\DeclareSymbolFontAlphabet{\mathbbm}{bbold}
\DeclareSymbolFontAlphabet{\mathbb}{AMSb}%
\newcommand{\grapegraph}{\mathscr{G}\mathsf{rape}}
\newcommand{\cRI}{\mathcal{RI}}
\newcommand{\cI}{\mathcal{I}}
\newcommand{\Lk}{\mathrm{Lk}}
\newcommand{\bfv}{\mathbf{v}}
\newcommand{\barbfv}{\bar{\mathbf{v}}}
\newcommand{\barbfw}{\bar{\mathbf{w}}}
\newcommand{\barbfu}{\bar{\mathbf{u}}}
\newcommand{\interior}[1]{\accentset{\smash{\raisebox{-0.5ex}{$\scriptstyle\circ$}}}{#1}\rule{0pt}{1ex}}
\tikzset{
math to/.tip={Glyph[glyph math command=rightarrow]},
loop/.tip={Glyph[glyph math command=looparrowleft, swap]},
loop'/.tip={Glyph[glyph math command=looparrowleft]},
 weird/.tip={Glyph[glyph math command=Rrightarrow, glyph length=1.5ex]},
  pi/.tip={Glyph[glyph math command=pi, glyph length=1.5ex, glyph axis=0pt]},
}
\newcommand{\grape}[2][0]{
\draw[thick, fill] (#2.center) circle(2pt) -- +({30+#1}:0.5) circle(2pt) +(0,0) -- +({-30+#1}:0.5) circle(2pt);
\draw[thick] (#2.center) +({30+#1}:0.5) arc ({120+#1}:{-120+#1}:{0.5 / sqrt(3)});
}
\newcommand{\grapeloop}[2][0]{
\draw[thick, fill] (#2.center) circle(0.1pt) -- +({30+#1}:0.5) circle(0.1pt) +(0,0) -- +({-30+#1}:0.5) circle(0.1pt);
\draw[thick] (#2.center) +({30+#1}:0.5) arc ({120+#1}:{-120+#1}:{0.5 / sqrt(3)});
}
\DeclareMathOperator{\CAT}{CAT}
\DeclareMathOperator{\link}{Lk}
\DeclareMathOperator{\lk}{\mathsf{Lk}}
\DeclareMathOperator{\st}{\mathsf{St}}
\DeclareMathOperator{\val}{val}
\newcommand\loops\ell
\newcommand\itimes{\mathbin{\interior{\times}}}
\numberwithin{equation}{section}
\theoremstyle{plain}
\newtheorem{theorem}[equation]{Theorem}
\newtheorem{maintheorem}{Main Theorem}
\newtheorem{lemma}[equation]{Lemma}
\newtheorem{corollary}[equation]{Corollary}
\newtheorem{proposition}[equation]{Proposition}
\newtheorem{claim}[equation]{Claim}
\theoremstyle{definition}
\newtheorem{definition}[equation]{Definition}
\newtheorem{assumption}[equation]{Assumption}
\newtheorem{question}{Question}
\newtheorem*{convention}{Convention}
\newtheorem{remark}[equation]{Remark}
\newtheorem*{observation}{Observation}
\newtheorem{Ex}[equation]{Example}
\renewcommand\bar\overline
\begin{document}
\title[Q.I. classification of certain graph $2$-braid groups and its applications]{Quasi-isometry classification of certain graph $2$-braid groups and its applications}
\author{Byung Hee An}
\address{Department of Mathematics Education, Kyungpook National University, Daegu, Korea}
\email{anbyhee@knu.ac.kr}
\author{Sangrok Oh}
\address{Department of Mathematics, University of the Basque Country, Spain}
\email{SangrokOh.math@gmail.com}
\begin{abstract}
In \cite{Oh22}, the second author defined a complex of groups decomposition of the fundamental group of a finitely generated 2-dimensional special group, called an \emph{intersection complex}, which is a quasi-isometry invariant. In this paper, using the theory of intersection complexes, we classify the class of 2-braid groups over graphs with circumference $\leq 1$ up to quasi-isometry. Moreover, we find a sufficient condition when such a graph 2-braid group is quasi-isometric to a right-angled Artin group or not.
Finally, by applying the same method, we also find that there is an algorithm to determine whether two 4-braid groups over trees are quasi-isometric or not. 
\end{abstract}
\subjclass[2020]{20F65, 20F36, 20F67, 57M60}
\keywords{Graph braid group, quasi-isometry, intersection complex}

\maketitle
\tableofcontents

\section{Introduction}\label{Sec:Intro}
In geometric group theory, a central objective is the classification of finitely generated groups up to quasi-isometry. As part of this endeavor, attention often focuses on specific subclasses of \emph{special groups}, which are defined as the fundamental groups of \emph{special cube complexes} in the sense of Haglund and Wise \cite{HW08} (see Definition~\ref{Def:SpecialCubeComplex}). Notable examples include right-angled Artin groups (RAAGs) and right-angled Coxeter groups (RACGs), both of which have been extensively studied and remain active research areas. For RAAGs, see \cite{BN, BKS(a), Hua(a), Mar20}, and for RACGs, see \cite{DT17, NT19, BX20, E24}.

This paper focuses on another subclass of special groups, namely \emph{graph braid groups} (or braid groups over graphs), which, while less well-known than RAAGs and RACGs, have garnered sustained interest from researchers. Since their introduction in \cite{Abrams00}, graph braid groups have been examined for their relationships to RAAGs, as some graph braid groups are isomorphic to RAAGs; embeddability \cite{Sab07, CW} as well as conditions for isomorphism between graph braid groups and RAAGs \cite{KKP12, FS08, CD14, KLP16}. Additionally, studies have investigated their algebraic properties, including group presentations, homology, and cohomology rings (see \cite{FS05, FS12, KP12, AM21}).

The large-scale geometry of graph braid groups, however, is a relatively recent field of study compared to the more developed investigations of RAAGs and RACGs, or the algebraic properties of graph braid groups. Fernandes introduced a quasi-isometry invariant for graph $2$-braid groups called the \emph{intersection complex} \cite{Fer12}. Building on this, the second author of this paper refined the definition and used it to determine when graph $2$-braid groups are quasi-isometric to RAAGs \cite{Oh22}. Genevois \cite{Gen21} and Berlyne \cite{Ber21} independently investigated various hyperbolic properties of graph braid groups, including Gromov hyperbolicity, acylindrical hyperbolicity, and toral relative hyperbolicity.

Motivated by \cite{Oh22}, this paper seeks to further the quasi-isometric classification of graph $2$-braid groups $\mathbb{B}_2(\Gamma)$, leveraging their status as special groups. It is well-known among experts that graph $2$-braid groups are isomorphic to free groups when the defining graphs are trees. Genevois \cite{Gen21} demonstrated that $\mathbb{B}_2(\Gamma)$ is hyperbolic if and only if $\Gamma$ has no pair of disjoint induced cycles, and that $\mathbb{B}_2(\Gamma)$ is hyperbolic relative to abelian subgroups if and only if $\Gamma$ does not contain an induced cycle disjoint from two other induced cycles. Thus, it is natural to begin by examining the geometry of $2$-braid groups over graphs that are almost trees but contain pairs of disjoint cycles.

\subsection{Bunches of grapes and their \texorpdfstring{$2$}{2}-braid groups}\label{Sec:IntroMainResults}
The \emph{circumference} of a graph is the length of any longest cycle in the graph. As a graph with circumference zero is a tree, a natural candidate of a tree-like graph with induced cycles might be a planar graph with circumference at most one, which is called a \emph{bunch of grapes}. See Figure~\ref{Figure:Examples} for example and Definition~\ref{Def:BunchesofGrapes} for details.

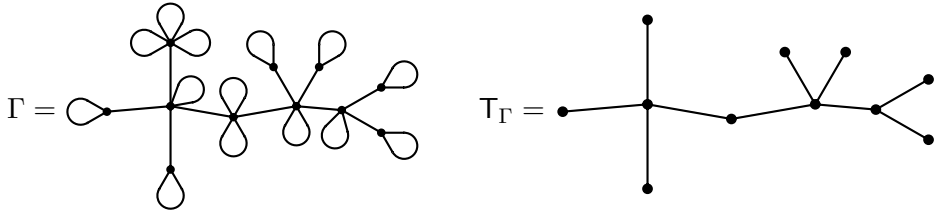
\begin{figure}[ht]
\[
\Gamma=\begin{tikzpicture}[baseline=-.5ex, scale=0.6]
\draw[thick,fill] (0,0) circle (2pt) node (A) {} -- ++(5:1.4) circle(2pt) node(B) {} -- +(0,1.4) circle (2pt) node (C) {}+(0,0) -- +(0,-1.4) circle (2pt) node(D) {} +(0,0) -- ++(-10:1.4) circle(2pt) node(E) {} -- ++(10:1.4) circle(2pt) node(F) {} -- +(120:1) circle(2pt) node(G) {} +(0,0) -- +(60:1) circle (2pt) node(H) {} +(0,0) -- ++(-5:1) circle (2pt) node(I) {} -- +(30:1) circle(2pt) node(J) {} +(0,0) -- +(-30:1) circle (2pt) node(K) {};
\grapeloop[180]{A};
\grapeloop[0]{C}; \grapeloop[90]{C}; \grapeloop[180]{C};
\grapeloop[-90]{D};
\grapeloop[-90]{F}; 
\grapeloop[120]{G};
\grapeloop[60]{H};
\grapeloop[-30]{K};
\grapeloop[30]{J};
\grapeloop[-90]{E}; \grapeloop[90]{E};
\grapeloop[40]{B};
\grapeloop[-105]{I};
\end{tikzpicture}\qquad
\mathsf{T}_\Gamma=\begin{tikzpicture}[baseline=-.5ex, scale=0.8]
\draw[thick,fill] (0,0) circle (2pt) node (A) {} -- ++(5:1.4) circle(2pt) node(B) {} -- +(0,1.4) circle (2pt) node (C) {}+(0,0) -- +(0,-1.4) circle (2pt) node(D) {} +(0,0) -- ++(-10:1.4) circle(2pt) node(E) {} -- ++(10:1.4) circle(2pt) node(F) {} -- +(120:1) circle(2pt) node(G) {} +(0,0) -- +(60:1) circle (2pt) node(H) {} +(0,0) -- ++(-5:1) circle (2pt) node(I) {} -- +(30:1) circle(2pt) node(J) {} +(0,0) -- +(-30:1) circle (2pt) node(K) {};
\end{tikzpicture}
\]
\caption{Examples of a bunch of graphs and its stem}
\label{Figure:Examples}
\end{figure}

A bunch of grapes $\Gamma$ comprises two key elements, a tree $\mathsf{T}_{\Gamma}$ (referred to as the \emph{stem}) and a function $\ell_\Gamma: \mathcal{V}(\mathsf{T}_\Gamma) \to \mathbb{Z}{\geq 0}$ defined on the vertex set of $\mathsf{T}_\Gamma$. The function $\ell_\Gamma$ specifies the number of induced cycles (called \emph{grapes}) attached to each vertex $v \in \mathcal{V}(\mathsf{T}_\Gamma)$.
The set of all bunches of grapes is denoted by $\grapegraph$.
We also define subclasses of $\grapegraph$ 
\begin{align*}
\grapegraph^{\mathsf{large}} &= \{\Gamma\in\grapegraph\mid \exists v,w\in \mathcal{V}(\mathsf{T}_\Gamma),\ (v\neq w)\land (\ell_\Gamma(v)>0)\land (\ell_\Gamma(w)>0)\},\\
\grapegraph^{\mathsf{large}}_{\mathsf{normal}} &= \{\Gamma\in\grapegraph^{\mathsf{large}}\mid \forall v\in \mathcal{V}(\mathsf{T}_\Gamma),\ (\val_{\mathsf{T}_\Gamma}(v)\le2 \Rightarrow \ell_\Gamma(v)\ge 1)\},\\
\grapegraph^{\mathsf{large}}_{\mathsf{rich}} &= \{\Gamma\in\grapegraph^{\mathsf{large}}_{\mathsf{normal}}\mid \forall v\in \mathcal{V}(\mathsf{T}_\Gamma),\ \ell_\Gamma(v)\ge 1\}.
\end{align*}

It is known and checked easily that for any bunch of grapes $\Gamma$ which is non-large, the $2$-braid group over $\Gamma$ is free by Theorem~\ref{scrg} \cite[Theorem~4.8]{KP12}, and its quasi-isometry class can be determined easily.

The first main result of this paper is an algorithmic classification of quasi-isometry classes of $2$-braid groups over bunches of grapes.

\begin{maintheorem}[Theorem~\ref{thm:existenceofAlgorithm}]\label{Mainthm:Algorithm}
Let $\Gamma$ and $\Gamma'$ be bunches of grapes.
Then there exists an algorithm to determine whether $\mathbb{B}_2(\Gamma)$ and $\mathbb{B}_2(\Gamma')$ are quasi-isometric.
\end{maintheorem}

According to \cite{Abrams00}, if a graph $\Gamma$ is suitably subdivided\footnote{For instance, if $n=2$, then any simple graph is suitably subdivided in this sense.}, then the $n$-braid group $\mathbb{B}_n(\Gamma)$ is the fundamental group of the unordered discrete configuration space $UD_n(\Gamma)$ of $n$ points in $\Gamma$, which turns out to be special cube complexes.

Following the preceding paragraph, Main Theorem~\ref{Mainthm:Algorithm} is rephrased as follows: for $\Gamma_1,\Gamma_2\in\grapegraph$, there is an algorithm to determine whether there exists a quasi-isometry between the universal covers $\bar{UD_2(\Gamma_1)}$ and $\bar{UD_2(\Gamma_2)}$ of $UD_2(\Gamma_1)$ and $UD_2(\Gamma_2)$. 
The proof will be done following two common schemes in geometric group theory:
\begin{itemize}
\item In order to show that $\bar{UD_2(\Gamma_1)}$ and $\bar{UD_2(\Gamma_2)}$ are not quasi-isometric, use a quasi-isometry invariant which distinguishes them up to quasi-isometry.
\item If the quasi-isometry invariant does not distinguish $\bar{UD_2(\Gamma_1)}$ and $\bar{UD_2(\Gamma_2)}$ up to quasi-isometry, construct a purported quasi-isometry between them and show that it is indeed a quasi-isometry.
\end{itemize}

\subsubsection{Intersection complexes}
The main quasi-isometry invariant we use is the \emph{intersection complex}, which will be briefly introduced below.

Let $Y$ be a compact special cube complex of dimension $2$ (a \emph{square complex}) and $\bar{Y}$ its universal cover.
A \emph{standard product subcomplex} of $Y$ is defined as a local isometry $\iota:\mathsf{P}_1\times\mathsf{P}_2\to Y$ for two finite graphs $\mathsf{P}_1$ and $\mathsf{P}_2$ without leaves such that the restriction to each factor is injective.
A \emph{standard product subcomplex} of $\bar Y$ is defined as an elevation $\bar{\iota}:\bar{\mathsf{P}_1}\times \bar{\mathsf{P}_2}\to \bar{Y}$ of $\iota$ such that $(\text{covering map})\circ\bar{\iota}=\iota\circ(\text{covering map})$. 
For the standard product subcomplex of either $Y$ or $\bar Y$, $\mathsf{P}_1\times\mathsf{P}_2$ is called its \emph{base}. 
See Definition~\ref{SPS} for the precise definition.

Let $X$ be either $Y$ or $\bar{Y}$.
A standard product subcomplex of $X$ which is maximal under the inclusion relation is called a \emph{maximal product subcomplex}. 
Then we can define the \emph{intersection complex} $\cI(X)$, which encodes the coarse intersection pattern of maximal product subcomplexes of $X$, as follows: 
\begin{itemize}
\item The vertex set of $\cI(X)$ consists of maximal product subcomplexes in $X$, and a set of $(k+1)$ vertices spans $k$-simplices whenever the $(k+1)$ maximal product subcomplexes share standard product subcomplexes. 
\item To each simplex $\triangle$ of $\cI(X)$, the base of the standard product subcomplex corresponding to $\triangle$ is assigned such that if $\triangle$ is a face of $\triangle'$, then the label of $\triangle'$ is factor-wisely contained in that of $\triangle$.
\end{itemize}
A \emph{(semi-)morphism} between intersection complexes is defined as a combinatorial map which preserves this kind of certain inclusion relation between labels (see Definition~\ref{Def:Morphism}).

In particular, $\cI(Y)$ will be called the \emph{reduced intersection complex} of $Y$ and denoted by $\cRI(Y)$. Indeed, there is a group action of $\pi_1(Y)$ on $\cI(\bar{Y})$ such that this action induces a (canonical) quotient morphism $\rho:\cI(\bar{Y})\to\cRI(Y)$ (Theorem~\ref{Thm:TPBCM}).

Then the second author proved the quasi-isometry invariance of $\cI(\bar Y)$ in \cite[Theorem~C]{Oh22}, which is also stated in Theorem~\ref{theorem:IsobetInt}.

\subsubsection{Operations on bunches of grapes}\label{Section:IntroOperations}
We introduce operations on bunches of grapes, called \emph{pruning empty twigs and smoothing twigs}, \emph{picking over-grown grapes}, and \emph{picking over-grown substems} in Definitions~\ref{Def:Elimination}, \ref{definition:picking}, and \ref{Def:Quasi-foldingonSuperrich}. 
These operations naturally induce an isomorphism between the intersection complexes of the $2$-braid groups and thus we follow the second scheme mentioned above to show that the quasi-isometry type of the $2$-braid groups is preserved. See Theorems~\ref{theorem:2-free factor}, \ref{theorem:loop reducing}, and \ref{theorem:quasi folding}.  

Moreover, these operations reduce bunches of grapes to smaller classes such as from $\grapegraph^{\mathsf{large}}$ to $\grapegraph^{\mathsf{large}}_{\mathsf{normal}}$ and to $\grapegraph^{\mathsf{large}}_{\mathsf{rich}}$, and produce eventually a unique bunch of grapes, called the \emph{quasi-minimal representative}. 
\begin{maintheorem}[Theorems~\ref{theorem:unique minimal representative} and \ref{thm:QIminimal}, Algorithm~\ref{alg:quasi-minimal}]\label{maintheorem:existence of quasi-minimal}
Let $\Gamma\in\grapegraph^{\mathsf{large}}$. Then there is a unique quasi-minimal representative $\Gamma_{\mathsf{min}}$ up to isometry, and $\mathbb{B}_2(\Gamma)$ is quasi-isometric to $\mathbb{B}_2(\Gamma_{\mathsf{min}})$.

Moreover, there is a finite-time algorithm producing $\Gamma_{\mathsf{min}}$ from $\Gamma$.
\end{maintheorem}

\subsubsection{The completeness of the invariant}
The intersection complex is not a complete invariant. Specifically, the converse of Theorem~\ref{theorem:IsobetInt} does not hold even when limited to 2-dimensional RAAGs (\cite{Mar20, Hua(c)}); if we further restrict to 2-dimensional RAAGs with finite outer automorphism groups, then the converse does hold \cite[Theorem~5.6]{Oh22}.

By Main Theorem~\ref{maintheorem:existence of quasi-minimal}, we can consider a subclass of $\grapegraph^{\mathsf{large}}_{\mathsf{rich}}$ consisting of quasi-minimal bunches of grapes
\[
\grapegraph^{\mathsf{large}}_{\mathsf{min}}=\{
\Gamma\in\grapegraph^{\mathsf{large}}_{\mathsf{rich}} \mid \Gamma\text{ is quasi-minimal}
\}.
\]
Then the intersection complex $\cI(\bar{UD_2(-)})$ on $\grapegraph^{\mathsf{large}}_{\mathsf{min}}$ is indeed a complete invariant.

\begin{maintheorem}[Proposition~\ref{Prop:FreeFactor} and Theorem~\ref{theorem:Quasi-minimalRepresentative}]\label{MainThm:IsomorphisminducesQI}
Let $\Gamma,\Gamma'\in\grapegraph^{\mathsf{large}}_{\mathsf{min}}$ be quasi-minimal.
If there is an isomorphism between $\cI(\bar{UD_2(\Gamma)})$ and $\cI(\bar{UD_2(\Gamma')})$, then $\Gamma$ and $\Gamma'$ are isometric.
\end{maintheorem}

\subsection{Applications}
There are two applications of our study of $2$-braid groups over bunches of grapes to other groups.

\subsubsection{Graph $2$-braid groups and RAAGs}
The first one is to strengthen the result in \cite{Oh22} about the determination of which graph $2$-braid groups are quasi-isometric to RAAGs or not.
Proposition~5.16 and Corollary~5.18 in \cite{Oh22} say that there are infinitely many graphs with circumference one whose $2$-braid groups are/are not quasi-isometric to RAAGs.
By Main Theorem~\ref{MainThm:IsomorphisminducesQI}, we can enlarge both classes of graph $2$-braid groups.

\begin{maintheorem}[Theorem~\ref{theorem:bunches of grapes related to RAAG}]\label{Mainthm:QItoRAAG}
There are infinitely many graphs with circumference one whose $2$-braid groups are quasi-isometric to RAAGs; this class properly contains the class in \cite[Proposition 5.16]{Oh22}.

There are also infinitely many graphs with circumference one whose $2$-braid groups are not quasi-isometric to RAAGs; this class properly contains the class in \cite[Corollary 5.18]{Oh22}.
\end{maintheorem}

\subsubsection{Tree $4$-braid groups}
Recall that the isomorphic classification of tree $4$-braid groups are completely solved by \cite{Sabalka2009}. That is, two tree $4$-braid groups are isomorphic if and only if the underlying trees are homeomorphic.

The other application of our result is to provide a complete classification of tree $4$-braid groups up to quasi-isometry.

For a tree $\Lambda$, we construct a bunch of grapes $\Gamma(\Lambda)$ \emph{grown from $\Lambda$}, whose stem is given as $\Lambda$ (Definition~\ref{definition:bunch of grapes grown from tree}). Then we have the following result.

\begin{maintheorem}[Theorem~\ref{Thm:B_4quasi-isometrictoB_2} and Corollary~\ref{corollary:algorithm for tree 4-braid groups}]\label{Mainthm:TBG}
Let $\Lambda$ be a tree and $\Gamma=\Gamma(\Lambda)$ be a bunch of grapes grown from $\Lambda$.
Then $\mathbb{B}_4(\Lambda)$ and $\mathbb{B}_2(\Gamma)$ are quasi-isometric.

Moreover, given two trees $\Lambda_1,\Lambda_2$, there exists an algorithm to determine whether $\mathbb{B}_4(\Lambda_1)$ and $\mathbb{B}_4(\Lambda_2)$ are quasi-isometric.
\end{maintheorem}

\subsection{Remarks and questions}
Here, we remark the following:
\begin{enumerate}[wide]
\item For a graph $\Gamma$, let $UP_2(\Gamma)$ be a subcomplex of $UD_2(\Gamma)$ which is the union of all maximal product subcomplexes of $UD_2(\Gamma)$. Our results are indeed for $UP_2(\Gamma)$ (or $\bar{UP_2(\Gamma)}$) as $\bar{UP_2(\Gamma)}$ shares the (almost) same quasi-isometry type of $\bar{UD_2(\Gamma)}$ if $\Gamma$ belongs to $\grapegraph^{\mathsf{large}}_{\mathsf{normal}}$.

\begin{proposition}[Proposition~\ref{Prop:FreeFactor}]
Let $\Gamma\in\grapegraph^{\mathsf{large}}_{\mathsf{normal}}$. Then the subcomplex $UP_2(\Gamma)$ is locally convex in $UD_2(\Gamma)$.
Moreover, the $2$-braid group $\mathbb{B}_2(\Gamma)$ is isomorphic to $\pi_1(UP_2(\Gamma))*\mathbb{F}_n$ for some $n\ge 2$.
\end{proposition}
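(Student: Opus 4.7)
The plan is to verify the two assertions in turn: first, local convexity of $UP_2(\Gamma)\hookrightarrow UD_2(\Gamma)$ via a link-condition analysis, and then the free-factor decomposition by identifying the relative complement of $UP_2(\Gamma)$ as a purely $1$-dimensional attaching graph with non-trivial cycle rank.

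Since $UD_2(\Gamma)$ is a non-positively curved square complex, $UP_2(\Gamma)$ is locally convex precisely when, at every $0$-cell $x$, the link $\lk_{UP_2(\Gamma)}(x)$ is a full subgraph of $\lk_{UD_2(\Gamma)}(x)$. Recall that a $0$-cell of $UD_2(\Gamma)$ is an unordered pair $\{p,q\}$ of vertices of a subdivided $\Gamma$, an edge records sliding one particle along an edge of $\Gamma$ while the other stays disjoint, and a square records simultaneous sliding along two disjoint edges of $\Gamma$. The key step is to show: whenever two edges $e_1,e_2$ at $x$ both lie in $UP_2(\Gamma)$ and span a square $s$ in $UD_2(\Gamma)$, then $s$ itself lies in some maximal product subcomplex. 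Because maximal product subcomplexes of $UD_2(\Gamma)$ are parametrized, for $\Gamma\in\grapegraph$, by pairs of disjoint leaf-free subgraphs of $\Gamma$, this reduces to the graph-theoretic statement that the two $\Gamma$-edges underlying $s$ can be extended to disjoint leaf-free subgraphs. The normal hypothesis --- every vertex of $\mathsf{T}_\Gamma$ of valence $\le 2$ carries at least one grape --- is exactly what permits this extension: at a low-valence endpoint the attached grape immediately certifies the leaf-free property, while at a higher-valence endpoint one can extend along branching stem edges until reaching vertices whose grapes (guaranteed on every eventual leaf by normality) close the subgraph off without ever running into the other side.

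With local convexity in hand, the inclusion $UP_2(\Gamma)\hookrightarrow UD_2(\Gamma)$ is $\pi_1$-injective; the same extension argument further shows that every square of $UD_2(\Gamma)$ already lies in $UP_2(\Gamma)$, so the complement of $UP_2(\Gamma)$ in $UD_2(\Gamma)$ is $1$-dimensional. Consequently $UD_2(\Gamma)=UP_2(\Gamma)\cup G$ for a graph $G$ meeting $UP_2(\Gamma)$ in a finite set of $0$-cells, and an application of van Kampen's theorem (or equivalently a graph-of-spaces decomposition with $UP_2(\Gamma)$ as the sole non-trivial vertex space) yields $\mathbb{B}_2(\Gamma)\cong \pi_1(UP_2(\Gamma))\ast \mathbb{F}_n$, where $\mathbb{F}_n$ is the fundamental group of the quotient of $G$ obtained by collapsing its attaching $0$-cells. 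To upgrade this to $n\ge 2$, I would use the largeness of $\Gamma$: grapes sitting at two distinct stem vertices $v,w$ yield ``stem-corridor'' loops in $UD_2(\Gamma)$ --- one particle parked at a grape vertex while the other traverses stem segments joining other grape-bearing vertices --- which produce at least two homotopically independent cycles in $G$ that cannot be absorbed into $UP_2(\Gamma)$.

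The hardest step I expect is the link-condition case analysis at $0$-cells $x=\{p,q\}$ where $p$ or $q$ sits at a high-valence stem vertex carrying no grape (a configuration the normal condition permits): there, grape edges, stem edges, and possibly edges of a grape on the other particle all meet at $x$, and one must verify that no square of $UD_2(\Gamma)$ available at $x$ is missed by $UP_2(\Gamma)$. The normal condition is invoked precisely here to rule out the pathological case in which a stem edge terminates at a low-valence, no-grape vertex, which would otherwise prevent the square from being closed into a maximal product subcomplex.
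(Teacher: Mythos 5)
The free-factor half of your argument rests on a false premise: the complement of $UP_2(\Gamma)$ in $UD_2(\Gamma)$ is \emph{not} $1$-dimensional, and it is not true that every square of $UD_2(\Gamma)$ lies in $UP_2(\Gamma)$. Concretely, let $v$ be a stem vertex carrying a grape $\mathsf{C}$ with far edge $[a,b]$ (so $a,b\neq v$), and let $[v,w]$ be a stem edge at $v$. Then $[a,b]\itimes[v,w]$ is a square of $UD_2(\Gamma)$; but since $a$ and $b$ have valence $2$ in $\Gamma$, any leaf-free subgraph containing $[a,b]$ must contain all of $\mathsf{C}$ and hence $v$, while any subgraph containing $[v,w]$ obviously contains $v$, so no pair of disjoint leaf-free subgraphs separates these two edges and the square is not in $UP_2(\Gamma)$. (The same happens for the two far edges of two distinct grapes at the same vertex.) This example also exposes a gap in your local-convexity step: the ``extension to disjoint leaf-free subgraphs'' argument you sketch never actually uses the hypothesis that the two boundary edges $e_1,e_2$ already lie in $UP_2(\Gamma)$, and without that hypothesis the conclusion you are extracting from it — that the two underlying $\Gamma$-edges of any square extend to disjoint leaf-free subgraphs — is simply false. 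A correct proof must exploit the fact that $e_1=\sigma_1\itimes q$ and $e_2=p\itimes\sigma_2$ each lie in some maximal product subcomplex, which heavily constrains the mutual position of $p$, $q$, $\sigma_1$, $\sigma_2$; the paper organizes this by the distance $d_\Gamma(p,q)\in\{1,2,\ge 3\}$, the delicate case being $d_\Gamma(p,q)=2$ with the two underlying stem vertices adjacent, where the link of $UP_2(\Gamma)$ is a \emph{proper} induced subcomplex of the link of $UD_2(\Gamma)$.

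Consequently the decomposition $UD_2(\Gamma)=UP_2(\Gamma)\cup G$ with $G$ a graph is unavailable, and the van Kampen computation built on it collapses. The correct decomposition is
\[
UD_2(\Gamma)\;=\;UP_2(\Gamma)\;\cup\;\bigcup_{v\in\mathsf{T},\ \loops(v)\ge 1} UD_2\bigl(\Gamma_v\cup\st_{\mathsf{T}}(v)\bigr),
\]
where the extra pieces are genuinely $2$-dimensional square complexes with non-abelian free fundamental group (they contain exactly the squares exhibited above), glued to $UP_2(\Gamma)$ along $\lk_{\mathsf{T}}(v)\itimes\Gamma_v$ and to one another along single vertices. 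Van Kampen applied to this decomposition yields $\mathbb{B}_2(\Gamma)\cong\pi_1(UP_2(\Gamma))*\mathbb{F}_n$ with a non-trivial free contribution for each grape-bearing vertex, and largeness of $\Gamma$ then gives $n\ge 2$. Your ``stem-corridor'' count of independent loops in $G$ cannot be repaired without first replacing the purported $1$-dimensional complement by these $2$-dimensional pieces.
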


This proposition was one of the main reasons why $2$-braid groups over bunches of grapes are completely classified up to quasi-isometry in this paper, using the quasi-isometry invariant `intersection complexes' which captures the coarse pattern of maximal standard product subcomplexes.

As $UP_2(\Gamma)$ is always a special square complex itself for any graph $\Gamma$ (Lemma~\ref{lem:SpecialSubcomplexes}), one may ask the local convexity of $UP_2(\Gamma)$ in $UD_2(\Gamma)$.

\begin{question}\label{Qn:LocalConvexity}
For any graph $\Gamma$, is $UP_2(\Gamma)$ always locally convex in $UD_2(\Gamma)$, possibly after subdividing or smoothing $\Gamma$ as described in Remark~\ref{remark:minimal simple graphs}? Alternatively, is $\pi_1(UP_2(\Gamma))$ a subgroup of $\mathbb{B}_2(\Gamma)$?
\end{question}

At the very least, if we can identify a class of graphs that provides an affirmative answer to Question~\ref{Qn:LocalConvexity} and extends $\grapegraph^{\mathsf{large}}_{\mathsf{normal}}$, then we may be able to broaden our results to encompass this class of graphs.

\item Combining our results, we obtain the following equivalences for $\Gamma,\Lambda\in\grapegraph^{\mathsf{large}}$ (Corollary~\ref{corollary:quasi-isometry invariant}): 
\begin{enumerate}[label=(\alph*), leftmargin=4em]
\item\label{item:B_2QI} $\mathbb{B}_2(\Gamma)$ and $\mathbb{B}_2(\Lambda)$ are quasi-isometric;
\item\label{item:QMR} The quasi-minimal representatives $\Gamma_{\mathsf{min}}$ and $\Lambda_{\mathsf{min}}$ of $\Gamma$ and $\Lambda$ are isometric;
\item\label{item:Iisom} $\cI(\bar{UP_2(\Gamma)})$ and $\cI(\bar{UP_2(\Lambda)})$ are isomorphic.
\end{enumerate}

The implication [\ref{item:B_2QI}$\Rightarrow$\ref{item:Iisom}] is proven by a combination of Theorem~\ref{theorem:IsobetInt}, Proposition~\ref{Prop:FreeFactor} and results in \cite{PW02} (Theorem~\ref{PW}).

The idea behind the proof of the implication [\ref{item:QMR}$\Rightarrow$\ref{item:B_2QI}] is as follows. For detailed proofs, see Sections~\ref{Subsection:PickingGrapes} and~\ref{Subsection:Quasi-folding}. 
\begingroup
\begin{itemize}[wide, labelindent=0pt]
\item[\textbf{Step 1.}] Suppose that $\Gamma'$ is obtained from $\Gamma$ by picking over-grown grapes or pruning over-grown substems. Then we have a surjective morphism $\Phi:\cRI(UP_2(\Gamma))\to\cRI(UP_2(\Gamma'))$, naturally induced from the operation. 
\item[\textbf{Step 2.}] 
For a vertex $\bfv\in\cRI(UP_2(\Gamma))$ ($\bfv'\in\cRI(UP_2(\Gamma'))$, resp.), let $\mathbf{M}_{\bfv}$ ($\mathbf{M}'_{\bfv'}$, resp.) be the domain of the maximal product subcomplex corresponding to $\bfv$ ($\bfv'$, resp.).
As the universal cover $\bar{\mathbf{M}_{\bfv}}$ is quasi-isometric to the universal cover $\bar{\mathbf{M}'_{\Phi(\bfv)}}$ by the construction of $\Phi$, we have finitely many quasi-isometries $\bar\phi_\bfv:\bar{\mathbf{M}_{\bfv}}\to\bar{\mathbf{M}'_{\Phi(\bfv)}}$.
Based on the fact that $\cI(\bar{UP_2(\Gamma)})$ and $\cI(\bar{UP_2(\Gamma')})$ are simply connected and isomorphic (Theorem~\ref{theorem:structureofI} and Lemma~\ref{Lem:IsometricRI}), we can inductively glue (copies of) those quasi-isometries to construct a quasi-isometry between $\bar{UP_2(\Gamma)}$ and $\bar{UP_2(\Gamma')}$.
\end{itemize}
\endgroup
To the best of our knowledge, this is the first attempt to construct a quasi-isometry from an isomorphism between complexes of groups, although the case of trees of groups is examined in \cite{CM17}.

The proof of the implication [\ref{item:Iisom}$\Rightarrow$\ref{item:QMR}] illustrates why it is worthwhile to consider the labels of simplices in intersection complexes, and its underlying idea is as follows.
For each simplex $\triangle$ of $\cRI(UP_2(\Gamma))$, there is a canonical way to give an order on the vertex set of $\triangle$ (see Definition~\ref{def:OrderOnSimplex}).
Moreover, any isomorphism between $\cI(\bar{UP_2(\Gamma)})\to\cI(\bar{UP_2(\Lambda)})$ preserves the order of vertices in each simplex (see Lemma~\ref{lem:OrderOnSimplex}).
By looking at (the complement of) specific vertices preserved by the isomorphism, we deduce that there are specific sub-bunches of grapes $\Gamma_1\subset\Gamma$ and $\Lambda_1\subset\Lambda$ such that $\cI(\bar{UP_2(\Gamma_1)})$ and $\cI(\bar{UP_2(\Lambda_1)})$ are isomorphic.
By carrying out this process until it terminates and then reversing it,  we can demonstrate that $\Gamma$ and $\Lambda$ are isometric. For the details, see Section~\ref{section:QIBetweenConfSpaces}.

\item 
In addition to the above proposition, our results, especially for the proof of the implication [\ref{item:QMR}]$\Rightarrow$[\ref{item:B_2QI}] above, rely on the following fact:
\begin{lemma}[Lemma~\ref{Lem:RIconnected} (modified)]\label{lemma:intersection of maximals}
Let $\{M_1,\dots,M_m\}$ be a finite collection of maximal product subcomplexes of given square complex.
Then $\bigcap_{i=1}^m M_i$ is either empty or a standard product subcomplex.
\end{lemma}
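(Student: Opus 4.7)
The plan is to proceed by induction on $m$. The base case $m=1$ is immediate, since each $M_i$ is itself a standard product subcomplex by definition. For the inductive step, it suffices to prove the following stronger pairwise claim: if $N$ is any standard product subcomplex and $M$ is a maximal one in the ambient square complex, then $N \cap M$ is either empty or a standard product subcomplex. Applied with $N = M_1 \cap \dots \cap M_{m-1}$ (standard by the inductive hypothesis, or empty) and $M = M_m$, this closes the induction.

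To handle the pairwise case, we would first lift to the universal cover, where every elevation of a local isometry is an embedded CAT(0)-convex subcomplex and intersections of convex subcomplexes remain convex. Working at a vertex $v$ of the intersection, the link of $v$ inside a standard product subcomplex through $v$ is a complete bipartite subgraph of $\lk(v, \bar{X})$, with the bipartition recording the two factor-directions. Because the ambient complex is special, the parallel class of an edge at $v$ determines its factor-assignment inside \emph{any} standard product subcomplex containing it, so the bipartitions coming from $N$ and from $M$ must agree on the common link vertices. Consequently $\lk(v, N \cap M)$ is again complete bipartite, and assembling these local products globally yields a product decomposition $N \cap M = P_1' \times P_2'$, where $P_i'$ is the subgraph of edges of $N \cap M$ in the $i$-th factor direction of $N$.

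The hard part will be verifying that $P_1'$ and $P_2'$ have no leaves, as required by the definition of a standard product subcomplex. Intersections reducing to a single vertex cause no trouble, since a vertex is trivially leaf-free. A pendant edge $e \in P_1'$, on the other hand, would be a factor-1 edge of both $N$ and $M$ whose adjacent factor-2 square of $N$ at one endpoint fails to lie in $M$. The expected remedy invokes the maximality of $M$: one shows that such missing perpendicular squares of $N$ can always be adjoined to $M$ while preserving the product structure and local isometry into the ambient complex, contradicting maximality. Executing this extension carefully — using specialness of the ambient complex to rule out self-osculating or self-intersecting configurations that would block the enlargement — is the principal technical step, and the place where both hypotheses (specialness and maximality of $M$) are essential.
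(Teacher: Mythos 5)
There is a genuine gap here, and in fact two separate problems. First, you have taken the statement at a level of generality at which it is false. This lemma is an introduction-section restatement of Lemma~\ref{Lem:RIconnected}, so the ``given square complex'' is $UP_2(\Gamma)$ for a large \emph{normal} bunch of grapes, not an arbitrary special square complex. For an arbitrary one the conclusion fails: in the Salvetti complex of the $5$-cycle $\mathsf{C}_5$ with consecutive vertices $a,b,c,d,e$ (a compact special square complex), the maximal product subcomplexes corresponding to the maximal joins $\{a,c\}\ast\{b\}$ and $\{c,e\}\ast\{d\}$ intersect in the single circle labelled $c$, which is non-empty but is not a standard product subcomplex, since by Definition~\ref{SPS} \emph{both} factors must lift to infinite leafless trees in the universal cover --- a point factor is excluded. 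The same example refutes your pairwise claim that a standard subcomplex meets a maximal one in something empty or standard, and it shows that your aside ``a vertex is trivially leaf-free'' misreads the definition: degenerate intersections (a vertex, a circle, a vertex times a circle) are precisely the non-standard outcomes the lemma must rule out. Second, even inside your own framework, the step you yourself flag as ``the principal technical step'' --- that the factors $P_1'$ and $P_2'$ have no leaves --- is only conjectured, and the proposed remedy cannot work: maximality of $M$ only forbids $M$ from being properly contained in a larger \emph{standard product subcomplex}, and adjoining the ``missing perpendicular squares'' of $N$ to $M$ need not yield any product structure at all (in the $\mathsf{C}_5$ example it cannot). So the proof is incomplete exactly where the content lies.

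The paper's argument is entirely different and entirely combinatorial, and it is worth seeing where the leaf-freeness actually comes from. Product subcomplexes of $UD_2(\Gamma)$ are exactly $\Gamma_1\itimes\Gamma_2$ for disjoint subgraphs $\Gamma_1,\Gamma_2\subset\Gamma$ (Lemma~\ref{Lem:SPSinGBG}), standard ones are those with leafless factors (Corollary~\ref{corollary:maximal}), and maximal ones biject with twigs of the stem (Lemma~\ref{lem:TwigCorrespondence}). For two twigs the intersection is computed explicitly to be $\Gamma_{\mathring{\mathsf{P}},1}\itimes\Gamma_{\mathring{\mathsf{P}},2}$, where $\mathsf{P}$ is the minimal path substem containing both twigs; this is standard precisely because $\Gamma\in\grapegraph^{\mathsf{large}}_{\mathsf{normal}}$ forces every $\mathring{\mathsf{P}}$-component to be a normal bunch of grapes, hence leafless --- normality is the hypothesis that supplies the leaf-freeness your argument is missing. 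For three or more twigs, either they are colinear and the same formula applies, or three of them are non-colinear, in which case $\Gamma_1\itimes\Gamma_2$ and $\Gamma_1\itimes\Gamma_3$ are disjoint (because $\Gamma_2\cap\Gamma_3=\varnothing$) and the total intersection is empty. No induction on $m$, no link analysis, and no maximality-extension argument is needed; the analogous statement upstairs is then Lemma~\ref{lem:Intofmaximal}, deduced via the Helly property rather than re-proved.
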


It is worth noticing that this is not the first paper mentioning this fact:
For a tree $\mathsf{T}$ of diameter $\ge 3$, the associated RAAG $A_\mathsf{T}$ is the fundamental group of a special square complex $\widetilde{S}_\mathsf{T}$ (sometimes, called an \emph{extended Salvetti complex}), which is homeomorphic to the Salvetti complex associated to $\mathsf{T}$.
Then $\widetilde{S}_\mathsf{T}$ is not only equal to the union of all its maximal product subcomplexes but also satisfies Lemma~\ref{lemma:intersection of maximals}.

One main result in \cite{BN} is that if $\mathsf{T}_1$ and $\mathsf{T}_2$ are trees of diameter $\ge 3$, then $A_{\mathsf{T}_1}$ and $A_{\mathsf{T}_2}$ are quasi-isometric.
On the way to prove this result, Behrstock and Neumann showed that if $\cI(\widetilde{X}_{\mathsf{T}_1})$ and $\cI(\widetilde{X}_{\mathsf{T}_2})$ are isomorphic, then $A_{\mathsf{T}_1}$ and $A_{\mathsf{T}_2}$ are quasi-isometric, where $\widetilde{X}_{\mathsf{T}_i}$ is the universal cover of $\widetilde{S}_{\mathsf{T}_i}$ for $i=1,2$.

\item Since each of $\widetilde{S}_{\mathsf{T}}$ and $UP_2(\Gamma)$ is equal to the union of all its maximal product subcomplexes and satisfies Lemma~\ref{lemma:intersection of maximals}, it is natural to ask the following question related to the completeness of the quasi-isometry invariant---the intersection complex:
\begin{question}
For $i=1,2$, let $Y_i$ be a compact special square complex such that its maximal product subcomplexes cover $Y_i$ itself and it satisfies Lemma~\ref{lemma:intersection of maximals}. Then is $\pi_1(Y_1)$ quasi-isometric to $\pi_1(Y_2)$ if $\cI(\bar{Y_1})$ is isomorphic to $\cI(\bar{Y_2})$?
\end{question}

\item Generalizing Main Theorem~\ref{Mainthm:QItoRAAG}, one can raise the following two questions:
\begin{question}\label{Qn:QItoRAAG}
Given a bunch of grapes $\Gamma$, does there exist an algorithm to determine whether $\mathbb{B}_2(\Gamma)$ is quasi-isometric to a RAAG or not?
\end{question}

\begin{question}
Given a bunch of grapes $\Gamma$, is $\pi_1({UP_2(\Gamma)})$ quasi-isometric to $A_\Lambda$ for some $\Lambda$ if and only if $\cI(\bar{UP_2(\Gamma)})$ is isomorphic to $\cI(X_\Lambda)$?
\end{question}

The former seems to be solved by looking at the structure of $\cI(\bar{UP_2(\Gamma)})$ if the latter is positive.

\item Before we finish the introduction, we leave two more natural questions considering larger classes of graphs than bunches of grapes.

\begin{question}\label{Qn:QIrigidity}
Let $\Gamma_1$ be a bunch of grapes and $\Gamma_2$ be a graph of circumference at least two. If $\mathbb{B}_2(\Gamma_1)$ and $\mathbb{B}_2(\Gamma_2)$ are quasi-isometric, can we obtain $\Gamma_1$ from $\Gamma_2$ using enlarged graph operations?
\end{question}

If the answer is affirmative, then we may say that the class of $2$-braid groups over graphs with circumference one will be quasi-isometrically rigid up to enlarged graph operations. Moreover, it will be a starting point to find a set of graph operations which preserve the quasi-isometric type of graph $2$-braid groups.

If the answer is negative, then those graph $2$-braid groups will be served as an ingredient to sharpen the quasi-isometry invariant `intersection complexes'.

Related to Question~\ref{Qn:QIrigidity}, we may consider a class of graphs, called \emph{cacti}, which are graphs in which any two cycles meet at most one vertex. The class of cacti is a strictly larger than the class of bunches of grapes.

\begin{question}\label{Qn:cacti}
Let $\Gamma_1$ and $\Gamma_2$ be cacti. Does there exist an algorithm to know whether $\mathbb{B}_2(\Gamma_1)$ and $\mathbb{B}_2(\Gamma_2)$ are quasi-isometric or not?
\end{question}
If we can answer Question~\ref{Qn:cacti}, then we may change the graph in Question~\ref{Qn:QItoRAAG} to any cactus.
\end{enumerate}

\subsection{Organization of the paper}
In Section~\ref{section:prelim}, we introduce the notations and definitions that will be used throughout the paper. Section~\ref{section:WSSCandIC} covers (weakly) special square complexes and their standard product subcomplexes, followed by a review of the theory of (reduced) intersection complexes in Section~\ref{subsection:intersection complexes}. In Section~\ref{section:PW}, we revisit results from \cite{PW02} concerning the relationship between free products of spaces and quasi-isometries.

Section~\ref{section:graph 2-braid groups} begins with background information on the fundamental groups of the discrete $n$-configuration spaces of graphs. In Sections~\ref{section:properties} and~\ref{Section:GOGdecomposition}, we focus specifically on graph $2$-braid groups and certain subcomplexes of the discrete $2$-configuration space of a graph. This discussion will clarify, directly or indirectly, why graphs with circumference at most one are of particular interest when studying graph $2$-braid groups.

In Section~\ref{section:grapes}, after introducing the class of bunches of grapes (which are obtained from graphs with circumference $\leq 1$ by subdividing loops), we study the structure of the (reduced) intersection complexes of (the universal cover of) the discrete $2$-configuration spaces of bunches of grapes.

In Section~\ref{section:Operations}, we study two operations on bunches of grapes which do not change quasi-isometric type of their $2$-braid groups, and in particular, prove the quasi-isometric invariance in Main Theorem~\ref{maintheorem:existence of quasi-minimal}.
Using these operations, in Section~\ref{section:proofofThm}, we finish the proof of Main Theorem~\ref{Mainthm:Algorithm} by proving the existence of quasi-minimal representative in Main Theorem~\ref{maintheorem:existence of quasi-minimal} and Main Theorem~\ref{MainThm:IsomorphisminducesQI}. 

Section~\ref{section:applications} is devoted to applications of the results obtained in the previous sections. In particular, we prove Main Theorems~\ref{Mainthm:QItoRAAG} and~\ref{Mainthm:TBG}.

\subsection*{Acknowledgement}
The first author was supported by Samsung Science and Technology Foundation under Project Number SSTF-BA2022-03.
The second author was supported by the Basque Government grant IT1483-22.

\section{Preliminaries}\label{section:prelim}

In this paper, we only deal with two types of finite dimensional polyhedral complexes; a \emph{cube complex} whose cells are all cubes (\emph{square complex} in dimension 2), and an \emph{almost simplicial complex}
each of whose cells is an isometrically embedded simplex (\emph{simplicial complex} if there are no multi-simplices).
Sometimes, a cube complex can be seen as a metric space equipped with the \emph{length metric} by assuming that each cell is a unit cube in the Euclidean space.
For an almost simplicial complex $Z$, the distance between two vertices $v,v'$ is defined as the smallest integer $k$ such that there is a sequence of vertices $v=v_0,v_1,\dots,v_k=v'$ where $v_i$ and $v_{i+1}$ are adjacent in $Z$.

Each 0-cell (1-cell, resp.) of a polyhedral complex $X$ is called a \emph{vertex} (\emph{edge}, resp.) as usual, and the vertex set (edge set, resp.) of $X$ is denoted by $\mathcal{V}(X)$ ($\mathcal{E}(X)$, resp.).
For a subset $V\subset X$, a subcomplex $Y$ of $X$ is \emph{induced by $V$} if any cell in $X$ whose vertices are contained in $V$ is contained in $Y$; in this case, $Y$ is an \emph{induced} subcomplex.
For an almost simplicial complex $Z$ and its vertex $v$, we denote by $\mathcal{N}_k(v)$ the subcomplex of $Z$ induced by the set of vertices whose distance from $v$ is $\le k$.

Any map $\phi:X\to Y$ between polyhedral complexes is assumed to be \emph{combinatorial}, i.e., the restriction of $\phi$ to the interior of each cell of $X$ is a homeomorphism onto the interior of a cell of $Y$.
If $\phi$ is bijective, then we say that $\phi$ is an \emph{isometry} and $X$ and $Y$ are \emph{isometric}, denoted by $X\cong Y$.

Unless mentioned otherwise, any groups are assumed to be non-trivial and finitely generated. They are mostly denoted by blackboard bold letters including the following cases: a free group is denoted by $\mathbb{F}$, or $\mathbb{F}_n$ when the rank $n$ is specified; $\mathbb{F}_1$ is usually denoted by $\mathbb{Z}$.

\begin{convention}
For a polyhedral complex $Y$, the universal covering map $\bar Y \to Y$ will be denoted by $p$. We may write $p_Y$ to emphasize the polyhedral complex $Y$.
\end{convention}

\subsection{Weakly special cube complexes}\label{section:WSSCandIC}
Let $Y$ be a cube complex. For a vertex $v\in Y$, its \emph{link} $\Lk(v,Y)$ in $Y$ is defined as an almost simplicial complex whose vertices are half edges containing $v$ such that each $k$-simplex $\triangle$ corresponds to a $(k+1)$-cube containing the half edges corresponding to the vertices of $\triangle$.
Then $Y$ is said to be \emph{non-positively curved (NPC)} if $\link(y,Y)$ for each vertex $y\in Y$ is a \emph{flag complex}, a simplicial complex satisfying that pairwise adjacent $(k+1)$-vertices spans a unique $k$-simplex. Additionally, if $Y$ is simply connected, then $Y$ is said to be $\CAT(0)$.

\begin{remark}
When $Y$ is equipped with the length metric, it is locally $\CAT(0)$ as seen in \cite{Grom} (\cite{Lea} for infinite dimensional case), and is $\CAT(0)$ by the Cartan--Hadamard theorem if it is simply connected.
Moreover, the notions of (local) isometry and (locally) isometric embedding above coincide with the usual geometric ones for the length metric on $Y$; see \cite{CW,HW08,Wis12}.
\end{remark}

\begin{Ex}[Graphs]\label{Ex:Graphs}
A \emph{graph} $\Gamma$ is a cube complex of dimension at most $1$; a \emph{tree} is a connected and simply connected graph.
Since the link of any vertex in $\Gamma$ is discrete, $\Gamma$ is NPC and especially $\CAT(0)$ if it is a tree.

If a graph $\Gamma$ is also a simplicial complex, then it is said to be \emph{simple}.
In this case, the link of a vertex $v\in\mathcal{V}(\Gamma)$ in $\Gamma$ can be identified with a discrete subgraph of $\Gamma$, denoted by $\lk_\Gamma(v)$, which is the set of vertices adjacent to $v$.
Then the \emph{star} of $v$ in $\Gamma$ is defined as the subgraph induced by $\lk_\Gamma(v)\cup\{v\}$, denoted by $\st_\Gamma(v)$.
\end{Ex}

\begin{remark}\label{remark:minimal simple graphs}
For any graph $\Gamma$, there are two operations which generate new graphs which are homeomorphic to $\Gamma$: adding bivalent vertices (\emph{subdivision}) and removing bivalent vertices (\emph{smoothing}).
Using these two operations, $\Gamma$ can be promoted to a simple graph; there is a unique one up to isometry, which has the minimal number of vertices. Such a simple graph is called the \emph{minimal simplicial representative} of $\Gamma$ in \cite{AM21}.
\end{remark}

\begin{Ex}[Salvetti complex]
A \emph{right-angled Artin group} (RAAG) is a group admitting a presentation whose relators are commutators of generators. 
The \emph{Salvetti complex} associated to a RAAG is a typical example of NPC cube complex consisting of one vertex, several circles, and various dimensional tori.
For more details about RAAGs and Salvetti complexes, we refer to \cite{CH}.
\end{Ex}

\begin{Ex}[Discrete $2$-configuration spaces of graphs]
For a graph $\Gamma$, the \emph{ordered} and \emph{unordered discrete $2$-configuration spaces} $D_2(\Gamma)$ and $UD_2(\Gamma)$ of $\Gamma$ consist of ordered and unordered pairs of disjoint closed cells, respectively. By Abrams \cite{Abrams00}, it is known that both $D_2(\Gamma)$ and $UD_2(\Gamma)$ are NPC cube complexes; see Theorem~\ref{thm:GBGSpecial}.
\end{Ex}

\begin{assumption}\label{assumption:cube}
Unless mentioned otherwise, every cube complex is assumed to be NPC and connected though its subcomplex is only assumed to be connected. 
\end{assumption}

For two cube complexes $Y$ and $Y'$, a (combinatorial) map $\phi:Y\to Y'$ is called an \emph{immersion} if the induced map $\link(y,Y)\rightarrow \link(\phi(y),Y')$ on links for each vertex $y\in Y$ is combinatorial and injective, and a \emph{local isometry} if the image of every induced link map is additionally an induced subcomplex of the range.
If a local isometry $\phi$ is injective, then it is called a \textit{locally isometric embedding}, and the image $\phi(Y)$ is said to be \emph{locally convex} in $Y'$.
Lastly, if $Y'$ is a $\CAT(0)$ cube complex, instead of the terms `locally isometric embedding' and `locally convex', we use the terms `\emph{isometric embedding}' and `convex', respectively.

\begin{lemma}\label{Lem:ImmersionImpliesLocalisometry}
Let $Y$ be an $n$-dimensional cube complex and let $\Gamma_1,\dots,\Gamma_n$ be 1-dimensional graphs.
Then any immersion $\iota:\Gamma_1\times\dots\times\Gamma_n\to Y$ is a local isometry.
\end{lemma}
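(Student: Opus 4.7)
The plan is to verify the induced-subcomplex condition on the link map at each vertex. Fix $v=(v_1,\dots,v_n)\in P:=\Gamma_1\times\cdots\times\Gamma_n$. Since $\iota$ is already an immersion, the induced link map $L_v\colon \link(v,P)\to \link(\iota(v),Y)$ is injective and simplicial, so all that remains is to show $L_v(\link(v,P))$ is an induced subcomplex of $\link(\iota(v),Y)$. The key structural observation is that $\link(v,P)=\link(v_1,\Gamma_1)*\cdots*\link(v_n,\Gamma_n)$, a join of $n$ discrete vertex sets, so its vertex set carries a canonical $n$-coloring by factor index, and a collection of vertices spans a simplex precisely when all of its colors are distinct.

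Take $w_1,\dots,w_k$ in the image of $L_v$ spanning a simplex in $\link(\iota(v),Y)$, and set $u_j:=L_v^{-1}(w_j)$. It suffices to prove that the $u_j$ carry pairwise distinct colors, since then they span a simplex in $\link(v,P)$ whose image under $L_v$ is exactly $\{w_1,\dots,w_k\}$. Assume for contradiction that $u_1,u_2$ share a color $i$, i.e.\ both lie in $\link(v_i,\Gamma_i)$. For each $j\neq i$ pick a half-edge $u^{(j)}\in\link(v_j,\Gamma_j)$ (possible because each $v_j$ has an incident edge in the factors contributing to the product's dimension). Both $\{u_1\}\cup\{u^{(j)}\}_{j\neq i}$ and $\{u_2\}\cup\{u^{(j)}\}_{j\neq i}$ are rainbow, so each spans an $(n-1)$-simplex of $\link(v,P)$; applying the simplicial map $L_v$ yields two $(n-1)$-simplices $\{w_1\}\cup\{w^{(j)}\}_{j\neq i}$ and $\{w_2\}\cup\{w^{(j)}\}_{j\neq i}$ in $\link(\iota(v),Y)$. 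Injectivity of $L_v$ combined with the color structure forces all $n+1$ vertices $w_1,w_2,\{w^{(j)}\}_{j\neq i}$ to be pairwise distinct, while pairwise adjacency in $\link(\iota(v),Y)$ is immediate from the two simplices together with the hypothesised edge $w_1w_2$. Since $Y$ is NPC by Assumption~\ref{assumption:cube}, the link $\link(\iota(v),Y)$ is a flag complex, so these $n+1$ pairwise-adjacent vertices span an $n$-simplex, which is the corner of an $(n+1)$-cube of $Y$ at $\iota(v)$. This contradicts $\dim Y=n$ and proves the claim.

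The argument is essentially a bookkeeping exercise, and the only genuinely subtle point is the numerical matching: one extra cube dimension is one too many, which is exactly what the hypothesis ``$n$ factors, $\dim Y=n$'' provides. The one loose end is the degenerate case where some $v_j$ is an isolated vertex of $\Gamma_j$: then the color $j$ is vacuously absent from $\link(v,P)$, the claim is trivial in that color direction, and the argument restricts to the remaining factors. In the paper's main use-case, where $\iota$ arises from a standard product subcomplex with leafless factor graphs, every $v_j$ has an incident edge and the argument applies uniformly without caveat.
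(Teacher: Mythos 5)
Your proof is correct and is exactly the argument the paper leaves implicit: the paper's own proof consists of the single sentence that the link of a vertex of $\Gamma_1\times\dots\times\Gamma_n$ is a join of $n$ discrete sets, and your flag-complex plus dimension-count argument is the standard way to turn that observation into the induced-subcomplex condition. The only quibble is your closing remark: if some $v_j$ were isolated, your contradiction would only produce an $m$-simplex with $m<n$ and the argument would genuinely fail rather than be ``trivial in that color direction,'' but this case cannot occur since each $\Gamma_j$ is assumed $1$-dimensional and (by Assumption~\ref{assumption:cube}) connected, so every $v_j$ has an incident half-edge.
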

\begin{proof}
This lemma is easily derived from the fact that the link of any vertex of $\Gamma_1\times\dots\times\Gamma_n$ is the join of $n$ discrete sets.
\end{proof}

A local isometry $\phi:Y\to Y'$ induces the following two maps:
\begin{itemize}
\item an injective homomorphism $\phi_*:\pi_1(Y,y)\rightarrow\pi_1(Y',\phi(y))$ on fundamental groups, and
\item an isometric embedding $\bar\phi:\bar Y\to \bar{Y'}$, called an \emph{elevation} of $\phi$, on universal covers with $\phi\circ p_Y=p_{Y'}\circ\bar\phi$. 
The image of $\bar\phi$ will be called a \emph{copy} of $\bar{Y}$ or a \emph{lift} of $Y$ in $\bar{Y'}$.
\end{itemize}
In particular, by the Švarc--Milnor lemma, $\phi_*(\pi_1(Y,y))$ is a quasi-isometrically embedded subgroup (or an \emph{undistorted subgroup}) of $\pi_1(Y',\phi(y))$.
For more details about those two maps, we refer to \cite[Chapter II.4]{BH}.

\begin{Ex}[Flats in $\CAT(0)$ cube complexes]
By considering a real line $\mathbb{R}$ as a $1$-dimensional cube complex, $\mathbb{R}^n$ can be thought of as an $n$-dimensional $\CAT(0)$ cube complex. 
For a $\CAT(0)$ cube complex $Y$, the image $F$ of an isometric embedding of $\mathbb{R}^n$ into $Y$ is called an $n$-dimensional \emph{flat} in $Y$.
If $Y$ is $n$-dimensional, then $F$ is called a \emph{top-dimensional} flat.

For instance, if there exists a local isometry $\iota:\mathsf{C}_{i_1}\times\dots\times\mathsf{C}_{i_n}\to Y$ where $Y$ is a cube complex and $\mathsf{C}_i$ is a cycle of length $i$, then the image of an elevation of $\iota$ is an $n$-dimensional flat in the universal cover of $Y$.
\end{Ex}

Two edges $e_1$ and $e_2$ of $Y$ are said to be \emph{parallel} if there is an immersion $e\times [0,n]\to Y$ for some positive integer $n$ such that $e_1$ and $e_2$ are the images of $e\times \{0\}$ and $e\times\{n\}$, respectively, where the interval $[0,n]$ is thought of as a cube complex. 
The \emph{hyperplane} $H$ dual to an edge $e$ of $Y$ is the set of all edges parallel to $e$ and the \emph{carrier} of $H$ is the subcomplex of $Y$ induced by $H$.

Haglund--Wise \cite{HW08} defined a special cube complex as an (NPC) cube complex which does not admit four pathological hyperplanes, and showed that a compact cube complex $Y$ is special if and only if there is a local isometry from $Y$ to some Salvetti complex. 
In the study of the large scale geometry of $\CAT(0)$ cube complexes, Huang \cite{Hua(b)} claimed that discarding only two out of four is enough as below.

\begin{definition}[\cite{HW08, Hua(b)} (Weakly) special cube complex]\label{Def:SpecialCubeComplex}
An (NPC) cube complex $Y$ is \emph{weakly special} if there are no \emph{self-osculate} or \emph{self-intersect} hyperplanes, and is \emph{special} if additionally there are no \emph{one-sided} hyperplanes and pairs of \emph{inter-osculate} hyperplanes.
\end{definition}

\begin{theorem}[\cite{Hua(b)}, Theorem~1.3]\label{FlattoFlat}
Let $Y$ and $Y'$ be two compact weakly special cube complexes with their universal covers $\bar{Y}$ and $\bar{Y'}$, respectively.
If there is a $(\lambda,\varepsilon)$-quasi-isometry $\phi:\bar{Y}\rightarrow \bar{Y'}$, then there exists a constant $C=C(\lambda,\varepsilon)>0$ such that for any top-dimensional flat $F\subset \bar{Y}$, there exists a unique top-dimensional flat $F'\subset \bar{Y'}$ such that $d_H(\phi(F),F')<C$, where $d_H$ denotes the Hausdorff distance.
\end{theorem}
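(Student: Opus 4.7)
The plan is to combine a top-dimensional quasi-flats theorem with the combinatorial rigidity that the weakly special hypothesis imposes on top-dimensional flats. Because $\bar{Y}$ is a finite-dimensional $\CAT(0)$ cube complex of some dimension $n$ and $F \cong \mathbb{R}^n$ is a top-dimensional flat, the restriction $\phi|_F$ is a $(\lambda,\varepsilon)$-quasi-isometric embedding of $\mathbb{R}^n$ into the $n$-dimensional $\CAT(0)$ cube complex $\bar{Y'}$. That is, $\phi(F)$ is a top-dimensional quasi-flat in $\bar{Y'}$, and the entire task becomes one of approximating such a quasi-flat by a unique genuine flat.

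First I would invoke a quasi-flats theorem for universal covers of compact $\CAT(0)$ cube complexes in the style of Kleiner--Leeb: any top-dimensional quasi-flat is contained in a uniform Hausdorff neighborhood of a \emph{finite} union of top-dimensional flats, with constants depending only on $\lambda$, $\varepsilon$, and the cocompact quotient. Applied to $\phi(F)$, this yields top-dimensional flats $F'_1,\dots,F'_k \subset \bar{Y'}$ and a constant $C_0 = C_0(\lambda,\varepsilon)$ such that $d_H\!\bigl(\phi(F),\; F'_1 \cup \cdots \cup F'_k\bigr) < C_0$. The problem then splits into two subproblems: collapse $k$ to $1$, and show the resulting flat is unique.

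Next, the weakly special hypothesis should force $k=1$. In a compact weakly special cube complex every hyperplane is two-sided and embedded in its carrier without self-osculation or self-intersection, so the edges of any top-dimensional flat in $\bar{Y'}$ lie in exactly $n$ distinct hyperplane parallelism classes, and those $n$ classes determine the flat uniquely. I would argue that, up to bounded error, $\phi(F)$ crosses only $n$ parallelism classes of hyperplanes in $\bar{Y'}$ (using cocompactness and the quasi-flats approximation to pass from local combinatorics on $F$ to that on $\phi(F)$), and that each $F'_i$ is supported on this same class data; the rigidity then collapses the $F'_i$ to a single flat $F'$. Uniqueness of $F'$ is then automatic, since two distinct top-dimensional flats in a finite-dimensional $\CAT(0)$ cube complex cannot be at finite Hausdorff distance: any point where they disagree opens into an unbounded halfspace in one but not the other.

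The main obstacle I anticipate is the middle step: converting the purely hyperplane-theoretic weakly special hypothesis into the combinatorial statement that a top-dimensional quasi-flat cannot genuinely spread across several distinct top-dimensional flats. This is precisely where weakly special (rather than merely $\NPC$) is essential, and I expect it to require a careful pigeonhole analysis of which hyperplane parallelism classes appear within a bounded neighborhood of long geodesic segments inside $\phi(F)$, together with an extraction of $n$ ``stable'' directions using cocompactness of $\pi_1(Y') \curvearrowright \bar{Y'}$. Everything else, including the final uniqueness, should follow formally once this rigidity is in place.
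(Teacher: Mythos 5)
First, note that the paper does not prove this statement at all: it is imported verbatim from Huang \cite{Hua(b)}, so there is no in-paper argument to compare against, and your proposal must stand on its own. It does not, because its first and most load-bearing step misquotes the key external input. The top-dimensional quasiflat theorem for $\CAT(0)$ cube complexes (Huang's own result, in the Kleiner--Leeb spirit) says that a top-dimensional quasiflat lies within uniform Hausdorff distance of a finite union of top-dimensional \emph{orthants}, not of top-dimensional flats. The distinction is essential: already in a product of two trees (the universal cover of a compact special square complex), a single quadrant $r_1\times r_2$ is a convex, hence isometrically embedded, bi-Lipschitz copy of $\mathbb{R}^2$ that is not within finite Hausdorff distance of any finite union of $2$-flats. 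So the statement you invoke to produce $F'_1,\dots,F'_k$ is false as written, and the genuine content of Huang's theorem is precisely the step you assume away: showing that when the quasiflat is the image of an \emph{actual} flat under a \emph{global} quasi-isometry between weakly special complexes, the approximating orthants must pair up along codimension-one faces and close up into a single flat. That is where weak specialness and cocompactness really enter, and your outline offers no mechanism to carry it out once the pieces are orthants rather than flats.

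The supporting combinatorial claims are also not reliable. A top-dimensional flat in $\bar{Y'}$ is dual to infinitely many hyperplanes, so ``the edges of any top-dimensional flat lie in exactly $n$ distinct hyperplane parallelism classes'' is not true in the usual sense of parallelism (edges dual to a common hyperplane), and in any coarser sense the claim that ``those $n$ classes determine the flat uniquely'' fails: distinct flats in a product of trees can share almost all of their dual hyperplanes. The one part that does work is the final uniqueness: two distinct top-dimensional flats at finite Hausdorff distance would bound a nondegenerate flat strip $F\times[0,a]$ by the flat strip theorem, which cannot embed in an $n$-dimensional cube complex; but that was never the hard part of the theorem.
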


The next lemma illustrates a main difference between two-dimensional NPC (or special) cube complexes and higher-dimensional ones; it may not hold in higher-dimensional ones.

\begin{lemma}\label{lem:SpecialSubcomplexes}
Let $Y$ be a square complex with its subcomplex $Y_0$.
If $Y$ is NPC, special, or weakly special, then $Y_0$ is NPC, special, or weakly special, respectively.
\end{lemma}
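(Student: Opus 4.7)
The plan is to verify the three conditions for $Y_0$ directly, exploiting the 2-dimensionality of $Y$ throughout.

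For the NPC case, I would observe that $\Lk(v, Y_0)$ is a subcomplex of $\Lk(v, Y)$ at every vertex $v \in Y_0$. In a 2-dimensional NPC cube complex, each link is a 1-dimensional simplicial complex with no 3-clique---i.e., a simplicial triangle-free graph, which is precisely the flag condition in dimension 1 (there are no 2-simplices available to fill in a triangle). Both simplicity (no loops or multi-edges) and triangle-freeness pass to arbitrary subgraphs, so $\Lk(v, Y_0)$ is also flag and $Y_0$ is NPC. I would remark that this step genuinely uses dimension 2: in higher dimensions, the boundary of an $n$-cube is a subcomplex whose vertex links are unfilled $(n-1)$-simplex boundaries, violating flagness. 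This is the contrast explicitly highlighted just before the lemma.

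For the weakly special and special cases, the key tool is that parallelism of edges is monotone under subcomplex inclusion: any immersion $e \times [0,n] \to Y_0$ witnessing parallelism in $Y_0$ composes with $Y_0 \hookrightarrow Y$ to witness parallelism in $Y$. Hence every hyperplane $H_0$ of $Y_0$ is contained in a unique hyperplane $H$ of $Y$, and moreover distinct hyperplanes of $Y_0$ extend to distinct hyperplanes of $Y$ (otherwise a $Y_0$-crossing square of two such hyperplanes would have all four edges in a common $Y$-hyperplane, producing a self-intersection in $Y$ and contradicting weak specialness). Consequently, each potential pathology for $Y_0$---self-intersection witnessed by a square, self-osculation witnessed by two distinct edges of a hyperplane sharing an endpoint, a one-sided orientation cycle, or inter-osculation witnessed by a crossing square together with an osculating pair---is a local configuration of vertices, edges, and squares already present in $Y$, and a short case-by-case check transfers each to a pathology of the same type in $Y$, contradicting (weak) specialness of $Y$.

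The step I would be most careful about is the inter-osculation case: its witness requires both the existence of a crossing square and two edges meeting at a vertex, \emph{and} the absence of a resolving square at that vertex. I would need to verify that this absence persists when passing from $Y_0$ to $Y$, which is where the 2-dimensionality of squares and the distinctness of the extended hyperplanes $H_1 \neq H_2$ (established above) play their essential role. The remaining conditions descend cleanly because their witnesses involve only objects already contained in $Y_0$, so no new cells of $Y$ outside $Y_0$ can obstruct the transfer.
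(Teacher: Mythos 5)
Your treatment of the NPC clause is exactly the paper's argument (links in a square complex are simple triangle-free graphs, and both properties pass to subgraphs), and your handling of self-intersection, self-osculation and one-sidedness is correct: each witness in $Y_0$ lives in $Y$, and the only subtlety --- that two self-osculating edges of a $Y_0$-hyperplane might span a square of $Y\setminus Y_0$ --- merely converts the self-osculation into a self-intersection of the ambient hyperplane, so weak specialness still descends. Up to this point you have actually written out more than the paper does (its proof of the hyperplane clauses is a single sentence asserting that the conditions are inherited).

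The step you flag as the one to ``be most careful about'' is, however, a genuine gap, and it cannot be closed by the means you suggest. The problem is precisely that the absence of a resolving square does \emph{not} persist from $Y_0$ to $Y$: two hyperplanes $K_1,K_2$ of $Y_0$ may cross at a square of $Y_0$ and osculate at a vertex $v$ via edges $e_1,e_2$, while $Y$ contains a square spanned by $e_1,e_2$ that resolves the osculation of the ambient hyperplanes $H_1,H_2$. Distinctness of $H_1$ and $H_2$ does not help, since distinct hyperplanes are allowed to cross twice in a non-simply-connected NPC complex. Concretely, take $Y_0$ to be a square $s_1$ with two strips of two squares each attached along adjacent edges of $s_1$, arranged so that the far ends $e_1,e_2$ of the two strips meet at a common vertex $v$; then $K_1\ni e_1$ and $K_2\ni e_2$ cross at $s_1$ and osculate at $v$, so $Y_0$ is not special, yet adding one square along $e_1,e_2$ at $v$ yields an NPC square complex $Y$ in which every link is a simple path, no hyperplane self-intersects, self-osculates, or is one-sided, and $H_1,H_2$ cross without osculating --- i.e.\ $Y$ is special. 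So the inter-osculation clause does not transfer, and your proposal (like the paper's own one-line justification) does not establish the ``special'' part of the statement. The NPC and weakly special clauses, which are what the downstream arguments chiefly rely on, are fully proved by your argument.
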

\begin{proof}
Suppose that $Y$ is NPC. For any vertex $y$ in $Y_0$, since $\Lk(y,Y)$ is a triangle-free graph without multi-edges, $\Lk(y,Y_0)$ is also a triangle-free graph without multi-edges, and thus $Y_0$ is NPC.

Suppose that $Y$ is special (weakly special, resp.). Since any hyperplane in $Y_0$ is a subset of some hyperplane in $Y$, the hyperplanes in $Y_0$ must satisfy the conditions for $Y_0$ to be special (weakly special, resp.). Combining with the previous paragraph, we conclude that $Y_0$ is special (weakly special, resp.). 
\end{proof}

\subsubsection{Standard product subcomplexes}\label{Sec:SPS}
When the dimension of a weakly special cube complex is two, we can easily define subcomplexes, admitting product structures, which can be used in the study of the large scale geometry of the universal cover of the complex. 
This perspective is written in \cite[Section 2.2]{Oh22} and here we revisit the definition and some properties of such subcomplexes with further explanations. Until the end of this subsection, we fix $Y$ be a compact weakly special square complex.

\begin{definition}[Product subcomplex]\label{Def:ProductSubcomplex}
Let $X$ be either $Y$ or $\bar Y$.
For two $1$-dimensional graphs ${\Gamma}_1$ and $\Gamma_2$, a local isometry $\iota:{\Gamma}_1 \times {\Gamma}_2\to X$ is a \emph{product structure} if $\iota(\Gamma_1\times\{v_2\})$ and $\iota(\{v_1\}\times\Gamma_2)$ are isometric to $\Gamma_1$ and $\Gamma_2$, respectively, for some vertices $v_i\in\Gamma_i$.
The image $K$ of $\iota$ is a \emph{product subcomplex} of $X$ and $\Gamma_1\times\Gamma_2$ is said to be the \emph{domain} of $K$.
\end{definition}

\begin{definition}\label{Def:Pullback and Pushforward}
Given a local isometry $\iota:\Delta_1\times\Delta_2\to Y$ for two finite graphs $\Delta_i$, a \emph{pull-back} $p^*\iota:\Gamma_1\times\Gamma_2\to \bar Y$ of $\iota$ is a local isometry such that
\[
\Gamma_1\cong \bar{\iota(\Delta_1\times\{v_2\})}\quad\text{ and }\quad
\Gamma_2\cong \bar{\iota(\{v_1\}\times\Delta_2)}
\]
for some vertices $v_i\in \Delta_i$ and the image of $p_Y\circ p^*\iota$ is equal to the image of $\iota$. The image of $p^*\iota$ is said to be a \emph{$p$-lift} of the image of $\iota$.

Given a (local) isometry $\bar\iota:\Gamma_1\times\Gamma_2\to \bar Y$ for two (possibly infinite) graphs $\Gamma_i$, a \emph{push-forward} $p_*\bar\iota:\Delta_1\times\Delta_2\to Y$ of $\bar\iota$ is a local isometry such that 
\[ \Delta_1\cong p_Y(\bar\iota(\Gamma_1\times\{\bar v_2\}))\quad\text{ and }\quad\Delta_2\cong p_Y(\bar\iota(\{\bar v_1\}\times\Gamma_2))\]
for some vertices $\bar v_i\in \Gamma_i$ and the image of $p_Y\circ \bar{\iota}$ is equal to the image of $p_*\bar\iota$.
\end{definition}

Note that the `$p$' in the term $p$-lift stands for `product' not the covering map and any $p$-lift is contained in a lift.

The following lemma implies that in the world of compact weakly special square complexes, push-forwards and pull-backs always exist and a pull-back of a local isometry gives us a largest product subcomplex of $\bar Y$ whose image under $p_Y$ is the image of the local isometry.

\begin{lemma}[\cite{Oh22}, Lemmas~2.8 and~2.9]\label{ProjofPS}
The following holds:
\begin{enumerate}
\item\label{push-forward} 
If a subcomplex $\bar{K}\subset\bar Y$ is the image of a local isometry $\bar\iota:\Gamma_1\times \Gamma_2\to\bar{Y}$ for some (possibly infinite) graphs $\Gamma_i$, then $p_Y(\bar K)$ admits a product structure which is a push-forward of $\bar\iota$.
\item\label{pull-back} If a subcomplex $K\subset Y$ is the image of a local isometry $\iota:\Delta_1\times\Delta_2\to Y$ for some finite graphs $\Delta_i$, then there exists a product subcomplex $\bar K\subset \bar Y$ with a product structure $\bar{\iota}':\Gamma'_1\times\Gamma'_2\to \bar Y$, which is a pull-back of $\iota$, such that $K$ admits a product structure which is a push-forward of $\bar{\iota}'$. 
Moreover, $\bar K$ is a largest product subcomplex whose image under $p_Y$ is $K$.
\item Definitions~\ref{Def:ProductSubcomplex} and~\ref{Def:Pullback and Pushforward} do not depend on the choice of $v_i\in\Gamma_i$'s or $\bar v_i\in\Gamma_i$'s.
\end{enumerate}
\end{lemma}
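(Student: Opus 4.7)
The approach exploits two foundational facts: any local isometry between NPC cube complexes is $\pi_1$-injective and lifts to an isometric embedding on universal covers (Cartan--Hadamard plus standard covering theory), and the universal cover of a product of two graphs is canonically the product of their universal covers. The weakly special hypothesis then ensures that when we push down from $\bar Y$ to $Y$ the identifications respect the factor decomposition.

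For part (2), given a product structure $\iota:\Delta_1\times\Delta_2\to Y$ with $\Delta_i$ finite, I would elevate $\iota$ to an isometric embedding $\bar\iota':\widetilde{\Delta_1\times\Delta_2}\to\bar Y$. Since the universal cover splits as $\widetilde{\Delta_1}\times\widetilde{\Delta_2}$, this yields a product structure in $\bar Y$ with factors $\Gamma'_i:=\widetilde{\Delta_i}$. Observing that $\iota|_{\Delta_i\times\{v_j\}}:\Delta_i\to\iota(\Delta_i\times\{v_j\})$ is a covering of graphs with the same universal cover $\widetilde{\Delta_i}$, we conclude that $\bar\iota'$ is a pull-back of $\iota$. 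Maximality then follows: for any product subcomplex $\bar L\subset\bar Y$ with $p_Y(\bar L)=K$, applying part (1) to $\bar L$ yields a product structure on $K$, and uniqueness of universal covers forces the domain of $\bar L$ to be covered by that of $\bar K$, hence $\bar L\subset\bar K$.

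For part (1), set $\Delta_i:=p_Y(\bar\iota(\Gamma_i\times\{\bar v_j\}))$ and let $G\le\pi_1(Y)$ be the stabilizer of $\bar K$. The restriction $p_Y|_{\bar K}:\bar K\to p_Y(\bar K)$ is a regular covering with deck group $G$, so if $G$ acts factor-wise on $\bar K=\Gamma_1\times\Gamma_2$ as $G=G_1\times G_2$, then the quotient is $(\Gamma_1/G_1)\times(\Gamma_2/G_2)\cong\Delta_1\times\Delta_2$, and the induced map to $Y$ is the desired push-forward $p_*\bar\iota$. The principal obstacle is precisely this factor-wise decomposition: a priori a deck transformation could swap the two factors (if $\Gamma_1\cong\Gamma_2$) or act diagonally. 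This is where the weakly special hypothesis enters: the edges of $\bar K$ split into horizontal and vertical parallelism classes, each extending to a distinct family of hyperplanes of $\bar Y$; since no hyperplane self-intersects or self-osculates, any deck transformation preserving $\bar K$ must preserve each class separately, yielding the required factor decomposition of $G$. Part (3) is then routine: since $\Gamma_2$ is connected, horizontal translation identifies $\Gamma_1\times\{v_2\}$ with $\Gamma_1\times\{v_2'\}$ canonically within the product, so $\iota(\Gamma_1\times\{v_2\})$ is isometric to $\Gamma_1$ independently of the basepoint.
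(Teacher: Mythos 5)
The paper does not actually prove this lemma---it is imported verbatim from \cite{Oh22}---so I am judging your argument on its own terms. Both halves of your construction break at a concrete step, and the same small example witnesses both failures.

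For part (2), you identify the pull-back with the elevation $\bar\iota'\colon\bar{\Delta_1}\times\bar{\Delta_2}\to\bar Y$, justified by the claim that $\iota|_{\Delta_1\times\{v_2\}}$ is a covering onto its image. But a local isometry only restricts to an \emph{immersion} on each slice, and a surjective immersion of graphs need not induce an isomorphism of universal covers, so $\bar{\Delta_1}$ and $\bar{\iota(\Delta_1\times\{v_2\})}$---the factor the definition of pull-back actually requires---can differ. Concretely, let $Y=\mathsf{C}_3\times\mathsf{C}_3$ (a compact special square complex) and let $\iota\colon\mathsf{P}_3\times\mathsf{C}_3\to Y$ wrap the length-$3$ path once around the first circle; one checks on links that this is a local isometry with image all of $Y$. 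Its elevation has image a strip $[0,3]\times\mathbb{R}\subsetneq\mathbb{R}^2=\bar Y$, whereas a pull-back must have first factor $\bar{\mathsf{C}_3}=\mathbb{R}$, and the largest product subcomplex over $K=Y$ is $\mathbb{R}^2$ itself. So the elevation is neither a pull-back nor maximal; the correct construction must take universal covers of the \emph{images} of the slices, which presupposes that $K$ already carries a product structure with embedded slices (i.e.\ part (1)).

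For part (1), the assertion that $p_Y|_{\bar K}$ is a regular covering onto $p_Y(\bar K)$ with deck group $G=\operatorname{Stab}_{\pi_1(Y)}(\bar K)$ fails: elements of $\pi_1(Y)\setminus G$ can translate $\bar K$ onto an overlapping copy and thereby identify points of $\bar K$ without stabilizing it. The same strip $\bar K=[0,3]\times\mathbb{R}\subset\mathbb{R}^2$ over $\mathsf{C}_3\times\mathsf{C}_3$ shows this: here $G=\{0\}\times 3\mathbb{Z}$, so $\bar K/G\cong[0,3]\times\mathsf{C}_3$, while $p_Y(\bar K)=\mathsf{C}_3\times\mathsf{C}_3$ and the two boundary lines of the strip are glued by $(3,0)\notin G$. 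Even where the quotient picture does apply, your hyperplane argument only shows that $G$ preserves the two parallelism classes, i.e.\ $G\le\operatorname{Aut}(\Gamma_1)\times\operatorname{Aut}(\Gamma_2)$; that does not make $G$ an internal direct product (it could be the graph of an isomorphism between subgroups of the two factors), so $\bar K/G$ need not be a product of graphs. Finally, the push-forward is required to have factors $p_Y(\bar\iota(\Gamma_i\times\{\bar v_j\}))$---in the strip example the full circle $\mathsf{C}_3$, not the interval $[0,3]$ your quotient would produce. What the weakly special hypothesis genuinely buys, and you do isolate this correctly, is that the vertical and horizontal hyperplane families of $\bar K$ project to disjoint families of hyperplanes of $Y$ (a hyperplane meeting both would self-intersect); the push-forward must then be assembled directly from the projected slices and this dichotomy, not as a quotient of $\bar K$ by its stabilizer. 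Part (3) as you state it is fine but inherits nothing from the flawed constructions.
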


Using the fact that for a graph without leaves both its image under any local isometry and its universal cover have no leaves, we define `standard' product subcomplexes as follows.

\begin{definition}[Standard and maximal product subcomplexes]\label{SPS}
We define the following:
\begin{enumerate}
\item A \emph{standard product subcomplex} $K\subset Y$ is a product subcomplex with a product structure which is a push-forward $p_*\bar\iota$ of a product structure $\bar\iota:\Gamma_1\times \Gamma_2\to\bar{Y}$, where each $\Gamma_i$ is an infinite tree without leaves; such $p_*\bar\iota$ is called a \emph{standard product structure of $K$}.
\item A \emph{standard product subcomplex} $\bar K\subset Y$ is a product subcomplex with a product structure which is a pull-back $p^*\iota$ of a product structure $\iota:{\Delta}_1\times {\Delta}_2\to Y$, where each $\Delta_i$ is a graph without leaves; such $p^*\iota$ is called a \emph{standard product structure of $\bar K$}.
\item A \emph{maximal product subcomplex} of $Y$ or $\bar Y$ is a maximal subcomplex up to inclusion among standard product subcomplexes of $Y$ or $\bar Y$, respectively.
\end{enumerate}
\end{definition}

By Lemma~\ref{ProjofPS}, a standard product subcomplex of $Y$ can be defined as the image of a local isometry $\iota:{\Delta}_1\times {\Delta}_2\to Y$ for two graphs $\Delta_i$'s without leaves.

\begin{remark}\label{Remark:Standard}
The definition of standard product subcomplexes of $Y$ or $\bar Y$ is a generalization of the definition for $2$-dimensional Salvetti complexes \cite{BKS(a)} or the ordered discrete $2$-configuration spaces of a graph \cite{Fer12}.
\end{remark}

For a (standard) product subcomplex $K\subset Y$, let $\iota:\Delta_1\times\Delta_2\to Y$ be a product structure of $K$ and let $\bar K\subset \bar Y$ be a $p$-lift of $K$ which admits a product structure $\bar\iota:\bar{\Delta_1}\times\bar{\Delta_2}\to \bar Y$, an elevation of $\iota$.
Then the domains of $K$ and $\bar K$ are well-defined up to permuting factors since $\iota$ and $\bar\iota$ have a universal property in the following sense. Let $\iota':\Delta_1'\times\Delta_2'\to Y$ and $\bar\iota':\Gamma_1'\times\Gamma_2'\to \bar Y$ be local isometries whose images are $K$ and $\bar K$, respectively.
Then there exist the products 
\[
q_1\times q_2:\Delta_1'\times\Delta_2'\to \Delta_{1}\times \Delta_{2}\quad\text{ and }\quad
f_1\times f_2:\Gamma'_1\times\Gamma'_2\to \bar{\Delta_1}\times\bar{\Delta_2},
\]
of immersions $q_i:\Delta_i'\to \Delta_{i}$ and $f_i:\Gamma'_i\to\bar{\Delta_i}$ (after permuting factors if necessary)
such that $\iota' = \iota\circ (q_1\times q_2)$ and $\bar\iota'=\bar\iota\circ(f_1\times f_2)$, i.e., the following diagram commutes:
\[
\begin{tikzcd}[row sep=0.7pc]
\bar{\Delta_1}\times\bar{\Delta_2} \arrow[rr, "\bar\iota"]  \arrow[ddd, "p\times p"'] & & \bar Y \arrow[ddd, "p"]\\
& \Gamma'_1\times\Gamma'_2 \arrow[lu, dashed, "\exists f_1\times f_2"', sloped] \arrow[ru, "\bar\iota'"]\\
& \Delta'_1\times\Delta'_2 \arrow[rd, "\iota'"] \arrow[dl, dashed, "\exists q_1\times q_2", sloped]\\
\Delta_1\times\Delta_2 \arrow[rr, "\iota"] & & Y
\end{tikzcd}
\]

By using the product symbol with circle above which implies ``up to permuting factors'', we may denote a product subcomplex $K\subset Y$ and its $p$-lift $\bar K$ by $K=\Delta_1\itimes\Delta_2$ and $\bar K=\bar{\Delta_1}\itimes\bar{\Delta_2}$, such that the following lemma holds.

\begin{lemma}[\cite{Oh22}, Lemma~2.12]\label{InclusionBetweenSPSes}
Let $K$ and $K'$ be standard product subcomplexes of $Y$ (or $\bar Y$) such that $K\subset K'$. Then $K$ and $K'$ are denoted by $\Delta_1\itimes\Delta_2$ and $\Delta_1'\itimes\Delta_2'$, respectively, such that $\Delta_i\subset \Delta'_i$ for $i=1,2$.
\end{lemma}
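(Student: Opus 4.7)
The plan is to reduce to the CAT(0) setting of $\bar Y$ and then analyze how the two product structures cohabit via hyperplane combinatorics. Given $K\subset K'$ in $Y$, I first apply Lemma~\ref{ProjofPS}(\ref{pull-back}) to pull $K'$ back to a standard product subcomplex $\bar K'\subset\bar Y$ with product structure $\bar\iota':\bar\Delta_1'\times\bar\Delta_2'\to\bar Y$. A connected component $\bar K$ of $p^{-1}(K)\cap\bar K'$ is a $p$-lift of $K$ sitting inside $\bar K'$, and itself admits a standard product structure $\bar K=\bar\Gamma_1\itimes\bar\Gamma_2$ by Lemma~\ref{ProjofPS}(\ref{push-forward}) applied to a product structure of $K$. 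Once factor-wise inclusion $\bar\Gamma_i\subset\bar\Delta_i'$ is secured in $\bar Y$, pushing forward by $p$ and invoking Lemma~\ref{ProjofPS}(\ref{push-forward}) again transfers it to $Y$. Thus it suffices to work entirely in $\bar Y$.

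Next I would exploit the $2$-dimensionality via hyperplanes. Each hyperplane of $\bar K$ carries an \emph{internal type} ($1$ or $2$) recording which factor $\bar\Gamma_i$ its dual edges belong to, and an \emph{external type} ($1'$ or $2'$) inherited from the unique hyperplane of $\bar K'$ containing it; this external type is well-defined because parallelism of edges in $\bar K$ persists in $\bar K'$. In each of the two products, two hyperplanes cross if and only if their types differ, and crossings in $\bar K$ remain crossings in $\bar K'$. Because $\bar\Gamma_1$ and $\bar\Gamma_2$ are nontrivial leafless trees, every internal type-$1$ hyperplane of $\bar K$ crosses every internal type-$2$ hyperplane. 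With only two external types available, this forces all internal type-$1$ hyperplanes of $\bar K$ to share a common external type and similarly for type $2$; these two common external types must differ. After permuting the factors of $\bar K'$ if needed, internal type $i$ corresponds to the $\bar\Delta_i'$-direction for $i=1,2$.

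Finally I would pass to coordinates. Writing the inclusion $\bar K\hookrightarrow\bar K'$ as $(u,v)\mapsto(\alpha(u,v),\beta(u,v))\in\bar\Delta_1'\times\bar\Delta_2'$ for $(u,v)\in\bar\Gamma_1\times\bar\Gamma_2$, the type-matching implies that moving along a $\bar\Gamma_1$-edge of $\bar K$ traverses a $\bar\Delta_1'$-edge of $\bar K'$ and thus leaves the $\bar\Delta_2'$-coordinate fixed, so $\beta(u,v)$ depends only on $v$; symmetrically $\alpha(u,v)$ depends only on $u$. The inclusion therefore factors as $(u,v)\mapsto(\alpha(u),\beta(v))$, and injectivity of the inclusion forces $\alpha$ and $\beta$ to be injective combinatorial maps, identifying $\bar\Gamma_i$ with a leafless subtree $\bar\Delta_i\subset\bar\Delta_i'$. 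Relabeling yields $\bar K=\bar\Delta_1\itimes\bar\Delta_2$ with $\bar\Delta_i\subset\bar\Delta_i'$, as desired.

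The main obstacle is the type-matching in the second paragraph, namely ruling out a ``twisted'' configuration in which internal type-$1$ hyperplanes of $\bar K$ split between the two external types of $\bar K'$. The argument rests squarely on two-dimensionality (only two external types available) and on the fact that every pair of opposite internal-type hyperplanes in $\bar K$ genuinely crosses, which is exactly where the leaflessness of the factors $\bar\Gamma_i$ is used.
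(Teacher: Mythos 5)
The paper itself does not prove this lemma (it is imported from \cite{Oh22}); the closest thing it records is the universal property in the commutative diagram displayed just above the statement, and your hyperplane argument in $\bar Y$ is essentially a correct re-derivation of that property for an inclusion. Indeed, the heart of your proof — assigning to each hyperplane of $\bar K=\bar\Gamma_1\times\bar\Gamma_2$ an internal and an external type, using that every internal type-$1$ hyperplane crosses every internal type-$2$ hyperplane (which holds in any product of graphs, not just leafless ones) and that crossings persist in $\bar K'$, then splitting the inclusion as $\alpha\times\beta$ and concluding injectivity of each factor — is sound. Leaflessness is not what makes opposite-type hyperplanes cross; it only guarantees that the resulting subgraphs $\Delta_i\subset\Delta_i'$ are again leafless.

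The genuine gap is the reduction from $Y$ to $\bar Y$. You assert that a connected component of $p^{-1}(K)\cap\bar K'$ is a $p$-lift of $K$ sitting inside $\bar K'$, citing Lemma~\ref{ProjofPS}(\ref{push-forward}); but that item goes in the opposite direction (from $\bar Y$ down to $Y$) and does not endow such a component with a product structure, nor show that it surjects onto $K$. What your argument actually requires is that some $p$-lift of $K$ — the image of an elevation of a product structure $\iota$ of $K$, since that is what defines the domain $\Delta_1\itimes\Delta_2$ via push-forward — is entirely contained in a single $p$-lift of $K'$. This is not automatic: the product structures $\iota$ and $\iota'$ need not be embeddings, so $p^{-1}(K')$ is a union of possibly overlapping $p$-lifts, and an elevation of $\iota$ started at a point of $\bar K'$ could a priori exit $\bar K'$ at a vertex where $\iota'$ fails to be injective (the edge to be traversed may lie in the image of the link of a different $\iota'$-preimage than the one carrying $\bar K'$ there). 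Supplying this containment — by a lifting argument for the immersion $\iota'$, or by exploiting the maximality of the pull-back in Lemma~\ref{ProjofPS}(\ref{pull-back}) — is precisely the non-simply-connected content of the lemma, and it is missing; once it is in place, your final push-forward step correctly transfers $\bar\Delta_i\subset\bar\Delta_i'$ down to $\Delta_i\subset\Delta_i'$.
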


\begin{remark}\label{remark:denoting}
There are two remarks on the notation for product subcomplexes. Firstly, since $\iota$ may not be an embedding, $K$ can not be written as $\triangle_1\times\triangle_2$.
Secondly, we admit that it has an ambiguity to denote $\bar K$ by $\bar{\Delta_1}\itimes\bar{\Delta_2}$ as there are infinitely many $p$-lifts of $K$. However, as all $p$-lifts come from product subcomplexes of $Y$, if it is clear which $p$-lift of $K=\Delta_1\itimes\Delta_2$ is considered, we denote the $p$-lift $\bar K$ of $K$ by $\bar{\Delta_1}\itimes\bar{\Delta_2}$ emphasizing that the image of $\bar K$ under the universal covering map is $K$.
\end{remark}

In the next subsection, we define the intersection complex of $\bar{Y}$ based on the following rigidity of maximal product subcomplexes.
 
\begin{theorem}[\cite{Oh22}, Lemma~2.14 and Theorem~3.4]\label{MaxtoMax}
Let $\phi:\bar{Y}\rightarrow \bar{Y'}$ be a $(\lambda,\varepsilon)$-quasi-isometry between the universal covers of two compact weakly special square complexes $Y$ and $Y'$.
Then there are constants $A=A(Y)$, $B=B(Y')$ and $D=D(\lambda,\varepsilon)$ such that for any finite collection $\{\bar M_i\}$ of maximal product subcomplexes of $\bar{Y}$ whose intersection $\bar{W}=\bigcap_i \bar{M}_i$ contains a flat, the following holds:
\begin{enumerate}[label=$(\arabic*)$]
\item\label{Item:UniqK} there exists a unique standard product subcomplex $\bar K\subset \bar Y$ with $d_H(\bar{W}, \bar K)<A$;
\item there exists a unique maximal product subcomplex $\bar M_i'\subset \bar{Y'}$ for each $i$ such that $d_H(\bar{M_i}', \phi(\bar M_i))<D$ and the intersection $\bar W' = \bigcap_i \bar M_i'$ also contains a flat;
\item there exists a unique standard product subcomplex $\bar K'\subset \bar{Y'}$ with $d_H(\bar{W}', \bar K')<B$ such that $d_H(\bar K', \phi(\bar K))<D$. 
\end{enumerate}
\end{theorem}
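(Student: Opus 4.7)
The plan is to establish the three conclusions in turn, using the product structure of each maximal product subcomplex ($\bar M_i = \bar A_i \itimes \bar B_i$ for two leafless trees $\bar A_i, \bar B_i$) together with Huang's flat-to-flat theorem (Theorem~\ref{FlattoFlat}).

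For conclusion~(1), I fix a top-dimensional flat $F \subset \bar W$. Inside each $\bar M_i$ the flat $F$ determines two bi-infinite geodesic lines, one in each factor tree $\bar A_i$ and $\bar B_i$; after possibly swapping factors uniformly, I may arrange that all horizontal axes lie in $\bar A_i$ and all vertical axes in $\bar B_i$. The subtrees $\bar K_1 := \bigcap_i \bar A_i$ and $\bar K_2 := \bigcap_i \bar B_i$ (viewed in $\bar Y$ via their product structures) are then leafless, and the product $\bar K := \bar K_1 \itimes \bar K_2$ lies in every $\bar M_i$, hence inside $\bar W$. Conversely, any vertex of $\bar W$ is the corner of a $2$-cube of $\bar W$ (since $\bar W$ thickens to contain $F$), and such a cube sits inside $\bar K$; the bound $A = A(Y)$ then comes from the uniformly bounded combinatorial type of links of $Y$. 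Uniqueness follows because any standard product subcomplex within Hausdorff distance $A$ of $\bar W$ must contain every horizontal and vertical axis determined by $F$.

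For conclusion~(2), each $\bar M_i$ is covered by its top-dimensional flats; Theorem~\ref{FlattoFlat} assigns to each such flat $F \subset \bar M_i$ a unique flat $F' \subset \bar Y'$ with $d_H(\phi(F), F') < C(\lambda, \varepsilon)$. I glue these $F'$ into a single maximal product subcomplex as follows: two flats in $\bar M_i$ that share a bi-infinite geodesic axis must have $\phi$-images that coarsely share a geodesic, and in a weakly special square complex such a pair of flats is forced into a common standard product subcomplex. Running this argument over all flats of $\bar M_i$ produces one maximal product $\bar M_i' \subset \bar Y'$ with $d_H(\phi(\bar M_i), \bar M_i') < D$; uniqueness of $\bar M_i'$ comes from the fact that a quasi-flat in a weakly special square complex coarsely lies in a unique maximal product subcomplex.

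For conclusion~(3), combining~(2) puts $\bar W' = \bigcap_i \bar M_i'$ at bounded Hausdorff distance from $\phi(\bar W)$, and in particular $\bar W'$ contains a flat coming from $\phi(F)$. Applying~(1) now to $\bar Y'$ produces the unique standard product subcomplex $\bar K' \subset \bar Y'$ with $d_H(\bar W', \bar K') < B(Y')$, and a triangle inequality on Hausdorff distances gives $d_H(\bar K', \phi(\bar K)) < D$ after enlarging $D$ to absorb $A + B$. The main obstacle throughout is conclusion~(2): Theorem~\ref{FlattoFlat} only tracks individual flats, so I must transfer \emph{maximality} of a product subcomplex across $\phi$. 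This requires showing that the relation ``two flats lie in a common maximal product'' is $\phi$-coarsely preserved---a property that hinges on the weakly special hypothesis (self-osculating or self-intersecting hyperplanes would let two flats meet along a line without lying in any common product subcomplex) and encapsulates the substantive combinatorial content behind the cited references \cite[Lemma~2.14 and Theorem~3.4]{Oh22}.
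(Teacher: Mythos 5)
First, a point of orientation: the paper does not prove Theorem~\ref{MaxtoMax} at all --- it is imported verbatim from \cite{Oh22} (Lemma~2.14 and Theorem~3.4) --- so your sketch is really being measured against that source. Your overall strategy (extract the two factor directions of $\bar W$ from a flat $F\subset\bar W$ to build $\bar K$; push flats forward with Theorem~\ref{FlattoFlat}; use the weakly special hypothesis to reassemble the image flats into a product) is the right one and correctly identifies conclusion~(2) as the crux. But three steps are asserted exactly where the substantive work lies.

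(i) In conclusion~(1) you claim that ``any vertex of $\bar W$ is the corner of a $2$-cube of $\bar W$.'' This is false: $\bar W$ is an intersection of convex subcomplexes and can carry one-dimensional hairs protruding from its product core; the whole reason the theorem asserts $d_H(\bar W,\bar K)<A$ rather than $\bar W=\bar K$ is that these hairs must be trimmed, with length bounded in terms of the compact complex $Y$. (ii) In conclusion~(2) your uniqueness argument (``a quasi-flat coarsely lies in a unique maximal product subcomplex'') is wrong as stated: a single flat typically lies in \emph{many} maximal product subcomplexes --- this is precisely what the intersection complex records. What you need instead is that two distinct maximal product subcomplexes of $\bar Y'$ are at infinite Hausdorff distance from one another, so at most one can be $D$-close to $\phi(\bar M_i)$. (iii) The most serious gap is the opening move of your treatment of~(3): from $d_H(\bar M_i',\phi(\bar M_i))<D$ it does not formally follow that $\bigcap_i\bar M_i'$ is at bounded Hausdorff distance from $\phi\bigl(\bigcap_i\bar M_i\bigr)$; intersections are not continuous under Hausdorff approximation. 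One needs the coarse-intersection property for convex subcomplexes of a $\CAT(0)$ cube complex (that $N_r(A)\cap N_r(B)\subseteq N_{r'}(A\cap B)$ for convex $A,B$ with $r'$ depending only on $r$ and the complex), and separately an argument that the genuine flat $F'$ supplied by Theorem~\ref{FlattoFlat} lies \emph{inside} $\bar W'$ rather than merely within bounded distance of each $\bar M_i'$. Both points are true and provable, but they are exactly the content concealed behind the citation and cannot be waved through.
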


\subsection{(Reduced) Intersection complexes}\label{subsection:intersection complexes}
Associated to $Y$ (or $\bar{Y}$) is a (possibly not connected) almost simplicial complex with labels, called the \emph{intersection complex} originally defined by Fernandes \cite{Fer12} and slightly generalized in \cite{Oh22} as follows:

\begin{definition}\label{IC}
Let $X$ be either $Y$ or $\bar Y$. We define the \textit{intersection complex} of $X$ as an almost simplicial complex $\cI(X)$ with labels as follows:
\begin{enumerate}
\item Vertices in $\cI(X)$ correspond to maximal product subcomplexes of $X$;
\item Suppose that a finite collection $\{M_i\}$ of maximal product subcomplexes of $X$ shares standard product subcomplexes $\{K_j\}$ such that each $K_j$ is a maximal one among standard product subcomplexes contained in $\bigcap_{i} M_i$.
Then the vertices in $\cI(X)$ corresponding to $M_i$'s span $(n-1)$-simplices corresponding to $K_j$'s;
\item Assigned to each simplex $\triangle$ in $\cI(X)$ is the \emph{label} of $\triangle$ which is the domain of (the push-forward of) the standard product structure of the standard product subcomplex $K_\triangle\subset X$ corresponding to $\triangle$;
for each pair of simplices $\triangle_2\subsetneq\triangle_1$ in $X$, if $A_i\itimes B_i$ are the labels of $\triangle_i$, then $A_1\subseteq A_{2}$ and $B_1\subseteq B_{2}$. 
\end{enumerate}
The underlying complex is denoted by $|\cI(X)|$.
\end{definition}

Contrast to $|\cI(Y)|$, by Item \ref{Item:UniqK} in Theorem~\ref{MaxtoMax}, $|\cI(\bar Y)|$ is always a simplicial complex. 

\begin{definition}[(Semi-)morphism]\label{Def:Morphism}
Let $X_i$ be either a compact weakly special square complex or its universal cover for $i=1,2$.
A combinatorial map $\Phi:\cI(X_1)\to\cI(X_2)$ is said to be a \emph{semi-morphism} if the following holds:
For each pair of simplices $\triangle_2\subsetneq\triangle_1$ in $X_1$, suppose that $A_i\itimes B_i$ are the labels of $\triangle_i$ such that $A_1\subseteq A_{2}$ and $B_1\subseteq B_{2}$. 
Then $\Phi(\triangle_i)$ are labelled by $A'_i\itimes B'_i$ with $A'_1\subseteq A'_{2}$ and $B'_1\subseteq B'_{2}$ such that 
\[\text{$A_1\subsetneq A_2$ if and only if $A'_1\subsetneq A'_2$, and $B_1\subsetneq B_2$ if and only if $B'_1\subsetneq B'_2$.}\]
Additionally, if the fundamental groups of the labels of $\triangle$ and $\Phi(\triangle)$ are quasi-isometric for each simplex $\triangle\subset \cI(X_1)$, then $\Phi$ is said to be a \emph{morphism}.

If both $\Phi$ and its inverse map $\Phi^{-1}$ are morphisms (semi-morphisms, resp.), then $\Phi$ is said to be an \emph{isomorphism} (\emph{semi-isomorphism}, resp.).
\end{definition}

As a partial converse of the construction of an intersection complex, we define the set map 
\begin{equation}\label{Eq:g_I}
\mathfrak{g}_\mathcal{I}:\cI(X)\rightarrow 2^X
\end{equation}
as follows:
The interior of each simplex is mapped to the corresponding standard product subcomplex, and for a (closed) subcomplex $A$ of $\cI(X)$, the image of $A$ is the union of the images of (open) simplices in $A$.

\begin{convention}
For a compact special square complex $Y$, $\cI(Y)$ will be called the \emph{reduced intersection complex} of $Y$ and denoted by $\cRI(Y)$. 
\end{convention}

There are two immediate statements which encode facts spread in the previous subsection via the two definitions above.
The first one is a consequence of Theorem~\ref{MaxtoMax} and the other one records the fact that the deck transformation action of $\pi_1(Y)$ on $\bar{Y}$ preserves $p$-lifts. 

\begin{theorem}[\cite{Oh22}, Theorem~C]\label{theorem:IsobetInt}
Let $\phi:\bar{Y}\rightarrow \bar{Y'}$ be given as in Theorem~\ref{MaxtoMax}.
Then there is an isomorphism $\Phi:\cI(\bar{Y}) \rightarrow \cI(\bar{Y'})$ induced from $\phi$.
\end{theorem}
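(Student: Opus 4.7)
The plan is to construct $\Phi$ directly by feeding $\phi$ through the uniqueness statements in Theorem~\ref{MaxtoMax}, and then to verify the simplicial, combinatorial, and labeling conditions piece by piece.

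First I would define $\Phi$ on vertices. A vertex of $\cI(\bar Y)$ is a maximal product subcomplex $\bar M\subset\bar Y$, and Theorem~\ref{MaxtoMax}(2) produces a unique maximal product subcomplex $\bar M'\subset\bar{Y'}$ with $d_H(\bar M',\phi(\bar M))<D$. Set $\Phi(\bar M):=\bar M'$. Next I would define $\Phi$ on simplices. A $k$-simplex $\triangle$ of $\cI(\bar Y)$ corresponds to a collection $\{\bar M_0,\dots,\bar M_k\}$ of maximal product subcomplexes together with a standard product subcomplex $\bar K_\triangle$ which is maximal inside $\bigcap_i\bar M_i$. Theorem~\ref{MaxtoMax}(3) applied to this collection yields a unique standard product subcomplex $\bar K'\subset\bar{Y'}$ with $d_H(\bar K',\phi(\bar K_\triangle))<D$, and Theorem~\ref{MaxtoMax}(1) guarantees that the maximal standard product subcomplexes of $\bigcap_i\Phi(\bar M_i)$ are in bijection with those of $\bigcap_i\bar M_i$ via this correspondence. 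Declare $\Phi(\triangle)$ to be the simplex on $\{\Phi(\bar M_0),\dots,\Phi(\bar M_k)\}$ labeled by $\bar K'$.

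Then I would check that $\Phi$ is a combinatorial map with the required face-inclusion behavior: if $\triangle\subset\triangle'$, then the underlying collection of maximal product subcomplexes for $\triangle'$ is contained in that for $\triangle$, and the uniqueness in Theorem~\ref{MaxtoMax}(1),(3) forces $\Phi(\triangle)\subset\Phi(\triangle')$; moreover, by Lemma~\ref{InclusionBetweenSPSes}, the labels on each side are comparable factor-wisely, and the correspondence $\bar K\mapsto\bar K'$ preserves these factor-wise inclusions, because the two factors of a standard product subcomplex are detected by maximal flats and Theorem~\ref{FlattoFlat} produces, via $\phi$, a bijection between top-dimensional flats that respects the directions spanning each factor. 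Bijectivity is then immediate: if $\psi:\bar{Y'}\to\bar Y$ is a quasi-inverse of $\phi$, running the same construction yields $\Psi:\cI(\bar{Y'})\to\cI(\bar Y)$, and the uniqueness clauses of Theorem~\ref{MaxtoMax}, together with the fact that $\phi\circ\psi$ is at bounded distance from the identity, force $\Psi\circ\Phi=\mathrm{id}$ and $\Phi\circ\Psi=\mathrm{id}$.

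The last step is to upgrade $\Phi$ from a combinatorial bijection to a morphism in the sense of Definition~\ref{Def:Morphism}. For each simplex $\triangle$ with label $A\itimes B$, the standard product subcomplex $\bar K_\triangle\subset\bar Y$ is quasi-isometric, via the restriction of $\phi$ (which is a quasi-isometry up to the bounded Hausdorff error), to the standard product subcomplex $\bar K'\subset\bar{Y'}$ labeling $\Phi(\triangle)$. Since both $\bar K_\triangle$ and $\bar K'$ split as products of two trees, the coarse factor-detection by flats gives that the corresponding tree factors are pairwise quasi-isometric, so in particular the fundamental groups of the labels $A\itimes B$ and $A'\itimes B'$ (each a product of free groups) are quasi-isometric, verifying the morphism condition. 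The same argument applied to $\Psi$ shows $\Phi^{-1}$ is also a morphism, so $\Phi$ is an isomorphism.

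The main obstacle I expect is the factor-wise part: verifying that the correspondence $\bar K_\triangle\mapsto\bar K'$ not only sends a product subcomplex to a product subcomplex but does so in a way that respects which factor is which, so that the inclusion relation in Definition~\ref{IC}(3) is preserved both ways. This has to be extracted from Theorem~\ref{FlattoFlat}, since the two tree factors of a standard product subcomplex are distinguished exactly by the parallelism classes of lines inside top-dimensional flats; once this is in place, everything else follows from threading uniqueness through Theorem~\ref{MaxtoMax}.
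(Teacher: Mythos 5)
This statement is imported verbatim from \cite{Oh22} (Theorem~C) and the paper gives no proof of it here, remarking only that it ``is a consequence of Theorem~\ref{MaxtoMax}''; your proposal fleshes out exactly that derivation, threading the uniqueness clauses of Theorem~\ref{MaxtoMax} through the definition of $\cI(-)$ and using flat-rigidity to handle the factor-wise label conditions. The outline is sound and matches the route the paper indicates, so there is nothing to correct against the source text.
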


\begin{theorem}[\cite{Oh22}, Theorem~3.15]\label{Thm:TPBCM}
The $\pi_1(Y)$-action on $\bar{Y}$ induces a $\pi_1(Y)$-action on $\cI(\bar{Y})$ by isomorphisms such that the quotient is $\cRI(Y)$, i.e., there is a $\pi_1(Y)$-equivariant morphism $\rho:\cI(\bar{Y})\rightarrow \cRI(Y)$, called the \emph{canonical quotient map}, such that $\mathfrak{g}_\mathcal{RI}\circ \rho = p \circ \mathfrak{g}_\mathcal{I}$, i.e., the following diagram commutes:
\[
\begin{tikzcd}[column sep=4pc, row sep=2pc]
\cI(\bar{Y})\arrow[r,"\mathfrak{g}_{\cI}"]\arrow[d,"\rho"'] & 2^{\bar{Y}}\arrow[d,"p"] \\ \cI(Y) \arrow[r,"\mathfrak{g}_{\cRI}"] & 2^Y
\end{tikzcd}
\]
\end{theorem}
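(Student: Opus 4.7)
The plan is to lift the deck transformation action from $\bar Y$ to $\cI(\bar Y)$, show that the covering map $p$ induces the quotient morphism $\rho$, and verify the commutative square by unwinding definitions.

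First, I would observe that each $g \in \pi_1(Y)$ acts on $\bar Y$ as a cellular isometry and therefore sends any standard product subcomplex $\bar K$ with standard product structure $\bar\iota$ to the standard product subcomplex $g \cdot \bar K$ with standard product structure $g \circ \bar\iota$. This preserves maximality, containments, and intersections, giving a simplicial $\pi_1(Y)$-action on $|\cI(\bar Y)|$. Since $p \circ g = p$, the push-forward $p_*(g \circ \bar\iota)$ equals $p_*\bar\iota$, so the label assigned to $g \cdot \triangle$ in Definition~\ref{IC} coincides with that of $\triangle$ (using Lemma~\ref{ProjofPS}(3) for independence of basepoints), and the action is by isomorphisms of intersection complexes in the sense of Definition~\ref{Def:Morphism}.

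Next I would define $\rho$ on a vertex $\bar M$ by $\rho(\bar M) = p(\bar M)$ and extend to simplices by sending the simplex associated to a shared standard product subcomplex $\bar K_\triangle$ to the simplex of $\cRI(Y)$ associated to $p(\bar K_\triangle)$. Well-definedness at the vertex level requires $p(\bar M)$ to be maximal in $Y$: by Lemma~\ref{ProjofPS}(1) it is a standard product subcomplex, and any strict enlargement $p(\bar M) \subsetneq K$ would, via the universal-pullback statement Lemma~\ref{ProjofPS}(2), yield a $p$-lift $\bar K$ strictly containing $\bar M$, contradicting maximality of $\bar M$. The same argument upgrades containments and shared maximal standard product subcomplexes from $\bar Y$ to $Y$. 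Because the push-forward preserves labels on the nose---the domain of the push-forward of the standard product structure of $\bar K_\triangle$ is literally the domain of the standard product structure of $p(\bar K_\triangle)$---the map $\rho$ is a morphism and not merely a semi-morphism.

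Third, equivariance of $\rho$ follows from $p \circ g = p$ (with trivial $\pi_1(Y)$-action on $\cRI(Y)$). For the quotient property, if $p(\bar M_1) = p(\bar M_2)$, then $\bar M_1$ and $\bar M_2$ are two $p$-lifts of the same local isometry $\iota : \Delta_1 \times \Delta_2 \to Y$, and $\pi_1(Y)$ permutes the elevations of $\iota$ transitively by post-composition with deck transformations, so $\bar M_2 = g \cdot \bar M_1$ for some $g$. Thus the fibers of $\rho$ are exactly the $\pi_1(Y)$-orbits, and $\rho$ factors as an isomorphism $\cI(\bar Y)/\pi_1(Y) \to \cRI(Y)$. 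The commutative square $\mathfrak{g}_\cRI \circ \rho = p \circ \mathfrak{g}_\cI$ is then automatic: both sides send a simplex $\triangle$ to the standard product subcomplex $p(\bar K_\triangle) \subset Y$.

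The step I expect to be the main obstacle is the transitivity of $\pi_1(Y)$ on $p$-lifts of a fixed standard product subcomplex of $Y$. When the product structure $\iota$ is not injective, the preimage $p^{-1}(p(\bar M))$ need not decompose into $p$-lifts as connected components, so transitivity has to be established at the level of elevations of $\iota$ rather than at the level of connected components of preimages; this is precisely what the pull-back formalism of Lemma~\ref{ProjofPS} is designed to handle, and I would invoke it explicitly in this step.
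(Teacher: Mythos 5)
This statement is imported verbatim from \cite{Oh22} (Theorem~3.15); the present paper gives no proof of it, so there is nothing internal to compare your argument against line by line. Judged against the definitions the paper does set up, your overall route is the natural one and matches what one would expect the proof in \cite{Oh22} to do: the deck group permutes standard product subcomplexes of $\bar Y$ preserving maximality, containment and labels (since $p\circ g=p$ forces equal push-forwards); $\rho$ is defined by $\bar M\mapsto p(\bar M)$ and $\bar K_\triangle\mapsto p(\bar K_\triangle)$; fibers of $\rho$ are orbits because a pull-back $p^*\iota$ has simply connected domain $\Gamma_1\times\Gamma_2$ (a product of trees) and is therefore determined by the image of one base vertex, on which the deck group acts transitively; and the square commutes by construction. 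You also correctly single out the genuine subtlety, namely that when $\iota$ is not injective the $p$-lifts are not components of $p^{-1}(K)$, so transitivity must be argued at the level of elevations.

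There is, however, one step you assert rather than prove, and it is the step that makes the theorem nontrivial: the descent of maximality at the level of \emph{simplices}. Your sentence ``the same argument upgrades containments and shared maximal standard product subcomplexes from $\bar Y$ to $Y$'' hides two claims. First, even at the vertex level, Lemma~\ref{ProjofPS}(2) only produces \emph{some} largest $p$-lift of a putative enlargement $K\supsetneq p(\bar M)$; you need the $p$-lift passing through $\bar M$, which requires invoking the universal-property diagram (or the basepoint argument you use later for transitivity) rather than the lemma as stated. Second, and more seriously, for $\rho$ to be combinatorial you must show that if $\bar K$ is maximal among standard product subcomplexes contained in $\bigcap_i\bar M_i$, then $p(\bar K)$ is maximal among those contained in $\bigcap_i p(\bar M_i)$. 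A hypothetical enlargement $p(\bar K)\subsetneq K'\subseteq\bigcap_i p(\bar M_i)$ lifts to some $\bar K'\supsetneq\bar K$, but it is not automatic that this lift lies inside each $\bar M_i$ rather than merely inside $p^{-1}(p(\bar M_i))$, precisely because $\bar M_i$ need not be a component of that preimage when the product structure fails to be injective. Closing this requires an argument (e.g.\ via convexity of $p$-lifts in the $\CAT(0)$ complex $\bar Y$, or via lifting through the covering $\bar\Delta_1\times\bar\Delta_2\to\Delta_1\times\Delta_2$ of domains) that you have not supplied; as written the proposal establishes $\rho$ on vertices but not that it carries simplices to simplices.
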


The fact that $\cRI(Y)$ is the quotient of $\cI(\bar {Y})$ by the action of $\pi_1(Y)$ is reminiscent of the definition of a complex of groups.
For a polyhedral complex $X$, if there is a group $G$ acting on $X$ by isometries, then the group action gives rise to a complex of groups $\mathcal{G}(Y)$ over the quotient $Y=X/G$. If $X$ is simply connected, then $\mathcal{G}(Y)$ is said to be \emph{developable} and $X$ is said to be the \emph{development} of $\mathcal{G}(Y)$. Conversely, if $\mathcal{G}(Y)$ is developable, then its development is constructed from the poset of left cosets of the groups assigned to cells of $\mathcal{G}(Y)$ with the inclusion relation
We refer to \cite[Chapter III.$\mathcal{C}$]{BH} for the details about (developable) complexes of groups.

\begin{theorem}[\cite{BH}, Corollary~2.15 in Chapter III.$\mathcal{C}$] \label{Thm:Developable}
A complex of groups $\mathcal{G}(Y)$ is developable if and only if there exists a morphism $\Phi$ from $\mathcal{G}(Y)$ to some group $G$ which is injective on the local groups.
\end{theorem}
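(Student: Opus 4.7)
The plan is to prove the two directions separately; the forward direction is essentially tautological, while the backward direction requires invoking the universal morphism from $\mathcal{G}(Y)$ to its own fundamental group.

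For the forward direction I would note that if $\mathcal{G}(Y)$ arises from a group $G$ acting by isometries on a simply connected polyhedral complex $X$ with $X/G = Y$, then by construction the local group $G_\sigma$ at each cell $\sigma \subset Y$ is the stabilizer in $G$ of a chosen lift $\tilde\sigma$, and the structure maps of $\mathcal{G}(Y)$ are induced by conjugations in $G$ that carry one lift to another. The inclusions $G_\sigma \hookrightarrow G$ therefore assemble into a morphism $\Phi : \mathcal{G}(Y) \to G$, which is automatically injective on each local group.

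For the backward direction I would invoke the canonical morphism $\iota : \mathcal{G}(Y) \to \pi_1(\mathcal{G}(Y))$ into the fundamental group of the complex of groups, defined by generators and relations coming from a spanning tree of the $1$-skeleton of $Y$ and the structure maps of $\mathcal{G}(Y)$. The associated candidate development is the geometric realization of the poset of pairs $(g\iota(G_\sigma), \sigma)$ consisting of cells of $Y$ labelled by left cosets, ordered by face inclusions and the structure maps. What I would then establish is that this candidate is automatically simply connected, and that it is a genuine development of $\mathcal{G}(Y)$ exactly when $\iota$ is injective on each local group. Given an arbitrary $\Phi : \mathcal{G}(Y) \to G$ injective on local groups, the universal property of $\pi_1(\mathcal{G}(Y))$ yields a factorization $\Phi = \tilde\Phi \circ \iota$ through some group homomorphism $\tilde\Phi$; injectivity of $\Phi|_{G_\sigma} = \tilde\Phi \circ \iota|_{G_\sigma}$ then forces injectivity of $\iota|_{G_\sigma}$, and developability follows.

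The hard part will be verifying simple connectivity of the candidate development and the equivalence between developability and injectivity of $\iota$ on local groups. I would present the $1$-skeleton in a Bass--Serre-like fashion and reduce every loop to a product of the defining relations of $\pi_1(\mathcal{G}(Y))$, killed off by $2$-cells attached via the local data at cells of $Y$; the injectivity of $\iota$ on local groups is precisely what guarantees that distinct cosets index genuinely distinct cells, so no unwanted collapses occur. Once this technical core is in place, the existence of \emph{any} morphism injective on local groups becomes both necessary and sufficient for developability.
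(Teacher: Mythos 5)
The paper does not actually prove this statement---it is quoted verbatim from Bridson--Haefliger (Corollary~2.15 in Chapter~III.$\mathcal{C}$)---and your outline reproduces the standard argument from that source: stabilizers of lifts of cells assemble into a morphism injective on local groups for the forward direction, and the Basic Construction applied to the canonical morphism $\iota:\mathcal{G}(Y)\to\pi_1(\mathcal{G}(Y))$ yields the (simply connected) development for the converse. The only step you elide is that factoring an arbitrary $\Phi$ through $\iota$ requires first normalizing $\Phi$ so that the edges of a chosen spanning tree map to the identity (a modification that does not disturb injectivity on the local groups); with that in place, your deduction that injectivity of $\Phi$ on local groups forces injectivity of $\iota$ on local groups is exactly what produces a simply connected development, as required by this paper's convention for ``developable''.
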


\begin{theorem}[\cite{BH}, Theorem~3.13 and Corollary~3.15 in Chapter III.$\mathcal{C}$]\label{Thm:Development} 
The development of a developable complex of groups is simply connected and unique up to isomorphisms.
\end{theorem}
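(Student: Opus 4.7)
The plan is to build the development concretely, then verify simple-connectedness by a presentation-style argument and uniqueness by a universal-property argument. Recall the construction: given a developability morphism $\Phi:\mathcal{G}(Y)\to G$ that is injective on local groups, the \emph{development} $D=D(\mathcal{G}(Y),\Phi)$ has cells indexed by pairs $(g\Phi(G_\sigma),\sigma)$ where $\sigma$ ranges over cells of $Y$ and $g\Phi(G_\sigma)$ over left cosets in $G$. The face relations are glued using the structural monomorphisms $\psi_{\tau\sigma}:G_\sigma\to G_\tau$ (for $\tau\subset\sigma$) together with the twisting elements, and $G$ acts on $D$ by left translation of cosets, with quotient canonically identified with the underlying complex of $\mathcal{G}(Y)$. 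To prove the theorem I would take the \emph{universal} developability morphism, namely $\Phi:\mathcal{G}(Y)\to\widehat{\pi_1}(\mathcal{G}(Y))$ built from the fundamental group of the complex of groups, which by Theorem~\ref{Thm:Developable} exists precisely when $\mathcal{G}(Y)$ is developable, and show that this particular development is the simply connected one.

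For simple-connectedness, I would translate loops in $D$ into words in a presentation of $\widehat{\pi_1}(\mathcal{G}(Y))$. Fix a spanning tree $T$ in the $1$-skeleton of $Y$ and a basepoint $v_0$. Any edge-loop in $D$ projects to an edge-loop $\gamma$ in $Y$; reading off the coset labels along edges of $\gamma$ produces a word in the generators of $\widehat{\pi_1}(\mathcal{G}(Y))$ (these generators being the local group elements together with a chosen edge generator off $T$). The loop closes up in $D$ if and only if this word represents the identity in $\widehat{\pi_1}(\mathcal{G}(Y))$. By the explicit presentation of $\widehat{\pi_1}(\mathcal{G}(Y))$, such a word can be reduced to the empty word using exactly three kinds of relations: the relations of each local group $G_\sigma$, the compatibility relations $\psi_{\rho\tau}\circ\psi_{\tau\sigma}=\operatorname{ad}(g_{\sigma,\tau,\rho})\circ\psi_{\rho\sigma}$, and the relations coming from $2$-cells of $Y$. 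Each such reduction is witnessed by a combinatorial disk in $D$ (a face in a local-group polygon, a triangle around a flag $\rho\subset\tau\subset\sigma$, or the lift of a $2$-cell of $Y$), so every loop is null-homotopic.

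For uniqueness, suppose $X_1$ and $X_2$ are two simply-connected developments associated to developability morphisms $\Phi_i:\mathcal{G}(Y)\to G_i$. The universal property of $\widehat{\pi_1}(\mathcal{G}(Y))$ yields surjections $\widehat{\pi_1}(\mathcal{G}(Y))\twoheadrightarrow G_i$ compatible with the local-group inclusions. Simple-connectedness of $X_i$ forces these surjections to be isomorphisms: indeed, any nontrivial kernel element would produce a nontrivial deck transformation of $X_i\to X_i/\ker$ lifting the identity of $Y$, contradicting the description of $X_i$ as a poset of cosets, and conversely the quotient relations ensure no relations are lost. Once the groups are identified, the natural cellular bijection $(g\Phi_1(G_\sigma),\sigma)\mapsto(g\Phi_2(G_\sigma),\sigma)$ is equivariant and respects face relations by construction, giving the desired isomorphism, unique once basepoints in a common fundamental domain are chosen.

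The main obstacle I anticipate is the second step: making precise that the three families of $2$-cells in $D$ really kill all loops. This requires careful bookkeeping of the twisting elements $g_{\sigma,\tau,\rho}$ across triple flags, and in particular checking that the cocycle condition satisfied by these twisting elements in the definition of a complex of groups is exactly what is needed for the associated $2$-cells in $D$ to be consistently glued; without this cocycle condition the candidate disks would fail to close up. Once this combinatorial verification is in hand, both conclusions follow formally, and the uniqueness step reduces to the standard rigidity of universal covers in the equivariant setting.
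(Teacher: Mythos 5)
The paper does not prove this statement; it is quoted verbatim from Bridson--Haefliger (Chapter III.$\mathcal{C}$, Theorem~3.13 and Corollary~3.15), so there is no internal proof to compare against. Your sketch reproduces the standard argument from that source: take the development associated to the canonical morphism into the fundamental group of the complex of groups, identify edge-loops in the development with words in the explicit presentation so that the three families of relators are exactly the boundaries of the available combinatorial disks, and deduce uniqueness from the fact that any other simply connected development is covered by the universal one with the kernel acting freely (injectivity on local groups), forcing the covering to be trivial. The outline is correct and is essentially the same route as the cited proof; the one point worth tightening is the uniqueness step, where the cleanest phrasing is that the quotient map between developments is a covering map because the kernel meets every local group trivially, so simple connectedness of the target forces the kernel to be trivial.
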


The group assigned to each simplex of $\cRI(Y)$, considered as a complex of groups, is the product of two free groups. In particular, the cardinality of each assigned group is infinite and for any pair of simplices $\triangle\subsetneq\triangle'$ in $\cRI(Y)$ the index of the group assigned to $\triangle'$ in the group assigned to $\triangle$ is either $1$ or infinite. We record the following proposition, which is an easy consequence of the construction of the development, in order to use it later.

\begin{proposition}\label{Prop:IsomorphicI}
Let $Y_1$ and $Y_2$ be two compact weakly special square complexes such that both $\cRI(Y_1)$ and $\cRI(Y_2)$ are connected and developable. 
If there exists a (semi-)isomorphism $\Phi:\cRI(Y_1)\to\cRI(Y_2)$, then there is a (semi-)isomorphism $\bar\Phi$ between the developments of $\cRI({Y_1})$ and $\cRI({Y_2})$ such that $\rho_2\circ\bar\Phi=\Phi\circ\rho_1$, where $\rho_i$ is the quotient map from the development of $\cRI(Y_i)$ to $\cRI(Y_i)$.
\end{proposition}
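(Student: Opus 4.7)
The plan is to identify each intersection complex $\cI(\bar{Y_i})$ with the development of $\cRI(Y_i)$ regarded as a complex of groups, and then to lift $\Phi$ through the two quotient maps $\rho_1$ and $\rho_2$. By Theorem~\ref{Thm:TPBCM}, the action of $\pi_1(Y_i)$ on $\cI(\bar{Y_i})$ has quotient $\cRI(Y_i)$ via $\rho_i$; combined with Theorems~\ref{Thm:Developable} and~\ref{Thm:Development}, this realizes $\cI(\bar{Y_i})$ as the essentially unique development of the developable complex of groups $\cRI(Y_i)$. The proposition then becomes a lifting problem: extend the given combinatorial (semi-)isomorphism $\Phi$ of bases to an isomorphism between their simply connected developments.

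I would construct $\bar\Phi$ by base-point propagation. Fix a maximal simplex $\triangle_0$ of $\cRI(Y_1)$, a preferred lift $\bar\triangle_0\in\rho_1^{-1}(\triangle_0)$, and any lift $\bar\triangle_0'\in\rho_2^{-1}(\Phi(\triangle_0))$; set $\bar\Phi(\bar\triangle_0):=\bar\triangle_0'$. Given a simplex $\bar\triangle$ of $\cI(\bar{Y_1})$ sharing a face $\bar f$ with a simplex $\bar\triangle_*$ already assigned, declare $\bar\Phi(\bar\triangle)$ to be the unique lift of $\Phi(\rho_1(\bar\triangle))$ in $\cI(\bar{Y_2})$ sharing the prescribed face with $\bar\Phi(\bar\triangle_*)$; uniqueness here follows because $\rho_2$ is locally a bijection on each simplex. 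Simple connectedness of $\cI(\bar{Y_1})$ then guarantees path-independence: two chains of adjacencies from $\bar\triangle_0$ to the same target form a loop, and the image loop under $\Phi\circ\rho_1$ must lift to a closed loop in $\cI(\bar{Y_2})$, since otherwise the analogous construction applied to $\Phi^{-1}$ would produce a non-trivial loop in $\cI(\bar{Y_1})$. Running the same construction with $\Phi^{-1}$ produces an inverse, so $\bar\Phi$ is a combinatorial bijection.

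To verify the (semi-)morphism property, recall that the label of a simplex $\bar\triangle$ in $\cI(\bar{Y_i})$ is the domain of the standard product structure of the corresponding standard product subcomplex of $\bar{Y_i}$, obtained from the label of $\rho_i(\bar\triangle)$ by taking universal covers of the two factors (Definition~\ref{IC} together with Lemma~\ref{ProjofPS}). Since $\Phi$ preserves the strict factor-wise inclusion pattern of labels, and, for a morphism, additionally the quasi-isometry type of each factor's fundamental group, these conditions transfer to $\bar\Phi$: passing to universal covers respects strict inclusions of finite graphs without leaves and preserves the quasi-isometry class of their free fundamental groups. The equality $\rho_2\circ\bar\Phi=\Phi\circ\rho_1$ is built into the propagation rule.

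The hard part will be the well-definedness of the propagation step for a semi-isomorphism, where the local groups at corresponding simplices are allowed to be non-isomorphic. Here one must identify the fibers of $\rho_1$ and $\rho_2$ directly as abstract poset-compatible coset spaces over the chosen base simplex, rather than via a fixed isomorphism of local groups. Simple connectedness of both developments is precisely what forces the local coset-level choices to patch into a globally consistent combinatorial isomorphism rather than a merely local one.
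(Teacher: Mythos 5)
Your opening identification is not available under the stated hypotheses. The proposition assumes only that $\cRI(Y_i)$ is developable; the paper explicitly warns (Remark~\ref{Rmk:Developability}) that in general $\cI(\bar{Y_i})$ need not be simply connected, hence need not be the development of $\cRI(Y_i)$, since the development is always simply connected by Theorem~\ref{Thm:Development}. The existence of a $\pi_1(Y_i)$-action on $\cI(\bar{Y_i})$ with quotient $\cRI(Y_i)$ does not by itself realize $\cI(\bar{Y_i})$ as the development. Because your entire construction, including the appeal to simple connectedness for path-independence, is carried out inside $\cI(\bar{Y_i})$, it is built on the wrong object; the statement concerns the abstract developments, and you should work with those directly (where simple connectedness comes for free).

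The more substantive gap is in the propagation step. In a development, the lifts of a simplex $\triangle$ containing a fixed lift of a face $f\subsetneq\triangle$ correspond to the cosets $G_f/G_\triangle$, whose cardinality is $1$ or countably infinite --- not always $1$. So ``the unique lift sharing the prescribed face'' generally does not exist, and the justification ``because $\rho_2$ is locally a bijection on each simplex'' addresses a different (and irrelevant) kind of injectivity. What actually makes the construction work is a point you never establish: the index $[G_f:G_\triangle]$ equals $1$ exactly when the labels of $f$ and $\triangle$ coincide factor-wise, and a (semi-)isomorphism preserves precisely the strictness of the factor-wise inclusions of labels (Definition~\ref{Def:Morphism}); therefore the corresponding coset spaces on the two sides have equal cardinality ($1$ or countably infinite) and can be put in bijection. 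This is where the hypothesis on $\Phi$ enters and is essentially the whole content of the proposition. Your final paragraph names the difficulty but does not resolve it, and attributes the resolution to simple connectedness, which only governs the global consistency of the choices once the local bijections are known to exist.
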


\begin{remark}\label{Rmk:Developability}
In Section~\ref{section:grapes}, we will show that $\cRI(Y)$ is developable and $\cI(\bar Y)$ is the development of $\cRI(Y)$ if $Y$ is the (unordered) discrete $2$-configuration space $UD_2(\Gamma)$ of the minimal simplicial representative $\Gamma$ of a graph with circumference one. 
In general, however, $\cRI(Y)$ is not developable (see the last paragraph of Section 4.1 in \cite{Oh22}) and $\cI(\bar Y)$ is not even simply connected (\cite[Proposition 4.7]{Oh22}).    
\end{remark}

In the remaining of this subsection, we will see the structure of $\cRI(Y)$ or $\cI(\bar Y)$ directly obtained from $Y$ or $Y_0=\mathfrak{g}_{\cRI}(\cRI(Y))$ which is a weakly special square complex by Lemma~\ref{lem:SpecialSubcomplexes} (unless there is no standard product subcomplex). 

\begin{lemma}\label{lem:UnionofSPS}
Suppose that $Y_0$ is given as above. Then there is a one-to-one correspondence between standard product subcomplexes of $Y_0$ and standard product subcomplexes of $Y$, which induces an isomorphism between $\cRI(Y)$ and $\cRI(Y_0)$.

If $Y_0$ is locally convex in $Y$, then any standard (maximal, resp.) product subcomplex of $\bar{Y_0}$ is a standard (maximal, resp.) product subcomplex of $\bar{Y}$. In particular, there is a canonical injective morphism $\cI(\bar{Y_0})\hookrightarrow \cI(\bar{Y})$.
\end{lemma}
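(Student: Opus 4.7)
The plan is to decouple the two parts and use Lemma~\ref{Lem:ImmersionImpliesLocalisometry} as the essential bridge between local isometries into $Y$ and into $Y_0$. Since $Y_0=\mathfrak{g}_{\cRI}(\cRI(Y))$ is by construction the union of all standard product subcomplexes of $Y$, each such $K$ is already contained in $Y_0$. A local isometry $\iota\colon\Delta_1\times\Delta_2\to Y$ witnessing $K$ remains a local isometry when its codomain is replaced by $Y_0$: each induced link map has the same image $\Lk(\iota(v),K)$, injectivity is unaffected, and being an induced subcomplex of $\Lk(\iota(v),Y)$ is inherited by the subcomplex $\Lk(\iota(v),Y_0)$. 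Conversely, any local isometry $\iota\colon\Delta_1\times\Delta_2\to Y_0$ composes with the inclusion to an immersion into $Y$, which Lemma~\ref{Lem:ImmersionImpliesLocalisometry} upgrades to a local isometry into $Y$. This yields the claimed bijection between standard product subcomplexes of $Y_0$ and of $Y$. The correspondence preserves inclusions and intersections, because each such object is literally the same subset; in particular it preserves maximality, and the labels (the domains of the product structures) are unchanged, so it induces the required isomorphism $\cRI(Y_0)\cong\cRI(Y)$ in the sense of Definition~\ref{Def:Morphism}.

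For the second part, assume $Y_0$ is locally convex in $Y$, so the inclusion is a local isometry and lifts to an isometric embedding of $\bar{Y_0}$ onto a distinguished lift inside $\bar Y$, with $p_Y^{-1}(Y_0)$ decomposing as a disjoint union of such lifts. A standard product subcomplex $\bar K$ of $\bar{Y_0}$ is, by Definition~\ref{SPS}, the pull-back of some local isometry $\iota\colon\Delta_1\times\Delta_2\to Y_0$ with leaf-free $\Delta_i$; by the first part, $\iota$ is also a local isometry into $Y$, and composing its pull-back with the embedding $\bar{Y_0}\hookrightarrow\bar Y$ exhibits $\bar K$ as a pull-back of $\iota\colon\Delta_1\times\Delta_2\to Y$, hence as a standard product subcomplex of $\bar Y$. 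For maximality, suppose $\bar K$ is maximal in $\bar{Y_0}$ but properly contained in some standard product subcomplex $\bar K'\subset\bar Y$. Then $p_Y(\bar K')$ is a standard product subcomplex of $Y$, hence is contained in $Y_0$, so $\bar K'\subset p_Y^{-1}(Y_0)$. Since $\bar K'$ is connected and meets the chosen lift $\bar{Y_0}$ at $\bar K$, it must lie in that single lift; applying the first part downstairs shows $\bar K'$ is standard in $\bar{Y_0}$, contradicting maximality.

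Finally, the induced map $\cI(\bar{Y_0})\hookrightarrow\cI(\bar Y)$ sends each vertex (maximal standard product subcomplex of $\bar{Y_0}$) to itself regarded in $\bar Y$, and a simplex arising from a family $\{\bar M_i\}$ sharing a maximal standard product subcomplex $\bar K$ in their intersection is sent to the simplex over the same family: the intersection $\bigcap_i\bar M_i$ is unchanged as a subcomplex of $\bar Y$, and any standard product subcomplex of $\bar Y$ contained in it must, by the same lift argument as above, lie in $\bar{Y_0}$, so $\bar K$ remains maximal among them and the labels match. I expect the main obstacle to be precisely the maximality step, which hinges on combining local convexity (to split $p_Y^{-1}(Y_0)$ into connected lifts isometric to $\bar{Y_0}$) with the connectedness of the putative enlargement $\bar K'$ in order to descend back into a single lift before reapplying the first part.
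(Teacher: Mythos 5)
Your proposal is correct and follows essentially the same route as the paper: Lemma~\ref{Lem:ImmersionImpliesLocalisometry} to pass local isometries between $Y_0$ and $Y$ for the downstairs correspondence, and local convexity to identify the components of $p_Y^{-1}(Y_0)$ with copies of $\bar{Y_0}$ so that standard (and maximal) product subcomplexes transfer upstairs. Your write-up merely spells out the maximality step (pushing a purported enlargement $\bar K'$ down to $Y$, noting its image lies in $Y_0$, and using connectedness to confine it to a single lift) in more detail than the paper does.
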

\begin{proof}
By the construction of $Y_0$ and Lemma~\ref{Lem:ImmersionImpliesLocalisometry}, any standard product subcomplex $K\subset Y_0$ with a standard product structure $\iota:\Lambda_1\times\Lambda_2\to Y_0$ can be seen as a standard product subcomplex of $Y$ with a standard product structure $\iota':\Lambda_1\times\Lambda_2\to Y$ (which is the composition of the embedding and $\iota$) and vice-versa.
Thus the first statement holds.

Suppose that $Y_0$ is locally convex in $Y$.
For any subset $Z\subset Y_0$, then a component of $p^{-1}(Z)$ in $\bar{Y}$ intersects $\bar{Y_0}$ if and only if the component belongs to $\bar{Y_0}$.
It follows that any $p$-lift $\bar K$ of a standard product subcomplex of $Y_0$ in $\bar{Y_0}$ is contained in $\bar{Y}$. Moreover, if $\bar K$ is a maximal product subcomplex of $\bar{Y_0}$, then $\bar K$ must be a maximal product subcomplex of $\bar Y$.
Therefore, the second statement also holds.
\end{proof}

It is worth noticing that in general $Y_0$ is not locally convex in $Y$ and thus neither $\bar{Y_0}$ may be isometrically embedded into $\bar{Y}$ nor $\cI(\bar{Y_0})$ may be embedded into $\cI(\bar Y)$.

\begin{Ex}[Extended Salvetti complex]
We start with the barycentric subdivision $X'$ of a pentagon $C$ which is a simplicial complex.
Let $X$ be a cube complex obtained from $X'$ by considering the union of two simplices sharing an edge joining the central vertex to a vertex of $C$ as a $2$-cube.
Let $Y$ be a cube complex obtained from the disjoint union of the products of two loops $a_i$ and $b_i$ with base vertices $x_i$ and $y_i$, respectively, for $i=0,\dots,4$ as follows:
\begin{enumerate}
\item
For each $i$, attach a path $\gamma_i$ of length $2$ connecting $x_i\times y_i\in a_i\times b_i$ to $x_{i+1}\times y_{i+1}\in a_{i+1}\times b_{i+1}$ and then attach the product of a path of length $2$ and an edge along the loop which is the concatenation of $x_i\times b_i$, $\gamma_i$, $a_{i+1}\times y_{i+1}$ and (the inverse of) $\gamma_i$ (the indices are taken modulo $5$); let $Y_0$ be the resulting complex.
\item 
And then attach $X$ to $Y_0$ along the union of $\gamma_i$'s such that the vertices of $C$ are identified with $x_i\times y_i$'s.
\end{enumerate}
Then it is easily seen that $Y_0$ is not locally convex in $Y$ but $\cRI(Y)$ is isomorphic to $\cRI(Y_0)$.
Moreover, $|\cI(\bar{Y_0})|$ is simply connected but $|\cI(\bar Y)|$ is not.
\end{Ex}

Even though $Y$ is assumed to be connected, $\cRI(Y)$ is possibly disconnected, and $\cI(\bar{Y})$ is also possibly disconnected even when $\cRI(Y)$ is connected.
Nonetheless, if $Y$ has a particularly nice structure which we will encounter in Section~\ref{section:grapes}, then both $\cRI(Y)$ and $\cI(\bar{Y})$ are connected.

\begin{lemma}\label{lem:Connected}
Suppose that $Y$ is equal to the union of all standard product subcomplexes of $Y$, i.e., $\mathfrak{g}_{\cRI}(Y)=Y$.
Then not only $\mathfrak{g}_{\cI}(\cI(\bar Y))=\bar Y$ but also both $|\cRI(Y)|$ and $|\cI(\bar{Y})|$ are connected simplicial complexes if the following hold:
\begin{enumerate}
\item Every standard product subcomplex of $Y$ is embedded;
\item The intersection of maximal product subcomplexes of $Y$ is non-empty if and only if it is a standard product subcomplex.
\end{enumerate}
\end{lemma}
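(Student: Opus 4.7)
The plan is to establish the three claims in turn---that $\mathfrak{g}_{\cI}(\cI(\bar Y)) = \bar Y$, that $|\cRI(Y)|$ and $|\cI(\bar Y)|$ are simplicial (not merely almost simplicial), and that both are connected---using conditions (1) and (2) together with the structural results already available.

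\emph{Covering of $\bar Y$ and simplicial structure.} For any cell $\bar c \subset \bar Y$ with $c = p(\bar c)$, the hypothesis $\mathfrak{g}_{\cRI}(\cRI(Y)) = Y$ produces a standard product subcomplex $K \subset Y$ with $c \in K$, and Lemma~\ref{ProjofPS}(2) provides $p$-lifts of $K$ in $\bar Y$; the component of $p^{-1}(K)$ through $\bar c$ is a standard product subcomplex of $\bar Y$ containing $\bar c$. Thus $\mathfrak{g}_{\cI}(\cI(\bar Y)) = \bar Y$, and $\bar Y$ is covered by maximal product subcomplexes. For simplicial-ness of $|\cRI(Y)|$, hypothesis (2) says that if maximal product subcomplexes $M_1,\dots,M_n$ have non-empty intersection, that intersection is itself a single standard product subcomplex---hence the unique maximal one inside it---so $M_1,\dots,M_n$ span a unique simplex. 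For $|\cI(\bar Y)|$, simplicial-ness for intersections containing a flat is Theorem~\ref{MaxtoMax}\ref{Item:UniqK}, and the remaining intersections are handled by the lifting analysis below, which shows uniqueness of the maximal standard product subcomplex in the intersection.

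\emph{Connectedness of $|\cRI(Y)|$.} Given two maximal product subcomplexes $M$ and $M'$, pick vertices $y \in M$, $y' \in M'$ and connect them by an edge-path $y = y_0, \dots, y_k = y'$ in $Y$. Each edge $[y_{i-1}, y_i]$ lies in some maximal product subcomplex $N_i$ by the covering hypothesis, and consecutive $N_i, N_{i+1}$ share the vertex $y_i$, so by (2) their intersection is a standard product subcomplex, contributing an edge of $\cRI(Y)$. The chain $M = N_0, N_1, \dots, N_k, N_{k+1} = M'$ then produces the required path in $|\cRI(Y)|$.

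\emph{Connectedness of $|\cI(\bar Y)|$.} Run the analogous edge-path scheme in $\bar Y$, which is permitted by the covering result above. The key remaining step is to verify that two maximal product subcomplexes $\bar M_1, \bar M_2 \subset \bar Y$ sharing a vertex $\bar y$ are joined by an edge of $\cI(\bar Y)$. Set $M_i = p(\bar M_i)$, which are maximal product subcomplexes of $Y$ by Lemma~\ref{ProjofPS}(1). Hypothesis (1) guarantees that every standard product subcomplex of $Y$ is embedded hence locally convex, so $p^{-1}(M_i)$ is a disjoint union of $p$-lifts; if $M_1 = M_2$ then $\bar M_1 = \bar M_2$ is the component through $\bar y$, and otherwise $M_1 \cap M_2$ is a standard product subcomplex $K$ by (2). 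In the latter case, local convexity of $K$ makes $p^{-1}(K)$ a disjoint union of $p$-lifts, and the component through $\bar y$ is a $p$-lift $\bar K \subset \bar M_1 \cap \bar M_2$, giving the desired edge of $\cI(\bar Y)$. I expect the main subtlety to be exactly this identification of $\bar M_1 \cap \bar M_2$ with a connected $p$-lift of $K$: it is where both hypotheses are used essentially, since (1) is what lets covering-space theory describe $p^{-1}(K)$ cleanly, and (2) is what identifies $M_1 \cap M_2$ as a standard product subcomplex to begin with.
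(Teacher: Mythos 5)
Your proof is correct and follows essentially the same route as the paper: connectivity via chains of maximal product subcomplexes whose consecutive intersections are standard product subcomplexes (using that every cell lies in one by the covering hypothesis), simplicial-ness of $|\cRI(Y)|$ from condition (2), and simplicial-ness of $|\cI(\bar Y)|$ from Theorem~\ref{MaxtoMax}\ref{Item:UniqK}. The only difference is that you spell out the lifting step identifying $\bar M_1\cap\bar M_2$ with a $p$-lift of $p(\bar M_1)\cap p(\bar M_2)$, which the paper's two-line argument leaves implicit.
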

\begin{proof}
As discussed below Definition~\ref{IC}, $|\cI(\bar{Y})|$ is always a simplicial complex. 
Moreover, the second condition implies that $|\cRI(Y)|$ is a simplicial complex.

Let $X$ be either $Y$ or $\bar{Y}$ and fix a maximal product subcomplex $M$ of $X$.
By the two assumptions, $X$ is the union of maximal product subcomplexes $M'$ of $X$ such that there exists a sequence $(M=M_1,\dots,M_n= M')$ of maximal product subcomplexes of $X$, where $M_i\cap M_{i+1}$ is a standard product subcomplex of $X$. 
Therefore, $\cI(X)$ is connected and $\mathfrak{g}_{\cI}(\bar Y)=\bar Y$.    
\end{proof}

If $|\cI(\bar{Y_0})|$ is connected, for two maximal product subcomplexes $\bar{M},\bar{M}'$ of $\bar{Y_0}$ there exists a sequence $(\bar{M}=\bar{M}_1,\dots,\bar{M}_n= \bar{M}')$ of maximal product subcomplexes of $\bar{Y_0}$ such that $\bar{M}_i\cap \bar{M}_{i+1}$ is a standard product subcomplex. 
As the direct product of two free groups are \emph{thick of order $0$}, 
$\bar{Y_0}$ is thick of order at most $1$. The precise definition of \emph{thickness} is not necessary for this paper; we refer to \cite{BDM} or \cite{BD14}. Instead, as a consequence of thickness of $\bar{Y_0}$, we can know that $\bar{Y_0}$ is one-ended.

\begin{lemma}\label{lem:One-endedness}
Suppose that $Y$ is equal to the union of all standard product subcomplexes of $Y$. If $|\cI(\bar{Y})|$ is connected, then $\bar{Y}$ is one-ended.
\end{lemma}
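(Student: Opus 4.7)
The plan is to bootstrap one-endedness from the product pieces of $\bar Y$ to the whole space, using connectedness of $|\cI(\bar Y)|$ as the gluing device. To begin, every standard product subcomplex $\bar K\subset\bar Y$ is one-ended: by Definition~\ref{SPS}(2), $\bar K$ carries a standard product structure $p^*\iota\colon \Gamma_1\times\Gamma_2\to\bar Y$ where each $\Gamma_i$ is the universal cover of a finite graph $\Delta_i$ without leaves, and since a finite leafless graph must contain a cycle, each $\Gamma_i$ is an infinite tree with no leaves, hence an unbounded connected length space. Convexity of $\bar K$ in the CAT(0) space $\bar Y$ makes $p^*\iota$ an isometric embedding with image $\bar K\cong\Gamma_1\times\Gamma_2$, and an elementary product argument (the complement of any box $B_R\times B_R$ in $\Gamma_1\times\Gamma_2$ is connected and unbounded) yields one-endedness.

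Now fix a compact set $K\subset\bar Y$. For each maximal product subcomplex $\bar M\subset\bar Y$ let $U_{\bar M}\subset\bar M\setminus K$ denote its unique unbounded component, and set $U:=\bigcup_{\bar M}U_{\bar M}$. I would show that $U$ is connected: whenever $\bar M,\bar M'$ are adjacent in $\cI(\bar Y)$, Definition~\ref{IC} gives a shared standard product subcomplex $\bar K\subset\bar M\cap\bar M'$, and the unique unbounded component $D$ of $\bar K\setminus K$ is an unbounded connected subset of both $\bar M\setminus K$ and $\bar M'\setminus K$, forcing $D\subset U_{\bar M}\cap U_{\bar M'}$. Chaining along an edge path in the connected complex $|\cI(\bar Y)|$ yields connectedness of $U$; unboundedness of each $U_{\bar M}$ makes $U$ unbounded, so $U$ lies inside a single unbounded component of $\bar Y\setminus K$.

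To finish, the hypothesis that $Y$ is covered by its standard product subcomplexes lifts to the statement that every point of $\bar Y$ lies in some maximal product subcomplex. Given an unbounded component $C$ of $\bar Y\setminus K$, choose $y\in C$ with $d(y,K)$ larger than a constant depending only on $\mathrm{diam}(K)$, let $\bar M_y\ni y$ be a maximal product subcomplex containing $y$, and use convexity of $\bar M_y$ in the CAT(0) space $\bar Y$ to identify $\bar Y$-distance with $\bar M_y$-distance; then $y$ lies outside a product box enclosing $K\cap\bar M_y$, so $y\in U_{\bar M_y}\subset U$. Hence $C$ must equal the single unbounded component containing $U$, establishing one-endedness.

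The main obstacle is this last identification: a priori a far point of $\bar Y\setminus K$ might sit in a \emph{bounded} component of $\bar M_y\setminus K$. Convexity of standard product subcomplexes in $\bar Y$ upgrades ambient distance to intrinsic distance, and the explicit product description of the ends of $\Gamma_1\times\Gamma_2$ from the first paragraph then rules this out once ``far'' is interpreted quantitatively, e.g., beyond the diameter of a $B_R\times B_R$-box containing $K\cap\bar M_y$, which is uniformly controlled by $\mathrm{diam}(K)$.
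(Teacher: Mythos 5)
Your proof is correct, but it takes a genuinely different route from the paper's. The paper does not argue with ends directly: it observes that each maximal product subcomplex of $\bar Y$ is quasi-isometric to a product of two unbounded trees (hence \emph{thick of order $0$} in the sense of Behrstock--Dru\c{t}u--Mosher), that connectedness of $|\cI(\bar Y)|$ chains these pieces together along unbounded intersections so that $\bar Y$ is thick of order at most $1$, and then cites the fact that thick spaces are one-ended. Your argument replaces this citation with a self-contained end-count: (a) each standard product subcomplex is convex in the $\CAT(0)$ complex $\bar Y$ and isometric to a product of two unbounded leafless trees, so the complement of any box is connected and unbounded; (b) for a compact $K$, the unique unbounded components $U_{\bar M}$ of $\bar M\setminus K$ glue into one connected unbounded set $U$ because adjacent vertices of $\cI(\bar Y)$ share a one-ended standard product subcomplex; (c) the covering hypothesis plus convexity (ambient distance equals intrinsic distance on $\bar M$) forces every sufficiently far point of $\bar Y\setminus K$ into some $U_{\bar M}\subset U$, so there is exactly one unbounded complementary component. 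The trade-off is that the paper's approach is shorter and records the stronger structural property of thickness (which is of independent interest, e.g.\ for obstructing relative hyperbolicity), whereas yours is elementary and avoids importing the thickness machinery; the key extra ingredient you use, convexity of standard product subcomplexes in $\bar Y$, is available here because elevations of local isometries into a $\CAT(0)$ cube complex are isometric embeddings.
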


\subsection{Free products of graphs}\label{section:PW}
In this subsection, we revisit the work of Papasoglu--Whyte in \cite{PW02}, where the main result is that a non-trivial splitting of a group over finite groups is a quasi-isometry invariant, and the following is its special case. 

\begin{theorem}[\cite{PW02}]\label{PW}
Let $\mathbb{G}_1$ and $\mathbb{G}_2$ be two (finitely generated) groups.
If $\mathbb{G}_1$ and $\mathbb{G}_2$ are quasi-isometric, then $\mathbb{G}_1*\mathbb{F}_m$ and $\mathbb{G}_2*\mathbb{F}_n$ are quasi-isometric for $m,n\geq 1$.  
Moreover, the converse also holds if both $\mathbb{G}_1$ and $\mathbb{G}_2$ are one-ended.
\end{theorem}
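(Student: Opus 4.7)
The plan is to realize each free product geometrically. Pick a compact $K(\mathbb{G}_i,1)$-complex $Z_i$ and form the wedge $W_i = Z_i \vee \bigvee_{j=1}^{k_i} S^1$ with $k_1 = m$ and $k_2 = n$, so that $\pi_1(W_i) \cong \mathbb{G}_i * \mathbb{F}_{k_i}$. The universal cover $\tilde{W_i}$ decomposes as a tree of spaces: countably many copies of $\tilde{Z_i}$, one per coset $g\mathbb{G}_i$, are joined along a bipartite Bass--Serre tree $T_i$ by edges coming from the free factor, with trivial edge spaces. By the \v{S}varc--Milnor lemma, $\tilde{W_i}$ is quasi-isometric to $\mathbb{G}_i * \mathbb{F}_{k_i}$, so it suffices to compare $\tilde{W_1}$ and $\tilde{W_2}$.

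For the forward direction, fix a quasi-isometry $f : \tilde{Z_1} \to \tilde{Z_2}$ from the hypothesis that $\mathbb{G}_1$ and $\mathbb{G}_2$ are quasi-isometric. In each $\tilde{Z_i}$-copy inside $\tilde{W_i}$, choose a $\mathbb{G}_i$-equivariant uniformly discrete set of attaching points, i.e.\ the loci where free-factor edges emanate. The quasi-isometry $\tilde{W_1} \to \tilde{W_2}$ is built copy-by-copy: map each $\tilde{Z_1}$-copy onto a $\tilde{Z_2}$-copy by a suitable translate of $f$, and extend across free-factor edges by a bijection between the Bass--Serre trees $T_1$ and $T_2$. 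Both trees are countably infinite with infinite valence at every $\tilde{Z_i}$-vertex, hence admit a bi-Lipschitz identification that can be arranged to respect the chosen attaching sets. The quasi-isometry constants on each piece are uniform and the tree contributes only an additive error, so the global map is a quasi-isometry.

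For the converse, assume $\mathbb{G}_1$ and $\mathbb{G}_2$ are one-ended and let $\Phi : \tilde{W_1} \to \tilde{W_2}$ be a quasi-isometry. The key step is to recover the vertex spaces coarsely. One-endedness of $\mathbb{G}_i$ implies each $\tilde{Z_i}$-copy is one-ended, while $\tilde{W_i}$ has infinitely many ends. An ends analysis shows that removing any sufficiently large metric ball in $\tilde{W_i}$ leaves complementary components whose thick (non-tree-like) portions are, up to bounded Hausdorff distance, precisely the $\tilde{Z_i}$-copies. This characterization is quasi-isometry invariant, so $\Phi$ coarsely permutes $\tilde{Z_1}$-copies onto $\tilde{Z_2}$-copies; restricting $\Phi$ to one such copy and then coarsely projecting onto its image copy yields a quasi-isometry $\tilde{Z_1} \to \tilde{Z_2}$, as desired.

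The main obstacle is the coarse identification of the vertex spaces in the converse direction, namely proving that the $\tilde{Z_i}$-copies are canonical quasi-isometry invariants of $\tilde{W_i}$. This depends essentially on the one-endedness of $\mathbb{G}_1$ and $\mathbb{G}_2$; without it, the statement genuinely fails, as $\mathbb{Z} * \mathbb{F}_1 = \mathbb{F}_2$ and $\mathbb{F}_2 * \mathbb{F}_1 = \mathbb{F}_3$ are quasi-isometric while $\mathbb{Z}$ and $\mathbb{F}_2$ are not.
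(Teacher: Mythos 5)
The paper does not reprove this statement: it simply cites \cite{PW02} (Theorems~0.3 and~0.4), so any genuine argument here must reproduce the content of Papasoglu--Whyte. Your outline follows their tree-of-spaces strategy, and the converse direction (coarse recognizability of the one-ended vertex spaces inside an infinitely-ended tree of spaces with trivial edge spaces) is a fair sketch of their Theorem~0.4. The forward direction, however, has a genuine gap at its central step.

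The problem is the sentence asserting that the Bass--Serre trees ``admit a bi-Lipschitz identification that can be arranged to respect the chosen attaching sets.'' This is precisely the hard part of \cite{PW02}, and as literally stated it is false. For the copy-by-copy map to extend over the free-factor edges, you need the edge-germs attached to a copy of $\widetilde{Z_1}$ (there are $2m$ at each point of a $\mathbb{G}_1$-orbit) to be matched, within uniformly bounded displacement of a translate of $f$, bijectively with the edge-germs attached to the image copy of $\widetilde{Z_2}$ ($2n$ at each point of a $\mathbb{G}_2$-orbit). Take $\mathbb{G}_1=\mathbb{G}_2=\mathbb{Z}^2$, $f=\mathrm{id}$, $m=2$, $n=1$: Hall's marriage condition for such a matching requires $2m|S|\le 2n\,|N_C(S)|$ for every finite subset $S$ of the net, and for large boxes $|N_C(S)|/|S|\to 1<m/n$, so no bounded-displacement bijection of edge sets compatible with $f$ exists. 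In other words, when the vertex groups are amenable, a single quasi-isometry of the pieces cannot be upgraded to a ``relative'' quasi-isometry matching the gates, and the naive extension across edges breaks down. Papasoglu--Whyte get around this by working with the whole tree of spaces (which is non-amenable, so Whyte's criterion provides bounded-displacement bijections between nets there) and by their rearrangement lemmas (the analogues of Lemmas~\ref{Lemma1.1inPW}--\ref{Lem:PWLem1} in this paper), which allow them to change the number of free factors and redistribute gates before gluing. Your proposal needs either to invoke those results explicitly or to supply this combinatorial absorption argument; as written, the forward direction does not go through.
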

\begin{proof}
The first part is shown by \cite[Theorem~0.3]{PW02}. The `moreover' part is shown by \cite[Theorem~0.4]{PW02}.
\end{proof}

In order to prove the above theorem, Papasoglu--Whyte defined the free product of graphs which is served as a geometric model for the free product of groups.
Roughly speaking, the free product of two graphs $\mathsf{G}_1$ and $\mathsf{G}_2$ has the following properties:
\begin{enumerate}
\item It consists of infinitely many copies of $\mathsf{G}_1$ and $\mathsf{G}_2$ and edges joining these copies such that each vertex in a copy of $\mathsf{G}_i$ is joined to a vertex in a copy of $\mathsf{G}_j$ for $\{i,j\}=\{1,2\}$.
\item It has a tree of spaces structure in the sense that if we collapse each copy of $\mathsf{G}_i$ to a point, then the resulting space is a tree.
As a consequence, each vertex of the free product can be thought of a \emph{gate} in the sense that each edge joining two copies separates the free product.
\end{enumerate}
If each $\mathsf{G}_i$ is a Cayley graph of $\mathbb{G}_i$ (with respect to a finite generating set), then the free product of $\mathsf{G}_1$ and $\mathsf{G}_2$ is thought of a geometric model for $\mathbb{G}_1*\mathbb{G}_2$.

In Section~\ref{section:Operations}, we encounter the case that each $\mathsf{G}_i$ is the universal cover $\bar{\Gamma_i}$ of a finite graph $\Gamma_i$, and instead of all the vertices of $\bar{\Gamma_i}$, the preimage of a specific vertex $v_i\in\mathcal{V}(\Gamma_i)$ under the universal covering map is our gates.
After redefining the concept of a free product and introducing terminologies related to quasi-isometries preserving specific structures, we will reinterpret facts presented in \cite{PW02}.
The remaining of this subsection can be skipped on first reading and referred back to when needed in the proofs of these results.

Let $\mathcal{A}=\{(\Gamma_1,v_1),\dots,(\Gamma_m,v_m)\}$ ($m\ge 2$) be a set of pairs of finite graphs $\Gamma_i$ with base vertices $v_i$. 
We define the \emph{free product} of $(\Gamma_1,v_1),\dots,(\Gamma_m,v_m)$ as the graph which is obtained from the disjoint union of ${\Gamma_i}$'s and an $m$-star $\mathsf{S}_{m}$ (see Section~\ref{section:graph 2-braid groups} for the definition of a star) by identifying each leaf of $\mathsf{S}_{m}$ with exactly one $v_i$; $\mathsf{S}_{m}$ will be called the \emph{connecting star} of the free product. 
The free product will be denoted by $*\mathcal{A}$ or (with a slight abuse of notation based on Remark~\ref{Remark:FreeProduct} below) $(\Gamma_1,v_1)*\dots*(\Gamma_m,v_m)$. See the left of Figure~\ref{figure:m-ary vs binary}. 

The universal cover $\bar{*\mathcal{A}}$ of $*\mathcal{A}$ consists of copies of $\bar{\Gamma_i}$'s and $m$-stars such that each $m$-star connects copies of $\bar{\Gamma_1},\dots,\bar{\Gamma_m}$.
For $*\mathcal{A}$, let $V$ be a subset of $\{v_1,\dots,v_m\}$. Then we denote the preimage of $V$ in $\bar{*\mathcal{A}}$ under the universal covering map by $\bar{V}$, which is equipped with the induced metric.
The schematic picture of $\bar{*\mathcal{A}}$ is depicted in Figure~\ref{figure:universal cover}.

\begin{figure}[ht]
\[
\bar{*\mathcal{A}}=
\begin{tikzpicture}[baseline=-.5ex]
\begin{scope}
\draw[fill] (0,0) circle (2pt);
\foreach \i in {1,2,3,4,5} {
\draw[thick, fill] (0,0) -- ({72*\i+18}:1) circle (2pt);
\draw ({72*\i+18}:1) ++({72*\i+18+90}:0.25) -- ++({72*\i+18-90}:0.5);
\begin{scope}[shift=({72*\i+18}:1.7)]
\foreach \k in {1,2,3,4,5} {
\draw ({72*\i+20-90+\k*72}:0.9) node {$\scriptscriptstyle\bar{\Gamma_{\pgfmathparse{int(mod(\i+\k+1,5)+1)}\pgfmathresult}}$};
\foreach \j in {-1,0,1} {
\begin{scope}[shift=({72*\i+18}:-0.7),rotate={\j*5},shift=({72*\i+18}:0.7)]
\begin{scope}[rotate=180,scale=0.7, opacity=0.5]
\if\k0
\fill (0,\k) circle (2pt);
\fi
\draw (0,0) -- ({72*\k+18}:1);
\fill (0,0) -- ({72*\k+18}:1) circle (2pt);
\if\i\k
\else
\draw ({72*\k+18}:1) ++({72*\k+18+90}:0.25) -- ++({72*\k+18-90}:0.5);
\fi
\end{scope}
\end{scope}
}
}
\end{scope}
}
\end{scope}
\end{tikzpicture}
\]
\caption{A part of $\bar{*_{i=1}^5\{(\Gamma_i,v_i)\}}$; at each vertex, the small bar indicates the universal cover $\bar{\Gamma}_i$.}
\label{figure:universal cover}
\end{figure}
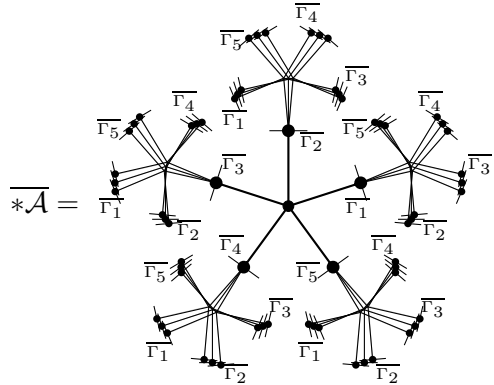

\begin{convention}
By a pair $(X,A)$ of a metric space $X$ with its subspace $A$, we always mean that the inclusion map $A\hookrightarrow X$ is a $(1,C)$-quasi-isometry, where the $C$-neighborhood of $A$ contains $X$. 
And we denote $(\bar X, p^{-1}(A))$ by $\bar{(X, A)}$.
\end{convention}

We caution the reader that the definition of `relative' maps defined below is specifically designed to say that if infinitely many `relative' quasi-isometries are well-glued, then the resulting map is also a relative quasi-isometry (and thus a quasi-isometry).

\begin{definition}[Relative (quasi-)isometry]\label{Def:RelQI}
Suppose that $(X_i,A_i)$ is a pair of a metric space with its subspace with the inclusion map $\iota_i:A_i\hookrightarrow X_i$ and a quasi-inverse $r_i$ of $\iota_i$ for $i=1,2$.
By a \emph{relative $L$-quasi-isometry} $\phi:(X_1, A_1)\to(X_2, A_2)$, we mean a quasi-isometry which is the composition of $\iota_2$, an $L$-bilipschitz map $A_1\to A_2$ and  $r_1$.
By a \emph{relative isometric embedding} $\phi:(X_1, A_1)\to(X_2, A_2)$, we mean an isometric embedding $X_1\to X_2$ whose restriction to $A_1$ is an injection to $A_2$; similarly, we define a \emph{relative isometry}.
\end{definition}

It is worth noticing that a relative $L$-quasi-isometry $\phi:(X_1, A_1)\to(X_2, A_2)$ is an $(L,2C)$-quasi-isometry where $C$ is a constant satisfying $X_i\subseteq \mathcal{N}_C(A_i)$ for $i=1,2$.

Let $\phi:X\to Y$ be either a (quasi-)isometry or a (quasi-)isometric embedding between metric spaces. For a proper subspace $A\subsetneq X$, in general, the restriction $\phi|_A:A\to Y$ is only a (quasi-)isometric embedding due to the lack of (quasi-)surjectivity. 
Unless mentioned otherwise, throughout this paper, we mean a (quasi-)isometry $\phi|_A:A\to B$ onto a certain subspace $B\subset Y$ just by the \emph{restriction} of $\phi$ to $A$ whenever the target space $B$ could be defined obviously, and similarly for relative maps defined above.

\begin{remark}\label{Remark:FreeProduct}
Let $\mathcal{A}=\{(\Gamma_1,v_1),\dots, (\Gamma_m,v_m)\}$ ($m\ge2$) be a set of pairs of finite graphs with base vertices. As $*\mathcal{A}$ is defined, we may define two other graphs from $\mathcal{A}$ as follows. 
One is the graph $\vee_i(\Gamma_i,v_i)$ obtained from the disjoint union of $(\Gamma_i,v_i)$'s by identifying all $v_i$'s.
The other one is constructed inductively: 
Let $\mathsf{Z}_1$ be the free product of $(\Gamma_1,v_1)$ and $(\Gamma_2,v_2)$ and $w_1$ the central vertex of the connecting $2$-star of $\mathsf{Z}_1$. 
Then we inductively define $\mathsf{Z}_k$ as the free product of $(\mathsf{Z}_{k-1},w_{k-1})$ and $(\Gamma_{k+1},v_{k+1})$ and $w_{k}$ as the central vertex of the connecting $2$-star of $\mathsf{Z}_k$ for $k=2,\dots,m-1$ and thus have $(\mathsf{Z}_{m-1},w_{m-1})$.
See Figure~\ref{figure:m-ary vs binary}.

However, there is no difference between $\bar{\mathsf{Z}_{m-1}}$ and $\bar{*\mathcal{A}}$ up to (relative) quasi-isometry. More precisely, for $V=\{v_1,\dots, v_m\}$ in both $\mathsf{Z}_{m-1}$ and $*\mathcal{A}$, there is a relative quasi-isometry $\bar\phi:\bar{(\mathsf{Z}_{m-1},V)}\to\bar{(*\mathcal{A},V)}$
which induces a one-to-one correspondence between copies of $\bar{(\Gamma_i,v_i)}$ in $\bar{\mathsf{Z}_{m-1}}$ and those in $\bar{*\mathcal{A}}$ such that the restriction of $\bar\phi$ to each copy is a relative isometry to the corresponding copy. 
Similarly, there is no difference between $\bar{\vee_i(\Gamma_i,v_i)}$ and $\bar{*\mathcal{A}}$ up to quasi-isometry.
\end{remark}

\begin{figure}[ht]
\[
*_{i=1}^{5}\{(\Gamma_i,v_i)\}=
\begin{tikzpicture}[baseline=-.5ex]
\draw[fill] (0,0) circle (2pt) node[above=1ex, right=-.5ex] {$w$};
\foreach \i in {1,2,3,4,5} {
\draw[fill] (0,0) -- ({72*\i+18}:1) circle (2pt);
\draw ({72*\i+18+20}:0.85) node {$v_{\i}$};
\begin{scope}[rotate={72*\i+18}]
\draw (1,-0.4) rectangle (1.7, 0.4) node[pos=0.5] {$\Gamma_{\i}$};
\end{scope}
}
\end{tikzpicture}
\qquad
\mathsf{Z}_4=
\begin{tikzpicture}[baseline=-.5ex]
\foreach \i in {1,2,3,4,5} {
\begin{scope}[rotate={72*(\i-1)+18}]
\draw (1,-0.4) rectangle (1.7, 0.4) node[pos=0.5] {$\Gamma_{\i}$};
\end{scope}
\draw[fill] (18:1) circle (2pt) node[below=1ex, left=-1ex] {$v_1$} -- coordinate[midway] (m1) ({72+18}:1) circle (2pt) node[below] {$v_2$};
\draw[fill] (m1) circle(2pt) -- coordinate[midway] (m2) ({72*2+18:1}) circle (2pt) node[below=1ex, right=-.5ex] {$v_3$};
\draw[fill] (m2) circle (2pt) -- coordinate[midway] (m3) ({72*3+18:1}) circle (2pt) node[above=.5ex, right=-.5ex] {$v_4$};
\draw[fill] (m3) circle (2pt) -- coordinate[midway] (m4) ({72*4+18:1}) circle (2pt) node[above=0ex] {$v_5$};
\draw[fill] (m4) circle (2pt) node[above] {$w$};
}
\end{tikzpicture}
\]
\caption{Free product of graphs as an $m$-ary operation and as a composition of binary operations.}
\label{figure:m-ary vs binary}
\end{figure}
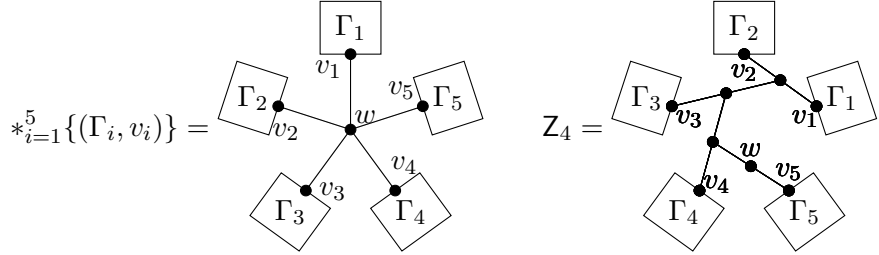

The following two lemmas are obvious and we omit the proof.

\begin{lemma}\label{Lem:RelQI}
Suppose that there are relative $L$-quasi-isometries $\phi_i:(X_i,A_i)\to (Y_i,B_i)$ for $i=0,1,2$ and relative isometric embeddings $(X_0,A_0)\hookrightarrow (X_j,A_j)$ and $(Y_0,B_0)\hookrightarrow (Y_j,Y_j)$ for $j=1,2$.
Then there is a relative $L$-quasi-isometry 
\[\phi_1\cup_{\phi_0}\phi_2:(X_1\cup_{X_0} X_2,A_1\cup_{A_0}A_2)\to(Y_1\cup_{Y_0} Y_2,B_1\cup_{B_0}B_2)\]
if the diagram below commutes:
\[
\begin{tikzcd}[column sep=4em, row sep=2em]
(X_1,A_1)\arrow[d,"\phi_1"]\arrow[hookleftarrow,r,""] &(X_0,A_0)\arrow[hookrightarrow,r," "]\arrow[d,"\phi_0"] & (X_2,A_2)\arrow[d,"\phi_2"]\\
(Y_1,B_1)\arrow[hookleftarrow,r," "]&(Y_0,B_0)\arrow[hookrightarrow,r," "] & (Y_2,B_2)
\end{tikzcd}
\]
\end{lemma}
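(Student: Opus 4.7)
The plan is to construct $\phi_1\cup_{\phi_0}\phi_2$ piecewise by declaring it to be $\phi_i$ on $X_i$ for $i=1,2$ and checking that the definition is consistent on the overlap $X_0$, then packaging the result in the form demanded by Definition~\ref{Def:RelQI}.

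First I would unpack the data: by Definition~\ref{Def:RelQI}, each $\phi_i$ factors as $\phi_i=\iota^Y_i\circ f_i\circ r^X_i$, where $f_i:A_i\to B_i$ is $L$-bilipschitz, $\iota^Y_i:B_i\hookrightarrow Y_i$ is the inclusion, and $r^X_i$ is a chosen quasi-inverse of $\iota^X_i:A_i\hookrightarrow X_i$. Commutativity of the diagram, combined with the fact that the horizontal arrows are relative isometric embeddings (so their restrictions to the $A_j$'s and $B_j$'s are honest isometric inclusions), forces the compatibility $f_1|_{A_0}=f_0=f_2|_{A_0}$. Hence one can define
\[
f:=f_1\cup_{f_0}f_2\colon A_1\cup_{A_0}A_2\longrightarrow B_1\cup_{B_0}B_2
\]
as a well-defined map between the pushout metric spaces.

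The key step is to verify that $f$ is again $L$-bilipschitz. For two points $a,a'\in A_1\cup_{A_0}A_2$ lying in the same piece, the estimate is immediate from the $L$-bilipschitzness of $f_1$ or $f_2$. If they lie in different pieces, then by the definition of the pushout metric any (nearly minimal) path between them factors through $A_0$; choosing a point $a_0\in A_0$ which realizes (up to $\varepsilon$) this factorization, we combine
\[
L^{-1} d(a,a_0) \le d(f(a),f(a_0)) \le L\, d(a,a_0)
\]
with the analogous estimates on the other side via the triangle inequality to obtain the same $L$-bilipschitz bound on $f$ after letting $\varepsilon\to 0$. Symmetric reasoning applied to $f^{-1}=f_1^{-1}\cup_{f_0^{-1}}f_2^{-1}$ gives the lower bound.

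Finally, since $(X_j,A_j)$ and $(Y_j,B_j)$ are pairs in the sense of the convention preceding the lemma, the inclusions $A_1\cup_{A_0}A_2\hookrightarrow X_1\cup_{X_0}X_2$ and $B_1\cup_{B_0}B_2\hookrightarrow Y_1\cup_{Y_0}Y_2$ are $(1,C)$-quasi-isometries for some uniform $C$ depending only on the constants for the individual pairs (a point in $X_1\cup_{X_0}X_2$ lies in some $X_j$, hence is $C$-close to $A_j\subset A_1\cup_{A_0}A_2$). Choosing a quasi-inverse $r$ of the first inclusion, the composition
\[
(X_1\cup_{X_0}X_2,\, A_1\cup_{A_0}A_2) \xrightarrow{\;r\;} A_1\cup_{A_0}A_2 \xrightarrow{\;f\;} B_1\cup_{B_0}B_2 \hookrightarrow (Y_1\cup_{Y_0}Y_2,\, B_1\cup_{B_0}B_2)
\]
is, by construction, a relative $L$-quasi-isometry and agrees (up to the bounded ambiguity inherent in choosing the $r^X_i$'s) with $\phi_j$ on each $X_j$, as required. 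The one point that requires care rather than routine verification is the bilipschitz gluing step, where one must know that the pushout metric does not create shortcuts outside of $A_0$; this is exactly why the hypothesis uses relative isometric embeddings on the $X_0\hookrightarrow X_j$ and $Y_0\hookrightarrow Y_j$ sides rather than mere relative quasi-isometric embeddings.
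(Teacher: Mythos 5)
The paper gives no argument for this lemma (it is dismissed as ``obvious''), so there is nothing to compare against; your piecewise gluing is exactly the intended proof and is essentially correct. One small caveat: granting the natural choice that each quasi-inverse $r^X_i$ fixes $A_i$ pointwise (so that exact commutativity really does give $f_1|_{A_0}=f_0=f_2|_{A_0}$), your claim that $f=f_1\cup_{f_0}f_2$ is again \emph{exactly} $L$-bilipschitz is slightly too strong. If $A_1\cup_{A_0}A_2$ carries the subspace metric inherited from $X_1\cup_{X_0}X_2$ (as it must for the pair convention to hold), a nearly geodesic path from $a\in A_1$ to $a'\in A_2$ crosses $X_0$ but need not pass through $A_0$; replacing its crossing point by a nearest point of $A_0$ costs an additive $2C$, so $f$ is $L$-bilipschitz only up to an additive constant $2LC$. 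This is harmless---the composite is still a quasi-isometry with constants depending only on $L$ and $C$, which is all the lemma is ever used for, and it is consistent with the paper's own remark that a relative $L$-quasi-isometry is merely an $(L,2C)$-quasi-isometry---but to get the literal conclusion one should either compute distances in $A_1\cup_{A_0}A_2$ via the amalgam over $A_0$ or accept this bounded modification. Your closing observation that the isometric-embedding hypotheses rule out shortcuts (so that $d_{X_1\cup_{X_0}X_2}$ restricted to each $X_j$ agrees with $d_{X_j}$) is correct and is indeed the real place where those hypotheses are needed.
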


\begin{lemma}\label{Lem:RelQIProduct}
Suppose that there are relative $L$-quasi-isometries $\phi_i:(X_i,A_i)\to (Y_i,B_i)$ for $i=1,2$.
Then there is a relative $L^2$-quasi-isometry 
\[\phi_1\times\phi_2:(X_1\times X_2,A_1 \times A_2)\to(Y_1\times Y_2,B_1\times B_2).\]
\end{lemma}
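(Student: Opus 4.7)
The statement is a direct unfolding of Definition~\ref{Def:RelQI}, so the approach is to construct $\phi_1\times\phi_2$ piecewise from the data of $\phi_1$ and $\phi_2$ and then verify each clause of the definition. By hypothesis we may write $\phi_i=\iota_{Y_i}\circ f_i\circ r_{X_i}$, where $\iota_{Y_i}\colon B_i\hookrightarrow Y_i$ is the inclusion, $f_i\colon A_i\to B_i$ is $L$-bilipschitz, and $r_{X_i}\colon X_i\to A_i$ is a chosen quasi-inverse of the inclusion $\iota_{X_i}\colon A_i\hookrightarrow X_i$. The candidate map factors as
\[
\phi_1\times\phi_2=(\iota_{Y_1}\times\iota_{Y_2})\circ(f_1\times f_2)\circ(r_{X_1}\times r_{X_2}),
\]
in which the outer left factor is literally the inclusion $B_1\times B_2\hookrightarrow Y_1\times Y_2$. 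A short check handles the right factor: since the near-identity estimates for each $r_{X_i}\circ\iota_{X_i}$ and each $\iota_{X_i}\circ r_{X_i}$ add coordinatewise in any $\ell^p$-product metric, $r_{X_1}\times r_{X_2}$ is a quasi-inverse of the product inclusion $A_1\times A_2\hookrightarrow X_1\times X_2$.

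The only step requiring genuine care is the bilipschitz estimate for $f_1\times f_2\colon A_1\times A_2\to B_1\times B_2$, so this is the expected main obstacle. Coordinatewise, each $f_i$ is $L$-bilipschitz, hence $f_1\times f_2$ is $L$-bilipschitz with respect to any $\ell^p$-product of the factor metrics. The subtlety is that the source and target are cube complexes equipped with the intrinsic length metric rather than an $\ell^p$-product metric; however, on each closed square cell the length metric coincides with the Euclidean metric on a unit square, and is therefore bilipschitz equivalent to the $\ell^1$- or $\ell^\infty$-product of the factor metrics with a universal constant at most $\sqrt{2}$. Absorbing this universal constant into the multiplicative distortion yields the bound $L^2$ stated in the lemma, and together with the first paragraph exhibits $\phi_1\times\phi_2$ in the form required by Definition~\ref{Def:RelQI}.
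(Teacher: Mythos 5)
Your proof is correct; the paper omits this lemma's proof as obvious, and your direct verification of Definition~\ref{Def:RelQI} for the product map (bilipschitz on $A_1\times A_2$, product of quasi-inverses is a quasi-inverse of the product inclusion) is exactly the intended argument. The only cosmetic wobble is the last step: absorbing a universal $\sqrt{2}$ into the multiplicative distortion to reach $L^2$ only works when $L\ge\sqrt{2}$, but this is moot --- an $L$-bilipschitz map is automatically $L^2$-bilipschitz since $L\ge1$, and in fact the length metric on a product of square complexes is the $\ell^2$-product of the factor metrics, so $f_1\times f_2$ is already $L$-bilipschitz with no correction needed.
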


Now, we are ready to rewrite results in \cite{PW02} in our language.

\begin{lemma}[\cite{PW02}, Lemma~1.1]\label{Lemma1.1inPW}
Let $(\Gamma,v)$ and $(\Lambda,w)$ be pairs of finite graphs with base vertices and let $\mathsf{Z}$ be a graph satisfying the following:
\begin{itemize}
\item $\mathsf{Z}$ contains a disjoint family of subgraphs $\{\bar{(\Gamma_i,v_i)}\}$ and $\{\bar{(\Lambda_j,w_j)}\}$ for pairs of finite graphs with base vertices $(\Gamma_i,v_i)$ and $(\Lambda_j,w_j)$, whose union contains all vertices of $\mathsf{Z}$.
\item Let $V\subset\mathsf{Z}$ be the union of $p_i^{-1}(v_i)\subset\bar{\Gamma_i}$ and $q_j^{-1}(w_j)\subset\bar{\Lambda_j}$ for all $i$ and $j$, where $p_i:\bar\Gamma_i\to\Gamma_i$ and $q_j:\bar\Lambda_j\to \Lambda_j$ are the universal covering maps.
Then every edge of $\mathsf{Z}$ not in one of the subgraphs connects a vertex in $p_i^{-1}(v_i)$ to a vertex in $q_j^{-1}(w_j)$ for some $i$ and $j$, and there is exactly one such edge at each vertex in $V$.
\item For a fixed $L$, there is a set $\mathcal{P}$ of relative $L$-quasi-isometries $\bar{(\Gamma,v)}\to\bar{(\Gamma_i,v_i)}$ and $\bar{(\Lambda,w)}\to\bar{(\Lambda_j,v_j)}$ for each $i$ and $j$.
\item The graph obtained from $\mathsf{Z}$ by collapsing each of the subgraphs to a point is a tree.
\end{itemize}
Then for $\mathsf{Z}_1=(\Gamma,v)*(\Lambda,w)$ and $V_1=\{v,w\}$, there is a relative $L$-quasi-isometry $\phi:\bar{(\mathsf{Z}_1,V_1)}\to(\mathsf{Z},V)$ such that
\begin{enumerate}
\item $\phi$ induces a one-to-one correspondence between copies of $\bar{(\Gamma,v)}$ ($\bar{(\Lambda,w)}$, resp.) in $\bar{\mathsf{Z}_1}$ and copies of $\{\bar{(\Gamma_i,v_i)}\}$ ($\{\bar{(\Lambda_j,v_j)}\}$, resp.) in $\mathsf{Z}$ such that the restriction of $\phi$ to each copy is a relative quasi-isometry in $\mathcal{P}$.
\item Suppose that a copy $\mathsf{A}$ of $\bar{(\Gamma,v)}$ and a copy $\mathsf{B}$ of $\bar{(\Lambda,w)}$ correspond to a copy $\mathsf{A}'$ of $\bar{(\Gamma_i,v_i)}$ and a copy $\mathsf{B}'$ of $\bar{(\Lambda_j,v_j)}$, respectively.
If $\mathsf{A}$ and $\mathsf{B}$ are joined by a connecting $2$-star, then $\phi$ maps the connecting $2$-star to an edge joining $\mathsf{A}'$ and $\mathsf{B}'$, and \textit{vice versa}.
\end{enumerate}
\end{lemma}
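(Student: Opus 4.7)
The plan is to construct $\phi$ inductively along the tree-of-spaces structure shared by $\bar{\mathsf{Z}_1}$ and $\mathsf{Z}$. First I would observe that both spaces carry a bipartite tree of pieces: collapsing each copy of $\bar{(\Gamma,v)}$ or $\bar{(\Lambda,w)}$ in $\bar{\mathsf{Z}_1}$ to a point yields a tree by the very definition of the free product, and the same for $\mathsf{Z}$ by hypothesis. Moreover, in both spaces the bridges between pieces (a connecting $2$-star in $\bar{\mathsf{Z}_1}$, a single edge in $\mathsf{Z}$) join a gate of $\bar{\Gamma}$-type to a gate of $\bar{\Lambda}$-type, and at each gate vertex in $V_1$ or $V$ there is exactly one such bridge. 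Hence the two bipartite trees of pieces are locally isomorphic: a $\bar{\Gamma}$-piece has one outgoing bridge per preimage of $v$ (or $v_i$), and a $\bar{\Lambda}$-piece has one per preimage of $w$ (or $w_j$).

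Next I would define $\phi$ by induction on the distance from a chosen root in the piece-tree. Pick any root copy $\mathsf{A}_0$ of $\bar{(\Gamma,v)}$ in $\bar{\mathsf{Z}_1}$ and any root copy $\mathsf{A}'_0$ of $\bar{(\Gamma_{i_0},v_{i_0})}$ in $\mathsf{Z}$, then choose a relative $L$-quasi-isometry $\phi_0\in\mathcal{P}$ from $\mathsf{A}_0$ to $\mathsf{A}'_0$. Assume $\phi$ has been defined on all pieces at combinatorial distance $\le k$ from the root so that every gate vertex of an outermost piece is sent bijectively to a gate vertex of the corresponding image piece. For each such matched pair of gates there is a unique bridge on each side going to a new piece of the opposite type; pick a relative $L$-quasi-isometry from $\mathcal{P}$ between the two new pieces that sends the distinguished gate of the source to the distinguished gate of the target, and map the bridge in $\bar{\mathsf{Z}_1}$ (a $2$-star) to the corresponding edge in $\mathsf{Z}$ in the unique sensible way. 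Both numbered properties hold by construction, and because the piece-tree has no cycles the process is well-defined with no compatibility to check beyond each single gluing step.

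The last step is to verify that the resulting global map is a relative $L$-quasi-isometry of pairs. For any finite subtree of pieces, Lemma~\ref{Lem:RelQI} applied iteratively to the (finitely many) gluings produces a relative $L$-quasi-isometry of the corresponding finite subcomplexes; the point of Definition~\ref{Def:RelQI} is precisely that the bilipschitz constant on the gate set is preserved under such gluings, so no distortion accumulates. Taking an exhaustion by finite subtrees then gives that the restriction of $\phi$ to $p^{-1}(V_1)$ is an $L$-bilipschitz bijection onto $V$, and composing with the inclusion and a quasi-inverse of the retraction onto gates produces the required relative $L$-quasi-isometry $\bar{(\mathsf{Z}_1,V_1)}\to(\mathsf{Z},V)$.

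The expected main obstacle is the last step: showing that $L$-bilipschitz behavior on individual gates of each piece really suffices to control distances across long paths in $\bar{\mathsf{Z}_1}$, so that the constant $L$ does not need to be replaced by something larger. This ultimately rests on the tree-of-spaces structure, which forces every geodesic between two points in different pieces to cross each intermediate piece through its two gates to neighboring bridges, and on the fact that the bridges themselves are of uniformly bounded length. Once this is observed, the relative quasi-isometry framework of Definition~\ref{Def:RelQI} packages the required gluing argument cleanly.
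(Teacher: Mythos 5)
Your proposal is correct and follows the same route as the source: the paper does not reprove this lemma but simply cites \cite{PW02}, and your bipartite tree-of-pieces induction with gate-matching, followed by iterated application of Lemma~\ref{Lem:RelQI} and a direct limit, is precisely the Papasoglu--Whyte argument, in the same style the paper itself deploys later (e.g.\ in the proofs of Theorems~\ref{theorem:loop reducing} and~\ref{theorem:quasi folding}). The one step worth making explicit is that to send a chosen gate of a source piece to the prescribed gate of its target piece you should pre- and post-compose a map of $\mathcal{P}$ with deck transformations, which are isometries permuting the gate sets and hence preserve the relative $L$-quasi-isometry property.
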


\begin{lemma}[\cite{PW02}, Lemma~1.4]\label{Lemma1.4inPW}
For two pairs of finite graphs with base vertices $(\Gamma,v)$ and $(\Lambda,w)$, let 
\[
(\mathsf{Z}_1, V_1)=((\Gamma, v) * (\Lambda,w), \{v,w\})\text{ and }
(\mathsf{Z}_2, V_2)=(*\{(\Gamma, v),(\Lambda,w),(\Lambda',w')\}, \{v,w,w'\}),
\]
where $(\Lambda',w')$ is a copy of $(\Lambda,w)$.
Then there is a relative quasi-isometry $\bar{(\mathsf{Z}_1, V_1)} \to \bar{(\mathsf{Z}_2, V_2)}$ which induces a one-to-one correspondence between copies of $\bar{(\Gamma,v)}$ ($\bar{(\Lambda,w)}$, resp.) in $\bar{(\mathsf{Z}_1, V_1)}$ and copies of $\bar{(\Gamma,v)}$ ($\bar{(\Lambda,w)}$ or $\bar{(\Lambda',w')},\textrm{ resp.}$) in $\bar{(\mathsf{Z}_2, V_2)}$ such that the restriction to each copy is a relative isometry on the corresponding copy. 
\end{lemma}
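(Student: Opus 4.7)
The plan is to apply Lemma~\ref{Lemma1.1inPW} with a carefully chosen graph $\mathsf{Z}$ derived from $\bar{(\mathsf{Z}_2, V_2)}$, so that the desired relative quasi-isometry emerges as the composition of the map furnished by Lemma~\ref{Lemma1.1inPW} with a relative quasi-isometry $(\mathsf{Z}, V) \to \bar{(\mathsf{Z}_2, V_2)}$, glued via Lemma~\ref{Lem:RelQI}.

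First I would use the given isomorphism $(\Lambda', w') \cong (\Lambda, w)$ to fix, once and for all, a relative isometry between each $\bar\Lambda'$-copy in $\bar{\mathsf{Z}_2}$ and $\bar{(\Lambda, w)}$. After this identification, $\bar{\mathsf{Z}_2}$ is a tree of spaces whose vertex spaces are of two effective types (copies of $\bar\Gamma$ and of $\bar\Lambda$) glued along $3$-stars, while $\bar{\mathsf{Z}_1}$ is a tree of spaces of the same two types glued along $2$-stars. The key structural difference is that each $\bar\Gamma$-gate of $\bar{\mathsf{Z}_2}$ sits on a $3$-star with two $\bar\Lambda$-type branches, whereas each $\bar\Gamma$-gate of $\bar{\mathsf{Z}_1}$ sits on a single $2$-star.

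Next I would construct $\mathsf{Z}$ by reorganizing each $3$-star: for a $3$-star with center $c$ and leaves at gates $\alpha \in \bar\Gamma$, $\beta \in \bar\Lambda$, $\gamma \in \bar\Lambda'$, I would absorb the $\gamma$-branch into the adjacent $\bar\Lambda$-copy at $\beta$ by fusing the $\bar\Lambda$-and-$\bar\Lambda'$-pair across $c$ into a single enlarged vertex space, which by Remark~\ref{Remark:FreeProduct} is relatively quasi-isometric to $\bar{(\Lambda, w)}$ itself (since the fused pair is precisely a copy of $\bar{((\Lambda,w)\ast(\Lambda',w'))}$ based at a $w$-type gate). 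The resulting $\mathsf{Z}$ has a single connecting edge between each $\bar\Gamma$-gate $\alpha$ and the corresponding enlarged $\bar\Lambda$-gate, so Lemma~\ref{Lemma1.1inPW} applies with the set $\mathcal{P}$ of relative quasi-isometries consisting of the fixed identification on $\bar\Gamma$-copies and the Remark~\ref{Remark:FreeProduct} map on enlarged $\bar\Lambda$-copies. Applying it yields a relative quasi-isometry $\phi_1 : \bar{(\mathsf{Z}_1, V_1)} \to (\mathsf{Z}, V)$ with the required one-to-one correspondence, and composing with the reorganization map $(\mathsf{Z}, V) \to \bar{(\mathsf{Z}_2, V_2)}$ via Lemma~\ref{Lem:RelQI} delivers the sought $\phi$.

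The main obstacle is the reorganization step: making $\mathsf{Z}$ literally satisfy the vertex and edge conditions of Lemma~\ref{Lemma1.1inPW} while preserving quasi-isometry to $\bar{(\mathsf{Z}_2, V_2)}$. The subtlety is that each gate vertex in $V$ must have exactly one connecting edge, forcing a specific choice of which branch of each $3$-star is contracted into its endpoint; if this choice is made equivariantly (say, always contracting the $c{-}\gamma$ edge into $\beta$), then the enlarged $\bar\Lambda$-copies are isometric copies of $\bar{((\Lambda,w)\ast(\Lambda',w'))}$, and Remark~\ref{Remark:FreeProduct} provides the uniform relative quasi-isometries to $\bar{(\Lambda, w)}$ needed to populate $\mathcal{P}$. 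Handling the edge cases where $\pi_1(\Gamma)$ or $\pi_1(\Lambda)$ is trivial requires a separate direct argument, since the enlarged $\bar\Lambda$-copies may then degenerate.
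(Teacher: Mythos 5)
The paper offers no proof of this lemma: it is imported verbatim from [PW02, Lemma~1.4], so there is nothing internal to compare against, and your argument must stand on its own. It does not, because its central step is unjustified and in fact false in the generality required. Your decomposition of $\bar{\mathsf{Z}_2}$ into disjoint copies of $\bar{(\Gamma,v)}$ and the components of the subgraph spanned by the $\bar\Lambda$- and $\bar{\Lambda'}$-copies together with the two non-$\Gamma$ legs of each $3$-star is correct, each such component is indeed a copy of $\bar{((\Lambda,w)*(\Lambda',w'))}$, and the resulting $\mathsf{Z}$ does satisfy the combinatorial hypotheses of Lemma~\ref{Lemma1.1inPW}. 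But to populate $\mathcal{P}$ you then need a uniform relative quasi-isometry $\bar{(\Lambda,w)}\to\bar{((\Lambda,w)*(\Lambda',w'),u)}$, where $u$ must be the centre of the connecting $2$-star (that is where the connecting edges attach, not a $w$-type gate as you wrote). Remark~\ref{Remark:FreeProduct} does not supply this map: it only compares the $m$-ary free product with the iterated binary one and with the wedge, always on the \emph{same} collection of factors. The map you need is the content of the lemma in disguise, and it does not exist when $\pi_1(\Lambda)\cong\mathbb{Z}$ (for a circle, $\bar\Lambda$ is a line while $\bar{(\Lambda*\Lambda')}$ is quasi-isometric to $\mathbb{F}_2$) nor when $\pi_1(\Lambda)$ is trivial. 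This is not a removable edge case: the paper's own application in the proof of Lemma~\ref{Lem:PWLem1} invokes the present lemma with $(\Lambda,w)=(\mathsf{R}_1,r_1)$, a single loop with $\pi_1\cong\mathbb{Z}$, which is exactly where your reduction breaks.

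Even in the cases where $\pi_1(\Lambda)$ is free of rank at least $2$ and some relative quasi-isometry $\bar{(\Lambda,w)}\to\bar{((\Lambda,w)*(\Lambda',w'),u)}$ could be manufactured, your construction would only produce a correspondence between $\bar\Lambda$-copies of $\bar{\mathsf{Z}_1}$ and entire $\bar{(\Lambda*\Lambda')}$-blocks of $\bar{\mathsf{Z}_2}$, with restrictions that are merely relative quasi-isometries onto those blocks. The statement demands strictly more: a bijection with the \emph{individual} $\bar\Lambda$- and $\bar{\Lambda'}$-copies whose restrictions are relative \emph{isometries}, and this stronger conclusion is what the paper actually uses downstream in Lemmas~\ref{Lem:PWLem5}, \ref{Lem:PWLem1} and \ref{Lemma:PWConclusion}. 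The Papasoglu--Whyte argument does not absorb the surplus $\bar{\Lambda'}$-copies locally into their neighbours; it builds a bijection between the copies indexed by the two underlying bushy trees, compatible with adjacency up to bounded error, and then glues copy-wise isometries. Your approach cannot be repaired without essentially carrying out that tree-matching argument.
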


For $n\ge 1$, let $\mathsf{R}_n$ be a bouquet of $n$ loops with central vertex $r_n$. Then the role of $(\mathsf{R}_n,r_n)$ in a free product may not be particularly important while studying the large scale geometry of the free product in the following sense.

\begin{lemma}\label{Lem:PWLem5}
For a set of pairs of finite graphs with base vertices $\mathcal{A}=\{(\Gamma_1,v_1),\dots,(\Gamma_n,v_n)\}$ ($n\ge 2$), let 
\[
\mathsf{Z}_1=*\mathcal{A},\quad\mathsf{Z}_2=*\mathcal{A}'\quad\text{and}\quad\mathsf{Z}_3=(\mathsf{Z}_1,c) \vee (\mathsf{R}_m,r_m),
\]
where $\mathcal{A}'=\mathcal{A}\cup\{(\mathsf{R}_m,r_m)\}$ and denote the subset $\{v_1,\dots,v_n\}$ of each $\mathsf{Z}_i$ by $V$.
Then for any pair of $1\le i<j\le 3$, there is a relative quasi-isometry $\phi_{i,j}:\bar{(\mathsf{Z}_i, V)} \to \bar{(\mathsf{Z}_j, V)}$ which induces a one-to-one correspondence between copies of $\bar{(\Gamma_k,v_k)}$ and copies of $\bar{(\Gamma_k,v_k)}$ for each $1\le k\le n$ such that the restriction of $\phi_{i,j}$ to each copy is a relative isometry.
\end{lemma}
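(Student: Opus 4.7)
The plan is to construct two of the three relative quasi-isometries directly and to obtain the third by composition, using that relative quasi-isometries compose and admit relative quasi-inverses. I will construct $\phi_{2,3}$ and $\phi_{1,2}$; then $\phi_{1,3}$ is obtained as the composition.

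For $\phi_{2,3}$ I will use an edge-collapse argument. The graph $\mathsf{Z}_3=(\mathsf{Z}_1,c)\vee(\mathsf{R}_m,r_m)$ is obtained from $\mathsf{Z}_2=*\mathcal{A}'$ by collapsing the single edge $e$ of the connecting $(n+1)$-star that joins its central vertex to the leaf $r_m$. The resulting quotient lifts to a combinatorial surjection $\bar\psi:\bar{\mathsf{Z}_2}\to\bar{\mathsf{Z}_3}$ that collapses each lift of $e$ and is otherwise an isomorphism. Since the lifts of $e$ are pairwise disjoint edges of length $1$ whose endpoints meet neither $p^{-1}(V)$ nor any copy of $\bar{(\Gamma_k,v_k)}$, the map $\bar\psi$ is a quasi-isometry of small additive distortion, restricts to the identity on $p^{-1}(V)$, and induces a bijection between copies of $\bar{(\Gamma_k,v_k)}$ that is an isometry on each copy, giving $\phi_{2,3}$.

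For $\phi_{1,2}$ the strategy is to apply Lemma~\ref{Lemma1.1inPW} in its straightforward extension to $n+1$ subgraph types (possibly after subdividing so that the central vertices of the connecting stars are absorbed into adjacent subgraphs), viewing both $\bar{\mathsf{Z}_1}$ and $\bar{\mathsf{Z}_2}$ as trees of spaces. In $\bar{\mathsf{Z}_1}$ the relevant subgraphs are copies of $\bar{(\Gamma_k,v_k)}$ connected through lifts of the $n$-star, whereas in $\bar{\mathsf{Z}_2}$ one additionally has copies of $\bar{(\mathsf{R}_m,r_m)}$ connected through lifts of the $(n+1)$-star; collapsing the subgraphs in either side produces an infinite tree, namely the Bass--Serre tree of the corresponding free-product splitting. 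Under the standing richness hypothesis on $\mathcal{A}$ that makes $\pi_1(\mathsf{Z}_1)$ infinite, both universal covers contain countably many copies of each $\bar{(\Gamma_k,v_k)}$, so I will build a bijection between these two families of copies inductively along matched spanning subtrees of the two collapsed trees, and then glue isometries on individual copies via Lemma~\ref{Lem:RelQI} to assemble $\phi_{1,2}$.

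The principal obstacle I anticipate is the bilipschitz control on $p^{-1}(V)$ required by Definition~\ref{Def:RelQI}: in $\bar{\mathsf{Z}_1}$ two copies of $\bar{(\Gamma_k,v_k)}$ sharing a lift of $c$ are at bounded distance, while in $\bar{\mathsf{Z}_2}$ two copies sharing a common copy of $\bar{\mathsf{R}_m}$ can be separated by an arbitrarily long path through that $\bar{\mathsf{R}_m}$. Hence a naive bijection between copies is not automatically bilipschitz. Following the proof strategy of Lemma~\ref{Lemma1.1inPW}, the remedy is to choose the bijection so that it respects tree-adjacency in the two collapsed trees, both of which have bounded local branching, and to bound the distortion additively in terms of the diameters of the matched copies and the branching constant; this upgrades the bijection to an $L$-bilipschitz map on $p^{-1}(V)$ and yields the desired relative quasi-isometry $\phi_{1,2}$, completing the proof.
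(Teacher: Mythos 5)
Your overall architecture matches the paper's: both treat $\phi_{2,3}$ as coming from the collapse of the star edge joining the central vertex to $r_m$ (the paper phrases this as "a homotopy equivalence between $\mathsf{Z}_2$ and $\mathsf{Z}_3$ relative to $\mathsf{Z}_1$"), and both obtain $\phi_{1,3}$ by composition. Your edge-collapse description of $\phi_{2,3}$ is essentially correct, though the distortion is multiplicative (bilipschitz) rather than "additive", since a geodesic between points of $p^{-1}(V)$ may cross a definite proportion of collapsed edges; this does not affect the conclusion.

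The genuine gap is in $\phi_{1,2}$. The paper obtains this map by citing \cite[Theorem~2.1]{PW02} outright, and that is not an incidental choice: the assertion that $\bar{*\mathcal{A}}$ and $\bar{*(\mathcal{A}\cup\{(\mathsf{R}_m,r_m)\})}$ are relatively quasi-isometric is precisely the nontrivial content of Papasoglu--Whyte (adding a free factor does not change the quasi-isometry type of a non-elementary free product), and its proof requires the duplication mechanism of Lemma~\ref{Lemma1.4inPW} and further manipulations, not only the gluing Lemma~\ref{Lemma1.1inPW}. Your proposed route cannot close this: Lemma~\ref{Lemma1.1inPW}, even extended to $n+1$ piece types, produces a quasi-isometry from a standard free product onto a tree of spaces \emph{whose pieces are relatively quasi-isometric to the source's piece types}; but $\bar{\mathsf{Z}_2}$ contains infinitely many copies of $\bar{\mathsf{R}_m}$ (infinite trees) that have no counterpart among the pieces of $\bar{\mathsf{Z}_1}$ and cannot be absorbed into adjacent copies of $\bar{\Gamma_k}$ without destroying the requirement that the restriction to each copy be a relative \emph{isometry}. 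Your fallback --- an inductive bijection of copies along "matched spanning subtrees" with distortion bounded "in terms of the diameters of the matched copies and the branching constant" --- does not repair this: the collapsed trees have vertices of infinite valence (each copy of $\bar{\Gamma_k}$ with nontrivial fundamental group has infinitely many gates), the copies may have infinite diameter, and producing a bijection of copies whose induced map on $p^{-1}(V)$ is globally bilipschitz is exactly the statement being proved, not a routine bookkeeping step. Either cite \cite[Theorem~2.1]{PW02} as the paper does, or reconstruct its proof via Lemma~\ref{Lemma1.4inPW}-style duplications; as written, the key step is assumed rather than established. (Separately, your appeal to a "standing richness hypothesis" making $\pi_1(\mathsf{Z}_1)$ infinite is not part of the lemma's hypotheses and should not be invoked.)
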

\begin{proof}
By \cite[Theorem~2.1]{PW02}, for $i=1$ and $j=2$, there exists a desired relative quasi-isometry $\phi_{1,2}:\bar{(\mathsf{Z}_1,V)}\to\bar{(\mathsf{Z}_2,V)}$.

Note that $\mathsf{Z}_1$ can be naturally seen as a subgraph of either $\mathsf{Z}_2$ and $\mathsf{Z}_3$.
As there is a homotopy equivalence between $\mathsf{Z}_{2}$ and $\mathsf{Z}_{3}$ relative to $\mathsf{Z}_1$, there is a relative quasi-isometry $\phi_{2,3}:\bar{(\mathsf{Z}_{2},V)}\to \bar{(\mathsf{Z}_{3},V)}$ such that the restrictions of $\phi_{2,3}$ and $\phi_{2,3}^{-1}$ to copies of $\bar{(\Gamma_k,v_k)}$ are relative isometries.

Lastly, it is easily seen that $\phi_{2,3}\circ\phi_{1,2}$ is the desired relative quasi-isometry for $\phi_{1,3}$. 
\end{proof}

\begin{lemma}\label{Lem:PWLem1}
Let $(\Gamma_i,v_i)$ be a copy of a pair of a finite graph $(\Gamma,v)$ with base vertex for $i=1,2,3$.
Let 
\[
(\mathsf{Z},V)=({(\Gamma_1, v_1)} * {(\Gamma_2,v_2)},\{v_1,v_2\})\quad
\text{ and } 
\quad
(\mathsf{Z}_n,V_n)=({(\Gamma_3, v_3)} *{(\mathsf{R}_n,r_n)},\{v_3\}).
\]
Then there is a relative quasi-isometry $\phi:\bar{(\mathsf{Z}, \{v_1,v_2\})} \to \bar{(\mathsf{Z}_n, \{v_3\})}$ which induces a one-to-one correspondence between copies of $\bar{(\Gamma_1,v_1)}$ and $\bar{(\Gamma_2,v_2)}$ in $\bar{\mathsf{Z}}$ and copies of $\bar{(\Gamma_3,v_3)}$ in $\bar{\mathsf{Z}_n}$ such that the restriction of $\phi$ to each copy is a relative isometry.
\end{lemma}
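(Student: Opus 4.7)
The plan is to realize $\phi$ as a composition of three relative quasi-isometries that pass through an auxiliary ternary free product. Let $(\Gamma_3',v_3')$ be an additional copy of $(\Gamma_3,v_3)$, and set
\[
\mathsf{W}=*\{(\Gamma_3,v_3),(\Gamma_3',v_3'),(\mathsf{R}_n,r_n)\}.
\]
Under the relabelling identifying $(\Gamma_3,v_3)\leftrightarrow(\Gamma_1,v_1)$ and $(\Gamma_3',v_3')\leftrightarrow(\Gamma_2,v_2)$, the graph $\mathsf{W}$ is isometric (as a graph with distinguished base points) to $\mathsf{W}'=*\{(\Gamma_1,v_1),(\Gamma_2,v_2),(\mathsf{R}_n,r_n)\}$.

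The first step invokes Lemma~\ref{Lem:PWLem5} with $\mathcal{A}=\{(\Gamma_1,v_1),(\Gamma_2,v_2)\}$ and added factor $(\mathsf{R}_n,r_n)$, yielding a relative quasi-isometry
\[
\phi_1:\bar{(\mathsf{Z},\{v_1,v_2\})}\longrightarrow\bar{(\mathsf{W}',\{v_1,v_2\})}
\]
whose restriction to each copy of $\bar{(\Gamma_i,v_i)}$ ($i=1,2$) is a relative isometry onto a copy of the same kind. The second step is the relabelling isometry
\[
\phi_2:\bar{(\mathsf{W}',\{v_1,v_2\})}\longrightarrow\bar{(\mathsf{W},\{v_3,v_3'\})}
\]
sending copies of $\bar{(\Gamma_1,v_1)}$ and $\bar{(\Gamma_2,v_2)}$ in $\bar{\mathsf{W}'}$ to copies of $\bar{(\Gamma_3,v_3)}$ and $\bar{(\Gamma_3',v_3')}$ in $\bar{\mathsf{W}}$, respectively. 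The third step applies Lemma~\ref{Lemma1.4inPW} with $(\Gamma,v)=(\mathsf{R}_n,r_n)$ and $(\Lambda,w)=(\Gamma_3,v_3)$, so that the duplicated factor is $\Gamma_3$; this produces a relative quasi-isometry $\bar{(\mathsf{Z}_n,\{v_3,r_n\})}\to\bar{(\mathsf{W},\{v_3,v_3',r_n\})}$ whose gate bilipschitz bijection $f$ splits, by the copies-correspondence, as $f_1\sqcup f_2$ with $f_1:p^{-1}(v_3)_{\mathsf{Z}_n}\to p^{-1}(\{v_3,v_3'\})_{\mathsf{W}}$ and $f_2:p^{-1}(r_n)_{\mathsf{Z}_n}\to p^{-1}(r_n)_{\mathsf{W}}$. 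Taking the inverse and retaining only the component $f_1^{-1}$ yields a relative quasi-isometry
\[
\phi_3:\bar{(\mathsf{W},\{v_3,v_3'\})}\longrightarrow\bar{(\mathsf{Z}_n,\{v_3\})}
\]
whose restriction to each copy of $\bar{(\Gamma_3,v_3)}$ or $\bar{(\Gamma_3',v_3')}$ in $\bar{\mathsf{W}}$ is a relative isometry onto a copy of $\bar{(\Gamma_3,v_3)}$ in $\bar{\mathsf{Z}_n}$, the assignment being bijective.

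The composition $\phi=\phi_3\circ\phi_2\circ\phi_1$ is the desired map, and tracking the bijections of copies through the three steps verifies that $\phi$ carries copies of $\bar{(\Gamma_1,v_1)}$ and $\bar{(\Gamma_2,v_2)}$ in $\bar{\mathsf{Z}}$ bijectively and relatively-isometrically onto copies of $\bar{(\Gamma_3,v_3)}$ in $\bar{\mathsf{Z}_n}$. The main obstacle is the gate-set reconciliation in the third step: one must verify that the relative quasi-isometry supplied by Lemma~\ref{Lemma1.4inPW}, originally between pairs whose gates include $p^{-1}(r_n)$, restricts to a relative quasi-isometry between the pairs appearing in the statement of Lemma~\ref{Lem:PWLem1} (with $r_n$ excluded from the gates). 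This is precisely the observation that the copies-correspondence in Lemma~\ref{Lemma1.4inPW} forces $p^{-1}(v_3)_{\mathsf{Z}_n}$ to map surjectively onto $p^{-1}(\{v_3,v_3'\})_{\mathsf{W}}$, together with the fact that the latter set is coarsely dense in $\bar{\mathsf{W}}$ (every vertex of a $\bar{\mathsf{R}_n}$-copy lies at distance $2$ from a preimage of $v_3$ or $v_3'$ via the connecting $3$-star).
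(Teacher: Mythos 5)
Your proposal is correct and follows essentially the same route as the paper: both arguments pass through the auxiliary ternary free product $*\{(\Gamma,v),(\Gamma',v'),(\mathsf{R}_n,r_n)\}$, using Lemma~\ref{Lemma1.4inPW} to duplicate the $\Gamma$-factor and Lemma~\ref{Lem:PWLem5} to insert or delete the bouquet factor, with the gate-set reconciliation handled via the coarse density of the relevant vertex preimages. The only differences are cosmetic: you compose in the direction $\mathsf{Z}\to\mathsf{Z}_n$ rather than $\mathsf{Z}_n\to\mathsf{Z}$ and treat general $n$ directly, whereas the paper first reduces to $n=1$ by an induction that your argument shows to be dispensable.
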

\begin{proof}
Let $\mathsf{Z}'_{2}=*\{(\Gamma_3, v_3),(\mathsf{R}_1,r_1),(\mathsf{R}'_1,r'_1)\}$ for a copy $(\mathsf{R}'_1,r'_1)$ of $(\mathsf{R}_1,r_1)$.
By Lemma~\ref{Lemma1.4inPW}, there is a relative quasi-isometry 
$$\phi:\bar{(\mathsf{Z}_1,\{v_3,r_1\})}\to\bar{(\mathsf{Z}'_{2},\{v_3,r_1,r'_1\})}$$
such that the restrictions of $\phi$ and $\phi^{-1}$ to copies of $\bar{(\Gamma,v)}$ are relative isometries and in particular, $\phi$ induces a bijection between the preimage of $v_3$ in $\mathsf{Z}_1$ and the preimage of $v_3$ in $\mathsf{Z}'_2$.
As there is a homotopy equivalence between $\mathsf{Z}_{2}$ and $\mathsf{Z}'_{2}$ relative to $\Gamma_3$, there is a relative quasi-isometry 
$$\varphi:\bar{(\mathsf{Z}'_{2},\{v_3\})}\to \bar{(\mathsf{Z}_{2},\{v_3\})}$$ 
such that the restrictions of $\varphi$ and $\varphi^{-1}$ to copies of $\bar{(\Gamma,v)}$ are relative isometries.
Thus there is a relative quasi-isometry 
$$f_1=\varphi\circ\phi:\bar{(\mathsf{Z}_{1},\{v_3\})}\to\bar{(\mathsf{Z}_{2},\{v_3\})}$$
such that the restrictions of $f_1$ and $f_1^{-1}$ to copies of $\bar{(\Gamma,v)}$ are relative isometries.
By induction, we can also show that there is a relative quasi-isometry $f_n:\bar{(\mathsf{Z}_{n},\{v_3\})}\to\bar{(\mathsf{Z}_{n+1},\{v_3\})}$ such that the restrictions of $f_n$ and $f_n^{-1}$ to copies of $\bar{(\Gamma,v)}$ are relative isometries.

Now, it suffices to prove this lemma for $n=1$.
Let $\mathsf{Z}'=(\Gamma_1,v_1)*(\Gamma_2,v_2)*(\mathsf{R}_1,r_1)$.
By Lemma~\ref{Lemma1.4inPW}, there is a relative quasi-isometry 
$$\phi_1:\bar{(\mathsf{Z}_1,\{v_3,r_1\})}\to\bar{(\mathsf{Z}',\{v_1,v_2,r_1\})}$$
such that the restrictions of $\phi_1$ and $\phi_1^{-1}$ to copies of $\bar{(\Gamma,v)}$ are relative isometries and in particular, $\phi_1$ induces a bijection between the preimage of $v_3$ in $\mathsf{Z}_1$ and the preimage of $\{v_1,v_2\}$ in $\mathsf{Z}'$. 
By Lemma~\ref{Lem:PWLem5}, there is a relative quasi-isometry 
$$\phi_2:\bar{(\mathsf{Z}',\{v_1,v_2\})}\to\bar{(\mathsf{Z},\{v_1,v_2\})}$$
such that the restrictions of $\phi_2$ and $\phi_2^{-1}$ to copies of $\bar{(\Gamma,v)}$ are relative isometries.
Therefore, $\phi_2\circ\phi_1:\bar{(\mathsf{Z}_1,\{v_3\})}\to\bar{(\mathsf{Z},\{v_1,v_2\})}$ is the desired relative quasi-isometry which completes the proof.
\end{proof}

By combining all the results in this subsection, we have the following conclusion.

\begin{lemma}\label{Lemma:PWConclusion}
Consider two finite sets of pairs of finite graphs with base vertices
\begin{align*}
\mathcal{A}=\{(\Gamma_1,v_1),\dots,(\Gamma_n,v_n)\} \text{ and } \mathcal{B}=\{(\Lambda_1,w_1),\dots,(\Lambda_m,w_m)\}\text{ for } n,m\ge 2.
\end{align*}
Let $\mathcal{P}$ be a set of relative quasi-isometries $\bar\varphi_{i,j}:\bar{(\Gamma_i,v_i)}\to\bar{(\Lambda_j,w_j)}$ for some pairs of $i$ and $j$ such that any $i\in I$ or $j\in J$ appears in at least one pair.

Then there is a relative quasi-isometry $\phi:\bar{(*\mathcal{A},A)}\to\bar{(*\mathcal{B},B)}$ 
for $A=\{v_1,\dots,v_n\}\subset\ast\mathcal{A}$ and $B=\{w_1,\dots,w_m\}\subset\ast\mathcal{B}$ such that the restriction of $\phi$ to each copy of $\bar{(\Gamma_i,v_i)}$ is a relative quasi-isometry in $\mathcal{P}$.

The same holds if $(\mathsf{R}_n,r_n)$ is attached to $*\mathcal{A}$ (or $*\mathcal{B}$) by identifying $r_n$ with the central vertex of the connecting star of the free product.
\end{lemma}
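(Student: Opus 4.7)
My plan is to construct $\phi$ as a composition $\chi\circ\psi$, where $\psi:\bar{(*\mathcal{A},A)}\to(\mathsf{Z},V)$ transfers each copy of $\bar{(\Gamma_i,v_i)}$ in $\bar{*\mathcal{A}}$ to a copy of some $\bar{(\Lambda_{j},w_{j})}$ using an element of $\mathcal{P}$, and $\chi:(\mathsf{Z},V)\to\bar{(*\mathcal{B},B)}$ rearranges the combinatorial pattern of the resulting tree-of-spaces to match that of $\bar{*\mathcal{B}}$. The whole argument is a packaging of the Papasoglu--Whyte lemmas already established, namely Lemmas~\ref{Lemma1.1inPW}, \ref{Lemma1.4inPW}, \ref{Lem:PWLem5}, and~\ref{Lem:PWLem1}.

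For the construction of $\psi$, I will first assign to each copy $\mathsf{A}$ of $\bar{(\Gamma_i,v_i)}$ appearing in $\bar{*\mathcal{A}}$ an index $j(\mathsf{A})\in\{1,\dots,m\}$ with $\bar\varphi_{i,j(\mathsf{A})}\in\mathcal{P}$, arranging the choices so that every $j\in\{1,\dots,m\}$ is selected infinitely often and in a manner uniformly distributed throughout the underlying tree of spaces. This is possible because every $j$ appears in at least one pair of $\mathcal{P}$ and $\bar{*\mathcal{A}}$ contains infinitely many copies of each $\bar{(\Gamma_i,v_i)}$. Replacing each $\mathsf{A}$ by a copy of $\bar{(\Lambda_{j(\mathsf{A})}, w_{j(\mathsf{A})})}$ and identifying the gate sets $p^{-1}(v_i)\leftrightarrow p^{-1}(w_{j(\mathsf{A})})$ via $\bar\varphi_{i,j(\mathsf{A})}$ produces a graph $\mathsf{Z}$ whose vertex spaces are copies of various $\bar{(\Lambda_j,w_j)}$ with every type occurring infinitely often, together with a relative quasi-isometry $\psi$ whose restriction to each copy lies in $\mathcal{P}$ by construction.

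The construction of $\chi$ reduces to showing that $(\mathsf{Z},V)$ is relatively quasi-isometric to $\bar{(*\mathcal{B},B)}$ by the identity on each vertex space. By Remark~\ref{Remark:FreeProduct}, $\bar{*\mathcal{B}}$ is relatively quasi-isometric to the universal cover of the iterated binary free product $\mathsf{Z}_{m-1}$, and I will apply Lemma~\ref{Lemma1.1inPW} iteratively along this decomposition: at each stage, I peel off one factor $(\Lambda_k,w_k)$ by grouping its copies as one side of a binary tree of spaces and regarding the union of the remaining vertex spaces, together with the portion of the tree they span, as the other ``combined'' side. The infinite-occurrence arrangement from the previous paragraph guarantees that each combined side is itself a tree-of-spaces of the form amenable to Lemma~\ref{Lemma1.1inPW}, while any mismatch of multiplicities that arises during the induction is corrected via Lemmas~\ref{Lemma1.4inPW} and~\ref{Lem:PWLem5}, following the pattern of the proofs of Lemmas~\ref{Lem:PWLem5} and~\ref{Lem:PWLem1}.

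The hard part will be this iterative matching: Lemma~\ref{Lemma1.1inPW} is stated only for binary free products with two model graphs, so at each peeling step one must carefully describe the combined side and verify its tree-of-spaces hypotheses after the previous iterations, keeping track of how the connecting $n$-stars in $\mathsf{Z}$ (inherited from $\bar{*\mathcal{A}}$) are reinterpreted as iterated connecting $2$-stars consistent with the binary decomposition of $\mathsf{Z}_{m-1}$. Finally, for the assertion about attaching $(\mathsf{R}_n,r_n)$ to $*\mathcal{A}$ or $*\mathcal{B}$, Lemma~\ref{Lem:PWLem5} absorbs the wedge summand either as an extra free-product factor or as nothing at all, after which the main argument applies verbatim.
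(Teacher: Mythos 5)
Your proposal is correct and follows essentially the same route as the paper: the paper's own proof is simply the observation that the first statement follows by repeatedly applying Lemmas~\ref{Lemma1.1inPW} and~\ref{Lemma1.4inPW} and the second by Lemma~\ref{Lem:PWLem5}, which is exactly the iterated binary peeling (with multiplicity corrections) that you spell out. Your ``transfer then rearrange'' decomposition is a faithful, somewhat more explicit organization of that same argument.
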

\begin{proof}
The first statement holds if we repeatedly apply Lemmas~\ref{Lemma1.1inPW} and \ref{Lemma1.4inPW}.
The second statement hold if we apply Lemma~\ref{Lem:PWLem5}.
\end{proof}

\section{Large scale geometry of general graph 2-braid groups}\label{section:graph 2-braid groups}
This section is devoted to fundamental facts about the large scale geometry of graph $2$-braid groups. Most of these facts will easily be derived from previously known results or graph-of-groups style decompositions of graph $2$-braid groups.
These facts will suggest us several directions for studying the large-scale geometry of graph $2$-braid groups and our specific choice will be initiated from Section~\ref{section:grapes}.

Before we move on to subsections, we first clarify notations and terminologies for graphs which will be used in this paper.

An \emph{$n$-path} or a path on $(n+1)$ vertices (of length $n$) is denoted by $\mathsf{P}_n$. 
An \emph{$n$-cycle} or a cycle of length $n$ is denoted by $\mathsf{C}_n$.
An \emph{$n$-star} or the cone over discrete $n$ vertices is denoted by $\mathsf{S}_n$.
Given a graph $\Gamma$, a \emph{path} (a \emph{cycle}, resp.) in $\Gamma$ is a (possibly non-induced) subgraph of $\Gamma$ which is isomorphic to an $n$-path (an $n$-cycle, resp.) for some $n$.

Let $\Gamma$ be a graph. For a subgraph $\Lambda\subset\Gamma$, the \emph{complement} $\Lambda^c=\Gamma\setminus \Lambda$ is the subgraph of $\Gamma$ induced by $\mathcal{V}(\Lambda^c)=\mathcal{V}(\Gamma)\setminus\mathcal{V}(\Lambda)$.
When $\Lambda$ is a single vertex $v$, each component of $\Lambda^c$ is called a \emph{$\hat{v}$-component} $\mathsf{A}$ of $\Gamma$ and the subgraph of $\Gamma$ induced by the union of a $\hat{v}$-component and $\{v\}$ is called an \emph{extended $\hat{v}$-component} and denoted by $\mathsf{A}^+$.

\subsection{Configuration spaces of graphs}\label{sec:graph configuration spaces}

Let $\Gamma$ be a finite (possibly non-simple) graph with at least one edge.
The \emph{ordered} and \emph{unordered $n$-configuration spaces} of $\Gamma$ are defined as
\begin{align*}
C_n(\Gamma) &= \{(x_1 ,x_2, \dots, x_n)\in \Gamma^n\mid x_i \neq x_j\text{ if }i \neq j \}\ \ \mathrm{and}\\
UC_n(\Gamma) &= \{\{x_1 ,x_2, \dots, x_n\}\subset \Gamma\mid x_i \neq x_j\text{ if }i \neq j \}
\cong C_n(\Gamma)/\mathbb{S}_n,
\end{align*}
where $\mathbb{S}_n$ is the symmetric group of $n$ letters acting on $\Gamma^n$ by permuting coordinates.
The fundamental groups of $UC_n(\Gamma)$ and $C_n(\Gamma)$ are said to be the \emph{$n$-braid group} and the \emph{pure $n$-braid group} over $\Gamma$, denoted by $\mathbb{B}_n(\Gamma)$ and $\mathbb{PB}_n(\Gamma)$, respectively.
The class of the (pure) $n$-braid groups over graphs is called \emph{(pure) graph $n$-braid groups}.
It is worth noticing that a (pure) graph $n$-braid group is determined by $n$ and the homeomorphism type of the base graph, and any (pure) $n$-braid group over a path graph is trivial.

By using a cell structure on $\Gamma$, we can define cubical version of configuration spaces of $\Gamma$ as follows:
The \emph{ordered} and \emph{unordered discrete $n$-configuration spaces} of $\Gamma$ are defined as
\begin{align*}
D_n(\Gamma) &= \{ (\sigma_1, \dots, \sigma_n)\in \Gamma^n\mid {\sigma_i} \cap {\sigma_j} = \emptyset\text{ if }i \neq j\}\ \ \mathrm{and}\\
UD_n(\Gamma)&=\{ \{\sigma_1, \dots, \sigma_n\} \subset \Gamma\mid \sigma_i \cap \sigma_j = \emptyset\text{ if }i \neq j \}\cong D_n(\Gamma)/\mathbb{S}_n,
\end{align*}
where $\sigma_i$ is either a vertex or an edge in $\Gamma$.
Note that $D_n(\Gamma)$ and so $UD_n(\Gamma)$ admit canonical cube complex structures where each element $(\sigma_1,\dots, \sigma_n)$ or $\{\sigma_1,\dots, \sigma_n\}$ defines a $k$-dimensional cube for the number $k$ of edges among $\sigma_i$'s.

\begin{theorem}[\cite{Abrams00, CW, Gen21}]\label{thm:GBGSpecial}
Both $D_n(\Gamma)$ and $UD_n(\Gamma)$ are special cube complexes. 
\end{theorem}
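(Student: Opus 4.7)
The plan is to handle $D_n(\Gamma)$ first and then transfer everything to $UD_n(\Gamma)$ via the free action of the symmetric group $\mathbb{S}_n$. The strategy splits into two phases: non-positive curvature by Gromov's link condition, and specialness by constructing an explicit local isometry into a Salvetti complex. Before either phase, I would confirm that the cube complex structure is well-defined in the sense that an (unordered) $n$-tuple containing exactly $k$ edges determines a genuine $k$-cube whose faces are obtained by replacing each edge with one of its two endpoints; Abrams' subdivision hypothesis is what prevents two cubes from being glued along more than a face.

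For the link condition, fix a vertex $\bfv = (v_1,\dots,v_n) \in D_n(\Gamma)$. Vertices of $\link(\bfv, D_n(\Gamma))$ are pairs $(i,e)$ with $e$ an edge at $v_i$ disjoint from all other $v_j$, and a collection of such pairs spans a simplex precisely when the indices are distinct and the underlying edges and vertices remain pairwise disjoint. Disjointness in a graph is a pairwise condition, so the flag check reduces immediately to checking adjacency pair-by-pair; hence the link is flag and $D_n(\Gamma)$ is NPC. The $\mathbb{S}_n$-action is free on vertices because the coordinates of any tuple are forced to be distinct, so $UD_n(\Gamma)$ inherits the NPC property as a covering quotient.

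For specialness, I would follow Crisp--Wiest and build an auxiliary graph $\Delta$ whose vertex set is $\mathcal{E}(\Gamma)$ and whose edges join pairs of disjoint edges of $\Gamma$. The associated Salvetti complex $S_\Delta$ of $A_\Delta$ carries a natural combinatorial map $\Phi \colon UD_n(\Gamma) \to S_\Delta$ sending a cube whose edge-coordinates are $\{e_1,\dots,e_k\}$ to the torus in $S_\Delta$ spanned by $\{e_1,\dots,e_k\}$. The core of the argument is checking that $\Phi$ is a local isometry: on each link it carries pairwise-disjoint edges to pairwise-commuting generators (so the image is already a simplex), and one must verify the induced link map is injective and its image is an induced subcomplex. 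Since Salvetti complexes are special and specialness pulls back under local isometries (via the hyperplane correspondence from Haglund--Wise), this yields specialness of $UD_n(\Gamma)$; lifting along the free $\mathbb{S}_n$-quotient then gives specialness of $D_n(\Gamma)$.

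The main obstacle is precisely the induced-subcomplex clause in the local isometry check for $\Phi$: given pairwise-disjoint edges $e_1,\dots,e_k$ at $\bfv$, each individually a legal single-particle move, one must ensure their simultaneous activation is legal in the configuration space, i.e.~the corresponding $k$-cube genuinely lies in $UD_n(\Gamma)$. Without Abrams' subdivision hypothesis this can fail because short cycles or scarce vertices allow two ``moves'' to collide at the far endpoint of an edge; with the hypothesis in force, the clause reduces to a disjointness bookkeeping. This is the one place where the combinatorics of $\Gamma$ genuinely enters the proof, and it is where I would concentrate the most care.
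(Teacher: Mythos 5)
The paper does not prove this statement: it is quoted directly from \cite{Abrams00, CW, Gen21}, so there is no in-paper argument to compare against. Your outline correctly reconstructs the standard proofs from those sources — Abrams' flag-link computation for non-positive curvature (adjacency of link vertices is a pairwise disjointness condition, so pairwise adjacent collections automatically span simplices), and the Crisp--Wiest local isometry from $UD_n(\Gamma)$ to the Salvetti complex of the disjointness graph on $\mathcal{E}(\Gamma)$ combined with the Haglund--Wise characterization of specialness, with the free $\mathbb{S}_n$-action transferring both properties between $D_n(\Gamma)$ and $UD_n(\Gamma)$. One correction of emphasis: the ``main obstacle'' you isolate at the end is not actually an obstacle, and Abrams' subdivision hypothesis plays no role in this theorem. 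The $k$-cubes of $UD_n(\Gamma)$ are \emph{by definition} the $n$-element sets of pairwise disjoint closed cells containing exactly $k$ edges, so if $e_1,\dots,e_k$ are link directions at a vertex whose images in $A_\Delta$ pairwise commute (equivalently, the closed edges are pairwise disjoint), the corresponding cube is automatically a cell of $UD_n(\Gamma)$; a ``collision at the far endpoint'' would mean two of the edges intersect, in which case their images do not commute and the induced-subcomplex clause is not being tested on that pair. The subdivision hypothesis is needed only for the homotopy equivalence with $UC_n(\Gamma)$ (Theorem~\ref{Abrams}), not for the cube-complex structure, NPC-ness, or specialness, all of which hold for arbitrary $\Gamma$.
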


By definition, the cube structure on $UD_n(\Gamma)$ (or $D_n(\Gamma)$) does depend on the cell structure on $\Gamma$. 
However, its homotopy type will eventually stabilize if $\Gamma$ is sufficiently subdivided.

\begin{theorem}[\cite{Abrams00, KKP12, PS12}]\label{Abrams}
The discrete $n$-configuration space $UD_n(\Gamma)$ ($D_n(\Gamma)$, resp.) is a deformation retract of $UC_n(\Gamma)$ ($D_n(\Gamma)$, resp.) for each $n\ge 1$ if and only if $\Gamma$ is \emph{sufficiently subdivided} as follows:
\begin{enumerate}
\item Each path between two non-bivalent vertices in $\Gamma$ contains at least $n-1$ edges;
\item Each cycle in $\Gamma$ contains at least $n+1$ edges.
\end{enumerate}
In particular, $UC_2(\Gamma)$ $(C_2(\Gamma),\ \textrm{resp.})$ is homotopy equivalent to $UD_2(\Gamma)$ $(D_2(\Gamma),\ \textrm{resp.})$ if and only if $\Gamma$ is simple.
\end{theorem}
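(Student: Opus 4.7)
The plan is to build an explicit deformation retraction $UC_n(\Gamma) \to UD_n(\Gamma)$ under the subdivision hypotheses, and to exhibit concrete homotopical obstructions when either hypothesis fails. Identify $UD_n(\Gamma)$ with the cubical subspace of $UC_n(\Gamma)$ whose $k$-cells correspond to $n$-tuples of pairwise disjoint closed cells of $\Gamma$; the inclusion $UD_n(\Gamma)\hookrightarrow UC_n(\Gamma)$ is always defined, and the question is whether it admits a homotopy inverse.

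For the sufficient direction, first stratify $\Gamma$ into its essential vertices (of valence $\ne 2$) and the open arcs and cycles between them. On an arc of length $\ge n-1$, i.e.\ with at least $n$ vertices, any $k \le n$ configuration points lying on the arc can be simultaneously slid to occupy $k$ pairwise disjoint closed cells (distinct vertices and pairwise non-adjacent edges); similarly on a cycle of length $\ge n+1$. The key verification is continuity when assembling these local retractions across essential vertices: one must check that points clustered in a small neighborhood of an essential vertex $v$ can be shuffled along the incident arcs without collisions. The inequalities $n-1$ and $n+1$ are sharp in the worst case, when all $n$ configuration points funnel onto a single arc or cycle through $v$, forcing that segment to accommodate $n$ pairwise disjoint closed cells.

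For the necessary direction, assuming either hypothesis fails, one exhibits a loop in $UC_n(\Gamma)$ whose class in $\pi_1$ cannot be represented within $UD_n(\Gamma)$. If a cycle $C$ has length $k\le n$, cyclically rotating $n$ points around $C$ yields a loop whose representation in $UD_n(\Gamma)$ would require $n$ pairwise disjoint closed cells on $C$, forcing $k\ge n+1$, a contradiction. If some path between essential vertices has length $<n-1$, a braid-style exchange of two adjacent points near the branch points gives an analogous obstruction. The essentiality of these loops can be verified by passing to first homology and applying the rank computations of \cite{FS05}.

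The principal obstacle is making the global retraction continuous at configurations concentrated near essential vertices, where several short incident arcs must coordinate their sliding motions without collisions. The standard resolution, followed in \cite{Abrams00} and refined in \cite{KKP12, PS12}, is to track the maximal number of configuration points a single arc or cycle must absorb and calibrate the retraction speed accordingly; the sharp bounds $n-1$ and $n+1$ then emerge from a worst-case count at each essential vertex. The `in particular' clause for $n=2$ then reads off from conditions (1)–(2) collapsing to `no pair of adjacent non-bivalent vertices' and `no loops or bigons', which together characterize simplicity of $\Gamma$.
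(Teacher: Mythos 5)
This statement is quoted background: the paper offers no proof of it at all, simply importing it from \cite{Abrams00}, \cite{KKP12} and \cite{PS12} (the sufficiency is Abrams' collapsing argument, sharpened by Prue--Scrimshaw to the bounds $n-1$ and $n+1$). So the real question is whether your sketch would stand on its own, and it would not. In the sufficient direction you correctly identify the one nontrivial point --- making the local sliding retractions on arcs and cycles assemble continuously at essential vertices --- but you then resolve it by writing ``the standard resolution, followed in \cite{Abrams00} and \cite{KKP12, PS12}, is to \dots''. That is a citation of the theorem you are meant to prove, not an argument; the actual content (an explicit sequence of elementary collapses, or a discrete Morse function, showing $UC_n(\Gamma)\searrow UD_n(\Gamma)$) is entirely absent.

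The necessity direction has a logical gap rather than just an omission. From ``cyclically rotating $n$ points around a short cycle $C$ yields a loop whose representation in $UD_n(\Gamma)$ would require $n$ pairwise disjoint closed cells on $C$'' you cannot conclude anything: a deformation retraction is free to push points off $C$ during the homotopy, so you must exhibit a genuine homotopy-invariant mismatch between $UC_n(\Gamma)$ and $UD_n(\Gamma)$ (a rank defect in $H_1$, a $\pi_0$ discrepancy, etc.), and ``verified by passing to first homology and applying \cite{FS05}'' is again deferring the entire point. The same applies to the ``braid-style exchange'' when a path between essential vertices is too short. Finally, your reading of the $n=2$ case is wrong: condition (1) with $n=2$ asks only that paths between non-bivalent vertices contain at least one edge, which is vacuous for distinct vertices, so it does not yield ``no pair of adjacent non-bivalent vertices'' --- and that property is in any case not part of simplicity (consider $K_4$). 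The equivalence with simplicity comes from condition (2) alone: cycles of length $\ge 3$ means no loops and no bigons, i.e.\ no multi-edges.
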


One may regard $UD_n(-)$ (and thus $UC_n(-)$) as a functor from the category of finite graphs with inclusions to the category of NPC cube complexes with locally isometric embeddings as follows:
\begin{lemma}[\cite{Abrams00}, Theorem~3.13]\label{Lem:embedding}
For any subgraph $\Gamma'$ of $\Gamma$, the inclusion $\Gamma'\hookrightarrow\Gamma$ induces a locally isometric embedding $\iota:UD_n(\Gamma')\hookrightarrow UD_n(\Gamma)$. In particular, $\mathbb{B}_n(\Gamma)$ contains an undistorted subgroup isomorphic to $\mathbb{B}_n(\Gamma')$.
\end{lemma}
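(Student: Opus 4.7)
The plan is to define $\iota$ in the obvious combinatorial way and then verify the two defining properties of a locally isometric embedding (injectivity and the link condition for local isometry); the second statement then follows from the discussion of local isometries in Section~\ref{section:WSSCandIC}.

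First, I would define $\iota:UD_n(\Gamma')\to UD_n(\Gamma)$ cube-wise by sending a cube $\{\sigma_1,\dots,\sigma_n\}$ of $UD_n(\Gamma')$, whose entries $\sigma_i$ are pairwise-disjoint cells of $\Gamma'$, to the same cube $\{\sigma_1,\dots,\sigma_n\}$ viewed in $UD_n(\Gamma)$. This is well-defined since every cell of $\Gamma'$ is a cell of $\Gamma$ and pairwise disjointness is preserved. Injectivity is immediate: two configurations of cells coincide in $UD_n(\Gamma)$ iff they coincide as unordered sets of cells, and this property does not depend on whether we view the cells inside $\Gamma'$ or inside $\Gamma$.

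The main content is to show that $\iota$ is a local isometry. Fix a vertex $\mathbf{v}=\{v_1,\dots,v_n\}\in UD_n(\Gamma')$. A vertex of $\link(\mathbf{v},UD_n(\Gamma'))$ is a half-edge at $\mathbf{v}$, i.e.\ a choice of some index $i$ and an edge $e$ of $\Gamma'$ incident to $v_i$ such that $e$ is disjoint from $v_j$ for every $j\ne i$; a collection of such half-edges spans a simplex precisely when the chosen edges are pairwise disjoint and, for each chosen edge $e$ at $v_i$, also disjoint from the unmoved vertices $v_j$. The analogous description holds for $\link(\iota(\mathbf{v}),UD_n(\Gamma))$, using edges of $\Gamma$ instead of $\Gamma'$. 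The induced map on links is the obvious inclusion of half-edges, which is injective because distinct edges of $\Gamma'$ at $v_i$ remain distinct edges of $\Gamma$. For the induced-subcomplex condition, suppose a simplex of $\link(\iota(\mathbf{v}),UD_n(\Gamma))$ has all its vertices in the image; then every edge of $\Gamma$ involved is already an edge of $\Gamma'$, and the disjointness conditions that let them span a simplex are identical in $\Gamma'$ and in $\Gamma$, so the same simplex exists in $\link(\mathbf{v},UD_n(\Gamma'))$. Hence $\iota$ is a local isometry, and being injective it is a locally isometric embedding.

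For the second statement, the discussion preceding Lemma~\ref{Lem:ImmersionImpliesLocalisometry} says that a local isometry $\phi:Y\to Y'$ of NPC cube complexes induces an injective homomorphism $\phi_*:\pi_1(Y)\hookrightarrow\pi_1(Y')$ whose image is undistorted (via the \v{S}varc--Milnor lemma applied to the elevation $\bar\phi:\bar Y\hookrightarrow\bar{Y'}$, which is an isometric embedding). Applying this to $\iota:UD_n(\Gamma')\hookrightarrow UD_n(\Gamma)$ yields the desired undistorted subgroup isomorphic to $\mathbb{B}_n(\Gamma')$ inside $\mathbb{B}_n(\Gamma)$. The main (and essentially only) obstacle is the induced-subcomplex verification on links, but this is completely formal once one spells out the link structure of a discrete configuration space in terms of pairwise disjoint half-edges.
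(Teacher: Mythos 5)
Your proof is correct and is the standard argument; the paper itself gives no proof of this lemma, simply citing \cite{Abrams00}, and your verification (the obvious cube-wise inclusion, injectivity, and the check that disjointness of cells of $\Gamma'$ is the same condition whether read in $\Gamma'$ or in $\Gamma$, which yields both injectivity on links and the induced-subcomplex condition) supplies exactly the details that citation stands in for. The deduction of the undistorted subgroup from the general facts about local isometries stated before Lemma~\ref{Lem:ImmersionImpliesLocalisometry} is likewise as intended.
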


\begin{Ex}
As in Figure~\ref{Fig:tripodFig}, $UD_2(\mathsf{S}_3)\cong\mathsf{C}_6$ and $UD_2(\mathsf{C}_3)\cong\mathsf{C}_3$, and therefore $\mathbb{B}_2(\mathsf{S}_3)\cong\mathbb{B}_2(\mathsf{C}_3)\cong\mathbb{Z}$. 
For any simple graph $\Gamma$ containing either a vertex of valency $\geq 3$ or a $3$-cycle, it contains a subgraph $\Gamma'$ isomorphic to $\mathsf{S}_3$ or $\mathsf{C}_3$, and so by Lemma \ref{Lem:embedding}, we have a locally convex subcomplex of $UD_2(\Gamma)$, which is isomorphic to $\mathsf{C}_6$ or $\mathsf{C}_3$, respectively.
\end{Ex}

\begin{figure}[ht]
\[
\setlength{\arraycolsep}{1pc}
\begin{array}{cccc}
\begin{tikzpicture}[baseline=-.5ex]
\tikzstyle{every node}=[draw,circle,fill=black,minimum size=3pt,inner sep=0pt]
  \node[anchor=center] (n1) at (90:1) [label={[label distance=0cm]above:$a$}]{};
  \node[anchor=center] (n2) at (210:1) [label={left:$b$}]{};
  \node[anchor=center] (n3) at (-30:1)  [label={right:$c$}]{};
  \node[anchor=center] (n4) at (0,0) [label={[label distance=0.05cm]left:$v$}]{};
  \foreach \from/\to in {n1/n4,n4/n2,n4/n3}
  \draw (\from) -- (\to);
\end{tikzpicture}&
\begin{tikzpicture}[baseline=-.5ex]
\tikzstyle{every node}=[draw,circle,fill=black,minimum size=3pt,inner sep=0pt]
  \draw (30:1) node (p1) [label={right:$\{a,b\}$}]{};
  \draw (90:1) node (p2) [label={[label distance=-.3cm]above:$\{a,v\}$}]{};
  \draw (150:1) node (p3) [label={left:$\{a,c\}$}]{};
  \draw (210:1) node (p4) [label={left:$\{c,v\}$}]{};
  \draw (270:1) node (p5) [label={[label distance=-.3cm]below:$\{c,b\}$}]{};
  \draw (330:1) node (p6) [label={right:$\{b,v\}$}]{};
  \foreach \from/\to in {p1/p2,p2/p3,p3/p4,p4/p5,p5/p6,p6/p1}
  \draw (\from) -- (\to);
\end{tikzpicture}&
\begin{tikzpicture}[baseline=-.5ex]
\tikzstyle{every node}=[draw,circle,fill=black,minimum size=3pt,inner sep=0pt]
  \node[anchor=center] (n1) at (-30:1) [label={[label distance=0cm]right:$a$}]{};
  \node[anchor=center] (n2) at (90:1) [label={above:$b$}]{};
  \node[anchor=center] (n3) at (210:1)  [label={left:$c$}]{};
  \foreach \from/\to in {n1/n2,n2/n3,n3/n1}
  \draw (\from) -- (\to);
\end{tikzpicture}&
\begin{tikzpicture}[baseline=-.5ex]
\tikzstyle{every node}=[draw,circle,fill=black,minimum size=3pt,inner sep=0pt]
  \draw (-30:1) node (p1) [label={right:$\{a,b\}$}]{};
  \draw (90:1) node (p2) [label={[label distance=-.3cm]above:$\{a,c\}$}]{};
  \draw (210:1) node (p3) [label={left:$\{b,c\}$}]{};
  \foreach \from/\to in {p1/p2,p2/p3,p3/p1}
  \draw (\from) -- (\to);
\end{tikzpicture}\\
\mathsf{S}_3&
UD_2(\mathsf{S}_3)&
\mathsf{C}_3&
UD_2(\mathsf{C}_3)
\end{array}
\]
\caption{Examples of $UD_2(\Gamma)$.}
\label{Fig:tripodFig}
\end{figure}
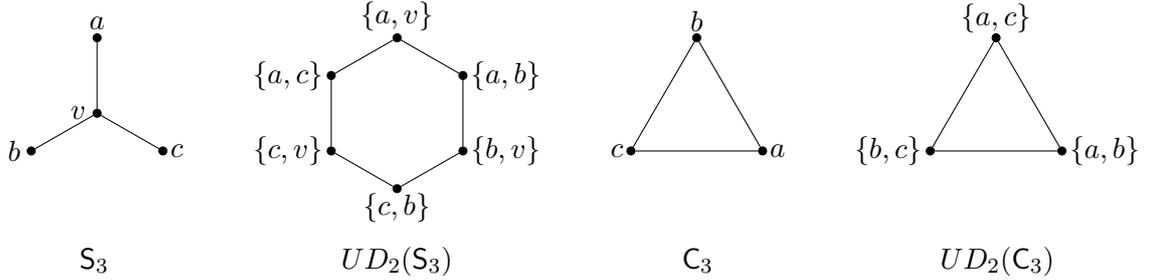

On the other hand, for finite disjoint subgraphs $\Gamma_1, \dots, \Gamma_m$ of a sufficiently subdivided graph $\Gamma$ in the sense of Theorem \ref{Abrams} and nonnegative integers $n_1,\dots, n_m$, there is a locally isometric embedding \[UD_{n_1}(\Gamma_1)\times\dots \times UD_{n_m}(\Gamma_m) \hookrightarrow UD_{n_1+\dots+n_m}(\Gamma).\]
If $n_1=\dots=n_m=1$, then we have a locally isometric embedding $\Gamma_1\times\dots\times\Gamma_m\hookrightarrow UD_m(\Gamma)$, whose image will be denoted by $\Gamma_1\itimes\dots \itimes\Gamma_m$.
In particular, if $m=2$, then the subcomplex $\Gamma_1\itimes\Gamma_2$ is a product subcomplex in $UD_2(\Gamma)$ as defined in Definition~\ref{Def:ProductSubcomplex}.

\begin{proposition}\label{Prop:FreeAbelianSubgroup}
Suppose that $\Gamma$ has disjoint $n$ cycles $\mathsf{C}^1,\dots\mathsf{C}^n$. For $m\ge n$, then $\mathbb{B}_m(\Gamma)$ contains an undistorted subgroup isomorphic to $\mathbb{Z}^n$. In particular, it is not quasi-isometric to a free group.
\end{proposition}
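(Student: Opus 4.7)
The plan is to exhibit a locally isometrically embedded product of $n$ circles inside $UD_m(\Gamma)$ and then apply the Švarc--Milnor principle recorded just below Lemma~\ref{Lem:ImmersionImpliesLocalisometry}, so that the resulting homomorphism $\mathbb{Z}^n \hookrightarrow \mathbb{B}_m(\Gamma)$ is automatically a quasi-isometric (and hence undistorted) embedding. First I would subdivide $\Gamma$ enough that Theorem~\ref{Abrams} applies to $UC_m(-)$; this does not change the homeomorphism type of $\Gamma$, the disjointness of the cycles, or $\mathbb{B}_m(\Gamma)$, and it ensures that each $\mathsf{C}^i$ has at least $m+1$ edges.

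Next, since $\mathsf{C}^1,\dots,\mathsf{C}^n$ are pairwise disjoint subgraphs of the subdivided $\Gamma$, I would invoke the locally isometric embedding
\begin{equation*}
UD_{n_1}(\Gamma_1)\times\cdots\times UD_{n_k}(\Gamma_k)\hookrightarrow UD_m(\Gamma)
\end{equation*}
recalled in the paragraph just before the statement, applied with $\Gamma_j=\mathsf{C}^j$, $n_1=m-n+1$, and $n_j=1$ for $j\ge 2$. This produces a locally isometric embedding
\begin{equation*}
\iota:UD_{m-n+1}(\mathsf{C}^1)\times\mathsf{C}^2\times\cdots\times\mathsf{C}^n\hookrightarrow UD_m(\Gamma).
\end{equation*}
Each factor $\mathsf{C}^j=UD_1(\mathsf{C}^j)$ for $j\ge 2$ contributes $\pi_1=\mathbb{Z}$, and by the classical identification $UC_k(S^1)\simeq S^1$ for $k\ge 1$ combined with Theorem~\ref{Abrams}, the first factor also has $\pi_1=\mathbb{Z}$. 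Consequently the domain of $\iota$ has fundamental group $\mathbb{Z}^n$, and the Švarc--Milnor argument cited earlier in the paper promotes $\iota_*$ to an undistorted embedding $\mathbb{Z}^n\hookrightarrow\mathbb{B}_m(\Gamma)$.

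For the final sentence (tacitly assuming $n\ge 2$, since for $n=1$ free groups of rank one do contain undistorted $\mathbb{Z}$), I would invoke the standard facts that free groups are Gromov-hyperbolic and that hyperbolicity is a quasi-isometry invariant; since hyperbolic groups contain no copy of $\mathbb{Z}^2$, $\mathbb{B}_m(\Gamma)$ cannot be quasi-isometric to any free group. The one technical point I expect to require care is the claim $UD_{m-n+1}(\mathsf{C}^1)\simeq S^1$; if a direct reference is inconvenient, an alternative is to split into cases according to whether $\Gamma$ possesses at least $m-n$ vertices disjoint from $\mathsf{C}^1\cup\cdots\cup\mathsf{C}^n$ (in which case one simply parks $m-n$ stationary particles there and uses the bare product embedding $\mathsf{C}^1\times\cdots\times\mathsf{C}^n\times\{\mathrm{pt}\}^{m-n}\hookrightarrow UD_m(\Gamma)$), handling the residual case $\Gamma=\mathsf{C}^1\sqcup\cdots\sqcup\mathsf{C}^n$ separately via the necklace move on one long cycle.
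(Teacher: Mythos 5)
Your proof is correct and follows essentially the same strategy as the paper's: the paper likewise embeds a product subcomplex supported on the disjoint cycles into $UD_m(\Gamma)$ after suitable subdivision, uses the fact that local isometries induce undistorted $\pi_1$-subgroups, and concludes by noting that $\mathbb{Z}^2$ admits no quasi-isometric embedding into a free group. The only difference is where the $m-n$ surplus points are parked: the paper places them at stationary vertices $v_{n+1},\dots,v_m$ off the cycles and uses the bare embedding $\mathsf{C}^1\times\dots\times\mathsf{C}^n\times\{v_{n+1}\}\times\dots\times\{v_m\}\hookrightarrow UD_m(\Gamma)$ --- exactly the fallback you describe --- rather than loading them onto $\mathsf{C}^1$ via $UD_{m-n+1}(\mathsf{C}^1)\simeq S^1$.
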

\begin{proof}
With the suitably subdivided assumption, there is a locally isometric embedding \[\mathsf{C}^1\times\dots\times\mathsf{C}^n\times\{v_{n+1}\}\times\dots\times\{v_m\}\hookrightarrow UD_m(\Gamma),\] which means that $\mathbb{B}_m(\Gamma)$ contains an undistorted subgroup isomorphic to $\mathbb{Z}^n$.
Therefore, $\mathbb{B}_m(\Gamma)$ is not quasi-isometric to a free group since there is no quasi-isometric embedding from $\mathbb{Z}^n$ to any free group.
One way to see this is that if there is a quasi-isometric embedding from $\mathbb{Z}^2$ to a free group, then $\mathbb{Z}^n$ must be $\delta$-hyperbolic in the sense of Gromov \cite[Theorem 1.9 in Chapter III.H]{BH}, which is a contradiction.
\end{proof}

\subsection{Graph 2-braid groups}\label{section:properties}
Now, we focus on graph $2$-braid groups.
In the remaining of this section, a graph $\Gamma$ is assumed to be connected (following Assumption \ref{assumption:cube}), simple (based on Theorem~\ref{Abrams}), finite (except the universal cover of such a graph) and not homeomorphic to a path graph.
Later, we additionally assume that $\Gamma$ has no leaves since the existence of leaves only effect on the rank of the free factor of the graph $2$-braid group as follows. 

\begin{lemma}\label{NoHangingEdge}
Let $\Gamma'$ be a graph with a leaf $w$ and $v\in\Gamma'$ be the vertex adjacent to $w$.
Let $\Gamma$ be the complement of $w$ in $\Gamma'$.
Then $\mathbb{B}_2(\Gamma')$ is the free product of $\mathbb{B}_2(\Gamma)$ and the free group $\mathbb{F}_{N}$ of rank $N$ where $N=\val_\Gamma(v)-1$.
\end{lemma}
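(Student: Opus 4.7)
The plan is to realize $UD_2(\Gamma')$ as the union of $UD_2(\Gamma)$ with a neighborhood of the new cells introduced by the leaf $w$, identify that neighborhood as a mapping cylinder, and then collapse the cylinder to reduce the computation to attaching a star to $UD_2(\Gamma)$. Write $e=\{v,w\}$, let $\Gamma_v=\Gamma\setminus\{v\}$ (the subgraph obtained by deleting $v$ and its incident edges), and let $v_1,\dots,v_M$ be the neighbors of $v$ in $\Gamma$, so $M=\val_\Gamma(v)=N+1$.

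First I would set $A=UD_2(\Gamma)\subset UD_2(\Gamma')$ and let $B$ be the subcomplex generated by those cells $\{\sigma_1,\sigma_2\}$ of $UD_2(\Gamma')$ for which some $\sigma_i\in\{w,e\}$. A direct enumeration of cells shows $A\cup B=UD_2(\Gamma')$ and that $A\cap B$ consists exactly of the $0$-cells $\{v,x\}$ with $x\in\mathcal{V}(\Gamma_v)$ and the $1$-cells $\{v,f\}$ with $f\in\mathcal{E}(\Gamma_v)$; equivalently, $A\cap B=\{v\}\itimes\Gamma_v$, the copy of $\Gamma_v$ in $UD_2(\Gamma)$ where one particle is pinned at $v$. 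Partitioning the cells of $B$ according to whether the element in $\{w,e\}$ is $w$ or $e$ gives
\[
B=\bigl(\{w\}\itimes\Gamma\bigr)\cup_{\{w\}\itimes\Gamma_v}\bigl(e\itimes\Gamma_v\bigr),
\]
where $\{w\}\itimes\Gamma\cong\Gamma$ and $e\itimes\Gamma_v\cong[0,1]\times\Gamma_v$ is a cylinder whose $\{w\}$-end meets $\{w\}\itimes\Gamma$ along $\{w\}\itimes\Gamma_v$ and whose $\{v\}$-end is precisely $A\cap B$. So $B$ is isomorphic (as a cube complex) to the mapping cylinder $M(\iota)$ of the inclusion $\iota:\Gamma_v\hookrightarrow\Gamma$, with $A\cap B$ being the ``far end'' $\Gamma_v\times\{1\}$.

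Collapsing the cylinder direction deformation retracts $B$ onto $\{w\}\itimes\Gamma\cong\Gamma$, carrying $A\cap B$ onto $\iota(\Gamma_v)\subset\Gamma$. Since inclusions of subcomplexes are cofibrations, the gluing lemma gives
\[
UD_2(\Gamma')=A\cup_{A\cap B}B\simeq A\cup_{\Gamma_v}\Gamma.
\]
Now $\Gamma=\Gamma_v\cup T$, where $T$ is the closed star of $v$ in $\Gamma$—a contractible tree with center $v$ and leaves $v_1,\dots,v_M$—and $\Gamma_v\cap T=\{v_1,\dots,v_M\}$. Associativity of pushouts yields
\[
UD_2(\Gamma')\simeq A\cup_{\{v_i\}}T,
\]
i.e.\ $UD_2(\Gamma)$ with a contractible $M$-star attached by identifying each leaf $v_i$ with the $0$-cell $\{v,v_i\}\in A$. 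Collapsing the contractible $T$ to a point shows this is homotopy equivalent to the quotient of $A$ obtained by identifying the $M$ distinct $0$-cells $\{v,v_1\},\dots,\{v,v_M\}$ to a single point. Because $A=UD_2(\Gamma)$ is path-connected, this identification adds exactly $M-1$ free generators to the fundamental group, giving
\[
\mathbb{B}_2(\Gamma')=\pi_1(UD_2(\Gamma'))\cong\pi_1(A)*\mathbb{F}_{M-1}=\mathbb{B}_2(\Gamma)*\mathbb{F}_N.
\]

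The only delicate point is the verification in the second paragraph that $B$ is literally the mapping cylinder of $\iota:\Gamma_v\hookrightarrow\Gamma$ at the cube-complex level—this is a bookkeeping check matching the cells $\{w,x\}$, $\{w,f\}$, $\{e,x\}$, $\{v,x\}$, $\{v,f\}$, $\{e,f\}$ to the cells of $\Gamma$ and $\Gamma_v\times[0,1]$ under the obvious correspondence. Once this identification is in hand everything else is standard homotopy theory, so the hard part is really just the combinatorial bookkeeping of cells in $B$; the case $M=1$ (where $v$ is itself a leaf of $\Gamma$ and $N=0$) is automatically handled, since then $T$ has a single leaf, the attaching step changes nothing up to homotopy, and $\mathbb{F}_N$ is trivial.
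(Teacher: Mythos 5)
Your proof is correct and follows essentially the same route as the paper's: the paper uses the identical decomposition $UD_2(\Gamma')=UD_2(\Gamma)\cup(\Gamma\setminus\{v\})\itimes e\cup\Gamma\itimes w$, collapses the cylinder piece $(\Gamma\setminus\{v\})\itimes e\cup\Gamma\itimes w$ relative to $(\Gamma\setminus\{v\})\itimes v$ onto $(\Gamma\setminus\{v\})\itimes v$ with a cone over $\lk_\Gamma(v)$ attached, and reads off the free factor as $\pi_1$ of the suspension of $\lk_\Gamma(v)$, i.e.\ $\mathbb{F}_{\val_\Gamma(v)-1}$. Your mapping-cylinder bookkeeping and the final point-identification step are just a more explicit rendering of those same moves.
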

Recall that $\lk_\Gamma(v)$ is the set of vertices adjacent to $v$ in $\Gamma$ (Example~\ref{Ex:Graphs}).
\begin{proof}
According to whether a point is lying on $e=[v,w]$ or not, we have a decomposition
\begin{equation}\label{eq:Decomposition}
UD_2(\Gamma')=UD_2(\Gamma)\bigcup_{(\Gamma\setminus\{v\})\itimes v}(\Gamma\setminus\{v\})\itimes e\bigcup_{(\Gamma\setminus\{v\})\itimes w} \Gamma\itimes w.      
\end{equation}
Since there is a homotopy equivalence 
\[
(\Gamma\setminus\{v\})\itimes e \bigcup_{(\Gamma\setminus\{v\})\itimes w } \Gamma\itimes w\quad \simeq\quad
(\Gamma\setminus\{v\})\itimes v \bigcup_{\lk_\Gamma(v)\itimes v} \operatorname{Cone}(\lk_\Gamma(v))
\]
relative to $(\Gamma\setminus\{v\})\itimes v$, we have 
\[
\mathbb{B}_2(\Gamma') \cong \mathbb{B}_2(\Gamma)\ast \pi_1(S(\lk_\Gamma(v)))
\cong \mathbb{B}_2(\Gamma)\ast \mathbb{F}_{N},
\]
where $S(\lk_\Gamma(v))$ is the suspension of $\lk_\Gamma(v)$ and $N=\val_\Gamma(v)-1$ as desired.
\end{proof}

Lemma~\ref{NoHangingEdge} and its proof have two important meanings for us: 
\begin{enumerate}
\item
The presence of a separating vertex in $\Gamma$ may induce a decomposition of $UD_2(\Gamma)$ and thus a decomposition of $\mathbb{B}_2(\Gamma)$. This idea will be elaborated on in Section~\ref{Section:GOGdecomposition}.
\item By Theorem \ref{PW}, there is no significant difference between the large scale geometry of $\mathbb{B}_2(\Gamma)$ and $\mathbb{B}_2(\Gamma')$ unless $\mathbb{B}_2(\Gamma)\cong\mathbb{Z}$, i.e, $\Gamma$ is either $\mathsf{S}_3$ or $\mathsf{C}_3$ (Corollary~\ref{cor:Z}).
\end{enumerate}

The main theme of the study of the large scale geometry of graph $2$-braid groups is how graph $2$-braid groups are determined up to quasi-isometry from their defining graphs. 
As a first step, let us see how the (non-)planarity of graphs affects on this theme.

\begin{proposition}\label{Non-planar}
If $\Gamma$ is non-planar, then $\mathbb{B}_2(\Gamma)$ has an undistorted subgroup isomorphic to the fundamental group of a closed hyperbolic surface. 
In particular, $\mathbb{B}_2(\Gamma)$ is not quasi-isometric to a free group.
\end{proposition}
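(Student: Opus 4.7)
The plan is to produce an undistorted surface subgroup inside $\mathbb{B}_2(\Gamma)$ by restricting to a Kuratowski subgraph, and then leverage this subgroup to rule out quasi-isometry with a free group.

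First, since $\Gamma$ is non-planar, Kuratowski's theorem furnishes a subgraph $\Gamma_0 \subseteq \Gamma$ homeomorphic to $K_5$ or $K_{3,3}$. As $\Gamma$ is simple (and hence sufficiently subdivided in the sense of Theorem~\ref{Abrams}), so is $\Gamma_0$, and Lemma~\ref{Lem:embedding} then supplies a locally isometric embedding $UD_2(\Gamma_0)\hookrightarrow UD_2(\Gamma)$ that realizes $\mathbb{B}_2(\Gamma_0)$ as an undistorted subgroup of $\mathbb{B}_2(\Gamma)$. I would next invoke the classical computation, going back to Abrams' thesis, that $UC_2(K_5)$ and $UC_2(K_{3,3})$ are homotopy equivalent to closed non-orientable surfaces of strictly negative Euler characteristic. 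In particular, $\mathbb{B}_2(\Gamma_0)$ is the fundamental group of a closed hyperbolic surface, which gives the first half of the proposition.

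For the quasi-isometry statement, I argue by contradiction. Suppose $\mathbb{B}_2(\Gamma)$ were quasi-isometric to a free group. Then, by a standard consequence of Stallings' ends theorem together with Dunwoody's accessibility, $\mathbb{B}_2(\Gamma)$ would itself be virtually free. Let $F$ be a finite-index free subgroup and let $H\leq \mathbb{B}_2(\Gamma)$ denote the surface subgroup just constructed. Then $H\cap F$ has finite index in $H$, so it is itself the fundamental group of a closed hyperbolic surface; but $H\cap F\leq F$ is a subgroup of a free group, hence itself free, a contradiction. I do not expect a real technical obstruction; the substance of the argument lies in the identification of $UC_2(K_5)$ and $UC_2(K_{3,3})$ with closed hyperbolic surfaces, which is exactly where Abrams' work does the heavy lifting.
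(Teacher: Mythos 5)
Your proof is correct. The first half is essentially identical to the paper's argument: Kuratowski's theorem produces a subgraph homeomorphic to $K_5$ or $K_{3,3}$, Lemma~\ref{Lem:embedding} gives an undistorted copy of its $2$-braid group, and Abrams' identification of $UD_2(K_5)$ and $UD_2(K_{3,3})$ with closed surfaces of negative Euler characteristic does the rest (your remark that these surfaces are non-orientable is consistent with the Euler characteristic counts $-5$ and $-3$, and is harmless since such surfaces still carry hyperbolic structures). Where you genuinely diverge is the ``in particular'' clause. The paper uses the undistortedness: a quasi-isometry to a free group would yield a quasi-isometric embedding of the hyperbolic plane into a tree, which is impossible by the result cited as \cite[Theorem 1]{BK05}. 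You instead invoke quasi-isometric rigidity of virtually free groups (Stallings' ends theorem plus Dunwoody's accessibility) to conclude $\mathbb{B}_2(\Gamma)$ would be virtually free, and then derive a contradiction from Nielsen--Schreier applied to the intersection of the surface subgroup with a finite-index free subgroup. Both routes are standard and valid; note that your version does not actually use the undistortedness of the surface subgroup at all (mere existence of a surface subgroup suffices), whereas the paper's does, at the price of relying on a different deep input (QI rigidity of virtual freeness versus non-embeddability of $\mathbb{H}^2$ into trees).
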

\begin{proof}
By Kuratowski's theorem, a graph is non-planar if and only if it contains a subgraph that is a subdivision of the complete bipartite graph $K_{3,3}$ or the complete graph $K_5$.
Since $UD_2(K_{3,3})$ and $UD_2(K_5)$ are homeomorphic to closed hyperbolic surfaces, by Lemma~\ref{Lem:embedding}, $\mathbb{B}_2(\Gamma)$ has an undistorted subgroup isomorphic to the fundamental group of a closed hyperbolic surface.

Assume that $\mathbb{B}_2(\Gamma)$ is quasi-isometric to a free group.
Since $\mathbb{B}_2(\Gamma)$ contains an undistorted subgroup isomorphic to the fundamental group of a closed hyperbolic surface, there must exist a quasi-isometric embedding from the hyperbolic plane to a free group (equipped with the word metric).
However, this is a contradiction by \cite[Theorem 1]{BK05}. Therefore, the last statement holds.
\end{proof}

It is not known whether there exists a planar $\Gamma$ such that $\mathbb{B}_2(\Gamma)$ has an undistorted subgroup isomorphic to the fundamental group of a closed hyperbolic surface.

If $\Gamma$ is planer, after fixing an embedding $\Gamma\hookrightarrow\mathbb{R}^2$, we define a \emph{boundary cycle} as the boundary of the closure of a bounded component of $\mathbb{R}^2\setminus \Gamma$.
As a corollary of Lemma~\ref{NoHangingEdge}, we can deduce that if $\Gamma$ has no boundary cycles, i.e., $\Gamma$ is a tree, then $\mathbb{B}_2(\Gamma)$ is a free group. The following is a generalization of this fact and Proposition \ref{Prop:FreeAbelianSubgroup} in graph $2$-braid groups.

\begin{theorem}[\cite{KP12}, Theorem~4.8]\label{scrg}
If $\Gamma$ is planar, then $\mathbb{B}_2(\Gamma)$ admits a group presentation whose relators are commutators corresponding to unordered pairs of disjoint boundary cycles.
In particular, if $\Gamma$ has no pair of disjoint boundary cycles, then $\mathbb{B}_2(\Gamma)$ is isomorphic to a free group.
\end{theorem}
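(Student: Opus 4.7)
The plan is to invoke the Farley--Sabalka discrete Morse theory on $UD_2(\Gamma)$, which by Theorem~\ref{Abrams} is a deformation retract of $UC_2(\Gamma)$ after a sufficient subdivision (and subdivision does not change $\mathbb{B}_2(\Gamma)$). I would fix a planar embedding of $\Gamma$ in $\mathbb{R}^2$, a rooted spanning tree $\mathsf{T}\subset\Gamma$ chosen compatibly with that embedding (for instance, via a planar depth-first search), and the induced linear orders on $\mathcal{V}(\Gamma)$ and $\mathcal{E}(\Gamma)$. These data determine a discrete gradient vector field on the cube complex $UD_2(\Gamma)$.

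The first step is to read off a presentation of $\mathbb{B}_2(\Gamma)$ from the critical cells of this Morse function. One collapses $UD_2(\Gamma)$ along the gradient to obtain a CW complex with a single critical $0$-cell, a collection of critical $1$-cells furnishing free generators of $\pi_1$, and critical $2$-cells whose gradient-traced boundary words give the relators. Using Farley--Sabalka's notion of blocked/unblocked cells, a critical $1$-cell is a pair $\{e,v\}$ with $e\notin\mathsf{T}$ and $v$ satisfying the unblockedness condition, and a critical $2$-cell is an unordered pair $\{e,e'\}$ of disjoint edges of $\Gamma$, each unblocked in the presence of the other.

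The heart of the argument is the planar identification of critical $2$-cells with pairs of disjoint boundary cycles. Each deleted edge $e\in\mathcal{E}(\Gamma)\setminus\mathcal{E}(\mathsf{T})$ determines, via the planar embedding and the unique $\mathsf{T}$-path between its endpoints, a boundary cycle $c_e$. I would verify that the unblockedness conditions, combined with the planarity and compatibility of $\mathsf{T}$ with the embedding, force a disjoint pair $\{e,e'\}$ of deleted edges to be critical precisely when the cycles $c_e$ and $c_{e'}$ are disjoint boundary cycles, and moreover that every such pair of disjoint boundary cycles arises this way (up to the Tietze moves induced by the collapse). Tracing the attaching circle of such a critical square through the gradient flow, the two opposite sides traverse $c_e$ while the other particle sits at a basepoint, and the other two sides traverse $c_{e'}$ while the first particle sits; after Tietze reductions this becomes the commutator $[a_{c_e},a_{c_{e'}}]$ of the corresponding Morse generators.

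The main obstacle is the bookkeeping in that last paragraph: carefully tracking the Farley--Sabalka flow on the boundary of each critical square to confirm it reduces to a commutator, and checking that the planarity hypothesis (together with the compatible choice of $\mathsf{T}$) is exactly what rules out the extra non-commutator relators that appear in the non-planar case, where $K_5$ or $K_{3,3}$ subdivisions produce surface-group relators as in Proposition~\ref{Non-planar}. Granting this analysis, the second sentence is immediate: if $\Gamma$ has no pair of disjoint boundary cycles then there are no critical $2$-cells, so $UD_2(\Gamma)$ collapses to a graph and $\mathbb{B}_2(\Gamma)$ is free.
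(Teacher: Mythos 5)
This theorem is imported: the paper quotes it from \cite{KP12} (Theorem~4.8 there) and offers no proof of its own, so the only meaningful comparison is with the argument in that reference --- and your route (Farley--Sabalka discrete Morse theory on $UD_2(\Gamma)$ with a spanning tree chosen compatibly with a planar embedding) is exactly the strategy of Ko--Park. So the approach is the right one; the issue is that, as written, your proposal defers precisely the step that constitutes the theorem.

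Two concrete gaps. First, your identification of critical $2$-cells with pairs of disjoint \emph{deleted} edges is not correct in general: in the Farley--Sabalka gradient field a $2$-cell $\{e,e'\}$ is critical when each edge is either deleted or a \emph{disrespectful} tree edge relative to the other, so critical squares of type (deleted, tree) and (tree, tree) also occur, and one must show that their boundary words either die or reduce to the same commutators --- this is a substantial part of the analysis in \cite{KP12} that your sketch silently omits. Second, the assertion that planarity plus a compatible tree forces every surviving relator to be a commutator $[a_{c_e},a_{c_{e'}}]$ of disjoint boundary cycles is stated (``I would verify that\dots'', ``Granting this analysis\dots'') rather than argued; since the non-planar case genuinely produces non-commutator (surface-group) relators, the planarity hypothesis must enter through a specific combinatorial property of the chosen order, and identifying and exploiting that property is the heart of the proof. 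The final deduction --- no disjoint boundary cycles implies no relators implies $\mathbb{B}_2(\Gamma)$ free --- is fine once the presentation is established.
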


\begin{Ex}\label{Ex:FreeGroupCase}
For integers $n,\ell\ge 0$ with $n+\ell\ge 1$, let $\Lambda=\Lambda(n,\ell)$ be a graph obtained from an $n$-star $\mathsf{S}_n$ with central vertex $v$ by attaching $\ell$ copies of $3$-cycles at $v$.
That is, 
\[
\Lambda=\left(\mathsf{S}_n\coprod_{i=1}^\ell\mathsf{C}_3^i\right)\bigg/ v\sim 0^i,
\] 
where $\mathsf{C}_3^i$ is the $i$-th copy of a 3-cycle $\mathsf{C}_3$ and $0^i$ is the designated vertex of $\mathsf{C}_3^i$.
By \cite{KP12}, $\mathbb{B}_2(\Lambda)$ is a free group of rank $N$, where
\[
N(n,\ell)=(n+\ell)(n+2\ell-2)-\binom{n+\ell}{2}+1
=\frac{(n+\ell) ( n+3\ell-3 )}{2} + 1\ge 0.
\]
Note that
\begin{enumerate}
\item $N=0$ if and only if $1\le n\le 2$ and $\ell=0$, namely, $\Gamma$ is a path graph $\mathsf{P}_1$ or $\mathsf{P}_2$.
\item $N=1$ if and only if $(n,\ell)=(3,0)$ or $(0,1)$, namely, $\Gamma$ is either $\mathsf{S}_3$ or $\mathsf{C}_3$.
\end{enumerate}
\end{Ex}

\begin{Ex}\label{example:trees}
For a tree $\mathsf{T}$, by \cite{KP12} or Lemma~\ref{NoHangingEdge}, $\mathbb{B}_2(\mathsf{T})$ is a free group of rank $N$, where
\[
N=\sum_{v\in \mathcal{V}(\mathsf{T})} N(\val_{\mathsf{T}}(v),0)=
\sum_{v\in \mathcal{V}(\mathsf{T})} \binom{\val_{\mathsf{T}}(v)-1}{2}\ge \#\{v\in\mathcal{V}(\mathsf{T})\mid \val_{\mathsf{T}}(v)\ge 3\}.
\]
\end{Ex}

\begin{corollary}\label{cor:Z}
The braid group $\mathbb{B}_2(\Gamma)$ is isomorphic to $\mathbb{Z}$ if and only if $\Gamma$ is homeomorphic to either $\mathsf{S}_3$ or $\mathsf{C}_3$.
\end{corollary}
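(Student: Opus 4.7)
The ``if'' direction is contained in Figure~\ref{Fig:tripodFig}: $UD_2(\mathsf{S}_3)\cong\mathsf{C}_6$ and $UD_2(\mathsf{C}_3)\cong\mathsf{C}_3$ are both topological circles, so both braid groups are infinite cyclic. For the converse, assume $\mathbb{B}_2(\Gamma)\cong\mathbb{Z}$. Since $\mathbb{Z}$ contains neither $\mathbb{Z}^2$ nor $\mathbb{F}_2$ as a subgroup, Proposition~\ref{Prop:FreeAbelianSubgroup} shows $\Gamma$ has no pair of disjoint cycles and Proposition~\ref{Non-planar} shows $\Gamma$ is planar; hence by Theorem~\ref{scrg}, $\mathbb{B}_2(\Gamma)$ is free of rank~$1$.

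The plan is to induct on the number of vertices of $\Gamma$, reducing to the leaf-free case via Lemma~\ref{NoHangingEdge}. If $\Gamma$ has a leaf $w$ with neighbor $v$ of valency $d$, then $\mathbb{B}_2(\Gamma)\cong\mathbb{B}_2(\Gamma\setminus\{w\})*\mathbb{F}_{d-2}$, and since $\mathbb{Z}$ is freely indecomposable there are exactly two cases. \emph{Case (a):} $\mathbb{B}_2(\Gamma\setminus\{w\})=1$ and $d=3$. Then $\Gamma\setminus\{w\}$ must be a path (any cycle or branch vertex would contribute a $\mathbb{Z}$ subgroup via Lemma~\ref{Lem:embedding}), and the condition $d=3$ forces $v$ to be an interior vertex of this path; thus $\Gamma$ is homeomorphic to $\mathsf{S}_3$. \emph{Case (b):} $\mathbb{B}_2(\Gamma\setminus\{w\})\cong\mathbb{Z}$ and $d=2$. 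Then $v$ becomes a leaf of $\Gamma\setminus\{w\}$, and by the inductive hypothesis $\Gamma\setminus\{w\}$ is homeomorphic to $\mathsf{S}_3$ (the alternative $\mathsf{C}_3$ has no leaves); since $\Gamma$ only extends the leaf $v$ by $w$, it remains homeomorphic to $\mathsf{S}_3$.

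It remains to show that a leaf-free connected simple $\Gamma$ with $\mathbb{B}_2(\Gamma)\cong\mathbb{Z}$ is homeomorphic to $\mathsf{C}_3$, i.e., a topological circle. Suppose not. Since $\Gamma$ is leaf-free, connected, and not a cycle, $b_1(\Gamma)\ge2$. If $\Gamma$ has some vertex of valency $\ge 4$, then after sufficient subdivision (Theorem~\ref{Abrams}) $\mathsf{S}_4\subset\Gamma$; Lemma~\ref{Lem:embedding} and Example~\ref{Ex:FreeGroupCase} then yield an undistorted embedding $\mathbb{F}_3\cong\mathbb{B}_2(\mathsf{S}_4)\hookrightarrow\mathbb{Z}$, impossible. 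Hence every vertex of $\Gamma$ has valency $2$ or $3$; smoothing bivalent vertices yields a connected $3$-regular multigraph on at least $2$ vertices. Any two distinct cycles there are not disjoint (Step~1) and cannot share only a single vertex (which would have valency $\ge 4$, contradicting $3$-regularity), so they share an arc — producing a $\theta$-topological-minor, and hence a literal $\theta$-subgraph $\theta'\subset\Gamma$ after suitable subdivision. A direct Euler-characteristic computation on the minimal $\theta'$ gives $\chi(UD_2(\theta'))=-2$, so by Theorem~\ref{scrg} and asphericity $\mathbb{B}_2(\theta')\cong\mathbb{F}_3$, again yielding the forbidden embedding $\mathbb{F}_3\hookrightarrow\mathbb{Z}$.

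The crux is the last step: after excluding disjoint cycles and high-valency vertices, one must verify the structural dichotomy that forces two cycles to share an arc (producing a $\theta$, rather than a figure-eight) and then rule out $\mathbb{B}_2(\theta')\cong\mathbb{Z}$ via the Euler-characteristic computation; this is the portion of the argument that needs the most care.
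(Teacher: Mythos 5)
Your proof is correct, but it takes a genuinely different (and considerably longer) route than the paper's. The paper's argument is three lines: after subdividing, choose a maximal tree $\mathsf{T}\subset\Gamma$ whose branch vertices are exactly those of $\Gamma$; by Lemma~\ref{Lem:embedding} and Example~\ref{example:trees}, $\mathbb{B}_2(\mathsf{T})$ is a free subgroup of $\mathbb{Z}$ of rank at least the number of branch vertices, so $\Gamma$ has at most one vertex of valency $\ge 3$; hence $\Gamma$ is homeomorphic to some $\Lambda(n,\ell)$ of Example~\ref{Ex:FreeGroupCase}, and the rank formula $N(n,\ell)=1$ forces $(n,\ell)=(3,0)$ or $(0,1)$. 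You instead strip leaves by induction using Lemma~\ref{NoHangingEdge} together with the free indecomposability of $\mathbb{Z}$, and settle the leaf-free case by excluding valency $\ge 4$ (via $\mathbb{B}_2(\mathsf{S}_4)\cong\mathbb{F}_3$) and extracting a $\theta$-subgraph, whose $2$-braid group your Euler-characteristic count correctly identifies as $\mathbb{F}_3$ (for the minimal simple $\theta$ one gets $\chi=6-10+2=-2$, and asphericity plus Theorem~\ref{scrg} give rank $1-\chi=3$). Both proofs ultimately rest on the same two facts---$\mathbb{F}_2$ does not embed in $\mathbb{Z}$, plus the subgraph Lemma~\ref{Lem:embedding}---but the maximal-tree trick lets the paper avoid analyzing how cycles intersect, which is exactly where your argument needs the most care: the step ``two non-disjoint cycles in a subcubic graph share an arc'' is not literally true as stated (their intersection can be disconnected), though the conclusion survives, since an arc of the second cycle whose interior misses the first must have distinct endpoints on it by subcubicity, yielding the $\theta$. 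Two minor remarks: your opening reduction (planarity, no disjoint cycles, freeness via Theorem~\ref{scrg}) is sound but mostly redundant---$\mathbb{B}_2(\Gamma)\cong\mathbb{Z}$ is already free of rank one, and all you later use is the absence of disjoint cycles---and the leaf-stripping induction via Lemma~\ref{NoHangingEdge} is a clean, self-contained alternative to the paper's appeal to the closed-form rank of $\mathbb{B}_2(\Lambda(n,\ell))$.
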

\begin{proof}
We only need to prove the `only if' direction.

Up to subdivision, we may choose a maximal tree $\mathsf{T}$ of $\Gamma$ such that $\val_{\mathsf{T}}(v)\ge 3$ if and only if $\val_{\Gamma}(v)\ge 3$.
Then by Lemma~\ref{Lem:embedding} and Example~\ref{example:trees}, $\mathsf{T}$ has at most one vertex of valency $\ge 3$ and so does $\Gamma$.
By Example~\ref{Ex:FreeGroupCase}, $\Gamma$ is homeomorphic to either $\mathsf{S}_3$ or $\mathsf{C}_3$.
\end{proof}

In summary, we have the following theorem completely classifying graph $2$-braid groups which are quasi-isometric to free groups.
\begin{theorem}\label{theorem:freegroupcase}
Let $\Gamma$ be a graph. Then $\mathbb{B}_2(\Gamma)$ is quasi-isometric to a free group if and only if $\mathbb{B}_2(\Gamma)$ is isomorphic to a free group if and only if $\Gamma$ is a planar graph without two disjoint (boundary) cycles.
\end{theorem}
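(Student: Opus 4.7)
The theorem chains three conditions: (A) quasi-isometry to a free group, (B) isomorphism to a free group, (C) planarity plus absence of two disjoint (boundary) cycles. The implication [(B)$\Rightarrow$(A)] is immediate, and the implication [(C)$\Rightarrow$(B)] is a direct restatement of Theorem~\ref{scrg} (that is, \cite[Theorem~4.8]{KP12}): a planar graph with no disjoint boundary cycles yields a presentation whose relators are all trivial, so $\mathbb{B}_2(\Gamma)$ is free. The only substantive implication is therefore [(A)$\Rightarrow$(C)], which I will prove by contraposition.

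The plan is to split the negation of (C) into two cases and apply one of the already-established structural results in each case. If $\Gamma$ is non-planar, Proposition~\ref{Non-planar} directly gives that $\mathbb{B}_2(\Gamma)$ contains an undistorted subgroup isomorphic to a closed hyperbolic surface group, and the argument in that proposition (invoking \cite[Theorem~1]{BK05} on the non-existence of a quasi-isometric embedding $\mathbb{H}^2\hookrightarrow\mathbb{F}$) shows that $\mathbb{B}_2(\Gamma)$ cannot be quasi-isometric to a free group. So assume $\Gamma$ is planar but has a pair of disjoint boundary cycles $\mathsf{C}^1,\mathsf{C}^2$. Disjoint boundary cycles are in particular disjoint cycles of $\Gamma$, so Proposition~\ref{Prop:FreeAbelianSubgroup} (applied with $n=2$ and $m=2$) provides an undistorted $\mathbb{Z}^2$ inside $\mathbb{B}_2(\Gamma)$. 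Since any free group is Gromov hyperbolic and $\mathbb{Z}^2$ is not, $\mathbb{Z}^2$ cannot be quasi-isometrically embedded into a free group; hence $\mathbb{B}_2(\Gamma)$ fails to be quasi-isometric to any free group.

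There is no real obstacle here: both of the exotic subgroups (hyperbolic surface groups and $\mathbb{Z}^2$) are known to obstruct quasi-isometry to a free group, and the decomposition of the failure of (C) into \emph{non-planar} versus \emph{planar with two disjoint cycles} aligns exactly with the two propositions available in the excerpt. The only small point worth emphasizing in the write-up is that a pair of disjoint boundary cycles is, in particular, a pair of disjoint cycles of $\Gamma$, so the hypothesis of Proposition~\ref{Prop:FreeAbelianSubgroup} is met. Assembling these three observations closes the cycle of implications (A)$\Leftrightarrow$(B)$\Leftrightarrow$(C) and finishes the proof.
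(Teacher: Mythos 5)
Your proposal is correct and follows essentially the same route as the paper: the authors likewise obtain [(A)$\Rightarrow$(C)] by combining Propositions~\ref{Prop:FreeAbelianSubgroup} and~\ref{Non-planar} (undistorted $\mathbb{Z}^2$ and surface subgroups obstructing quasi-isometry to a free group), then close the loop with Theorem~\ref{scrg} for [(C)$\Rightarrow$(B)] and the trivial [(B)$\Rightarrow$(A)]. Your explicit remark that a pair of disjoint boundary cycles is in particular a pair of disjoint cycles is a harmless elaboration the paper leaves implicit.
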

\begin{proof}
Suppose that $\mathbb{B}_2(\Gamma)$ is quasi-isometric to a free group.
By Propositions \ref{Prop:FreeAbelianSubgroup} and \ref{Non-planar}, $\Gamma$ must be planar and have no pair of disjoint cycles. And then, Theorem \ref{scrg} implies that $\mathbb{B}_2(\Gamma)$ is a free group. Therefore, the theorem holds. 
\end{proof}

Let us denote by $\mathcal{F}$ the set of graphs which are planar and have no pair of disjoint cycles.
By Theorem~\ref{theorem:freegroupcase}, then it is quasi-isometrically rigid up to the $2$-braid groups in the sense that if a graph $2$-braid group is quasi-isometric to a free group, then its defining graph belongs to $\mathcal{F}$.

\begin{remark}
Indeed, (simple) graphs in $\mathcal{F}$ are completely classified in a combinatorial way by Lov\'asz \cite{Lovasz}, which are graphs satisfying the following: there exist at most three vertices such that any (induced) cycle contains at least one such vertex.
\end{remark}

\subsubsection{The union of maximal product subcomplexes}
By Theorem \ref{Abrams}, the discrete (unordered) configuration space $UD_2(\Gamma)$ is a (weakly) special square complex and therefore $\cI(\bar{UD_2(\Gamma)})$ and $\cRI(UD_2(\Gamma))$ are definable. In order to see their structures inherited from $\Gamma$, let us see how standard product subcomplexes of $UD_2(\Gamma)$ and $\bar{UD_2(\Gamma)}$ look like.

By the paragraph above Proposition~\ref{Prop:FreeAbelianSubgroup}, we know that a pair of disjoint subgraphs $\Gamma_1$ and $\Gamma_2$ induces a product subcomplex $\Gamma_1\itimes\Gamma_2$ of $UD_2(\Gamma)$. Indeed, this is the only way to construct product subcomplexes of $UD_2(\Gamma)$.

\begin{lemma}\label{Lem:SPSinGBG}
A subcomplex $K\subset UD_2(\Gamma)$ is a product subcomplex if and only if there exists a pair of disjoint subgraphs $\Gamma_1$ and $\Gamma_2$ of $\Gamma$ such that $K=\Gamma_1\itimes\Gamma_2$.
In this case, any $p$-lift of $K$ is exactly a component of the preimage $p_{UD_2(\Gamma)}^{-1}(K)$ of $K$.
\end{lemma}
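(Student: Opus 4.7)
The plan is to prove both directions of the biconditional and then deduce the $p$-lift statement from covering-space theory.

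For the ``if'' direction, given disjoint subgraphs $\Gamma_1,\Gamma_2\subset\Gamma$, the assignment $(x,y)\mapsto\{x,y\}$ is a well-defined combinatorial embedding $\Gamma_1\times\Gamma_2\hookrightarrow UD_2(\Gamma)$, manifestly injective on each factor. At a vertex $(v_1,v_2)$ the link of the product is the complete bipartite graph $\lk_{\Gamma_1}(v_1)*\lk_{\Gamma_2}(v_2)$, and it injects into $\lk(\{v_1,v_2\},UD_2(\Gamma))$, which is itself bipartite with sides the half-edges at $v_1$ in $\Gamma$ disjoint from $v_2$ and those at $v_2$ disjoint from $v_1$. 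Hence the map is an immersion, so by Lemma~\ref{Lem:ImmersionImpliesLocalisometry} a local isometry, and $\Gamma_1\itimes\Gamma_2$ is a product subcomplex.

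For the ``only if'' direction, let $\iota\colon\Gamma_1\times\Gamma_2\to UD_2(\Gamma)$ be a product structure with image $K$, fix a basepoint $(v_1^0,v_2^0)\mapsto\{a^0,b^0\}$, and note that $\lk(\{a^0,b^0\},UD_2(\Gamma))$ carries a canonical bipartition into ``half-edges at $a^0$'' and ``half-edges at $b^0$''. Since the link map is injective and a complete bipartite subgraph of a bipartite graph respects the host bipartition, the two product factors map to opposite sides; say $\Gamma_1$-half-edges to $a^0$ and $\Gamma_2$-half-edges to $b^0$. Propagating this labeling along edge-paths in $\Gamma_1\times\Gamma_2$ produces subgraphs $\Lambda_1,\Lambda_2\subset\Gamma$ with $a^0\in\Lambda_1$, $b^0\in\Lambda_2$, together with isomorphisms $x\colon\Gamma_1\xrightarrow{\sim}\Lambda_1$ and $y\colon\Gamma_2\xrightarrow{\sim}\Lambda_2$ such that $\iota(v_1,v_2)=\{x(v_1),y(v_2)\}$, giving $K=\Lambda_1\itimes\Lambda_2$. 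Disjointness is then forced: were $z\in\Lambda_1\cap\Lambda_2$ with $x(v_1^\ast)=z$, the restriction $\iota|_{\{v_1^\ast\}\times\Gamma_2}$ would be $\{z\}\itimes\Lambda_2$, which contains the forbidden vertex $\{z,z\}$.

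The main subtlety lies in the propagation step. It is handled by the observation that any two edge-paths between common endpoints of $\Gamma_1\times\Gamma_2$ are related by a sequence of elementary square exchanges, and each product square maps to a square of $UD_2(\Gamma)$---an unordered pair of disjoint edges $\{e_1,e_2\}$---whose two pairs of opposite edges correspond unambiguously to moving $e_1$ or $e_2$ in the pair. Hence the bipartite labeling is preserved around every square, so the global assignment $(x,y)$ is well-defined.

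For the $p$-lift claim, disjointness makes $K\cong\Lambda_1\times\Lambda_2$ a genuine product, and the inclusion $K\hookrightarrow UD_2(\Gamma)$ is a local isometry, hence $\pi_1$-injective. Applying covering-space theory to the universal covering $p\colon\bar{UD_2(\Gamma)}\to UD_2(\Gamma)$ yields that every connected component of $p^{-1}(K)$ is simply connected, so it is isomorphic to the universal cover $\bar{\Lambda_1}\times\bar{\Lambda_2}$ of $K$. By Definition~\ref{Def:Pullback and Pushforward}, any $p$-lift of $K$ is the image of a pull-back $p^\ast\iota$ whose domain is $\bar{\Lambda_1}\times\bar{\Lambda_2}$ and which projects onto $K$; simple connectivity of this image identifies it with a connected component of $p^{-1}(K)$.
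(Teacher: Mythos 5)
Your ``if'' direction and your treatment of the $p$-lift claim are fine, and your overall strategy for the ``only if'' direction --- reading off the bipartition of $\Lk(\{a,b\},UD_2(\Gamma))$ according to which of the two configuration points an edge moves, and showing the two product factors land on opposite sides --- is sound. But the step you yourself isolate as the main subtlety is justified by a false claim. Two edge-paths in $\Gamma_1\times\Gamma_2$ with common endpoints are related by elementary square exchanges only when they are homotopic rel endpoints, and $\Gamma_1\times\Gamma_2$ is not simply connected in exactly the cases that matter: for a standard product subcomplex both factors have no leaves, hence contain cycles, so the domain contains a torus. A labeling defined by propagation along paths therefore carries an a priori monodromy homomorphism $\pi_1(\Gamma_1\times\Gamma_2)\to\mathbb{Z}/2$ which square exchanges cannot kill, and your argument as written does not rule out the two labels swapping around an essential loop (which is precisely what a point-exchanging braid would do).

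The gap is repairable from ingredients you already have, because the labeling admits a pointwise definition that makes propagation unnecessary. At every vertex $(v_1,v_2)$ the link of $\Gamma_1\times\Gamma_2$ is the join of the two nonempty discrete sets $\lk_{\Gamma_1}(v_1)$ and $\lk_{\Gamma_2}(v_2)$, so under the injective link map every $\Gamma_1$-half-edge is adjacent to every $\Gamma_2$-half-edge; since the ambient link is bipartite with no edges inside a part, all $\Gamma_1$-direction edges at that vertex move the \emph{same} point of the configuration, which you may declare to be the $\Gamma_1$-point of that vertex. Examining a single edge together with its reverse then shows this assignment agrees across every edge of $\Gamma_1\times\Gamma_2$, so there is no monodromy to control and your maps $x$ and $y$ are well defined. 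For comparison, the paper proves only the ``only if'' direction and obtains the same consistency from hyperplanes rather than links: parallel edges of $UD_2(\Gamma)$ share the same ``moving edge,'' so each slice $\Lambda_1\times\{y_j\}$ maps onto some $\Gamma_1^j\itimes w_j$, and parallelism of adjacent slices forces $\Gamma_1^j$ to be independent of $j$. Either repair yields the lemma; your link-based route, once fixed, is essentially dual to the paper's hyperplane argument.
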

Note that by the definition of $UD_2(\Gamma)$, for an edge $\sigma\itimes\{v\}$ in $UD_2(\Gamma)$, the carrier of the hyperplane dual to $\sigma\itimes\{v\}$ is $\sigma\itimes\mathsf{H}$, where $v$ is a vertex in $\Gamma$, $\sigma$ is an edge of $\Gamma$ and $\mathsf{H}$ is a component of $\Gamma\setminus \sigma$ which contains $v$.
It follows that if the carrier of two hyperplanes $\sigma_1\itimes\mathsf{H}_1$ and $\sigma_2\itimes\mathsf{H}_2$ intersect, then $\sigma_1$ and $\sigma_2$ are disjoint and $\mathsf{H}_i$ contains $\sigma_j$ for $\{i,j\}=\{1,2\}$.

\begin{proof}
We only need to prove the `only if' direction.

Let $K\subset Y$ be a product subcomplex with a product structure $\iota:\Lambda_1\times\Lambda_2\to UD_2(\Gamma)$.
Since $\iota$ is a local isometry, the carriers of hyperplanes are mapped by $\iota$ into the carriers of hyperplanes which are of form $\sigma\itimes\mathsf{H}$ as above.
It follows by the definition of a product structure that for any vertex $y_j\in\Lambda_2$, the restriction of $\iota$ to $\Lambda_1\times y_j$ is an isomorphism onto $\Gamma^j_1\itimes w_j$, where $w_j$ is a vertex in $\Gamma$ and $\Gamma^j_1$ is a subgraph of a component of $\Gamma\setminus w_j$.
Moreover, the restriction of $\iota$ to $\Lambda_1\times e'$ must be an immersion for any edge $e'$ of $\Lambda_2$. It means that $\Gamma^j_1\itimes w_j$ and $\Gamma^{j'}_1\itimes w_{j'}$ are parallel 
and thus $\Gamma^j_1$ and $\Gamma^{j'}_1$ are identical for any $j,j'$.
Hence, there exists a subgraph $\Gamma_1$ of $\Gamma$ such that for any vertex $y\in\Lambda_2$, the restriction of $\iota$ to $\Lambda_1\times y$ is an isomorphism onto $\Gamma_1\itimes w$ for some vertex $w\in\Gamma$.
Similarly, there exists a subgraph $\Gamma_2$ of $\Gamma$ such that for any $x\in\Lambda_1$, the restriction of $\iota$ to $x\times \Lambda_2$ is an isomorphism onto $v\itimes\Gamma_2$ for some $v\in\Gamma$.

Since any hyperplane intersecting $\iota(\Lambda_1\times y)$ intersects a hyperplane intersecting $\iota(x\times \Lambda_2)$ and vice-versa, $\Gamma_1$ and $\Gamma_2$ must be disjoint.
Thus the restriction of $\iota$ to the $1$-skeleton of $\Lambda_1\times\Lambda_2$ is an isomorphism onto the $1$-skeleton of $\Gamma_1\itimes\Gamma_2$. 
Since $\iota$ is a local isometry, the image $K$ of $\iota$ must be equal to $\Gamma_1\itimes\Gamma_2$. 

Indeed, we have shown that $\iota$ is a locally isometric embedding.
Since a $p$-lift $\bar{K}$ of $K$ is the image of an elevation $\bar\iota:\bar{\Lambda_1}\times\bar{\Lambda_2}\to \bar{UD_2(\Gamma)}$ of $\iota$, $\bar K$ must be a component of the preimage of $K$ under $p_{UD_2(\Gamma)}$.
\end{proof}

\begin{corollary}\label{corollary:maximal}
A product subcomplex $K=\Gamma_1\itimes\Gamma_2\subset UD_2(\Gamma)$ is a standard product subcomplex if and only if each of $\Gamma_1$ and $\Gamma_2$ has no leaves.
In this case, $K$ is maximal if and only if there is no pair of two disjoint subgraphs $\Gamma_1'$ and $\Gamma_2'$ without leaves such that $\Gamma_1\subset\Gamma_1'$, $\Gamma_2\subset\Gamma_2'$, and at least one of $\Gamma_i$ is a proper subgraph of $\Gamma_i'$.
\end{corollary}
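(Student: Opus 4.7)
The plan is to derive both equivalences essentially formally from Lemma~\ref{Lem:SPSinGBG} combined with Definition~\ref{SPS} (in its equivalent ``finite graph without leaves'' form obtained via Lemma~\ref{ProjofPS}) and Lemma~\ref{InclusionBetweenSPSes}. No new geometric input is needed beyond these earlier results; the corollary is a bookkeeping statement about what Lemma~\ref{Lem:SPSinGBG} says in combination with ``standardness''.

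For the first equivalence, I would handle the ``if'' direction by observing that when each $\Gamma_i$ is a subgraph of $\Gamma$ without leaves and $\Gamma_1 \cap \Gamma_2 = \emptyset$, the canonical map $\Gamma_1 \times \Gamma_2 \to UD_2(\Gamma)$ with image $K = \Gamma_1 \itimes \Gamma_2$ is an immersion from a product of two $1$-dimensional graphs, hence a local isometry by Lemma~\ref{Lem:ImmersionImpliesLocalisometry}; since neither factor has leaves, this realizes $K$ as a standard product subcomplex in the sense of the reformulation of Definition~\ref{SPS} that follows Lemma~\ref{ProjofPS}. For the ``only if'' direction, assume $K$ admits a standard product structure $\iota : \Delta_1 \times \Delta_2 \to UD_2(\Gamma)$ with each $\Delta_i$ a finite graph without leaves. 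The combinatorial analysis carried out in the proof of Lemma~\ref{Lem:SPSinGBG} forces $\iota(\Delta_1 \times \{v_2\})$ to equal a ``horizontal'' slice $\Gamma_1 \itimes \{w\}$ of $K$ (and similarly for the other factor); comparing with the defining property of a product structure in Definition~\ref{Def:ProductSubcomplex} (each slice is isometric to the corresponding factor) then gives $\Delta_1 \cong \Gamma_1$ and $\Delta_2 \cong \Gamma_2$, so $\Gamma_i$ inherits the absence of leaves from $\Delta_i$.

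The second equivalence is then pure bookkeeping on top of the first. Lemma~\ref{InclusionBetweenSPSes} says that any inclusion $K \subsetneq K'$ of standard product subcomplexes can be written, after possibly permuting factors, as $\Gamma_1 \itimes \Gamma_2 \subsetneq \Gamma_1' \itimes \Gamma_2'$ with $\Gamma_i \subseteq \Gamma_i'$ and at least one inclusion strict. Applying the first equivalence to both sides, $\Gamma_i'$ must be leaf-free subgraphs of $\Gamma$; conversely, given such a pair $(\Gamma_1', \Gamma_2')$ the first equivalence shows $\Gamma_1' \itimes \Gamma_2'$ is a standard product subcomplex strictly containing $K$. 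Hence maximality of $K$ is exactly the non-existence of such a pair.

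I do not expect any real obstacle. The only subtle point, namely identifying the abstract domain $\Delta_1 \times \Delta_2$ of a given standard product structure with the concrete pair $(\Gamma_1, \Gamma_2)$ in the ambient graph $\Gamma$, is already essentially settled inside the proof of Lemma~\ref{Lem:SPSinGBG}, so the corollary really is a short unpacking of that lemma together with Lemma~\ref{InclusionBetweenSPSes} and the definition of ``standard''.
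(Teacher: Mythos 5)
Your proposal is correct and follows essentially the same route as the paper, whose own proof is a one-line citation of exactly the ingredients you use: the definition of standard product subcomplexes (in the leaf-free-domain form following Lemma~\ref{ProjofPS}), Lemma~\ref{Lem:SPSinGBG}, and Lemma~\ref{InclusionBetweenSPSes}. Your write-up simply makes explicit the unpacking that the paper leaves implicit.
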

\begin{proof}
The proof follows from the definition of standard product subcomplexes, Lemma~\ref{Lem:SPSinGBG} and Lemma~\ref{InclusionBetweenSPSes}.
\end{proof}

\begin{definition}\label{Def:UP_2(Gamma)}
We define a subcomplex $UP_2(\Gamma)$ of $UD_2(\Gamma)$ as the union of all maximal product subcomplexes of $UD_2(\Gamma)$.
\end{definition}

It can easily be seen that there is no pair of disjoint cycles of $\Gamma$ if and only if $UP_2(\Gamma)$ is empty. 
If $UP_2(\Gamma)$ is not empty, then it turns out to be a connected subcomplex of $UD_2(\Gamma)$ (thus Assumption~\ref{assumption:cube} still holds for $UP_2(\Gamma)$)  which is special.

\begin{lemma}\label{Lem:UP_2Special}
If $UP_2(\Gamma)$ is non-empty, then it is a connected special square complex.
\end{lemma}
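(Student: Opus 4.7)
The plan is to handle the two assertions separately. The specialness is nearly immediate: Theorem~\ref{thm:GBGSpecial} gives that $UD_2(\Gamma)$ is a special square complex, and Lemma~\ref{lem:SpecialSubcomplexes} says any subcomplex of a special square complex is itself special; since $UP_2(\Gamma) \subseteq UD_2(\Gamma)$ is a subcomplex by construction, it is special. So only connectedness requires work.

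For connectedness, I would first verify that every maximal product subcomplex $M$ of $UD_2(\Gamma)$ is itself connected. Definition~\ref{SPS} requires the $p$-lift of a standard product subcomplex to factor as a product of infinite trees, which are by definition connected; thus the factors $A, B$ of any standard product subcomplex $M = A \itimes B$ appearing in Corollary~\ref{corollary:maximal} must be connected subgraphs of $\Gamma$, and $M$ is the continuous image of the connected space $A \times B$. Since $A$ and $B$ are also finite and leafless, each contains at least one cycle, so every maximal product subcomplex contains a ``cycle torus'' $C \itimes D$ for some pair of disjoint cycles $C \subset A$ and $D \subset B$.

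It then suffices to connect any two such cycle tori $C \itimes D$ and $C' \itimes D'$ of $UP_2(\Gamma)$ through a chain of maximal product subcomplexes with pairwise nontrivial intersections. My key observation is the following pivot: if two cycles $C, C'$ of $\Gamma$ both lie in the same connected component $L$ of the maximal leafless subgraph of $\Gamma \setminus D$, then $L \itimes D$ is a \emph{connected} standard product subcomplex containing both $C \itimes D$ and $C' \itimes D$, so these tori sit in the same component of $UP_2(\Gamma)$. For arbitrary cycle pairs $(C, D)$ and $(C', D')$, I would pivot through intermediate pairs of the form $(C, D^\ast)$, choosing the auxiliary cycle $D^\ast$ disjoint from the relevant earlier cycles and using the connectedness of $\Gamma$ together with the non-emptiness of $UP_2(\Gamma)$ to guarantee that such a $D^\ast$ can be found.

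The main obstacle will be verifying this pivoting chain in full generality. A reasonable route is first to reduce to the case that $\Gamma$ is itself leafless, since $UP_2(\Gamma)$ only depends on the maximal leafless subgraph of $\Gamma$, and then carry out a combinatorial case analysis on the block/$2$-edge-connected structure of $\Gamma$, showing that any two cycles disjoint from a fixed cycle can be bridged either through a common connected leafless subgraph or through a suitable auxiliary cycle. This combinatorial step is where the bulk of the proof effort lies.
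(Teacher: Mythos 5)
Your first half is fine and matches the paper exactly: specialness follows from Theorem~\ref{thm:GBGSpecial} together with Lemma~\ref{lem:SpecialSubcomplexes}. The problem is the connectedness half, where the actual argument is never carried out. You reduce to linking ``cycle tori,'' state a pivot lemma that only applies when two tori share a common cycle factor (or when the two first factors lie in a common leafless component of the complement of a shared second factor), and then defer the general case to an unspecified ``combinatorial case analysis on the block structure of $\Gamma$.'' That deferred step is the entire content of the lemma, and the pivot mechanism you propose does not suffice for it. Concretely, take two disjoint hexagons $C$ and $D$, and attach two vertex-disjoint ``bridging'' cycles $C'$ and $D'$, where $C'$ consists of an edge of $C$, an edge of $D$, and two connecting paths, and $D'$ is built the same way from a different edge of $C$ and a different edge of $D$. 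Then $(C,D)$ and $(C',D')$ are both maximal pairs of disjoint leafless subgraphs, they share no cycle factor, no cycle of $\Gamma$ is disjoint from more than one of $C,C',D,D'$ besides its designated partner, so there is no intermediate cycle torus to pivot through. The two product subcomplexes nevertheless meet (at, e.g., a point of $C\cap C'$ paired with a point of $D\cap D'$), but your pivoting scheme cannot detect such ``crossed'' intersections, and the appeal to non-emptiness of $UP_2(\Gamma)$ to produce auxiliary cycles $D^\ast$ with prescribed disjointness is unjustified.

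The paper proves something stronger and cleaner: \emph{any} two maximal product subcomplexes $\Gamma_1\itimes\Gamma_2$ and $\Gamma_1'\itimes\Gamma_2'$ of $UP_2(\Gamma)$ intersect. The engine is maximality plus the observation that the union of two leafless subgraphs (joined by a path if they are disjoint) is again leafless: if $\Gamma_1$ met neither $\Gamma_1'$ nor $\Gamma_2'$, one could run a path from $\Gamma_1$ to one of them avoiding the other and thereby enlarge $\Gamma_1'\itimes\Gamma_2'$ to a strictly larger standard product subcomplex, contradicting maximality; a short case analysis then forces each $\Gamma_i$ to meet the $\Gamma_j'$'s so that the products overlap. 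If you want to salvage your approach, you should either adopt this maximality argument or substantially strengthen your pivot step to account for crossed intersections; as written, the proof is incomplete.
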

\begin{proof}
It is obvious that $UP_2(\Gamma)$ is a square complex. By Lemma~\ref{lem:SpecialSubcomplexes}, moreover, $UP_2(\Gamma)$ is special. Thus, it only remains to show that $UP_2(\Gamma)$ is connected. In order for this, we will show that any two maximal product subcomplexes $\Gamma_1\itimes\Gamma_2$ and $\Gamma_1'\itimes\Gamma_2'$ intersect using the following observation:
\begin{observation}
Let $\Lambda_1$ and $\Lambda_2$ be two subgraphs of $\Gamma$ without leaves. 
\begin{enumerate}
\item\label{union} 
If they meet, then their union is also a subgraph without leaves. 
\item\label{unionwithpath}
Otherwise, for any path $\mathsf{P}$ joining $\Lambda_1$ to $\Lambda_2$ such that $\mathsf{P}$ meets $\Lambda_i$ only at its endpoint, the union of $\Lambda_1$, $\Lambda_2$ and $\mathsf{P}$ is a subgraph without leaves. 
\end{enumerate} 
\end{observation}

Assume that $\Gamma_1$ meets neither $\Gamma'_1$ nor $\Gamma'_2$.
Since $\Gamma$ is connected, there exists a path $\mathsf{P}$ from $\Gamma_1$ to $\Gamma'_i$ without intersecting $\Gamma'_{j}$ for some $i$ with $j\neq i$, say $i=1$.
By Observation (\ref{unionwithpath}), $\Gamma_1\cup\mathsf{P}\cup\Gamma'_1$ is a graph without leaves which properly contains $\Gamma'_1$. 
It means that $(\Gamma_1\cup\mathsf{P}\cup\Gamma'_1)\itimes\Gamma'_{2}$ is a standard product subcomplex properly containing $\Gamma'_1\itimes\Gamma'_2$, which is a contradiction. 
Thus, $\Gamma_1$ meets at least one of $\Gamma'_1$ and $\Gamma'_2$, and similarly for $\Gamma_2$.

Suppose that $\Gamma_1$ meets both $\Gamma'_1$ and $\Gamma'_2$. Assume that $\Gamma_2$ meets only one of $\Gamma'_1$ and $\Gamma'_2$, say $\Gamma'_2$. Then $\Gamma'_2$ is not properly contained in either $\Gamma_1$ or $\Gamma_2$ since $\Gamma'_2$ meets both $\Gamma_1$ and $\Gamma_2$.
If $\Gamma'_1$ is contained in $\Gamma_1$, by Observation (\ref{union}), $\Gamma'_1\itimes(\Gamma'_2\cup\Gamma_2)$ is a standard product subcomplex which properly contains $\Gamma'_1\itimes\Gamma'_2$, a contradiction.
Otherwise, again by Observation (\ref{union}), $(\Gamma_1\cup\Gamma'_1)\itimes\Gamma_2$ is a standard product subcomplex which properly contains $\Gamma_1\itimes\Gamma_2$, again a contradiction. It follows that $\Gamma_2$ must meet both $\Gamma'_1$ and $\Gamma'_2$, and thus $\Gamma_1\itimes\Gamma_2$ and $\Gamma_1'\itimes\Gamma_2'$ intersect.

Suppose that $\Gamma_1$ meets only one of $\Gamma'_1$ and $\Gamma'_2$, say $\Gamma'_1$. If $\Gamma_2$ does not meet $\Gamma'_2$ as well, then $\Gamma'_1\cup\Gamma_1\cup\Gamma_2$ must coincide with $\Gamma'_1$ since $\Gamma'_1\itimes\Gamma'_2$ is maximal and $\Gamma'_2$ does not meet $\Gamma_1$, $\Gamma_2$ or $\Gamma'_1$. But it is a contradiction since $\Gamma_1\itimes\Gamma_2$ is also maximal. It follows that $\Gamma_2$ meets $\Gamma'_2$ and therefore $\Gamma_1\itimes\Gamma_2$ and $\Gamma_1'\itimes\Gamma_2'$ intersect.
\end{proof}

However, we do not know if $UP_2(\Gamma)$ is always locally convex in $UD_2(\Gamma)$. The point is that though $\Lk(u\itimes v,UP_2(\Gamma)))$ is triangle-free, it may not be an induced subcomplex in $\Lk(u\itimes v,UD_2(\Gamma)))$. In particular, we can not say that $\pi_1(UP_2(\Gamma))$ is a subgroup of $\mathbb{B}_2(\Gamma)$ neither. See Question~\ref{Qn:LocalConvexity}.

\begin{remark}\label{Rmk:LocallyConvex}
In the next section, we will see that if $\Gamma$ is the minimal simplicial representative of a graph with circumference one (which is previously mentioned in Remark~\ref{Rmk:Developability}), then the embedding $UP_2(\Gamma)\hookrightarrow UD_2(\Gamma)$ is a local isometry and thus $\pi_1(UP_2(\Gamma))$ can be considered as a subgroup of $\mathbb{B}_2(\Gamma)$ (Proposition \ref{Prop:FreeFactor}).
\end{remark}

\subsection{Graph-of-groups decomposition}\label{Section:GOGdecomposition}
This subsection is devoted to a graph-of-groups decomposition of the $2$-braid group over a graph having a separating vertex.

Suppose that $\Gamma$ has no leaves. For a vertex $v$ of $\Gamma$, let $\mathcal{C}(\Gamma,v)=\{\Gamma_i\mid i\in I\}$ and $\mathcal{C}^+(\Gamma,v)=\{\Gamma_i^+\mid i\in I\}$
be the sets of $\hat{v}$-components and extended $\hat{v}$-components of $\Gamma$, respectively.
Suppose that $\pi_1(\Gamma_i)$ is non-trivial if and only if $i\in I'\subseteq I$, and define subcomplexes of $UP_2(\Gamma)$ and $UD_2(\Gamma)$,
\[
UP_2(\Gamma, v)=\bigcup_{i\in I'} \Gamma_i\itimes\Gamma_i^c\quad\text{ and }\quad
UD_2(\Gamma, v) = \left(\bigcup_{i\in I'}{UD_2(\Gamma^+_i)}\right)\cup UP_2(\Gamma, v),
\]
respectively.
Note that $|I'|=0$ if and only if $UP_2(\Gamma,v)$ and $UD_2(\Gamma,v)$ are empty sets; in this case, $\Gamma$ must belong to $\mathcal{F}$.
Otherwise, both are special square complexes by Lemma~\ref{lem:SpecialSubcomplexes}.

\begin{lemma}\label{Lem:D'}
Let $\mathbb{B}_2(\Gamma,v)=\pi_1(UD_2(\Gamma,v))$.
Then for some $N\ge 0$, there is an isomorphism
\[
\mathbb{B}_2(\Gamma) \cong \mathbb{B}_2(\Gamma,v) \ast \mathbb{F}_N.
\]
Moreover, $N>0$ if there exists $i\in I\setminus I'$ such that $\pi_1(\Gamma^+_i)$ is non-trivial.
\end{lemma}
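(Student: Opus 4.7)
The plan is to imitate the proof of Lemma~\ref{NoHangingEdge}: decompose $UD_2(\Gamma)$ into $UD_2(\Gamma,v)$ and a complementary subcomplex $Z$, and show that $Z$ contributes exactly a free factor $\mathbb{F}_N$ to the fundamental group.

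First, a direct cell-by-cell analysis shows that every cell $\{\sigma_1,\sigma_2\}$ of $UD_2(\Gamma)$ not contained in $UD_2(\Gamma,v)$ must have at least one coordinate inside a trivial component $\Gamma_j$ for some $j\in I_0:=I\setminus I'$. This yields a decomposition $UD_2(\Gamma)=UD_2(\Gamma,v)\cup Z$ for an explicit complementary subcomplex $Z$; one convenient choice is
\[
Z \;=\; \bigcup_{j\in I_0}\bigl(UD_2(\Gamma_j^+)\cup(\Gamma_j\itimes\Gamma_j^c)\bigr),
\]
whose interface with $UD_2(\Gamma,v)$ is $\bigcup_{j\in I_0,\,i\in I'}\Gamma_j\itimes\Gamma_i$, together with the slices $\Gamma_j\itimes\{v\}$.

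The technical heart of the proof is to deformation-retract $Z$, relative to this interface, onto the interface with a bouquet of $N$ circles attached at a single base point, mimicking the ``cone over $\lk_\Gamma(v)$'' reduction used at the end of the proof of Lemma~\ref{NoHangingEdge}. The retraction uses crucially that each $\Gamma_j$ ($j\in I_0$) is a tree and that every cycle of $\Gamma_j^+$ passes through $v$, so that $\mathbb{B}_2(\Gamma_j^+)$ is free by Theorem~\ref{scrg}: one collapses the product pieces $\Gamma_j\itimes\Gamma_j^c$ toward their slices $\Gamma_j\itimes\{v\}$, while $UD_2(\Gamma_j^+)$ retracts onto a wedge of circles. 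Applying Seifert--van Kampen to the resulting homotopy equivalence $UD_2(\Gamma)\simeq UD_2(\Gamma,v)\vee\bigvee^N S^1$ then yields $\mathbb{B}_2(\Gamma)\cong\mathbb{B}_2(\Gamma,v)\ast\mathbb{F}_N$.

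For the moreover claim, when some $j\in I_0$ has $\pi_1(\Gamma_j^+)$ non-trivial, $\mathbb{B}_2(\Gamma_j^+)$ is a non-trivial free group (again by Theorem~\ref{scrg}); since the cells of $UD_2(\Gamma_j^+)$ are disjoint from all cells of $UD_2(\Gamma,v)$, this non-trivial contribution cannot be absorbed into the $\mathbb{B}_2(\Gamma,v)$ factor and must appear in $\mathbb{F}_N$, forcing $N>0$. The main obstacle is engineering the deformation retraction so that the resulting wedge of circles attaches along a contractible subset; a naive attachment along the full interface $\bigcup_{i\in I'}\Gamma_j\itimes\Gamma_i$ would be problematic because $\pi_1(\Gamma_j\itimes\Gamma_i)=\pi_1(\Gamma_i)$ is non-trivial, and a direct Seifert--van Kampen argument would then produce only an amalgamated free product rather than the desired pure free product.
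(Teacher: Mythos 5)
Your decomposition $UD_2(\Gamma)=UD_2(\Gamma,v)\cup Z$ with $Z=\bigcup_{j\in I\setminus I'}\bigl(UD_2(\Gamma_j^+)\cup(\Gamma_j\itimes\Gamma_j^c)\bigr)$ is a workable starting point, but the proposal has a genuine gap precisely at the step you flag at the end. The interface $(\Gamma_j\itimes\Gamma_j^c)\cap UD_2(\Gamma,v)$ contains $\bigcup_{i\in I'}\Gamma_j\itimes\Gamma_i$, a disjoint union (over $i\in I'$) of pieces each homotopy equivalent to $\Gamma_i$, hence with non-trivial fundamental group; moreover the interface is disconnected when $|I'|\ge 2$. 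Seifert--van Kampen along such an interface yields a graph-of-groups decomposition with non-trivial edge groups, not a free product, and the deformation retraction of $Z$ rel this interface onto ``interface $\vee$ bouquet of circles'' that you declare to be the technical heart is never constructed. It also cannot be constructed the way you describe: collapsing $\Gamma_j\itimes\Gamma_j^c$ toward the slice $\Gamma_j\itimes\{v\}$ would have to move the subcomplexes $\Gamma_j\itimes\Gamma_i$, which must remain pointwise fixed since they lie in the intersection with $UD_2(\Gamma,v)$. So the conclusion is asserted rather than proved, and the ``moreover'' argument (disjointness of cells forces the contribution into $\mathbb{F}_N$) presupposes the free-product decomposition it is meant to help establish.

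The paper's proof resolves exactly this point by a finer bookkeeping. It writes $UD_2(\Gamma)$ as the union of $UD_2(\Gamma_i^+)$ for $i\in I$ and $\Gamma_i\itimes\Gamma_j^+$ for $i\neq j$, and observes that for $i\in I\setminus I'$ and $j\in I'$ the only part of $\Gamma_i\itimes\Gamma_j^+$ not already contained in $UD_2(\Gamma,v)$ is $\Gamma_i\itimes E_j$, where $E_j$ is the union of the edges in $\mathcal{E}(\Gamma_j^+)\setminus\mathcal{E}(\Gamma_j)$: the problematic remainder $\Gamma_i\itimes\Gamma_j$ already sits inside $\Gamma_j\itimes\Gamma_j^c\subset UP_2(\Gamma,v)$, so it never appears as new attaching data. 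Since $\Gamma_i$ is a tree and $E_j$ is a star at $v$ (here simplicity of $\Gamma$ is used), each genuinely new piece --- $\Gamma_i\itimes E_j$, $UD_2(\Gamma_i^+)$ with $i\in I\setminus I'$, or $\Gamma_i\itimes\Gamma_j^+$ with $i,j\in I\setminus I'$ --- meets the previously assembled subcomplex in a union of simply connected sets of the form $\Gamma_i\itimes\{v\}$, $\Gamma_i\itimes\{\text{vertex}\}$ or $\Gamma_i\itimes\Gamma_j$ (a product of trees), so each gluing contributes only a free factor which is a free group. If you wish to keep your two-piece decomposition you would have to reprove this refinement inside $Z$ anyway, so you should adopt the paper's decomposition directly; once it is in place, your observation that $\mathbb{B}_2(\Gamma_i^+)$ is a non-trivial free group when $\pi_1(\Gamma_i^+)\neq 1$ does correctly give $N>0$.
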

\begin{proof}
Let us consider a decomposition of $UD_2(\Gamma)$ as follows:
\begin{align*}
UD_2(\Gamma) &= \left(\bigcup_{\substack{i,j\in I\\ i\neq j}} \Gamma_i\itimes \Gamma_j^+\right)
\cup
\left(\bigcup_{i\in I} UD_2(\Gamma_i^+)\right)
\\
&=
\left(\bigcup_{\substack{i,j\in I\setminus I'\\i\neq j}} \Gamma_{i}\itimes\Gamma_j^+\right)\cup
\underbrace{\left(\bigcup_{j\in I',\ i\in I\setminus I'}\Gamma_{i}\itimes\Gamma_j^+\right)\cup
\underbrace{\left(\bigcup_{i\in I\setminus I'} UD_2(\Gamma_{i}^+)\right)\cup UD_2(\Gamma,v)}_X}_Y.
\end{align*}
In particular, if $I'=I$, then $UD_2(\Gamma,v)=UD_2(\Gamma)$.
For $i\neq j$ and $i'\neq j'$, 
we observe the intersections
\begin{enumerate}
\item\label{enum:intersection1} $UD_2(\Gamma^+_i)\cap UD_2(\Gamma^+_j)=\varnothing$,
\item\label{enum:intersection2} $(\Gamma_i\itimes\Gamma_j^+)\cap UD_2(\Gamma^+_{i'})=\begin{cases}
\Gamma_i\itimes\{v\} & i=i';\\
\varnothing & i\neq i',
\end{cases}$
\item\label{enum:intersection3} $(\Gamma_i\itimes\Gamma_j^+)\cap(\Gamma_{i'}\itimes\Gamma_{j'}^+)=
\begin{cases}
\Gamma_i\itimes\{v\} & i=i', j\neq j';\\
\Gamma_i\itimes\Gamma_j & i=j', j=i';\\
\varnothing & \text{otherwise}.
\end{cases}$
\end{enumerate}
In particular, these are either empty or $\pi_1$-trivial if $i,j\in I\setminus I'$.

If $i\in I\setminus I'$, then $\Gamma_i^+$ is a tree and hence $\mathbb{B}_2(\Gamma_i^+)$ is free or trivial by Theorem~\ref{theorem:freegroupcase} and Example~\ref{Ex:FreeGroupCase}.
It follows by \eqref{enum:intersection1} and \eqref{enum:intersection2} above that for some $N_0\ge 0$, we have
\[
\pi_1(X)=\mathbb{B}_2(\Gamma,v)\ast\left(*_{i\in I\setminus I'}\mathbb{B}_2(\Gamma_{i}^+)\right)\cong \mathbb{B}_2(\Gamma,v)\ast \mathbb{F}_{N_0}.
\]

If $j\in I'$ and $i\in I\setminus I'$, then $(\Gamma_i\itimes\Gamma_j^+)\cap X=(\Gamma_i\itimes \Gamma_j^+)\cap UD_2(\Gamma,v)=\Gamma_i\itimes\Gamma_j$ by \eqref{enum:intersection3} above, and hence we have
\[
Y=\left(\bigcup_{j\in I',\ i\in I\setminus I'} \Gamma_i\itimes E_j\right)\cup X,
\]
where $E_j$ is the union of edges in $\mathcal{E}(\Gamma_j^+)\setminus\mathcal{E}(\Gamma_j)$. (Note that $E_j$ is $\pi_1$-trivial since $\Gamma$ is assumed to be simple.)
It follows by the $\pi_1$-triviality of $\Gamma_i\itimes E_j$ for such a pair of $i$ and $j$ that
$\pi_1(Y)=\pi_1(X)* \mathbb{F}_{N_1}$ for some $N_1\ge 0$.

Finally, if $i,j\in I\setminus I'$ with $i\neq j$ and $i',j'\in I \setminus I'$ with $i'\neq j'$, the intersections of $\Gamma_i\itimes\Gamma_j^+$ with $Y$ and with $\Gamma_{i'}\itimes \Gamma_{j'}^+$ are disjoint unions of $\pi_1$-trivial complexes by \eqref{enum:intersection2} and \eqref{enum:intersection3}. 
Since $\Gamma_i\itimes\Gamma_j$ is $\pi_1$-trivial for such a pair of $i$ and $j$, we therefore have $\mathbb{B}_2(\Gamma)=\pi_1(Y)*\mathbb{F}_{N_2}$ for some $N_2\ge 0$.

If there exists $i\in I\setminus I'$ such that $\pi_1(\Gamma^+_i)$ is non-trivial, then $N_0$ must be non-zero. Therefore, the `moreover' part automatically holds. 
\end{proof}

Since we have inclusions
\[
\begin{tikzcd}
\pi_1(UP_2(\Gamma,v)) & \pi_1(\Gamma_i)\cong\pi_1(\Gamma_i\itimes\{v\})
\arrow[from=l, hookleftarrow, "\alpha_i"]
\arrow[r, hookrightarrow, "\beta_i"] & \mathbb{B}_2(\Gamma_i^+)
\end{tikzcd}
\]
for $i\in I'$ and $(\Gamma_i\itimes\{v\})\cap(\Gamma_{i'}\itimes \{v\})=\emptyset$ for $i,i'\in I'$ with $i\neq i'$, we have
\[
\mathbb{B}_2(\Gamma,v)\cong 
\left(\mathop{\ast}_{i\in I'} \mathbb{B}_2(\Gamma_i^+)\right)\ast
\pi_1(UP_2(\Gamma,v))
\bigg/\sim,
\]
where $\sim$ identifies the images under $\alpha_{i}$ and $\beta_{i}$. 
Hence the embeddings 
\[UP_2(\Gamma,v)\hookrightarrow UD_2(\Gamma,v)\quad\text{and}\quad UD_2(\Gamma^+_i)\hookrightarrow UD_2(\Gamma,v)\text{ for $i\in I'$}\] induce $\pi_1$-injective homomorphisms, and so $\bar{UP_2(\Gamma,v)}$ and $\bar{UD_2(\Gamma_i^+)}$ can be considered as subcomplexes of $\bar{UD_2(\Gamma,v)}$.

Now, there is a \emph{graph-of-groups} decomposition $\mathcal{G}(\mathbb{B}_2(\Gamma,v))$ defined as follows: as $\mathbb{B}_2(\Gamma,v)=\pi_1(UD_2(\Gamma,v))$ is generated by subgroups
\[\mathbb{B}_{1,1}(\Gamma_i,\Gamma_i^c)\cong\pi_1(\Gamma_i)\times\pi_1(\Gamma_i^c)\quad\text{or}\quad\mathbb{B}_2(\Gamma_i^+)=\pi_1(UD_2(\Gamma_i^+))\]
for $i\in I'$, these subgroups will be the set of vertex groups.
Each edge group is the intersection of the vertex groups, which are either
\[\mathbb{B}_{1,1}(\Gamma_i,\Gamma_j)=\mathbb{B}_{1,1}(\Gamma_i,\Gamma_i^c)\cap\mathbb{B}_{1,1}(\Gamma_j,\Gamma_j^c)\quad\text{or}\quad \mathbb{B}_{1,1}(\Gamma_i,\{v\})=
\mathbb{B}_{1,1}(\Gamma_i,\Gamma_i^c)\cap\mathbb{B}_2(\Gamma_i^+)\cong\pi_1(\Gamma_i)\]
for $i\in I'$ and $j\in I'\setminus\{i\}$.
Then the underlying graph is isomorphic to the complete $k$-graph with one edge attached at each vertex where $k=|I'|$; see Figure~\ref{Fig:GraphofGroupsDecomposition} when $k=4$. 

By the construction of $\mathcal{G}(\mathbb{B}_2(\Gamma,v))$, the (sub)graph-of-groups of $\mathcal{G}(\mathbb{B}_2(\Gamma,v))$ corresponding to the complete $k$-graph (presented in blue in Figure~\ref{Fig:GraphofGroupsDecomposition}) is a graph-of-groups decomposition $\mathcal{G}(\pi_1(UP_2(\Gamma,v)))$ of $\pi_1(UP_2(\Gamma,v))$.
Let us denote the Bass-Serre trees associated to $\mathcal{G}(\mathbb{B}_2(\Gamma,v))$ and $\mathcal{G}(\pi_1(UP_2(\Gamma,v)))$ by $\mathcal{T}(\mathbb{B}_2(\Gamma,v))$ and $\mathcal{T}(\pi_1(UP_2(\Gamma,v)))$, respectively.

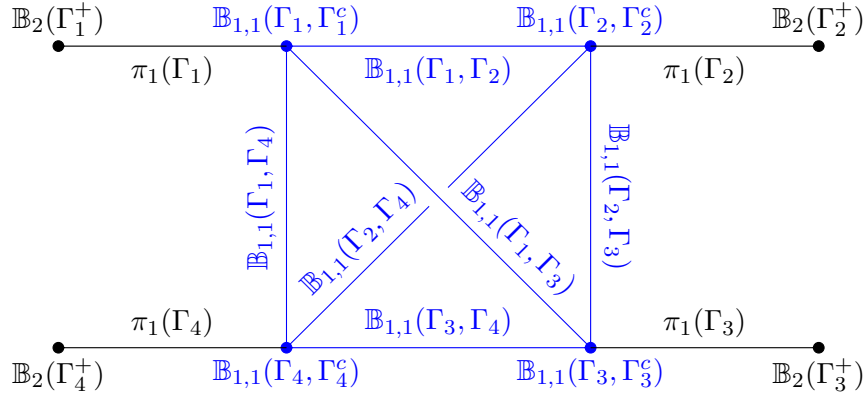
\begin{figure}[ht]
\[
\begin{tikzpicture}[baseline=-.5ex]
\begin{scope}[blue]
\draw (-2,-2) -- node[pos=0.3,above, sloped] {$\mathbb{B}_{1,1}(\Gamma_2,\Gamma_4)$} (2,2);
\draw[white, line width=10] (-2,2) -- (2,-2);
\draw (-2,2) -- node[pos=0.7,above, sloped] {$\mathbb{B}_{1,1}(\Gamma_1,\Gamma_3)$} (2,-2);
\draw[fill] (-2,2) circle (2pt) node[above] {$\mathbb{B}_{1,1}(\Gamma_1,\Gamma_1^c)$} -- node[midway, below] {$\mathbb{B}_{1,1}(\Gamma_1,\Gamma_2)$} (2,2) circle (2pt) node[above] {$\mathbb{B}_{1,1}(\Gamma_2,\Gamma_2^c)$} -- node[midway, above, sloped] {$\mathbb{B}_{1,1}(\Gamma_2, \Gamma_3)$} (2,-2) circle (2pt) node[below] {$\mathbb{B}_{1,1}(\Gamma_3,\Gamma_3^c)$} -- node[midway, above] {$\mathbb{B}_{1,1}(\Gamma_3, \Gamma_4)$} (-2,-2) circle (2pt) node[below] {$\mathbb{B}_{1,1}(\Gamma_4,\Gamma_4^c)$} -- node[midway, above, sloped] {$\mathbb{B}_{1,1}(\Gamma_1,\Gamma_4)$} (-2,2);
\end{scope}
\draw[fill] (-5, 2) circle (2pt) node[above] {$\mathbb{B}_2(\Gamma_1^+)$} -- node[midway,below] {$\pi_1(\Gamma_1)$} (-2,2);
\draw[fill] (5, 2) circle (2pt) node[above] {$\mathbb{B}_2(\Gamma_2^+)$} -- node[midway,below] {$\pi_1(\Gamma_2)$} (2,2);
\draw[fill] (-5, -2) circle (2pt) node[below] {$\mathbb{B}_2(\Gamma_4^+)$} -- node[midway,above] {$\pi_1(\Gamma_4)$} (-2,-2);
\draw[fill] (5, -2) circle (2pt) node[below] {$\mathbb{B}_2(\Gamma_3^+)$} -- node[midway,above] {$\pi_1(\Gamma_3)$} (2,-2);
\end{tikzpicture}
\]
\caption{An example of graph-of-groups decompositions $\mathcal{G}(\mathbb{B}_2(\Gamma,v))$ and $\mathcal{G}(\pi_1(UP_2(\Gamma,v)))$.}
\label{Fig:GraphofGroupsDecomposition}
\end{figure}

If $|I'|\ge 2$, then $\cRI(UP_2(\Gamma,v))$ is non-empty.
In this case, by construction, the graph-of-groups decomposition $\mathcal{G}(\pi_1(UP_2(\Gamma,v)))$ is exactly the same as $\cRI(UP_2(\Gamma,v))$ (considered as a complex of groups based on Theorem~\ref{Thm:TPBCM}) and thus by the theory of Bass-Serre tree, $\cI(\bar{UP_2(\Gamma,v)})$ is isomorphic to the Bass-Serre tree $\mathcal{T}(\pi_1(UP_2(\Gamma,v)))$.
Moreover, infinitely many (disjoint) copies of $\cI(\bar{UP_2(\Gamma,v)})$ sit inside the Bass-Serre tree $\mathcal{T}(\mathbb{B}_2(\Gamma,v))$ associated to $\mathcal{G}(\mathbb{B}_2(\Gamma,v))$.

On the other hand, the Bass-Serre tree $\mathcal{T}(\mathbb{B}_2(\Gamma,v))$ can be inductively constructed as follows:
\begin{enumerate}
\item\label{BSTitem1}
Start with $\cI(\bar{UP_2(\Gamma,v)})$. At each vertex labelled by $\mathbb{B}_{1,1}(\Gamma_i,\Gamma^c_i)$ in $\cI(\bar{UP_2(\Gamma,v)})$, we attach infinitely many edges, labelled by $\mathbb{B}_{1,1}(\Gamma_i,\{v\})$, whose other endpoints are labelled by $\mathbb{B}_2(\Gamma^+_i)$ (the edges correspond to left cosets of $\mathbb{B}_{1,1}(\Gamma_i,\{v\})$ in $\mathbb{B}_2(\Gamma^+_i)$).  
The resulting complex is denoted by $\mathcal{T}_0(\mathbb{B}_2(\Gamma,v))$. 
\item\label{BSTitem2}
At each leaf $\barbfv$ in $\mathcal{T}_k(\mathbb{B}_2(\Gamma,v))$ labelled by $\mathbb{B}_2(\Gamma^+_i)$, we first attach infinitely many edges labelled by $\mathbb{B}_{1,1}(\Gamma_i,\{v\})$ such that the other endpoint of each edge is labelled by $\mathbb{B}_{1,1}(\Gamma_i,\Gamma^c_i)$ (the edges incident to $\barbfv$ correspond to left cosets of $\mathbb{B}_{1,1}(\Gamma_i,\{v\})$ in $\mathbb{B}_2(\Gamma^+_i)$). 
And then we attach a copy of $\cI(\bar{UP_2(\Gamma,v)})$ at each new leaf by identifying it with a vertex labelled by $\mathbb{B}_{1,1}(\Gamma_i,\Gamma^c_i)$ in the copy of $\cI(\bar{UP_2(\Gamma,v)})$.
\item\label{BSTitem3}
For a vertex $\barbfw$ in every copy of $\cI(\bar{UP_2(\Gamma,v)})$ added in \eqref{BSTitem2}, if there is no edge containing $\barbfw$ but not contained in the copy, then we attach infinitely many edges $\bar{\mathbf{E}}$ at $\barbfw$ as in \eqref{BSTitem1}. If $\barbfw$ is labelled by $\mathbb{B}_{1,1}(\Gamma_j,\Gamma^c_j)$, then $\bar{\mathbf{E}}$ is labelled by $\mathbb{B}_{1,1}(\Gamma_j,\{v\})$ and the other endpoint of $\bar{\mathbf{E}}$ is labelled by $\mathbb{B}_2(\Gamma^+_j)$. The resulting complex is denoted by $\mathcal{T}_{k+1}(\mathbb{B}_2(\Gamma,v))$.
\item By performing the processes (\ref{BSTitem2}) and (\ref{BSTitem3}) repeatedly,
we obtain $\mathcal{T}_k(\mathbb{B}_2(\Gamma,v))$ for each $k\ge 0$, whose direct limit is exactly the same as $\mathcal{T}(\mathbb{B}_2(\Gamma,v))$. 
\end{enumerate}

As each vertex and edge in $\cI(\bar{UP_2(\Gamma,v)})$ can be realized as standard product subcomplexes in $\bar{UP_2(\Gamma,v)}$ via the set map $\mathfrak{g}_{\cI}$ (see \eqref{Eq:g_I}), the same holds for $\mathcal{T}(\mathbb{B}_2(\Gamma,v))$. Namely, there is a canonical set map 
\begin{equation}\label{Eq:SetmapforT}
\mathfrak{g}_{\mathcal{T}}:\mathcal{T}(\mathbb{B}_2(\Gamma,v))\to 2^{\bar{UD_2(\Gamma,v)}}
\end{equation}
extending $\mathfrak{g}_{\cI}:\cI(\bar{UP_2(\Gamma,v)})\to 2^{\bar{UP_2(\Gamma,v)}}$ such that 
\begin{itemize}
\item the restriction to each copy of $\cI(\bar{UP_2(\Gamma,v)})$ is $\mathfrak{g}_{\cI}$ postcomposed with an embedding $\bar{UP_2(\Gamma,v)}$ into $\bar{UD_2(\Gamma,v)}$ and 
\item the vertex corresponding to a conjugation of $\mathbb{B}_2(\Gamma^+_i)$ is mapped to the corresponding copy of $\bar{UD_2(\Gamma^+_i)}$ in $\bar{UD_2(\Gamma,v)}$.
\end{itemize}

The upshot of the graph-of-groups decompositions of $\pi_1(UP_2(\Gamma,v))$ and $\mathbb{B}_2(\Gamma,v)$ is that the associated Bass-Serre trees are determined by the loop configurations around a given vertex and the $2$-braid groups $\mathbb{B}_2(\Gamma_i^+)$ over extended $\hat v$-components.
\section{Bunches of grapes and their discrete 2-configuration spaces}\label{section:grapes}
In this section, we first define a family of graphs, called \emph{bunches of grapes} $\Gamma$, 
which contains trees, and then see some properties of $UD_2(\Gamma)$ and $\cRI(UD_2(\Gamma))$.
A speciality of this class of graphs would be that it satisfies two features which make the study of the large scale geometry accessible; Section~\ref{Subsection:BOG} focuses on Remark~\ref{Rmk:LocallyConvex} and its consequences, and Section~\ref{Subsection:IntofBOG} primarily deals with Remark~\ref{Rmk:Developability}.

\subsection{Bunches of grapes}\label{Subsection:BOG}
For a graph $\Lambda$ not necessarily simple, the original definition of the \emph{circumference} of $\Lambda$ is the longest length of cycles in $\Lambda$. Instead of the original one, we will use the following definition `\emph{topological circumference}' in this paper as we usually deal with simple graphs.
\begin{definition}[(Topological) circumference]
Let $\Lambda$ be a (possibly non-simple) graph with at least one vertex of valency $\ge 3$.
We say that $\Lambda$ is \emph{of circumference at most $n-1$} if there is no topological embedding 
\[
\begin{tikzpicture}[baseline=-.5ex]
\draw (0,0) circle (0.5);
\foreach \i in {0,1,2,3,4,5} {
\draw[fill] ({60*\i}:0.5) circle (2pt) -- ({60*\i}:0.8) circle(2pt);
}
\draw ({60}:1.2) node {$1$};
\draw ({120}:1.2) node {$2$};
\draw ({0}:1.2) node {$n$};
\end{tikzpicture}
\not\hookrightarrow \Lambda.
\]
\end{definition}

\begin{Ex}
A graph of circumference $0$ is a tree, and a graph of circumference $1$ is obtained by attaching cycles to a tree up to smoothing and subdivision.
\end{Ex}

\begin{definition}[Bunches of grapes]\label{Def:BunchesofGrapes}
Let $\mathsf{T}$ be a finite tree and $\loops:\mathcal{V}(\mathsf{T})\to \mathbb{Z}_{\ge 0}$ a function. A \emph{bunch of grapes} $\Gamma=(\mathsf{T},\loops)$ is a graph of circumference at most $1$ obtained by attaching $\loops(v)$ copies of $3$-cycles at each vertex $v$ of $\mathsf{T}$.
We assume that $\Gamma$ is not isomorphic to a path graph.
\end{definition}

\begin{remark}
Any graph of circumference at most $1$ which is not a path is the same as a bunch of grapes $(\mathsf{T}, \loops)$ for some $\mathsf{T}$ and $\loops$ up to smoothing and subdivision on loops.
\end{remark}

We define several terminologies related to a bunch of grapes $\Gamma=(\mathsf{T},\loops)$. 
The tree $\mathsf{T}$ is called a \emph{stem} of $\Gamma$ and each cycle $\mathsf{C}$ attached at a vertex $v\in\mathsf{T}$ is called a \emph{grape}. 
A \emph{substem} $\mathsf{T}'\subset\mathsf{T}$ is a subtree of $\mathsf{T}$ and $\Gamma_{\mathsf{T}'}=(\mathsf{T}', \loops|_{\mathsf{T}'})$ is a bunch of grapes over the substem $\mathsf{T}'$; for convenience's sake, $\Gamma_{\{v\}}$, which is a bouquet of $\loops(v)$ cycles, is denoted by $\Gamma_v$. 
For a subset $V\subset \mathcal{V}(\mathsf{T})$ or a substem $\mathsf{T}'\subset\mathsf{T}$, we define $\loops(V)$ or $\loops(\mathsf{T}')$ as
\[
\loops(V) = \sum_{v\in V}\loops(v),\quad\text{ or }\quad
\loops(\mathsf{T}')=\loops(\mathcal{V}(\mathsf{T}')).
\]
A \emph{twig} of $\Gamma$ (or $\mathsf{T}$) is a path substem $[v_0,\dots, v_n]$ in $\mathsf{T}$ such that $\val_\Gamma(v_i)=2$ if and only if $0<i<n$; it is said to be \emph{empty} if it starts or ends at a leaf of $\Gamma$.
See Figure~\ref{figure:grapes, stem and twigs}; (empty) twigs are indicated by thick (red) lines.

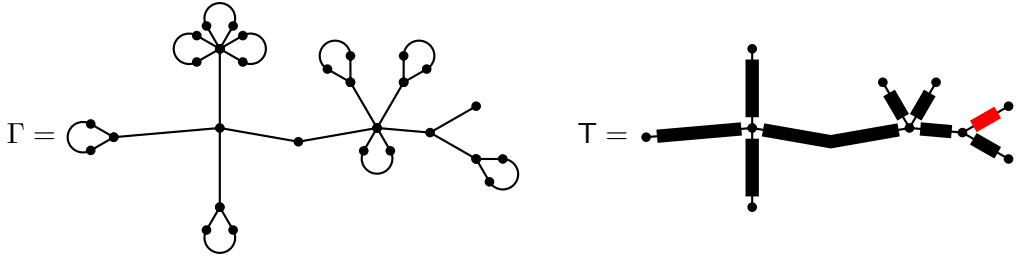
\begin{figure}[ht]
\[
\Gamma=\begin{tikzpicture}[baseline=-.5ex, scale=0.7]
\draw[thick,fill] (0,0) circle (2pt) node (A) {} -- ++(5:2) circle(2pt) node(B) {} -- +(0,1.5) circle (2pt) node (C) {}+(0,0) -- +(0,-1.5) circle (2pt) node(D) {} +(0,0) -- ++(-10:1.5) circle(2pt) node(E) {} -- ++(10:1.5) circle(2pt) node(F) {} -- +(120:1) circle(2pt) node(G) {} +(0,0) -- +(60:1) circle (2pt) node(H) {} +(0,0) -- ++(-5:1) circle (2pt) node(I) {} -- +(30:1) circle(2pt) node(J) {} +(0,0) -- +(-30:1) circle (2pt) node(K) {};
\grape[180]{A};
\grape[0]{C}; \grape[90]{C}; \grape[180]{C};
\grape[-90]{D};
\grape[-90]{F};
\grape[120]{G};
\grape[60]{H};
\grape[-30]{K};
\end{tikzpicture}\qquad
\mathsf{T}=\begin{tikzpicture}[baseline=-.5ex, scale=0.7]
\draw[thick,fill] (0,0) circle (2pt) node (A) {} -- ++(5:2) circle(2pt) node(B) {} -- +(0,1.5) circle (2pt) node (C) {}+(0,0) -- +(0,-1.5) circle (2pt) node(D) {} +(0,0) -- ++(-10:1.5) circle(2pt) node(E) {} -- ++(10:1.5) circle(2pt) node(F) {} -- +(120:1) circle(2pt) node(G) {} +(0,0) -- +(60:1) circle (2pt) node(H) {} +(0,0) -- ++(-5:1) circle (2pt) node(I) {} -- +(30:1) circle(2pt) node(J) {} +(0,0) -- +(-30:1) circle (2pt) node(K) {};
\draw[line width=5, opacity=0.5] (A)--(B)--(C) (B)--(D) (B)--(E.center)--(F) (F)--(G) (F)--(H) (F)--(I) (I)--(K);
\draw[line width=5, opacity=0.5, red] (I)--(J);
\end{tikzpicture}
\]
\caption{A bunch of grapes, its stem and twigs.}
\label{figure:grapes, stem and twigs}
\end{figure}

For a path substem $\mathsf{P}=[v_0,\dots,v_n]$ ($n\ge 1$) of $\mathsf{T}$, \emph{$\mathring{\mathsf{P}}$-components} of $\mathsf{T}$ are defined as components of $\mathsf{T}_{\mathring{\mathsf{P}}}$ containing either $v_0$ and $v_n$, where $\mathsf{T}_{\mathring{\mathsf{P}}}\subset\mathsf{T}$ is the (disconnected) substem induced by $\mathcal{V}(\mathsf{T}_{\mathring{\mathsf{P}}})=\mathcal{V}(\mathsf{T})\setminus\{v_1,\dots,v_{n-1}\}$; there are always two $\mathring{\mathsf{P}}$-components, denoted by $\mathsf{T}_{\mathring{\mathsf{P}},1},\mathsf{T}_{\mathring{\mathsf{P}},2}$.
(Note that if $\mathsf{P}$ is a single edge, then $\mathring{\mathsf{P}}$-components of $\mathsf{T}$ are the complement of the interior of the edge in $\mathsf{T}$, viewed as topological spaces.)
Similarly, \emph{$\mathring{\mathsf{P}}$-components} of $\Gamma$ are defined as $\Gamma_{\mathsf{T}_{\mathring{\mathsf{P}},i}}$, or simply $\Gamma_{\mathring{\mathsf{P}},i}$. 
See Figure~\ref{figure:paths in grapes} for an example.

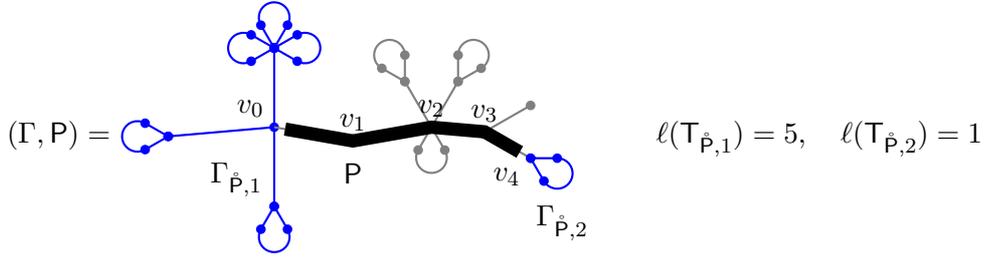
\begin{figure}[ht]
\[
(\Gamma,\mathsf{P})=
\begin{tikzpicture}[baseline=-.5ex, scale=0.7]
\draw[thick,fill,blue] (0,0) circle (2pt) node (A) {} -- ++(5:2) circle(2pt) node(B) {} -- +(0,1.5) circle (2pt) node (C) {}+(0,0) -- +(0,-1.5) circle (2pt) node(D) {};
\draw[thick,fill,gray] (B) +(0,0) -- ++(-10:1.5) circle(2pt) node(E) {} -- ++(10:1.5) circle(2pt) node(F) {} -- +(120:1) circle(2pt) node(G) {} +(0,0) -- +(60:1) circle (2pt) node(H) {} +(0,0) -- ++(-5:1) circle (2pt) node(I) {} -- +(30:1) circle(2pt) node(J) {} +(0,0) -- +(-30:1) circle (2pt) node(K) {};
\draw (B) node[above left] {$v_0$} node[left=3ex, below=2ex] {$\Gamma_{\mathring{\mathsf{P}},1}$};
\draw (E) node[above] {$v_1$} node[below=1ex] {$\mathsf{P}$};
\draw (F) node[above] {$v_2$};
\draw (I) node[above] {$v_3$};
\draw (K) node[below left] {$v_4$} node[right=2.5ex, below=3ex] {$\Gamma_{\mathring{\mathsf{P}},2}$};
\begin{scope}[blue]
\grape[180]{A};
\grape[0]{C}; \grape[90]{C}; \grape[180]{C};
\grape[-90]{D};
\grape[-30]{K};
\end{scope}
\begin{scope}[gray]
\grape[-90]{F};
\grape[120]{G};
\grape[60]{H};
\end{scope}
\draw[line width=5, opacity=0.5] (B)--(E.center)--(F.center)--(I.center)--(K);
\end{tikzpicture}\qquad
\ell(\mathsf{T}_{\mathring{\mathsf{P}},1})=5,\quad
\ell(\mathsf{T}_{\mathring{\mathsf{P}},2})=1
\]
\caption{$\mathring{\mathsf{P}}$-components of a bunch of grapes $\Gamma$.}
\label{figure:paths in grapes}
\end{figure}

We define several adjectives to refer to a specific bunch of grapes as follows:
\begin{enumerate}
\item \emph{large} if there are at least two distinct vertices $v_1, v_2$ of $\mathsf{T}$ with $\loops(v_i)>0$, and \emph{small} otherwise;
\item \emph{normal} if $\val_\Gamma(v)\ge 3$  for all $v\in\mathcal{V}(\mathsf{T})$, and \emph{rich} if $\loops(v)\ge 1$ for all $v\in\mathcal{V}(\mathsf{T})$.
\end{enumerate}
Based on these adjectives, we define the sets
\begin{align*}
\grapegraph&=\{\text{bunches of grapes}\},&
\grapegraph^{\mathsf{large}}&=\{\text{large bunches of grapes}\},\\
\grapegraph_{\mathsf{normal}}&=\{\text{normal bunches of graphs}\},&
\grapegraph_{\mathsf{rich}}&=\{\text{rich bunches of graphs}\},\\
\grapegraph^{\mathsf{large}}_{\mathsf{normal}}&=\grapegraph^{\mathsf{large}}\cap \grapegraph_{\mathsf{normal}},&
\grapegraph^{\mathsf{large}}_{\mathsf{rich}}&=\grapegraph^{\mathsf{large}}\cap \grapegraph_{\mathsf{rich}}.
\end{align*}
Note that for $\grapegraph'$ and $\grapegraph''$ in Section~\ref{Section:IntroOperations}, we have $\grapegraph^{\mathsf{large}}_{\mathsf{normal}}\subsetneq\grapegraph'\subsetneq\grapegraph^{\mathsf{large}}$ and $\grapegraph''=\grapegraph^{\mathsf{large}}_{\mathsf{rich}}$.
See Figure~\ref{figure:normal and rich} for examples.

\begin{figure}[ht]
\[
\begin{tikzpicture}[baseline=-.5ex, scale=0.7]
\draw[thick,fill] (0,0) circle (2pt) node (A) {} -- ++(5:2) circle(2pt) node(B) {} -- +(0,1.5) circle (2pt) node (C) {}+(0,0) -- +(0,-1.5) circle (2pt) node(D) {} +(0,0) -- ++(-10:1.5) circle(2pt) node(E) {} -- ++(10:1.5) circle(2pt) node(F) {} -- +(120:1) circle(2pt) node(G) {} +(0,0) -- +(60:1) circle (2pt) node(H) {} +(0,0) -- ++(-5:1) circle (2pt) node(I) {} -- +(30:1) circle(2pt) node(J) {} +(0,0) -- +(-30:1) circle (2pt) node(K) {};
\grape[180]{A};
\grape[0]{C}; \grape[90]{C}; \grape[180]{C};
\grape[-90]{D};
\grape[-90]{F};
\grape[120]{G};
\grape[60]{H};
\grape[-30]{K};
\begin{scope}[red]
\grape[30]{J};
\grape[-90]{E};
\end{scope}
\end{tikzpicture}\qquad
\begin{tikzpicture}[baseline=-.5ex, scale=0.7]
\draw[thick,fill] (0,0) circle (2pt) node (A) {} -- ++(5:2) circle(2pt) node(B) {} -- +(0,1.5) circle (2pt) node (C) {}+(0,0) -- +(0,-1.5) circle (2pt) node(D) {} +(0,0) -- ++(-10:1.5) circle(2pt) node(E) {} -- ++(10:1.5) circle(2pt) node(F) {} -- +(120:1) circle(2pt) node(G) {} +(0,0) -- +(60:1) circle (2pt) node(H) {} +(0,0) -- ++(-5:1) circle (2pt) node(I) {} -- +(30:1) circle(2pt) node(J) {} +(0,0) -- +(-30:1) circle (2pt) node(K) {};
\grape[180]{A};
\grape[0]{C}; \grape[90]{C}; \grape[180]{C};
\grape[-90]{D};
\grape[-90]{F};
\grape[120]{G};
\grape[60]{H};
\grape[-30]{K};
\begin{scope}[red]
\grape[30]{J};
\grape[-90]{E};
\end{scope}
\begin{scope}[blue]
\grape[40]{B};
\grape[-105]{I};
\end{scope}
\end{tikzpicture}
\]
\caption{Normal and rich bunches of grapes}
\label{figure:normal and rich}
\end{figure}
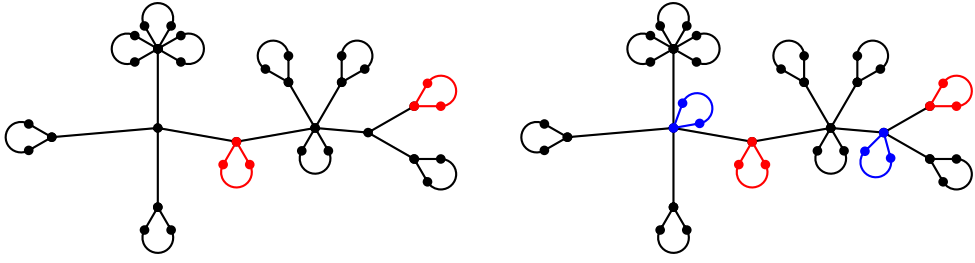

\begin{observation}
Let $\Gamma=(\mathsf{T},\loops)\in\grapegraph$.
\begin{itemize}
\item If $\Gamma\in\grapegraph_{\mathsf{normal}}$, then the set of twigs is the same as the set $\mathcal{E}(\mathsf{T})$ of edges of $\mathsf{T}$. Moreover, for any path substem $\mathsf{P}$ of $\mathsf{T}$, $\mathring{\mathsf{P}}$-components $\Gamma_{\mathring{\mathsf{P}},1}, \Gamma_{\mathring{\mathsf{P}},2}$ of $\Gamma$ belong to $\grapegraph_{\mathsf{normal}}$ and thus $\Gamma_{\mathring{\mathsf{P}},1}\itimes\Gamma_{\mathring{\mathsf{P}},2}$ is a standard product subcomplex of $UD_2(\Gamma)$ by Corollary~\ref{corollary:maximal}.
\item If $\Gamma\in\grapegraph_{\mathsf{rich}}$, then $\Gamma_{\mathsf{T}'}\in\grapegraph_{\mathsf{rich}}$ for any substem $\mathsf{T}'\subset\mathsf{T}$. Hence $\grapegraph_{\mathsf{rich}}$ is closed under taking a restriction over a substem (but $\grapegraph_{\mathsf{normal}}$ is not).
\end{itemize}
\end{observation}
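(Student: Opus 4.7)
The plan is to handle the two bulleted statements independently, relying only on the combinatorial definitions of \emph{normal}, \emph{rich}, \emph{twig}, and \emph{$\mathring{\mathsf{P}}$-component}, together with Corollary~\ref{corollary:maximal}. Both arguments reduce to valency bookkeeping after removing a prescribed set of vertices or edges from $\mathsf{T}$.

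For the first bullet, I would begin by showing that every twig $[v_0,\dots,v_n]$ in a normal $\Gamma$ has length one: any interior vertex $v_i$ ($0<i<n$) would satisfy $\val_\Gamma(v_i)=2$ by the twig definition, contradicting $\val_\Gamma(v_i)\geq 3$. Conversely, each edge of $\mathsf{T}$ is trivially a length-one twig, since the interior condition is vacuous and the endpoint condition $\val_\Gamma\neq 2$ is guaranteed by normality. Next, for an arbitrary path substem $\mathsf{P}=[v_0,\dots,v_n]$, I plan to track valencies in each component $\mathsf{T}_{\mathring{\mathsf{P}},i}$: using that $\mathsf{T}$ is a tree, no $w\in\mathcal{V}(\mathsf{T}_{\mathring{\mathsf{P}},i})$ other than $v_0$ or $v_n$ can be $\mathsf{T}$-adjacent to an interior vertex of $\mathsf{P}$ (otherwise two distinct $\mathsf{T}$-paths would connect $w$ to $v_0$), so such a $w$ keeps its full $\Gamma$-valency; the endpoint drops valency by exactly one, to at least $2$. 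Since grape vertices trivially have valency $2$ within their $3$-cycles, neither $\Gamma_{\mathring{\mathsf{P}},1}$ nor $\Gamma_{\mathring{\mathsf{P}},2}$ has leaves, and by construction the two are disjoint subgraphs of $\Gamma$, so Corollary~\ref{corollary:maximal} yields that $\Gamma_{\mathring{\mathsf{P}},1}\itimes\Gamma_{\mathring{\mathsf{P}},2}$ is a standard product subcomplex of $UD_2(\Gamma)$.

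For the second bullet, hereditariness of richness is immediate: if $\Gamma=(\mathsf{T},\loops)\in\grapegraph_{\mathsf{rich}}$ and $\mathsf{T}'\subseteq\mathsf{T}$ is a substem, then for each $v\in\mathcal{V}(\mathsf{T}')\subseteq\mathcal{V}(\mathsf{T})$ we have $(\loops|_{\mathsf{T}'})(v)=\loops(v)\geq 1$, so $\Gamma_{\mathsf{T}'}\in\grapegraph_{\mathsf{rich}}$. The parenthetical failure for $\grapegraph_{\mathsf{normal}}$ will be witnessed by an explicit counterexample: with $\mathsf{T}$ a $4$-star with center $c$ and leaves $w_1,\dots,w_4$, $\loops(c)=0$ and $\loops(w_i)=1$, one checks $\val_\Gamma(c)=4$ and $\val_\Gamma(w_i)=3$, so $\Gamma\in\grapegraph_{\mathsf{normal}}$, while the substem $\mathsf{T}'=[c,w_1]$ yields $\val_{\Gamma_{\mathsf{T}'}}(c)=1<3$. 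The only delicate point of the whole observation is the endpoint valency drop in the first bullet: when $\val_\Gamma(v_0)=3$ exactly, one has $\val_{\Gamma_{\mathring{\mathsf{P}},1}}(v_0)=2$, so strictly speaking $\Gamma_{\mathring{\mathsf{P}},i}$ may fail the strong normality condition at $v_0$; the no-leaves property that Corollary~\ref{corollary:maximal} actually requires for the standard-product conclusion nonetheless remains intact, which is what I would highlight to justify the final clause of the observation.
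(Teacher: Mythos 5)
Your proof is correct; the paper states this as an unproved Observation, so there is no argument to compare against, and the valency bookkeeping you carry out is the natural (essentially the only) way to verify it. Your flagged ``delicate point'' is a genuine catch rather than a defect of your argument: with the paper's literal definition of normality ($\val_\Gamma(v)\ge 3$ for all stem vertices), an endpoint $v_0$ of $\mathsf{P}$ with $\val_\Gamma(v_0)=3$ has valency exactly $2$ in $\Gamma_{\mathring{\mathsf{P}},1}$ (e.g.\ $\mathsf{T}$ a single edge with one grape at each endpoint gives $\mathring{\mathsf{P}}$-components that are single $3$-cycles), so the intermediate claim ``$\Gamma_{\mathring{\mathsf{P}},i}\in\grapegraph_{\mathsf{normal}}$'' is false as stated. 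You correctly observe that the only property Corollary~\ref{corollary:maximal} actually consumes is that the two disjoint subgraphs have no leaves, which the valency bound $\ge 2$ at the endpoints and $\ge 3$ elsewhere guarantees, so the final conclusion that $\Gamma_{\mathring{\mathsf{P}},1}\itimes\Gamma_{\mathring{\mathsf{P}},2}$ is a standard product subcomplex stands; this is also how the paper uses the observation later (e.g.\ in the proof of the twig correspondence). Your counterexample for non-heredity of normality is likewise correct.
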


\begin{lemma}\label{lemma:large}
For $\Gamma=(\mathsf{T},\loops)\in\grapegraph$, the following are equivalent.
\begin{enumerate}
\item $\Gamma$ is small,
\item $\Gamma$ is in $\mathcal{F}$ (defined in the paragraph below Theorem~\ref{theorem:freegroupcase}), and 
\item $UP_2(\Gamma)$ (Definition~\ref{Def:UP_2(Gamma)}) is empty.
\end{enumerate}
\end{lemma}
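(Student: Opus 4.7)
The plan is to first clarify the cycle structure of a bunch of grapes, and then derive all three equivalences from a single combinatorial observation about when two disjoint cycles exist in $\Gamma$.

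The key preliminary observation I will use is that every cycle in $\Gamma=(\mathsf{T},\ell)$ is exactly a grape. Indeed, $\Gamma$ is built from the tree $\mathsf{T}$ by attaching $3$-cycles, and each grape attached at $v\in\mathcal{V}(\mathsf{T})$ consists of $v$ together with two vertices outside $\mathsf{T}$; since the circumference is at most one, any embedded cycle using a non-stem edge is forced to stay inside a single grape, and the stem itself carries no cycle. Consequently, two grapes are vertex-disjoint if and only if they are attached at different vertices of $\mathsf{T}$.

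With this in hand, the implication [(1)$\Leftrightarrow$(2)] is immediate. Every bunch of grapes is planar, so $\Gamma\in\mathcal{F}$ iff $\Gamma$ has no pair of disjoint cycles. When $\Gamma$ is small all grapes (if any) cluster at a single stem vertex $v_0$, and therefore share $v_0$ and cannot be disjoint. Conversely, when $\Gamma$ is large one chooses $v_1\neq v_2$ with $\ell(v_i)>0$ and reads off two disjoint grapes attached at $v_1$ and $v_2$.

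For [(2)$\Leftrightarrow$(3)] I plan to invoke Corollary~\ref{corollary:maximal}, which characterises standard product subcomplexes of $UD_2(\Gamma)$ as $\Gamma_1\itimes\Gamma_2$ for pairs of disjoint leafless subgraphs $\Gamma_i\subset\Gamma$. A finite subgraph without leaves must contain a cycle, and conversely every cycle is itself a leafless subgraph, so such a pair exists iff $\Gamma$ contains two disjoint cycles. Since any standard product subcomplex is contained in a maximal one, $UP_2(\Gamma)=\emptyset$ is equivalent to $\Gamma\in\mathcal{F}$. The argument is essentially a translation between combinatorial and cubical language; the only point that genuinely needs the circumference hypothesis is the identification of cycles with grapes, and no further obstacle is anticipated.
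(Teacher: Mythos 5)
Your proposal is correct and follows essentially the same route as the paper: the equivalence of (1) and (2) comes from identifying disjoint cycles with grapes at distinct stem vertices, and the link to (3) uses the characterization of standard product subcomplexes as $\Gamma_1\itimes\Gamma_2$ for disjoint leafless subgraphs (the paper cites Lemma~\ref{Lem:SPSinGBG} where you cite Corollary~\ref{corollary:maximal}, but the content is the same). The only difference is presentational: the paper declares [(1)$\Leftrightarrow$(2)] and [(2)$\Rightarrow$(3)] obvious and only writes out [(3)$\Rightarrow$(1)], whereas you spell out the cycle-structure observation explicitly.
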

\begin{proof}
The equivalence [(1)$\Leftrightarrow$(2)] and the implication [(2)$\Rightarrow$(3)] are obvious from the definitions of $\mathcal{F}$ and $UP_2(\Gamma)$.

In order to prove the implication [(3)$\Rightarrow$(1)], suppose that $\Gamma$ is large. For $i=1,2$, let $v_i$ be a vertex in $\Gamma$ with $\loops(v_i)\ge 1$ and $\mathsf{C}_i$ a grape at $v_i$ ($v_1$ and $v_2$ are chosen to be distinct). 
By Lemma \ref{Lem:SPSinGBG}, then $\mathsf{C}_1\itimes\mathsf{C}_2$ is a standard product subcomplex, and any standard product subcomplex must be contained a maximal one.
Therefore, $UP_2(\Gamma)$ is non-empty.
\end{proof}

The main goal of this paper to classify the $2$-braid groups over bunches of grapes up to quasi-isometry, and by Theorem~\ref{theorem:freegroupcase}, the $2$-braid group over any small bunch of grapes is not quasi-isometric to the $2$-braid group over any large bunch of grapes. 
\textbf{Hence, in the remaining of this section, we fix and focus on a bunch of grapes $\Gamma=(\mathsf{T},\loops)\in\grapegraph_{\mathsf{normal}}^{\mathsf{large}}$.}
Large but non-normal bunches of grapes will be dealt with in the beginning of Section~\ref{section:Operations}. Large and rich bunches of grapes will step into the spotlight starting from Section~\ref{Subsection:PickingGrapes}.

Our main tool for the goal will be the (reduced) intersection complex of (the universal cover of) the unordered discrete $2$-configuration space over a bunch of grapes.
In order to use this quasi-isometry invariant, our first step will be to know how maximal product subcomplex of $UD_2(\Gamma)$ are distributed inside $UD_2(\Gamma)$. 
This indeed comes from the fact that maximal product subcomplexes of $UD_2(\Gamma)$ can be characterized from (the stem of) $\Gamma$.

\begin{lemma}\label{lem:TwigCorrespondence}
There is a one-to-one correspondence between the set of twigs $t$ of $\Gamma$ and the set of maximal product subcomplexes $M(t)$ of $UD_2(\Gamma)$ (and thus $UP_2(\Gamma)$).
\end{lemma}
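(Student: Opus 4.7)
Since $\Gamma\in\grapegraph^{\mathsf{large}}_{\mathsf{normal}}$, the observation after Definition~\ref{Def:BunchesofGrapes} identifies twigs of $\Gamma$ with edges of $\mathsf{T}$, and guarantees that $M(t):=\Gamma_{\mathring{t},1}\itimes\Gamma_{\mathring{t},2}$ is a standard product subcomplex of $UD_2(\Gamma)$ for every $t\in\mathcal{E}(\mathsf{T})$. The plan is to establish that $t\mapsto M(t)$ is a bijection onto the set of maximal product subcomplexes. Injectivity is clear, since distinct edges yield distinct unordered partitions of $\mathcal{V}(\mathsf{T})$ into two subtrees. To verify that each $M(t)$ is maximal, I would observe that both factors $\Gamma_{\mathring{t},i}$ are connected and without leaves (a stem vertex loses at most one edge from the cut but still has $\val_\Gamma\ge 2$ since $\val_\Gamma\ge 3$ originally; grape vertices have valency $2$), so Corollary~\ref{corollary:maximal} applies; any strict enlargement would by Lemma~\ref{InclusionBetweenSPSes} force the interior of $t$ into one factor, pushing an endpoint of $t$ into both factors and breaking disjointness.

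The main work is surjectivity. Let $M=\Gamma_1\itimes\Gamma_2$ be a maximal product subcomplex. By Lemma~\ref{Lem:SPSinGBG}, $\Gamma_1,\Gamma_2$ are disjoint subgraphs of $\Gamma$; by Corollary~\ref{corollary:maximal} they are leafless; and each is connected since the factor of a standard product pushes forward from an infinite tree in the universal cover. Set $\mathsf{T}_i:=\Gamma_i\cap\mathsf{T}$, which is a subtree of $\mathsf{T}$ (off-stem grape vertices have valency $2$, so any grape meeting $\Gamma_i$ off the stem sits entirely inside $\Gamma_i$, and then connectedness of $\Gamma_i$ forces $\mathsf{T}_i$ itself to be connected). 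A direct extension argument then shows that by maximality, every grape at a vertex of $\mathsf{T}_i$ must already lie in $\Gamma_i$: otherwise appending the missing grape enlarges $\Gamma_i$ while preserving connectivity, leaflessness, and disjointness from $\Gamma_j$.

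The key obstacle is to rule out the ``gap'' set $G:=\mathcal{V}(\mathsf{T})\setminus(\mathcal{V}(\mathsf{T}_1)\cup\mathcal{V}(\mathsf{T}_2))$. Suppose $G\neq\emptyset$ and let $C$ be a connected component of the induced subgraph of $\mathsf{T}$ on $G$, so $C$ is a subtree. Because $\mathsf{T}$ is a tree and $\mathsf{T}_1,\mathsf{T}_2,C$ are pairwise disjoint subtrees, at most one edge of $\mathsf{T}$ can leave $C$ toward each of $\mathsf{T}_1,\mathsf{T}_2$, and connectedness of $\mathsf{T}$ forces at least one such edge, say $uw$ with $u\in C$ and (WLOG) $w\in\mathsf{T}_1$. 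I would then enlarge $\Gamma_1$ to $\Gamma_1'$ by adjoining $C$, the grapes at every vertex of $C$, and the bridging edge $uw$. For the leaflessness check on $\Gamma_1'$: a leaf $v^*$ of $C$ with $\ell_\Gamma(v^*)\ge 1$ picks up valency $2$ from its grape; if $\ell_\Gamma(v^*)=0$, normality forces $\val_\Gamma(v^*)\ge 3$ while $\val_C(v^*)=1$, so two external $\mathsf{T}$-edges must emanate from $v^*$, one to each of $\mathsf{T}_1$ and $\mathsf{T}_2$, and tree-uniqueness of the $C$-to-$\mathsf{T}_1$ bridge pins $v^*=u$, yielding valency $\val_C(v^*)+1=2$ in $\Gamma_1'$; internal vertices of $C$ have $\val_C\ge 2$ already. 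Disjointness from $\Gamma_2$ and connectivity of $\Gamma_1'$ are transparent, so $\Gamma_1'\itimes\Gamma_2\supsetneq M$ contradicts maximality. Hence $G=\emptyset$, and the tree structure of $\mathsf{T}$ supplies a unique edge $t$ joining the partition $\mathsf{T}_1\sqcup\mathsf{T}_2=\mathcal{V}(\mathsf{T})$; consequently $\Gamma_i=\Gamma_{\mathring{t},i}$ and $M=M(t)$. The ``and thus $UP_2(\Gamma)$'' clause is automatic from Definition~\ref{Def:UP_2(Gamma)}, since the two sets of maximal product subcomplexes coincide.
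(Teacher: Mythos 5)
Your proposal is correct and follows essentially the same route as the paper's proof: the forward direction via Corollary~\ref{corollary:maximal} and normality, and the converse by showing that a maximal $\Gamma_1\itimes\Gamma_2$ must absorb every grape at its stem vertices, hence each $\Gamma_i=\Gamma_{\mathsf{T}_i}$ for substems that partition $\mathsf{T}$ across a single edge. Your explicit "gap" enlargement argument is simply a fleshed-out version of the step the paper compresses into "by Corollary~\ref{corollary:maximal} and normality\dots there exists a unique twig $t$."
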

\begin{proof}
As standard product subcomplexes of $UP_2(\Gamma)$ are exactly standard product subcomplexes of $UD_2(\Gamma)$ by Lemma~\ref{lem:UnionofSPS}, we will show that the lemma holds for $UD_2(\Gamma)$.

For a twig $t$ of $\Gamma$ which is a path substem of $\mathsf{T}$, let $\mathsf{T}_i=\mathsf{T}_{\mathring{t},i}$ be a $\mathring{t}$-component of $\mathsf{T}$ for $i=1,2$.
Since $t$ is an edge, by Corollary~\ref{corollary:maximal} and normality of $\Gamma$ (and thus $\Gamma_{\mathsf{T}_i}$ is normal), it is obvious that $\Gamma_{\mathsf{T}_1}\itimes\Gamma_{\mathsf{T}_2}$ is a maximal product subcomplex.

Suppose that $\Gamma_1\itimes\Gamma_2\subset UD_2(\Gamma)$ is a maximal product subcomplex. 
By the maximality of $\Gamma_1\itimes\Gamma_2$, if a vertex $v$ in $\mathsf{T}$ is contained in $\Gamma_1$ ($\Gamma_2$, resp.), then $\Gamma_v$ must be contained in $\Gamma_1$ ($\Gamma_2$, resp.); otherwise, $\Gamma_2$ ($\Gamma_1$, resp.) may not be a graph without leaves.
It follows that each $\Gamma_i$ is $\Gamma_{\mathsf{T}_i}$ for a substem $\mathsf{T}_i\subset\mathsf{T}$. 
By Corollary~\ref{corollary:maximal} and normality of $\Gamma$, therefore, there exists a unique twig $t$ such that $\mathsf{T}_1$ and $\mathsf{T}_2$ are $\mathring{t}$-components of $\mathsf{T}$.
\end{proof}

Based on the previous lemma, in this subsection, we first show that $UP_2(\Gamma)$ is locally convex in $UD_2(\Gamma)$, which is an affirmative answer for Question~\ref{Qn:LocalConvexity}. This leads our study of the large scale geometry of $\mathbb{B}_2(\Gamma)$ to the study of the structure of $UP_2(\Gamma)$ and $\bar{UP_2(\Gamma)}$. And then we will see how maximal product subcomplexes of $UP_2(\Gamma)$ or $\bar{UP_2(\Gamma)}$ intersect.

Let us start with the local convexity. 

\begin{proposition}\label{Prop:FreeFactor}
The subcomplex $UP_2(\Gamma)$ is locally convex in $UD_2(\Gamma)$.
Moreover, the braid group $\mathbb{B}_2(\Gamma)$ is isomorphic to $\pi_1(UP_2(\Gamma))*\mathbb{F}_n$ for some $n\ge 2$.
\end{proposition}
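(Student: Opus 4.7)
My plan is to handle local convexity first by a link-level argument, then deduce the free splitting from the graph-of-groups decomposition of Section~\ref{Section:GOGdecomposition}. Since $UD_2(\Gamma)$ has dimension $2$, local convexity of $UP_2(\Gamma)\hookrightarrow UD_2(\Gamma)$ is equivalent to the following link-level condition: for every vertex $u\itimes v$ of $UP_2(\Gamma)$, whenever $e_u\itimes v$ and $u\itimes e_v$ both lie in $UP_2(\Gamma)$ and $e_u\cap e_v=\emptyset$, the $2$-cube $e_u\itimes e_v$ also lies in $UP_2(\Gamma)$. To verify it, I would introduce the stem projection $\pi:\Gamma\to\mathsf{T}$ that collapses each grape at $w$ onto $w$; in particular $\pi$ sends a grape edge at $w$ to $w$ and a stem edge to itself. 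By Lemma~\ref{lem:TwigCorrespondence}, a cell $c_1\itimes c_2$ lies in the maximal product subcomplex $M(t)=\Gamma_{\mathsf{T}_{\mathring{t},1}}\itimes\Gamma_{\mathsf{T}_{\mathring{t},2}}$ iff $\pi(c_1)$ and $\pi(c_2)$ sit in opposite $\mathring{t}$-components of $\mathsf{T}$.

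A four-case analysis on whether each of $e_u, e_v$ is a grape edge (so $\pi$ equals a single stem vertex) or a stem edge (so $\pi$ equals an edge of $\mathsf{T}$) then shows that the hypotheses force the endpoint sets of $\pi(e_u)$ and $\pi(e_v)$ in $\mathsf{T}$ to be disjoint; the first twig on the $\mathsf{T}$-geodesic between these endpoint sets, chosen distinct from $e_u$ and $e_v$ themselves, separates $e_u$ and $e_v$. The disjointness hypothesis $e_u\cap e_v=\emptyset$ enters precisely in the stem/stem case to forbid $\pi(e_u)$ and $\pi(e_v)$ from sharing a vertex in $\mathsf{T}$.

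For the \emph{moreover} part, local convexity provides the $\pi_1$-injection $\pi_1(UP_2(\Gamma))\hookrightarrow\mathbb{B}_2(\Gamma)$. The free splitting is then extracted by iterating Lemma~\ref{Lem:D'}: at a leaf $v$ of $\mathsf{T}$, which must carry $\ell(v)\ge 1$ grapes by normality, apply $\mathbb{B}_2(\Gamma)\cong\mathbb{B}_2(\Gamma,v)*\mathbb{F}_{N(v)}$ with $N(v)\ge 1$ (each grape $\hat v$-component contributes a $\mathbb{Z}$ factor from $\mathbb{B}_2(\mathsf{C}_3)$), and use the graph-of-groups structure of $\mathbb{B}_2(\Gamma,v)$ from Section~\ref{Section:GOGdecomposition} to amalgamate the remaining vertex groups over $\pi_1(UP_2(\Gamma,v))\subset\pi_1(UP_2(\Gamma))$. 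Iterating on $|\mathcal{V}(\mathsf{T})|$, together with a direct computation in the base case $|\mathcal{V}(\mathsf{T})|=2$, shows that every contribution outside $\pi_1(UP_2(\Gamma))$ collapses into a single free factor $\mathbb{F}_n$.

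Finally, to see $n\ge 2$, I would invoke the Euler-characteristic identity $n=\chi(UP_2(\Gamma))-\chi(UD_2(\Gamma))$, valid because both complexes are $2$-dimensional and aspherical. The cells of $UD_2(\Gamma)\setminus UP_2(\Gamma)$ are precisely those $c_1\itimes c_2$ with $\pi(c_1), \pi(c_2)$ concentrated near a single stem vertex; a direct cell count using normality ($\val_\Gamma(w)\ge 3$ at every stem vertex $w$) and largeness (two distinct grape-bearing stem vertices) yields $\chi(UP_2)-\chi(UD_2)\ge 2$, with equality already holding in the base case $\mathsf{T}=\mathsf{P}_1$ carrying one grape at each endpoint. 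The main obstacle I anticipate is the bookkeeping in the inductive graph-of-groups collapse: the amalgamation of $\mathbb{B}_2(\Gamma_i^+)$ with $\pi_1(UP_2(\Gamma,v))$ may produce subtle re-identifications of loops, which must be matched carefully against the global $\pi_1(UP_2(\Gamma))$.
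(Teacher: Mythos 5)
Your local-convexity argument is sound as far as it goes: in dimension $2$, local convexity of a subcomplex is exactly the ``no missing squares'' condition on links, and your stem-projection criterion for membership in $M(t)$ (which is just Lemma~\ref{lem:TwigCorrespondence} restated) correctly reduces the check to finding a twig separating $\pi(e_u)$ from $\pi(e_v)$ and distinct from both. Carrying out the four cases is routine, and this is essentially the same link-level analysis the paper performs (the paper organizes the cases by $d_\Gamma(u,v)\in\{1,2,\ge 3\}$ rather than by edge type), so the first half is fine.

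The genuine gap is in the \emph{moreover} part. The step ``use the graph-of-groups structure of $\mathbb{B}_2(\Gamma,v)$ \dots\ to amalgamate the remaining vertex groups over $\pi_1(UP_2(\Gamma,v))$ \dots\ every contribution outside $\pi_1(UP_2(\Gamma))$ collapses into a single free factor'' is precisely the content of the claim, and your sketch does not establish it. Concretely: for a leaf $v$ of $\mathsf{T}$, Lemma~\ref{Lem:D'} gives $\mathbb{B}_2(\Gamma,v)\cong \mathbb{B}_2(\Gamma_1^+)\ast_{\pi_1(\Gamma_1)}\bigl(\pi_1(\Gamma_1)\times\mathbb{F}_{\loops(v)}\bigr)$, where $UP_2(\Gamma,v)=\Gamma_1\itimes\Gamma_1^c$ is only \emph{one} maximal product subcomplex of $UD_2(\Gamma)$, namely $M(t)$ for the twig $t$ at $v$. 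Every other maximal product subcomplex $M(t')$ has a factor containing $v$ together with its grapes, so $M(t')$ straddles both pieces of the amalgam rather than sitting inside $UD_2(\Gamma_1^+)$; consequently the recursion neither reduces $|\mathcal{V}(\mathsf{T})|$ (it only strips grapes off $v$, and $\Gamma_1^+$ is no longer normal) nor lets you identify $\pi_1(UP_2(\Gamma))$ with a sub-amalgam without tracking exactly the bookkeeping you flag as an obstacle. The Euler-characteristic route to $n\ge 2$ is likewise only asserted: the identity $n=\chi(UP_2(\Gamma))-\chi(UD_2(\Gamma))$ is valid once the splitting is known, but the inequality $\chi(UP_2(\Gamma))-\chi(UD_2(\Gamma))\ge 2$ is not proved, and the cleanest proof of it goes through the splitting itself.

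The paper avoids all of this by decomposing around $UP_2(\Gamma)$ rather than around a vertex: it writes $UD_2(\Gamma)=UP_2(\Gamma)\cup\bigcup_{v,\ \loops(v)\ge 1} UD_2(\Gamma_v\cup\st_{\mathsf{T}}(v))$, observes that each local piece meets $UP_2(\Gamma)$ in $\lk_{\mathsf{T}}(v)\itimes\Gamma_v$ and meets another local piece in at most a vertex, that each local piece has non-abelian free $\pi_1$ (Example~\ref{Ex:FreeGroupCase}), and that the inclusion of a component of $\lk_{\mathsf{T}}(v)\itimes\Gamma_v$ induces a free factor by the decomposition \eqref{eq:Decomposition} from Lemma~\ref{NoHangingEdge}. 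Van Kampen then gives $\mathbb{B}_2(\Gamma)\cong\pi_1(UP_2(\Gamma))\ast\mathbb{F}_n$ with each grape-bearing vertex contributing at least one free generator, so $n\ge 2$ follows directly from largeness, with no Euler-characteristic computation needed. I would replace your inductive scheme by this decomposition.
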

\begin{proof}
For any vertex $u\itimes v$ in $UP_2(\Gamma)$, 
$\Lk(u\itimes v, UP_2(\Gamma)))$ can be considered as a subcomplex of $\Lk(u\itimes v,UD_2(\Gamma))$.
For the first statement, it suffices to show that $\Lk(u\itimes v, UP_2(\Gamma)))$ is an induced subcomplex of $\Lk(u\itimes v, UD_2(\Gamma)))$.

Let $l$ be the distance in $\Gamma$ between $u$ and $v$.
If $l\geq 3$, then there exists a maximal product subcomplex $M\subset UP_2(\Gamma)$ which contains $\st_\Gamma(u)\itimes\st_{\Gamma}(v)$ (recall that $\st_\Gamma(u)$ is the subgraph of $\Gamma$ induced by $\lk_\Gamma(u)\cup\{u\}$) and thus both $\Lk(u\itimes v, UP_2(\Gamma))$ and $\Lk(u\itimes v, UD_2(\Gamma))$ are isomorphic to $\operatorname{Join}\bigl(\lk_\Gamma(u), \lk_\Gamma(v)\bigr)$.
If $l=1$, then $u$ and $v$ must be the endpoints of a twig and thus there exists a unique maximal product subcomplex $M\subset UP_2(\Gamma)$ which contains $u\itimes v$. In particular, both $\Lk(u\itimes v, UP_2(\Gamma))$ and $\Lk(u\itimes v, UD_2(\Gamma))$ are isomorphic to $\operatorname{Join}\bigl(\lk_\Gamma(u)\setminus\{v\}, \lk_\Gamma(v)\setminus\{u\}\bigr)$.
Thus the remaining case is when $l=2$.

Let $x$ and $y$ be distinct vertices in $\mathsf{T}$ such that $u\in\Gamma_x$ and $v\in\Gamma_y$. 
If $x$ and $y$ are not adjacent, then $u=x$ and $v=y$, and there are two twigs $t, t'$ between $u$ and $v$ such that $u\itimes v$ is contained only in the maximal product subcomplexes $M=M(t)$ and $M'=M(t')$. Thus, 
\begin{align*}
\Lk(u\itimes v, UP_2(\Gamma))&\cong 
\Lk(u\itimes v, M)\cup
\Lk(u\itimes v, M')\\
&\cong
\operatorname{Join}\bigl(\lk_\Gamma(u)\setminus\{z\},\lk_\Gamma(v)\bigr)\cup
\operatorname{Join}\bigl(\lk_\Gamma(u),\lk_\Gamma(v)\setminus\{z\}\bigr)\\
&\cong \Lk(u\itimes v, UD_2(\Gamma))),
\end{align*}
where $z$ is the vertex adjacent to both $x$ and $y$.

If $x$ and $y$ are adjacent via a twig $t$, then we may assume that $u=x, v\neq y$.
Since $u\itimes v$ is contained in the unique maximal product subcomplex $M=M(t)$, we have
\begin{align*}
\Lk(u\itimes v, UP_2(\Gamma))&\cong \Lk(u\itimes v, M)
\cong \operatorname{Join}\bigl(\lk_\Gamma(u)\setminus\{y\}, \lk_\Gamma(v)\bigr)\\
\Lk(u\itimes v, UD_2(\Gamma)))&\cong
\operatorname{Join}\bigl(\lk_\Gamma(u)\setminus\{y\},\lk_\Gamma(v)\bigr)\cup
\operatorname{Join}\bigl(\lk_\Gamma(u),\lk_\Gamma(v)\setminus\{y\}\bigr),
\end{align*}
where the intersection of the latter two joins is $\operatorname{Join}\bigl(\lk_\Gamma(u)\setminus\{y\},\lk_\Gamma(v)\setminus\{y\}\bigr)$; in this case, $\Lk(u\itimes v, UP_2(\Gamma)))$ is an induced subcomplex of $\Lk(u\itimes v, UD_2(\Gamma)))$.
Therefore $UP_2(\Gamma)$ is locally convex in $UD_2(\Gamma)$.

From the fact that a vertex $u\itimes v$ in $UD_2(\Gamma)$ is not in $UP_2(\Gamma)$ if and only if there exists a vertex $x$ in $\mathsf{T}$ such that both $u$ and $v$ are in $\Gamma_x$, 
we have a decomposition
\[
UD_2(\Gamma)=UP_2(\Gamma)\cup \bigcup_{v\in\mathsf{T},\ \loops(v)\ge 1} UD_2(\Gamma_v\cup\st_{\mathsf{T}}(v)),
\]
such that 
\begin{align*}
UP_2(\Gamma)\cap UD_2(\Gamma_v\cup\st_{\mathsf{T}}(v))&=\lk_{\mathsf{T}}(v)\itimes\Gamma_v,\text{ and }\\
UD_2(\Gamma_v\cup\st_{\mathsf{T}}(v))\cap UD_2(\Gamma_w\cup\st_{\mathsf{T}}(w))&\cong \begin{cases}
v\itimes w & v\text{ and }w \text{ are adjacent};\\
\emptyset & \text{otherwise}.
\end{cases}
\end{align*}
By Example~\ref{Ex:FreeGroupCase}, $\pi_1(UD_2(\Gamma_v\cup\st_{\mathsf{T}}(v)))$ is isomorphic to a non-abelian free group since $\loops(v)\ge 1$.
Moreover, by the decomposition given in \eqref{eq:Decomposition} in the proof of Lemma~\ref{NoHangingEdge}, the inclusion map from a component of $\lk_{\mathsf{T}}(v)\itimes\Gamma_v$ to $UD_2(\Gamma_v\cup\st_{\mathsf{T}}(v))$ induces a non-trivial free factor of $\pi_1(UD_2(\Gamma_v\cup\st_{\mathsf{T}}(v)))$. 
It follows that $\pi_1(UP_2(\Gamma)\cup UD_2(\Gamma_v\cup\st_{\mathsf{T}}(v)))$ is isomorphic to $\pi_1(UP_2(\Gamma))*\mathbb{F}_{n_v}$ for some ${n_v}\geq 1$.
Inductively, we can show that $\mathbb{B}_2(\Gamma)$ is isomorphic to $\pi_1(UP_2(\Gamma))*\mathbb{F}_n$, where $n$ is greater than or equal to the cardinality of $\mathcal{V}(\mathsf{T})$.
\end{proof}

By Theorem~\ref{PW}, we immediately obtain the following fact.

\begin{corollary}\label{cor:QIbetweenGBGs}
For $\Gamma_1, \Gamma_2\in\grapegraph_{\mathsf{normal}}^{\mathsf{large}}$, the $2$-braid groups $\mathbb{B}_2(\Gamma_1)$ and $\mathbb{B}_2(\Gamma_2)$ are quasi-isometric if and only if $\pi_1(UP_2(\Gamma_1))$ and $\pi_1(UP_2(\Gamma_2))$ are quasi-isometric.
\end{corollary}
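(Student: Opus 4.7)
By Proposition~\ref{Prop:FreeFactor}, for $i=1,2$ we have $\mathbb{B}_2(\Gamma_i)\cong \pi_1(UP_2(\Gamma_i))\ast\mathbb{F}_{n_i}$ for some $n_i\ge 2$. The ``if'' direction is then immediate from the first half of Theorem~\ref{PW}: a quasi-isometry between $\pi_1(UP_2(\Gamma_1))$ and $\pi_1(UP_2(\Gamma_2))$ promotes to a quasi-isometry between their free products with finitely generated free groups, i.e.\ between $\mathbb{B}_2(\Gamma_1)$ and $\mathbb{B}_2(\Gamma_2)$.

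For the converse, I would invoke the ``moreover'' part of Theorem~\ref{PW}, which yields the desired conclusion once we verify that $\pi_1(UP_2(\Gamma_i))$ is one-ended for $i=1,2$. This is the heart of the proof and the main obstacle. The plan is to deduce one-endedness from Lemma~\ref{lem:One-endedness} applied to $Y=UP_2(\Gamma_i)$. The hypothesis $\mathfrak{g}_{\cRI}(Y)=Y$ holds by the very definition of $UP_2(\Gamma_i)$ together with the fact that every maximal product subcomplex is a standard product subcomplex. It then remains to show that $|\cI(\bar{UP_2(\Gamma_i)})|$ is connected.

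For the connectedness, I would appeal to Lemma~\ref{lem:Connected}, checking its two hypotheses. The embeddedness of each standard product subcomplex follows from Lemmas~\ref{lem:UnionofSPS} and~\ref{Lem:SPSinGBG}: a standard product subcomplex of $UP_2(\Gamma)$ coincides with one of $UD_2(\Gamma)$, hence has the form $\Gamma_1\itimes\Gamma_2$ for disjoint subgraphs $\Gamma_1,\Gamma_2\subset\Gamma$, and since $\Gamma$ is simple the induced map $\Gamma_1\times\Gamma_2\to UD_2(\Gamma)$ is an embedding. For the second hypothesis, I use the twig correspondence of Lemma~\ref{lem:TwigCorrespondence}: each maximal product subcomplex of $UP_2(\Gamma)$ has the form $M(t)=\Gamma_{\mathring{t},1}\itimes\Gamma_{\mathring{t},2}$ for a twig $t$. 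Given a finite family $\{M(t_j)\}$ whose intersection is non-empty, the intersection factors as a product of two induced subgraphs of $\Gamma$, each of which is a (possibly empty) union of pieces of the form $\Gamma_{\mathsf{T}'}$ for substems $\mathsf{T}'\subset\mathsf{T}$; normality of $\Gamma$ forces these factors, if non-empty, to be free of leaves, making the intersection a standard product subcomplex by Corollary~\ref{corollary:maximal}.

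The hardest step is the last one: the combinatorial check that intersections of families of maximal product subcomplexes are leaf-free in each factor. The rest is routine assembly of Proposition~\ref{Prop:FreeFactor}, Theorem~\ref{PW}, and Lemmas~\ref{lem:One-endedness} and~\ref{lem:Connected}.
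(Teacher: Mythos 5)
Your proof is correct and follows essentially the same route as the paper: the paper derives the corollary from Proposition~\ref{Prop:FreeFactor} and Theorem~\ref{PW}, with the one-endedness needed for the converse supplied by Proposition~\ref{Prop:connectedsimplicial}, whose proof is exactly your chain Lemma~\ref{lem:Connected} plus Lemma~\ref{lem:One-endedness} with hypotheses checked via the intersection pattern of maximal product subcomplexes. The ``hardest step'' you identify (intersections of the $M(t_j)$ being leaf-free products) is precisely the content of Lemma~\ref{Lem:RIconnected}, which the paper proves separately.
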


For twigs $t_i$ of $\Gamma$, let $M(t_i)\subset UP_2(\Gamma)$ be the corresponding maximal product subcomplexes. Then the intersection $M(t_i)\cap M(t_j)$ for any two twigs $t_i,t_j$ is $\Gamma_1\itimes\Gamma_2$, where $\Gamma_1$ ($\Gamma_2$, resp.) is a $\mathring{t_1}$-component ($\mathring{t_2}$-component, resp.) of $\Gamma$ which does not contain $t_2$ ($t_1$, resp.).
Indeed, $\Gamma_1$ and $\Gamma_2$ are $\mathring{\mathsf{P}}$-components of $\Gamma$, where $\mathsf{P}$ is the minimal path substem containing $t_1$ and $t_2$.
This observation can be generalized as follows.

\begin{lemma}\label{Lem:RIconnected}
Let $\{M(t_1),\dots,M(t_m)\}$ be a finite collection of maximal product subcomplexes of $UP_2(\Gamma)$ corresponding to twigs $\{t_1,\dots,t_m\}$ of $\Gamma$.
Then the following are equivalent:
\begin{enumerate}
\item $\bigcap_{i=1}^m M(t_i)$ is non-empty.
\item All $t_i$'s are colinear and $\bigcap_{i=1}^m M(t_i)$ is a standard product subcomplex corresponding to the minimal path substem $\mathsf{P}$ of $\mathsf{T}$ containing all $t_i$'s, namely, $\bigcap_{i=1}^m M(t_i)=\Gamma_{\mathring{\mathsf{P}},1}\itimes\Gamma_{\mathring{\mathsf{P}},2}$, where $\Gamma_{\mathring{\mathsf{P}},i}$'s are $\mathring{\mathsf{P}}$-components of $\Gamma$.
\end{enumerate}
\end{lemma}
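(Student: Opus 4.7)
The direction $(2) \Rightarrow (1)$ is a straightforward induction on $m$, with the base case $m = 2$ being exactly the computation recorded in the paragraph preceding the lemma. For the inductive step, let $\mathsf{P}'$ be the minimal path substem containing $t_1, \dots, t_{m-1}$ and $\mathsf{P}$ the one containing all of $t_1, \dots, t_m$; colinearity forces $\mathsf{P}'$ to be a subpath of $\mathsf{P}$, so for each $i \in \{1,2\}$ the $\mathring{\mathsf{P}}$-component $\Gamma_{\mathring{\mathsf{P}}, i}$ is contained in the corresponding $\mathring{\mathsf{P}}'$-component $\Gamma_{\mathring{\mathsf{P}}', i}$. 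Applying the binary case to $\Gamma_{\mathring{\mathsf{P}}',1} \itimes \Gamma_{\mathring{\mathsf{P}}',2}$ and $M(t_m)$ then identifies $\bigcap_{i=1}^m M(t_i) = \Gamma_{\mathring{\mathsf{P}},1} \itimes \Gamma_{\mathring{\mathsf{P}},2}$.

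For $(1) \Rightarrow (2)$, the key is to introduce a stem-shadow of cells in $UP_2(\Gamma)$ and translate membership in $M(t)$ into a tree-separation condition. Every cell has the form $c = \sigma_1 \itimes \sigma_2$ with $\sigma_1, \sigma_2$ disjoint closed cells of $\Gamma$. Define the shadow $s(\sigma) \subseteq \mathcal{V}(\mathsf{T})$ by $s(\sigma) = \{x\}$ whenever $\sigma$ is a vertex or edge supported in the bouquet $\Gamma_x$ attached at $x \in \mathcal{V}(\mathsf{T})$, and $s(\sigma) = \{x, y\}$ when $\sigma$ is the stem edge $[x, y]$. Since $\Gamma_{\mathsf{T}'}$ contains exactly those cells $\sigma$ with $s(\sigma) \subseteq \mathcal{V}(\mathsf{T}')$, the membership $c \in M(t) = \Gamma_{\mathsf{T}_{\mathring{t}, 1}} \itimes \Gamma_{\mathsf{T}_{\mathring{t}, 2}}$ (using Lemma~\ref{lem:TwigCorrespondence}) is equivalent to $s(\sigma_1)$ and $s(\sigma_2)$ lying in opposite $\mathring{t}$-components of $\mathsf{T}$, i.e., to $t$ separating the two shadows in the tree $\mathsf{T}$.

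Fixing any $c \in \bigcap_i M(t_i)$, the shadows $s(\sigma_1), s(\sigma_2)$ are vertex-disjoint subsets of $\mathcal{V}(\mathsf{T})$ each spanning at most a single edge, so their convex hull in $\mathsf{T}$ is a path substem $\mathsf{Q}$. Every $t_i$ separates the two shadows and therefore lies in $\mathsf{Q}$, so the $t_i$ are colinear; the minimal path substem $\mathsf{P}$ containing them satisfies $\mathsf{P} \subseteq \mathsf{Q}$, and the first direction completes the identification. The main bookkeeping subtlety is ensuring that the shadow map behaves uniformly across all cell types (vertex-vertex, vertex-edge, edge-edge, with edges being in either the stem or a grape) so that the equivalence between $c \in M(t)$ and separation in $\mathsf{T}$ holds in each case; once that is in hand, both implications reduce to elementary tree combinatorics in $\mathsf{T}$.
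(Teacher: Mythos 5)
Your proposal is correct, but the route you take for the substantive direction $(1)\Rightarrow(2)$ is genuinely different from the paper's. The paper argues by contrapositive: if the $t_i$ are not colinear, one extracts three pairwise non-colinear twigs $t_1,t_2,t_3$, observes via the pairwise-intersection formula (the paragraph preceding the lemma) that $M(t_1)\cap M(t_2)=\Gamma_1\itimes\Gamma_2$ and $M(t_1)\cap M(t_3)=\Gamma_1\itimes\Gamma_3$ with $\Gamma_2$ and $\Gamma_3$ the far-side components, which are disjoint, so the triple intersection is already empty. You instead fix a cell $\sigma_1\itimes\sigma_2$ in the common intersection, project to the stem via your shadow map, and read off that every $t_i$ must lie on the single path $\mathsf{Q}$ spanned by the two shadows — a direct argument rather than a reduction to the three-twig case. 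Both rest on the same structural input (Lemmas~\ref{Lem:SPSinGBG} and~\ref{lem:TwigCorrespondence}, and the fact that for a normal $\Gamma$ the two $\mathring{t}$-components partition $\mathcal{V}(\mathsf{T})$, so membership in $M(t)$ is exactly a separation condition). What your approach buys: the shadow/separation dictionary is a clean reusable reformulation, and your explicit induction for $(2)\Rightarrow(1)$ actually pins down the identification $\bigcap_i M(t_i)=\Gamma_{\mathring{\mathsf{P}},1}\itimes\Gamma_{\mathring{\mathsf{P}},2}$, which the paper treats as an immediate generalization of the two-twig observation and does not spell out. What the paper's approach buys is brevity: once the pairwise formula is recorded, the contrapositive is three lines. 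The only bookkeeping point worth making explicit in your write-up is that the inductive step intersects a standard (not maximal) product subcomplex with a maximal one, a mild extension of the binary computation, and that the whole shadow formalism uses the standing normality assumption so that twigs are single stem edges.
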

\begin{proof}
We only need to show the implication [(1)$\Rightarrow$(2)].

Suppose that $t_i$'s are non-colinear in $\mathsf{T}$, i.e., there exist non-colinear twigs $t_1, t_2$, and $t_3$ (after reordering the indices). For each $i\in\{1,2,3\}$, let $\Gamma_i$ be the $\mathring{t_i}$-component of $\Gamma$, which is away from other $t_j$ for $j\in \{1,2,3\}\setminus\{i\}$.
As observed in the paragraph above this lemma, $M(t_i)\cap M(t_j)$ is the standard product subcomplex $\Gamma_i\itimes\Gamma_j$ for two distinct $i,j\in\{1,2,3\}$. 
Since $\Gamma_2$ and $\Gamma_3$ are disjoint, $\Gamma_1\itimes\Gamma_2$ and $\Gamma_1\itimes\Gamma_3$ are also disjoint and thus, the intersection of $M(t_1)$, $M(t_2)$ and $M(t_3)$ is empty. In particular, $\bigcap_{i=1}^m M(t_i)$ is empty.
\end{proof}

Combining with Lemma~\ref{lem:UnionofSPS}, we also have an analog of Lemma~\ref{Lem:RIconnected} for $\bar{UP_2(\Gamma)}$.

\begin{lemma}\label{lem:Intofmaximal}
Let $\{\bar{M(t_1)},\dots,\bar{M(t_m)}\}$ be a finite collection of pairwise intersecting maximal product subcomplexes of $\bar{UP_2(\Gamma)}$ such that each $\bar{M(t_i)}$ is a $p$-lift of the maximal product subcomplex $M(t_i)\subset UP_2(\Gamma)$ corresponding to a twig $t_i$ of $\Gamma$. 
Then $t_i$'s are colinear. 
Moreover, $\bigcap_{i=1}^m \bar{M(t_i)}$ is a standard product subcomplex which is a $p$-lift of the standard product subcomplex of $UP_2(\Gamma)$ corresponding to the minimal path substem containing all $t_i$'s.
\end{lemma}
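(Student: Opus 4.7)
The plan is to reduce the statement to Lemma~\ref{Lem:RIconnected} by projecting via the covering map $p$, with the Helly property for convex subcomplexes of CAT(0) cube complexes as the crucial extra ingredient that converts pairwise intersection in the cover into non-emptiness of the total intersection. Each $\bar{M(t_i)}$ is the image of an isometric embedding of a product of two trees into the CAT(0) cube complex $\bar{UP_2(\Gamma)}$, so it is a convex subcomplex. The Helly property---any finite family of pairwise intersecting convex subcomplexes in a CAT(0) cube complex has non-empty total intersection---then yields that $\bar{W}:=\bigcap_{i=1}^m \bar{M(t_i)}$ is non-empty, and as an intersection of convex sets it is itself convex and in particular connected.

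Pushing down, $p(\bar{W})\subset W:=\bigcap_{i=1}^m M(t_i)$, so $W$ is non-empty. Lemma~\ref{Lem:RIconnected} then forces the twigs $t_1,\dots,t_m$ to be colinear and identifies $W$ with $\Gamma_{\mathring{\mathsf{P}},1}\itimes\Gamma_{\mathring{\mathsf{P}},2}$, where $\mathsf{P}$ is the minimal path substem of $\mathsf{T}$ containing all $t_i$. This settles the first sentence of the conclusion and pins down the correct standard product subcomplex downstairs.

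For the second sentence I need to identify $\bar{W}$ with a $p$-lift of $W$, not merely a subset of $p^{-1}(W)$. Since $\bar{W}$ is connected, it lies in a unique component $\bar{W}'$ of $p^{-1}(W)$. Lemma~\ref{ProjofPS}(2)---applied in $\bar{UP_2(\Gamma)}$ by first using Proposition~\ref{Prop:FreeFactor} together with Lemma~\ref{lem:UnionofSPS} to transfer the standard product subcomplex theory from $\bar{UD_2(\Gamma)}$ into $\bar{UP_2(\Gamma)}$---shows that $\bar{W}'$ carries a product structure and is a $p$-lift of $W$. Moreover, each $\bar{M(t_i)}$ is a connected component of $p^{-1}(M(t_i))$ by Lemma~\ref{Lem:SPSinGBG}. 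Since $\bar{W}'\cap \bar{M(t_i)}\supset \bar{W}\ne\emptyset$ and $\bar{W}'$ is a connected subset of $p^{-1}(W)\subset p^{-1}(M(t_i))$, we obtain $\bar{W}'\subset \bar{M(t_i)}$ for every $i$. Hence $\bar{W}'\subset \bigcap_i \bar{M(t_i)}=\bar{W}$, and so $\bar{W}=\bar{W}'$ is the desired $p$-lift.

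The main obstacle is this last identification step: verifying that the connected component $\bar{W}'$ of $p^{-1}(W)$ has the product structure and equals $\bar{W}$ rather than properly containing it. Everything else is essentially formal---Helly and Lemma~\ref{Lem:RIconnected} deliver non-emptiness, colinearity and the correct base-level subcomplex---but the transfer of Lemma~\ref{ProjofPS}(2) into $\bar{UP_2(\Gamma)}$ and the component-of-preimage characterization of $p$-lifts in $\bar{UD_2(\Gamma)}$ (Lemma~\ref{Lem:SPSinGBG}), enabled by the local convexity from Proposition~\ref{Prop:FreeFactor}, are what tie $\bar{W}$ down to a single well-defined $p$-lift.
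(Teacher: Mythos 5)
Your proposal is correct and follows essentially the same route as the paper: the Helly property gives non-emptiness of $\bar{W}$, projecting down and applying Lemma~\ref{Lem:RIconnected} yields colinearity and identifies $W$ with the standard product subcomplex of the minimal path substem, and the final identification of $\bar{W}$ with a full $p$-lift uses exactly the paper's argument that the $p$-lift of $W$ meeting $\bar{W}$ is connected and contained in $p^{-1}(M(t_i))$, hence lies in each component $\bar{M(t_i)}$. Your extra care in justifying that the relevant component of $p^{-1}(W)$ carries a product structure (via Lemma~\ref{Lem:SPSinGBG} and the transfer to $UP_2(\Gamma)$) is a more explicit rendering of what the paper treats as the ``unique $p$-lift $\bar K$ of $K$,'' not a different argument.
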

Note that a $\CAT(0)$ cube complex satisfies the \emph{Helly} property: if $Y_1,\dots,Y_m$ are convex subcomplexes in a $\CAT(0)$ cube complex such that $Y_i\cap Y_j$ is non-empty for all $1\le i < j\le m$, then $\bigcap_{i=1}^m Y_i$ is non-empty.
\begin{proof}
By Lemma~\ref{Lem:SPSinGBG}, any two $p$-lifts of $M(t_i)$ in $\bar{UP_2(\Gamma)}$ are components of $p^{-1}(M(t_i))$, and in particular, if $t_i=t_j$, then $\bar{M(t_i)}$ and $\bar{M(t_j)}$ must be identical.

Since we have $\bigcap_{i=1}^m \bar{M(t_i)}\neq\varnothing$ by the Helly property, we have
\[
\varnothing\neq p\left(\bigcap_{i=1}^m \bar{M(t_i)}\right) \subseteq \bigcap_{i=1}^m p(\bar{M(t_i)})=\bigcap_{i=1}^m {M(t_i)}.
\]
By Lemma~\ref{Lem:RIconnected}, the intersection $\bigcap_{i=1}^m M(t_i)$ must be the standard product subcomplex $K$ corresponding to the minimal path substem containing all $t_i$'s.
It follows that there exists a unique $p$-lift $\bar{K}$ of $K$ which contains $\bigcap_{i=1}^m \bar{M(t_i)}$. Moreover, since $K$ is contained in each $M(t_i)$, $\bar{K}$ is contained in each $\bar{M(t_i)}$ as well.
Therefore, $\bigcap_{i=1}^m \bar{M(t_i)}$ must be equal to $\bar K$. 
\end{proof}

\subsection{(Reduced) intersection complexes for bunches of grapes}\label{Subsection:IntofBOG}
In this subsection, we elaborate on the (local) structure of $\cRI(UP_2(\Gamma))$ and $\cI(\bar{UP_2(\Gamma)})$.
Especially, we focus on the developability of $\cRI(UP_2(\Gamma))$ and then the order on the vertices of each simplex which is preserved (or reversed) by semi-isomorphisms.

Let us start with the connectivity of $\cRI(UP_2(\Gamma))$ and $\cI(\bar{UP_2(\Gamma)})$. 

\begin{proposition}\label{Prop:connectedsimplicial}
Not only $\mathfrak{g}_{\cI}(\cI(\bar{UP_2(\Gamma)}))$ is equal to $\bar{UP_2(\Gamma)}$ but also both $\cRI(UP_2(\Gamma))$ and $\cI(\bar{UP_2(\Gamma)})$ are connected simplicial complexes.
In particular, $\bar{UP_2(\Gamma)}$ is one-ended.
\end{proposition}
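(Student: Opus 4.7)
The plan is to reduce this proposition to the two general criteria established earlier, namely Lemma~\ref{lem:Connected} and Lemma~\ref{lem:One-endedness}. Setting $Y = UP_2(\Gamma)$, by definition $Y$ is the union of all maximal product subcomplexes of $UD_2(\Gamma)$, and via Lemma~\ref{lem:UnionofSPS} (together with Proposition~\ref{Prop:FreeFactor} guaranteeing local convexity) the standard product subcomplexes of $Y$ are exactly those of $UD_2(\Gamma)$ contained in some maximal one; hence $\mathfrak{g}_{\cRI}(UP_2(\Gamma)) = UP_2(\Gamma)$. So it only remains to verify the two hypotheses of Lemma~\ref{lem:Connected}.

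First I would check that every standard product subcomplex of $UP_2(\Gamma)$ is embedded. By Lemma~\ref{Lem:SPSinGBG}, any product subcomplex of $UD_2(\Gamma)$ is of the form $\Gamma_1\itimes\Gamma_2$ for disjoint subgraphs $\Gamma_1,\Gamma_2\subset\Gamma$, and the proof of that lemma shows the defining product structure $\iota:\Gamma_1\times\Gamma_2\to UD_2(\Gamma)$ is in fact a locally isometric embedding; restricting to $UP_2(\Gamma)$ preserves this. Second, the intersection property for maximal product subcomplexes is precisely Lemma~\ref{Lem:RIconnected}: any non-empty intersection of maximal product subcomplexes $M(t_1)\cap\dots\cap M(t_m)$ coincides with the standard product subcomplex $\Gamma_{\mathring{\mathsf{P}},1}\itimes\Gamma_{\mathring{\mathsf{P}},2}$ associated to the minimal path substem $\mathsf{P}$ containing all of the (necessarily colinear) twigs $t_i$.

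With both hypotheses verified, Lemma~\ref{lem:Connected} gives simultaneously $\mathfrak{g}_{\cI}(\cI(\bar{UP_2(\Gamma)})) = \bar{UP_2(\Gamma)}$ and the fact that $|\cRI(UP_2(\Gamma))|$ and $|\cI(\bar{UP_2(\Gamma)})|$ are connected simplicial complexes. The one-endedness of $\bar{UP_2(\Gamma)}$ then follows immediately from Lemma~\ref{lem:One-endedness} applied to the same $Y$, once connectedness of $|\cI(\bar{UP_2(\Gamma)})|$ is in hand.

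The work is essentially bookkeeping: nothing genuinely new must be proved, since the combinatorial content of the intersection pattern of maximal product subcomplexes (Lemma~\ref{Lem:RIconnected}) and the embeddedness of standard product subcomplexes (from Lemma~\ref{Lem:SPSinGBG}) are already available. The only mild subtlety I would be careful about is invoking Lemma~\ref{lem:UnionofSPS} correctly: one should confirm via Proposition~\ref{Prop:FreeFactor} that $UP_2(\Gamma)$ is locally convex in $UD_2(\Gamma)$ so that the correspondence between standard product subcomplexes of $UP_2(\Gamma)$ and those of $UD_2(\Gamma)$ inside $UP_2(\Gamma)$ is clean, and in particular that the maximal product subcomplexes of $UP_2(\Gamma)$ are exactly the $M(t)$ enumerated by twigs in Lemma~\ref{lem:TwigCorrespondence}. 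After this alignment, the proof is a direct citation of the preparatory lemmas.
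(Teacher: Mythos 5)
Your proof is correct and follows essentially the same route as the paper: both reduce the statement to Lemmas~\ref{lem:Connected} and~\ref{lem:One-endedness} after noting that $UP_2(\Gamma)$ is the union of its standard product subcomplexes (Lemma~\ref{lem:UnionofSPS}) and that the intersection hypothesis is supplied by Lemma~\ref{Lem:RIconnected}. Your explicit check of the embeddedness hypothesis via Lemma~\ref{Lem:SPSinGBG} is a welcome detail the paper leaves implicit, and your appeal to Proposition~\ref{Prop:FreeFactor} is harmless but not actually needed, since the proposition concerns $UP_2(\Gamma)$ and its own universal cover rather than its embedding in $UD_2(\Gamma)$.
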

\begin{proof}
By Lemma~\ref{lem:UnionofSPS} and Definition~\ref{Def:UP_2(Gamma)}, $UP_2(\Gamma)$ is equal to the union of all standard product subcomplexes of $UP_2(\Gamma)$.
By Lemma~\ref{Lem:UP_2Special}, moreover, $UP_2(\Gamma)$ is a connected special square complex.
By Lemmas~\ref{Lem:RIconnected} and \ref{lem:Connected}, therefore, the proposition holds.

By Lemma~\ref{lem:One-endedness}, one-endedness of $\bar{UP_2(\Gamma)}$ is shown.
\end{proof}

\begin{Ex}\label{Ex:3-star}
Let $\Lambda=(\mathsf{S}_3,\ell)\in\grapegraph_{\mathsf{normal}}^{\mathsf{large}}$ such that $\mathsf{S}_3$ is an $3$-star with leaves $v_1,v_2,v_3$.
Then $\cRI(UP_2(\Lambda))$ is given as follows:
\[
\cRI(UP_2(\Lambda))=\begin{tikzpicture}[baseline=-.5ex]
\draw[fill] (90:1) circle (2pt) node[above] {$\Lambda_{v_1}\itimes\Lambda^c_{v_1}$};
\draw[fill] (210:1) circle (2pt) node[left] {$\Lambda_{v_2}\itimes\Lambda^c_{v_2}$};
\draw[fill] (330:1) circle (2pt) node[right] {$\Lambda_{v_3}\itimes\Lambda^c_{v_3}$};
\draw (90:1)--(210:1) node[midway, left] {$\Lambda_{v_1}\itimes\Lambda_{v_2}$};
\draw (210:1)--(330:1) node[midway, below] {$\Lambda_{v_2}\itimes\Lambda_{v_3}$};
\draw (330:1)--(90:1) node[midway, right] {$\Lambda_{v_1}\itimes\Lambda_{v_3}$};
\end{tikzpicture}
\]
Note that there is no $2$-simplex in $\cRI(UP_2(\Lambda))$ and $\Lambda_{v_i}\subsetneq\Lambda^c_{v_j}$ for $i\neq j$.
\end{Ex}

As seen in the previous example, $\cRI(UP_2(\Gamma))$ may be neither simply connected nor a flag complex. However, $\cI(\bar{UP_2(\Gamma)})$ is always a simply connected flag complex as follows.

\begin{theorem}\label{theorem:structureofI}
For $\Gamma\in\grapegraph_{\mathsf{normal}}^{\mathsf{large}}$, $\cRI(UP_2(\Gamma))$ is a developable complex of groups and $\cI(\bar{UP_2(\Gamma)})$ is the development of $\cRI(UP_2(\Gamma))$. In particular, $\cI(\bar{UP_2(\Gamma)})$ is a flag complex which is connected and simply connected. 
\end{theorem}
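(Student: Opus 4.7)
The plan is to verify three claims in turn—developability of $\cRI(UP_2(\Gamma))$, contractibility (hence simple connectivity) of $\cI(\bar{UP_2(\Gamma)})$, and the flag condition—and then identify the development via the uniqueness statement in Theorem~\ref{Thm:Development}.

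For developability I would invoke Theorem~\ref{Thm:Developable} with target group $\pi_1(UP_2(\Gamma))$, sending the local group attached to each simplex $\triangle$ (the fundamental group of the corresponding standard product subcomplex $K_\triangle$) to its image in $\pi_1(UP_2(\Gamma))$. Because $UP_2(\Gamma)$ is an NPC special square complex (Lemma~\ref{lem:SpecialSubcomplexes}) and each standard product subcomplex is the image of a local isometry from a product of graphs, its inclusion into $UP_2(\Gamma)$ is $\pi_1$-injective, so the resulting morphism is injective on every local group, which is what Theorem~\ref{Thm:Developable} asks for.

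For simple connectivity, I would apply a nerve-type lemma to the cover of $\bar{UP_2(\Gamma)}$ by its maximal product subcomplexes $\{\bar{M(t)}\}$. Three inputs make this go through. First, Proposition~\ref{Prop:connectedsimplicial} says that this is indeed a cover of $\bar{UP_2(\Gamma)}$. Second, each maximal product subcomplex is by Definition~\ref{SPS} a product of two infinite trees without leaves, hence contractible, and Lemma~\ref{lem:Intofmaximal} ensures that any non-empty intersection of finitely many such subcomplexes is again a standard product subcomplex, i.e., a product of two trees, hence again contractible. Third, Lemma~\ref{lem:Intofmaximal} also shows that the intersection pattern matches the simplicial structure of $\cI(\bar{UP_2(\Gamma)})$ exactly: pairwise intersecting maximal product subcomplexes meet in a unique standard product subcomplex, so $\cI(\bar{UP_2(\Gamma)})$ is literally the nerve of this good cover by subcomplexes. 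Since $\bar{UP_2(\Gamma)}$ is $\CAT(0)$ as the universal cover of an NPC cube complex, hence contractible, the nerve lemma yields that $\cI(\bar{UP_2(\Gamma)})$ is contractible, in particular simply connected. The same third input simultaneously gives the flag condition: any pairwise adjacent collection of vertices in $\cI(\bar{UP_2(\Gamma)})$ spans a unique simplex labelled by its common intersection.

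To conclude, the $\pi_1(UP_2(\Gamma))$-action of Theorem~\ref{Thm:TPBCM} makes $\cI(\bar{UP_2(\Gamma)})$ into a simply connected complex on which $\pi_1(UP_2(\Gamma))$ acts with quotient $\cRI(UP_2(\Gamma))$ via the canonical quotient morphism, so by the uniqueness clause of Theorem~\ref{Thm:Development}, $\cI(\bar{UP_2(\Gamma)})$ must be the development of $\cRI(UP_2(\Gamma))$. The main technical obstacle I anticipate is making the nerve lemma rigorous for a cover of a CW complex by infinitely many closed subcomplexes with contractible intersections; this can be handled by invoking a standard good-cover version of the nerve theorem for subcomplexes (for instance, by passing to open regular neighbourhoods of the $\bar{M(t)}$ whose finite intersections deformation retract to the $\bar K$ supplied by Lemma~\ref{lem:Intofmaximal}).
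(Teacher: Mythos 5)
Your proposal is correct and its overall architecture (developability via an injective morphism to $\pi_1(UP_2(\Gamma))$, the flag and connectedness claims from Lemma~\ref{lem:Intofmaximal} and Proposition~\ref{Prop:connectedsimplicial}, identification of the development via Theorems~\ref{Thm:TPBCM} and~\ref{Thm:Development}) matches the paper's. The one place where you take a genuinely different route is simple connectivity of $\cI(\bar{UP_2(\Gamma)})$. The paper never proves this directly: it first argues that $\cRI(UP_2(\Gamma))$ is developable (using that $UP_2(\Gamma)$ is the geometric realization of the complex of groups, so Lemma~\ref{Lem:RIconnected} supplies the injective morphism to $\pi_1(UP_2(\Gamma))$ required by Theorem~\ref{Thm:Developable}), then identifies $\cI(\bar{UP_2(\Gamma)})$ with the development and reads off simple connectivity from Theorem~\ref{Thm:Development}. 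You instead prove contractibility of $\cI(\bar{UP_2(\Gamma)})$ directly, via the nerve lemma applied to the cover of the $\CAT(0)$ complex $\bar{UP_2(\Gamma)}$ by its maximal product subcomplexes, using Lemma~\ref{lem:Intofmaximal} to see that all nonempty finite intersections are products of trees and that the nerve is literally $|\cI(\bar{UP_2(\Gamma)})|$; only then do you invoke uniqueness of the development. Your ordering is logically cleaner—identifying a simply connected $\pi_1(UP_2(\Gamma))$-complex with prescribed quotient as \emph{the} development is immediate, whereas the paper's identification tacitly relies on the coset construction of the development—and it yields the stronger conclusion of contractibility. The price is the technical care you already flagged in applying a nerve theorem to an infinite closed cover by subcomplexes; your suggested fix (open regular neighbourhoods deformation retracting to the $\bar K$'s) is standard and adequate. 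You should also note explicitly that the stabilizer of each simplex of $\cI(\bar{UP_2(\Gamma)})$ under the deck-group action equals the local group of the corresponding simplex of $\cRI(UP_2(\Gamma))$; this is implicit in Theorem~\ref{Thm:TPBCM} and in Lemma~\ref{Lem:SPSinGBG} (each $p$-lift is a full component of the preimage), but it is the hypothesis that makes the uniqueness clause of Theorem~\ref{Thm:Development} applicable.
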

\begin{proof}
It follows from Proposition~\ref{Prop:connectedsimplicial} and Lemma~\ref{lem:Intofmaximal} that $\cI(\bar{UP_2(\Gamma)})$ is a connected flag complex.
By Theorem~\ref{Thm:Development} and Theorem~\ref{Thm:TPBCM}, thus, it only remains to show that $\cRI(UP_2(\Gamma))$ can be considered as a developable complex of groups.

As previously mentioned in Section \ref{subsection:intersection complexes}, if the label of each simplex in $\cRI(UP_2(\Gamma))$ is replaced by its fundamental group, then $\cRI(UP_2(\Gamma))$ can be considered as a complex of groups.
Lemma \ref{Lem:RIconnected} implies that $UP_2(\Gamma)$ is the geometric realization of $\cRI(UP_2(\Gamma))$ such that the fundamental group of $\cRI(UP_2(\Gamma))$ is isomorphic to $\pi_1(UP_2(\Gamma))$. By Theorem~\ref{Thm:Developable}, hence $\cRI(UP_2(\Gamma))$ is developable. 
\end{proof}

Let us see the detailed features of $\cRI(UP_2(\Gamma))$ based on the correspondence between its simplices and path substems of $\mathsf{T}$ given by Lemma~\ref{Lem:RIconnected}. 

\begin{remark}\label{Ex:PathofLengthN}
For a simplex $\triangle=\{\mathbf{v}_0, \dots, \mathbf{v}_{n}\}\subset\cRI(UP_2(\Gamma))$, its label corresponds to the minimal path substem $\mathsf{P}\subset\mathsf{T}$ containing the twigs corresponding to $\mathbf{v}_0,\dots,\mathbf{v}_{n}$.
By Lemma~\ref{Lem:RIconnected}, the fundamental group of the label of $\triangle$ is $\pi_1(\Gamma_{\mathring{\mathsf{P}},1} \times \Gamma_{\mathring{\mathsf{P}},2})\cong \pi_1(\Gamma_{\mathring{\mathsf{P}},1}) \times \pi_1(\Gamma_{\mathring{\mathsf{P}},2})$,
which is isomorphic to $\mathbb{F}_{m_1} \times \mathbb{F}_{m_2}$, where $\Gamma_{\mathring{\mathsf{P}},i}=\Gamma_{\mathsf{T}_{\mathring{\mathsf{P}},i}}$ and $m_i=\ell(\mathsf{T}_{\mathring{\mathsf{P}},i})$ for $i=1,2$.
In particular, the fundamental group is quasi-isometric to $\mathbb{G}$, where
\begin{align*}
\mathbb{G}&=\begin{cases}
\mathbb{Z}\times\mathbb{Z} & \text{if the endpoints } v,w \text{ of } \mathsf{P} \text{ are leaves of }\mathsf{T} \text{ and }\ell(v)=\ell(w)=1;\\
\mathbb{Z}\times\mathbb{F}_2 & \text{if only one endpoint } v \text{ of } \mathsf{P} \text{ is a leaf of }\mathsf{T} \text{ and }\ell(v)=1;\\
\mathbb{F}_2\times\mathbb{F}_2 & \text{otherwise},
\end{cases}
\end{align*}

Let $\triangle'\subset\triangle$ be a sub-simplex and $\mathsf{P}'\subseteq\mathsf{P}$ the minimal path substem of $\mathsf{T}$ containing all twigs corresponding to vertices of $\triangle'$.
Then the labels of $\triangle$ and $\triangle'$ have a common factor if and only if $\mathsf{P}$ and $\mathsf{P}'$ share an endpoint, and in particular, the labels of $\triangle$ and $\triangle'$ coincide if and only if $\mathsf{P} = \mathsf{P}'$.
It means that for a path substem $\mathsf{P}$ of length $(k+1)$, a maximal simplex $\triangle_{\mathsf{P}}\subset\cRI(UP_2(\Gamma))$ whose label is $\Gamma_{\mathring{\mathsf{P}},1}\itimes\Gamma_{\mathring{\mathsf{P}},2}$ is a uniquely determined $k$-simplex.
Thus, such $\triangle_{\mathsf{P}}$ will be said to be \emph{induced from} $\mathsf{P}$. Similarly, a simplex $\bar\triangle\subset\cI(\bar{UP_2(\Gamma)})$ with $\rho(\bar\triangle)=\triangle_{\mathsf{P}}$ will be said to be induced from $\mathsf{P}$, which is consistent with Lemma~\ref{lem:Intofmaximal}.
\end{remark}

With Remark~\ref{Ex:PathofLengthN}, we can see that the stem $\mathsf{T}$ of $\Gamma$ induces the (semi-)isomorphism type of $\cRI(UP_2(\Gamma))$ as $\cRI(UP_2(\Gamma))$ encodes the colinearity of twigs of $\Gamma$. 

\begin{lemma}\label{Lem:IsometricRI}
Let $\Gamma=(\mathsf{T},\ell), \Gamma'=(\mathsf{T},\ell')\in\grapegraph_{\mathsf{normal}}^{\mathsf{large}}$ be two large and normal bunches of grapes over the same stem $\mathsf{T}$.
Then there exists a canonical semi-isomorphism 
$$\Phi:\cRI(UP_2(\Gamma))\to\cRI(UP_2(\Gamma'))$$ which is an isomorphism if and only if $\min\{\ell(v),2\}=\min\{\ell'(v),2\}$ for each leaf $v\in \mathsf{T}$.

Moreover, $\Phi$ induces a (semi-)isomorphism $\bar\Phi:\cI(\bar{UP_2(\Gamma)})\to \cI(\bar{UP_2(\Gamma')})$ such that $\Phi\circ\rho=\rho'\circ\bar\Phi$. 
\end{lemma}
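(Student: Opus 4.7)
The plan is to define $\Phi$ via the correspondence between simplices of the reduced intersection complex and path substems of $\mathsf{T}$, verify the semi-morphism axioms by a direct inspection that depends only on $\mathsf{T}$, pin down the extra quasi-isometry condition needed for $\Phi$ to be an isomorphism, and finally invoke Proposition~\ref{Prop:IsomorphicI} to lift $\Phi$ to the claimed map between developments.

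By normality, the set of twigs of either $\Gamma$ or $\Gamma'$ coincides with $\mathcal{E}(\mathsf{T})$, so by Lemma~\ref{lem:TwigCorrespondence} the vertex sets of $\cRI(UP_2(\Gamma))$ and $\cRI(UP_2(\Gamma'))$ are both canonically indexed by edges of $\mathsf{T}$. Combining Lemma~\ref{Lem:RIconnected} with Remark~\ref{Ex:PathofLengthN}, simplices of $\cRI(UP_2(\Gamma))$ are in turn in bijection with path substems $\mathsf{P}\subset\mathsf{T}$ of length at least one; I will check that, thanks to normality, \emph{every} such $\mathsf{P}$ indexes a simplex by showing $\pi_1(\Gamma_{\mathring{\mathsf{P}},i})$ is non-trivial. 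The key observation is that any leaf of the subtree $\mathsf{T}_{\mathring{\mathsf{P}},i}$ is either a leaf of $\mathsf{T}$ or a vertex bivalent in $\mathsf{T}$, so normality forces $\ell\ge 1$ at such a vertex and hence $\ell(\mathsf{T}_{\mathring{\mathsf{P}},i})\ge 1$. Since this description depends only on $\mathsf{T}$, the simplex sets of $\cRI(UP_2(\Gamma))$ and $\cRI(UP_2(\Gamma'))$ are canonically identified, and I define $\Phi$ as the resulting combinatorial bijection $\triangle_\mathsf{P}\mapsto\triangle'_\mathsf{P}$.

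To verify $\Phi$ is a semi-isomorphism, consider nested simplices $\triangle_{\mathsf{P}'}\subsetneq\triangle_\mathsf{P}$, corresponding to path substems $\mathsf{P}'\subsetneq\mathsf{P}$. Their labels $\Gamma_{\mathring{\mathsf{P}'},1}\itimes\Gamma_{\mathring{\mathsf{P}'},2}$ and $\Gamma_{\mathring{\mathsf{P}},1}\itimes\Gamma_{\mathring{\mathsf{P}},2}$ admit a factor-wise identification, and strict containment in one factor occurs exactly when the corresponding endpoints of $\mathsf{P}$ and $\mathsf{P}'$ differ---a condition that depends only on $\mathsf{T}$. Hence both $\Phi$ and $\Phi^{-1}$ satisfy the label condition of Definition~\ref{Def:Morphism}. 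For the upgrade to a morphism, I must compare the quasi-isometry types of the labels: by the trichotomy of Remark~\ref{Ex:PathofLengthN}, the type of $\pi_1$ of a label is controlled by how many endpoints of $\mathsf{P}$ are leaves of $\mathsf{T}$ with $\ell=1$. Letting the pair of endpoints of $\mathsf{P}$ range over all pairs of distinct vertices of $\mathsf{T}$, the labels of $\triangle_\mathsf{P}$ and $\Phi(\triangle_\mathsf{P})$ agree in quasi-isometry type for every $\mathsf{P}$ if and only if, for every leaf $v$ of $\mathsf{T}$, the values $\ell(v)$ and $\ell'(v)$ are either both equal to $1$ or both at least $2$, i.e., $\min\{\ell(v),2\}=\min\{\ell'(v),2\}$.

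For the last assertion, Theorem~\ref{theorem:structureofI} says $\cRI(UP_2(\Gamma))$ is developable with development $\cI(\bar{UP_2(\Gamma)})$, and similarly for $\Gamma'$, while Proposition~\ref{Prop:connectedsimplicial} supplies the connectedness required by Proposition~\ref{Prop:IsomorphicI}; applying that proposition to $\Phi$ yields a lift $\bar\Phi:\cI(\bar{UP_2(\Gamma)})\to\cI(\bar{UP_2(\Gamma')})$ of the appropriate type with $\Phi\circ\rho=\rho'\circ\bar\Phi$. The place where care is needed is the leaf-and-valency analysis that turns normality into the statement ``every path substem indexes a simplex''; once this is in hand, everything else is a structural consequence of the fact that $\mathsf{T}$ alone determines the combinatorics and the label-inclusion pattern of $\cRI(UP_2(-))$, with $\ell$ only affecting quasi-isometry types of labels at the leaves of $\mathsf{T}$.
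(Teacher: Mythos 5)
Your proof is correct and follows essentially the same route as the paper: the canonical combinatorial identification of simplices of $\cRI(UP_2(-))$ with path substems of the common stem $\mathsf{T}$, the label and quasi-isometry-type analysis of Remark~\ref{Ex:PathofLengthN} to decide when the semi-isomorphism upgrades to an isomorphism, and the lift to the developments via Theorem~\ref{theorem:structureofI} and Proposition~\ref{Prop:IsomorphicI}. Your only addition is the explicit normality argument that every path substem of $\mathsf{T}$ indexes a simplex, which the paper records in the observation following the definition of normality and in Lemma~\ref{Lem:RIconnected}.
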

\begin{proof}
Since vertices of $\cRI(UP_2(\Gamma))$ correspond to twigs of $\Gamma$ and a finite collection of vertices spans a simplex if and only if the corresponding twigs are colinear, we have an obvious combinatorial map $\Phi:\cRI(UP_2(\Gamma))\to\cRI(UP_2(\Gamma'))$. By Remark~\ref{Ex:PathofLengthN} and Definition~\ref{Def:Morphism}, $\Phi$ must be a semi-isomorphism and it becomes an isomorphism if and only if $\min\{\ell(v),2\}=\min\{\ell'(v),\}$ for each leaf $v\in \mathsf{T}$.

By Proposition~\ref{Prop:IsomorphicI} and Theorem~\ref{theorem:structureofI}, there is a (semi-)isomorphism $\cI(\bar{UP_2(\Gamma)})\to \cI(\bar{UP_2(\Gamma')})$ which is induced from $\Phi$.
\end{proof}

Indeed, there are two natural orders on twigs in any path substem of $\mathsf{T}$ depending on the choice of the initial vertex of the path substem. Then it is natural to consider the order on vertices of a simplex of $\cRI(UP_2(\Gamma))$ induced from a natural order on the twigs corresponding to the vertices and then ask whether it is preserved by semi-isomorphisms.
In the remaining of this subsection, we will show that this turns out to be true by extracting this order from the information of the labels of the vertices and this fact will be significantly used in Section~\ref{section:QIBetweenConfSpaces}.

For each $2\le k\le \operatorname{diam}(\mathsf{T})$, let $\cRI_{\le k}(UP_2(\Gamma))$ and $\cI_{\le k}(\bar{UP_2(\Gamma)})$ be subcomplexes of $\cRI(UP_2(\Gamma))$ and $\cI(\bar{UP_2(\Gamma)})$, respectively, which are the unions of simplices induced from path substems of length at most $k$. In particular, the dimension of $\cRI_{\le k}(UP_2(\Gamma))$ or $\cI_{\le k}(\bar{UP_2(\Gamma)})$ is $k-1$. Then we have the following strong deformation retractions.

\begin{lemma}\label{lemma:deformation retract}
For each $k\ge 2$, there are strong deformation retractions 
\[r_{\le k}:\cRI(UP_2(\Gamma))\to \cRI_{\le k}(UP_2(\Gamma))\quad\text{and}\quad\bar r_{\le k}:\cI(\bar{UP_2(\Gamma)})\to \cI_{\le k}(\bar{UP_2(\Gamma)})\] 
which fit into the following commutative diagram:
\[
\begin{tikzcd}
\cI(\bar{UP_2(\Gamma)})\ar[r,"\bar r_{\le k}"]\ar[d,"\rho"'] & \cI_{\le k}(\bar{UP_2(\Gamma)})\ar[d,"\rho_{\le k}"]\\
\cRI(UP_2(\Gamma))\ar[r,"r_{\le k}"]& \cRI_{\le k}(UP_2(\Gamma)).
\end{tikzcd}
\]

Moreover, for $\Gamma'\in\grapegraph^{\mathsf{large}}_{\mathsf{normal}}$,
any semi-isomorphism $\psi:\cRI(UP_2(\Gamma))\to\cRI(UP_2(\Gamma'))$ ($\bar\psi:\cI(\bar{UP_2(\Gamma)})\to\cI(\bar{UP_2(\Gamma')})$, resp.) has a restriction
$\psi_{\le k}:\cRI(UP_2(\Gamma))\to\cRI_{\le k}(UP_2(\Gamma'))$ ($\bar\psi_{\le k}:\cI(\bar{UP_2(\Gamma)})\to\cI_{\le k}(\bar{UP_2(\Gamma')})$, resp.)
which commutes with the deformation retracts given above.
\end{lemma}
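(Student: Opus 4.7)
The plan rests on the bijection (Remark~\ref{Ex:PathofLengthN}, Lemma~\ref{Lem:RIconnected}) attaching to each simplex $\sigma$ of $\cRI(UP_2(\Gamma))$ its minimal enclosing path substem $\mathsf{P}_\sigma$ of $\mathsf{T}$: we have $\sigma\in\cRI_{\le k}(UP_2(\Gamma))$ precisely when $\mathrm{length}(\mathsf{P}_\sigma)\le k$, and every $\sigma$ is a face of the unique maximal simplex $\triangle_{\mathsf{P}_\sigma}$ of dimension $\mathrm{length}(\mathsf{P}_\sigma)-1$. I would construct $r_{\le k}$ by descending induction on enclosing length: for $\ell=\operatorname{diam}(\mathsf{T}),\operatorname{diam}(\mathsf{T})-1,\dots,k+1$, simultaneously deformation retract every $\triangle_\mathsf{P}$ of length $\ell$ onto the subcomplex $B_\mathsf{P}$ of its faces with strictly shorter enclosing length, and then transplant the whole construction to $\cI(\bar{UP_2(\Gamma)})$ using the development structure from Theorem~\ref{theorem:structureofI}.

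For a single $\triangle_\mathsf{P}$ with twigs $t_1,\dots,t_\ell$ ordered along $\mathsf{P}$, a face of $\triangle_\mathsf{P}$ has enclosing length $<\ell$ iff it fails to contain both $t_1$ and $t_\ell$, giving
\[
B_\mathsf{P}\;=\;\Delta(\{t_1,\dots,t_{\ell-1}\})\cup\Delta(\{t_2,\dots,t_\ell\}),
\]
a union of two simplices glued along the common face $\Delta(\{t_2,\dots,t_{\ell-1}\})$, hence contractible. An explicit strong deformation retraction $r_\mathsf{P}\colon\triangle_\mathsf{P}\to B_\mathsf{P}$ (with $\mu=\min(\lambda_1,\lambda_\ell)$) is given by
\[
(\lambda_1,\dots,\lambda_\ell)\;\longmapsto\;\bigl(\lambda_1-\mu,\;\lambda_2+\tfrac{2\mu}{\ell-2},\;\dots,\;\lambda_{\ell-1}+\tfrac{2\mu}{\ell-2},\;\lambda_\ell-\mu\bigr)
\]
followed by the straight-line homotopy; it fixes $B_\mathsf{P}$ pointwise and treats the middle twigs symmetrically. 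Two distinct path substems $\mathsf{P},\mathsf{P}'$ of the same length meet (Lemma~\ref{Lem:RIconnected}) in $\triangle_{\mathsf{P}\cap\mathsf{P}'}$ of strictly smaller length lying in $B_\mathsf{P}\cap B_{\mathsf{P}'}$, so the local retractions glue across each step; composing all steps yields $r_{\le k}$. By Lemma~\ref{lem:Intofmaximal} every maximal simplex of $\cI(\bar{UP_2(\Gamma)})$ is a $p$-lift of some $\triangle_\mathsf{P}$ inheriting the same twig-endpoint structure, so applying the identical formula on each lift produces $\bar r_{\le k}$ with $\rho\circ\bar r_{\le k}=r_{\le k}\circ\rho$.

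For the moreover part, the key point is that any semi-isomorphism $\psi$ preserves enclosing length. Call $\triangle$ \emph{label-maximal} if no proper supersimplex has the same label; by Remark~\ref{Ex:PathofLengthN}, $\triangle$ is label-maximal iff $\triangle=\triangle_{\mathsf{P}_\triangle}$, and this property is preserved by $\psi$ via the strict-versus-equal clause of Definition~\ref{Def:Morphism}. Combinatoriality then forces $\mathrm{length}(\mathsf{P}_{\psi(\triangle_\mathsf{P})})=\dim\psi(\triangle_\mathsf{P})+1=\mathrm{length}(\mathsf{P})$, and extending to arbitrary $\sigma$ via the label equality between $\sigma$ and $\triangle_{\mathsf{P}_\sigma}$ shows that $\psi$ restricts to $\psi_{\le k}\colon\cRI_{\le k}(UP_2(\Gamma))\to\cRI_{\le k}(UP_2(\Gamma'))$. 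Moreover the endpoint twig pair $\{t_1,t_\ell\}$ of $\mathsf{P}$ is intrinsically characterized as the vertices of $\triangle_\mathsf{P}$ whose removal strictly shortens the enclosing length---a label-level condition preserved by $\psi$---so $\psi$ respects the partition used in the retraction formula, giving the commutative square with $r_{\le k}$ and $r'_{\le k}$, and likewise for $\bar\psi$. The main obstacle I expect is this last intrinsic characterization step: the combinatorial structure of the retraction must be recovered purely from label data, without \emph{a priori} access to the order or orientation of twigs along $\mathsf{P}$.
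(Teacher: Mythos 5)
Your proposal is correct and follows essentially the same route as the paper: both collapse, level by level, each simplex of maximal enclosing length onto the union of its two codimension-one faces obtained by dropping an endpoint twig, lift this to $\cI(\bar{UP_2(\Gamma)})$ via the correspondence of Lemma~\ref{lem:Intofmaximal}, and deduce compatibility with semi-isomorphisms from the fact that label coincidence between a simplex and its faces is preserved. Your explicit barycentric formula and the label-maximality characterization of the endpoint pair are just more detailed versions of steps the paper asserts more tersely, and the "obstacle" you flag at the end is in fact resolved by exactly that label-coincidence observation.
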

\begin{proof}
Since $\rho$ preserves labels, the codomain of the restriction $\rho_{k}$ of $\rho$ to $\cI_{\le k}(\bar{UP_2(\Gamma)})$ could be chosen as $\cRI_{\le k}(UP_2(\Gamma))$ for each $k\ge2$. Hence, there is a commutative diagram with rows which are strictly increasing filtrations as follows:
\[\begin{tikzcd}[column sep=2pc, row sep=1pc]
\cI_{\le 2}(\bar{UP_2(\Gamma)})\arrow[r,hookrightarrow]\arrow[d,"\rho_{2}"]&\cI_{\le 3}(\bar{UP_2(\Gamma)})\arrow[r,hookrightarrow]\arrow[d,"\rho_{3}"]&\cdots\arrow[r,hookrightarrow]&
\cI_{\le N}(\bar{UP_2(\Gamma)})\arrow[r,equal]\arrow[d,"\rho_{N}"]&\cI(\bar{UP_2(\Gamma)})\arrow[d,"\rho"]\\
\cRI_{\le 2}(UP_2(\Gamma))\arrow[r,hookrightarrow]&\cRI_{\le 3}(UP_2(\Gamma))\arrow[r,hookrightarrow]&\cdots\arrow[r,hookrightarrow]&
\cRI_{\le N}(UP_2(\Gamma))\arrow[r,equal]&\cRI(UP_2(\Gamma)),
\end{tikzcd}\] 
where $N$ is the diameter of $\mathsf{T}$.

For $k\ge 2$, let $\triangle=\{\bfv_0,\dots,\bfv_k\}\subset\cRI_{\le k+1}(UP_2(\Gamma))$ be the $k$-simplex induced from a path substem $\mathsf{P}^0=[v_0,\dots, v_{k+1}]\subset \mathsf{T}$
such that the twig $[v_i,v_{i+1}]\subset\mathsf{T}$ corresponds to $\bfv_i$.
Let $\triangle_1=\{\bfv_0,\bfv_k\}$ be an edge of $\triangle$.
By Remark~\ref{Ex:PathofLengthN}, any sub-simplex $\triangle'$ with $\triangle_1\subseteq\triangle'\subseteq\triangle$ has the label $\Gamma_{\mathring{\mathsf{P}}^0,1}\itimes \Gamma_{\mathring{\mathsf{P}}^0,2}$.
By the definition of $\cRI_{\le k+1}(UP_2(\Gamma))$, moreover, $\triangle$ is a unique maximal simplex of $\cRI_{\le k+1}(UP_2(\Gamma))$ containing $\triangle_1$.
Hence, one can find a strong deformation retract $r_\triangle$ from $\triangle$ to the union of sub-simplices $\triangle_{k-1,1}$ and $\triangle_{k-1,2}$, which can be extended to a map $\cRI_{\le k+1}(UP_2(\Gamma))\to\cRI_{\le k+1}(UP_2(\Gamma))$ fixing all simplices but $\triangle$.

On the other hand, any sub-simplex of $\triangle$ not containing both $\bfv_0$ and $\bfv_k$ belongs to one of the two $(k-1)$-simplices $\triangle_{k-1,1}=\{\bfv_0,\dots,\bfv_{k-1}\}$ and $\triangle_{k-1,2}=\{\bfv_1,\dots,\bfv_k\}$ corresponding to two path substems $\mathsf{P}^1=[v_0,\dots,v_k]$ and $\mathsf{P}^2=[v_1,\dots,v_{k+1}]$ of length $k$, respectively. Note that $\triangle_{k-1,1}$ and $\triangle_{k-1,2}$ belong to $\cRI_{\le k}(UP_2(\Gamma))$. 

By combining $r_\triangle$ for every $k$-simplex $\triangle$ in $\cRI_{\le k+1}(UP_2(\Gamma))$, we therefore have a strong deformation retract $r_k$ from $\cRI_{\le k+1}(UP_2(\Gamma))$ onto $\cRI_{\le k}(UP_2(\Gamma))$.
Then there is a strong deformation retraction $r_{\le k}:\cRI(UP_2(\Gamma))\to \cRI_{\le k}(UP_2(\Gamma))$, where $r_{\le k}=r_{k}\circ\dots\circ r_{N}$.

Suppose that $\bar{\triangle}=\{\bar\bfv_0,\dots,\bar\bfv_k\}\subset\cI_{\le k+1}(\bar{UP_2(\Gamma)})$ is a $k$-simplex such that $\rho(\bar\triangle)=\triangle$ with $\rho(\bar\bfv_i)=\bfv_i$ for each $0\le i\le k$ and let $\bar{\triangle}_1=\{\barbfv_0,\barbfv_k\}$.
For a simplex $\bar\triangle'$ with $\bar\triangle_1\subseteq \bar\triangle'\subseteq \bar\triangle$, the stabilizers of $\bar\triangle$ and $\bar\triangle'$ under the group action given in Theorem~\ref{Thm:TPBCM} coincide and $\triangle$ is a unique maximal simplex containing $\rho(\bar\triangle')$, which means that $\bar\triangle$ is a unique maximal simplex containing $\bar\triangle'$.
Hence, a strong deformation retract $\bar r_{\bar\triangle}$ from $\bar\triangle$ to the union of two sub-simplices $\bar\triangle_{k-1,1}=\{\bar\bfv_0,\dots,\bar\bfv_{k-1}\}$ and $\bar\triangle_{k-1,2}=\{\bar\bfv_1,\dots,\bar\bfv_{k+1}\}$ is well-defined and extends to a map $\cI_{\le k+1}(\bar{UP_2(\Gamma)})\to \cI_{\le k+1}(\bar{UP_2(\Gamma)})$ fixing all simplices but $\bar\triangle$. 
By combining $\bar r_{\bar\triangle}$ for all $k$-simplices in $\cI_{\le k+1}(\bar{UP_2(\Gamma)})$, we therefore have a strong deformation retract $\bar r_{k}:\cI_{\le k+1}(\bar{UP_2(\Gamma)})\to \cI_{\le k}(\bar{UP_2(\Gamma)})$ satisfying $\rho_{k}\circ \bar r_k = r_k\circ \rho_{k+1}$ since $\rho(\bar r_{\bar\triangle})=r_\triangle$ for each $k$-simplex $\bar\triangle\subset\cI(\bar{UP_2(\Gamma)})$.
Similar to $r_{\le k}$, then we have a strong deformation retraction $\bar r_{\le k}:\cI(\bar{UP_2(\Gamma)})\to \cI_{\le k}(\bar{UP_2(\Gamma)})$, where $\bar r=\bar r_{k}\circ\dots\circ \bar r_{N}$.

Now let $\Gamma'=(\mathsf{T}',\ell')\in\grapegraph_{\mathsf{normal}}^{\mathsf{large}}$ and suppose that there is a (semi-)isomorphism $\psi:\cRI(UP_2(\Gamma))\to \cRI(UP_2(\Gamma'))$.
Since $\cRI(UP_2(\Gamma))$ and $\cRI(UP_2(\Gamma'))$ have the same dimension, the diameter of $\mathsf{T}'$ must be $N$.
Then the restriction $\psi_{\le k}:\cRI_{\le k}(UP_2(\Gamma))\to\cRI_{\le k}(UP_2(\Gamma'))$ is well-defined for $k=N$ (trivially) and $k=N-1$ by the definition of a (semi-)isomorphism. By induction, we can easily see that the restriction is well-defined for any $k\ge 2$.

Note that for $\triangle$ and $\triangle_1$ given above, the images $\psi(\triangle)$ and $\psi(\triangle_1)$ are a $k$-simplex and an edge whose labels coincide by the definition of a (semi-)isomorphism. Thus we have $r'_{k+1}\circ\psi_{\le k+1}= \psi_{\le k}\circ r_{k+1}$, where $r'_{k+1}$ is the the strong deformation retract of $\cRI_{\le k+1}(UP_2(\Gamma'))$.
Similarly, if there is a (semi-)isomorphism $\bar\psi:\cI(\bar{UP_2(\Gamma)})\to\cI(\bar{UP_2(\Gamma')})$, then the assertion holds by the same argument, and we are done.
\end{proof}

Note that the image of a simplex $\triangle\subset\cRI(UP_2(\Gamma))$ induced from a path substem of $\mathsf{T}$ under $r_{\le 2}$ is a path subgraph $\mathcal{P}_\triangle\subset \cRI_{\le 2}(UP_2(\Gamma))$.

\begin{definition}\label{def:OrderOnSimplex}
A \emph{canonical order} on the set $\{\bfv_0,\dots,\bfv_k\}$ of vertices of a simplex $\triangle\subset\cRI(UP_2(\Gamma))$ is given by $\bfv_0<\bfv_1<\dots<\bfv_k$ if the image of $\triangle$ under the strong deformation retraction $r_{\le 2}$ is the path subgraph $\mathcal{P}_\triangle=[\bfv_0,\dots,\bfv_k]\subset\cRI_{\le 2}(UP_2(\Gamma))$, which is unique up to reversing.
Similarly, a \emph{canonical order} on the set of vertices of a simplex $\bar\triangle\subset\cI(\bar{UP_2(\Gamma)})$ is defined using $\bar r_{\le 2}$.
\end{definition}

As previously mentioned, the canonical order on the set of vertices of a simplex $\triangle\subset\cRI(UP_2(\Gamma))$ is the same as the natural order on the set of the twigs corresponding to the vertices up to reversing. The point is that the labels of simplices of $\cRI(UP_2(\Gamma))$ contain pretty much information of (the stem of) $\Gamma$ in the sense that Definition~\ref{def:OrderOnSimplex} is defined without mentioning the stem of $\Gamma$. 

The following lemma summarizes the above discussion and we omit the proof.

\begin{lemma}\label{lem:OrderOnSimplex}
Let $\Gamma, \Gamma'\in\grapegraph_{\mathsf{normal}}^{\mathsf{large}}$.
Then any (semi-)isomorphism $\psi:\cRI(UP_2(\Gamma))\to\cRI(UP_2(\Gamma'))$ or $\bar\psi:\cI(\bar{UP_2(\Gamma)})\to\cI(\bar{UP_2(\Gamma')})$ preserves a canonical order on the vertices of each simplex up to reversing.
\end{lemma}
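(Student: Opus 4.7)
The plan is to reduce the claim to the elementary fact that any combinatorial isomorphism between two finite embedded paths preserves (or reverses) the linear order on their vertices, by invoking the commutativity supplied at the end of Lemma~\ref{lemma:deformation retract}.

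Fix a simplex $\triangle=\{\bfv_0,\ldots,\bfv_k\}\subset\cRI(UP_2(\Gamma))$ induced from a path substem $\mathsf{P}=[v_0,\ldots,v_{k+1}]\subset\mathsf{T}$, with each twig $[v_i,v_{i+1}]$ corresponding to $\bfv_i$. I would first identify $r_{\le 2}(\triangle)$ as the embedded path subgraph $\mathcal{P}_\triangle=[\bfv_0,\bfv_1,\ldots,\bfv_k]$ in the simplicial complex $\cRI_{\le 2}(UP_2(\Gamma))$, whose consecutive edges $\{\bfv_i,\bfv_{i+1}\}$ are the $1$-simplices induced from the $2$-paths $[v_i,v_{i+1},v_{i+2}]$. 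This follows directly from the inductive construction in the proof of Lemma~\ref{lemma:deformation retract}: each $r_k$ collapses a top-dimensional simplex onto the union of its two codimension-one faces $\{\bfv_0,\ldots,\bfv_{k-1}\}$ and $\{\bfv_1,\ldots,\bfv_k\}$, so the composition $r_{\le 2}=r_2\circ\cdots\circ r_N$ leaves exactly the spine $\mathcal{P}_\triangle$; it is embedded because distinct twigs give distinct vertices and distinct $2$-paths give distinct edges in the simplicial complex $\cRI(UP_2(\Gamma))$ (simplicial by Proposition~\ref{Prop:connectedsimplicial}).

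Next, I would apply the commutativity $\psi_{\le 2}\circ r_{\le 2}=r'_{\le 2}\circ\psi$ from Lemma~\ref{lemma:deformation retract}. Since $\psi_{\le 2}$ is the restriction of $\psi$ to the subcomplex $\cRI_{\le 2}(UP_2(\Gamma))$, this yields
\[
\psi(\mathcal{P}_\triangle)=\psi\bigl(r_{\le 2}(\triangle)\bigr)=r'_{\le 2}\bigl(\psi(\triangle)\bigr)=\mathcal{P}_{\psi(\triangle)}.
\]
Because $\psi$ is a combinatorial bijection between simplicial complexes, its restriction to $\mathcal{P}_\triangle$ is a graph isomorphism onto $\mathcal{P}_{\psi(\triangle)}$; both are embedded paths on $k+1$ vertices, so this isomorphism is unique up to reversing direction, which is exactly the statement that the canonical order on $\{\bfv_0,\ldots,\bfv_k\}$ is preserved up to reversal. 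The same argument with $\bar r_{\le 2}$ in place of $r_{\le 2}$ handles $\bar\psi$, using that $\cI(\bar{UP_2(\Gamma)})$ is a simplicial complex by Theorem~\ref{theorem:structureofI}. The only point to verify carefully is the identification $r_{\le 2}(\triangle)=\mathcal{P}_\triangle$, which is essentially forced by the construction; once this is in hand the rest of the proof is automatic.
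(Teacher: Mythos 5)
Your argument is correct and is exactly the proof the paper has in mind: the paper omits the proof because the lemma "summarizes the above discussion," namely the identification $r_{\le 2}(\triangle)=\mathcal{P}_\triangle$ from Definition~\ref{def:OrderOnSimplex} together with the commutation $\psi_{\le 2}\circ r_{\le 2}=r'_{\le 2}\circ\psi$ from Lemma~\ref{lemma:deformation retract}, which you combine with the elementary rigidity of path-graph isomorphisms in just the intended way.
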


Before we finish this subsection, as a by-product of canonical order, we give a criterion when $\cRI(UP_2(\Gamma))$ is simply connected.

\begin{proposition}\label{SM-types}
The following are equivalent.
\begin{enumerate}
\item The stem $\mathsf{T}$ is a path graph.
\item The reduced intersection complex $\cRI(UP_2(\Gamma))$ is a simplex.
\item The reduced intersection complex $\cRI(UP_2(\Gamma))$ is simply connected.
\end{enumerate}
\end{proposition}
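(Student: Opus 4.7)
The plan is to prove $(1) \Rightarrow (2) \Rightarrow (3) \Rightarrow (1)$, with $(2) \Rightarrow (3)$ being trivial since any simplex is contractible.

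For $(1) \Rightarrow (2)$, suppose $\mathsf{T} = [v_0, \dots, v_n]$ is a path. Since $\Gamma \in \grapegraph^{\mathsf{large}}_{\mathsf{normal}}$, twigs coincide with edges of $\mathsf{T}$, and every twig lies on the single path substem $\mathsf{T}$ itself. Thus any finite collection of twigs is colinear, so by Lemma~\ref{Lem:RIconnected} the corresponding collection of maximal product subcomplexes has non-empty intersection, which is a standard product subcomplex. By Definition~\ref{IC} this means every finite set of vertices of $\cRI(UP_2(\Gamma))$ spans a simplex, so $\cRI(UP_2(\Gamma))$ is the $(n-1)$-simplex induced from $\mathsf{T}$.

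For $(3) \Rightarrow (1)$, the key observation is that $|\cRI_{\leq 2}(UP_2(\Gamma))|$ is isomorphic, as a 1-complex, to the line graph $L(\mathsf{T})$. Indeed, since twigs coincide with edges of $\mathsf{T}$, vertices of $\cRI_{\leq 2}$ correspond to edges of $\mathsf{T}$; and by Lemma~\ref{Lem:RIconnected} two such vertices span an edge of $\cRI_{\leq 2}$ iff the corresponding twigs are colinear, which (for edges of a tree) simply means they share a common vertex of $\mathsf{T}$, matching the adjacency rule for $L(\mathsf{T})$. Note that two distinct edges of $\mathsf{T}$ share at most one vertex, so no multi-edges occur. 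By Lemma~\ref{lemma:deformation retract}, $\cRI(UP_2(\Gamma))$ strongly deformation retracts onto $\cRI_{\leq 2}(UP_2(\Gamma))$, so the two have the same fundamental group.

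Now I prove the contrapositive of $(3) \Rightarrow (1)$. Suppose $\mathsf{T}$ is not a path. Since $\mathsf{T}$ is a tree, some $v \in \mathcal{V}(\mathsf{T})$ has $\val_\mathsf{T}(v) \geq 3$; three edges $e_1, e_2, e_3$ incident to $v$ are pairwise adjacent in $L(\mathsf{T})$ but obviously form a non-separating triangle, so $L(\mathsf{T})$ contains a cycle and hence is not a tree. Since a connected $1$-dimensional simplicial complex is simply connected iff it is a tree, $L(\mathsf{T})$ is not simply connected, and therefore neither is $\cRI(UP_2(\Gamma))$. The proof involves no serious obstacle; the only delicate point is verifying the line-graph identification (in particular checking there are no accidental multi-edges in $\cRI_{\leq 2}$), which follows immediately from the fact that two edges of a tree meet in at most one vertex.
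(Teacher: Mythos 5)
Your proof is correct and follows essentially the same route as the paper: reduce to $\cRI_{\le 2}(UP_2(\Gamma))$ via the deformation retraction of Lemma~\ref{lemma:deformation retract}, identify its vertices with twigs (edges of $\mathsf{T}$) and its edges with pairs of twigs sharing a vertex, and observe that a vertex of valency $\ge 3$ in $\mathsf{T}$ produces a cycle (your triangle is the paper's embedded $\mathsf{K}_m$) in this $1$-complex, contradicting simple connectivity. The explicit identification with the line graph $L(\mathsf{T})$ and the check against multi-edges are harmless refinements of the paper's argument.
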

\begin{proof}
It is obvious that the first implies the second as observed in Remark~\ref{Ex:PathofLengthN}, which implies the third.

Suppose that $\cRI(UP_2(\Gamma))$ is simply connected.
Then so is the subcomplex $\cRI_{\le 2}(UP_2(\Gamma))$ by Lemma~\ref{lemma:deformation retract}.
For each maximal product subcomplex $M(t)\subset UP_2(\Gamma)$, there are a unique twig $t$ in $\mathsf{T}$ and a unique vertex $\bfv_t\in\cRI_{\le 2}(UP_2(\Gamma))$ by Lemma~\ref{lem:TwigCorrespondence} such that $M(t)=\Gamma_{\mathring{t},1}\itimes \Gamma_{\mathring{t},2}$.
Namely, the set of vertices in $\cRI_{\le 2}(UP_2(\Gamma))$ is the same as the set of edges in $\mathsf{T}$, and two distinct vertices $\bfv_1$ and $\bfv_2$ in $\cRI_{\le 2}(UP_2(\Gamma))$ are adjacent if and only if the corresponding distinct twigs $t_1$ and $t_2$ share one vertex in $\mathsf{T}$.

Notice that whenever $\mathsf{T}$ has a vertex of valency $m\ge 3$, there is an embedded complete graph $\mathsf{K}_m$ in $\cRI_{\le 2}(UP_2(\Gamma))$, which is never simply connected.
Hence this contradiction prohibits the existence of a vertex of valency $\ge 3$ in $\mathsf{T}$, which implies that $\mathsf{T}$ is a path graph as desired.
\end{proof}
\section{Operations on large bunches of grapes}\label{section:Operations}
This section is devoted to three operations on $\Gamma=(\mathsf{T},\ell)\in\grapegraph^{\mathsf{large}}$ which are removing irrelevant parts of $\Gamma$ in the sense that these operations turn out to be quasi-isometry invariants for the $2$-braid groups.

\subsection{Pruning empty twigs and smoothing twigs}\label{section:eliminating}
The first operation is easily derived from Lemma~\ref{NoHangingEdge}, which says that attaching an edge to a graph $\Lambda$ by identifying one endpoint of the edge with a vertex in $\Lambda$ only increases the rank of the free factor of the $2$-braid group unless the identified vertex was a leaf of $\Lambda$.

\begin{definition}[Pruning empty twigs and smoothing twigs]\label{Def:Elimination}
Let $\Gamma=(\mathsf{T},\ell)\in\grapegraph^{\mathsf{large}}$ and $t=[v_0,\dots, v_k]$ a twig.
\begin{itemize}
\item Suppose that $t$ is an empty twig with $\val_\Gamma(v_k)=1$. A bunch of grapes $\Gamma'$ is obtained by \emph{pruning an empty twig} $t$ if $\Gamma'=\Gamma_{\mathsf{T}'}$, where $\mathsf{T}'$ is the induced subgraph of $\mathsf{T}$ with $\mathcal{V}(\mathsf{T}')=\mathcal{V}(\mathsf{T})\setminus\{v_1,\dots, v_k\}$.
\item A bunch of grapes $\Gamma'$ is obtained by \emph{smoothing a twig} $t$ if it is obtained by replacing the twig $t$ by an edge joining $v_0$ and $v_k$.
\end{itemize}
\end{definition}

The following lemma is obvious and we omit the proof and instead provide Algorithm~\ref{alg:normal} in Appendix~\ref{appendix:algorithms}.

\begin{lemma}\label{lemma:elimination}
Let $\Gamma\in\grapegraph^{\mathsf{large}}$. Then pruning empty twigs and smoothing twigs terminate in finite time, and the uniquely determined resulting graph $\Gamma_{\mathsf{normal}}$ becomes a large and normal bunch of grapes.

The bunch of grapes $\Gamma_{\mathsf{normal}}$ will be called the \emph{normal representative} of $\Gamma$.
\end{lemma}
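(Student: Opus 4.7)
My plan is to split the lemma into three claims and verify them in order: termination, the output being large and normal, and uniqueness.

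For termination, every non-trivial application of either operation strictly decreases $|\mathcal{V}(\mathsf{T})|$: pruning an empty twig $[v_0,\dots,v_k]$ removes the $k\ge 1$ vertices $v_1,\dots,v_k$, and smoothing a twig of length $k\ge 2$ removes its $k-1$ interior vertices $v_1,\dots,v_{k-1}$. Hence the process halts after finitely many steps. For largeness of the output, every vertex removed by either operation has $\ell=0$: a twig interior has $\val_\Gamma=2$, which forces $\val_{\mathsf{T}}=2$ and $\ell=0$, and the leaf endpoint of an empty twig has $\val_\Gamma=1$, which forces $\val_{\mathsf{T}}=1$ and $\ell=0$; so the (at least two) grape-bearing vertices of $\Gamma$ all persist. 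Normality follows by contrapositive: a remaining vertex with $\val_\Gamma=1$ would be the leaf endpoint of an empty twig still prunable, and one with $\val_\Gamma=2$ would be interior to a twig of length $\ge 2$ still smoothable, each contradicting termination.

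Uniqueness is the subtler claim, which I plan to establish by exhibiting an intrinsic description of the terminal graph depending only on $\Gamma$. Let $G=\{v\in\mathcal{V}(\mathsf{T})\mid \ell(v)>0\}$ and set
\[
V^*=G\cup\bigl\{v\in\mathcal{V}(\mathsf{T})\mid \mathsf{T}\setminus\{v\}\text{ has at least three components each meeting }G\bigr\}.
\]
Let $\mathsf{T}^*$ be the tree on vertex set $V^*$ whose edges $[u,w]$ correspond to pairs of distinct $V^*$-vertices whose connecting path in $\mathsf{T}$ has no interior vertex in $V^*$, and put $\ell^*=\ell|_{V^*}$. The key claim is that any single operation preserves the triple $(V^*,\mathsf{T}^*,\ell^*)$: each removed vertex lies outside $V^*$ (it has $\ell=0$ and $\val_{\mathsf{T}}\le 2$), and the removed vertices form a connected $G$-free subtree of $\mathsf{T}$, so neither the partition of $\mathsf{T}\setminus\{w\}$ into $G$-containing components (for surviving $w\in V^*$) nor the set of interior $V^*$-vertices on any path in $\mathsf{T}$ between two surviving $V^*$-vertices changes. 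Iterating this invariance, $(V^*,\mathsf{T}^*,\ell^*)$ is determined by $\Gamma$ alone; and the terminal stem coincides with $\mathsf{T}^*$ because any $\ell=0$ surviving vertex $v$ has $\val_{\mathsf{T}_{\mathsf{normal}}}(v)\ge 3$ by normality, so $\mathsf{T}_{\mathsf{normal}}\setminus\{v\}$ has at least three components, each containing a leaf (of the component) with $\mathsf{T}_{\mathsf{normal}}$-valency $\le 2$ and therefore $\ell\ge 1$ by normality, witnessing $v\in V^*$.

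The main obstacle is the invariance verification in the uniqueness step: it requires confirming that the vertices deleted by a single operation always form a connected $G$-free subtree of $\mathsf{T}$---a branch hanging off a single surviving vertex in the pruning case, or a path between two surviving twig-endpoints in the smoothing case---and that this subtree is invisible both to the branching criterion defining $V^*$ and to the path condition defining the edges of $\mathsf{T}^*$. Once this is in hand, the identification $\Gamma_{\mathsf{normal}}=(\mathsf{T}^*,\ell^*)$ is immediate.
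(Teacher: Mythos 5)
Your proposal is correct, and it is worth noting that the paper does not actually prove this lemma: it declares the statement obvious and points to Algorithm~\ref{alg:normal}, which implicitly fixes one particular order of operations. Your termination and normality arguments are the routine part (and are right, using $\val_\Gamma(v)=\val_{\mathsf{T}}(v)+2\ell(v)$ and the standing assumption that $\Gamma$ is not a path, which largeness preserves since only $\ell=0$ vertices are ever deleted). The genuine content you add is the confluence argument: rather than checking that any two single-step reductions can be completed to a common reduct (a diamond-lemma case analysis), you exhibit the terminal object intrinsically as $(\mathsf{T}^*,\ell^*)$ built from $G=\{v\mid\ell(v)>0\}$ and the branching vertices separating $G$ into three or more pieces, and show this triple is a one-step invariant. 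I checked the two verifications you flag: a pruned branch is a dead-end subtree disjoint from $G$ and from every geodesic between surviving vertices, and a smoothed twig interior is a $G$-free segment whose contraction changes neither the $G$-meeting components of $\mathsf{T}\setminus\{w\}$ for surviving $w$ nor the interior $V^*$-vertices of geodesics between surviving $V^*$-vertices; both go through. The closing identification $\mathsf{T}_{\mathsf{normal}}=\mathsf{T}^*(\Gamma_{\mathsf{normal}})$ also needs the trivial converse inclusion (no extra vertices or edges in $\mathsf{T}^*$ of a normal terminal graph, which holds because every vertex of $\mathsf{T}_{\mathsf{normal}}$ lies in $V^*$, so no geodesic of length $\ge 2$ is $V^*$-interior-free). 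In short: a complete and somewhat more robust argument than the paper's appeal to obviousness, since it yields order-independence of the normal form without tracking any particular reduction sequence.
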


\begin{theorem}\label{theorem:2-free factor}
Let $\Gamma\in\grapegraph^{\mathsf{large}}$ and $\Gamma_{\mathsf{normal}}\in\grapegraph^{\mathsf{large}}_{\mathsf{normal}}$ the normal representative of $\Gamma$.
Then $\mathbb{B}_2(\Gamma)$ is quasi-isometric to $\mathbb{B}_2(\Gamma')$.   
\end{theorem}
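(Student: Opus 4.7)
The plan is to decompose the normalization process of Lemma~\ref{lemma:elimination} into its atomic steps—individual smoothings and individual prunings—track the effect on $\mathbb{B}_2$ at each step, and then invoke the Papasoglu--Whyte theorem at the end. I would separate the two operations because they behave very differently: smoothing will give an outright isomorphism of braid groups, while pruning will only give the desired identification up to a free factor.

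First I would handle smoothing. Because $\mathbb{B}_2(\Gamma)=\pi_1(UC_2(\Gamma))$ is defined at the level of topological configuration spaces and is determined by the homeomorphism type of the underlying graph (as noted in Section~\ref{sec:graph configuration spaces}), and because smoothing a twig is a homeomorphism of graphs (one is replacing a subdivided arc by a single arc in the underlying space), smoothing preserves $\mathbb{B}_2(\Gamma)$ up to isomorphism. Nothing more is needed.

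Next I would handle pruning an empty twig $t=[v_0,\dots,v_n]$ where $v_n$ is a leaf of $\Gamma$. The idea is to remove the leaves one at a time: set $\Gamma^{(0)}=\Gamma$ and $\Gamma^{(k)}=\Gamma^{(k-1)}\setminus\{v_{n-k+1}\}$. At each stage $v_{n-k+1}$ is a leaf of $\Gamma^{(k-1)}$, so Lemma~\ref{NoHangingEdge} applies with free-factor rank $\val_{\Gamma^{(k)}}(v_{n-k})-1$. For $k<n$ the endpoint is an interior twig vertex of valency $2$ in $\Gamma$, so after removing the adjacent leaf its valency drops to $1$ and the free factor is $\mathbb{F}_0=1$; these steps are strict isomorphisms. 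At $k=n$ the endpoint is $v_0$, and the free factor is $\mathbb{F}_{\val_\Gamma(v_0)-2}$, which is some $\mathbb{F}_M$ with $M\ge 0$. Composing, pruning a single empty twig gives
\[
\mathbb{B}_2(\Gamma)\cong \mathbb{B}_2(\Gamma')\ast\mathbb{F}_M
\]
for some $M\ge 0$. Iterating both operations along the finite sequence produced by Lemma~\ref{lemma:elimination} yields
\[
\mathbb{B}_2(\Gamma)\cong \mathbb{B}_2(\Gamma_{\mathsf{normal}})\ast\mathbb{F}_N
\]
for some $N\ge 0$.

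The finishing move uses Proposition~\ref{Prop:FreeFactor}, which applies because $\Gamma_{\mathsf{normal}}\in\grapegraph^{\mathsf{large}}_{\mathsf{normal}}$: it provides an isomorphism $\mathbb{B}_2(\Gamma_{\mathsf{normal}})\cong \pi_1(UP_2(\Gamma_{\mathsf{normal}}))\ast\mathbb{F}_{n'}$ with $n'\ge 2$. Thus both $\mathbb{B}_2(\Gamma)$ and $\mathbb{B}_2(\Gamma_{\mathsf{normal}})$ are free products of $\pi_1(UP_2(\Gamma_{\mathsf{normal}}))$ with a non-trivial free group, of ranks $n'+N\ge 2$ and $n'\ge 2$ respectively. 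Applying Theorem~\ref{PW} (Papasoglu--Whyte) to the trivial quasi-isometry $\pi_1(UP_2(\Gamma_{\mathsf{normal}}))\to\pi_1(UP_2(\Gamma_{\mathsf{normal}}))$ immediately gives the desired quasi-isometry $\mathbb{B}_2(\Gamma)\sim_{\mathrm{QI}}\mathbb{B}_2(\Gamma_{\mathsf{normal}})$. I do not anticipate any real obstacle: the only point requiring mild care is verifying that every intermediate graph in the leaf-by-leaf pruning still satisfies the hypotheses of Lemma~\ref{NoHangingEdge}, but this is transparent from the bivalent structure of a twig.
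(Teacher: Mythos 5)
Your proof is correct and follows essentially the same route as the paper's: smoothing is an isomorphism by homeomorphism-invariance, pruning an empty twig contributes a free factor via Lemma~\ref{NoHangingEdge}, and the conclusion follows from Proposition~\ref{Prop:FreeFactor} together with Theorem~\ref{PW}. The only cosmetic difference is that you prune a twig vertex-by-vertex with explicit rank bookkeeping (obtaining $M\ge 0$) where the paper first smooths the twig to a single edge and applies Lemma~\ref{NoHangingEdge} once (obtaining $k_1\ge 1$); either way the final Papasoglu--Whyte step only needs the factor $\mathbb{F}_{n'}$ with $n'\ge 2$ supplied by Proposition~\ref{Prop:FreeFactor}, so your weaker bound is harmless.
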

\begin{proof}
Since smoothing twigs does not change the isomorphism type of the $2$-braid groups over bunches of grapes, without loss of generality, we assume that $\Gamma$ has no twigs of length $\ge 2$.
Let $\Gamma_1$ be the bunch of grapes obtained from $\Gamma$ by only pruning an empty twig $t$.
Then by Lemma~\ref{NoHangingEdge}, $\mathbb{B}_2(\Gamma)\cong\mathbb{B}_2(\Gamma_{1})*\mathbb{F}_{k_{1}}$ for some $k_{1}\geq 1$.

Suppose that $\Gamma'$ is a large and normal bunch of grapes which is obtained from $\Gamma$ by repeatedly pruning empty twigs and smoothing twigs. Namely, based on Algorithm~\ref{alg:normal}, we have bunches of grapes $\Gamma=\Gamma_0,\dots, \Gamma_n=\Gamma'$ such that $\Gamma_{i+1}$ is obtained from $\Gamma_i$ by pruning an empty twig in $\Gamma_i$ and then smoothing twigs in $\Gamma_i$. Then
$\mathbb{B}_2(\Gamma)\cong \mathbb{B}_2(\Gamma')* \mathbb{F}_{k_1}*\dots*\mathbb{F}_{k_n}$.
Since Proposition~\ref{Prop:FreeFactor} implies that $\mathbb{B}_2(\Gamma')\cong\pi_1(UP_2(\Gamma'))*\mathbb{F}_{n}$ for some $n\geq 2$, we conclude that $\mathbb{B}_2(\Gamma)$ and $\mathbb{B}_2(\Gamma')$ are quasi-isometric by Theorem~\ref{PW}.
\end{proof}

In the upcoming two subsections, we use the notation $\mathcal{W}(-)$, a subset of the vertex set $\mathcal{V}(-)$, which is defined for objects related to $\Gamma=(\mathsf{T},\ell)\in\grapegraph^{\mathsf{large}}_{\mathsf{normal}}$ as follows:
First, we let $\mathcal{W}(\Gamma)=\mathcal{V}(\mathsf{T})$ and $\mathcal{W}(UD_2(\Gamma))=\{u\itimes v\mid u,v\in\mathcal{V}(\mathsf{T})\text{ and }u\neq v\}\subset UD_2(\Gamma)$. Then $\mathcal{W}(\bar{UD_2(\Gamma)})\subset \bar{UD_2(\Gamma)}$ is the preimage of $\mathcal{W}(UD_2(\Gamma))$ under the covering map.
For any subcomplex $X$ of $UD_2(\Gamma)$ ($\bar{UD_2(\Gamma)}$, resp.), $\mathcal{W}(X)$ is defined as $\mathcal{W}(UD_2(\Gamma))\cap X$ ($\mathcal{W}(\bar{UD_2(\Gamma)})\cap X$, resp.).
If $X$ is the domain of a standard product subcomplex $K$, then $\mathcal{W}(X)$ is the preimage of $\mathcal{W}(K)$ under the standard product structure.

In this section, whatever $X$ and $Y$ are chosen, there is a constant $C$ such that $X$ and $Y$ are contained in the $C$-neighborhoods of $\mathcal{W}(X)$ and $\mathcal{W}(Y)$, respectively. Thus, by a relative $L$-quasi-isometry $\phi:X\to Y$, we mean that $\phi$ is a relative $L$-quasi-isometry $(X,\mathcal{W}(X))\to (Y,\mathcal{W}(Y))$ in Definition~\ref{Def:RelQI}, which is an $(L,2C)$-quasi-isometry. 

\subsection{Picking over-grown grapes}\label{Subsection:PickingGrapes}
This subsection is devoted to the operation ``picking over-grown grapes'' on $\Gamma=(\mathsf{T},\ell)\in\grapegraph^{\mathsf{large}}_{\mathsf{normal}}$ and show that this operation is a quasi-isometry invariant for the $2$-braid groups, following the theme of Theorem~\ref{theorem:2-free factor}.

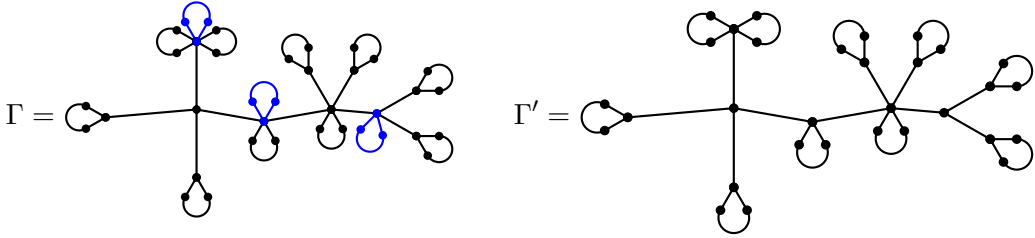
\begin{figure}[ht]
\[
\Gamma=\begin{tikzpicture}[baseline=-.5ex, scale=0.6]
\draw[thick,fill] (0,0) circle (2pt) node (A) {} -- ++(5:2) circle(2pt) node(B) {} -- +(0,1.5) circle (2pt) node (C) {}+(0,0) -- +(0,-1.5) circle (2pt) node(D) {} +(0,0) -- ++(-10:1.5) circle(2pt) node(E) {} -- ++(10:1.5) circle(2pt) node(F) {} -- +(120:1) circle(2pt) node(G) {} +(0,0) -- +(60:1) circle (2pt) node(H) {} +(0,0) -- ++(-5:1) circle (2pt) node(I) {} -- +(30:1) circle(2pt) node(J) {} +(0,0) -- +(-30:1) circle (2pt) node(K) {};
\grape[180]{A};
\grape[0]{C}; \grape[180]{C};
\grape[-90]{D};
\grape[-90]{F};
\grape[120]{G};
\grape[60]{H};
\grape[-30]{K};
\grape[30]{J};
\grape[-90]{E};
\begin{scope}[blue]
\grape[90]{C};
\grape[90]{E};
\grape[-105]{I};
\end{scope}
\end{tikzpicture}
\qquad
\Gamma'=\begin{tikzpicture}[baseline=-.5ex, scale=0.7]
\draw[thick,fill] (0,0) circle (2pt) node (A) {} -- ++(5:2) circle(2pt) node(B) {} -- +(0,1.5) circle (2pt) node (C) {}+(0,0) -- +(0,-1.5) circle (2pt) node(D) {} +(0,0) -- ++(-10:1.5) circle(2pt) node(E) {} -- ++(10:1.5) circle(2pt) node(F) {} -- +(120:1) circle(2pt) node(G) {} +(0,0) -- +(60:1) circle (2pt) node(H) {} +(0,0) -- ++(-5:1) circle (2pt) node(I) {} -- +(30:1) circle(2pt) node(J) {} +(0,0) -- +(-30:1) circle (2pt) node(K) {};
\grape[180]{A};
\grape[0]{C}; \grape[180]{C};
\grape[-90]{D};
\grape[-90]{F};
\grape[120]{G};
\grape[60]{H};
\grape[-30]{K};
\grape[30]{J};
\grape[-90]{E};
\end{tikzpicture}
\]
\caption{A bunch of grapes $\Gamma'$ obtained from $\Gamma$ by picking over-grown grapes.}
\label{figure:over-grown grapes}
\end{figure}

\begin{definition}[Picking over-grown grapes]\label{definition:picking}
Let $\Gamma=(\mathsf{T},\loops)\in \grapegraph^{\mathsf{large}}_{\mathsf{normal}}$. 

We say that a grape $\mathsf{C}$ at $v\in\mathsf{T}$ is \emph{over-grown} if $\loops(v) + \val_{\mathsf{T}}(v)\ge 4$.
We say that a bunch of grapes $\Gamma'$ is obtained from $\Gamma$ by \emph{picking an over-grown grape} $\mathsf{C}$ at $v$ if $\mathsf{C}$ is an over-grown grape and $\Gamma'=\Gamma\setminus(\mathsf{C}\setminus\{v\})$, or $\Gamma'=(\mathsf{T}',\loops')$, where
\begin{align*}
\mathsf{T}' &= \mathsf{T}
\quad\text{and}\quad
\loops'(w) =\begin{cases}
\loops(w) & w\neq v;\\
\loops(v)-1 & w=v.
\end{cases}
\end{align*}
\end{definition}

Note that in the above definition, $\Gamma'$ again belongs to $\grapegraph_{\mathsf{normal}}^{\mathsf{large}}$. 

\begin{theorem}\label{theorem:loop reducing}
Suppose that a bunch of grapes $\Gamma'$ is obtained from $\Gamma\in\grapegraph^{\mathsf{large}}_{\mathsf{normal}}$ by picking an over-grown grape. 
Then $\mathbb{B}_2(\Gamma)$ and $\mathbb{B}_2(\Gamma')$ are quasi-isometric.
\end{theorem}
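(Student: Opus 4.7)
The plan is to execute the inductive gluing strategy outlined in the introduction. By Proposition~\ref{Prop:FreeFactor} we have $\mathbb{B}_2(\Gamma) \cong \pi_1(UP_2(\Gamma)) \ast \mathbb{F}_n$, and analogously for $\Gamma'$, so by Theorem~\ref{PW} together with one-endedness of $\pi_1(UP_2(\Gamma))$ (via Proposition~\ref{Prop:connectedsimplicial} and Lemma~\ref{lem:One-endedness}), it suffices to produce a quasi-isometry $\bar\phi: \bar{UP_2(\Gamma)} \to \bar{UP_2(\Gamma')}$.

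Since $\Gamma$ and $\Gamma'$ share the stem $\mathsf{T}$, Lemma~\ref{Lem:IsometricRI} supplies a canonical (semi-)isomorphism $\Phi: \cRI(UP_2(\Gamma)) \to \cRI(UP_2(\Gamma'))$, which lifts via Proposition~\ref{Prop:IsomorphicI} and Theorem~\ref{theorem:structureofI} to a (semi-)isomorphism $\bar\Phi: \cI(\bar{UP_2(\Gamma)}) \to \cI(\bar{UP_2(\Gamma')})$ between the simply connected developments. The over-grown hypothesis is tailored so that $\bar\Phi$ actually preserves the QI-type of every label listed in Remark~\ref{Ex:PathofLengthN}: that QI-type depends only on whether the endpoints of the supporting path substem are leaves $w$ of $\mathsf{T}$ with $\ell(w) = 1$, and the condition $\ell(v) + \val_\mathsf{T}(v) \ge 4$ forces $\ell(v) \ge 3$ whenever $v$ is a leaf of $\mathsf{T}$ (hence $\ell'(v) \ge 2$), while the rank-$1$-leaf data is obviously unchanged when $v$ is an interior vertex.

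Next, for each vertex $\bfv \in \cRI(UP_2(\Gamma))$ associated to a twig $t \subset \mathsf{T}$, the domain $\mathbf{M}_\bfv = \Gamma_{\mathring{t},1} \times \Gamma_{\mathring{t},2}$ has universal cover $\bar{\mathbf{M}_\bfv} = \bar{\Gamma_{\mathring{t},1}} \times \bar{\Gamma_{\mathring{t},2}}$, and similarly $\bar{\mathbf{M}'_{\Phi(\bfv)}} = \bar{\Gamma'_{\mathring{t},1}} \times \bar{\Gamma'_{\mathring{t},2}}$. Each factor $\bar{\Gamma'_{\mathring{t},i}}$ either coincides with $\bar{\Gamma_{\mathring{t},i}}$ (when $v$ lies in the opposite $\mathring{t}$-component) or is obtained from it by removing exactly one free summand at $v$; in either case, the free-product-of-graphs machinery of Section~\ref{section:PW}, notably Lemmas~\ref{Lem:PWLem1} and~\ref{Lemma:PWConclusion}, yields a relative quasi-isometry $\bar\phi_{\bfv,i}: \bar{\Gamma_{\mathring{t},i}} \to \bar{\Gamma'_{\mathring{t},i}}$ realising a prescribed combinatorial bijection on preimages of stem vertices, with quasi-isometry constants depending only on $\Gamma$ and $\Gamma'$. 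Lemma~\ref{Lem:RelQIProduct} then assembles these factor-wise maps into a relative quasi-isometry $\bar\phi_\bfv: \bar{\mathbf{M}_\bfv} \to \bar{\mathbf{M}'_{\Phi(\bfv)}}$.

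Finally, since $\cI(\bar{UP_2(\Gamma)})$ is a simply connected flag complex and $\mathfrak{g}_{\cI}(\cI(\bar{UP_2(\Gamma)})) = \bar{UP_2(\Gamma)}$, we exhaust it by an increasing sequence of finite connected subcomplexes and inductively glue the lifts of the maps $\bar\phi_\bfv$ along their intersections using Lemma~\ref{Lem:RelQI}. On each edge $\{\bfv, \bfw\}$ of $\cI(\bar{UP_2(\Gamma)})$, the intersection of the corresponding maximal product subcomplexes is again a standard product subcomplex by Lemma~\ref{lem:Intofmaximal}, and the per-factor data on both sides must be chosen so that their restrictions to this intersection agree on preimages of stem vertices. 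The main obstacle is maintaining this coherence across the entire (infinite) development: simple connectivity of $\cI(\bar{UP_2(\Gamma)})$ reduces coherence to a check on each $2$-simplex, and on such a simplex Lemma~\ref{lem:Intofmaximal} places the three pairwise intersections inside a common standard product subcomplex, so the verification becomes a finite, bounded matching of factor-wise quasi-isometries. Iterating along the exhaustion produces the desired relative quasi-isometry $\bar\phi$, and invoking Corollary~\ref{cor:QIbetweenGBGs} completes the proof.
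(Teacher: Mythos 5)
Your overall strategy is the one the paper follows: reduce to $\pi_1(UP_2(-))$ via Proposition~\ref{Prop:FreeFactor}, Theorem~\ref{PW} and Corollary~\ref{cor:QIbetweenGBGs}, transport the isomorphism of Lemma~\ref{Lem:IsometricRI} to the developments, build factor-wise relative quasi-isometries between the domains of corresponding maximal product subcomplexes with the Papasoglu--Whyte machinery, and glue along the simply connected flag complex $\cI(\bar{UP_2(\Gamma)})$. Your leaf analysis showing that $\Phi$ is a genuine isomorphism (not merely a semi-isomorphism) and that corresponding factors have quasi-isometric universal covers is also correct.

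There is, however, a genuine gap at the coherence step, which is exactly where the technical work of the paper lives. You write that the per-factor data ``must be chosen'' so that restrictions to intersections agree, and that simple connectivity reduces this to a finite, bounded matching on each $2$-simplex. This does not work as stated: you need one simultaneous choice of a relative quasi-isometry for each of the infinitely many maximal product subcomplexes of $\bar{UP_2(\Gamma)}$ such that \emph{every} pairwise intersection (a copy of $\bar{\Gamma_{\mathsf{T}_x}}\times\bar{\Gamma_{\mathsf{T}_y}}$ sitting inside both domains) receives literally the same map from both sides --- Lemma~\ref{Lem:RelQI} demands actual agreement on the overlap, not agreement up to bounded error. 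Verifying each $2$-simplex separately does not produce such a global family; simple connectivity only removes the monodromy obstruction once a locally consistent family already exists, which is how the paper actually uses it (to prove path-independence of the composed maps $\bar\phi_\gamma$ in the proof of Theorem~\ref{theorem:loop reducing}). The missing ingredient is Proposition~\ref{prop:coherent}: for each extended $\hat v$-component one constructs a single relative quasi-isometry $\bar\varphi_i:\bar{\Gamma}\to\bar{\Gamma'}$ whose restriction to \emph{any} copy of $\bar{\Gamma_{\mathsf{T}'}}$ (for $\mathsf{T}'$ containing $\mathsf{T}^c_i$ or contained in some $\mathsf{T}_j$) is one fixed map $\bar\varphi_{i,\mathsf{T}'}$ independent of the chosen copy. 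Every factor map is then a restriction of one of these few global maps, which is what makes agreement on intersections (Lemma~\ref{ReducingLoops} and the universal property recorded in Figure~\ref{Figure:the intersection of two maps}) automatic rather than something to be arranged afterwards. Without constructing such a coherent family, the inductive gluing cannot start.
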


\begin{Ex}
Let $\Gamma_1$ and $\Gamma_2$ be large and normal bunches of grapes over $\mathsf{S}_3$ as depicted in Figure \ref{RemovingOneLoop}, respectively. 
Since $\bar{UP_2(\Gamma_2)}$ contains maximal product subcomplexes quasi-isometric to $\mathbb{F}_2\times\mathbb{F}_2$ but $\bar{UP_2(\Gamma_1)}$ does not, there is no isomorphism between $\cI(\bar{UP_2(\Gamma_1)})$ and $\cI(\bar{UP_2(\Gamma_2)})$ and therefore $\mathbb{B}_2(\Gamma_1)$ and $\mathbb{B}_2(\Gamma_2)$ are not quasi-isometric by Theorem~\ref{theorem:IsobetInt} and Corollary~\ref{cor:QIbetweenGBGs}.
However, $|\cI(\bar{UP_2(\Gamma_1)})|$ and $|\cI(\bar{UP_2(\Gamma_2)})|$ are both locally (countably) infinite trees of infinite diameter and thus isometric.
\end{Ex}

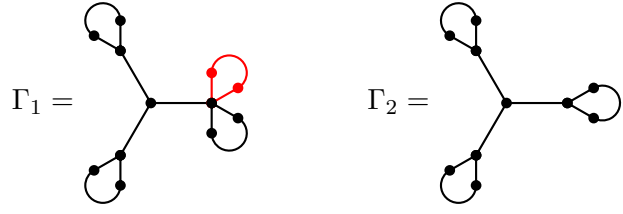
\begin{figure}[ht]
\[
\Gamma_1=\begin{tikzpicture}[baseline=-.5ex, scale=0.8]
\draw[thick,fill] (0,0) circle (2pt) node (A) {} -- ++(0:1) circle(2pt) node(B) {} 
(0,0) -- +(120:1) circle (2pt) node (C) {} 
(0,0) -- +(240:1) circle (2pt) node(D) {};
\begin{scope}[red]
\grape[60]{B};
\end{scope}
\grape[-60]{B};
\grape[120]{C};
\grape[240]{D};
\end{tikzpicture}
\qquad\qquad
\Gamma_2=\begin{tikzpicture}[baseline=-.5ex, scale=0.8]
\draw[thick,fill] (0,0) circle (2pt) node (A) {} -- ++(0:1) circle(2pt) node(B) {} 
(0,0) -- +(120:1) circle (2pt) node (C) {} 
(0,0) -- +(240:1) circle (2pt) node(D) {};
\grape[0]{B};
\grape[120]{C};
\grape[240]{D};
\end{tikzpicture}
\]
\caption{Picking non-over-grown grapes \emph{changes} the quasi-isometry type.}
\label{RemovingOneLoop}
\end{figure}

The process of picking or attaching grapes does not change the stem of the initial bunch of grapes. Moreover, changing the order of picking or attaching grapes at two different vertices does not effect on the resulting graph. Therefore the following lemma is obvious and instead of the proof, we provide Algorithm~\ref{alg:minimal rich} in Appendix~\ref{appendix:algorithms}.

\begin{lemma}\label{lemma:richrepresentative}
Let $\Gamma=(\mathsf{T},\ell)\in\grapegraph^{\mathsf{large}}_{\mathsf{normal}}$.
Then there exists a uniquely determined large and rich bunch of grapes $\Gamma_{\mathsf{rich}}=(\mathsf{T},\loops')\in\grapegraph_{\mathsf{rich}}^{\mathsf{large}}$ such that 
(1) it has no over-grown grapes and (2) it is obtained from $\Gamma$ by repeatedly pruning over-grown grapes or attaching grapes in finite time, i.e., \[\loops'(v) = \begin{cases}
\min\{\loops(v),2\} & \val_\mathsf{T}(v)=1;\\
1 & \val_\mathsf{T}(v)\ge2.
\end{cases}\]

The bunch of grapes $\Gamma_{\mathsf{rich}}$ will be called the \emph{rich representative} of $\Gamma$.
\end{lemma}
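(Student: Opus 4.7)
The strategy is to verify that the explicit formula for $\ell'$ defines a bunch of grapes in $\grapegraph^{\mathsf{large}}_{\mathsf{rich}}$, to exhibit a two-phase finite sequence of the allowed operations that realizes $\Gamma \leadsto \Gamma_{\mathsf{rich}}$, and to deduce uniqueness from the fact that both picking and attaching act on a single vertex at a time, so the formula pins down $\ell'(v)$ vertex by vertex.

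First I would check that $\Gamma_{\mathsf{rich}} = (\mathsf{T}, \ell')$ lies in $\grapegraph^{\mathsf{large}}_{\mathsf{rich}}$: richness is built into the formula since $\ell'(v) \geq 1$ everywhere; largeness follows because $\Gamma \in \grapegraph^{\mathsf{large}}$ forces $|\mathcal{V}(\mathsf{T})| \geq 2$, and $\ell'$ is positive on every vertex; normality follows because $\val_{\Gamma_{\mathsf{rich}}}(v) = \val_\mathsf{T}(v) + 2\ell'(v) \geq 3$ in each case. Property~(1) is then a short case analysis: the only grapes in $\Gamma_{\mathsf{rich}}$ that satisfy $\ell'(v) + \val_\mathsf{T}(v) \geq 4$ sit at vertices with $\val_\mathsf{T}(v) \geq 3$, where $\ell'(v) = 1$, and picking such a grape would drop $\ell'(v)$ to $0$ and break richness, so no picking operation is available within $\grapegraph^{\mathsf{large}}_{\mathsf{rich}}$.

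Second, I would describe the finite algorithm realizing (2), consisting of two phases. In the prune phase, for each vertex $v$ with $\ell(v) > \ell'(v)$, remove grapes one at a time until the count reaches $\ell'(v)$; at every intermediate multiplicity $m$ with $m > \ell'(v)$, the inequality $m + \val_\mathsf{T}(v) \geq 4$ holds by a direct arithmetic check ($m \geq 3$ at leaves, $m \geq 2$ together with $\val_\mathsf{T}(v) \geq 2$ at non-leaves), so each removal is legitimately the picking of an over-grown grape in the sense of Definition~\ref{definition:picking}. In the attach phase, for each vertex $v$ with $\ell(v) = 0$ — which the normality of $\Gamma$ forces to satisfy $\val_\mathsf{T}(v) \geq 3$, hence $\ell'(v) = 1$ — attach a single grape. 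Finiteness of each phase is clear, and since both operations modify only the loop count at the chosen vertex, the order in which vertices are processed is immaterial, yielding the same end state.

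Finally, for uniqueness, both operations preserve the stem $\mathsf{T}$ and alter only a single loop count per step, so any element of $\grapegraph^{\mathsf{large}}_{\mathsf{rich}}$ reached from $\Gamma$ by a finite sequence of the allowed operations and satisfying~(1) has stem $\mathsf{T}$ and some loop-count function that is pinned down vertex by vertex exactly by the formula above; this yields $\Gamma_{\mathsf{rich}}$ up to isometry. I do not expect any genuinely hard step; the only mildly technical point is the arithmetic verification that each intermediate removal in the prune phase is indeed the picking of an over-grown grape, which is precisely what Algorithm~\ref{alg:minimal rich} packages into an explicit procedure.
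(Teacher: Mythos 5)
Your proposal is correct and follows essentially the same route as the paper, which declares the lemma obvious (both operations act on a single vertex at a time and commute across distinct vertices) and delegates the construction to Algorithm~\ref{alg:minimal rich}; your two-phase procedure and the arithmetic check that every intermediate removal is genuinely the picking of an over-grown grape are exactly what that algorithm packages. One point where you are in fact more careful than the statement itself: at a stem vertex $v$ with $\val_{\mathsf{T}}(v)\ge 3$ the formula gives $\ell'(v)=1$ and $1+\val_{\mathsf{T}}(v)\ge 4$, so $\Gamma_{\mathsf{rich}}$ literally does contain over-grown grapes in the sense of Definition~\ref{definition:picking}; your reading of condition~(1) as ``no picking is available that keeps the result in $\grapegraph^{\mathsf{large}}_{\mathsf{rich}}$'' is the interpretation under which the lemma and its uniqueness claim actually hold, and it is consistent with the paper's use of the name \emph{minimal rich representative}.
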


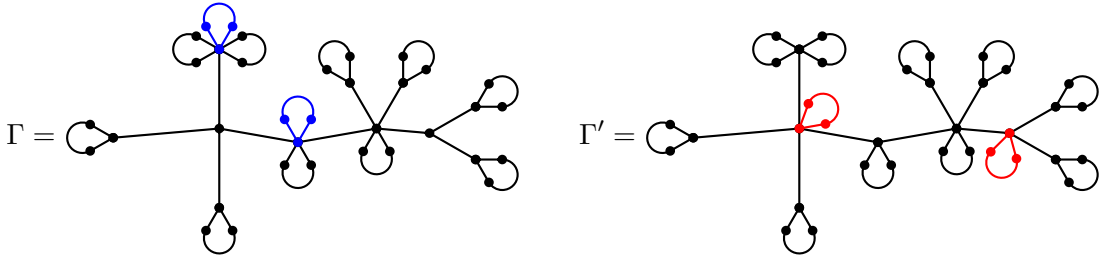
\begin{figure}[ht]
\[\Gamma=\begin{tikzpicture}[baseline=-.5ex, scale=0.7]
\draw[thick,fill] (0,0) circle (2pt) node (A) {} -- ++(5:2) circle(2pt) node(B) {} -- +(0,1.5) circle (2pt) node (C) {}+(0,0) -- +(0,-1.5) circle (2pt) node(D) {} +(0,0) -- ++(-10:1.5) circle(2pt) node(E) {} -- ++(10:1.5) circle(2pt) node(F) {} -- +(120:1) circle(2pt) node(G) {} +(0,0) -- +(60:1) circle (2pt) node(H) {} +(0,0) -- ++(-5:1) circle (2pt) node(I) {} -- +(30:1) circle(2pt) node(J) {} +(0,0) -- +(-30:1) circle (2pt) node(K) {};
\grape[180]{A};
\grape[0]{C}; \grape[180]{C};
\grape[-90]{D};
\grape[-90]{F};
\grape[120]{G};
\grape[60]{H};
\grape[-30]{K};
\grape[30]{J};
\grape[-90]{E};
\begin{scope}[blue]
\grape[90]{C};
\grape[90]{E};
\end{scope}
\end{tikzpicture}
\qquad
\Gamma'=\begin{tikzpicture}[baseline=-.5ex, scale=0.7]
\draw[thick,fill] (0,0) circle (2pt) node (A) {} -- ++(5:2) circle(2pt) node(B) {} -- +(0,1.5) circle (2pt) node (C) {}+(0,0) -- +(0,-1.5) circle (2pt) node(D) {} +(0,0) -- ++(-10:1.5) circle(2pt) node(E) {} -- ++(10:1.5) circle(2pt) node(F) {} -- +(120:1) circle(2pt) node(G) {} +(0,0) -- +(60:1) circle (2pt) node(H) {} +(0,0) -- ++(-5:1) circle (2pt) node(I) {} -- +(30:1) circle(2pt) node(J) {} +(0,0) -- +(-30:1) circle (2pt) node(K) {};
\grape[180]{A};
\grape[0]{C}; \grape[180]{C};
\grape[-90]{D};
\grape[-90]{F};
\grape[120]{G};
\grape[60]{H};
\grape[-30]{K};
\grape[30]{J};
\grape[-90]{E};
\begin{scope}[red]
\grape[40]{B};
\grape[-105]{I};
\end{scope}
\end{tikzpicture}\]
\caption{The rich representative $\Gamma'$ of $\Gamma$.}
\label{figure:minimal rich representative}
\end{figure}

\begin{corollary}\label{cor:minimalrich}
Let $\Gamma\in\grapegraph_{\mathsf{normal}}^{\mathsf{large}}$ and $\Gamma'\in\grapegraph_{\mathsf{rich}}^{\mathsf{large}}$ the rich representative of $\Gamma$.
Then $\mathbb{B}_2(\Gamma)$ and $\mathbb{B}_2(\Gamma')$ are quasi-isometric. 
\end{corollary}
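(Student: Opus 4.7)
The plan is to chain together the quasi-isometries supplied by Theorem~\ref{theorem:loop reducing} along the trajectory of graph operations guaranteed by Lemma~\ref{lemma:richrepresentative}. Explicitly, I would write
\[
\Gamma=\Gamma_0,\ \Gamma_1,\ \dots,\ \Gamma_n=\Gamma'=\Gamma_{\mathsf{rich}}
\]
for a sequence in which each $\Gamma_{i+1}$ is obtained from $\Gamma_i$ either by picking an over-grown grape or by attaching a grape at some vertex. The goal then reduces to showing that each consecutive pair $\mathbb{B}_2(\Gamma_i), \mathbb{B}_2(\Gamma_{i+1})$ is quasi-isometric, after which transitivity of quasi-isometry closes the argument.

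Before invoking Theorem~\ref{theorem:loop reducing}, I would verify that the whole sequence stays in $\grapegraph^{\mathsf{large}}_{\mathsf{normal}}$. The stem $\mathsf{T}$ is preserved by both operations, so I only need to track $\val_{\Gamma_i}(v)=\val_{\mathsf{T}}(v)+2\ell_i(v)$. The target values $\ell'(v)$ from Lemma~\ref{lemma:richrepresentative} are $\min\{\ell(v),2\}$ at leaves and $1$ at interior vertices, and since normality of $\Gamma$ forces $\ell(v)\ge1$ whenever $\val_{\mathsf{T}}(v)\le2$, every intermediate $\ell_i(v)$ remains at least $1$ at such vertices and hence $\val_{\Gamma_i}(v)\ge3$ throughout. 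Moreover, largeness is never violated: at least two vertices of $\mathsf{T}$ carry a grape in $\Gamma$ and in $\Gamma'$, and a routine choice of the order of operations (as in Algorithm~\ref{alg:minimal rich}) keeps this property at every intermediate step.

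Now for the two types of steps. If $\Gamma_{i+1}$ is obtained from $\Gamma_i$ by picking an over-grown grape, Theorem~\ref{theorem:loop reducing} applied to $\Gamma_i\in\grapegraph^{\mathsf{large}}_{\mathsf{normal}}$ immediately gives $\mathbb{B}_2(\Gamma_i)$ quasi-isometric to $\mathbb{B}_2(\Gamma_{i+1})$. If instead $\Gamma_{i+1}$ is obtained from $\Gamma_i$ by attaching a grape at a vertex $v$, I would observe that by normality $v$ must satisfy $\val_{\mathsf{T}}(v)\ge3$ (otherwise $\ell_i(v)\ge 1$ already and no attachment is needed). Consequently, in $\Gamma_{i+1}$ the newly attached grape satisfies $\ell_{i+1}(v)+\val_{\mathsf{T}}(v)\ge 1+3=4$, so it is over-grown in $\Gamma_{i+1}$. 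Applying Theorem~\ref{theorem:loop reducing} to $\Gamma_{i+1}$ (with the newly attached over-grown grape picked) yields $\mathbb{B}_2(\Gamma_{i+1})$ quasi-isometric to $\mathbb{B}_2(\Gamma_i)$, exactly as needed.

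I don't expect any real obstacle here: the only non-cosmetic point is the verification that an attachment operation produces a grape which, viewed from the larger graph, is over-grown, so that the hypothesis of Theorem~\ref{theorem:loop reducing} is met when the operation is read backwards. This follows cleanly from the normality condition $\val_{\Gamma}(v)\ge 3$, which forces $\val_{\mathsf{T}}(v)\ge 3$ at every vertex where $\ell(v)=0$. With that observation in place, the induction is a direct concatenation, and the conclusion $\mathbb{B}_2(\Gamma)\sim_{\mathrm{QI}}\mathbb{B}_2(\Gamma')$ follows.
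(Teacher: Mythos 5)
Your proposal is correct and follows essentially the same route as the paper: the paper's proof also takes the sequence $\Gamma=\Gamma_0,\dots,\Gamma_n=\Gamma'$ from Algorithm~\ref{alg:minimal rich}, notes that each attached grape becomes over-grown, and applies Theorem~\ref{theorem:loop reducing} at each step. Your extra verifications (that the intermediate graphs stay in $\grapegraph^{\mathsf{large}}_{\mathsf{normal}}$, and that normality forces $\val_{\mathsf{T}}(v)\ge3$ at any vertex needing an attachment, so the new grape is over-grown) are exactly the points the paper leaves implicit, and they check out.
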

\begin{proof}
Based on Algorithm~\ref{alg:minimal rich}, we have bunches of grapes $\Gamma=\Gamma_0,\dots, \Gamma_n=\Gamma'$ such that $\Gamma_{i+1}$ is obtained from $\Gamma_i$ by picking one grape or attaching one grape (which then becomes an over-grown grape). 
By Theorem~\ref{theorem:loop reducing}, $\mathbb{B}_2(\Gamma_i)$ and $\mathbb{B}_2(\Gamma_{i+1})$ are quasi-isometric, and therefore, the corollary holds.
\end{proof}

Throughout the remaining of this subsection, as we will prove Theorem~\ref{theorem:loop reducing}, we assume that $\Gamma'=(\mathsf{T},\ell')\in\grapegraph_{\mathsf{normal}}^{\mathsf{large}}$ is obtained from $\Gamma=(\mathsf{T},\ell)\in\grapegraph_{\mathsf{normal}}^{\mathsf{large}}$ by picking an over-grown grape at $v\in\mathsf{T}$. 
Additionally, we use the following notations.
\begin{itemize}
\item $\hat v$-components and extended $\hat v$-components of $\mathsf{T}$ are denoted by
$\mathsf{T}_i$ and $\mathsf{T}^+_i$, respectively, such that $\mathsf{T}_i\subset\mathsf{T}^+_i$ for $i=1,\dots,n$. 
\item The complement of $\mathsf{T}_i$ in $\mathsf{T}$ is denoted by $\mathsf{T}^c_i$. 
\item For each $1\le i\le n$, the twig of $\Gamma$ (or $\Gamma'$) joining $\mathsf{T}_i$ and $v$ is denoted by $t_i=[v,v_i]$.
\item The set of twigs of $\mathsf{T}$ is denoted by $\{t_a\mid a\in I\}$.
\end{itemize}

\begin{lemma}\label{lem:Twigs of intersection}
If $K={\Gamma_{\mathsf{T}_x}}\itimes{\Gamma_{\mathsf{T}_y}}$ is the intersection of maximal product subcomplexes of $UP_2(\Gamma)$, then one of the following holds:
\begin{enumerate}
\item\label{item:T_1} Both $\mathsf{T}_x$ and $\mathsf{T}_y$ are contained in $\mathsf{T}_i$ for some $1\le i\le n$.
\item\label{item:T_2} One of $\mathsf{T}_x$ and $\mathsf{T}_y$ is contained in $\mathsf{T}_i$ and the other is contained in $\mathsf{T}_j$ for distinct $1\le i\neq j\le n$.
\item\label{item:T_3} One of $\mathsf{T}_x$ and $\mathsf{T}_y$ is contained in $\mathsf{T}_i$ and the other contains $\mathsf{T}^c_i$ for some $1\le i\le n$.
\end{enumerate}
\end{lemma}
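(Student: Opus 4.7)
The plan is to reduce the statement to a combinatorial analysis of path substems of $\mathsf{T}$ via Lemma~\ref{Lem:RIconnected} and then carry out a case split according to the position of the distinguished vertex $v$ (the vertex at which the over-grown grape is picked). First, since $K$ is the non-empty intersection of a collection of maximal product subcomplexes of $UP_2(\Gamma)$, Lemma~\ref{Lem:RIconnected} produces a minimal path substem $\mathsf{P}=[v_0,\dots,v_n]\subset\mathsf{T}$ such that $\{\mathsf{T}_x,\mathsf{T}_y\}$ coincides with the set $\{\mathsf{T}_{\mathring{\mathsf{P}},1},\mathsf{T}_{\mathring{\mathsf{P}},2}\}$ of $\mathring{\mathsf{P}}$-components of $\mathsf{T}$. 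Hence, after relabeling, $\mathsf{T}_x$ and $\mathsf{T}_y$ are precisely the components of $\mathsf{T}_{\mathring{\mathsf{P}}}$ containing $v_0$ and $v_n$, respectively.

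Next, since removing a single vertex from the tree $\mathsf{T}$ separates it into the $\hat v$-components $\mathsf{T}_1,\dots,\mathsf{T}_n$, every connected subgraph of $\mathsf{T}$ not containing $v$ is contained in some $\mathsf{T}_i$. With this observation I would perform the case analysis on the location of $v$:
\begin{itemize}
\item If $v$ is an interior vertex $v_j$ ($0<j<n$) of $\mathsf{P}$, then $v\notin\mathsf{T}_{\mathring{\mathsf{P}}}$ and hence neither $\mathsf{T}_x$ nor $\mathsf{T}_y$ contains $v$; the components of $v_{j-1}$ and $v_{j+1}$ in $\mathsf{T}$ lie in distinct $\hat v$-components, which forces case~(\ref{item:T_2}).
\item If $v$ is an endpoint, say $v=v_0$, then $v$ survives in $\mathsf{T}_{\mathring{\mathsf{P}}}$, and since all of $v_1,\dots,v_{n-1}$ lie in the single $\hat v$-component $\mathsf{T}_i$ containing the twig-neighbor $v_1=v_i$, removing them disconnects $\mathsf{T}_i$ from $v$. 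The component of $v$ then equals $\mathsf{T}^c_i$ while the component of $v_n$ sits inside $\mathsf{T}_i$, giving case~(\ref{item:T_3}).
\item If $v\notin\mathsf{P}$, let $v_k$ be the vertex of $\mathsf{P}$ nearest $v$ in $\mathsf{T}$. Using normality and the tree structure one shows that $v_0,\dots,v_n$ all lie in a single $\hat v$-component $\mathsf{T}_i$. If $0<k<n$, then the path from $v$ to either endpoint of $\mathsf{P}$ must cross the removed vertex $v_k$, so neither component of $\mathsf{T}_{\mathring{\mathsf{P}}}$ containing an endpoint meets $v$, and both are contained in $\mathsf{T}_i$; this is case~(\ref{item:T_1}). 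If instead $k\in\{0,n\}$ (say $k=0$), then $v$ is joined to $v_0$ through a path avoiding $v_1,\dots,v_{n-1}$, so $v$ lies in the component of $v_0$, which consequently contains $\{v\}\cup\bigcup_{j\ne i}\mathsf{T}_j=\mathsf{T}^c_i$, while the component of $v_n$ is contained in $\mathsf{T}_i$; this is case~(\ref{item:T_3}).
\end{itemize}

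Each of the four subcases thus produces exactly one of the three alternatives listed in the lemma, covering all possibilities. The only mild obstacle is bookkeeping in the last subcase---verifying that removing the interior vertices of $\mathsf{P}$ detaches the right portion of $\mathsf{T}_i$ from $v$ while leaving all other $\hat v$-components fully attached to $v$; this relies on the fact that in a tree, $\mathsf{T}_i$ is joined to $v$ through the unique twig edge $t_i=[v,v_i]$, so any removal of vertices strictly inside $\mathsf{T}_i$ separates a subtree of $\mathsf{T}_i$ from $v$ without affecting the other $\mathsf{T}_j$.
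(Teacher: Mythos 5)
Your proposal is correct and follows essentially the same route as the paper: reduce to the path substem $\mathsf{P}$ via Lemma~\ref{Lem:RIconnected}, then case-split on the position of $v$ relative to $\mathsf{P}$ (your four cases correspond exactly to the paper's classification by the nearest-point projection $v'$ of $v$ onto $\mathsf{P}$ — interior with $v'=v$ gives~(\ref{item:T_2}), interior with $v'\neq v$ gives~(\ref{item:T_1}), and $v'$ an endpoint gives~(\ref{item:T_3})). Your write-up just spells out the tree-combinatorial verifications that the paper leaves implicit.
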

\begin{proof}
If $v$ is a leaf of $\mathsf{T}$, then Item~\eqref{item:T_1} or Item~\eqref{item:T_3} automatically holds.
Otherwise, let $\mathsf{P}$ be the path substem corresponding to $K$ obtained by Lemma~\ref{Lem:RIconnected} and $v'$ the nearest point projection of $v$ onto $\mathsf{P}$. When $v'$ is a leaf of $\mathsf{P}$ (or $v$ belongs to a $\mathring{\mathsf{P}}$-component of $\mathsf{T}$), Item~\eqref{item:T_3} holds. 
When $v'$ is not a leaf of $\mathsf{P}$, Item~\eqref{item:T_2} holds if $v'=v$, and Item~\eqref{item:T_1} holds otherwise.
\end{proof}

Let $\Phi:\cRI(UP_2(\Gamma))\to\cRI(UP_2(\Gamma'))$ be an isomorphism obtained by Lemma~\ref{Lem:IsometricRI}.
We first construct a quasi-isometry between the universal covers of maximal product subcomplexes corresponding to a vertex of $\cRI(UP_2(\Gamma))$ and its image vertex in $\cRI(UP_2(\Gamma'))$ under $\Phi$.
And then we obtain a quasi-isometry $\bar{UP_2(\Gamma)}\to\bar{UP_2(\Gamma')}$ by patching together the previously obtained quasi-isometries while constructing an isomorphism $\bar\Phi:\cI(\bar{UP_2(\Gamma)})\to\cI(\bar{UP_2(\Gamma')})$ such that $\rho'\circ\bar\Phi=\Phi\circ\rho$.
Lastly, Corollary~\ref{cor:QIbetweenGBGs} will complete the proof.

We start with a (relative) quasi-isometry between $\bar{\Gamma}$ and $\bar{\Gamma'}$ which will be an ingredient of the construction of quasi-isometries between maximal product subcomplexes. Recall that $\bar{(\Gamma,V)}=(\bar{\Gamma},\bar{V})$ where $\bar{V}=p^{-1}(V)$. 

\begin{proposition}\label{prop:coherent}
For each $1\le i\le n$, there are a constant $L_i>0$ and a relative $L_i$-quasi-isometry $\bar\varphi_i:\bar{\Gamma}\to \bar{\Gamma'}$ which induces a one-to-one correspondence between copies of $\bar{\Gamma_{\mathsf{T}'}}$ and copies of $\bar{\Gamma'_{\mathsf{T}'}}$ for any substem $\mathsf{T}'\subset\mathsf{T}$ containing $\mathsf{T}^c_i$ or contained in $\mathsf{T}_j$ with any $1\le j\le n$ such that the restriction of $\bar\varphi_i$ to a copy of $\bar{\Gamma_{\mathsf{T}'}}$ is the same as a relative $L_i$-quasi-isometry $\bar\varphi_{i,\mathsf{T}'}:\bar{\Gamma_{\mathsf{T}'}}\to\bar{\Gamma'_{\mathsf{T}'}}$.
\end{proposition}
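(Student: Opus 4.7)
The plan hinges on the wedge decomposition at $v$:
\[
\Gamma = \Gamma^+_i \cup_v \Gamma^c_i, \qquad \Gamma' = \Gamma^+_i \cup_v {\Gamma'}^c_i,
\]
where $\Gamma^+_i = {\Gamma'}^+_i$ because the picking happens at $v \notin \mathsf{T}_i$, and where $\Gamma^c_i$ and ${\Gamma'}^c_i$ are two bunches of grapes on the common substem $\mathsf{T}^c_i$ differing only by a single grape at $v$. The map $\bar\varphi_i$ will be the identity on every copy of $\bar{\Gamma^+_i}$ and a carefully chosen relative quasi-isometry $\bar\varphi^c_i$ on every copy of $\bar{\Gamma^c_i}$, assembled via the free-product machinery of Section~\ref{section:PW}.

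First I would build $\bar\varphi^c_i : \bar{\Gamma^c_i} \to \bar{{\Gamma'}^c_i}$. Using Remark~\ref{Remark:FreeProduct}, $\Gamma^c_i$ is relatively quasi-isometric to the free product $*\mathcal{B}$ with
\[
\mathcal{B} = \{(\Gamma^+_j, v) : j \ne i\} \cup \{(\mathsf{C}_3^k, v) : 1 \le k \le \ell(v)\},
\]
and similarly ${\Gamma'}^c_i \sim_{QI} *\mathcal{B}'$ where $\mathcal{B}'$ has $\ell(v) - 1$ grape factors. The over-grown hypothesis $\val_\mathsf{T}(v) + \ell(v) \ge 4$ translates to $|\mathcal{B}'| \ge 2$, so the free products are well-defined. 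Since the universal cover of $(\mathsf{C}_3, v)$ and that of $(\mathsf{R}_1, r_1)$ are isometric as pairs (both being $\mathbb{R}$ with integer-spaced basepoints), Lemma~\ref{Lem:PWLem5}---with the removed grape playing the role of $(\mathsf{R}_1, r_1)$---produces a relative $L$-quasi-isometry $\bar{*\mathcal{B}} \to \bar{*\mathcal{B}'}$ whose restriction to each copy of $\bar{(\Gamma^+_j, v)}$ ($j \ne i$) and to each surviving copy of $\bar{(\mathsf{C}_3^k, v)}$ is a relative isometry (in fact, the canonical identification). Composing with the quasi-isometries from Remark~\ref{Remark:FreeProduct} gives $\bar\varphi^c_i$. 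I would then assemble $\bar\varphi_i$ from $\mathrm{id}_{\bar{\Gamma^+_i}}$ and $\bar\varphi^c_i$ by applying Lemma~\ref{Lemma1.1inPW} to the wedge $\Gamma = \Gamma^+_i \cup_v \Gamma^c_i$ with model set $\mathcal{P} = \{\mathrm{id}_{\bar{\Gamma^+_i}}, \bar\varphi^c_i\}$.

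The substem condition then follows by construction. For $\mathsf{T}' \subseteq \mathsf{T}_j$ (any $j$), every copy of $\bar{\Gamma_{\mathsf{T}'}}$ sits inside a copy of $\bar{\Gamma^+_j}$ on which $\bar\varphi_i$ acts as the identity, and $\Gamma_{\mathsf{T}'} = \Gamma'_{\mathsf{T}'}$, so $\bar\varphi_{i, \mathsf{T}'}$ is the identity. For $\mathsf{T}' \supseteq \mathsf{T}^c_i$, every copy of $\bar{\Gamma_{\mathsf{T}'}}$ decomposes at a lift $\tilde v$ of $v$ as $\bar{\Gamma^+_{i, \mathsf{T}' \cap \mathsf{T}^+_i}} \cup_{\tilde v} \bar{\Gamma^c_i}$ and is mapped by $\bar\varphi_i$ to $\bar{\Gamma^+_{i, \mathsf{T}' \cap \mathsf{T}^+_i}} \cup_{\tilde v} \bar{{\Gamma'}^c_i} = \bar{\Gamma'_{\mathsf{T}'}}$ via the glued map (identity on the first factor, $\bar\varphi^c_i$ on the second), giving a copy-independent $\bar\varphi_{i, \mathsf{T}'}$.

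The main obstacle will be the bookkeeping required to verify that $\bar\varphi_i$ restricts to literally the \emph{same} map $\bar\varphi_{i, \mathsf{T}'}$ on every copy of $\bar{\Gamma_{\mathsf{T}'}}$ (not merely conjugate to it), and that the constants produced by the many applications of Lemma~\ref{Lem:PWLem5} and by the final gluing via Lemma~\ref{Lemma1.1inPW} can be uniformly bounded to yield a single $L_i$. This calls for pinning down canonical elevations of the pieces in both $\bar\Gamma$ and $\bar{\Gamma'}$ and carefully tracking the bijection between their copies throughout the construction.
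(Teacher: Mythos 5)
Your proposal follows essentially the same route as the paper: decompose $\Gamma$ at $v$ into $\Gamma_{\mathsf{T}_i}$ and $\Gamma_{\mathsf{T}_i^c}$, take the identity on the unchanged side, produce the quasi-isometry on the changed side from the Papasoglu--Whyte free-product lemmas of Section~\ref{section:PW} (you cite Lemma~\ref{Lem:PWLem5}, the paper uses Lemma~\ref{Lem:PWLem1} and the bouquet case, which amounts to the same machinery), and glue with Lemma~\ref{Lemma1.1inPW} while tracking base points to get copy-independence. The one substantive point you gloss over is that the free-product quasi-isometry on $\bar{\Gamma_{\mathsf{T}_i^c}}$ need not carry $p^{-1}(v)$ to $p^{-1}(v)$ (especially when $\ell'(v)=0$, so $v$ survives only as the central vertex of the connecting star); the paper repairs this explicitly by redefining the map on $p^{-1}(v)$ using the unique adjacent vertex in $p^{-1}(v_2)$, and your argument needs the same correction before the gluing step is legitimate.
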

\begin{proof}
We will only construct $\bar\varphi_1$ since the others can be constructed similarly.
Note that up to smoothing the connecting $2$-stars 
\[\Gamma\cong(\Gamma_{\mathsf{T}_1^c},v)*(\Gamma_{\mathsf{T}_1},v_1)\quad\text{and}\quad\Gamma'\cong(\Gamma'_{\mathsf{T}_1^c},v)*(\Gamma'_{\mathsf{T}_1},v_1).\]

\begin{claim}\label{Claim:InductiononN}
There is a relative $L_1$-quasi-isometry
$\bar\psi_{1}^c:\bar{\Gamma_{\mathsf{T}_1^c}}\to
\bar{\Gamma'_{\mathsf{T}_1^c}}$ such that for $k\ge 2$, $\bar\psi_{1}^c$ induces a one-to-one correspondence between copies of $\bar{\Gamma_{\mathsf{T}_k}}$ and copies of $\bar{\Gamma'_{\mathsf{T}_k}}$ such that the restriction to each copy is the identity map.
\end{claim}

Suppose that the claim holds. Since $\Gamma_{\mathsf{T}_1}$ and $\Gamma'_{\mathsf{T}_1}$ are identical, there is a relative isometry $\bar\psi_1:\bar{\Gamma_{\mathsf{T}_1}}\to \bar{\Gamma'_{\mathsf{T}_1}}$.
By Lemma~\ref{Lemma1.1inPW}, then there is a relative $L_1$-quasi-isometry $\bar\varphi_1:\bar{\Gamma}\to\bar{\Gamma'}$ obtained from $\bar\psi^c_{1}$ and $\bar\psi_1$, which is constructed by inductively patching copies of quasi-isometries $\bar\psi^c_{1}$ and $\bar\psi_1$. 
During the construction, by fixing a base point of $\bar{\Gamma_{\mathsf{T}^c_1}}$ and tracking the base points across all the copies of $\bar{\Gamma_{\mathsf{T}^c_1}}$ in $\bar{\Gamma}$, we may assume that $\bar\varphi_1$ has the following property: For any two copies $\mathsf{A}$ and $\mathsf{B}$ of $\bar{\Gamma_{\mathsf{T}^c_1}}$, there exists an isometry $f$ of $\bar{\Gamma}$ mapping the base point in $\mathsf{A}$ to the base point in $\mathsf{B}$ such that $p_{\Gamma'}\circ\bar\varphi_1=p_{\Gamma'}\circ\bar\varphi_1\circ f$; this is possible since $\bar\psi_1$ is the identity.

Let $\mathsf{T}'\subset\mathsf{T}$ be a substem of $\mathsf{T}$. If $\mathsf{T}'\subset \mathsf{T}_j$ for any $j$, then the restriction of $\bar\varphi_1$ to a copy of $\bar{\Gamma}_{\mathsf{T}'}$ is the restriction of either $\bar\psi^c_{1}$ or $\bar\psi_1$ to a copy of $\bar{\Gamma}_{\mathsf{T}'}$. Hence we are done by Claim~\ref{Claim:InductiononN} or the construction of $\bar\psi_1$.
If $\mathsf{T}'$ contains $\mathsf{T}^c_1$, up to smoothing the connecting $2$-stars, 
\[\Gamma_{\mathsf{T}'}\cong(\Gamma_{\mathsf{T}'\cap\mathsf{T}_1},v_1)*(\Gamma_{\mathsf{T}^c_1},v)\quad\text{and}\quad \Gamma'_{\mathsf{T}'}\cong (\Gamma'_{\mathsf{T}'\cap\mathsf{T}_1},v_1)*(\Gamma'_{\mathsf{T}^c_1},v).\]
Since $\bar\psi_1$ is the identity map, the construction of $\bar\varphi_1$ implies that the restriction of $\bar\varphi_1$ to a copy of $\bar{\Gamma_{\mathsf{T}'}}$ is a relative $L_1$-quasi-isometry 
$\bar\varphi_{1,\mathsf{T}'}:\bar{\Gamma_{\mathsf{T}'}}\to\bar{\Gamma'_{\mathsf{T}'}}$.
By the aforementioned property of $\bar\varphi_1$ above, $\bar\varphi_{1,\mathsf{T}'}$ does not depend on the choice of a copy of $\bar{\Gamma_{\mathsf{T}'}}$ in $\bar{\Gamma}$, and therefore we are done.

Finally, it suffices to prove Claim~\ref{Claim:InductiononN}.
As there is an over-grown grape at $v$, $\loops(v)$ is assumed to be $\ge 3$ if $n=1$, $\ge 2$ if $n=2$, or $\ge 1$ if $n\ge 3$.

\noindent{\textbf{When $n=1$.}} We have $\Gamma_{\mathsf{T}_1^c}=\Gamma_v$ and $\Gamma'_{\mathsf{T}_1^c}=\Gamma'_v$, each of which is a bouquet of at least two cycles. By the result in \cite{P95}, there is a relative quasi-isometry $\bar\psi_v:\bar{(\Gamma_v,v)}\to\bar{(\Gamma_v',v)}$. Since there is no proper substem in $\Gamma_v$, the claim holds vacuously if $\bar\psi^c_1=\bar\psi_v$.

\noindent{\textbf{When $n\ge 2$.}} If $n=2$, then we have up to smoothing the $2$-stars
\[\Gamma_{\mathsf{T}_1^c}\cong(\Gamma_{\mathsf{T}_2},v_2)*(\Gamma_v,v)\quad\text{and}\quad \Gamma'_{\mathsf{T}_1^c}\cong(\Gamma'_{\mathsf{T}_2},v_2)*(\Gamma'_v,v).\]
If $n\ge 3$, then both $\Gamma_{\mathsf{T}_1^c}$ and $\Gamma'_{\mathsf{T}_1^c}$ are obtained from $*_{j\neq 1}(\Gamma_{\mathsf{T}_j},v_j)$ by attaching 3-cycles at the central vertex of the connecting star.
In either case, by Lemma~\ref{Lem:PWLem1}, we have a relative $L'_1$-quasi-isometry
\[\bar\phi:\bar{(\Gamma_{\mathsf{T}^c_1},\{v_j\mid j\neq 1\})}\to\bar{((\Gamma'_{\mathsf{T}^c_1},\{v_j\mid j\neq 1\})}\]
for some $L'_1>0$ such that for each $j\neq 1$, $\bar\phi$ induces a one-to-one correspondence between copies of $\bar{\Gamma_{\mathsf{T}_j}}$ and copies of $\bar{\Gamma'_{\mathsf{T}_j}}$ and the restriction of $\bar\phi$ to each copy of $\bar{\Gamma_{\mathsf{T}_j}}$ is the identity map.
However, $\bar\phi$ is not yet the desired relative quasi-isometry since $p_{\Gamma}^{-1}(v)$ may not be bijectively mapped to $p_{\Gamma'}^{-1}(v)$.

On the other hand, in either $\bar{\Gamma_{\mathsf{T}^c_1}}$ or $\bar{\Gamma'_{\mathsf{T}^c_1}}$, there exists a one-to-one correspondence between the preimage of $v$ and the preimage of $v_2$ as follows:
for each vertex $\bar v$ in the preimage of $v$, there is a unique vertex $\bar{v}_2$ in the preimage of $v_2$ such that $\bar{v}_2$ is adjacent to $\bar v$, and vice-versa.
Consider a map $\bar\psi_{1}^c:\bar{\Gamma_{\mathsf{T}^c_1}}\to\bar{\Gamma'_{\mathsf{T}^c_1}}$ which is obtained from $\bar\phi$ by replacing the image of such $\bar v$ under $\bar\phi$ by a vertex in $p_{\Gamma'}^{-1}(v)\subset\bar{\Gamma'_{\mathsf{T}_1^c}}$ which is adjacent to $\bar\phi(\bar{v}_2)$. 
Since the restriction of $\bar\phi$ to $p^{-1}(v_2)$ is bilipschitz onto $p_{\Gamma'}^{-1}(v_2)$, $\bar\psi_{1}^c$ is the desired relative $L_1$-quasi-isometry, which completes the proof.
\end{proof}

In the above proposition, the following is implicit: If $\mathsf{T}''\subset\mathsf{T}'$ is a substem of $\mathsf{T}$ containing $\mathsf{T}^c_i$ or contained in $\mathsf{T}_j$, then the restriction of $\bar\varphi_{i,\mathsf{T}'}$ to any copy of $\bar{\Gamma_{\mathsf{T}''}}$ is $\bar\varphi_{i,\mathsf{T}''}$.

Consider the sets of the universal covers of labels of vertices of $\cI(\bar{UP_2(\Gamma)})$ and $\cI(\bar{UP_2(\Gamma')})$
\[\{\bar{\mathbf{M}}_a=\bar{\Gamma_{\mathring{t_a},1}}\times\bar{\Gamma_{\mathring{t_a},2}}\mid a\in I\}\quad\text{and}\quad\{\bar{\mathbf{M}}'_a=\bar{\Gamma'_{\mathring{t}_a,1}}\times\bar{\Gamma'_{\mathring{t}_a,2}}\mid a\in I\},\]
respectively. According to Proposition~\ref{prop:coherent} and Lemma~\ref{Lem:RelQIProduct}, we have the set of relative $L^2$-quasi-isometries 
\begin{equation}\label{setofQI}
\{\bar\phi_a:(\bar{\mathbf{M}}_a,\mathcal{W}(\bar{\mathbf{M}}_a))\to(\bar{\mathbf{M}}'_{a},\mathcal{W}(\bar{\mathbf{M}}'_a))\mid a\in I\}
\end{equation}
for $L=\max\{L_1,\dots,L_n\}$ such that if $t_a$ is contained in $\mathsf{T}^+_i$, then $\bar\phi_{a}$ is the product of two relative quasi-isometries each of which is the restriction of $\bar\varphi_i$ defined in Proposition~\ref{prop:coherent} to a copy corresponding to a coordinate of $\bar{\mathbf{M}}_{a}$.

Suppose that there are two distinct maximal product subcomplexes of $\bar{UP_2(\Gamma)}$ with standard product structures 
$\bar\iota_{\barbfv}:\bar{\mathbf{M}}_{a}\to \bar{UP_2(\Gamma)}$ and $
\bar\iota_{\barbfu}:\bar{\mathbf{M}}_{b}\to\bar{UP_2(\Gamma)}$
such that $\bar\iota_{\barbfv}(\bar{\mathbf{M}}_{a})\cap\bar\iota_{\barbfu}(\bar{\mathbf{M}}_{b})=\bar K$ is a standard product subcomplex with the domain $\bar{\mathbf{K}}=\bar{\Gamma_{\mathsf{T}_x}}\times\bar{\Gamma_{\mathsf{T}_y}}$. 
If $t_a$ and $t_b$ are contained in $\mathsf{T}^+_i$, then $\mathsf{T}_x$ and $\mathsf{T}_y$ satisfy Item~\eqref{item:T_1} or Item~\eqref{item:T_3} of Lemma~\ref{lem:Twigs of intersection}. 
If $t_a$ and $t_b$ are contained in $\mathsf{T}^+_i$ and $\mathsf{T}^+_j$, respectively, for distinct $1\le i,j\le n$, then $\mathsf{T}_x$ and $\mathsf{T}_y$ satisfy Item~\eqref{item:T_2} of Lemma~\ref{lem:Twigs of intersection}.
By Proposition~\ref{prop:coherent} and the construction of $\bar\phi_a$ and $\bar\phi_b$, we thus have the commutative diagram represented by solid arrows in Figure~\ref{Figure:the intersection of two maps}, where $\bar{\mathbf{K}}'=\bar{\Gamma'_{\mathsf{T}_x}}\times\bar{\Gamma'_{\mathsf{T}_y}}$ and the four diagonal arrows are canonical (relative) isometric embeddings.
By the paragraph below Proposition~\ref{prop:coherent}, moreover, there is a universal property as follows: Let $\bar{\mathbf{K}}_0=\bar{\Gamma_{\mathsf{T}_p}}\times\bar{\Gamma_{\mathsf{T}_q}}$ be the domain of a standard product subcomplex contained in $\bar{K}$ such that $\mathsf{T}_p$ and $\mathsf{T}_q$ satisfy Lemma~\ref{lem:Twigs of intersection}.
Then there are canonical (relative) isometric embeddings of $\bar{\mathbf{K}}_0$ and $\bar{\mathbf{K}}'_0=\bar{\Gamma'_{\mathsf{T}_p}}\times\bar{\Gamma'_{\mathsf{T}_q}}$ such that the whole diagram in Figure~\ref{Figure:the intersection of two maps} commutes.
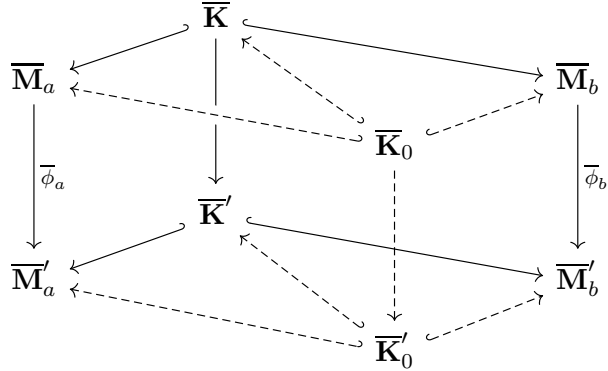
\begin{figure}[ht]
\[
\begin{tikzcd}[column sep=4em, row sep=0.5em]
& \bar{\mathbf{K}}\arrow[hookrightarrow,rrd," "]\arrow[ddd," "]\arrow[dashed,hookleftarrow,rdd," "] && \\
\bar{\mathbf{M}}_a\arrow[ddd,"\bar\phi_{a}"]\arrow[hookleftarrow,ru," "]\arrow[dashed,hookleftarrow,crossing over,rrd," "] &&& \bar{\mathbf{M}}_b\arrow[ddd,"\bar\phi_{b}"]\\
&  &\bar{\mathbf{K}}_0\arrow[dashed,hookrightarrow,ru," "]\arrow[dashed,ddd," "] & \\
& \bar{\mathbf{K}}'\arrow[hookrightarrow,rrd," "]\arrow[dashed,hookleftarrow,rdd," "] && \\
\bar{\mathbf{M}}'_a\arrow[hookleftarrow,ru," "]\arrow[dashed,hookleftarrow,rrd," "]&&& \bar{\mathbf{M}}'_b\\
&  &\bar{\mathbf{K}}'_0\arrow[dashed,hookrightarrow,ru," "] &
\end{tikzcd}
\]
\caption{A universal property of the embedding of the intersection of maximal product subcomplexes.}
\label{Figure:the intersection of two maps}
\end{figure}

For a pair of a vertex $\bar\bfv\in\cI(\bar{UP_2(\Gamma)})$ and a vertex $\bar\bfv'\in\cI(\bar{UP_2(\Gamma')})$ with $\Phi\circ\rho(\bar\bfv)=\rho'(\bar\bfv')$, we have a relative $L^2$-quasi-isometry $\bar\phi_{\barbfv}:\bar M_{\barbfv}\to\bar M'_{\barbfv'}$ such that the diagram in Figure~\ref{Figure:mapbetweenMPS} commutes, where $\bar\iota_{\barbfv}:\bar{\mathbf{M}}_{a}\to\bar{UP_2(\Gamma)}$ ($\bar\iota'_{\barbfv'}:\bar{\mathbf{M}}'_{a}\to\bar{UP_2(\Gamma')}$, resp.) is the standard product structure of $\bar{M}_{\barbfv}$ ($\bar{M}'_{\barbfv'}$, resp.).
If we fix one such relative $L^2$-quasi-isometry, then the following lemma says that we can canonically find a one-to-one correspondence between $\cI^{(0)}(\bar{UP_2(\Gamma)})$ and $\cI^{(0)}(\bar{UP_2(\Gamma')})$, and a relative quasi-isometry between maximal product subcomplexes corresponding to vertices in each correspondence.
\begin{figure}[ht]
\[
\begin{tikzcd}[column sep=4pc, row sep=2pc]
\bar{M}_{\barbfv} (\subset \bar{UP_2(\Gamma)})\arrow[r,"\bar\phi_{\barbfv}"] & \bar{M}'_{\barbfv'} (\subset \bar{UP_2(\Gamma')})\\ \bar{\mathbf{M}}_{a} \arrow[u,"\bar\iota_{\barbfv} "] \arrow[r,"\bar\phi_{a}"] & \bar{\mathbf{M}}'_{a}\arrow[u,"\bar\iota'_{\barbfv'} "]
\end{tikzcd}
\]
\caption{A relative quasi-isometry between maximal product subcomplexes is induced from an element in Set~\eqref{setofQI}.}
\label{Figure:mapbetweenMPS}
\end{figure}

\begin{lemma}\label{ReducingLoops}
Let $\barbfv$, $\barbfv'$ and $\bar\phi_{\barbfv}$ be given as above.
For any vertex $\barbfu\in\mathcal{N}_1(\barbfv)\setminus\{\barbfv\}$, then there are a uniquely determined vertex $\barbfu'\in\cI(\bar{UP_2(\Gamma')})$ adjacent to $\barbfv'$ and a relative $L^2$-quasi-isometry $\bar\phi_{\barbfu}:\bar{M}_{\barbfu}\to \bar{M}_{\barbfu'}$ such that the restrictions of $\bar\phi_{\barbfu}$ and $\bar\phi_{\barbfv}$ to $\bar{M}_{\barbfu}\cap \bar{M}_{\bar\bfv}$ are identical relative $L^2$-quasi-isometries.

Moreover, if vertices $\barbfu_0,\barbfu_1,\dots,\barbfu_k$ in $\mathcal{N}_1(\barbfv)$ form a simplex in $\cI(\bar{UP_2(\Gamma)})$, 
then the restrictions of $\bar\phi_{\barbfu_j}$ to $\bigcap_{j=0}^k\bar{M}_{\barbfu_j}$ for $0\le j\le k$ are identical relative $L^2$-quasi-isometries.
\end{lemma}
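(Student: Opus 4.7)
The plan is to use the universal property encoded in Figure~\ref{Figure:the intersection of two maps} together with the rigidity of the intersection complex structure (Lemma~\ref{lem:Intofmaximal} and Lemma~\ref{Lem:IsometricRI}) to propagate the fixed quasi-isometry $\bar\phi_{\barbfv}$ to adjacent vertices in a forced way.

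First I would define $\barbfu'$. Since $\barbfu$ is adjacent to $\barbfv$, the intersection $\bar M_{\barbfv}\cap \bar M_{\barbfu}$ is a standard product subcomplex $\bar K$ with domain $\bar{\mathbf{K}}=\bar{\Gamma_{\mathsf{T}_x}}\times\bar{\Gamma_{\mathsf{T}_y}}$, where $(\mathsf{T}_x,\mathsf{T}_y)$ falls into one of the three cases of Lemma~\ref{lem:Twigs of intersection}. Apply $\Phi$ to $\rho(\barbfu)$: since $\rho'(\barbfv')=\Phi(\rho(\barbfv))$, the image $\Phi(\rho(\barbfu))$ is adjacent to $\rho'(\barbfv')$ in $\cRI(UP_2(\Gamma'))$, and the label of the simplex $\{\rho'(\barbfv'),\Phi(\rho(\barbfu))\}$ is $\Gamma'_{\mathsf{T}_x}\itimes\Gamma'_{\mathsf{T}_y}$ (by Lemma~\ref{Lem:IsometricRI}, $\Phi$ preserves the substems appearing in labels). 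The restriction of $\bar\phi_{\barbfv}$ to $\bar K$ is a relative quasi-isometry onto a standard product subcomplex $\bar K'\subset \bar M'_{\barbfv'}$ with domain $\bar{\mathbf{K}}'=\bar{\Gamma'_{\mathsf{T}_x}}\times\bar{\Gamma'_{\mathsf{T}_y}}$. By Lemma~\ref{lem:Intofmaximal}, there is a unique maximal product subcomplex distinct from $\bar M'_{\barbfv'}$ containing $\bar K'$, which defines $\barbfu'$ uniquely, and $\rho'(\barbfu')=\Phi(\rho(\barbfu))$.

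Next I would construct $\bar\phi_{\barbfu}$. Let $\bar\iota_{\barbfu}:\bar{\mathbf{M}}_b\to \bar{UP_2(\Gamma)}$ and $\bar\iota'_{\barbfu'}:\bar{\mathbf{M}}'_b\to \bar{UP_2(\Gamma')}$ be standard product structures (where $b$ is the twig index corresponding to $\rho(\barbfu)$), and take $\bar\phi_b$ from the set \eqref{setofQI}. Set $\bar\phi_{\barbfu}:=\bar\iota'_{\barbfu'}\circ \bar\phi_b\circ \bar\iota_{\barbfu}^{-1}$. The fact that the restrictions of $\bar\phi_{\barbfv}$ and $\bar\phi_{\barbfu}$ agree on $\bar K$ is then exactly the commutativity of the universal-property diagram in Figure~\ref{Figure:the intersection of two maps}: both restrictions factor through the canonical embeddings $\bar{\mathbf{K}}\hookrightarrow\bar{\mathbf{M}}_a$ and $\bar{\mathbf{K}}\hookrightarrow\bar{\mathbf{M}}_b$, and the restrictions of $\bar\phi_a$ and $\bar\phi_b$ to $\bar{\mathbf{K}}$ coincide because both are assembled from the same maps $\bar\varphi_{i,\mathsf{T}'}$ supplied by Proposition~\ref{prop:coherent} (according as $\mathsf{T}_x,\mathsf{T}_y$ fall into case~\eqref{item:T_1},~\eqref{item:T_2} or~\eqref{item:T_3} of Lemma~\ref{lem:Twigs of intersection}). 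Uniqueness of $\bar\phi_{\barbfu}$ then follows from the fact that it is forced on $\bar K$ by $\bar\phi_{\barbfv}$ and on the complementary factor by the coherence clause in Proposition~\ref{prop:coherent} (the restriction of $\bar\varphi_i$ to every copy of $\bar{\Gamma_{\mathsf{T}'}}$ is the single map $\bar\varphi_{i,\mathsf{T}'}$).

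For the moreover clause, given a simplex $\{\barbfu_0,\dots,\barbfu_k\}$ in $\N_1(\barbfv)$, the intersection $\bar W=\bigcap_{j=0}^k \bar M_{\barbfu_j}$ is, by Lemma~\ref{lem:Intofmaximal}, a standard product subcomplex whose domain $\bar{\mathbf{K}}_0=\bar{\Gamma_{\mathsf{T}_p}}\times\bar{\Gamma_{\mathsf{T}_q}}$ embeds canonically into each $\bar{\mathbf{M}}_{a_j}$ (where $a_j$ is the twig index for $\barbfu_j$) and into each pairwise intersection; the pair $(\mathsf{T}_p,\mathsf{T}_q)$ again obeys Lemma~\ref{lem:Twigs of intersection}. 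Applying the extended universal property (the full diagram in Figure~\ref{Figure:the intersection of two maps}, with $\bar{\mathbf{K}}_0$ playing the role of the smaller domain), the restriction of every $\bar\phi_{a_j}$ to $\bar{\mathbf{K}}_0$ coincides with the single map $\bar\phi_{\bar{\mathbf{K}}_0}$ built product-wise from $\bar\varphi_{i,\mathsf{T}_p}\times\bar\varphi_{i',\mathsf{T}_q}$; consequently the restrictions of all $\bar\phi_{\barbfu_j}$ to $\bar W$ agree. The main obstacle I anticipate is a careful bookkeeping of the three cases in Lemma~\ref{lem:Twigs of intersection} to be sure that the Proposition~\ref{prop:coherent} maps $\bar\varphi_i$ chosen for the two factors are compatible — this is precisely what the coherence clause ``the restriction to a copy of $\bar{\Gamma_{\mathsf{T}'}}$ is the same as $\bar\varphi_{i,\mathsf{T}'}$'' is designed to ensure.
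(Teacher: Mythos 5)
Your proposal follows essentially the same route as the paper's proof: define $\barbfu'$ from the image of $\bar K=\bar{M}_{\barbfv}\cap\bar{M}_{\barbfu}$ under $\bar\phi_{\barbfv}$, set $\bar\phi_{\barbfu}=\bar\iota'_{\barbfu'}\circ\bar\phi_b\circ\bar\iota_{\barbfu}^{-1}$ as in Figure~\ref{Figure:mapbetweenMPS}, and derive the agreement on intersections (including the moreover clause via $\bar{\mathbf{K}}_0$) from the universal property of Figure~\ref{Figure:the intersection of two maps} together with the coherence clause of Proposition~\ref{prop:coherent}. One step is misstated: when the twigs corresponding to $\barbfv$ and $\barbfu$ are not adjacent in the stem, the minimal path substem contains intermediate twigs, so $\bar K'$ lies in more than two maximal product subcomplexes and there is \emph{not} a unique one distinct from $\bar{M}'_{\barbfv'}$ containing $\bar K'$; the correct characterization, which your argument already supplies, is that $\barbfu'$ is the unique $p$-lift of the maximal product subcomplex of $UP_2(\Gamma')$ corresponding to $\Phi(\rho(\barbfu))$ that contains $\bar K'$ (unique because distinct $p$-lifts are disjoint components of the preimage, Lemma~\ref{Lem:SPSinGBG}). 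With that correction the argument matches the paper's.
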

\begin{proof}
Let $\bar\iota_{\barbfu}:\bar{\mathbf{M}}_{b}\to\bar{UP_2(\Gamma)}$ be the standard product structure of $\bar{M}_{\barbfu}$.
By Lemma~\ref{lem:Intofmaximal}, $\bar{M}_{\barbfv}\cap\bar{M}_{\barbfu}$ is a standard product subcomplex $\bar K$ with domain $\bar{\mathbf{K}}=\bar{\Gamma_{\mathsf{T}_x}}\times\bar{\Gamma_{\mathsf{T}_y}}$. 
By the commutative diagrams in Figures~\ref{Figure:the intersection of two maps} and \ref{Figure:mapbetweenMPS}, we have the diagram represented by solid arrows in Figure~\ref{Figure:}.
It easily follows that there is a relative isometric embedding $\bar\iota':\bar{\mathbf{M}}'_b\to\bar{UP_2(\Gamma')}$ which is represented by the dashed arrow in Figure~\ref{Figure:} such that the whole diagram commutes.
\begin{figure}[ht]
\[\begin{tikzcd}[column sep=2em, row sep=1em]
& \bar{UP_2(\Gamma)}\arrow[rrr," "']   &&& \bar{UP_2(\Gamma')}  & \\
\bar{\mathbf{M}}_a\arrow[hookleftarrow,rdd," "']\arrow[ru,"\bar\iota_{a} "]\arrow[rrr,"\bar\phi_{a}" near end] &&& \bar{\mathbf{M}}'_a\arrow[hookleftarrow,rdd," "]\arrow[ru,"\bar\iota'_{\barbfv'} "]&&\\
&& \bar{\mathbf{M}}_b\arrow[luu,crossing over,"\bar\iota_{b}" near start]\arrow[ rrr,crossing over,"\bar\phi_b" near end] &&& \bar{\mathbf{M}}'_b\arrow[dashed,luu,"\bar\iota' "']\\
& \bar{\mathbf{K}}\arrow[hookrightarrow,ru," "]\arrow[rrr,"  "] &&& \bar{\mathbf{K}'}\arrow[hookrightarrow,ru," "] &
\end{tikzcd}\]
\caption{The commutative diagram describing how to glue two relative quasi-isometries sharing the domains.}
\label{Figure:}
\end{figure}

Let $\barbfu'\in\cI(\bar{UP_2(\Gamma')})$ be the vertex corresponding to $\bar\iota'(\bar{\mathbf{M}}'_b)$.
It follows that $\barbfu'$ is adjacent to $\barbfv'$ and $\rho'(\barbfu')=\Phi(\rho(\barbfu))$.
Then using the diagram in Figure~\ref{Figure:mapbetweenMPS} (after changing the indices properly), we have a relative $L^2$-quasi-isometry $\bar\phi_{\barbfu}:\bar{M}_{\barbfu}\to\bar{M}_{\barbfu'}$, and the diagram in Figure~\ref{Figure:} implies that $\bar\phi_{\barbfv}$ and $\bar\phi_{\barbfu}$ coincide on $\bar{M}_{\barbfv}\cap\bar{M}_{\barbfu}$.

Suppose that vertices $\barbfu_0,\barbfu_1,\dots,\barbfu_k$ in $\mathcal{N}_1(\barbfv)$ form a simplex in $\cI(\bar{UP_2(\Gamma)})$.
By the universal property described in the commutative diagram in Figure~\ref{Figure:the intersection of two maps}, $\bar\phi_{\barbfu_j}$'s coincide on $\bigcap_{j=0}^{k}\bar{M}_{\barbfu_j}$, which completes the proof.
\end{proof}

Consider the set of relative $L$-quasi-isometries which are quasi-inverses of elements in Set~\eqref{setofQI}. Then the same argument as in Lemma~\ref{ReducingLoops} after switching the role of $\Gamma$ and $\Gamma'$ holds and $\bar\phi_{\barbfu'_i}$ will be a quasi-inverse of $\bar\phi_{\barbfu}$.
By virtue of the uniqueness in Lemma~\ref{ReducingLoops}, thus we have an isomorphism $\mathcal{N}_1(\barbfv)\to\mathcal{N}_1(\barbfv')$ and a relative map 
\[\bar\phi_{1}:\mathfrak{g}_{\cI}(\mathcal{N}_{1}(\barbfv)))\to\mathfrak{g}_{\cI}(\mathcal{N}_{1}(\barbfv')).\]
By Lemma~\ref{Lem:RelQI} and the fact that $\mathcal{N}_1(\barbfv)$ is countably infinite, $\bar\phi_1$ turns out to be a relative $L^2$-quasi-isometry.

\begin{proof}[Proof of Theorem~\ref{theorem:loop reducing}]
For a path $\gamma=(\barbfv=\bar\bfv_0, \dots, \bar\bfv_n)$ in $\cI(\bar{UP_2(\Gamma)})$, by applying Lemma~\ref{ReducingLoops} along consecutive vertices in $\gamma$, we can construct a relative quasi-isometry $\bar\phi_\gamma:\bar M_{\barbfv_n}\to \bar M_{\barbfv_n'}$.
If there is a path $\delta$ in $\cI(\bar{UP_2(\Gamma)})$ from $\barbfv_0$ to $\barbfv_n$ which is homotopic to $\gamma$, since $\cI(\bar{UP_2(\Gamma)})$ is a simply connected flag complex by Theorem~\ref{theorem:structureofI}, we can find a sequence of two kinds of elementary homotopies from $\gamma$ to $\delta$ as follows:
\begin{enumerate}
\item one is to cancel a back-tracking,
\[
(\dots,\bar\bfv_i, \bar\bfv_{i+1}, \bar\bfv_i, \dots)\leftrightarrow(\dots,\bar\bfv_i,\dots)
\]
\item the other is to replace two edges by an edge
\[
(\dots,\bar\bfv_i, \bar\bfv_{i+1}, \bar\bfv_{i+2},\dots)\leftrightarrow(\dots,\bar\bfv_i, \bar\bfv_{i+2},\dots)
\]
when $\bar\bfv_i, \bar\bfv_{i+1}$ and $\bar\bfv_{i+2}$ form a triangle in $\cI(\bar{UP_2(\Gamma)})$.
\end{enumerate}
Then it easily follows from Lemma~\ref{ReducingLoops} that both elementary homotopies do not change the quasi-isometry induced from a path and thus $\bar\phi_\gamma$ and $\bar\phi_\delta$ are identical.

By Lemma~\ref{Lem:IsometricRI}, there is an isomorphism $\cI(\bar{UP_2(\Gamma)})\to\cI(\bar{UP_2(\Gamma')})$ sending $\barbfv$ to $\barbfv'$.
Combining this fact with the previous paragraph, we can inductively construct a relative map
\[\bar\phi_{n+1}:\mathfrak{g}_{\cI}(\mathcal{N}_{n+1}(\barbfv))\to\mathfrak{g}_{\cI}(\mathcal{N}_{n+1}(\barbfv'))\]
which induces an isomorphism $\mathcal{N}_{n+1}(\barbfv)\to\mathcal{N}_{n+1}(\barbfv')$ from the relative map $\bar\phi_{n}$.
By Proposition~\ref{Prop:connectedsimplicial}, $\cI(\bar{UP_2(\Gamma)})$ is connected and $\mathfrak{g}_{\cI}(\cI(\bar{UP_2(\Gamma)}))=\bar{UP_2(\Gamma)}$. 
It follows that the direct limit of $\bar\phi_n$ is a relative map $\bar\phi:\bar{UP_2(\Gamma)}\to\bar{UP_2(\Gamma')}$ which induces an isomorphism $\bar\Phi:\cI(\bar{UP_2(\Gamma)})\to\cI(\bar{UP_2(\Gamma')})$.

Since $\mathcal{N}_{n+1}(\barbfv)\setminus\mathcal{N}_{n}(\barbfv)$ is countably infinite, by Lemma~\ref{Lem:RelQI}, if $\bar\phi_n$ is a relative $L^2$-quasi-isometry, then $\bar\phi_{n+1}$ is also a relative $L^2$-quasi-isometry.
From the fact that $\bar\phi_0=\bar\phi_{\barbfv}$ is a relative $L^2$-quasi-isometry, it follows that $\bar\phi$ is a relative $L^2$-quasi-isometry.
By Corollary~\ref{cor:QIbetweenGBGs}, therefore, $\mathbb{B}_2(\Gamma)$ is quasi-isometric to $\mathbb{B}_2(\Gamma')$.
\end{proof}

\subsection{Pruning over-grown substems}\label{Subsection:Quasi-folding}
In this subsection, we define the last operation, called \emph{pruning over-grown substems} on $\Gamma\in\grapegraph^{\mathsf{large}}_{\mathsf{rich}}$, which is a quasi-isometry invariant for the $2$-braid groups as well. 
This operation is particularly applied to $\Gamma$ which has a kind of symmetry and leads to a more simplified bunch of grapes.

\begin{definition}[Pruning over-grown substems]\label{Def:Quasi-foldingonSuperrich}
Let $\Gamma=(\mathsf{T},\ell)\in\grapegraph^{\mathsf{large}}_{\mathsf{rich}}$ and $v\in\mathcal{V}(\mathsf{T})$ a vertex. 
Consider the set $\mathcal{C}^+(\Gamma,v)$ of extended $\hat v$-components of $\Gamma$ which are not grapes and an equivalence relation on the set defined by the presence of isometry.

Suppose that there exists an equivalence class $\{\Gamma^+_1,\dots,\Gamma^+_m\}$ on $\mathcal{C}^+(\Gamma,v)$ such that $m\ge 3$.
Then each $\Gamma^+_i$ is called an \emph{over-grown stem} at $v$ and the union of the $\Gamma^+_i$'s is called a \emph{scope} of pruning. 
In this case, the complement of $\Gamma_1$ (the $\hat v$-component of $\Gamma$ contained in $\Gamma^+_1$) in $\Gamma$ is said to be obtained from $\Gamma$ by \emph{pruning an over-grown stem} $\Gamma^+_1$ at $v$.

We say that a bunch of grapes $\Gamma'$ is obtained from $\Gamma$ by \emph{pruning over-grown substems} at $v$ if $\Gamma'$ is obtained from $\Gamma$ by repeatedly pruning an over-grown stem from a given scope of pruning at $v$ till the scope is no longer considered as a scope.
\end{definition}

It is worth noticing that in the above definition, $\Gamma'$ again belongs to $\grapegraph^{\mathsf{large}}_{\mathsf{rich}}$.

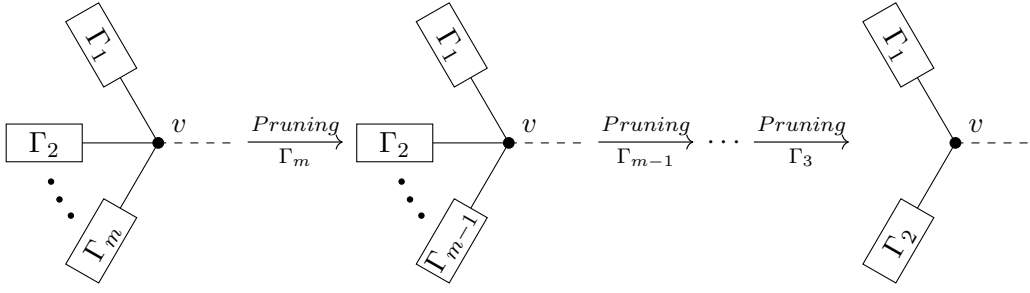
\begin{figure}[ht]
\[
\begin{tikzcd}[ampersand replacement=\&, row sep=2pc, column sep=3pc]
\begin{tikzpicture}[baseline=-.5ex]
\draw[fill] (0,0) circle (2pt) node[above right] {$v$};
\draw[dashed] (0,0) -- (1,0);
\begin{scope}[rotate=120]
\draw (0,0) -- (1,0);
\draw (1,-0.25) rectangle node[rotate={-60},above=-2ex] {$\Gamma_1$} (2,0.25);
\end{scope}
\begin{scope}[rotate=180]
\draw (0,0) -- (1,0);
\draw (1,-0.25) rectangle node[rotate={0},above=-2ex] {$\Gamma_2$} (2,0.25);
\end{scope}
\begin{scope}[rotate=240]
\draw (0,0) -- (1,0);
\draw (1,-0.25) rectangle node[rotate={60},above=-2ex] {$\Gamma_m$} (2,0.25);
\end{scope}
\foreach \i in {1,2,3} {
\draw[fill] ({\i*10+190}:1.5) circle (1pt);
}
\end{tikzpicture}
\arrow[r,"Pruning","\Gamma_m"'] 
\&
\begin{tikzpicture}[baseline=-.5ex]
\draw[fill] (0,0) circle (2pt) node[above right] {$v$};
\draw[dashed] (0,0) -- (1,0);
\begin{scope}[rotate=120]
\draw (0,0) -- (1,0);
\draw (1,-0.25) rectangle node[rotate={-60},above=-2ex] {$\Gamma_1$} (2,0.25);
\end{scope}
\begin{scope}[rotate=180]
\draw (0,0) -- (1,0);
\draw (1,-0.25) rectangle node[rotate={0},above=-2ex] {$\Gamma_2$} (2,0.25);
\end{scope}
\begin{scope}[rotate=240]
\draw (0,0) -- (1,0);
\draw (1,-0.25) rectangle node[rotate={60},above=-2ex] {$\Gamma_{m-1}$} (2,0.25);
\end{scope}
\foreach \i in {1,2,3} {
\draw[fill] ({\i*10+190}:1.5) circle (1pt);
}
\end{tikzpicture}
\arrow[r,"Pruning","\Gamma_{m-1}"'] 
\&
\cdots
\arrow[r,"Pruning","\Gamma_3"'] 
\&
\begin{tikzpicture}[baseline=-.5ex]
\draw[fill] (0,0) circle (2pt) node[above right] {$v$};
\draw[dashed] (0,0) -- (1,0);
\foreach \i in {1,2} {
\begin{scope}[rotate={\i*120}]
\draw (0,0) -- (1,0);
\draw (1,-0.25) rectangle node[rotate={\i*120-180},above=-2ex] {$\Gamma_{\i}$} (2,0.25);
\end{scope}
}
\end{tikzpicture}
\end{tikzcd}
\]
\caption{Pruning over-grown substems; at $v$, there is at least one grape.}
\label{figure:quasi-folding_2}
\end{figure}

As in Theorem~\ref{theorem:2-free factor} or \ref{theorem:loop reducing}, pruning over-grown substems does not change the quasi-isometry type of the $2$-braid group as follows.

\begin{theorem}\label{theorem:quasi folding}
For $\Gamma\in\grapegraph^{\mathsf{large}}_{\mathsf{rich}}$, let $\Lambda$ be a bunch of grapes obtained from $\Gamma$ by pruning an over-grown substem. Then $\mathbb{B}_2(\Gamma)$ is quasi-isometric to $\mathbb{B}_2(\Lambda)$.
\end{theorem}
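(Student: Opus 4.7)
The plan is to mimic the architecture of the proof of Theorem~\ref{theorem:loop reducing}: build a base-level relative quasi-isometry between $\bar\Gamma$ and $\bar\Lambda$ that respects an appropriate set of distinguished vertices; upgrade it to a family of quasi-isometries between universal covers of the maximal product subcomplexes via Lemma~\ref{Lem:RelQIProduct}; then glue these inductively along the intersection complex using that $\cI(\bar{UP_2(\Gamma)})$ is a connected simply-connected flag complex (Theorem~\ref{theorem:structureofI}); and finally invoke Corollary~\ref{cor:QIbetweenGBGs}. Two features of the pruning setting distinguish it from Section~\ref{Subsection:PickingGrapes}: pruning changes the stem, so Lemma~\ref{Lem:IsometricRI} does not apply off the shelf, and twigs sitting inside $\Gamma_1$ have no literal counterpart in $\Lambda$. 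Both obstacles are resolved by exploiting the isometry $\sigma:\Gamma_1^+\xrightarrow{\cong}\Gamma_2^+$ fixing $v$, together with the hypothesis $m\ge 3$ which leaves at least two pairwise isometric copies of $\Gamma_1^+$ in $\Lambda$.

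First I would construct the analogue of Proposition~\ref{prop:coherent}. For each substem $\mathsf{T}'\subset\mathsf{T}$, let $\mathsf{T}'_\Lambda\subset\mathsf{T}_\Lambda$ be the substem obtained from $\mathsf{T}'$ by folding its $\mathsf{T}_1$-part onto $\mathsf{T}_2$ via $\sigma$. Decomposing $\Gamma$ and $\Lambda$ at $v$ as free products of pairs in the sense of Section~\ref{section:PW} and noting that, after pruning $\Gamma_1^+$, the resulting $\Lambda$ still contains $(m-1)\ge 2$ pairwise isometric copies of $\Gamma_1^+$ at~$v$, Lemmas~\ref{Lemma1.1inPW}, \ref{Lemma1.4inPW}, \ref{Lem:PWLem5} and their consequence Lemma~\ref{Lemma:PWConclusion} yield, for each such $\mathsf{T}'$, a relative $L$-quasi-isometry
\[
\bar\varphi_{\mathsf{T}'}:\bar{\Gamma_{\mathsf{T}'}}\longrightarrow\bar{\Lambda_{\mathsf{T}'_\Lambda}}
\]
that folds copies of $\bar{\Gamma_1^+}$ onto copies of $\bar{\Gamma_2^+}$ via $\sigma$ and acts as a relative identity on all other pieces. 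Coherence of the family $\{\bar\varphi_{\mathsf{T}'}\}$ under inclusion of substems is then arranged exactly as in the last paragraph of Proposition~\ref{prop:coherent}, by fixing a base point in each translated piece and tracking it across all copies.

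Next I would package the folding combinatorially. Send each twig $t$ of $\Gamma$ to $\sigma(t)$ if $t\subset\Gamma_1^+$ and to $t$ itself otherwise; since any path substem meeting both $\mathsf{T}_1$ and $\mathsf{T}\setminus\mathsf{T}_1^+$ must pass through $v$, and this remains true after applying $\sigma$, colinearity of twigs is preserved, so this rule extends to a simplicial map $\Phi:\cRI(UP_2(\Gamma))\to\cRI(UP_2(\Lambda))$. The map $\Phi$ is surjective but not injective, hence is not an isomorphism in the sense of Definition~\ref{Def:Morphism}; nonetheless a $\pi_1(UP_2(\Gamma))$-equivariant simplicial lift $\bar\Phi:\cI(\bar{UP_2(\Gamma)})\to\cI(\bar{UP_2(\Lambda)})$ can be produced along the lines of Proposition~\ref{Prop:IsomorphicI}. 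By Remark~\ref{Ex:PathofLengthN} together with richness, the labels of a simplex $\triangle$ and of $\bar\Phi(\triangle)$ have fundamental groups of the same quasi-isometry type in the trichotomy of Remark~\ref{Ex:PathofLengthN}; applying Lemma~\ref{Lem:RelQIProduct} factor-by-factor to the $\bar\varphi$-family therefore supplies, for each vertex $\barbfv\in\cI(\bar{UP_2(\Gamma)})$, a relative $L^2$-quasi-isometry $\bar\phi_{\barbfv}:\bar M_{\barbfv}\to\bar M'_{\bar\Phi(\barbfv)}$.

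The gluing then proceeds word-for-word as in the proof of Theorem~\ref{theorem:loop reducing}: the universal property encoded in Figure~\ref{Figure:the intersection of two maps} forces $\bar\phi_{\barbfv}$ and $\bar\phi_{\barbfu}$ to agree on $\bar M_{\barbfv}\cap\bar M_{\barbfu}$ whenever $\barbfv,\barbfu$ are adjacent in $\cI(\bar{UP_2(\Gamma)})$, and simply-connectedness of $\cI(\bar{UP_2(\Gamma)})$ (Theorem~\ref{theorem:structureofI}) lets us extend inductively over the balls $\mathcal{N}_k(\barbfv_0)$, with Lemma~\ref{Lem:RelQI} controlling the constants at each stage. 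The direct limit is a relative $L^2$-quasi-isometry $\bar\phi:\bar{UP_2(\Gamma)}\to\bar{UP_2(\Lambda)}$, and Corollary~\ref{cor:QIbetweenGBGs} upgrades this to $\mathbb{B}_2(\Gamma)$ being quasi-isometric to $\mathbb{B}_2(\Lambda)$. The main obstacle is the non-injectivity of $\bar\Phi$: distinct maximal product subcomplexes of $\bar{UP_2(\Gamma)}$ whose $\Gamma_1$-coordinates differ only by a deck transformation can share a common image in $\bar{UP_2(\Lambda)}$, so one must verify that the folding quasi-isometries chosen on different $\bar{M}_{\barbfv}$ do not clash on the thick part where those subcomplexes meet after folding. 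This is exactly the place where having $m-1\ge 2$ surviving copies of $\Gamma_1^+$ in $\Lambda$ is used: the Papasoglu--Whyte freedom to re-route through any of the $\bar{\Gamma_j^+}$ with $j\ge 2$ removes the potential conflicts.
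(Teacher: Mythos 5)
There is a genuine gap, and it sits exactly where you flagged your ``main obstacle'': the folding map $\Phi$ does not merely fail to be injective, it fails to be a combinatorial map at all, and this destroys the gluing scheme you import from Theorem~\ref{theorem:loop reducing}. A twig $t\subset\Gamma_1^+$ and its image $\sigma(t)\subset\Gamma_2^+$ are colinear (the minimal path substem containing both passes through $v$), so the corresponding vertices span an edge of $\cRI(UP_2(\Gamma))$; your rule sends both endpoints to the single vertex of $\cRI(UP_2(\Lambda))$ corresponding to $\sigma(t)$, i.e.\ it collapses that edge. Upstairs this is fatal: by Lemma~\ref{lem:Intofmaximal}, two distinct $p$-lifts of the \emph{same} maximal product subcomplex of $UP_2(\Lambda)$ are never adjacent in $\cI(\bar{UP_2(\Lambda)})$, so an edge of $\cI(\bar{UP_2(\Gamma)})$ joining a lift of $M(t)$ to a lift of $M(\sigma(t))$ has no possible image under any $\bar\Phi$ satisfying $\rho'\circ\bar\Phi=\Phi\circ\rho$. (Already for $\mathsf{T}=\mathsf{S}_3$ as in Example~\ref{Ex:3-star}, $\cRI(UP_2(\Gamma))$ is a triangle while $\cRI(UP_2(\Lambda))$ is a single edge, and no fiber-preserving isomorphism of the developments exists.) Consequently the inductive step of Lemma~\ref{ReducingLoops}---which needs a bijection $\mathcal{N}_1(\barbfv)\to\mathcal{N}_1(\barbfv')$ compatible with the labels at every vertex---cannot be run, and Theorem~\ref{theorem:IsobetInt} tells you that whatever quasi-isometry does exist must induce an isomorphism of $\cI$'s that scrambles the $\rho$-fibers, so no argument organized over a fixed map $\cRI(UP_2(\Gamma))\to\cRI(UP_2(\Lambda))$ can succeed.

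Your closing remark that the Papasoglu--Whyte freedom to re-route among the surviving copies $\Gamma_j^+$, $j\ge 2$, should resolve the conflicts is the right mechanism, but it has to be built into the combinatorial bookkeeping from the start rather than patched on afterwards: the target of a given lift of $\Gamma_1\itimes\Gamma_1^c$ must be allowed to be a lift of $\Lambda_j\itimes\Lambda_j^c$ for a $j$ \emph{depending on the lift}. This is what the paper does. It abandons the global complex $UP_2(\Gamma)$ and instead uses the decomposition at $v$ from Section~\ref{Section:GOGdecomposition}: $\mathbb{B}_2(\Gamma)\cong\mathbb{B}_2(\Gamma,v)*\mathbb{F}_N$ with $N>0$ (richness), the pieces $UD_2(\Gamma_i^+)$ are treated as black boxes that are literally isometric for $(i,j)\in R$, Lemma~\ref{Lem:Generalization1} produces a quasi-isometry $\bar{UP_2(\Gamma,v)}\to\bar{UP_2(\Lambda,w)}$ sending each lift of $\Gamma_i\itimes\Gamma_i^c$ to a lift of \emph{some} $\Lambda_j\itimes\Lambda_j^c$ with $(i,j)\in R$, and the final gluing is performed over the Bass--Serre tree $\mathcal{T}(\mathbb{B}_2(\Gamma,v))$, where path-independence is automatic; Theorem~\ref{PW} then finishes. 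If you want to salvage your outline, you would at minimum have to replace $\cI(\bar{UP_2(\Gamma)})$ by this tree of spaces and drop the requirement that your correspondence of maximal product subcomplexes covers a fixed map of reduced intersection complexes.
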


Throughout the remaining of this subsection, we assume that $\Lambda$ is obtained from $\Gamma$ by pruning an over-grown substem $\Gamma^+_n$ at $v\in\mathsf{T}$ and $\Gamma^+_1,\dots,\Gamma^+_{n-1}$ are the remaining extended $\hat v$-components of $\Gamma$ in $\mathcal{C}^+(\Gamma,v)$. We denote by $\Gamma_i$  the $\hat v$-component of $\Gamma$ contained in $\Gamma^+_i$.
Note that there is an obvious embedding $\iota:\Lambda\hookrightarrow\Gamma$.
Then we let $w=\iota^{-1}(v)$ and $\Lambda_j=\iota^{-1}(\Gamma_j)$ for $j=1,\dots,n-1$.
Additionally, we let $v_i\in\Gamma_i$ be the vertex adjacent to $v$ in in $\Gamma$ for $i=1,\dots,n$, and $w_j=\iota^{-1}(v_j)\in\Lambda_j$ for $j=1,\dots,n-1$.

Here, we use the decompositions of $UD_2(\Gamma)$ (and thus the graph-of-groups decompositions of $\mathbb{B}_2(\Gamma)$) obtained in Section~\ref{Section:GOGdecomposition}. 
Recall that the subcomplex $UP_2(\Gamma,v)$ is the union of $\Gamma_i\itimes\Gamma^c_i$'s and the subcomplex $UD_2(\Gamma,v)$ of $UD_2(\Gamma)$ is the union $UP_2(\Gamma,v)\cup\bigcup_i{UD_2(\Gamma^+_i)}$. 
By Lemma~\ref{Lem:D'} and the fact that there is at least one grape at $v$, we have $\mathbb{B}_2(\Gamma)\cong\mathbb{B}_2(\Gamma,v)*\mathbb{F}_N$, where $\mathbb{B}_2(\Gamma,v)=\pi_1(UD_2(\Gamma,v))$ and $N>0$.
Thus we will only focus on the structures of $UD_2(\Gamma,v)$ and its universal cover.

The intersection $UD_2(\Gamma^+_i)\cap UD_2(\Gamma^+_j)$ is empty for $i\neq j$, and $UP_2(\Gamma,v)\cap UD_2(\Gamma^+_i)=\Gamma_i\itimes\{v\}$.
From this fact, a graph-of-groups decomposition $\mathcal{G}(\mathbb{B}_2(\Gamma,v))$ of $\mathbb{B}_2(\Gamma,v)$ is constructed from $\cRI(UP_2(\Gamma,v))$ by attaching one edge to each vertex such that additional vertices are labelled by $UD_2(\Gamma^+_i)$'s and edges labelled by $\Gamma_i$'s.
Moreover, the Bass-Serre tree $\mathcal{T}(\mathbb{B}_2(\Gamma,v))$ associated to $\mathcal{G}(\mathbb{B}_2(\Gamma,v))$ is the union of copies of $\cI(\bar{UP_2(\Gamma,v)})$ and the additional vertices and edges labelled by lifts of $UD_2(\Gamma_i^+)$ and $\Gamma_i\itimes\{v\}$, respectively, for some $1\le i\le n$.
There is also a canonical set map $\mathfrak{g}_{\mathcal{T}}:\mathcal{T}(\mathbb{B}_2(\Gamma,v))\to 2^{\bar{UD_2(\Gamma,v)}}$ sending (the interiors of) simplices to the corresponding subcomplexes (see \eqref{Eq:SetmapforT}).

Now, we will construct a specific quasi-isometry $\bar{UP_2(\Gamma,v)}\to\bar{UP_2(\Lambda,w)}$ using finitely many quasi-isometries defined below.
For each $(i,j)\in R=\{(i,j)\mid \Gamma^+_i\cong\Lambda^+_j\}$, let $\varphi_{i,j}:{(\Gamma_i,\{v_i\})}\to{(\Lambda_j,\{w_j\})}$ be a relative isometry. Note that $\varphi_{i,j}$ can also be seen as a relative isometry ${\Gamma_i}\to{\Lambda_j}$.
Then there are a relative isometry $\bar\varphi_{i,j}:\bar{(\Gamma_i,\{v_i\})}\to\bar{(\Lambda_j,\{w_j\})}$ which is an elevation of $\varphi_{i,j}$, and the set $\bar Q_R$ of $\bar\varphi_{i,j}$'s.
By Lemma~\ref{Lemma:PWConclusion}, moreover, there is also a relative $L'$-quasi-isometry 
\[f_{i,j}:\bar{(\Gamma^c_i,\mathcal{W}(\Gamma^c_i)\setminus\{v\})}\to\bar{(\Lambda^c_j,\mathcal{W}(\Lambda^c_j)\setminus\{w\})}\] 
with the property that the restriction to a copy of $\bar{(\Gamma_{i'},v_{i'})}$ for any $i'\neq i$ is a relative isometry in $\bar Q_R$ onto a copy of $\bar{(\Lambda_{j'},v_{j'})}$ for some $j'\neq j$.
Then we slightly modify $f_{i,j}$ to a relative $L$-quasi-isometry $\bar\varphi^c_{i,j}:\bar{\Gamma^c_i}\to\bar{\Lambda^c_j}$ which satisfies the property $f_{i,j}$ does and maps the preimage of $v$ to the preimage of $w$. This is possible since any vertex of the preimage of $v$ in $\bar{\Gamma^c_i}$ (the preimage of $w$ in $\bar{\Lambda^c_i}$, resp.) is adjacent to a vertex of the preimage of $v_{i'}$ (the preimage of $w_{j'}$, resp.) for some $i'\neq i$ ($j'\neq j$, resp.). We refer to the construction in the case of $n\ge 2$ in the proof of Proposition~\ref{prop:coherent}.

\begin{lemma}\label{Lem:Generalization1}
There exists a quasi-isometry 
$\bar\phi_v:\bar{UP_2(\Gamma,v)}\to\bar{UP_2(\Lambda,w)}$ such that 
\begin{itemize}
\item a maximal product subcomplex of $\bar{UP_2(\Gamma,v)}$ which is a $p$-lift of $\Gamma_i\itimes\Gamma^c_i$ is mapped to a maximal product subcomplex of $\bar{UP_2(\Lambda,w)}$ which is a $p$-lift of $\Lambda_j\itimes\Lambda^c_j$ for some $j$ satisfying $(i,j)\in R$, and
\item the restriction to the intersection of two maximal product subcomplexes is an isometry.
\end{itemize}
\end{lemma}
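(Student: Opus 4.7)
The map $\bar\phi_v$ will be assembled inductively on the simplicial structure of $\cI(\bar{UP_2(\Gamma,v)})$, in the same spirit as the construction in the proof of Theorem~\ref{theorem:loop reducing}. First, observe that $\cRI(UP_2(\Gamma,v))$ is one-dimensional: the triple intersection of distinct maximal product subcomplexes $\Gamma_a\itimes\Gamma_a^c$ ($a\in I'$) is always empty because the $\Gamma_a$'s are pairwise disjoint (any point in such an intersection would force two distinct $\Gamma_a,\Gamma_b$ to meet). By the argument of Theorem~\ref{theorem:structureofI}, the development $\cI(\bar{UP_2(\Gamma,v)})$ is thus a simply connected 1-complex, that is, a tree.

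For every pair $(i,j)\in R$, set $\bar\phi_{i,j}=\bar\varphi_{i,j}\times\bar\varphi^c_{i,j}:\bar{\Gamma_i}\times\bar{\Gamma^c_i}\to\bar{\Lambda_j}\times\bar{\Lambda^c_j}$, which is a relative $L^2$-quasi-isometry by Lemma~\ref{Lem:RelQIProduct}. Because $\bar\varphi_{i,j}$ is an isometry and the restriction of $\bar\varphi^c_{i,j}$ to any copy of $\bar{\Gamma_{i'}}$ (with $i'\neq i$) is by construction an isometry $\bar\varphi_{i',j'}\in\bar Q_R$ for some $(i',j')\in R$, the restriction of $\bar\phi_{i,j}$ to any sub-copy $\bar{\Gamma_i}\times\bar{\Gamma_{i'}}$ is an isometry onto the corresponding sub-copy $\bar{\Lambda_j}\times\bar{\Lambda_{j'}}$. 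This is the universal compatibility pictured in Figure~\ref{Figure:the intersection of two maps}, transported to the present situation.

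Pick a base vertex $\barbfv_0\in\cI(\bar{UP_2(\Gamma,v)})$ corresponding to a $p$-lift of $\Gamma_{i_0}\itimes\Gamma_{i_0}^c$, fix $(i_0,j_0)\in R$, and define $\bar\phi_{\barbfv_0}$ to be the map on $\bar{M}_{\barbfv_0}$ obtained from $\bar\phi_{i_0,j_0}$ via the standard product structures. Propagate along the edges of the tree: at a newly encountered vertex $\barbfv$ adjacent to an already handled $\barbfv'$ of pairing $(i',j')\in R$, the intersection $\bar{M}_{\barbfv'}\cap\bar{M}_{\barbfv}$ is a $p$-lift of $\Gamma_{i'}\itimes\Gamma_i$ for some $i\neq i'$; the restriction of $\bar\varphi^c_{i',j'}$ to the relevant copy of $\bar{\Gamma_i}$ is some $\bar\varphi_{i,j}\in\bar Q_R$, which uniquely dictates the pairing $(i,j)\in R$ at $\barbfv$. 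Set $\bar\phi_{\barbfv}$ to be the map obtained from $\bar\phi_{i,j}$; by the previous paragraph, $\bar\phi_{\barbfv}$ and $\bar\phi_{\barbfv'}$ agree, via an isometry, on the shared standard product subcomplex.

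Finally, iterated application of Lemma~\ref{Lem:RelQI} along the tree $\cI(\bar{UP_2(\Gamma,v)})$ patches the $\bar\phi_{\barbfv}$'s into a single relative $L^2$-quasi-isometry $\bar\phi_v:\bar{UP_2(\Gamma,v)}\to\bar{UP_2(\Lambda,w)}$, which is in particular a quasi-isometry. The main technical obstacle is the index-consistency underlying the inductive step: the pairing $(i,j)\in R$ attached to each vertex must be forced by the previously defined map on the neighbouring vertex, with no conflict. This is handled because (a) $\cI(\bar{UP_2(\Gamma,v)})$ is a tree, so there are no non-trivial loops along which consistency could fail, and (b) the restrictions of the $\bar\varphi^c_{-,-}$ were designed in the paragraph above Lemma~\ref{Lem:Generalization1} to land inside the fixed set $\bar Q_R$, making the propagation deterministic along each edge.
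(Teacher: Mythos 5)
Your proposal is correct and follows essentially the same route as the paper: both build the map from the product quasi-isometries $\bar\varphi_{i,j}\times\bar\varphi^c_{i,j}$, use the emptiness of triple intersections of the $\Gamma_a\itimes\Gamma_a^c$'s to reduce consistency checks to pairs, and glue along $\cI(\bar{UP_2(\Gamma,v)})$ inductively via Lemma~\ref{Lem:RelQI}. Your explicit observation that this intersection complex is a tree is just a repackaging of the paper's iteration over the neighborhoods $\mathcal{N}_k$, so no substantive difference.
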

\begin{proof}
Let $\bar M, \bar M'\subset\bar{UP_2(\Gamma,v)}$ be two intersecting maximal product subcomplexes with standard product structures $\bar\iota_{\bar M}:\bar{\Gamma_i}\times\bar{\Gamma^c_i}\to\bar{UP_2(\Gamma,v)}$ and
$\bar\iota_{\bar M'}:\bar{\Gamma^c_{i'}}\times\bar{\Gamma_{i'}}\to \bar{UP_2(\Gamma,v)}$ for some $i\neq i'$.
Then their intersection $\bar K=\bar M\cap\bar M'$ admits two identical standard product structures
\begin{align*}
\bar\iota_{\bar M}|_{\bar\iota_{\bar M}^{-1}(\bar K)}\circ (id_i\times\bar\iota_{i'})=
\bar\iota_{\bar M'}|_{\bar\iota_{\bar M'}^{-1}(\bar K)}\circ (\bar\iota_{i}\times id_{i'})
&:\bar{(\Gamma_i,v_i)}\times\bar{(\Gamma_{i'},v_{i'})}\to\bar K\subset\bar{UP_2(\Gamma,v)},
\end{align*}
where $id_{i}$ and $id_{i'}$ are isometries on $\bar{(\Gamma_i,v_i)}$ and $\bar{(\Gamma_{i'},v_{i'})}$, respectively, and $\bar\iota_i:\bar{\Gamma_i}\hookrightarrow\bar{\Gamma^c_{i'}}$ and $\bar\iota_{i'}:\bar{\Gamma_{i'}}\hookrightarrow\bar{\Gamma^c_i}$ are elevations of embeddings.

By the assumption on $\Gamma$ (and $\Lambda$), there exists $\bar{\Lambda_j}$ which is isometric to $\bar{\Gamma_i}$ via a relative isometry $\bar\varphi_{i,j}$ in $\bar Q_R$.
Then there exists a maximal product subcomplex $\bar N \subset \bar{UP_2(\Lambda,w)}$ with standard product structure $\bar\iota_{\bar N}:\bar{\Lambda_j}\times\bar{\Lambda^c_j}\to \bar{UP_2(\Lambda,w)}$.
By Lemma~\ref{Lem:RelQIProduct}, then $\bar\varphi_{i,j}\times\bar\varphi^c_{i,j}$ induces a relative $L^2$-quasi-isometry $\phi_0:\bar M \to \bar N$.
Moreover, there is a unique copy of $\bar{\Lambda_{j'}}$ in $\bar{\Lambda_j^c}$ such that $\bar{\Gamma_{i'}}$ is isometric to $\bar{\Lambda_{j'}}$ via $\bar\varphi_{i',j'}$ and $\phi_0(\bar K)$ is isometric to a standard product subcomplex $\bar K'$ with standard product structure $\bar{\Lambda_j}\times\bar{\Lambda_{j'}}\to\bar{UP_2(\Lambda,w)}$.
Let $\bar N'\subset\bar{UP_2(\Lambda,w)}$ be the maximal product subcomplex containing $\bar K'$ with standard product structure $\bar\iota_{\bar N'}:\bar{\Lambda_{j'}^c}\times\bar{\Lambda_{j'}}\to \bar{UP_2(\Lambda,w)}$.
As before, $\bar\varphi_{i',j'}^c\times\bar\varphi_{i',j'}$ induces a relative $L^2$-quasi-isometry $\phi'_0:\bar{M'}\to\bar{N'}$.

\begin{figure}[ht]
\[
\begin{tikzcd}[column sep=1em, row sep=1em]
& \bar M\cup\bar M' \arrow[dashed, rrr,"\phi_0\cup\phi'_0 "] &&& \bar N\cup\bar N' & \\
\bar{\Gamma_i}\times\bar{\Gamma^c_i}\arrow[hookleftarrow,rdd," id_i\times\bar\iota_{i'}"']\arrow[ru,"\bar\iota_{\bar M} "]\arrow[rrr,"\bar\varphi_{i,j}\times\bar\varphi^c_{i,j}" near end] &&& \bar{\Lambda_j}\times\bar{\Lambda^c_j}\arrow[hookleftarrow,rdd,"id_j\times\bar\iota_{j'}"',near end]\arrow[ru,"\bar\iota_{\bar N}"'] &&\\
&& \bar{\Gamma^c_{i'}}\times\bar{\Gamma_{i'}}\arrow[luu,crossing over,"\bar\iota_{\bar M'}" near start]\arrow[rrr,crossing over,"\bar\varphi^c_{i',j'}\times\bar\varphi_{i',j'}" near end] &&& \bar{\Lambda^c_{j'}}\times\bar{\Lambda_{j'}}\arrow[luu,"\bar\iota_{\bar N'}"']\\
& \bar{\Gamma_i}\times\bar{\Gamma_{i'}}\arrow[hookrightarrow,ru,"\bar\iota_i\times id_{i'}"]\arrow[rrr,"\bar\varphi_{i,j}\times\bar\varphi_{i',j'} "] &&& \bar{\Lambda_j}\times\bar{\Lambda_{j'}}\arrow[hookrightarrow,ru,"\iota_j\times id_{j'}"'] &
\end{tikzcd}
\]
\caption{Gluing of two relative $L^2$-quasi-isometries}
\label{Figure:the intersection of two maps_first}
\end{figure}

Consider the commutative diagram in Figure~\ref{Figure:the intersection of two maps_first} where all maps except for $\phi_0\cup \phi_0'$ are known to be either quasi-isometric embeddings or quasi-isometries.
By Lemma~\ref{Lem:RelQI}, then the map $\phi_0\cup\phi'_0:\bar{M}\cup\bar{M'}\to\bar{N}\cup\bar{N'}$ is a relative $L^2$-quasi-isometry.
Combining this fact with the fact that the intersection of three distinct maximal product subcomplexes in $\bar{UP_2(\Gamma,v)}$ or $\bar{UP_2(\Lambda,w)}$ is empty, we obtain a relative $L^2$-quasi-isometry 
\[
\phi_1:\mathfrak{g}_{\cI}(\mathcal{N}_1(\barbfv_{\bar M}))\to\mathfrak{g}_{\cI}(\mathcal{N}_1(\barbfv'_{\bar N})),
\]
where $\barbfv_{\bar M}$ and $\barbfv'_{\bar N}$ are the vertices of $\cI(\bar{UP_2(\Gamma,v)})$ and $\cI(\bar{UP_2(\Lambda,w)})$ corresponding to $\bar M$ and $\bar N$, respectively. By iterating this process, 
we can construct a relative $L^2$-quasi-isometry 
$\phi_k:\mathfrak{g}_{\cI}(\mathcal{N}_k(\barbfv_{\bar M}))\to\mathfrak{g}_{\cI}(\mathcal{N}_k(\barbfv'_{\bar N}))$, 
whose direct limit is a relative $L^2$-quasi-isometry $\phi:\bar{UP_2(\Gamma,v)}\to\bar{UP_2(\Lambda,w)}$, and this is the desired relative quasi-isometry $\bar\phi_v$.
\end{proof}

Now, we are ready to prove Theorem~\ref{theorem:quasi folding}.

\begin{proof}[Proof of Theorem~\ref{theorem:quasi folding}]
By the paragraph above reviewing $UD_2(\Gamma,v)$, we know that \[\mathbb{B}_2(\Gamma)\cong\mathbb{B}_2(\Gamma,v)*\mathbb{F}_N\quad\text{and}\quad \mathbb{B}_2(\Lambda)\cong\mathbb{B}_2(\Lambda,w)*\mathbb{F}_{M},\]
where $N, M\ge 1$.
By Theorem~\ref{PW}, thus it suffices to prove that $\mathbb{B}_2(\Gamma,v)$ and $\mathbb{B}_2(\Lambda,w)$ are quasi-isometric, or equivalently, $\bar{UD_2(\Gamma,v)}$ and $\bar{UD_2(\Lambda,w)}$ are quasi-isometric.

Now, we fix a copy $\cI_0$ of $\cI(\bar{UP_2(\Gamma,v)})$ in $\mathcal{T}(\mathbb{B}_2(\Gamma,v))$ and call it a $0$-level copy.
Then $1$-level copies of $\cI(\bar{UP_2(\Gamma,v)})$ are defined as copies of $\cI(\bar{UP_2(\Gamma,v)})$ whose distance from the $0$-level copy is $2$.
Inductively, $n$-level copies of $\cI(\bar{UP_2(\Gamma,v)})$ are defined as copies of $\cI(\bar{UP_2(\Gamma,v)})$ whose distance from $(n-1)$-level copies of $\cI(\bar{UP_2(\Gamma,v)})$ is $2$ and which are not $k$-level copies for $k<n$.
Similarly, we fix a copy $\cI'_0$ of $\cI(\bar{UP_2(\Lambda,w)})$ in $\mathcal{T}(\mathbb{B}_2(\Lambda,w))$ and then define $n$-level copies of $\cI(\bar{UP_2(\Lambda,w)})$ in $\mathcal{T}(\mathbb{B}_2(\Lambda,w))$.

Let $\mathcal{L}_k(\mathbb{B}_2(\Gamma,v))$ be the minimal subcomplex of $\mathcal{T}(\mathbb{B}_2(\Gamma,v))$ containing all $m$-level copies for $m\le k$.
Then we will inductively construct an isomorphism and a relative quasi-isometry
\[\bar\Phi_k:\mathcal{L}_k(\mathcal{T}(\mathbb{B}_2(\Gamma,v)))\to\mathcal{L}_k(\mathcal{T}(\mathbb{B}_2(\Lambda,w)))\text{ and }\bar\phi_k:\mathfrak{g}_{\mathcal{T}}(\mathcal{L}_k(\mathbb{B}_2(\Gamma,v)))\to\mathfrak{g}_{\mathcal{T}}(\mathcal{L}_k(\mathbb{B}_2(\Lambda,w))).\]

The case when $k=0$ is shown by declaring $\bar\phi_0$ as the relative $L$-quasi-isometry $\bar\phi_v$ in Lemma~\ref{Lem:Generalization1} and $\bar\Phi_0$ as the isomorphism induced by $\bar\phi_v$.

Let $\barbfv\in\mathcal{T}(\mathbb{B}_2(\Gamma,v))$ be a vertex labelled by a $p$-lift of $UD_2(\Gamma^+_i)$ which is adjacent to the $0$-level copy and let $\bar Z_0=\mathfrak{g}_{\mathcal{T}}(\cI_0)\cap \mathfrak{g}_{\mathcal{T}}(\barbfv)$.
By the construction of $\bar\phi_v$, the restriction of $\bar\phi_0$ to $\bar Z_0$ is an isometry onto a $p$-lift $\bar Z'_0$ of $\Lambda_j\itimes\{w\}$ in $\mathfrak{g}_{\mathcal{T}}(\cI'_0)$ for some $1\le j\le n-1$; the isometry is the restriction of $\bar\varphi_{i,j}\in \bar Q_R$.
Then there exists a $p$-lift $\bar U'$ of $UD_2(\Lambda^+_j)$ in $\bar{UD_2(\Lambda,w)}$ such that $\bar U'\cap\mathfrak{g}_{\mathcal{T}}(\cI'_0)=\bar Z'_0$.
Moreover, there is a relative isometry $\varphi^+_{i,j}:(\Gamma^+_i,\{v\})\to (\Lambda^+_j,\{w\})$ which extends $\varphi_{i,j}$ such that $\varphi^+_{i,j}$ induces a (relative) isometry $\bar{UD_2(\Gamma^+_i)}\to\bar{UD_2(\Lambda^+_j)}$.
Let $\barbfv'$ be the vertex in $\mathcal{T}(\mathbb{B}_2(\Lambda,w))$ corresponding to $\bar U'$, i.e., $\mathfrak{g}_{\mathcal{T}}(\barbfv')=\bar U'$.
Then we can naturally extend $\bar\Phi_0$ and $\bar\phi_0$ to an isomorphism and a relative $L$-quasi-isometry (by Lemma~\ref{Lem:RelQI})
\begin{equation}\label{Eq:AlittlebitExtended}
\cI_0\cup\{\barbfv\}\to\cI_0\cup\{\barbfv'\}\quad\text{and}\quad \bar\phi^+_0:\mathfrak{g}_{\mathcal{T}}(\barbfv) \cup_{\bar Z_0} \bar{UP_2(\Gamma,v)}\to\mathfrak{g}_{\mathcal{T}}(\barbfv') \cup_{\bar Z_0} \bar{UP_2(\Lambda,w)}    
\end{equation}
such that the restriction of $\bar\phi^+_0$ to $\mathfrak{g}_{\mathcal{T}}(\barbfv)$ is an isometry onto $\mathfrak{g}_{\mathcal{T}}(\barbfv')$.
Here, $\cI_0\cup\{\barbfv\}$ and $\cI_0\cup\{\barbfv'\}$ mean their induced subcomplexes.

Let $\barbfv_1\in\mathcal{T}(\mathbb{B}_2(\Gamma,v))$ be a vertex in a $1$-level copy $\cI_1$ of $\cI(\bar{UP_2(\Gamma)})$ adjacent to $\barbfv$.
Then the edge $[\barbfv,\barbfv_1]$ corresponds to a $p$-lift $\bar Z_1$ of $\Gamma_{i}\itimes\{v\}$ in $\mathfrak{g}_{\mathcal{T}}(\barbfv)$ and $\barbfv_1$ corresponds to a maximal product subcomplex which is a $p$-lift of $\Gamma_i\itimes\Gamma^c_i$ in $\bar{UD_2(\Gamma,v)}$.
In $\bar{UD_2(\Lambda,w)}$, there exists a $p$-lift of $\Lambda_j\itimes\{w\}$ which is the image of $\bar Z_1$ under $\bar\phi^+_0$, and thus there is a maximal product subcomplex which is a $p$-lift of $\Lambda_j\itimes\Lambda^c_j$ and corresponds to a vertex $\barbfv'_1\in\mathcal{T}(\mathbb{B}_2(\Lambda,w))$.
Then we can naturally extend maps in \eqref{Eq:AlittlebitExtended} to an isomorphism and a relative $L$-quasi-isometry
\[
\cI_0 \cup \cI_1 \cup \{\barbfv\}\to\cI'_0 \cup \cI'_1 \cup \{\barbfv'\}\text{ and } \mathfrak{g}_{\mathcal{T}}(\cI_0) \cup \mathfrak{g}_{\mathcal{T}}(\barbfv)\cup \mathfrak{g}_{\mathcal{T}}(\cI_1)\to\mathfrak{g}_{\mathcal{T}}(\cI'_0) \cup \mathfrak{g}_{\mathcal{T}}(\barbfv')\cup \mathfrak{g}_{\mathcal{T}}(\cI'_1).
\]
Since this process can be repeated for any $1$-level copies independently, we have an isomorphism and a relative $L$-quasi-isometry 
\[
\bar\Phi_1:\mathcal{L}_1(\mathbb{B}_2(\Gamma,v))\to\mathcal{L}_1(\mathbb{B}_2(\Lambda,w))\quad\text{and}\quad \bar\phi_1:\mathfrak{g}_{\mathcal{T}}(\mathcal{L}_1(\mathbb{B}_2(\Gamma,v)))\to\mathfrak{g}_{\mathcal{T}}(\mathcal{L}_1(\mathbb{B}_2(\Lambda,w))).
\]

From the fact that $\mathcal{T}(\mathbb{B}_2(\Gamma,v))$ and $\mathcal{T}(\mathbb{B}_2(\Lambda,w))$ are the Bass-Serre \emph{trees}, we can inductively define $\bar\Phi_k$ and $\bar\phi_k$.
Since each $\bar\phi_k$ is a relative $L$-quasi-isometry by Lemma~\ref{Lem:RelQI}, so is the direct limit of $\bar\phi_k$.
As in Proposition~\ref{Prop:connectedsimplicial}, it can easily be shown that $\mathcal{T}(\bar{UD_2(\Gamma,v)})$ is connected and $\mathfrak{g}_{\mathcal{T}}(\mathcal{T}(\bar{UD_2(\Gamma,v)}))=\bar{UD_2(\Gamma,v)}$.
It follows that the direct limit of $\bar\phi_k$ is a relative quasi-isometry from $\bar{UD_2(\Gamma,v)}$ to $\bar{UD_2(\Lambda,w)}$, and therefore we are done.
\end{proof}
\section{Quasi-isometric rigidity of 2-braid groups over bunches of grapes}\label{section:proofofThm}

In this section, we show that for any $\Gamma\in\grapegraph^{\mathsf{large}}$, there is a uniquely determined $\Gamma_{\mathsf{min}}\in\grapegraph^{\mathsf{large}}_{\mathsf{rich}}$ which is minimal in the sense that there are no other large and rich bunches of grapes obtained from $\Gamma_{\mathsf{min}}$ by the operations defined in the previous section.
Using this fact, we show that the set of the $2$-braid groups over $\Gamma\in\grapegraph^{\mathsf{large}}$ is quasi-isometrically rigid in the sense that for $\Gamma,\Gamma'\in\grapegraph^{\mathsf{large}}$, two $2$-braid groups $\mathbb{B}_2(\Gamma)$ and $\mathbb{B}_2(\Gamma')$ are quasi-isometric if and only if $\Gamma_{\mathsf{min}}$ and $\Gamma'_{\mathsf{min}}$ are isometric.

\subsection{Quasi-minimal representatives}\label{section:quasi-minimal}
In the previous section, from $\Gamma\in\grapegraph^{\mathsf{large}}$, after obtaining $\Gamma_{\mathsf{normal}}$ by repeatedly picking an empty twig and then smoothing twigs, we obtained the rich represntative $\Gamma_{\mathsf{rich}}$ of $\Gamma_{\mathsf{normal}}$ such that the $2$-braid groups over these three graphs are quasi-isometric to each other.
It was obvious that such $\Gamma_{\mathsf{normal}}$ and thus $\Gamma_{\mathsf{rich}}$ are uniquely determined.

On the other hand, uniqueness of the terminal bunch of grapes under the iterated process of pruning over-grown substems is not clear since it is not sure whether pruning two different over-grown substems could be interchangeable.
However, the following proposition provides an affirmative answer and the uniqueness of the terminal bunch of grapes under this process.

\begin{figure}[ht]
\[
\begin{tikzcd}[ampersand replacement=\&, row sep=1.5pc, column sep=2.5pc]
\Gamma=\begin{tikzpicture}[baseline=-.5ex]
\draw[fill] (0,0) circle (2pt) node[above right] {$v$} (1.5,0) circle (2pt) node[above left] {$v'$};
\draw[dashed] (0,0) -- (1.5,0);
\foreach \i in {120,180,240} {
\begin{scope}[rotate=\i]
\draw (0,0) -- (1,0);
\draw (1,-0.25) rectangle node[rotate={\i-180},above=-2ex] {$\mathsf{X}$} (2,0.25);
\end{scope}
\draw[fill] ({\i/60*10+180}:1.5) circle (1pt);
}
\draw[dashed] (-2.5,-2) rectangle (0.7, 2);
\draw (-1.5,1.5) node[left] {$\Lambda$};
\begin{scope}[xshift=1.5cm]
\foreach \i in {-60,0,60} {
\begin{scope}[rotate=\i]
\draw (0,0) -- (1,0);
\draw (1,-0.25) rectangle node[rotate=\i,above=-2ex] {$\mathsf{Y}$} (2,0.25);
\end{scope}
\draw[fill] ({\i/60*10-30}:1.5) circle (1pt);
}
\draw[dashed] (-.7,-2) rectangle (2.5, 2);
\draw (1.5,1.5) node[right] {$\Lambda'$};
\end{scope}
\end{tikzpicture}
\arrow[r,"Pruning","\Lambda"'] 
\arrow[d,"Pruning"',"\Lambda'"]
\&
\Gamma'=\begin{tikzpicture}[baseline=-.5ex]
\draw[fill] (0,0) circle (2pt) node[above right] {$v$} (1.5,0) circle (2pt) node[above left] {$v'$};
\draw[dashed] (0,0) -- (1.5,0);
\foreach \i in {120,240} {
\begin{scope}[rotate=\i]
\draw (0,0) -- (1,0);
\draw (1,-0.25) rectangle node[rotate={\i-180},above=-2ex] {$\mathsf{X}$} (2,0.25);
\end{scope}
}
\begin{scope}[xshift=1.5cm]
\foreach \i in {-60,0,60} {
\begin{scope}[rotate=\i]
\draw (0,0) -- (1,0);
\draw (1,-0.25) rectangle node[rotate=\i,above=-2ex] {$\mathsf{Y}$} (2,0.25);
\end{scope}
\draw[fill] ({\i/60*10-30}:1.5) circle (1pt);
}
\draw[dashed] (-.5,-2) rectangle (2.5, 2);
\draw (1.5,1.5) node[right] {$\Lambda'$};
\end{scope}
\end{tikzpicture}
\arrow[d,"Pruning"',"\Lambda'"]
\\
\Gamma_2=\begin{tikzpicture}[baseline=-.5ex]
\draw[fill] (0,0) circle (2pt) node[above right] {$v$} (1.5,0) circle (2pt) node[above left] {$v'$};
\draw[dashed] (0,0) -- (1.5,0);
\foreach \i in {120,180,240} {
\begin{scope}[rotate=\i]
\draw (0,0) -- (1,0);
\draw (1,-0.25) rectangle node[rotate={\i-180},above=-2ex] {$\mathsf{X}$} (2,0.25);
\end{scope}
\draw[fill] ({\i/60*10+180}:1.5) circle (1pt);
}
\draw[dashed] (-2.5,-2) rectangle (0.5, 2);
\draw (-1.5,1.5) node[left] {$\Lambda$};
\begin{scope}[xshift=1.5cm]
\foreach \i in {-60,60} {
\begin{scope}[rotate=\i]
\draw (0,0) -- (1,0);
\draw (1,-0.25) rectangle node[rotate=\i,above=-2ex] {$\mathsf{Y}$} (2,0.25);
\end{scope}
}
\end{scope}
\end{tikzpicture}
\arrow[r,"Pruning","\Lambda"']
\&
\Gamma_0=\begin{tikzpicture}[baseline=-.5ex]
\draw[fill] (0,0) circle (2pt) node[above right] {$v$} (1.5,0) circle (2pt) node[above left] {$v'$};
\draw[dashed] (0,0) -- (1.5,0);
\foreach \i in {120,240} {
\begin{scope}[rotate=\i]
\draw (0,0) -- (1,0);
\draw (1,-0.25) rectangle node[rotate={\i-180},above=-2ex] {$\mathsf{X}$} (2,0.25);
\end{scope}
}
\begin{scope}[xshift=1.5cm]
\foreach \i in {-60,60} {
\begin{scope}[rotate=\i]
\draw (0,0) -- (1,0);
\draw (1,-0.25) rectangle node[rotate=\i,above=-2ex] {$\mathsf{Y}$} (2,0.25);
\end{scope}
}
\end{scope}
\end{tikzpicture}
\end{tikzcd}
\]
\caption{Pruning over-grown substems whose scopes are disjoint or intersect at one vertex; at $v$ ($v'$, resp.), there is at least one grape.}
\label{figure:lattice1}
\end{figure}
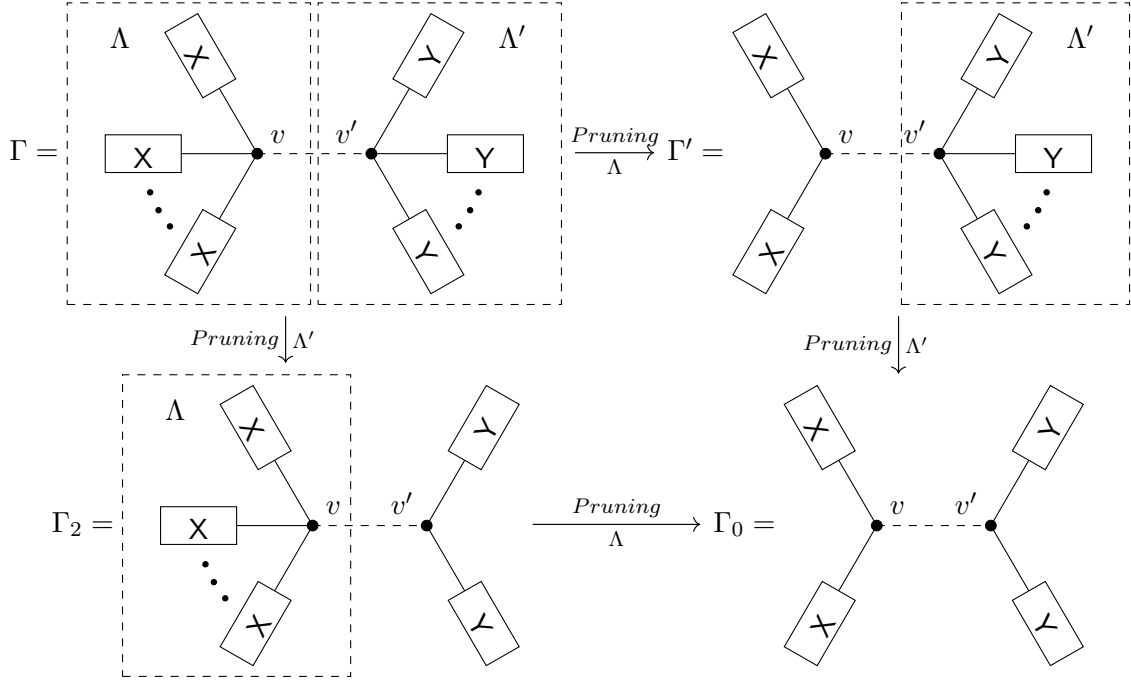

\begin{proposition}\label{proposition:quasi-minimal}
For $\Gamma\in\grapegraph^{\mathsf{large}}_{\mathsf{rich}}$, let $\Gamma'$ and $\Gamma''$ be two bunches of grapes each of which is obtained from $\Gamma$ by pruning over-grown substems repeatedly as many times as possible.
Then $\Gamma'$ and $\Gamma''$ are isometric.
\end{proposition}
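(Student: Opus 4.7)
The plan is to apply Newman's diamond lemma to the abstract rewriting system on $\grapegraph^{\mathsf{large}}_{\mathsf{rich}}/\!\cong$ defined by $\Gamma\rightsquigarrow\Gamma^\ast$ whenever $\Gamma^\ast$ arises from $\Gamma$ by a single elementary pruning (removing one member of one scope of size $\ge 3$). Since the composite operation of Definition~\ref{Def:Quasi-foldingonSuperrich} is a concatenation of such elementary steps, and the bunches of grapes $\Gamma',\Gamma''$ in the statement are $\rightsquigarrow$-normal forms reachable from $\Gamma$, confluence of $\rightsquigarrow$ suffices to conclude. Termination is immediate: each elementary pruning strictly decreases $|\mathcal{V}(\mathsf{T}_\Gamma)|$, a nonnegative integer.

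For local confluence, consider $\Gamma\rightsquigarrow\Gamma_1$ pruning $\Gamma^+_1$ from $\mathcal{S}_v=\{\Gamma^+_1,\dots,\Gamma^+_m\}$ at $v$ and $\Gamma\rightsquigarrow\Gamma_2$ pruning $\Lambda^+_1$ from $\mathcal{S}_{v'}=\{\Lambda^+_1,\dots,\Lambda^+_k\}$ at $v'$. Write $\mathsf{T}_1,\mathsf{S}_1\subset\mathsf{T}_\Gamma$ for the substems corresponding to $\Gamma^+_1\setminus\{v\}$ and $\Lambda^+_1\setminus\{v'\}$, and call $\mathsf{T}_1$ \emph{central} if $v'\in\mathsf{T}_1$ (analogously for $\mathsf{S}_1$). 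A short tree argument on $\mathsf{T}_\Gamma$, using the structure of $\hat{v}$- and $\hat{v'}$-components recalled in Section~\ref{Subsection:BOG}, shows that $\mathsf{T}_1\cap\mathsf{S}_1=\emptyset$ unless at least one of $\mathsf{T}_1,\mathsf{S}_1$ is central. In the disjoint case the two prunings act on disjoint parts of $\Gamma$ and commute, and using $m,k\ge 3$ one checks that each scope remains of size $\ge 3$ after the other is applied, yielding a common descendant $\Gamma_3$. If $\mathsf{T}_1=\mathsf{S}_1$ (which forces $v=v'$ and the scopes to coincide), then $\Gamma_1\cong\Gamma_2$ since all members of a scope are isometric.

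The main obstacle is the interacting case. A counting argument rules out both $\mathsf{T}_1$ and $\mathsf{S}_1$ being central: transporting the scopes along the isometries $\Gamma^+_i\cong\Gamma^+_1$ and $\Lambda^+_j\cong\Lambda^+_1$, one derives the inequalities $|\mathsf{T}_1|\ge p+(k-1)|\mathsf{S}_1|$ and $|\mathsf{S}_1|\ge p+(m-1)|\mathsf{T}_1|$, where $p\ge 1$ is the stem-distance from $v$ to $v'$; substitution yields $|\mathsf{S}_1|\bigl(1-(m-1)(k-1)\bigr)\ge mp$, which has no positive solution because $(m-1)(k-1)\ge 4$. Hence WLOG only $\mathsf{T}_1$ is central, $\mathsf{S}_1\subsetneq\mathsf{T}_1$, and pruning $\Gamma^+_1$ at $v$ completely removes $\Lambda^+_1$. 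To reach a common descendant I exploit the symmetry of $\mathcal{S}_v$: each $\Gamma^+_i$ for $i\ge 2$ contains an analog $v'^{(i)}$ of $v'$ with an analog of $\Lambda^+_1$ at $v'^{(i)}$. From $\Gamma_1$ I continue by pruning these analogs of $\Lambda^+_1$ inside each $\Gamma^+_i$ (valid when the corresponding analog scopes retain size $\ge 3$); from $\Gamma_2$ I further prune the remaining members of $\mathcal{S}_v$ at $v$ together with the analogous internal prunings in each $\Gamma^+_i$. Intermediate prunings can unveil \emph{new} scopes — for instance, a once-distinct $\tilde\Gamma^+_1$ may become isometric to the modified $\tilde\Gamma^+_i$ once all the analogous internal prunings have been carried out — but these new scopes appear symmetrically in both branches, so iterating and inducting on $|\mathcal{V}(\mathsf{T}_\Gamma)|$ drives both branches to isometric normal forms. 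The delicate points — verifying that every scope one needs to prune retains size $\ge 3$ (with separate care in the boundary cases $m=3$ or $k=3$) and that the two sequences of moves produce isometric terminal graphs — constitute the main technical content.
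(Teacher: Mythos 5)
Your overall strategy (termination plus local confluence via Newman's lemma) is viable and is essentially a refinement of the paper's argument, which proves the same confluence claim but takes the composite operation ``reduce one whole scope at one vertex'' as the elementary move and runs its trichotomy on the scopes themselves (disjoint, meeting at $v=v'$, or nested) rather than on the individual pruned members. Your counting argument excluding the doubly-central configuration is a worthwhile explicit addition that the paper leaves implicit.

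However, there is a genuine gap in your ``disjoint'' case. Take $\mathcal{S}_v=\{\Gamma^+_1,\Gamma^+_2,\Gamma^+_3\}$ with $m=3$, and suppose the scope $\mathcal{S}_{v'}$ lies entirely inside $\Gamma_2$ (so $v'\in\Gamma_2$, and by the isometries each $\Gamma_i$ carries an analogous scope at an analogous vertex $v'^{(i)}$). In your classification, pruning $\Lambda^+_1\subset\Gamma_2$ and pruning $\Gamma^+_1$ fall into the disjoint case, since $\mathsf{T}_1=\Gamma_1$ does not contain $v'$ and $\mathsf{S}_1=\Lambda_1$ does not contain $v$. But after removing $\Lambda_1$, the modified $\tilde\Gamma^+_2$ is no longer isometric to $\Gamma^+_1$ and $\Gamma^+_3$, so the isometry class at $v$ drops from $3$ to $2$ and ceases to be a scope: the move ``prune $\Gamma^+_1$ at $v$'' is no longer available, and the two elementary prunings do not commute. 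Your assertion that ``each scope remains of size $\ge 3$ after the other is applied'' is exactly what fails here. Local confluence still holds, but only via the propagation mechanism you reserve for the central case (prune the analogs of $\Lambda^+_1$ inside every $\Gamma_i$ to restore the isometry class at $v$, then prune at $v$); this is precisely how the paper treats this configuration, because its scope-based trichotomy classifies it as nested ($\mathcal{S}_{v'}$ contained in the union of the members of $\mathcal{S}_v$) rather than disjoint. You need to redo the case division so that ``disjoint'' means the pruned member of each scope is disjoint from the \emph{entire} other scope, and route everything else through the propagation argument.
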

\begin{proof}
We first claim the following:
If $\Gamma'$ and $\Gamma''$ are obtained from $\Gamma$ by pruning over-grown substems at $v$ and $v'$, which are not necessarily distinct, then there is a bunch of grapes $\Gamma_0$ obtained from both $\Gamma_1$ and $\Gamma_2$ by (repeatedly) pruning over-grown substems.

If the claim is true, then the set of all bunches of grapes identical to $\Gamma$ up to (repeatedly) pruning over-grown substems has a unique bunch of grapes which can be obtained by (repeatedly) pruning over-grown substems from others. The existence follows easily from the induction argument on the number of vertices of valency $\ge 3$ in the stem.

Now, let us prove the claim. Let $\Lambda$ and $\Lambda'$ be the scopes of pruning at $v$ and $v'$ in $\Gamma$, respectively.
Then there is a trichotomy as follows:
\begin{enumerate}
\item Two scopes $\Lambda$ and $\Lambda'$ are disjoint;
\item Two scopes $\Lambda$ and $\Lambda'$ intersect exactly at $v=v'$;
\item One contains the other, say $\Lambda'\subsetneq \Lambda$.
\end{enumerate}

For the first two cases, it is easy to see that by pruning over-grown substems from both $\Gamma$ and $\Gamma'$, we have a bunch of grapes $\Gamma_0$ as desired. See Figure~\ref{figure:lattice1}.

Suppose that $\Lambda'\subsetneq\Lambda$. Then there are distinct copies  $\Lambda'^{(0)}=\Lambda',\Lambda'^{(1)},\cdots,\Lambda'^{(k)}$ of $\Lambda'$ in $\Lambda$, where $k+1$ is the number of (extended) $\hat v$-components in $\Lambda$ and the vertices corresponding to $v'$ will be denoted by $v'^{(0)}=v',v'^{(1)},\cdots,v'^{(k)}$.
Then in $\Gamma'$, we can prune over-grown substems at $v'^{(0)}, v'^{(1)},\cdots,v'^{(k)}$ with scopes $\Lambda'^{(0)},\Lambda'^{(1)},\cdots,\Lambda'^{(k)}$, respectively, to obtain $\Gamma''$, where it is still possible to prune over-grown substems at $v$. Hence we obtain a grape $\Gamma_0$, which can be obtained from $\Gamma'$ by repeatedly pruning over-grown substems at the vertices corresponding to $v'$. See Figure~\ref{figure:lattice2}.
\end{proof}

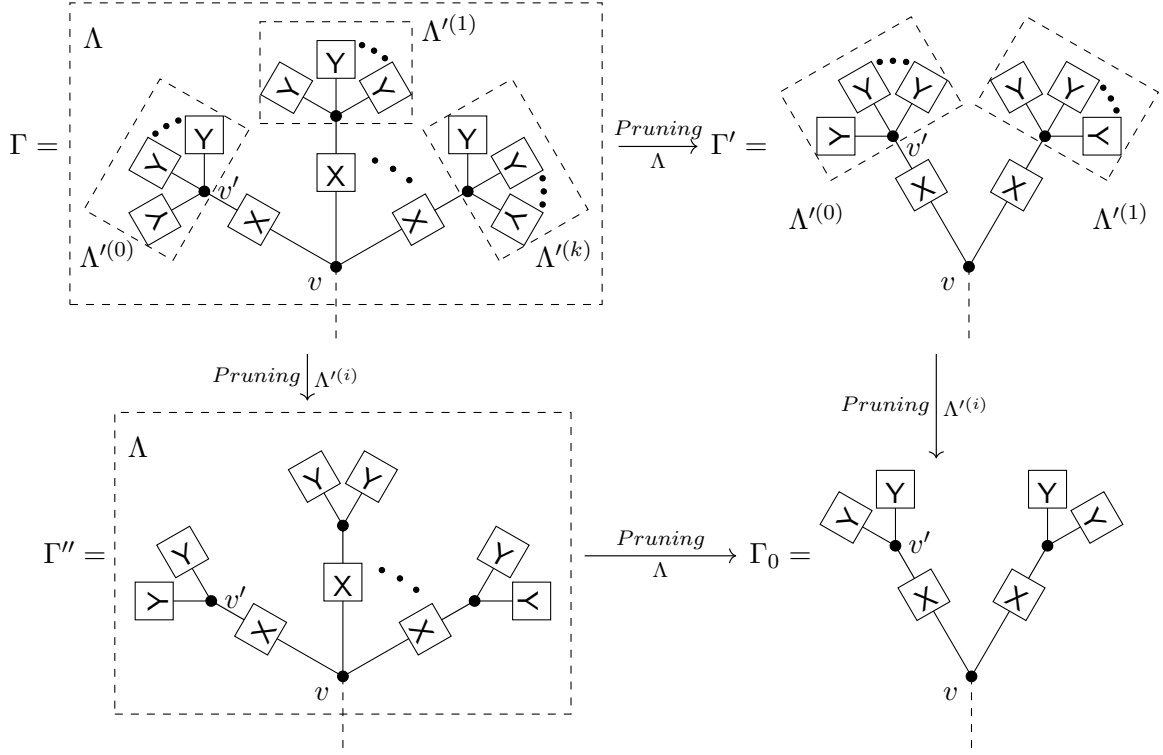
\begin{figure}[ht]
\[
\begin{tikzcd}[ampersand replacement=\&,row sep=1.5pc,column sep=2.5pc]
\Gamma=\begin{tikzpicture}[baseline=1.5cm, rotate=90]
\draw[fill] (0,0) circle (2pt) node[below left] {$v$};
\draw[dashed] (0,0) -- (-1,0);
\foreach \i in {-60,0,60} {
\begin{scope}[rotate=\i]
\draw (0,0) -- (1,0);
\draw (1,-0.25) rectangle node[rotate=\i,above=-2ex] {$\mathsf{X}$} (1.5,0.25);
\draw[fill] (1.5,0) -- (2,0) circle (2pt);
\begin{scope}[xshift=2cm]
\foreach \j in {-60,0,60} {
\begin{scope}[rotate=\j]
\draw (0,0) -- (0.5,0);
\draw (0.5,-0.25) rectangle node[rotate={\i+\j},above=-2ex] {$\mathsf{Y}$} (1,0.25);
\end{scope}
\draw[fill] ({\j/60*10-30}:1) circle (1pt);
}
\end{scope}
\draw[dashed] (1.9,-1) rectangle (3.25,1);
\end{scope}
\draw[fill] ({\i/60*10-30}:1.5) circle (1pt);
}
\draw[dashed] (-.5,-3.5) rectangle (3.5, 3.5);
\draw (3,3.5) node[right] {$\Lambda$};
\draw (0,-3) node {$\Lambda'^{(k)}$};
\draw (3,-1.5) node {$\Lambda'^{(1)}$};
\draw (0,3) node {$\Lambda'^{(0)}$};
\draw (60:2) node[right] {$v'$};
\end{tikzpicture}
\arrow[r,"Pruning","\Lambda"'] 
\arrow[d,"Pruning"',"\Lambda'^{(i)}"]
\&
\Gamma'=\begin{tikzpicture}[baseline=1.5cm, rotate=90]
\draw[fill] (0,0) circle (2pt) node[below left] {$v$};
\draw[dashed] (0,0) -- (-1,0);
\foreach \i in {-30,30} {
\begin{scope}[rotate=\i]
\draw (0,0) -- (1,0);
\draw (1,-0.25) rectangle node[rotate=\i,above=-2ex] {$\mathsf{X}$} (1.5,0.25);
\draw[fill] (1.5,0) -- (2,0) circle (2pt);
\begin{scope}[xshift=2cm]
\foreach \j in {-60,0,60} {
\begin{scope}[rotate=\j]
\draw (0,0) -- (0.5,0);
\draw (0.5,-0.25) rectangle node[rotate={\i+\j},above=-2ex] {$\mathsf{Y}$} (1,0.25);
\end{scope}
\draw[fill] ({\j/60*10-30}:1) circle (1pt);
}
\end{scope}
\draw[dashed] (1.9,-1) rectangle (3.25,1);
\end{scope}
}
\draw (0.5,-2) node {$\Lambda'^{(1)}$};
\draw (0.5,2) node {$\Lambda'^{(0)}$};
\draw (30:2) node[below=1ex, right] {$v'$};
\end{tikzpicture}
\arrow[d,"Pruning"',"\Lambda'^{(i)}"]
\\
\Gamma''=\begin{tikzpicture}[baseline=1.5cm, rotate=90]
\draw[fill] (0,0) circle (2pt) node[below left] {$v$};
\draw[dashed] (0,0) -- (-1,0);
\foreach \i in {-60,0,60} {
\begin{scope}[rotate=\i]
\draw (0,0) -- (1,0);
\draw (1,-0.25) rectangle node[rotate=\i,above=-2ex] {$\mathsf{X}$} (1.5,0.25);
\draw[fill] (1.5,0) -- (2,0) circle (2pt);
\begin{scope}[xshift=2cm]
\foreach \j in {-30,30} {
\begin{scope}[rotate=\j]
\draw (0,0) -- (0.5,0);
\draw (0.5,-0.25) rectangle node[rotate={\i+\j},above=-2ex] {$\mathsf{Y}$} (1,0.25);
\end{scope}
}
\end{scope}
\end{scope}
\draw[fill] ({\i/60*10-30}:1.5) circle (1pt);
}
\draw[dashed] (-.5,-3) rectangle (3.5, 3);
\draw (3,3) node[right] {$\Lambda$};
\draw (60:2) node[right] {$v'$};
\end{tikzpicture}
\arrow[r,"Pruning","\Lambda"']
\&
\Gamma_0=\begin{tikzpicture}[baseline=1.5cm, rotate=90]
\draw[fill] (0,0) circle (2pt) node[below left] {$v$};
\draw[dashed] (0,0) -- (-1,0);
\foreach \i in {-30,30} {
\begin{scope}[rotate=\i]
\draw (0,0) -- (1,0);
\draw (1,-0.25) rectangle node[rotate=\i,above=-2ex] {$\mathsf{X}$} (1.5,0.25);
\draw[fill] (1.5,0) -- (2,0) circle (2pt);
\begin{scope}[xshift=2cm]
\foreach \j in {-30,30} {
\begin{scope}[rotate=\j]
\draw (0,0) -- (0.5,0);
\draw (0.5,-0.25) rectangle node[rotate={\i+\j},above=-2ex] {$\mathsf{Y}$} (1,0.25);
\end{scope}
}
\end{scope}
\end{scope}
}
\draw (30:2) node[right] {$v'$};
\end{tikzpicture}
\end{tikzcd}
\]
\caption{Pruning over-grown substems whose scopes are nested}
\label{figure:lattice2}
\end{figure}

\begin{definition}
For $\Gamma\in\grapegraph^{\mathsf{large}}$, the \emph{quasi-minimal representative} $\Gamma_{\mathsf{min}}$ of $\Gamma$ is a bunch of grapes in $\grapegraph^{\mathsf{large}}_{\mathsf{rich}}$ obtained as follows:
Let $\Gamma_{\mathsf{normal}}$ be the normal representative of $\Gamma$ as in Lemma~\ref{lemma:elimination} and let $\Gamma_{\mathsf{rich}}$ be the rich representative of $\Gamma_{\mathsf{normal}}$ as in Lemma~\ref{lemma:richrepresentative}.
Then $\Gamma_{\mathsf{min}}$ is a bunch of grapes obtained from $\Gamma_{\mathsf{rich}}$ by repeatedly pruning over-grown substems until the process terminates.

We say $\Gamma$ is \emph{quasi-minimal} if $\Gamma=\Gamma_{\mathsf{min}}$, and denote the set of quasi-minimal bunches of grapes by $\grapegraph^{\mathsf{large}}_{\mathsf{min}}$.
\end{definition}

\begin{theorem}\label{theorem:unique minimal representative}
Let $\Gamma\in\grapegraph^{\mathsf{large}}$. Then there is a unique quasi-minimal representative $\Gamma_{\mathsf{min}}$ up to isometry.
\end{theorem}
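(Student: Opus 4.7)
The plan is to decompose the definition of $\Gamma_{\mathsf{min}}$ into its three constituent stages --- $\Gamma \rightsquigarrow \Gamma_{\mathsf{normal}} \rightsquigarrow \Gamma_{\mathsf{rich}} \rightsquigarrow \Gamma_{\mathsf{min}}$ --- and to verify uniqueness up to isometry at each stage. Since every operation involved is defined intrinsically from the abstract combinatorial data $(\mathsf{T},\ell)$, each stage is functorial under isometry, so it suffices to argue on the level of a fixed representative.

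The first two stages are straightforward. Lemma~\ref{lemma:elimination} already guarantees that pruning empty twigs and smoothing twigs terminate in finite time and yield a uniquely determined $\Gamma_{\mathsf{normal}}\in\grapegraph^{\mathsf{large}}_{\mathsf{normal}}$; the underlying reason is that these are local operations whose applicability and effect at distinct twigs do not interfere, while termination is immediate from the fact that each pruning strictly decreases $|\mathcal{V}(\mathsf{T})|$. Lemma~\ref{lemma:richrepresentative} then produces $\Gamma_{\mathsf{rich}}$ from $\Gamma_{\mathsf{normal}}$ via an explicit closed-form prescription for the loop function (namely $\min\{\ell(v),2\}$ at leaves and $1$ elsewhere), so there is nothing to choose.

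The main obstacle is the third stage, where one iterates pruning of over-grown substems starting from $\Gamma_{\mathsf{rich}}$; a priori, the order in which the available scopes of pruning are addressed, as well as the choice of which stem within a scope to discard, could produce non-isometric terminal graphs. To dispose of this I would invoke Proposition~\ref{proposition:quasi-minimal}, which is exactly a diamond-lemma confluence statement: whenever two one-step prunings are available, a common further descendant can be exhibited by performing additional prunings, classified by whether the two scopes $\Lambda,\Lambda'$ are disjoint, meet at a single stem-vertex, or are nested (Figures~\ref{figure:lattice1} and~\ref{figure:lattice2}; the nested case requires pruning several isometric copies of the smaller scope sitting inside the larger one). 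Combined with induction on the number of valency-$\ge 3$ vertices in the stem --- and with termination since each pruning strictly decreases $|\mathcal{V}(\mathsf{T})|$ --- this confluence forces the terminal output of the iteration to be unique up to isometry. Chaining the three uniqueness statements yields the theorem.
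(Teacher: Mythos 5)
Your proposal is correct and follows essentially the same route as the paper: the paper's proof of this theorem is precisely the citation of Lemma~\ref{lemma:elimination} (uniqueness of the normal representative), the explicit formula of Lemma~\ref{lemma:richrepresentative} for the rich representative, and the confluence statement of Proposition~\ref{proposition:quasi-minimal} for the iterated pruning of over-grown substems, combined with termination and induction on the number of valency-$\ge 3$ stem vertices. Your diamond-lemma framing and the trichotomy of scopes (disjoint, meeting at a vertex, nested) are exactly the content of that proposition, so there is nothing to add.
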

\begin{proof}
This follows from Lemma~\ref{lemma:elimination}, Corollary~\ref{cor:minimalrich} and Proposition~\ref{proposition:quasi-minimal}.
\end{proof}

In Appendix~\ref{appendix:algorithms}, we provide Algorithm~\ref{alg:quasi-minimal} to obtain the quasi-minimal representative $\Gamma_{\mathsf{min}}$ of $\Gamma\in\grapegraph^{\mathsf{large}}$.

\begin{theorem}\label{thm:QIminimal}
For each $\Gamma\in\grapegraph^{\mathsf{large}}$, $\mathbb{B}_2(\Gamma)$ is quasi-isometric to $\mathbb{B}_2(\Gamma_{\mathsf{min}})$.
\end{theorem}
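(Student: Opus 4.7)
The plan is to assemble this theorem directly from the three quasi-isometry-preserving operations established in Section~\ref{section:Operations} together with the well-definedness of $\Gamma_{\mathsf{min}}$ proved in Theorem~\ref{theorem:unique minimal representative}. Recall that by construction (see Algorithm~\ref{alg:quasi-minimal}), the quasi-minimal representative $\Gamma_{\mathsf{min}}$ is obtained from $\Gamma$ through a three-stage process: first pass to the normal representative $\Gamma_{\mathsf{normal}}\in\grapegraph^{\mathsf{large}}_{\mathsf{normal}}$ by pruning empty twigs and smoothing twigs; then pass to the rich representative $(\Gamma_{\mathsf{normal}})_{\mathsf{rich}}=\Gamma_{\mathsf{rich}}\in\grapegraph^{\mathsf{large}}_{\mathsf{rich}}$ by picking/attaching grapes until no over-grown grape remains; finally pass to $\Gamma_{\mathsf{min}}$ by repeatedly pruning over-grown substems until the process terminates.

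First I would apply Theorem~\ref{theorem:2-free factor} to obtain a quasi-isometry $\mathbb{B}_2(\Gamma)\sim_{QI}\mathbb{B}_2(\Gamma_{\mathsf{normal}})$. Next I would apply Corollary~\ref{cor:minimalrich} (which itself is an iterated application of Theorem~\ref{theorem:loop reducing}) to get $\mathbb{B}_2(\Gamma_{\mathsf{normal}})\sim_{QI}\mathbb{B}_2(\Gamma_{\mathsf{rich}})$. For the final stage, since $\Gamma_{\mathsf{min}}$ is reached from $\Gamma_{\mathsf{rich}}$ in finitely many steps (the process terminates because each pruning strictly decreases the number of vertices of $\mathsf{T}$), there is a finite sequence $\Gamma_{\mathsf{rich}}=\Lambda_0,\Lambda_1,\dots,\Lambda_k=\Gamma_{\mathsf{min}}$ in $\grapegraph^{\mathsf{large}}_{\mathsf{rich}}$ such that each $\Lambda_{i+1}$ is obtained from $\Lambda_i$ by pruning a single over-grown substem. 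A direct induction on $i$ using Theorem~\ref{theorem:quasi folding} then yields $\mathbb{B}_2(\Gamma_{\mathsf{rich}})\sim_{QI}\mathbb{B}_2(\Gamma_{\mathsf{min}})$.

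Composing these three quasi-isometries gives the desired conclusion $\mathbb{B}_2(\Gamma)\sim_{QI}\mathbb{B}_2(\Gamma_{\mathsf{min}})$. The statement of the theorem is unambiguous because Theorem~\ref{theorem:unique minimal representative} guarantees that $\Gamma_{\mathsf{min}}$ is well defined up to isometry, regardless of the order in which the pruning operations at the last stage are applied; in particular the quasi-isometry class of $\mathbb{B}_2(\Gamma_{\mathsf{min}})$ does not depend on any choices made during the algorithm. There is no genuine obstacle here: all the analytic and geometric work (the graph-of-groups/free-product manipulations of Papasoglu--Whyte used in Theorem~\ref{theorem:2-free factor} and Theorem~\ref{theorem:loop reducing}, and the inductive gluing of relative quasi-isometries over the Bass--Serre tree in Theorem~\ref{theorem:quasi folding}) has already been carried out. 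Thus the proof is essentially a concatenation of these three results, and the only thing to check is that the operations appearing in Algorithm~\ref{alg:quasi-minimal} are precisely the ones to which Theorems~\ref{theorem:2-free factor}, \ref{theorem:loop reducing} and \ref{theorem:quasi folding} apply, which is clear from the definitions.
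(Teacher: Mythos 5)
Your proof is correct and follows exactly the route the paper takes: the paper's own proof of this theorem is a one-line concatenation of Theorems~\ref{theorem:2-free factor}, \ref{theorem:loop reducing} (via Corollary~\ref{cor:minimalrich}) and \ref{theorem:quasi folding}, organized by Algorithm~\ref{alg:quasi-minimal}. Your expansion of the three stages and the finite-induction argument for the pruning phase is a faithful, slightly more detailed rendering of the same argument.
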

\begin{proof}
This is a consequence of Theorems~\ref{theorem:2-free factor}, \ref{theorem:loop reducing}, and \ref{theorem:quasi folding} and Algorithm~\ref{alg:quasi-minimal}.
\end{proof}

Therefore we have the following commutative diagram.
\[
\begin{tikzcd}[column sep=14pc, row sep=2pc]
\grapegraph^{\mathsf{large}}\big/\cong
\ar[r,->>,"\mathbb{B}_2(-)"]
\ar[rd,->>, end anchor=north west, "\pi_1(UP_2(-))",sloped]
\ar[rdd,->>, pos=0.35, end anchor=north west, "\cI(\bar{UP_2(-)})", sloped]
\ar[dd,->>, sloped, yshift=1ex, "(-)_{\mathsf{min}}"] 
&
\{\mathbb{B}_2(\Gamma)\mid \Gamma\in \grapegraph^{\mathsf{large}}_{\mathsf{min}}\}\big/\simeq_\text{q.i.}
\ar[d, "\cong"',"\text{Cor.~\ref{cor:QIbetweenGBGs}}"]\\
& \{\pi_1(UP_2(\Gamma))\mid \Gamma\in \grapegraph^{\mathsf{large}}_{\mathsf{min}}\}\big/\simeq_\text{q.i.}
\ar[d,->>,"\text{Thm.~\ref{theorem:IsobetInt}}"] \\
\grapegraph^{\mathsf{large}}_{\mathsf{min}}\big/\cong
\ar[ruu,->>, pos=0.35, sloped, end anchor=south west, "\mathbb{B}_2(-)"'] 
\ar[ru,->>, end anchor=south west, "\pi_1(UP_2(-))"',sloped]
\ar[r,->>, "\cI(\bar{UP_2(-)})"']
\ar[uu, sloped, yshift=1ex, hook]
& \{\cI(\bar{UP_2(\Gamma)})\mid \Gamma\in\grapegraph^{\mathsf{large}}_{\mathsf{min}}\}\big/\cong
\end{tikzcd}
\]

Our goal of this section is to show that there is a one-to-one correspondence between $\grapegraph^{\mathsf{large}}_{\mathsf{min}}$ and $\{\cI(\bar{UP_2(\Gamma)})\mid \Gamma\in\grapegraph^{\mathsf{large}}_{\mathsf{min}}\}$ up to isomorphism, which implies the injectivity of the bottom map $\cI(\bar{UP_2(-)})$ and therefore not only the vertical maps on the right but also all maps from $\grapegraph^{\mathsf{large}}_{\mathsf{min}}\big/\cong$ are bijective in the above diagram.

\begin{theorem}\label{theorem:Quasi-minimalRepresentative}
Let $\Gamma,\Gamma'\in\grapegraph^{\mathsf{large}}_{\mathsf{min}}$ be quasi-minimal.
If there is an isomorphism between $\cI(\bar{UP_2(\Gamma)})$ and $\cI(\bar{UP_2(\Gamma')})$, then $\Gamma$ and $\Gamma'$ are isometric.
\end{theorem}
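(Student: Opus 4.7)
The plan is to reconstruct the pair $(\mathsf{T},\ell)$ representing $\Gamma$ purely from the labeled simplicial structure of $\cI(\bar{UP_2(\Gamma)})$, so that the given isomorphism $\bar\Phi$ necessarily transports these data and yields $\Gamma\cong\Gamma'$. As a preliminary, I would descend $\bar\Phi$ to an isomorphism $\Phi\colon\cRI(UP_2(\Gamma))\to\cRI(UP_2(\Gamma'))$: this is available because $\cI(\bar{UP_2(\Gamma)})$ is the development of $\cRI(UP_2(\Gamma))$ (Theorem~\ref{theorem:structureofI}) and the quotient map $\rho$ is determined intrinsically by the labeling (two simplices of $\cI$ share the same $\rho$-image iff their push-forward labels coincide), so a label-preserving isomorphism respects this equivalence and descends.

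To recover $\mathsf{T}$, I plan to use that vertices of $\cRI(UP_2(\Gamma))$ biject with edges of $\mathsf{T}$ (Lemma~\ref{lem:TwigCorrespondence}) and simplices with path substems (Lemma~\ref{Lem:RIconnected}), while Lemma~\ref{lem:OrderOnSimplex} ensures that $\Phi$ preserves up to reversal the canonical order on each simplex, matching the linear order of twigs along the underlying path. Following the inductive scheme sketched in the introduction, I would then identify a canonical ``peripheral'' vertex $\bfv_t\in\cRI(UP_2(\Gamma))$ whose twig $t=[v,w]$ meets a leaf $v$ of $\mathsf{T}$ (characterized intrinsically by the fact that one factor of its label never strictly grows under passage to any larger simplex through $\bfv_t$), peel off this leaf to obtain a sub-bunch $\Gamma_1\subset\Gamma$ (and likewise $\Gamma'_1\subset\Gamma'$) whose intersection complexes remain isomorphic under the restriction of $\bar\Phi$, and induct on the number of leaves of $\mathsf{T}$ to conclude $\mathsf{T}\cong\mathsf{T}'$.

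To recover $\ell$, I rely on the fact that quasi-minimality forces $\ell(v)=1$ at every interior vertex of $\mathsf{T}$ and $\ell(v)\in\{1,2\}$ at every leaf. For a leaf $v$ with adjacent twig $t$, the label of $\bfv_t\in\cRI(UP_2(\Gamma))$ is $\Gamma_v\itimes\Gamma_{\mathsf{T}\setminus\{v\}}$, whose $v$-side factor $\Gamma_v$ has fundamental group $\mathbb{F}_{\ell(v)}$, so either $\mathbb{Z}$ or $\mathbb{F}_2$. Since $\Phi$ is a genuine isomorphism and not merely a semi-isomorphism, Definition~\ref{Def:Morphism} forces the fundamental groups of corresponding labels to be quasi-isometric; because $\mathbb{Z}$ and $\mathbb{F}_2$ are not quasi-isometric, the values $\ell(v)$ at leaves are transported by $\Phi$ to matching values on $\mathsf{T}'$, and combining with the reconstruction of $\mathsf{T}$ gives $\Gamma\cong\Gamma'$.

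The main technical obstacle will be making the inductive peeling rigorous: specifying the canonical peripheral vertex at each step purely in terms of the labeled complex $\cRI(UP_2(\Gamma))$, verifying that the resulting substructure is literally $\cRI(UP_2(\Gamma_1))$ for an identifiable sub-bunch $\Gamma_1$, and dealing with the potential loss of quasi-minimality after one peeling step so that the induction still closes (possibly by broadening the inductive hypothesis to a suitable class containing such sub-bunches). Quasi-minimality is essential throughout: it confines $\ell$ to $\{1,2\}$ so that the $\pi_1$-quasi-isometry distinction sharply detects leaf loop numbers, and via Proposition~\ref{proposition:quasi-minimal} it rules out ambiguity from symmetric over-grown substems that would otherwise produce non-isometric bunches with indistinguishable intersection complexes.
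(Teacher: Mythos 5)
Your overall strategy (use label data plus the canonical order to reconstruct $(\mathsf{T},\ell)$, detect leaf grape numbers via the quasi-isometry type $\mathbb{Z}$ vs.\ $\mathbb{F}_2$ of a label factor, and induct on the tree) is in the right spirit, but there are two concrete problems. First, the descent of $\bar\Phi$ to an isomorphism $\Phi\colon\cRI(UP_2(\Gamma))\to\cRI(UP_2(\Gamma'))$ is not available. Two simplices of $\cI(\bar{UP_2(\Gamma)})$ have the same $\rho$-image iff they are induced from the same path substem of $\mathsf{T}$, not iff their labels coincide: distinct path substems can have isometric pairs of $\mathring{\mathsf{P}}$-components (any symmetry of $\mathsf{T}$ produces this), and in any case an isomorphism in the sense of Definition~\ref{Def:Morphism} only preserves the strict-inclusion pattern and the quasi-isometry type of $\pi_1$ of labels, not the labels themselves. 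So $\bar\Phi$ need not respect the $\pi_1$-orbits, and the paper accordingly never descends it; it works entirely upstairs in $\cI$, using Lemmas~\ref{lemma:deformation retract} and~\ref{lem:OrderOnSimplex} together with the block decomposition of Lemma~\ref{Lem:EmbeddedRI} and Corollary~\ref{corollary:components and retraction}. Your leaf-detection and $\ell$-detection steps can be salvaged upstairs, but the peeling then requires matching infinitely many blocks, and a priori $\bar\Phi$ could send different lifts of the same leaf-twig vertex to lifts of different leaf-twig vertices of $\Gamma'$, so "peel leaf $v$" on one side does not automatically correspond to "peel a single leaf" on the other.

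The second and more serious gap is the one you flag yourself but do not resolve: after one peeling step the sub-bunch $\Gamma_1$ need not be quasi-minimal (removing a leaf can create three mutually isometric extended $\hat v$-components where there were only two), so the inductive hypothesis only yields that the quasi-minimal representatives of $\Gamma_1$ and $\Gamma'_1$ are isometric, which does not give $\Gamma_1\cong\Gamma'_1$ and hence does not let you reattach the leaf and close the induction. The paper explicitly warns against exactly this induction in the remark following Lemma~\ref{Lem:ComponentPreserving}, and the actual content of the proof is the machinery that replaces it: the radial sphere decomposition $\mathcal{S}_{\mathsf{T}}(k)$ from the center outward, the invariance of the vertex classes $\bar{\mathcal{V}}_\Gamma(k)$ under $\bar\Phi$ (Lemma~\ref{lem:PreservingVertices}), the block-to-block correspondence (Lemma~\ref{Lem:ComponentPreserving}), and above all the counting of ray versus non-ray substems in Lemma~\ref{Lem:UnionofRays}, where quasi-minimality caps the number of isometric branches at four and the quasi-isometry types of the relevant labels distinguish the cases. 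Without an argument of this kind your induction does not close, so the proposal as written does not constitute a proof.
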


We defer the proof of Theorem~\ref{theorem:Quasi-minimalRepresentative} to Section~\ref{section:QIBetweenConfSpaces} while we consider its consequences. 

\begin{corollary}\label{corollary:quasi-isometry invariant}
For $\Gamma,\Gamma'\in\grapegraph^{\mathsf{large}}$, $\mathbb{B}_2(\Gamma)$ and $\mathbb{B}_2(\Gamma')$ are quasi-isometric if and only if $\cI(\bar{UP_2(\Gamma)})$ and $\cI(\bar{UP_2(\Gamma')})$ are isomorphic if and only if the quasi-minimal representatives $\Gamma_{\mathsf{min}}$ and $\Gamma'_{\mathsf{min}}$ are isometric.
\end{corollary}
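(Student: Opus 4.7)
The plan is to prove the three-way equivalence by establishing the cycle $(a) \Rightarrow (b) \Rightarrow (c) \Rightarrow (a)$, with the quasi-minimal representative serving as a pivot in the step $(b) \Rightarrow (c)$.

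The implication $(c) \Rightarrow (a)$ will follow immediately from Theorem~\ref{thm:QIminimal}: if $\Gamma_{\mathsf{min}} \cong \Gamma'_{\mathsf{min}}$, then $\mathbb{B}_2(\Gamma) \simeq_{\text{q.i.}} \mathbb{B}_2(\Gamma_{\mathsf{min}}) \cong \mathbb{B}_2(\Gamma'_{\mathsf{min}}) \simeq_{\text{q.i.}} \mathbb{B}_2(\Gamma')$.

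For $(a) \Rightarrow (b)$, my plan is to first reduce to the normal case using Theorem~\ref{theorem:2-free factor}. This reduction requires a quick check that pruning empty twigs and smoothing twigs do not affect $UP_2(\Gamma)$ as a special square complex: by Corollary~\ref{corollary:maximal}, an empty twig ending at a leaf fails the ``no leaves'' hypothesis and so supports no maximal product subcomplex, while smoothing is a homeomorphism on the underlying graph. With $\Gamma, \Gamma' \in \grapegraph^{\mathsf{large}}_{\mathsf{normal}}$, Proposition~\ref{Prop:FreeFactor} will yield $\mathbb{B}_2(\Gamma) \cong \pi_1(UP_2(\Gamma)) * \mathbb{F}_n$ with $n \geq 2$, and similarly for $\Gamma'$. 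Proposition~\ref{Prop:connectedsimplicial} furnishes the one-endedness of $\bar{UP_2(\Gamma)}$ required to apply the Papasoglu--Whyte Theorem~\ref{PW}, which promotes the quasi-isometry in (a) to $\bar{UP_2(\Gamma)} \simeq_{\text{q.i.}} \bar{UP_2(\Gamma')}$. Since $UP_2(\Gamma)$ is a compact special square complex by Lemma~\ref{Lem:UP_2Special}, applying Theorem~\ref{theorem:IsobetInt} will conclude with the desired isomorphism $\cI(\bar{UP_2(\Gamma)}) \cong \cI(\bar{UP_2(\Gamma')})$.

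The implication $(b) \Rightarrow (c)$ will be proved by bootstrapping the two implications already established. Theorem~\ref{thm:QIminimal} gives $\mathbb{B}_2(\Gamma) \simeq_{\text{q.i.}} \mathbb{B}_2(\Gamma_{\mathsf{min}})$, and applying $(a) \Rightarrow (b)$ to this quasi-isometry yields $\cI(\bar{UP_2(\Gamma)}) \cong \cI(\bar{UP_2(\Gamma_{\mathsf{min}})})$, and analogously for $\Gamma'$. Combined with the hypothesis (b), this produces an isomorphism $\cI(\bar{UP_2(\Gamma_{\mathsf{min}})}) \cong \cI(\bar{UP_2(\Gamma'_{\mathsf{min}})})$ between intersection complexes of two quasi-minimal bunches of grapes, so Theorem~\ref{theorem:Quasi-minimalRepresentative} will conclude $\Gamma_{\mathsf{min}} \cong \Gamma'_{\mathsf{min}}$. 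The main obstacle is Theorem~\ref{theorem:Quasi-minimalRepresentative} itself, whose proof is deferred to Section~\ref{section:QIBetweenConfSpaces}; granted that, the corollary is a bookkeeping synthesis of Propositions~\ref{Prop:FreeFactor} and~\ref{Prop:connectedsimplicial}, Lemma~\ref{Lem:UP_2Special}, and Theorems~\ref{PW}, \ref{theorem:IsobetInt}, and~\ref{thm:QIminimal}. The subtlest auxiliary point is the invariance of $UP_2(-)$ under normalization, which is what allows us to work with the intersection complex of the original $\Gamma$ rather than of $\Gamma_{\mathsf{normal}}$.
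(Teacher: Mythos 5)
Your proposal is correct and follows essentially the same route as the paper: the cycle is assembled from Theorem~\ref{thm:QIminimal} for the step from isometric quasi-minimal representatives to quasi-isometric braid groups, the combination of Proposition~\ref{Prop:FreeFactor}, Proposition~\ref{Prop:connectedsimplicial}, Theorem~\ref{PW} and Theorem~\ref{theorem:IsobetInt} for the step from quasi-isometric braid groups to isomorphic intersection complexes, and Theorem~\ref{theorem:Quasi-minimalRepresentative} (after passing to quasi-minimal representatives) for the remaining implication. Your auxiliary observation that pruning empty twigs and smoothing does not change $UP_2(-)$ or its intersection complex is the same normalization step the paper relies on implicitly.
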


By Example~\ref{Ex:FreeGroupCase} and Lemma~\ref{NoHangingEdge}, if $\Gamma=(\mathsf{T},\ell)\in\grapegraph$ is small, then it is the free group of rank $\sum_{v\in \mathcal{V}(\mathsf{T})} N(\val_{\mathsf{T}}(v),\loops(v))$, where $N(n,\ell)= \frac{(n+\ell) ( n+3\ell-3 )}{2} + 1$.
Combining this fact with Lemma~\ref{lemma:large} and Corollary~\ref{corollary:quasi-isometry invariant}, we have the following theorem and provide Algorithm~\ref{alg:quasi-isom} in Appendix~\ref{appendix:algorithms}.

\begin{theorem}\label{thm:existenceofAlgorithm}
Let $\Gamma$ and $\Gamma'$ be bunches of grapes.
Then there exists an algorithm to determine whether $\mathbb{B}_2(\Gamma)$ and $\mathbb{B}_2(\Gamma')$ are quasi-isometric by looking at $\Gamma$ and $\Gamma'$.
\end{theorem}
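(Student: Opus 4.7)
The plan is to reduce the question to a finite case analysis based on whether each input graph is large or small, using only tools already established in the paper. Given a bunch of grapes $\Gamma = (\mathsf{T}, \ell)$, the property of being large (Definition~\ref{Def:BunchesofGrapes}, the paragraph above Lemma~\ref{lemma:large}) is a trivial finite check on the defining data: one simply scans $\mathcal{V}(\mathsf{T})$ for two distinct vertices with $\ell > 0$. By Lemma~\ref{lemma:large}, this determines membership in $\mathcal{F}$, and by Theorem~\ref{theorem:freegroupcase} together with Example~\ref{Ex:FreeGroupCase} and Lemma~\ref{NoHangingEdge}, when $\Gamma$ is small the group $\mathbb{B}_2(\Gamma)$ is a free group whose rank is given by an explicit finite sum over $\mathcal{V}(\mathsf{T})$, hence is directly computable.

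The algorithm then branches as follows. If both $\Gamma$ and $\Gamma'$ are small, compute the two free-group ranks $N, N'$ as above and decide using the classical fact that $\mathbb{F}_N$ and $\mathbb{F}_{N'}$ are quasi-isometric iff $N = N' \in \{0,1\}$ or $\min\{N,N'\} \geq 2$. If exactly one is large, say $\Gamma$, then by Proposition~\ref{Prop:FreeAbelianSubgroup} the group $\mathbb{B}_2(\Gamma)$ contains an undistorted $\mathbb{Z}^2$ subgroup, while $\mathbb{B}_2(\Gamma')$ is free and hence hyperbolic; as in the proof of Proposition~\ref{Prop:FreeAbelianSubgroup}, no quasi-isometric embedding $\mathbb{Z}^2 \hookrightarrow \mathbb{F}_{N'}$ exists, so the groups are not quasi-isometric. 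The output in each of these cases is clearly computable in finite time.

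The main case is when both $\Gamma$ and $\Gamma'$ are large. Here I would invoke Algorithm~\ref{alg:quasi-minimal}, which by Main Theorem~\ref{maintheorem:existence of quasi-minimal} (equivalently Theorem~\ref{theorem:unique minimal representative}) terminates in finite time and produces the quasi-minimal representatives $\Gamma_{\mathsf{min}}, \Gamma'_{\mathsf{min}} \in \grapegraph^{\mathsf{large}}_{\mathsf{min}}$. By Corollary~\ref{corollary:quasi-isometry invariant}, $\mathbb{B}_2(\Gamma)$ and $\mathbb{B}_2(\Gamma')$ are quasi-isometric if and only if $\Gamma_{\mathsf{min}}$ and $\Gamma'_{\mathsf{min}}$ are isometric as bunches of grapes. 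Since $\Gamma_{\mathsf{min}}$ and $\Gamma'_{\mathsf{min}}$ are finite graphs equipped with a finite integer-valued function $\ell$ on the vertex set of the stem, the isometry test reduces to searching among the finitely many bijections between the vertex sets of the two stems for one that preserves adjacency and the value of $\ell$; this is clearly decidable in finite time.

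Concatenating the four branches yields the required algorithm, and the correctness of each branch has already been established in the earlier sections. There is no genuine obstacle remaining: the content of the theorem is the assembly of the quasi-isometric classification statement (Corollary~\ref{corollary:quasi-isometry invariant}) with the effectiveness of computing the quasi-minimal representative (Algorithm~\ref{alg:quasi-minimal}) and of testing finite graph isometry. Thus the only remark worth including in the proof is the explicit enumeration of the four cases above, together with a pointer to where each piece has already been carried out.
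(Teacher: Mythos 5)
Your proposal is correct and follows essentially the same route as the paper: the paper's Algorithm~\ref{alg:quasi-isom} branches on largeness exactly as you do, compares the computable free-group ranks via $\min\{N,2\}=\min\{N',2\}$ in the small--small case, declares non-quasi-isometry when exactly one graph is large, and in the large--large case computes the quasi-minimal representatives via Algorithm~\ref{alg:quasi-minimal} and tests them for isometry, with correctness supplied by Corollary~\ref{corollary:quasi-isometry invariant}. The only cosmetic difference is that you justify the mixed case by the undistorted $\mathbb{Z}^2$ from Proposition~\ref{Prop:FreeAbelianSubgroup} rather than citing Lemma~\ref{lemma:large} and Theorem~\ref{theorem:freegroupcase} directly, which amounts to the same argument.
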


\subsection{Proof of Theorem~\ref{theorem:Quasi-minimalRepresentative}}\label{section:QIBetweenConfSpaces}
Throughout this subsection, we assume that $\Gamma=(\mathsf{T},\ell)$ and $\Gamma'=(\mathsf{T}',\ell')$ are in $\grapegraph^{\mathsf{large}}_{\mathsf{min}}$ but both have \emph{two grapes} at each vertex which is not a leaf of the stems, and there is an isomorphism $\bar\Phi:\cI(\bar{UP_2(\Gamma)})\to\cI(\bar{UP_2(\Gamma')})$.
Note that adding grapes at a vertex of valency at least $2$ does not change the isomorphism type of $\cI(\bar{UP_2(-)})$ by Theorem~\ref{theorem:loop reducing} and incur additional process of pruning over-grown substems.

For simplicity, we denote  $\cRI({UP_2(\Gamma)})$, $\cI(\bar{UP_2(\Gamma)})$, $\cRI({UP_2(\Gamma')})$ and $\cI(\bar{UP_2(\Gamma')})$ by $\cRI$, $\cI$, $\cRI'$ and $\cI'$, respectively. 
Since $\cI$ and $\cI'$ have the same dimension, which is exactly the same as $\operatorname{diam}(\mathsf{T})-1$, both $\mathsf{T}$ and $\mathsf{T}'$ have the same diameter $D=\operatorname{diam}(\mathsf{T})$ which is at least $1$ since $\Gamma$ and $\Gamma'$ are large.

Note that any maximal simplex of $\cI$ is induced from a path substem whose endpoints are leaves of $\mathsf{T}$ (see Remark~\ref{Ex:PathofLengthN}).
Combining this fact with the fact that $\bar\Phi$ sends maximal simplices to maximal simplices, we can easily obtain the following proposition. 

\begin{proposition}\label{prop:SimpleCase}
If there is no maximal simplex in $\cI$ of dimension $<(D-1)$, then Theorem~\ref{theorem:Quasi-minimalRepresentative} holds.
\end{proposition}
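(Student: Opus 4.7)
The plan. By Remark~\ref{Ex:PathofLengthN} and Lemma~\ref{Lem:RIconnected}, each maximal $(k-1)$-simplex of $\cI$ corresponds to a maximal path substem of length $k$ with leaf endpoints in $\mathsf{T}$. The hypothesis thus translates to: every leaf-to-leaf path substem of $\mathsf{T}$ has length exactly $D$, and likewise for $\mathsf{T}'$. An elementary tree argument --- inspecting the branches at any putative second vertex of valency $\ge 3$ and using that leaf-to-leaf distances are all equal to $D$ --- shows that $\mathsf{T}$ contains at most one vertex of valency $\ge 3$; if such a center $c$ exists, all $n \ge 3$ branches at $c$ are paths of equal length $D/2$ (forcing $D$ even). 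Hence $\mathsf{T}$ is either a \emph{path} $\mathsf{P}_D$ or a \emph{spider} with $n \ge 3$ pendant paths of length $D/2$ at a central vertex, and the same dichotomy applies to $\mathsf{T}'$.

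Next, quasi-minimality of $\Gamma$ severely constrains the spider case: pruning over-grown substems at $c$ is blocked only if no three branches at $c$ are mutually isometric. Since each branch is determined by its length $D/2$ together with the number of grapes at its terminal leaf, and this grape count lies in $\{1,2\}$ by richness and absence of over-grown grapes, the branches realize at most two isometry types, giving $n \in \{3,4\}$. Therefore $\Gamma$ belongs to an explicit, finite list parametrized by $D$, the type (path vs.\ spider), the spider valency $n \in \{3,4\}$ when applicable, and the grape counts $\{1,2\}$ at the leaves.

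Finally, I would verify that $\cI$ together with its labels detects each of these parameters, forcing $\Gamma \cong \Gamma'$. The diameter $D$ is recovered as the dimension of maximal simplices plus one. The dichotomy path vs.\ spider, together with the value of $n$ in the spider case, is detected by the intersection pattern of maximal simplices: in the path case, $\cRI$ consists of a single $(D-1)$-simplex (Proposition~\ref{SM-types}), whereas in the spider case $\cRI$ contains $\binom{n}{2}$ maximal $(D-1)$-simplices pairwise intersecting in $(D/2-1)$-dimensional sub-simplices corresponding to shared branches, and these distinct local structures propagate to distinguishable combinatorial patterns in $\cI$. For each leaf $v$ of $\mathsf{T}$ with incident edge $e$, the label of the corresponding vertex of $\cRI$ has one factor $\Gamma_v$, a bouquet of $\ell(v)$ circles; since $\bar\Phi$ preserves labels up to quasi-isometry of their fundamental groups (Definition~\ref{Def:Morphism}) and $\mathbb{Z}$ is not quasi-isometric to $\mathbb{F}_2$, the values $\ell(v) \in \{1,2\}$ at leaves are recovered as well. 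The main obstacle, and the most delicate step, is to extract the above combinatorial invariants from $\cI$ alone without assuming $\bar\Phi$ is equivariant under the deck-transformation actions; the developability of $\cRI$ (Theorem~\ref{theorem:structureofI}) together with the order-preservation of Lemma~\ref{lem:OrderOnSimplex} are the key tools for bridging this gap.
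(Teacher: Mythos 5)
Your proposal follows the paper's proof essentially verbatim: the hypothesis forces $\mathsf{T}$ and $\mathsf{T}'$ to be paths (for $D$ odd) or unions of $m$ equal-length rays issuing from one vertex with $2\le m\le 4$ by quasi-minimality (for $D$ even), and the graph is then pinned down by reading the leaf grape counts off the labels. The only point where the paper is more explicit is in phrasing those leaf-count criteria directly as properties of $\cI$ (whether no/every vertex label has a $\mathbb{Z}$ direct factor, whether some maximal simplex has label with fundamental group $\mathbb{Z}\times\mathbb{Z}$, etc.), which sidesteps the equivariance issue you flag at the end without needing to recover the intersection pattern of maximal simplices.
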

\begin{proof}
If $D$ is odd, then both $\mathsf{T}$ and $\mathsf{T}'$ are paths of length $D$, and thus, by Remark~\ref{Ex:PathofLengthN} and the quasi-minimality assumption, $\Gamma$ and $\Gamma'$ must be isometric.

If $D$ is even, then each of $\mathsf{T}$ and $\mathsf{T}'$ must be the union of $m$ rays of length $D/2$ issuing from the same vertex for some $2\le m\le 4$.
Then we have the following observation which is preserved by the isomorphism $\bar\Phi$.
\begin{itemize}
\item $\cI$ has no vertex whose label has the fundamental group having $\mathbb{Z}$ as a direct factor if and only if there is no leaf of $\mathsf{T}$ which has one grape.
\item There is a maximal simplex in $\cI$ whose label has the fundamental group $\mathbb{Z}\times\mathbb{Z}$ if and only if there are at least two leaves of $\mathsf{T}$ having one grape.
\item Every vertex in $\cI$ has a label whose fundamental group has $\mathbb{Z}$ as a direct factor if and only if there is no leaf of $\mathsf{T}$ which has two grapes.
\item There is a maximal simplex in $\cI$ whose label has the fundamental group not having $\mathbb{Z}$ as a direct factor if and only if there are at least two leaves of $\mathsf{T}$ having two grapes.
\end{itemize}
Combining this observation with the quasi-minimality assumption, it can be easily seen that $\Gamma$ and $\Gamma'$ are isometric.
\end{proof}
 
The fact we used in the proof is that $\bar \Phi$ sends maximal simplices to maximal simplices preserving the quasi-isometric type of the fundamental groups of their labels and each maximal simplex is induced a path substem of length $D$. In order to resolve the general case, we delve into what $\bar \Phi$ preserves, relate it to a sub-bunch of grapes, and then conclude that $\Gamma$ and $\Gamma'$ are isometric by the quasi-minimality assumption.

Before we move on, let us see how the embedding of a disjoint union of sub-bunches of grapes into a given bunch of grapes characterize the induced maps on $\cRI(UP_2(-))$ and $\cI(\bar{UP_2(-)})$.
Let $\mathsf{S}$ be the disjoint union of substems $\mathsf{T}_1,\cdots,\mathsf{T}_n$ of $\mathsf{T}$, where each $\mathsf{T}_i$ has diameter at least $1$.
We define $\Gamma_{\mathsf{S}}$ as the disjoint union of $\Gamma_{\mathsf{T}_i}$'s for $i=1,\dots,n$.
Then we define $UP_2(\Gamma_{\mathsf{S}})$ as the disjoint union of $UP_2(\Gamma_{\mathsf{T}_i})$'s, and $\cRI(UP_2(\Gamma_{\mathsf{S}}))$ as the disjoint union of $\cRI(UP_2(\Gamma_{\mathsf{T}_i}))$'s.
Then the canonical embedding $\underline{\iota}:\Gamma_{\mathsf{S}}\to\Gamma$ induces a map on $\cRI(UP_2(-))$ as follows.

\begin{lemma}\label{Lem:EmbeddedRI}
The embedding $\underline{\iota}$ given above induces an injective morphism \[\iota:\cRI(UP_2(\Gamma_{\mathsf{S}}))\to\cRI\] such that for each component $\mathcal{A}$ of $\rho^{-1}(\operatorname{Image}(\iota))\subset \cI$, there is an isomorphism $\bar\iota_{\mathcal{A}}$ from $\cI(\bar{UP_2(\Gamma_{\mathsf{T}_i})})$ to $\mathcal{A}$ for some $1\le i\le n$ such that $\rho\circ\bar\iota_{\mathcal{A}}=\iota\circ\rho$ on $\cI(\bar{UP_2(\Gamma_{\mathsf{T}_i})})$.
\end{lemma}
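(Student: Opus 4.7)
The plan is to construct $\iota$ at the level of reduced intersection complexes via the bijective correspondence between simplices and path substems, and then lift to intersection complexes using the developability theory of Theorem~\ref{theorem:structureofI}.

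For the first step, by Lemma~\ref{Lem:RIconnected} and Remark~\ref{Ex:PathofLengthN}, simplices of $\cRI(UP_2(\Gamma_{\mathsf{T}_i}))$ correspond bijectively to path substems of $\mathsf{T}_i$ of length at least one, and the same holds for $\cRI$. Since each $\mathsf{T}_i$ is itself a substem of $\mathsf{T}$, every such path substem $\mathsf{P} \subset \mathsf{T}_i$ is also a path substem of $\mathsf{T}$, defining a combinatorial map $\iota$. Pairwise disjointness of $\mathsf{T}_1,\dots,\mathsf{T}_n$ in $\mathsf{T}$ immediately gives injectivity. To verify that $\iota$ is a morphism in the sense of Definition~\ref{Def:Morphism}, for $\triangle_{\mathsf{P}'}^i \subsetneq \triangle_{\mathsf{P}}^i$ in the source I pass the label inclusions $(\mathsf{T}_i)_{\mathring{\mathsf{P}}, j} \subseteq (\mathsf{T}_i)_{\mathring{\mathsf{P}'}, j}$ directly to $\mathsf{T}_{\mathring{\mathsf{P}}, j} \subseteq \mathsf{T}_{\mathring{\mathsf{P}'}, j}$, and observe that strictness is preserved on each side. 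The quasi-isometry condition on the fundamental groups of labels then follows from the working assumption that every non-leaf vertex of $\mathsf{T}$ carries exactly two grapes: a factor $\pi_1(\Gamma_{(\mathsf{T}_i)_{\mathring{\mathsf{P}}, j}})$ equals $\mathbb{Z}$ exactly when the corresponding $\mathring{\mathsf{P}}$-component is a single leaf of $\mathsf{T}$ bearing one grape (in which case the enclosing component in $\mathsf{T}$ is identical), and is non-abelian free otherwise; in the latter case the enclosing component in $\mathsf{T}$ also carries at least two grapes, so its fundamental group is again non-abelian free and hence quasi-isometric.

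For the second step, by Theorem~\ref{theorem:structureofI}, both $\cI$ and $\cI(\bar{UP_2(\Gamma_{\mathsf{T}_i})})$ arise as developments of the corresponding reduced intersection complexes. I would show that the sub-bunch inclusion $\Gamma_{\mathsf{T}_i} \hookrightarrow \Gamma$ induces a locally convex inclusion $UP_2(\Gamma_{\mathsf{T}_i}) \hookrightarrow UP_2(\Gamma)$; granting this, Lemma~\ref{lem:UnionofSPS} produces an injective morphism $\cI(\bar{UP_2(\Gamma_{\mathsf{T}_i})}) \hookrightarrow \cI$ whose image $\mathcal{A}_0$ is contained in one component of $\rho^{-1}(\iota(\cRI(UP_2(\Gamma_{\mathsf{T}_i}))))$. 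Since $\cI(\bar{UP_2(\Gamma_{\mathsf{T}_i})})$ is simply connected and the restriction of $\rho$ to $\mathcal{A}_0$ is a covering in the complex-of-groups sense, this map is an isomorphism onto $\mathcal{A}_0$. Every other component $\mathcal{A}$ is the image of $\mathcal{A}_0$ under an appropriate element of the $\pi_1(UP_2(\Gamma))$-action on $\cI$ from Theorem~\ref{Thm:TPBCM}, yielding $\bar\iota_\mathcal{A}$; the relation $\rho \circ \bar\iota_\mathcal{A} = \iota \circ \rho$ is built into the construction.

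The main technical obstacle is verifying that $UP_2(\Gamma_{\mathsf{T}_i}) \hookrightarrow UP_2(\Gamma)$ is locally convex so that Lemma~\ref{lem:UnionofSPS} applies. Both subcomplexes are locally convex in $UD_2(\Gamma)$ (by Proposition~\ref{Prop:FreeFactor} combined with the locally isometric embedding $UD_2(\Gamma_{\mathsf{T}_i}) \hookrightarrow UD_2(\Gamma)$ from Lemma~\ref{Lem:embedding}), but local convexity of the direct inner inclusion needs a separate link analysis in the style of the proof of Proposition~\ref{Prop:FreeFactor}, checking that $\Lk(u \itimes v, UP_2(\Gamma_{\mathsf{T}_i}))$ is an induced subcomplex of $\Lk(u \itimes v, UP_2(\Gamma))$ at every vertex $u \itimes v$. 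The case split will parallel that proof, with the extra bookkeeping that leaves of $\mathsf{T}_i$ interior to $\mathsf{T}$ must be handled separately from genuine leaves of $\mathsf{T}$.
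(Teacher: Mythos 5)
Your first step---defining $\iota$ on reduced intersection complexes via the twig/path-substem correspondence of Lemma~\ref{Lem:RIconnected} and Remark~\ref{Ex:PathofLengthN}, and checking the (semi-)morphism and quasi-isometry-of-labels conditions using the standing two-grapes convention---is correct and is essentially what the paper does (it "mimics the proof of Lemma~\ref{Lem:IsometricRI}").

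The second step has a genuine gap. A maximal product subcomplex of $UP_2(\Gamma_{\mathsf{T}_i})$ corresponding to a twig $t\subset\mathsf{T}_i$ is $(\Gamma_{\mathsf{T}_i})_{\mathring{t},1}\itimes(\Gamma_{\mathsf{T}_i})_{\mathring{t},2}$, which is a \emph{proper} standard product subcomplex of the maximal product subcomplex $\Gamma_{\mathring{t},1}\itimes\Gamma_{\mathring{t},2}$ of $UP_2(\Gamma)$ whenever $\mathsf{T}_i\subsetneq\mathsf{T}$. So the inclusion $UP_2(\Gamma_{\mathsf{T}_i})\hookrightarrow UP_2(\Gamma)$, locally convex or not, does \emph{not} send maximal product subcomplexes to maximal ones, and there is no inclusion-induced injective morphism $\cI(\bar{UP_2(\Gamma_{\mathsf{T}_i})})\hookrightarrow\cI$ whose image could serve as $\mathcal{A}_0$. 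Your citation of Lemma~\ref{lem:UnionofSPS} does not repair this: its "moreover" clause is stated and proved only for $Y_0=\mathfrak{g}_{\cRI}(\cRI(Y))$, the union of \emph{all} maximal product subcomplexes of $Y$, which $UP_2(\Gamma_{\mathsf{T}_i})$ is not. Consequently the "main technical obstacle" you isolate (local convexity of $UP_2(\Gamma_{\mathsf{T}_i})$ in $UP_2(\Gamma)$) is not the ingredient actually needed. The paper instead sets $U_i=\mathfrak{g}_{\cRI}(\iota(\cRI(UP_2(\Gamma_{\mathsf{T}_i}))))$---the union of the \emph{big} maximal product subcomplexes of $UP_2(\Gamma)$ indexed by twigs of $\mathsf{T}_i$---checks that $f_i:U_i\hookrightarrow UP_2(\Gamma)$ is a local isometry, identifies each component $\mathcal{A}$ with $\cI(\bar{U_i})$ via an elevation of $f_i$, and then obtains $\bar\iota_{\mathcal{A}}$ as an \emph{abstract} isomorphism $\cI(\bar{UP_2(\Gamma_{\mathsf{T}_i})})\to\cI(\bar{U_i})$ coming from the isomorphism $\cRI(UP_2(\Gamma_{\mathsf{T}_i}))\to\cRI(U_i)$ of complexes of groups together with uniqueness of developments (Proposition~\ref{Prop:IsomorphicI}, in the style of Lemma~\ref{Lem:IsometricRI}), not from any inclusion of spaces. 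Your argument needs to be rerouted through this fattened subcomplex $U_i$ (or an equivalent device) to close the gap.
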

Such a component $\mathcal{A}$ will be said to be a \emph{block} (with respect to $\mathsf{S}$) \emph{induced from} $\mathsf{T}_i$.
\begin{proof}
Since $\underline{\iota}$ maps not only twigs to twigs but also a finite collection of colinear twigs in $\mathsf{T}_i$ to such a collection in $\mathsf{T}$, by mimicking the proof of Lemma~\ref{Lem:IsometricRI}, we can obtain an injective morphism $\iota:\cRI(UP_2(\Gamma_{\mathsf{S}}))\to\cRI$.
We note that it can easily be checked that for each $i$, $U_i=\mathfrak{g}_{\cRI}(\iota(\cRI(\Gamma_{\mathsf{T}_i})))$ is a special square complex and the embedding $f_i:U_i\hookrightarrow UP_2(\Gamma)$ is a local isometry. By Lemma~\ref{Lem:IsometricRI}, moreover, $\iota|_{\cRI(UP_2(\Gamma_{\mathsf{T}_i}))}$ can be seen as an isomorphism from $\cRI(UP(\Gamma_{\mathsf{T}_i}))$ to $\cRI(U_i)$ by identifying $\cRI(U_i)$ with $\iota(\cRI(UP_2(\Gamma_{\mathsf{T}_i})))$.

Let $\mathcal{A}\subset\cI$ be a component of $\rho^{-1}(\operatorname{Image}(\iota))$.
Indeed, $\mathfrak{g}_{\cI}(\mathcal{A})\subset\bar{UP_2(\Gamma)}$ is equal to the image of an elevation of $f_i$ for some $i$, and thus $\cI(\bar{U_i})$ can be identified with $\mathcal{A}$.
By Lemma~\ref{Lem:IsometricRI}, therefore, there is an isomorphism $\bar\iota_{\mathcal{A}}:\cI(\bar{UP_2(\Gamma_{\mathsf{T}_i})})\to\cI(\bar{U_i})$ such that $\rho\circ\bar\iota_{\mathcal{A}}=\iota\circ\rho$ on $\cI(\bar{UP_2(\Gamma(\mathsf{T}_i))})$, which completes the proof.
\end{proof}

Recall in Lemma~\ref{lemma:deformation retract} that the isomorphism $\bar\Phi:\cI\to\cI'$ induces an isomorphism $\bar\Phi_{\le 2}:\cI_{\le 2}\to\cI_{\le 2}'$, where $\cI_{\le 2}$ and $\cI'_{\le 2}$ are subcomplexes of $\cI$ and $\cI'$, respectively, such that their any simplex is induced from a path substem of length at most $2$.
In summary, we have the following diagram: 
\[
\begin{tikzcd}[column sep=4em, row sep=0.3em]
\cI \arrow[rr,"\bar\Phi"]\arrow[rd,"\bar r_{\le 2}" ']\arrow[dd,"\rho "] & & \cI'\arrow[dd, "\rho' " near end]\arrow[rd,"\bar r'_{\le 2} "]& \\
& \cI_{\le 2}\arrow[dd,"\rho_{\le 2}"]\arrow[rr,crossing over,"\bar \Phi_{\le 2} " near start] && \cI'_{\le 2}\arrow[dd,"\rho'_{\le 2}"]\\
\cRI\arrow[rd,"r_{\le 2} "] & & \cRI'\arrow[rd,"r'_{\le 2} "]\\
& \cRI_{\le 2} && \cRI'_{\le 2}
\end{tikzcd}
\]
where $r_{\le 2}$, $r'_{\le 2}$, $\bar r_{\le 2}$ and $\bar r'_{\le 2}$ are the deformation retractions given in Lemma~\ref{lemma:deformation retract}, and $\rho_{\le 2}$ and $\rho'_{\le 2}$ are the restrictions.

Since $\underline{\iota}$ maps path substems to path substems, the morphism $\iota$ in Lemma~\ref{Lem:EmbeddedRI} commutes with the deformation retraction $\cRI(-)\to\cRI_{\le 2}(-)$ defined in Lemma~\ref{lemma:deformation retract} by construction.
Moreover, there is a one-to-one correspondence, induced by the deformation retraction $r_{\le 2}:\cRI\to\cRI_{\le 2}$, between components of $\operatorname{Image}(\iota)$ and components of $\operatorname{Image}(\iota_{\le 2})$, since $\operatorname{Image}(\iota)$ is a union of simplices of $\cRI$ induced from path substems contained in $\mathsf{S}$ and the images of such simplices under $r_{\le 2}$ are connected path graphs in $\cRI_{\le 2}$.
Similarly, there is a one-to-one correspondence between components of $\rho^{-1}(\operatorname{Image}(\iota))$ in $\cI$ and components of $\rho_{\le 2}^{-1}(\operatorname{Image}(\iota_{\le 2}))$ in $\cI_{\le 2}$. 

\begin{corollary}\label{corollary:components and retraction}
Let $\iota$ be the injective semi-morphism given in Lemma~\ref{Lem:EmbeddedRI}.
Then there is a one-to-one correspondence between components of $\rho^{-1}(\operatorname{Image}(\iota))$ and components of $\rho_{\le 2}^{-1}(\operatorname{Image}(\iota_{\le2}))$, given as follows:
\[
\begin{tikzcd}[row sep=0.5pc]
\pi_0\left(
\rho^{-1}(\operatorname{Image}(\iota))
\right) \arrow[r, leftrightarrow] &
\pi_0\left(
\rho_{\le 2}^{-1}(\operatorname{Image}(\iota_{\le2}))
\right)\\
\mathcal{A} \arrow[r, mapsto, yshift=0.2em, "\text{Deformation retract}"] & \mathcal{A}_{\le2}
\arrow[l, mapsto, yshift=-.2em, "\text{Induced subcomplex}"]
\end{tikzcd}
\]
\end{corollary}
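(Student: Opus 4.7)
The plan is to combine two facts: first, the morphism $\iota$ commutes with the strong deformation retractions (by the paragraph above the corollary and Lemma~\ref{lemma:deformation retract}), and second, the strong deformation retractions $\bar r_{\le 2}$ and $r_{\le 2}$ fix the vertex sets of $\cI$ and $\cRI$ respectively. From these two observations the correspondence follows almost formally.

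First I would make precise the two maps. For the forward direction, given a component $\mathcal{A}\subset \rho^{-1}(\operatorname{Image}(\iota))$, since $\iota$ commutes with the deformation retractions, we have $\bar r_{\le 2}(\mathcal{A})\subset \rho_{\le 2}^{-1}(\operatorname{Image}(\iota_{\le 2}))$. As $\bar r_{\le 2}$ is continuous and $\mathcal{A}$ is connected, its image is a connected subset, and thus lies in a unique component $\mathcal{A}_{\le 2}$. For the backward direction, given a component $\mathcal{A}_{\le 2}$, I define the associated subcomplex as the union of all simplices $\bar\triangle\subset \rho^{-1}(\operatorname{Image}(\iota))$ whose vertex set is contained in $\mathcal{A}_{\le 2}^{(0)}$; equivalently, as $\bar r_{\le 2}$ fixes vertices and maps each simplex $\bar\triangle$ to a connected path subgraph $\bar r_{\le 2}(\bar\triangle)$ containing all vertices of $\bar\triangle$, this is the set of $\bar\triangle$ with $\bar r_{\le 2}(\bar\triangle)\subset\mathcal{A}_{\le 2}$.

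Next I would verify that the backward construction indeed yields a connected subcomplex. Every simplex $\bar\triangle$ in this subcomplex contains at least one vertex, and every such vertex belongs to $\mathcal{A}_{\le 2}$ (being fixed by $\bar r_{\le 2}$). Thus each such $\bar\triangle$ is connected to $\mathcal{A}_{\le 2}$ through its vertices, and since $\mathcal{A}_{\le 2}$ is connected by assumption, so is the associated subcomplex. Moreover, if $\bar\triangle\subset\rho^{-1}(\operatorname{Image}(\iota))$ has a vertex in $\mathcal{A}_{\le 2}$, then its retract $\bar r_{\le 2}(\bar\triangle)$—being a connected path subgraph meeting $\mathcal{A}_{\le 2}$—is contained in $\mathcal{A}_{\le 2}$, so the assignment is insensitive to whether we demand all vertices or merely one vertex to lie in $\mathcal{A}_{\le 2}$.

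Finally, I would check that the two assignments are mutual inverses. Starting from $\mathcal{A}$, going forward produces $\mathcal{A}_{\le 2}=\bar r_{\le 2}(\mathcal{A})$, and going backward returns the union of simplices of $\rho^{-1}(\operatorname{Image}(\iota))$ whose retracts lie in $\bar r_{\le 2}(\mathcal{A})$; this contains $\mathcal{A}$ by construction, and is contained in $\mathcal{A}$ because any such simplex has a vertex in $\mathcal{A}$, hence belongs to the same component. Starting from $\mathcal{A}_{\le 2}$, the backward-then-forward composition recovers $\mathcal{A}_{\le 2}$ because $\bar r_{\le 2}$ fixes the vertex set $\mathcal{A}_{\le 2}^{(0)}$, which already spans $\mathcal{A}_{\le 2}$ inside the $1$-dimensional complex $\cI_{\le 2}$ once one observes that every edge of $\mathcal{A}_{\le 2}$ is itself a simplex of $\rho^{-1}(\operatorname{Image}(\iota))$ (induced from a length-$2$ path substem lying in some $\mathsf{T}_i$). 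The only mildly delicate point—which I expect to verify by direct inspection rather than a long argument—is the second inverse check, namely that no new edges are created by the backward-then-forward round-trip; this reduces to the observation that a simplex of $\cRI_{\le 2}$ lies in $\operatorname{Image}(\iota_{\le 2})$ exactly when the corresponding path substem of length $\le 2$ lies in $\mathsf{S}$, a condition readable directly from the vertex set.
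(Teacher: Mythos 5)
Your proposal is correct and follows essentially the same route as the paper: the paper derives this corollary directly from the preceding discussion, namely that $\iota$ commutes with the deformation retractions, that $\operatorname{Image}(\iota)$ is a union of simplices induced from path substems in $\mathsf{S}$, and that $r_{\le 2}$ (resp.\ $\bar r_{\le 2}$) sends each such simplex to a connected path subgraph on the same vertex set. Your write-up simply makes the mutual-inverse check explicit, which the paper leaves implicit.
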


\begin{remark}\label{Remark:HowtoapplyCorollary}
The way to use Corollary~\ref{corollary:components and retraction} will be the following.
Suppose that there are sets $\mathcal{V}$ and $\mathcal{V}'$ of vertices in $\cRI$ and $\cRI'$, respectively, such that $\bar\Phi$ maps $\rho^{-1}(\mathcal{V})$ to $\rho'^{-1}(\mathcal{V})$ bijectively.
Consider the union $\mathsf{S}$ of twigs of $\Gamma$ which are not contained in the twigs corresponding to vertices in $\mathcal{V}$ and the injective semi-morphisms $\iota:\cRI(\Gamma_{\mathsf{S}})\to\cRI$ obtained by Lemma~\ref{Lem:EmbeddedRI}.
Then we have $\rho_{\le 2}^{-1}(\operatorname{Image}(\iota_{\le 2}))=\cI_{\le 2}\setminus\rho^{-1}(\mathcal{V})$.
From $\mathcal{V}'$, similarly, $\mathsf{S}'$ and $\iota':\cRI'(\Gamma'_{\mathsf{S}'})\to\cRI'$ are defined and thus $\rho'^{-1}_{\le 2}(\operatorname{Image}(\iota'_{\le 2}))=\cI'_{\le 2}\setminus\rho'^{-1}(\mathcal{V}')$.
As $\bar\Phi$ induces an isomorphism $\bar\Phi_{\le 2}:\cI_{\le 2}\to\cI'_{\le 2}$ and $\bar\Phi(\rho^{-1}(\mathcal{V}))=\rho^{-1}(\mathcal{V}')$, Corollary~\ref{corollary:components and retraction} implies that the restriction of $\bar\Phi$ to any block (with respect to $\mathsf{S}$) in $\cI$ is an isomorphism onto a block (with respect to $\mathsf{S}'$) in $\cI'$.
\end{remark}

One main characteristic of $\bar\Phi$ that can be derived from $\cI$ and $\cI'$, apart from the fact that $\bar\Phi$ is an isomorphism, is that $\bar\Phi$ preserves a canonical order (up to reversing) on the vertex set of each simplex, which is defined in Definition~\ref{def:OrderOnSimplex}.
Using this fact, we will find classes of vertices in $\cI$ and $\cI'$ such that $\bar\Phi$ preserves these classes, and thus we can apply Remark~\ref{Remark:HowtoapplyCorollary}.

Now, we decompose $\mathsf{T}$ as a union of a kind of spheres as follows. 
Let $\mathcal{S}_{\mathsf{T}}(1)=\mathcal{B}_{\mathsf{T}}(1)$ be the intersection of the union of path substems of length $D$ and the star of the central vertex in $\mathsf{T}$ if $D$ is even, or the twig containing the center of $\mathsf{T}$ if $D$ is odd.
For $k\ge 2$, let $\mathcal{S}_{\mathsf{T}}(k)$ be the set of twigs of $\mathsf{T}$ which intersect $\mathcal{B}_{\mathsf{T}}(k-1)$ at its leaves but are not contained in $\mathcal{B}_{\mathsf{T}}(k-1)$, and let $\mathcal{B}_{\mathsf{T}}(k)=\mathcal{B}_{\mathsf{T}}(k-1)\cup \mathcal{S}_{\mathsf{T}}(k)$ and $\mathcal{B}_{\mathsf{T}}=\mathcal{B}_{\mathsf{T}}([(D+1)/2])$.
When $D$ is even, $\mathsf{T}(0)$ is defined as the union of extended $\hat{v}$-components of $\mathsf{T}$ not containing rays of length $D/2$ issuing from $v$, i.e., the closure of the complement of $\mathcal{B}_{\mathsf{T}}$ in $\mathsf{T}$, viewed as topological spaces. See Figure~\ref{figure:Sphere decomposition}.

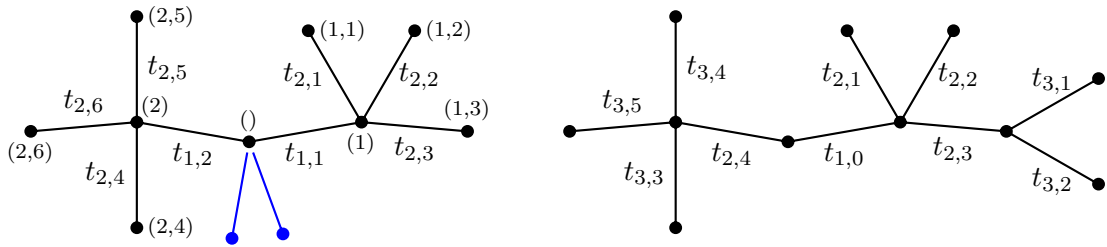
\begin{figure}[ht]
\[
\begin{tikzpicture}[baseline=-.5ex, scale=1]
\draw[thick, fill] 
  (0,0) circle (2pt) node (A) {} node[below] {$_{(2,6)}$}
  -- ++(5:1.4) circle (2pt) node (B) {} node[midway, above] {$t_{2,6}$}
  node[xshift=0.25cm, yshift=0.2cm] {$_{(2)}$} 
  -- +(0,1.4) circle (2pt) node (C) {} node[midway, right] {$t_{2,5}$}
  node[right] {$_{(2,5)}$}
  +(0,0) -- +(0,-1.4) circle (2pt) node (D) {} node[midway, left] {$t_{2,4}$}
  node[right] {$_{(2,4)}$}
  +(0,0) -- ++(-10:1.5) circle (2pt) node (E) {} node[midway, below] {$t_{1,2}$} node[above] {$_{(\empty)}$} 
  -- ++(10:1.5) circle (2pt) node (F) {} node[midway, below] {$t_{1,1}$}
  node[below] {$_{(1)}$}
  -- +(120:1.4) circle (2pt) node (G) {} node[midway, left] {$t_{2,1}$}
  node[right] {$_{(1,1)}$}
  +(0,0) -- +(60:1.4) circle (2pt) node (H) {} node[midway, right] {$t_{2,2}$}
  node[right] {$_{(1,2)}$}
  +(0,0) -- ++(-5:1.4) circle (2pt) node (I) {} node[midway, below] {$t_{2,3}$}
  node[above] {$_{(1,3)}$};

\draw[thick, fill, blue]
  (E) -- ++(260:1.3) circle (2pt) [fill=blue] node {};
\draw[thick, fill, blue]
  (E) -- ++(290:1.3) circle (2pt) [fill=blue] node {};
\end{tikzpicture}
\qquad
\begin{tikzpicture}[baseline=-.5ex, scale=1]
\draw[thick, fill] 
  (0,0) circle (2pt) node (A) {} 
  -- ++(5:1.4) circle (2pt) node (B) {} node[midway, above] {$t_{3,5}$}
  -- +(0,1.4) circle (2pt) node (C) {} node[midway, right] {$t_{3,4}$}
  +(0,0) -- +(0,-1.4) circle (2pt) node (D) {} node[midway, left] {$t_{3,3}$}
  +(0,0) -- ++(-10:1.5) circle (2pt) node (E) {} node[midway, below] {$t_{2,4}$}
  -- ++(10:1.5) circle (2pt) node (F) {} node[midway, below] {$t_{1,0}$}
  -- +(120:1.4) circle (2pt) node (G) {} node[midway, left] {$t_{2,1}$}
  +(0,0) -- +(60:1.4) circle (2pt) node (H) {} node[midway, right] {$t_{2,2}$}
  +(0,0) -- ++(-5:1.4) circle (2pt) node (I) {} node[midway, below] {$t_{2,3}$}
  -- +(30:1.4) circle (2pt) node (J) {} node[midway, above] {$t_{3,1}$}
  +(0,0) -- +(-30:1.4) circle (2pt) node (K) {} node[midway, below] {$t_{3,2}$};
\end{tikzpicture}
\]
\caption{Examples of sphere decompositions; the left stem is when $D$ is odd and the right stem is when $D$ is even. For each stem, $\mathcal{S}_{\mathsf{T}}(i)$ consists of twigs $t_{i,j}$, and the unlabelled blue edges in the left stem belong to $\mathsf{T}(0)$.}
\label{figure:Sphere decomposition}
\end{figure}

Let $\bar{\mathcal{V}}_{\Gamma}(k)$ be the set of vertices in $\cI(\bar{UP_2(\Gamma)})$ induced from twigs in $\mathcal{S}_{\mathsf{T}}(k)$.
Then the following lemma says $\bar{\mathcal{V}}_{\Gamma}(k)$ is the desired class.

\begin{lemma}\label{lem:PreservingVertices}
For each $k\ge 1$, $\bar\Phi$ induces a bijection between $\bar{\mathcal{V}}_{\Gamma}(k)$ and $\bar{\mathcal{V}}_{\Gamma'}(k)$.
\end{lemma}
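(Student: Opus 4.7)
The plan is to proceed by induction on $k$, exploiting two properties of $\bar\Phi$: it is a simplicial isomorphism, and by Lemma~\ref{lem:OrderOnSimplex} it preserves the canonical order on each simplex up to reversal. Throughout, note that $\dim\cI=\dim\cI'=D-1$ forces $\mathsf{T}$ and $\mathsf{T}'$ to share the common diameter $D$.

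For the base case $k=1$, maximal simplices of dimension $D-1$ correspond to diameter paths in $\mathsf{T}$, and the twig(s) in $\mathcal{S}_\mathsf{T}(1)$ occupy the central position $(D-1)/2$ when $D$ is odd, or the pair of central positions $\{D/2-1,\,D/2\}$ when $D$ is even, in the canonical order along any such diameter path. Since the set of central positions is invariant under reversal of the canonical order, $\bar{\mathcal{V}}_\Gamma(1)$ is intrinsically characterized as the set of vertices occupying one of these central positions in some maximal simplex of dimension $D-1$, and this characterization is manifestly preserved by $\bar\Phi$.

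For the inductive step $k\geq 2$, I assume $\bar\Phi(\bar{\mathcal{V}}_\Gamma(i))=\bar{\mathcal{V}}_{\Gamma'}(i)$ for each $1\leq i<k$ and characterize $\bar{\mathcal{V}}_\Gamma(k)$ by the three conditions: (a) $\barbfv$ is $1$-skeleton adjacent in $\cI$ to some vertex of $\bar{\mathcal{V}}_\Gamma(k-1)$; (b) $\barbfv\notin\bigcup_{i<k}\bar{\mathcal{V}}_\Gamma(i)$; and, when $D$ is even, (c) $\barbfv$ lies in some maximal simplex of dimension $D-1$ of $\cI$. All three conditions are preserved by $\bar\Phi$, using the inductive hypothesis together with the preservation of the $1$-skeleton and of simplex dimensions. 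Correctness of this characterization hinges on the combinatorial observation that two twigs of $\mathsf{T}$ sharing a vertex differ in sphere-layer by at most one, with the sole exception being the twigs of $\mathsf{T}(0)$: when $D$ is even, these attach to the central vertex of $\mathsf{T}$ and therefore only abut twigs of $\mathcal{S}_\mathsf{T}(1)$, creating ambiguity only in the case $k=2$, where condition (c) then rules them out since by the definition of $\mathcal{B}_\mathsf{T}$ the twigs of $\mathsf{T}(0)$ lie on no diameter path.

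The main obstacle is the careful case analysis underpinning the ``adjacent-layer-difference-at-most-one'' claim and the precise localization of the $\mathsf{T}(0)$-pathology to the center of $\mathsf{T}$ when $D$ is even; in particular, I must check that deeper $\mathsf{T}(0)$-twigs (not attached directly at the center) already fail condition (a) for every $k\geq 2$ because they share no vertex with any twig of $\mathcal{B}_\mathsf{T}$. An accompanying subtlety is that condition (c) must be imposed only when $D$ is even: when $D$ is odd we have $\mathcal{B}_\mathsf{T}=\mathsf{T}$, but some genuine $\mathcal{S}_\mathsf{T}(k)$-twigs (for example, short branches attached away from the diameter axis) need not lie on any diameter path, so (c) would in that regime wrongly exclude them.
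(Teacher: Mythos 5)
Your base case $k=1$ matches the paper's argument and is fine, but the inductive step has a genuine gap: condition (a) is phrased as adjacency in the $1$-skeleton of $\cI$, and that is the wrong relation. Two vertices of $\cI$ are adjacent exactly when the corresponding maximal product subcomplexes intersect in a standard product subcomplex, and by Lemma~\ref{Lem:RIconnected} this is governed by colinearity of the underlying twigs --- but \emph{any} two twigs of a tree are colinear (the minimal subtree containing two edges is always a path; non-colinearity requires at least three twigs). Concretely, for $t\in\mathcal{S}_{\mathsf{T}}(k+1)$ and $t'\in\mathcal{S}_{\mathsf{T}}(k-1)$ the intersection $M(t)\cap M(t')$ is a nonempty standard product subcomplex, so every lift of $M(t)$ contains a $p$-lift of it and therefore meets some lift of $M(t')$; hence every vertex of $\cI$ over $t$ satisfies your (a) for layer $k$, satisfies (b), and satisfies (c) whenever $t$ lies on a diameter path. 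Your characterization thus absorbs all deeper layers into layer $k$ and the induction collapses. The ``adjacent twigs differ in layer by at most one'' observation you invoke is about twigs \emph{sharing a vertex}, i.e.\ adjacency in $\cI_{\le 2}$, or equivalently consecutiveness in the canonical order of a common maximal simplex; that is the $\bar\Phi$-invariant supplied by Lemmas~\ref{lemma:deformation retract} and~\ref{lem:OrderOnSimplex}, and it is what the paper's proof actually uses (a vertex lies in $\bar{\mathcal{V}}_{\Gamma}(k)$ iff, inside a maximal simplex containing the full central set of one or two $\bar{\mathcal{V}}_{\Gamma}(1)$-vertices, it is an immediate predecessor or successor of the vertices already placed in layers $<k$).

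A secondary problem persists even after repairing the adjacency: condition (c) is too strong for $k\ge 2$. When $D$ is even, a twig of $\mathcal{S}_{\mathsf{T}}(2)$ branching off a diameter ray at distance one from the center is a genuine layer-$2$ twig but need not lie on any path substem of length $D$, so demanding membership in a $(D-1)$-dimensional maximal simplex wrongly excludes it; the same happens in all deeper layers. The paper excludes the $\mathsf{T}(0)$-twigs at the center differently: it requires the witnessing maximal simplex to contain \emph{both} vertices of $\bar{\mathcal{V}}_{\Gamma}(1)$, which a simplex containing a $\mathsf{T}(0)$-twig attached at the center cannot do (a path through the center has only two directions there, one already taken by that twig), while still admitting simplices of low dimension through genuine off-axis branches.
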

\begin{proof}
Let $\bar\triangle\subset\cI$ be a $(D-1)$-simplex.
If $D$ is odd, then there is a vertex in $\bar\triangle$ induced from a unique twig in $\mathcal{S}_{\mathsf{T}}(1)$ which is exactly the middle vertex with respect to a canonical order on $\mathcal{V}(\bar\triangle)$. 
If $D$ is even, then there are two vertices in $\bar\triangle$ induced from two twigs in $\mathcal{S}_{\mathsf{T}}(1)$ and they are exactly the middle vertices with respect to a canonical order on $\mathcal{V}(\bar\triangle)$. 
Conversely, if a vertex $\barbfv\in\cI$ is in $\bar{\mathcal{V}}_{\Gamma}(1)$, then it must be a unique middle vertex (if $D$ is odd) or one of two middle vertices (if $D$ is even) in any $(D-1)$-simplex containing $\barbfv$.
By Lemma~\ref{lem:OrderOnSimplex}, thus $\bar\Phi$ induces a bijection between $\bar{\mathcal{V}}_{\Gamma}(1)$ and $\bar{\mathcal{V}}_{\Gamma'}(1)$.

Inductively, we have that a vertex $\barbfv\in\cI$ is in $\bar{\mathcal{V}}_{\Gamma}(k)$ if and only if it is contained in a maximal simplex $\bar\triangle$ which contains one vertex (if $D$ is odd) or two vertices (if $D$ is even) in $\bar{\mathcal{V}}_{\Gamma}(1)$ such that it is a predecessor or a successor of $\mathcal{V}(\bar\triangle)\cap\bigcup_{i=1}^{k-1}\bar{\mathcal{V}}_{\Gamma}(i)$ in $\mathcal{V}(\bar\triangle)$.
By Lemma~\ref{lem:OrderOnSimplex} again, $\bar\Phi$ induces a bijection between $\bar{\mathcal{V}}_{\Gamma}(k)$ and $\bar{\mathcal{V}}_{\Gamma'}(k)$ for each $k\ge 2$, which completes the proof.
\end{proof}

\begin{convention}
We denote by a $0$-tuple the center vertex of $\mathcal{B}_{\mathsf{T}}$ if $D$ is even, or an empty set if $D$ is odd. For $k\ge 1$, leaves of $\mathcal{B}_{\mathsf{T}}(k)$ are denoted by distinct $k$-tuples such that if a leaf $\mathbbm{v}_1$ of $\mathcal{B}_{\mathsf{T}}(k)$ is adjacent to a $(k-1)$-tuple $\mathbbm{v}$, then $\mathbbm{v}$ is the prefix of $\mathbbm{v}_1$. For instance, if $\mathbbm{v}_1=(a_1,\dots,a_{k-1},a_k)$, then $\mathbbm{v}=(a_1,\dots,a_{k-1})$; see the labels of vertices of the left stem in Figure~\ref{figure:Sphere decomposition}.

For a $k$-tuple $\mathbbm{v}\in\mathcal{B}_{\mathsf{T}}$, let ${\mathsf{T}}_{\mathbbm{v}}$ be the union of extended $\hat{\mathbbm{v}}$-components of $\mathsf{T}$ except one containing $\mathcal{B}_{\mathsf{T}}(k)$ if $k\ge 1$ (see Figure~\ref{figure:(non)ray branch}), or the whole stem $\mathsf{T}$ if $k=0$. 
\end{convention}

\begin{lemma}\label{Lem:ComponentPreserving}
Suppose that $\mathbbm{v}\in\mathcal{B}_{\mathsf{T}}$ is a $k$-tuple for $k\ge 0$.
Then there exists a $k$-tuple $\mathbbm{w}\in\mathcal{B}_{\mathsf{T}'}$ such that 
$\bar\Phi$ induces an isomorphism \[\bar\Phi_{\mathbbm{v}}:\cI(\bar{UP_2(\Gamma_{\mathsf{T}_{\mathbbm{v}}})})\to\cI(\bar{UP_2(\Gamma'_{\mathsf{T}'_{\mathbbm{w}}})})\] which sends vertices induced from twigs in $\mathsf{T}_{\mathbbm{v}}\cap\mathcal{S}_\mathsf{T}(k+1)$ to vertices induced from twigs in $\mathsf{T}'_{\mathbbm{w}}\cap\mathcal{S}_{\mathsf{T}'}(k+1)$.

Moreover, if $\mathbbm{v}_1$ is a $(k+1)$-tuple which has $\mathbbm{v}$ as a prefix, then $\bar\Phi_{\mathbbm{v}}$ induces an isomorphism $\cI(\bar{UP_2(\Gamma_{\mathsf{T}_{\mathbbm{v}_1}})})\to\cI(\bar{UP_2(\Gamma'_{\mathsf{T}'_{\mathbbm{w}_1}})})$ for a $(k+1)$-tuple $\mathbbm{w}_1$ having $\mathbbm{w}$ as a prefix. 

If $\mathsf{T}(0)$ exists, so does $\mathsf{T}'(0)$ and there is an isomorphism $\cI(\bar{UP_2(\Gamma_{\mathsf{T}(0)})})\to\cI(\bar{UP_2(\Gamma'_{\mathsf{T}'(0)})})$ induced by $\bar\Phi$.
\end{lemma}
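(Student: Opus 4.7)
The plan is to proceed by induction on $k$, with the base case determined by the parity of $D$: when $D$ is even, we take $k=0$ with $\mathbbm{v}$ the central vertex of $\mathcal{B}_{\mathsf{T}}$, choose $\mathbbm{w}$ to be the central vertex of $\mathcal{B}_{\mathsf{T}'}$ (which exists since $\mathsf{T}'$ has the same diameter $D$), and set $\bar\Phi_{\mathbbm{v}}=\bar\Phi$; when $D$ is odd, induction starts at $k=1$ using that $\bar{\mathcal{V}}_\Gamma(1)$ consists of a single vertex which is preserved by $\bar\Phi$. In both base cases the preservation property for level-$(k+1)$ vertices is an immediate consequence of Lemma~\ref{lem:PreservingVertices}.

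For the inductive step, suppose that $\bar\Phi_{\mathbbm{v}}:\cI(\bar{UP_2(\Gamma_{\mathsf{T}_{\mathbbm{v}}})})\to\cI(\bar{UP_2(\Gamma'_{\mathsf{T}'_{\mathbbm{w}}})})$ has been constructed for a $k$-tuple $\mathbbm{v}$, and sends vertices induced from twigs in $\mathsf{T}_{\mathbbm{v}}\cap\mathcal{S}_{\mathsf{T}}(k+1)$ bijectively onto vertices induced from twigs in $\mathsf{T}'_{\mathbbm{w}}\cap\mathcal{S}_{\mathsf{T}'}(k+1)$. Let $\mathsf{S}=\bigsqcup_{\mathbbm{v}_1}\mathsf{T}_{\mathbbm{v}_1}$ be the disjoint union over all $(k+1)$-tuples $\mathbbm{v}_1$ extending $\mathbbm{v}$; viewed as a subgraph of $\mathsf{T}_{\mathbbm{v}}$, this is exactly the union of twigs of $\mathsf{T}_{\mathbbm{v}}$ not lying in $\mathsf{T}_{\mathbbm{v}}\cap\mathcal{S}_{\mathsf{T}}(k+1)$. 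By Lemma~\ref{Lem:EmbeddedRI} the canonical embedding $\Gamma_{\mathsf{S}}\hookrightarrow\Gamma_{\mathsf{T}_{\mathbbm{v}}}$ induces an injective semi-morphism $\iota$ whose components of $\rho^{-1}(\operatorname{Image}(\iota))$ are indexed by the $(k+1)$-tuples $\mathbbm{v}_1$, with each $\mathbbm{v}_1$-block canonically isomorphic to $\cI(\bar{UP_2(\Gamma_{\mathsf{T}_{\mathbbm{v}_1}})})$.

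Now invoke Remark~\ref{Remark:HowtoapplyCorollary} applied to the isomorphism $\bar\Phi_{\mathbbm{v}}$ with $\mathcal{V}$ the set of vertices induced from twigs in $\mathsf{T}_{\mathbbm{v}}\cap\mathcal{S}_{\mathsf{T}}(k+1)$; by the inductive hypothesis $\rho^{-1}(\mathcal{V})$ is preserved by $\bar\Phi_{\mathbbm{v}}$. Hence $\bar\Phi_{\mathbbm{v}}$ restricts to an isomorphism between the two sets of blocks, inducing a bijection between $(k+1)$-tuples extending $\mathbbm{v}$ in $\mathsf{T}$ and $(k+1)$-tuples extending $\mathbbm{w}$ in $\mathsf{T}'$; we use this bijection to define $\mathbbm{w}_1$ in terms of $\mathbbm{v}_1$, and set $\bar\Phi_{\mathbbm{v}_1}$ to be the composition of the block restriction of $\bar\Phi_{\mathbbm{v}}$ with the canonical isomorphisms furnished by Lemma~\ref{Lem:EmbeddedRI}. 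Preservation of level-$(k+2)$ vertices by $\bar\Phi_{\mathbbm{v}_1}$ then follows because such vertices are precisely the elements of $\bar{\mathcal{V}}_\Gamma(k+2)$ lying inside the $\mathbbm{v}_1$-block, and these are preserved by $\bar\Phi$ via Lemma~\ref{lem:PreservingVertices}, hence by $\bar\Phi_{\mathbbm{v}_1}$ up to the canonical block identifications.

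The $\mathsf{T}(0)$ assertion is obtained by the same block mechanism applied at the central vertex (which requires $D$ even): in the decomposition of $\mathsf{T}$ at the center, besides the components producing the $1$-tuple extensions, the extended components constituting $\mathsf{T}(0)$ give rise to additional blocks which contain no level-$j$ vertex for any $j\ge 1$. The block bijection induced by $\bar\Phi$ must match these blocks with analogous blocks on the $\mathsf{T}'$ side, forcing $\mathsf{T}'(0)$ to exist and yielding the required isomorphism $\cI(\bar{UP_2(\Gamma_{\mathsf{T}(0)})})\to\cI(\bar{UP_2(\Gamma'_{\mathsf{T}'(0)})})$. The main obstacle in this argument is verifying that at each inductive stage the block decomposition of $\rho^{-1}(\operatorname{Image}(\iota))$ correctly corresponds to $(k+1)$-tuple extensions of $\mathbbm{v}$ (and to $\mathsf{T}(0)$ components when relevant), and that the choice of $\mathbbm{w}_1$ so produced is compatible with the previously chosen $\mathbbm{w}$; both are resolved by the observation that $\mathsf{T}_{\mathbbm{v}_1}\subset\mathsf{T}_{\mathbbm{v}}$ forces the block of $\mathbbm{v}_1$ to be contained in the block of $\mathbbm{v}$, making the construction intrinsically compatible with the nesting of tuples.
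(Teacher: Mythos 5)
Your proposal is correct and follows essentially the same route as the paper's proof: the base case is Lemma~\ref{lem:PreservingVertices} applied to $\bar\Phi$ on the whole of $\cI$, the inductive step passes to the complement of the level-$(k+1)$ twigs and uses Lemma~\ref{Lem:EmbeddedRI} together with Remark~\ref{Remark:HowtoapplyCorollary} to match blocks, and compatibility of the nested choices of $\mathbbm{w}_1$ comes from the nesting of blocks. The only substantive variation is at the $k=0$ step when $D$ is even: the paper separates the $\mathsf{T}(0)$-blocks from the $\mathsf{T}_{\mathbbm{v}_1}$-blocks by introducing ``partial simplices'' of maximal $(D-1)$-simplices (which leans on the canonical order of Lemma~\ref{lem:OrderOnSimplex}), whereas you separate them by whether a block contains any vertex of $\bar{\mathcal{V}}_\Gamma(j)$ with $j\ge 1$; both criteria are valid and are preserved by $\bar\Phi$, so this is a cosmetic rather than a substantive difference.
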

\begin{proof}
If $k=0$, then $\mathsf{T}_{\mathbbm{v}}=\mathsf{T}$ and $\mathsf{T}'_{\mathbbm{w}}=\mathsf{T}'$, and thus, the first statement holds since $\bar\Phi$ induces a bijection between $\bar{\mathcal{V}}_{\mathsf{T}}(1)$ and $\bar{\mathcal{V}}_{\mathsf{T}'}(1)$ by Lemma~\ref{lem:PreservingVertices}.
By Remark~\ref{Remark:HowtoapplyCorollary}, moreover, the restriction of $\bar\Phi$ to a block (with respect to $\mathsf{S}$) in $\cI$ is an isomorphism onto a block (with respect to $\mathsf{S}'$) in $\cI'$ where $\mathsf{S}$ is the union of twigs of $\Gamma$ not in $\mathcal{B}_{\mathsf{T}}(1)$ and $\mathsf{S}'$ is the union of twigs of $\Gamma'$ not in $\mathcal{B}_{\mathsf{T}'}(1)$.

We define a specific simplex of $\cI$ whose feature is preserved by $\bar\Phi$ as follows.
For any maximal $(D-1)$-simplex $\bar\triangle$ in $\cI$, its sub-simplex spanned by vertices all less than or all larger than vertices in $\bar\triangle\cap\bar{\mathcal{V}}(1)$ will be called a \emph{partial simplex}.
Then it is easily seen that partial simplices are mapped by $\bar\Phi$ to partial simplices. 

If a block $\mathcal{A}$ in $\cI$ has no partial simplices, by Lemma~\ref{Lem:EmbeddedRI} and the definition of partial simplices, it must be induced from $\mathsf{T}(0)$. 
Since $\bar\Phi(\mathcal{A})$ also has no partial simplices, it follows that $\mathsf{T}'(0)$ exists and $\bar\Phi(\mathcal{A})$ is a block induced from $\mathsf{T}'(0)$. This completes the proof of the third statement of the lemma.

If a block $\mathcal{A}$ in $\cI$ has a partial simplex, then it must be induced from $\mathsf{T}_{\mathbbm{v}}$ for some $1$-tuple $\mathbbm{v}\in\mathcal{B}_{\mathsf{T}}$.
It follows that $\bar\Phi(\mathcal{A})$ is a block induced from $\mathsf{T}'_{\mathbbm{w}}$ for some $1$-tuple $\mathbbm{w}\in\mathcal{B}_{\mathsf{T}'}$, and thus the second statement of the lemma holds for $k=0$.

Let $\mathbbm{v}$ be a $1$-tuple in $\mathcal{B}_{\mathsf{T}}$. By the previous paragraph, there exists a $1$-tuple $\mathbbm{w}$ in $\mathcal{B}_{\mathsf{T}'}$ such that $\bar\Phi$ induces an isomorphism between $\cI(\bar{UP_2(\Gamma_{\mathsf{T}_{\mathbbm{v}}})})$ and $\cI(\bar{UP_2(\Gamma'_{\mathsf{T}'_{\mathbbm{w}}})})$.
Since $\bar\Phi$ induces a bijection between $\bar{\mathcal{V}}_{\Gamma}(2)$ and $\bar{\mathcal{V}}_{\Gamma'}(2)$ by Lemma~\ref{lem:PreservingVertices}, we can do the same process above in order to show that the second statement of the lemma holds for $k=1$ by defining the union of twigs in $\mathsf{T}_{\mathbbm{v}}$ not contained in $\mathcal{S}_{\mathsf{T}}(2)$ and the union of twigs in $\mathsf{T}'_{\mathbbm{w}}$ not contained in $\mathcal{S}_{\mathsf{T}'}(2)$.
The only difference is that we do not have to define partial simplices for this case.
By induction, we can show that the second statement holds for any $k$ and this completes the proof.
\end{proof}

\begin{remark}
One may think that we can finish the proof of Theorem~\ref{theorem:Quasi-minimalRepresentative} by inductively applying Lemma~\ref{Lem:ComponentPreserving}. More precisely, we wish that for a $1$-tuple $\mathbbm{v}\in\mathcal{B}_{\mathsf{T}}$, there was a $1$-tuple $\mathbbm{w}\in\mathcal{B}_{\mathsf{T}'}$ such that an induction argument would imply that there was an isometry from $\Gamma_{\mathsf{T}_{\mathbbm{v}}}$ to $\Gamma'_{\mathsf{T}'_{\mathbbm{w}}}$ sending $\mathbbm{v}$ to $\mathbbm{w}$.
However, this is not true since we only know that the quasi-minimal representatives of $\Gamma_{\mathsf{T}_{\mathbbm{v}}}$ and $\Gamma'_{\mathsf{T}'_{\mathbbm{w}}}$ are isometric and $\Gamma_{\mathsf{T}_{\mathbbm{v}}}$ does not have to be quasi-minimal.
\end{remark}

For a $k$-tuple $\mathbbm{v}\in\mathcal{B}_{\mathsf{T}}$ with $k\ge 1$, an extended $\hat{\mathbbm{v}}$-component of $\mathsf{T}_{\mathbbm{v}}$ will be called a \emph{ray substem} if it is a ray issuing from $\mathbbm{v}$, and a \emph{non-ray substem} otherwise.

\begin{figure}[ht]
\begin{tikzpicture}[baseline=-.5ex, scale=1]
\draw[thick, fill, dashed] 
  (0,0) circle (2pt) node (A) {}
  -- ++(5:1.4) circle (2pt) node (B) {} node[above right] {$\mathbbm{v}_0$} 
  -- +(0,1.4) circle (2pt) node (C) {} 
  +(0,0) -- +(0,-1.4) circle (2pt) node (D) {};

\draw[thick, fill] 
  (B) -- ++(-10:1.5) circle (2pt) node (E) {} node[above] {$\mathbbm{v}_1$};

\draw[thick, fill, draw=red]
  (E) -- ++(20:1.5) circle (2pt) [fill=red, draw=red] node (F) {} node[below] {$\mathbbm{v}_2$};

\draw[thick, fill, draw=red]
  (F) -- +(120:1.4) circle (2pt) [fill=red] node (G) {} 
  +(0,0) -- +(60:1.4) circle (2pt) [fill=red] node (H) {} 
  +(0,0) -- ++(-5:1.4) circle (2pt) [fill=red] node (I) {};

\draw[thick, fill, draw=red]
  (I) -- +(0:1.4) circle (2pt) [fill=red] node (L) {};
  
\draw[thick, fill, draw=blue]
  (E) -- ++(-30:1.5) circle (2pt) [fill=blue]  node (J) {};
\draw[thick, fill, draw=blue]
  (E) -- ++(-70:1.5) circle (2pt) [fill=blue]  node (K) {};

\draw[thick, fill, draw=blue]
  (J) -- ++(0:1.5) circle (2pt) [fill=blue]  node (L) {};
\end{tikzpicture}
\caption{An example of a non-ray substem of $\mathsf{T}_{\mathbbm{v}_0}$ where $\mathbbm{v}_{0}$ is a prefix of $\mathbbm{v}_{1}$. The blue subgraph is the union of ray substems of $\mathsf{T}_{\mathbbm{v}_1}$ and the red subgraph is a non-ray substem of $\mathsf{T}_{\mathbbm{v}_1}$.}
\label{figure:(non)ray branch}
\end{figure}
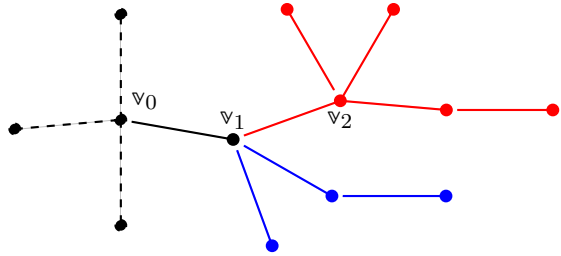

\begin{lemma}\label{Lem:UnionofRays}
For a $k$-tuple $\mathbbm{v}\in\mathsf{T}$ with $k\ge 1$, suppose that $\mathbbm{w}\in\mathsf{T}'$ is a $k$-tuple obtained in Lemma~\ref{Lem:ComponentPreserving} such that there is an isomorphism $\bar\Phi_{\mathbbm{v}}:\cI(\bar{UP_2(\Gamma_{\mathsf{T}_{\mathbbm{v}}})})\to\cI(\bar{UP_2(\Gamma'_{\mathsf{T}'_{\mathbbm{w}}})})$.
Let $\mathsf{U}\subset\mathsf{T}_{\mathbbm{v}}$ be the union of ray substems of $\mathsf{T}_{\mathbbm{v}}$.
Then for the union $\mathsf{U}'$ of ray substems of $\mathsf{T}'_{\mathbbm{w}}$, there exists an isometry from $\Gamma_{\mathsf{U}}$ to $\Gamma'_{\mathsf{U}'}$ sending $\mathbbm{v}$ to $\mathbbm{w}$.
\end{lemma}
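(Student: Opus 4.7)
The plan is to identify each ray substem of $\mathsf{T}_{\mathbbm{v}}$ with a combinatorial feature of $\cI(\bar{UP_2(\Gamma_{\mathsf{T}_{\mathbbm{v}}})})$ that is automatically preserved by the isomorphism $\bar\Phi_{\mathbbm{v}}$, then read off an isometry $\Gamma_{\mathsf{U}}\to\Gamma'_{\mathsf{U}'}$ from the resulting bijection of ray substems. First I would let $\bar{\mathcal{V}}_{\mathbbm{v}}\subset\cI(\bar{UP_2(\Gamma_{\mathsf{T}_{\mathbbm{v}}})})$ be the set of vertices induced from twigs of $\mathsf{T}_{\mathbbm{v}}$ that contain $\mathbbm{v}$. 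By the conclusion of Lemma~\ref{Lem:ComponentPreserving}, these correspond exactly to the images of twigs in $\mathsf{T}_{\mathbbm{v}}\cap\mathcal{S}_{\mathsf{T}}(k{+}1)$, and hence $\bar\Phi_{\mathbbm{v}}$ bijects $\bar{\mathcal{V}}_{\mathbbm{v}}$ onto the analogously defined $\bar{\mathcal{V}}_{\mathbbm{w}}$.

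Next I would combinatorially detect ``being on a ray''. A ray substem of length $\ell$ emanating from $\mathbbm{v}$ and ending at a leaf with $r\in\{1,2\}$ grapes is the path substem $\mathsf{P}=[\mathbbm{v},u_1,\dots,u_\ell]$ whose twigs induce an $(\ell{-}1)$-simplex $\bar\triangle$ of $\cI(\bar{UP_2(\Gamma_{\mathsf{T}_{\mathbbm{v}}})})$. In the canonical order of $\bar\triangle$ (Definition~\ref{def:OrderOnSimplex}) its vertices are $\barbfv_0<\barbfv_1<\dots<\barbfv_{\ell-1}$ with $\barbfv_0\in\bar{\mathcal{V}}_{\mathbbm{v}}$. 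I claim that such a chain arises from a ray if and only if: (i) for every $1\le i\le \ell-1$, the partial chain $\barbfv_0<\dots<\barbfv_{i-1}$ admits a \emph{unique} extension to a chain $\barbfv_0<\dots<\barbfv_{i}$ in some simplex of $\cI(\bar{UP_2(\Gamma_{\mathsf{T}_{\mathbbm{v}}})})$, and (ii) the label of the terminal $\bar\triangle$ has a factor whose fundamental group is quasi-isometric to $\mathbb{F}_r$. By Remark~\ref{Ex:PathofLengthN} condition (ii) detects that one $\mathring{\mathsf{P}}$-component is the singleton $\{u_\ell\}$ carrying $r$ grapes. Condition (i) translates, via the colinearity description of edges in $\cRI_{\le 2}$, exactly to $\val_{\mathsf{T}_{\mathbbm{v}}}(u_i)=2$ for every intermediate $u_i$ on $\mathsf{P}$ (so the extension is forced to the unique remaining twig at $u_i$), which is precisely the defining property of being internal to a ray substem; the maximality of $\bar\triangle$ then ensures $\mathsf{P}$ terminates at a leaf of $\mathsf{T}_{\mathbbm{v}}$.

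Now both conditions (i) and (ii) are intrinsic to the labelled simplicial structure of the intersection complex. Since $\bar\Phi_{\mathbbm{v}}$ is an isomorphism that preserves the canonical order on each simplex (Lemma~\ref{lem:OrderOnSimplex}) and the quasi-isometric type of the fundamental group of each label (Definition~\ref{Def:Morphism}), it induces a bijection between the multiset of pairs $(\ell,r)$ indexing ray substems of $\mathsf{T}_{\mathbbm{v}}$ and the analogous multiset for $\mathsf{T}'_{\mathbbm{w}}$. Under the running convention of this subsection (every non-leaf vertex of the stem carries exactly two grapes), a ray substem of $\mathsf{T}_{\mathbbm{v}}$ is determined up to isometry by its length and the grape count at its leaf, so this bijection lifts to an isometry $\Gamma_{\mathsf{U}}\to\Gamma'_{\mathsf{U}'}$ sending $\mathbbm{v}$ to $\mathbbm{w}$.

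The main obstacle I expect is verifying characterization~(i): one has to rule out that a branched (non-ray) substem can imitate the ``uniquely extending chain'' behaviour of a ray. The subtlety is that two distinct ray-like chains could in principle share several initial twigs before branching, so one must show that non-uniqueness is always detected at the \emph{first} vertex of valency $\ge 3$ encountered along the corresponding path, and that uniqueness at every stage forces bivalence in $\mathsf{T}_{\mathbbm{v}}$ of all interior vertices. The interplay between the combinatorial notion of ``forward extension in the canonical order'' and the underlying tree valencies is where care is required; once this is established, the remainder of the argument is essentially bookkeeping through the bijections afforded by $\bar\Phi_{\mathbbm{v}}$.
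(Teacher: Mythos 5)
Your overall strategy---detecting ray substems through intrinsic, $\bar\Phi_{\mathbbm{v}}$-invariant features of the labelled complex and then reading off an isometry from the resulting bijection---is the same as the paper's, and your use of the canonical order (Lemma~\ref{lem:OrderOnSimplex}) and of the quasi-isometry types of labels matches the paper's key observation. However, your detector, condition~(i), fails: $\cI(\bar{UP_2(\Gamma_{\mathsf{T}_{\mathbbm{v}}})})$ is the intersection complex of the \emph{universal cover}, so a single twig $t_i$ of $\mathsf{T}_{\mathbbm{v}}$ contributes infinitely many vertices (one for each $p$-lift of $M(t_i)$), and a fixed chain $\barbfv_0<\dots<\barbfv_{i-1}$ admits infinitely many extensions $\barbfv_i$ even when the interior vertex $u_i$ is bivalent in the stem. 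Concretely, the lifts of $M(t_i)$ meeting $\bigcap_{j<i}\bar{M(t_j)}$ in a standard product subcomplex are indexed by the components of the preimage, inside that intersection, of the standard product subcomplex corresponding to $[u_0,\dots,u_{i+1}]$, and there are infinitely many such components because its fundamental group has infinite index in that of the subcomplex corresponding to $[u_0,\dots,u_i]$ (every interior stem vertex carries grapes under the running convention of this subsection). So condition~(i) is \emph{never} satisfied, and the obstacle is not, as you anticipated, a branched substem imitating unique extension, but that uniqueness already fails along a genuine ray. You also cannot repair this by asking for a ``unique underlying twig,'' since $\bar\Phi_{\mathbbm{v}}$ is only an isomorphism at the level of $\cI$ and is not assumed to descend to the reduced complexes.

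A second, related gap: even with a corrected detector, each ray substem is witnessed by infinitely many chains in $\cI$, so a bijection between sets of detecting chains does not by itself give a bijection of the \emph{multisets} of pairs $(\ell,r)$, and counting multiplicities is essential here (quasi-minimality allows up to four ray substems of the same length). The paper resolves both issues by replacing chains with two global, $\bar\Phi_{\mathbbm{v}}$-invariant structures: \emph{blocks} (components of $\rho^{-1}$ of the image of the sub-bunch obtained by deleting the twigs of $\mathcal{S}_{\mathsf{T}}(k+1)$, via Lemma~\ref{Lem:EmbeddedRI}, Corollary~\ref{corollary:components and retraction} and Remark~\ref{Remark:HowtoapplyCorollary}) and \emph{partial simplices} of maximal simplices taken relative to the preserved vertex classes of Lemma~\ref{lem:PreservingVertices}. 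A ray of length $n\ge2$ then corresponds to a block that is a union of $(n-2)$-dimensional partial simplices; two rays of the same length are detected by a $(2n-1)$-dimensional maximal simplex crossing two such blocks; and the count of three versus four, together with the grape numbers at the leaves, is read off from whether the labels of those maximal simplices are quasi-isometric to $\mathbb{Z}\times\mathbb{Z}$, $\mathbb{Z}\times\mathbb{F}_2$ or $\mathbb{F}_2\times\mathbb{F}_2$. Some version of this block machinery is needed to make your argument close.
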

\begin{proof}
For any maximal simplex in $\cI(\bar{UP_2(\Gamma_{\mathsf{T}_{\mathbbm{v}}})})$ ($\cI(\bar{UP_2(\Gamma'_{\mathsf{T}'_{\mathbbm{w}}})})$, resp.) containing vertices induced from ${\mathcal{S}}_{\mathsf{T}}(k+1)$ (${\mathcal{S}}_{\mathsf{T}'}(k+1)$, resp.), its sub-simplex spanned by vertices all less than or all larger than the vertices induced from ${\mathcal{S}}_{\mathsf{T}}(k+1)$ (${\mathcal{S}}_{\mathsf{T}'}(k+1)$, resp.) will be called a \emph{partial simplex}. (This definition is similar to `partial simplices' defined in the proof of Lemma~\ref{Lem:ComponentPreserving} for $\cI$ and $\cI'$, but a little bit different since there is no restriction on the dimension of maximal simplices in this case.)
It is easily seen that $\bar\Phi_{\mathbbm{v}}$ maps partial simplices to partial simplices by Lemmas~\ref{lem:OrderOnSimplex} and~\ref{Lem:ComponentPreserving}.

Let $\mathsf{S}$ be the union of $\mathsf{T}_{\mathbbm{v}_i}$'s for $(k+1)$-tuples $\mathbbm{v}_i$ in $\mathsf{T}_{\mathbbm{v}}$ and $\mathsf{S}'$ the union of $\mathsf{T}'_{\mathsf{w}_j}$'s for $(k+1)$-tuples $\mathbbm{w}_j$ in $\mathsf{T}'_{\mathbbm{w}}$.
By Remark~\ref{Remark:HowtoapplyCorollary} and Lemma~\ref{Lem:ComponentPreserving}, it follows that $\bar\Phi_{\mathbbm{v}}$ maps blocks (with respect to $\mathsf{S}$) to blocks (with respect to $\mathsf{S}'$).

In order to finish the proof, we need the following observation, which is about the relation between ray substems of $\mathsf{T}_{\mathbbm{v}}$ and partial simplices or blocks in $\cI(\bar{UP_2(\Gamma(\mathsf{T}_{\mathbbm{v}}))})$:
\begingroup
\renewcommand{\theenumi}{\roman{enumi}}
\begin{enumerate}
\item\label{one ray} There is a ray substem of length $n\ge 2$ if and only if there is a block which is the union of partial simplices of dimension $(n-2)$.
There is a ray substem of length $1$ if and only if there is a maximal simplex which has at most one partial simplex.
\item\label{item:TwoRays}
There are at least two ray substems of length $n\ge 2$ if and only if there is a maximal simplex of dimension $(2n-1)$ such that it intersects two distinct blocks each of which is the union of partial simplices of dimension $(n-2)$.
There are at least two rays substems of length $1$ if and only if there is a maximal simplex which does not have any partial simplices.
\item
There are three (four, resp.) ray substems of length $n\ge 2$ if and only if the labels of the maximal simplices given in \eqref{item:TwoRays} have the fundamental groups quasi-isometric to two (three, resp.) of $\mathbb{Z}\times\mathbb{Z}$, $\mathbb{Z}\times\mathbb{F}$ and $\mathbb{F}_2\times\mathbb{F}_2$; this is due to the quasi-minimality of $\Gamma$.
There are three (four, resp.) ray substems of length $1$ if and only if there are three maximal simplices which does not have partial simplices such that their labels have the fundamental groups quasi-isometric to two (three, resp.) of $\mathbb{Z}\times\mathbb{Z}$, $\mathbb{Z}\times\mathbb{F}$ and $\mathbb{F}_2\times\mathbb{F}_2$. 
\item Since $\Gamma$ and $\Gamma'$ are assumed to be quasi-minimal, there are no more than four ray substems of the same length.
\end{enumerate}
Note that this observation is invariant under $\bar\Phi_{\mathbbm{v}}$ since $\bar\Phi_{\mathbbm{v}}$ is an isomorphism and both partial simplices and blocks are invariant under $\bar\Phi_{\mathbbm{v}}$.
\endgroup

By looking at the quasi-isometric types of the fundamental groups of the labels appeared in the above observation, we can also know that the numbers of ray substems of the same length with the same number of grapes at leaves are equal.
For instance, in \eqref{one ray}, the labels of the partial simplices in $\cI(\bar{UP_2(\Gamma_{\mathsf{T}_{\mathbbm{v}}})})$ of dimension $(n-2)$ have the fundamental groups quasi-isometric to $\mathbb{Z}\times\mathbb{F}_2$ if and only if the leaf of $\mathsf{T}_{\mathbbm{v}}$ in the ray substem has one grape.
Therefore, we can conclude that there exist an isometry from $\Gamma_{\mathsf{U}}$ to $\Gamma'_{\mathsf{U}'}$ sending $\mathbbm{v}$ to $\mathbbm{w}$.
\end{proof}

\begin{proof}[Proof of Theorem~\ref{theorem:Quasi-minimalRepresentative}]
If $D\le 2$, then Proposition~\ref{prop:SimpleCase} implies that the theorem holds. Thus it suffices to only consider the case when $D\ge 3$.

For a $1$-tuple $\mathbbm{v}\in\mathcal{B}_{\mathsf{T}}$, let $\mathbbm{w}$ be a $1$-tuple in $\mathcal{B}_{\mathsf{T}'}$ which is obtained in Lemma~\ref{Lem:ComponentPreserving}.
By Lemma~\ref{Lem:UnionofRays}, if there is a non-ray substem in $\mathsf{T}_{\mathbbm{v}}$, then $\mathsf{T}'_{\mathbbm{w}}$ also has a non-ray substem.
In order to determine the number of non-ray substems, we need the following observation.
\begin{observation}
Let $\mathbbm{v}\in\mathcal{B}_{\mathsf{T}}$ be a $k$-tuple and $\mathbbm{w}$ be a $k$-tuple in $\mathcal{B}_{\mathsf{T}'}$ which is obtained in Lemma~\ref{Lem:ComponentPreserving}.
For a block $\mathcal{A}_1$ induced from $\mathsf{T}_{\mathbbm{v}_1}$ for some $(k+1)$-tuple $\mathbbm{v}_1$ having $\mathbbm{v}$ as a prefix, if a block $\mathcal{A}_2$ is adjacent to $\mathcal{A}_1$ in $\cI(\bar{UP_2(\Gamma_{\mathsf{T}_{\mathbbm{v}}})})$, then it must be induced from $\mathsf{T}_{\mathbbm{v}_2}$ for another $(k+1)$-tuple $\mathbbm{v}_2$ having $\mathbbm{v}$ as a prefix. 
Thus, $\bar\Phi(\mathcal{A}_1)$ and $\bar\Phi(\mathcal{A}_2)$ are blocks induced from $\mathsf{T}'_{\mathbbm{w}_1}$ and $\mathsf{T}'_{\mathbbm{w}_2}$ for two distinct $(k+1)$-tuples $\mathbbm{w}_1$ and $\mathbbm{w}_2$ having $\mathbbm{w}$ as their prefix, respectively.
\end{observation}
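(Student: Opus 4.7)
The plan is to unpack what ``adjacency of blocks'' means and then exploit the tree-convexity of $\mathsf{T}_{\mathbbm{v}_1}$: if both endpoints of a connecting simplex come from the same branch $\mathsf{T}_{\mathbbm{v}_1}$, then the $\rho$-image of that simplex already lies inside $\operatorname{Image}(\iota)$, forcing the simplex to sit in a single component of $\rho^{-1}(\operatorname{Image}(\iota))$. More precisely, fix the disjoint union $\mathsf{S}=\bigsqcup_j \mathsf{T}_{\mathbbm{v}_j}$ over all $(k+1)$-tuples $\mathbbm{v}_j$ having $\mathbbm{v}$ as prefix, and let $\iota:\cRI(UP_2(\Gamma_{\mathsf{S}}))\to\cRI(UP_2(\Gamma_{\mathsf{T}_{\mathbbm{v}}}))$ be the injective morphism supplied by Lemma~\ref{Lem:EmbeddedRI}. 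Every block of $\cI(\bar{UP_2(\Gamma_{\mathsf{T}_{\mathbbm{v}}})})$ is then a component of $\rho^{-1}(\operatorname{Image}(\iota))$, canonically induced from exactly one $\mathsf{T}_{\mathbbm{v}_j}$; in particular $\mathcal{A}_2$ is induced from some $\mathsf{T}_{\mathbbm{v}_2}$, and the task reduces to showing $\mathbbm{v}_2\neq\mathbbm{v}_1$. I interpret ``$\mathcal{A}_2$ adjacent to $\mathcal{A}_1$'' in the natural way: there exist $\barbfv_i\in\mathcal{A}_i$ spanning a $1$-simplex $\bar\tau$ of $\cI(\bar{UP_2(\Gamma_{\mathsf{T}_{\mathbbm{v}}})})$; by the flag property of $\cI$ (Theorem~\ref{theorem:structureofI}) this edge-level notion is equivalent to sharing any higher-dimensional simplex.

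The main argument is by contradiction. Assume $\mathbbm{v}_1=\mathbbm{v}_2$. Via Lemma~\ref{lem:TwigCorrespondence}, each $\barbfv_i$ determines a twig $t_i$ of $\mathsf{T}_{\mathbbm{v}}$, and the assumption that both $\barbfv_1,\barbfv_2$ lie in blocks induced from the same $\mathsf{T}_{\mathbbm{v}_1}$ forces $t_1,t_2\subset\mathsf{T}_{\mathbbm{v}_1}$. By Lemma~\ref{lem:Intofmaximal}, $\rho(\bar\tau)$ corresponds to the minimal path substem $\mathsf{P}$ of $\mathsf{T}_{\mathbbm{v}}$ containing $t_1$ and $t_2$; since $\mathsf{T}_{\mathbbm{v}_1}$ is itself a subtree of $\mathsf{T}_{\mathbbm{v}}$ containing the two twigs, the tree-convexity of a subtree gives $\mathsf{P}\subset\mathsf{T}_{\mathbbm{v}_1}$. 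Consequently $\rho(\bar\tau)\in\iota(\cRI(UP_2(\Gamma_{\mathsf{T}_{\mathbbm{v}_1}})))\subset\operatorname{Image}(\iota)$, so $\bar\tau\subset\rho^{-1}(\operatorname{Image}(\iota))$. A simplex is connected, so $\bar\tau$ lies in a single component of $\rho^{-1}(\operatorname{Image}(\iota))$, that is, in a single block; this places $\barbfv_1$ and $\barbfv_2$ in the same block, contradicting $\mathcal{A}_1\neq\mathcal{A}_2$. The one delicate point deserves emphasis: vertices of $\rho^{-1}(\operatorname{Image}(\iota))$ are \emph{a priori} allowed to be joined in $\cI$ by edges lying \emph{outside} $\rho^{-1}(\operatorname{Image}(\iota))$, and the whole argument hinges on ruling this out precisely when both endpoints come from the same $\mathsf{T}_{\mathbbm{v}_i}$; tree-convexity of $\mathsf{T}_{\mathbbm{v}_1}$ delivers exactly this.

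The ``Thus'' clause then follows formally. By Lemma~\ref{Lem:ComponentPreserving}, $\bar\Phi_{\mathbbm{v}}(\mathcal{A}_i)$ is a block of $\cI(\bar{UP_2(\Gamma'_{\mathsf{T}'_{\mathbbm{w}}})})$ induced from $\mathsf{T}'_{\mathbbm{w}_i}$ for some $(k+1)$-tuple $\mathbbm{w}_i\in\mathcal{B}_{\mathsf{T}'}$ having $\mathbbm{w}$ as prefix. Since $\bar\Phi_{\mathbbm{v}}$ is a combinatorial isomorphism, it preserves adjacency of blocks, so $\bar\Phi_{\mathbbm{v}}(\mathcal{A}_1)$ and $\bar\Phi_{\mathbbm{v}}(\mathcal{A}_2)$ remain adjacent in $\cI(\bar{UP_2(\Gamma'_{\mathsf{T}'_{\mathbbm{w}}})})$; applying the first half of the observation now to $\Gamma'_{\mathsf{T}'_{\mathbbm{w}}}$ yields $\mathbbm{w}_1\neq\mathbbm{w}_2$, completing the proof.
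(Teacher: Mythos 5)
Your proof is correct and supplies exactly the reasoning the paper leaves implicit (the observation is asserted without proof inside the proof of Theorem~\ref{theorem:Quasi-minimalRepresentative}): the convexity of the subtree $\mathsf{T}_{\mathbbm{v}_1}$ forces any edge of $\cI(\bar{UP_2(\Gamma_{\mathsf{T}_{\mathbbm{v}}})})$ joining two vertices whose twigs lie in $\mathsf{T}_{\mathbbm{v}_1}$ into $\rho^{-1}(\operatorname{Image}(\iota))$, hence into a single block, and the ``Thus'' clause follows from the fact that $\bar\Phi_{\mathbbm{v}}$ carries blocks to blocks and preserves adjacency. Your explicit flagging of the only delicate point --- that a priori two vertices of $\rho^{-1}(\operatorname{Image}(\iota))$ could be joined by an edge outside it --- is exactly the right thing to isolate, and your argument rules it out correctly.
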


\begin{claim}\label{ClaimInSection6}
There is an isometry between 
$\Gamma_{\mathsf{T}_{\mathbbm{v}}}$ and $\Gamma_{\mathsf{T}'_{\mathbbm{w}}}$ sending $\mathbbm{v}$ to $\mathbbm{w}$.
\end{claim}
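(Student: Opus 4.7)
The plan is to prove Claim~\ref{ClaimInSection6} by induction on the depth of $\mathsf{T}_{\mathbbm{v}}$, i.e., the maximum distance in $\mathsf{T}_{\mathbbm{v}}$ from $\mathbbm{v}$ to a leaf. The base case is depth zero, where $\mathsf{T}_{\mathbbm{v}}=\{\mathbbm{v}\}$; this happens either because $\mathbbm{v}$ is a leaf of $\mathsf{T}$, or because $\mathbbm{v}$ lies at the extremal level $[(D+1)/2]$ of $\mathcal{B}_{\mathsf{T}}$, where any outward extension would violate $D=\operatorname{diam}(\mathsf{T})$. In this case $\mathsf{T}'_{\mathbbm{w}}=\{\mathbbm{w}\}$ as well, and the grape count $\ell(\mathbbm{v})\in\{1,2\}$ is determined by the quasi-isometric type of the label of the vertex in $\cI$ corresponding to the twig of $\mathsf{T}$ incident to $\mathbbm{v}$ (namely $\mathbb{Z}\times\mathbb{F}$ when $\ell(\mathbbm{v})=1$ and $\mathbb{F}_2\times\mathbb{F}$ when $\ell(\mathbbm{v})=2$, for a free factor $\mathbb{F}$ of rank $\ge 2$); this quasi-isometric type is preserved by the isomorphism $\bar\Phi$, forcing $\ell(\mathbbm{v})=\ell'(\mathbbm{w})$.

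For the inductive step, I would partition the extended $\hat{\mathbbm{v}}$-components of $\mathsf{T}_{\mathbbm{v}}$ into the union $\mathsf{U}$ of ray substems and the collection of non-ray substems; each non-ray substem has the form $[\mathbbm{v},\mathbbm{v}_1]\cup \mathsf{T}_{\mathbbm{v}_1}$ for a uniquely determined $(k+1)$-tuple $\mathbbm{v}_1$ with $\mathbbm{v}$ as prefix and with $\mathsf{T}_{\mathbbm{v}_1}$ containing some branching. Lemma~\ref{Lem:UnionofRays} supplies the required isometry on the ray part $\Gamma_{\mathsf{U}}\to\Gamma'_{\mathsf{U}'}$ sending $\mathbbm{v}$ to $\mathbbm{w}$. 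For each non-ray substem, the ``moreover'' clause of Lemma~\ref{Lem:ComponentPreserving} (applied to $\mathbbm{v}_1$) furnishes a $(k+1)$-tuple $\mathbbm{w}_1$ with $\mathbbm{w}$ as prefix together with an induced isomorphism $\cI(\bar{UP_2(\Gamma_{\mathsf{T}_{\mathbbm{v}_1}})})\to\cI(\bar{UP_2(\Gamma'_{\mathsf{T}'_{\mathbbm{w}_1}})})$; since $\mathsf{T}_{\mathbbm{v}_1}$ has strictly smaller depth, the inductive hypothesis yields an isometry $\Gamma_{\mathsf{T}_{\mathbbm{v}_1}}\cong\Gamma'_{\mathsf{T}'_{\mathbbm{w}_1}}$ sending $\mathbbm{v}_1\mapsto\mathbbm{w}_1$, which then extends to an isometry of the corresponding non-ray substems by sending the twig $[\mathbbm{v},\mathbbm{v}_1]$ to $[\mathbbm{w},\mathbbm{w}_1]$.

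The main obstacle will be verifying that these piecewise isometries glue together consistently at $\mathbbm{v}$, which reduces to showing that the multiplicities of non-ray substems of each isometry type agree on the two sides. This is where the Observation preceding the Claim enters: $\bar\Phi_{\mathbbm{v}}$ bijectively maps the blocks of $\cI(\bar{UP_2(\Gamma_{\mathsf{T}_{\mathbbm{v}}})})$ induced from the various $\mathsf{T}_{\mathbbm{v}_1}$ onto the blocks induced from the various $\mathsf{T}'_{\mathbbm{w}_1}$, so the non-ray substems are automatically counted consistently, and the inductively supplied isometries ensure that paired substems are in fact mutually isometric. Finally, the quasi-minimality of $\Gamma$ and $\Gamma'$ bounds by two the number of mutually isometric extended components at any vertex (three or more would form a scope of pruning and contradict quasi-minimality), which leaves just enough flexibility to permute isometric copies on either side so that the ray isometry from Lemma~\ref{Lem:UnionofRays} and the non-ray isometries from the induction can be simultaneously realized as a single isometry $\Gamma_{\mathsf{T}_{\mathbbm{v}}}\to\Gamma'_{\mathsf{T}'_{\mathbbm{w}}}$ sending $\mathbbm{v}$ to $\mathbbm{w}$.
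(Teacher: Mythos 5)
Your proposal is correct and follows essentially the same route as the paper: both arguments split $\mathsf{T}_{\mathbbm{v}}$ into ray and non-ray substems, dispatch the ray part with Lemma~\ref{Lem:UnionofRays}, recurse on the non-ray parts via the correspondence of blocks from Lemma~\ref{Lem:ComponentPreserving} and the Observation, and use quasi-minimality to control multiplicities of isometric components. The only difference is cosmetic — you induct on the depth of $\mathsf{T}_{\mathbbm{v}}$ while the paper inducts on the number of vertices admitting a non-ray substem (Property $(*)$) — and both are valid decreasing measures for the same recursion.
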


\begin{proof}[Proof of Claim~\ref{ClaimInSection6}]
For any $k$-tuple $\mathbbm{u}\in\mathcal{B}_\mathsf{T}$, we say $\mathbbm{u}$ has Property $(*)$ if there is a non-ray substem in $\mathsf{T}_{\mathbbm{u}}$.

If $\mathsf{T}_{\mathbbm{v}}$ has no vertex having Property $(*)$, by Lemma~\ref{Lem:UnionofRays}, the claim holds.

If $\mathsf{T}_{\mathbbm{v}}$ has only one vertex having Property $(*)$, then the vertex must be $\mathbbm{v}$ and 
for any $2$-tuple $\mathbbm{v}_1\in\mathsf{T}_{\mathbbm{v}}$, $\mathsf{T}_{\mathbbm{v}_1}$ is the union of ray substems.
By Lemma~\ref{Lem:UnionofRays}, there exists a $2$-tuple $\mathbbm{w}_1$ in $\mathsf{T}'_{\mathbbm{w}}$ such that there is an isometry between 
$\Gamma_{\mathsf{T}_{\mathbbm{v}_1}}$ and $\Gamma'_{\mathsf{T}'_{\mathbbm{w}_1}}$ sending $\mathbbm{v}_1$ to $\mathbbm{w}_1$. 
By the above observation and the assumption that $\Gamma$ and $\Gamma'$ are quasi-minimal, we deduce that the claim holds in this case.

Suppose that $\mathsf{T}_{\mathbbm{v}}$ has at least two vertices having Property $(*)$.
Then one of them, say $\mathbbm{v}_1$, must be $\mathbbm{v}$.
Let $\mathbbm{v}_2$ be a $2$-tuple in $\mathsf{T}_{\mathbbm{v}}$.
Since $\mathsf{T}_{\mathbbm{v}_2}$ has less vertices having Property $(*)$ than $\mathsf{T}_{\mathbbm{v}}$, by induction, there must be a $2$-tuple $\mathbbm{w}_2$ in $\mathsf{T}'_{\mathbbm{w}}$ such that there is an isometry between 
$\Gamma_{\mathsf{T}_{\mathbbm{v}_2}}$ and $\Gamma'_{\mathsf{T}'_{\mathbbm{w}_2}}$ sending $\mathbbm{v}_2$ to $\mathbbm{w}_2$. 
By the above observation, we deduce that $\Gamma_{\mathsf{T}_{\mathbbm{v}}}$ and $\Gamma'_{\mathsf{T}'_{\mathbbm{w}}}$ must be isometric.
This completes the proof of the claim.
\end{proof}

Let $\mathbbm{v}_i$ be a $1$-tuple in $\mathcal{B}_{\mathsf{T}}$.
By Claim~\ref{ClaimInSection6} and the above observation, there exists a $1$-tuple $\mathbbm{w}_j$ such that there is an isometry between 
$\Gamma_{\mathsf{T}_{\mathbbm{v}_i}}$ and $\Gamma'_{\mathsf{T}'_{\mathbbm{w}_j}}$ sending $\mathbbm{v}_i$ to $\mathbbm{w}_j$.
Therefore, by the quasi-minimality assumption and the above observation, we can conclude that $\Gamma_{\mathcal{B}_\mathsf{T}}$ and $\Gamma'_{\mathcal{B}_{\mathsf{T}'}}$ are isometric.

The remaining case is when $\mathsf{T}(0)$ exists. In this case, by Lemma~\ref{Lem:ComponentPreserving}, $\mathsf{T}'(0)$ also exists and there is an isomorphism between $\cI(\bar{UP_2(\Gamma_{\mathsf{T}(0)})})$ and $\cI(\bar{UP_2(\Gamma'_{\mathsf{T}'(0)})})$ induced by $\bar\Phi$.

Now, define $\mathcal{S}_{\mathsf{T}(0)}(1)$ as the set of twigs in $\mathsf{T}(0)$ incident to $v$.
Then a vertex $\barbfv\in\cI$ induced from a twig in $\mathcal{S}_{\mathsf{T}(0)}(1)$ can be characterized as follows: there is a maximal simplex of $\cI$ which contains only one vertex $\barbfu$ in $\bar{\mathcal{V}}_{\mathsf{T}}(1)$ such that $\barbfv$ is (1) a predecessor or a successor of $\barbfu$ and (2) not in $\bar{\mathcal{V}}_{\mathsf{T}}(2)$.
By Lemma~\ref{lem:PreservingVertices}, then $\bar\Phi(\barbfv)$ is contained in a maximal simplex of $\cI'$ which contains only one vertex in $\bar{\mathcal{V}}_{\mathsf{T}'}(1)$, which is $\bar\Phi(\barbfu)$, such that $\bar\Phi(\barbfv)$ is (1) a predecessor or a successor of $\bar\Phi(\barbfu)$ and (2) not in $\bar{\mathcal{V}}_{\mathsf{T}'}(2)$.
It follows that $\bar\Phi$ induces a bijection between vertices induced from $\mathcal{S}_{\mathsf{T}(0)}(1)$ and vertices induced from $\mathcal{S}_{\mathsf{T}'(0)}(1)$.

After inductively defining $\mathcal{S}_{\mathsf{T}(0)}(k)$ and $\mathcal{S}_{\mathsf{T}'(0)}(k)$, we can apply all the lemmas, the observation in this subsection and Claim~\ref{ClaimInSection6} to show that there is an isometry between 
$\Gamma_{\mathsf{T}(0)}$ and $\Gamma'_{\mathsf{T}'(0)}$ such that the $0$-tuple of $\mathsf{T}$ is mapped to the $0$-tuple of $\mathsf{T}'$.
As $\Gamma$ and $\Gamma'$ are assumed to be quasi-minimal, therefore $\Gamma$ and $\Gamma'$ must be isometric.
\end{proof}

\section{Applications}\label{section:applications}
In this section, we will introduce two applications of our study on the 2-braid groups over bunches of grapes. One is to determine whether the $2$-braid group of a bunch of grapes is quasi-isometric to a right-angled Artin group, and the other is the quasi-isometry classification of the $4$-braid group over trees.

\subsection{Quasi-isometric to RAAGs}
Recall that the right-angled Artin group (RAAG) $A_\Lambda$ associated to a (possibly disconnected) simple graph $\Lambda$ is the group admitting the following group presentation:
\[
\langle \mathcal{V}(\Lambda)\mid [v_i,v_j]=1\quad \forall\{v_i,v_j\}\in \mathcal{E}(\Lambda)\rangle.
\]
In \cite{Oh22}, it was shown that there are two families of the $2$-braid groups over bunches of grapes, which are quasi-isometric and not quasi-isometric to RAAGs, respectively, as below.
\begin{itemize}
\item
For example, the $2$-braid group over a bunch of grapes such that its stem is a star graph $\mathsf{S}_n$ for some $n\ge 3$ and every leaf of the stem has exactly one grape (see Figure~\ref{fig:qitoRAAG}) is quasi-isometric to a RAAG $A_\Lambda*\mathbb{Z}$ for a tree $\Lambda$ of diameter at least $3$ \cite[Proposition 5.16]{Oh22}.
\item
For non-example, consider a bunch of grapes $\Gamma$, whose stem is a tree with two vertices of valency $3$ and four leaves that looks like an \emph{affine Dynkin diagram} $\tilde{\mathsf{D}}_n$ with $(n+1)$ vertices for some $n\ge 5$ (see Figure~\ref{fig:notqitoRAAG}).
If every vertex of valency $\le 2$ in the stem has exactly one grape, then $\mathbb{B}(\Gamma)$ is not quasi-isometric to any RAAG \cite[Corollary 5.18]{Oh22}.
\end{itemize}
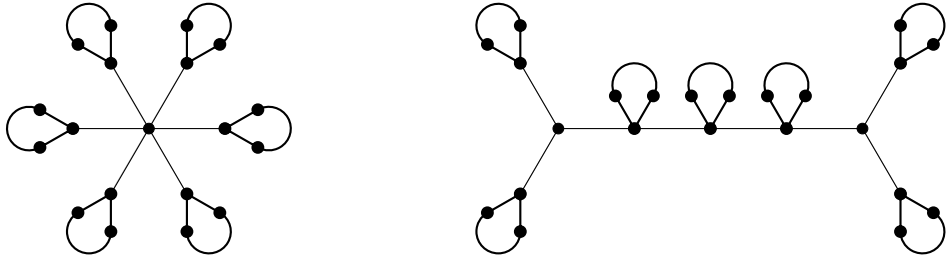
\begin{figure}[ht]
\subfigure[A bunch of grapes over a star graph $\mathsf{S}_n$]{
\begin{tikzpicture}[baseline=-.5ex]
\useasboundingbox (-3,-2) rectangle (3,2);
\draw[fill] (0,0) circle (2pt);
\foreach \i in {0,1,2,3,4,5} {
\draw[fill] (0,0) -- ({\i*60}:1) circle (2pt) node (A) {};
\grape[{\i*60}]{A};
}
\end{tikzpicture}
\label{fig:qitoRAAG}
}
\subfigure[A bunch of grapes over a affine Dynkin diagram $\tilde{\mathsf{D}}_n$]{
\begin{tikzpicture}[baseline=-.5ex]
\useasboundingbox (-2,-2) rectangle (6,2);
\draw[fill] (0,0) circle (2pt) -- (120:1) circle (2pt) node (A) {};
\draw[fill] (0,0) -- (240:1) circle (2pt) node (B) {};
\draw[fill] (0,0) -- (1,0) circle (2pt) node (C) {} -- (2,0) circle (2pt) node (D) {} -- (3,0) circle (2pt) node (E) {} -- (4,0) circle (2pt);
\draw[fill] (4,0) -- +(60:1) circle (2pt) node (F) {} +(0,0) -- +(-60:1) circle (2pt) node (G) {};
\grape[120]{A};
\grape[240]{B};
\grape[90]{C};
\grape[90]{D};
\grape[90]{E};
\grape[60]{F};
\grape[-60]{G};
\end{tikzpicture}
\label{fig:notqitoRAAG}
}
\caption{An example and a non-example of bunches of grapes, whose $2$-braid group is quasi-isometric to a RAAG}
\end{figure}
In this subsection, we will slightly extend both classes of the $2$-braid groups over bunches of grapes.
Throughout this subsection, let $\Gamma=(\mathsf{T},\ell)\in\grapegraph$.

Let us first see the case when $\mathbb{B}_2(\Gamma)$ is quasi-isometric to a RAAG.
If $\mathsf{T}$ is an $n$-star such that each leaf has one grape, then the quasi-minimal representative of $\Gamma$ has a stem which has length $2$.
This case can easily be generalized as follows.

\begin{lemma}\label{lem:isomorphictoRAAG}
Let $\Gamma=(\mathsf{P},\loops)$ be a bunch of grapes over the path graph $\mathsf{P}$.
Then $\mathbb{B}_2(\Gamma)$ is isomorphic to a RAAG.
\end{lemma}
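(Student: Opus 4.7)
The plan is to reduce to the large normal case and then identify $\pi_1(UP_2(\Gamma))$ as a right-angled Artin group via an iterated van Kampen argument.

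First, I would dispense with the small case: if $\Gamma$ is small then $\mathbb{B}_2(\Gamma)$ is already a free group by Theorem~\ref{theorem:freegroupcase}, hence trivially a RAAG. For large $\Gamma$, I would iterate Lemma~\ref{NoHangingEdge} and smooth twigs to write $\mathbb{B}_2(\Gamma) \cong \mathbb{B}_2(\Gamma_{\mathsf{normal}}) \ast \mathbb{F}_N$, where $\Gamma_{\mathsf{normal}} \in \grapegraph^{\mathsf{large}}_{\mathsf{normal}}$ still has a path stem $\mathsf{P} = [v_0, \dots, v_n]$ with $\ell(v_i) \ge 1$ for every $i$. Since a free product of a RAAG with a free group is again a RAAG (adjoin isolated vertices to the defining graph), and since Proposition~\ref{Prop:FreeFactor} further gives $\mathbb{B}_2(\Gamma_{\mathsf{normal}}) \cong \pi_1(UP_2(\Gamma_{\mathsf{normal}})) \ast \mathbb{F}_m$, the task reduces to proving that $\pi_1(UP_2(\Gamma_{\mathsf{normal}}))$ is a RAAG.

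Next, I would apply van Kampen to the decomposition $UP_2(\Gamma_{\mathsf{normal}}) = \bigcup_{k=1}^{n} M(t_k)$ of Lemma~\ref{lem:TwigCorrespondence}, where $t_k = [v_{k-1}, v_k]$ and $M(t_k) = \Gamma_{[v_0,\dots,v_{k-1}]} \itimes \Gamma_{[v_k,\dots,v_n]}$; each $\pi_1(M(t_k))$ is then a direct product of two free groups. By Lemma~\ref{Lem:RIconnected}, $M(t_k) \cap M(t_{k+1}) = \Gamma_{[v_0,\dots,v_{k-1}]} \itimes \Gamma_{[v_{k+1},\dots,v_n]}$, which crucially \emph{omits} the grapes at $v_k$; the inclusion of this intersection into $M(t_k)$ is a local isometry (as a standard product subcomplex inclusion) and hence $\pi_1$-injective, so the pushouts are honest amalgamated products. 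Consequently, the grape-generators at each interior vertex $v_k$ will split into two \emph{distinct} families in $\pi_1(UP_2(\Gamma_{\mathsf{normal}}))$: a ``left-facing'' family $\{x^L_{k,\mu}\}$ drawn from the second factor of $\pi_1(M(t_j))$ for $j \le k$, and a ``right-facing'' family $\{x^R_{k,\mu}\}$ drawn from the first factor of $\pi_1(M(t_j))$ for $j > k$ (while the grapes at $v_0$ yield only right-facing and those at $v_n$ only left-facing generators).

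The target RAAG will be $A_\Lambda$, where $\Lambda$ has vertex set $\{x^R_{i,\mu} : 0 \le i < n,\ 1 \le \mu \le \ell(v_i)\} \cup \{x^L_{j,\nu} : 0 < j \le n,\ 1 \le \nu \le \ell(v_j)\}$ and edges $x^R_{i,\mu} \sim x^L_{j,\nu}$ precisely when $i < j$; each such commutation will arise from the factor-wise commutation inside $M(t_k)$ for any $i < k \le j$. Note that $\Lambda$ is bipartite between $R$- and $L$-type vertices, hence triangle-free---consistent with $UP_2(\Gamma_{\mathsf{normal}})$ being $2$-dimensional. The main obstacle will be verifying that the iterated amalgamated product yields \emph{exactly} the presentation of $A_\Lambda$, with no extra relations arising. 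This I would handle by induction on $n$: assuming $\pi_1\bigl(\bigcup_{k=1}^{n-1} M(t_k)\bigr) \cong A_{\Lambda_{n-1}}$ for the analogous smaller graph, I would identify the common subgroup $\pi_1(M(t_{n-1}) \cap M(t_n))$ via its explicit direct-product-of-free-groups structure and match generators and relations in the pushout with $A_\Lambda$.
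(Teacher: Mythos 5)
Your proposal is correct and follows essentially the same route as the paper: reduce to the rich/normal path case via Lemma~\ref{NoHangingEdge} and Proposition~\ref{Prop:FreeFactor}, decompose $UP_2$ into the $n$ maximal product subcomplexes indexed by twigs, and compute the iterated amalgamated product to obtain the RAAG on the bipartite graph with $a_i \sim b_j$ iff $i<j$ (your left-/right-facing families are exactly the paper's $a_i$'s and $b_j$'s). The only difference is cosmetic: you spell out the $\pi_1$-injectivity of the intersection inclusions and the general multi-grape indexing, which the paper dispatches with ``essentially the same.''
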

\begin{proof}
If $\Gamma$ is not large, then $\Gamma$ is homeomorphic to a graph in Example~\ref{Ex:FreeGroupCase} and thus $\mathbb{B}_2(\Gamma)$ is isomorphic to a free group.
Otherwise, after smoothing twigs, by Lemma~\ref{NoHangingEdge}, we assume that $\Gamma$ is rich.
By Proposition~\ref{Prop:FreeFactor}, it suffices to show that $\pi_1(UP_2(\Gamma))$ is isomorphic to a RAAG.
  
Let $\mathsf{P}=[v_0,\dots,v_n]$ and denote the induced subgraph $[v_i,\dots, v_j]\subset\mathsf{P}$ for $0\le i\le j\le n$ by $[i,j]$.
By Lemma~\ref{lem:TwigCorrespondence}, there are $n$ maximal product subcomplexes of $UP_2(\Gamma)$:
\[
K_i=\Gamma_{[0,i]}\interior{\times} \Gamma_{[i+1,n]}\text{ for } i=0,\dots,n-1.
\]
Let $U_i=K_0\cup\dots\cup K_i$ and so $U_0=K_0$ and $U_{n-1}=UP_2(\Gamma)$.

Suppose that $\ell(v_i)=1$ for $0\leq i\leq n$.
Then $\pi_1(K_i)\cong \langle a_0,\dots,a_i\rangle\times\langle b_{i+1},\dots,b_n\rangle$, and in particular $\pi_1(K_0)\cong \langle a_0\rangle\times \langle b_1,\dots, b_n\rangle \cong A_{\mathsf{S}_n}$.
By induction on $i$ and the Seifert-van Kampen Theorem, one can easily prove that 
\begin{align*}
\pi_1(U_i)=\langle a_0,\dots,a_i,b_1,\dots,b_{n}\mid [a_j,b_k]=1\quad\forall j<k\rangle\cong A_{\mathsf{S}_n\cup\dots\cup\mathsf{S}_{n-i}},
\end{align*}
where the central vertex and leaves of $\mathsf{S}_{n-j}$ are given by $a_j$ and $b_{j+1},\dots, b_n$, respectively.
Therefore $\pi_1(UP_2(\Gamma))=\pi_1(U_{n-1})\cong A_{\mathsf{S}_n\cup\dots\cup\mathsf{S}_{1}}$, and so we are done.

The proofs when $\ell(v_i)\ge 2$ for some $i$'s are essentially the same as above and we omit the detail.
\end{proof}

\begin{proposition}\label{prop:QItoRAAG}
Let $\Gamma$ be a large bunch of grapes.
If the quasi-minimal representative of $\Gamma$ has a path stem, then $\mathbb{B}_2(\Gamma)$ is quasi-isometric to a RAAG.
\end{proposition}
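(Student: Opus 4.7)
The plan is to reduce the proposition to the combination of two facts already established in the paper: the quasi-isometry invariance of the quasi-minimal representative (Theorem~\ref{thm:QIminimal}) and the explicit RAAG structure obtained in the path-stem case (Lemma~\ref{lem:isomorphictoRAAG}).

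First, I would invoke Theorem~\ref{thm:QIminimal} to replace $\Gamma$ by its quasi-minimal representative $\Gamma_{\mathsf{min}}$ without changing the quasi-isometry class of $\mathbb{B}_2(\Gamma)$. Note that for this step to apply, $\Gamma_{\mathsf{min}}$ must be a large bunch of grapes, which is ensured because $\Gamma \in \grapegraph^{\mathsf{large}}$ and the operations producing the quasi-minimal representative (pruning empty twigs, smoothing twigs, picking over-grown grapes, and pruning over-grown substems) all keep the resulting graph inside $\grapegraph^{\mathsf{large}}$. By hypothesis, $\Gamma_{\mathsf{min}} = (\mathsf{P}, \ell_{\mathsf{min}})$ where $\mathsf{P}$ is a path graph.

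Next, I would apply Lemma~\ref{lem:isomorphictoRAAG} directly to $\Gamma_{\mathsf{min}}$, which yields that $\mathbb{B}_2(\Gamma_{\mathsf{min}}) \cong A_\Lambda$ for some simple graph $\Lambda$ (an iterated union of stars, as in the proof of that lemma). Since an isomorphism is a quasi-isometry, composing with the quasi-isometry $\mathbb{B}_2(\Gamma) \simeq_{\mathrm{q.i.}} \mathbb{B}_2(\Gamma_{\mathsf{min}})$ gives the desired conclusion.

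There is no serious obstacle here: the proposition is a direct corollary of the two cited results. The only point requiring a tiny bit of care is confirming that the hypotheses of Lemma~\ref{lem:isomorphictoRAAG} really apply once the quasi-minimal representative has been taken, in particular that the lemma indeed covers the general case where the loop counts $\ell_{\mathsf{min}}(v_i)$ may differ at endpoints of the path (this is exactly the case explicitly noted, but not written out, at the end of the proof of Lemma~\ref{lem:isomorphictoRAAG}). Once that is acknowledged, the proposition follows in two lines.
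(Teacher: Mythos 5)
Your proposal is correct and follows exactly the paper's own (one-line) proof: apply Theorem~\ref{thm:QIminimal} to pass to the quasi-minimal representative, then Lemma~\ref{lem:isomorphictoRAAG} to the path-stem case. The extra care you note about the lemma covering arbitrary loop counts is a reasonable observation but does not change the argument.
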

\begin{proof}
This is done by a combination of Theorem~\ref{thm:QIminimal} and Lemma~\ref{lem:isomorphictoRAAG}
\end{proof}

Now, let us see the case when $\mathbb{B}_2(\Gamma)$ is not quasi-isometric to any RAAG. Before we move on, we need some preliminaries about RAAGs starting with well-known facts about ends of RAAGs.
\begin{itemize}
\item $A_\Lambda$ is one-ended if and only if $\Lambda$ is connected and has at least two vertices.
\item $A_\Lambda$ is two-ended if and only if $\Lambda$ is a single vertex.
\item $A_\Lambda$ is infinite-ended if and only if $\Lambda$ is disconnected. In this case, $A_\Lambda\cong A_{\Lambda_1}*\dots*A_{\Lambda_n}$ where $\Lambda_1,\dots,\Lambda_n$ are all the components of $\Lambda$.
\end{itemize}

For any (possibly disconnected) simple graph $\Lambda$, the RAAG $A_\Lambda$ can be considered as the fundamental group of the \emph{Salvetti complex} $S_\Lambda$, whose universal cover will be denoted by $X_\Lambda$.
The dimension of $S_\Lambda$ or $A_\Lambda$ is equal to $\max\{n\mid \mathsf{K}_n\hookrightarrow\Lambda\}$, where $\mathsf{K}_n$ is a complete graph with $n$ vertices, and therefore $S_\Lambda$ is a special \emph{square} complex if and only if $\Lambda$ is triangle-free and has at least one edge. 
In this case, we can define both $\cRI(S_\Lambda)$ and $\cI(X_\Lambda)$ using the fact that each standard product subcomplex of $S_\Lambda$ corresponds to an induced subgraph of $\Lambda$ which admits a join decomposition. 
See \cite{CH} and \cite[\S4.1]{Oh22} for more about RAAGs and the structures of $\cRI(S_\Lambda)$ and $\cI(X_\Lambda)$, respectively.

\begin{lemma}\label{lem:CopiesofR}
Let $\Lambda$ be a triangle-free connected simple graph with at least two vertices.
Then $\cI(X_\Lambda)$ is the union of copies $\mathcal{R}_x$ of $\cRI(S_\Lambda)$ where $x$ corresponds to a vertex of $X_\Lambda$.
\end{lemma}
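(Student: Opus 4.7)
The plan is as follows. For each vertex $x$ of $X_\Lambda$, define $\mathcal{R}_x$ to be the full subcomplex of $\cI(X_\Lambda)$ induced by those vertices whose associated maximal product subcomplex contains $x$. I will show that (a) the canonical quotient map $\rho:\cI(X_\Lambda)\to\cRI(S_\Lambda)$ from Theorem~\ref{Thm:TPBCM} restricts on each $\mathcal{R}_x$ to an isomorphism onto $\cRI(S_\Lambda)$, and (b) every simplex of $\cI(X_\Lambda)$ belongs to some $\mathcal{R}_x$, so that $\cI(X_\Lambda)=\bigcup_x \mathcal{R}_x$.

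The argument for (a) will rest on the fact that the Salvetti complex $S_\Lambda$ has a unique vertex $s$, so $p^{-1}(s)=\mathcal{V}(X_\Lambda)$. Every standard product subcomplex $K\subseteq S_\Lambda$ contains $s$, and its $p$-lifts are precisely the connected components of $p^{-1}(K)$; since the deck transformation action of $A_\Lambda$ on $X_\Lambda$ is free, each $x\in\mathcal{V}(X_\Lambda)$ lies in exactly one $p$-lift of $K$. This yields a bijection between the vertices of $\mathcal{R}_x$ and the vertices of $\cRI(S_\Lambda)$. To promote this to a simplicial isomorphism, I use uniqueness of lifts through $x$: if maximal product subcomplexes $M_0,\dots,M_k\subseteq S_\Lambda$ share a standard product subcomplex $K$, then the $p$-lift $\bar K$ of $K$ through $x$ is contained in the unique $p$-lift $\bar M_i$ of each $M_i$ through $x$, so the $\bar M_i$ span a simplex of $\mathcal{R}_x$ that $\rho$ sends to the given simplex of $\cRI(S_\Lambda)$; conversely every simplex of $\mathcal{R}_x$ projects under $\rho$ to a simplex of $\cRI(S_\Lambda)$.

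For (b), every standard product subcomplex of $X_\Lambda$ is a $p$-lift of a standard product subcomplex of $S_\Lambda$ by Lemma~\ref{ProjofPS}, and hence contains a point of $p^{-1}(s)$; therefore every simplex of $\cI(X_\Lambda)$ lies in $\mathcal{R}_x$ for any vertex $x$ in the corresponding standard product subcomplex. The only place requiring care is label preservation: labels in $\cI(X_\Lambda)$ are domains of pull-back product structures whereas labels in $\cRI(S_\Lambda)$ are domains of push-forward ones, so they differ by passing to universal covers of the factors. Since $\rho$ is already a morphism in the sense of Definition~\ref{Def:Morphism} by Theorem~\ref{Thm:TPBCM}, however, the bijective restriction $\rho|_{\mathcal{R}_x}$ is automatically an isomorphism of labelled intersection complexes, and I do not expect any substantial obstacle beyond this bookkeeping.
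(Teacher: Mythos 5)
Your proposal is correct and follows essentially the same route as the paper: define $\mathcal{R}_x$ as the subcomplex induced by the maximal product subcomplexes containing $x$, note that the covering is immediate because every standard product subcomplex of $X_\Lambda$ contains a vertex of $p^{-1}(s)$, and use the fact that $S_\Lambda$ has a single vertex to see that $\rho|_{\mathcal{R}_x}$ is an isomorphism onto $\cRI(S_\Lambda)$. You simply spell out the lift-uniqueness and label bookkeeping that the paper's three-line proof leaves implicit.
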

\begin{proof}
For each vertex $x\in X_\Lambda$, let $\mathcal{R}_x$ be the induced subcomplex of $\cI(X_\Lambda)$ spanned by the vertices corresponding to the maximal product subcomplexes of $X_\Lambda$ containing $x$.
Then it is obvious that $\cI(X_\Lambda)$ is covered by $\mathcal{R}_x$'s for all $x\in \mathcal{V}(X_\Lambda)$.
Since $S_\Lambda$ has one vertex, the restriction of the canonical quotient map $\rho:\cI(X_\Lambda)\to\cRI(S_\Lambda)$ to $\mathcal{R}_x$ is an isomorphism.
Therefore, each $\mathcal{R}_x$ can be seen as a copy of $\cRI(S_\Lambda)$.
\end{proof}

We remark that $\mathcal{R}_x$ and $\mathcal{R}_y$ are identical if and only if any maximal product subcomplex of $X_\Lambda$ containing $x$ also contains $y$ and \textit{vice versa}.

\begin{lemma}\label{lem:flag}
Let $\Lambda$ be given as in Lemma~\ref{lem:CopiesofR}.
If $\cI(X_\Lambda)$ is a flag complex, then so is $\cRI(S_\Lambda)$. 
\end{lemma}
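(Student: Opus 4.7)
The plan is to reduce the flag property of $\cRI(S_\Lambda)$ to that of $\cI(X_\Lambda)$ by exploiting the fact that $\cRI(S_\Lambda)$ embeds in $\cI(X_\Lambda)$ as any one of the ``local copies'' $\mathcal{R}_x$ provided by Lemma~\ref{lem:CopiesofR}. Concretely, the restriction of the canonical quotient map $\rho|_{\mathcal{R}_x}:\mathcal{R}_x\to\cRI(S_\Lambda)$ is an isomorphism of almost simplicial complexes, while $\mathcal{R}_x$ is by construction the induced subcomplex of $\cI(X_\Lambda)$ on its vertex set. So the idea is to lift a pairwise-adjacent set, apply the flag hypothesis upstairs, and project back down.

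First I would fix any vertex $x\in\mathcal{V}(X_\Lambda)$ and, given pairwise adjacent vertices $\bfv_0,\dots,\bfv_k$ of $\cRI(S_\Lambda)$, define their lifts $\barbfv_i:=(\rho|_{\mathcal{R}_x})^{-1}(\bfv_i)$ in $\mathcal{R}_x$. Since $\rho|_{\mathcal{R}_x}$ is a combinatorial isomorphism, the $\barbfv_i$ are pairwise adjacent in $\mathcal{R}_x$; because $\mathcal{R}_x$ is an \emph{induced} subcomplex of $\cI(X_\Lambda)$, they remain pairwise adjacent in $\cI(X_\Lambda)$. Invoking the hypothesized flag property of $\cI(X_\Lambda)$, these vertices span a $k$-simplex $\bar{\triangle}\subset\cI(X_\Lambda)$; pushing forward by the combinatorial map $\rho$ produces a $k$-simplex $\rho(\bar{\triangle})\subset\cRI(S_\Lambda)$ with vertex set $\{\bfv_0,\dots,\bfv_k\}$, yielding the flag conclusion.

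Finally I would address the tacit requirement that being a flag complex presupposes being a genuine simplicial complex: since $\cI(X_\Lambda)$ is always simplicial (Theorem~\ref{MaxtoMax}\ref{Item:UniqK} and the remark below Definition~\ref{IC}), so are all of its induced subcomplexes $\mathcal{R}_x$, and hence so is $\cRI(S_\Lambda)$ through the isomorphism. There is essentially no serious obstacle here; the structural work was already carried out in Lemma~\ref{lem:CopiesofR}, and the present statement is a direct formal consequence of having an induced-subcomplex copy of $\cRI(S_\Lambda)$ sitting inside $\cI(X_\Lambda)$ over which $\rho$ restricts to an isomorphism.
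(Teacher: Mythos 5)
Your proof is correct and takes essentially the same approach as the paper: lift a pairwise-adjacent vertex set to the copy $\mathcal{R}_x$ via the isomorphism $\rho|_{\mathcal{R}_x}$, apply the flag hypothesis in $\cI(X_\Lambda)$, and project the resulting simplex back down by $\rho$. The paper merely phrases this as a contradiction argument and makes the final projection step explicit in terms of standard product subcomplexes, but the substance is identical.
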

\begin{proof}
By Propositions 4.3 and 4.6 in \cite{Oh22}, it is shown that both $\cI(X_\Lambda)$ and $\cRI(S_\Lambda)$ are simplicial complexes.

Assume that $\cRI(S_\Lambda)$ is not a flag complex, i.e., there exists a complete subgraph with vertices $\bfv_1,\dots,\bfv_n$ ($n\ge 3$) in $\cRI(S_\Lambda)$ which does not span a simplex.
Consider the preimage of the complete subgraph in $\cRI(S_\Lambda)$ under the restriction of $\rho$ to $\mathcal{R}_x$ for any $x\in X_\Lambda$ obtained in Lemma~\ref{lem:CopiesofR}.
Since $\cI(X_\Lambda)$ is assumed to be a flag complex, the intersection of the maximal product subcomplexes of $X_\Lambda$ corresponding to the vertices of the preimage contains a standard product subcomplex.
It follows that there is a standard product subcomplex contained in the intersection of the maximal product subcomplexes of $S_\Lambda$ corresponding to $\bfv_1,\dots,\bfv_n$.
By the definition of $\cRI(S_\Lambda)$, it is a contradiction, and therefore, $\cRI(S_\Lambda)$ is a flag complex.
\end{proof}

An affine Dynkin diagram $\tilde{\mathsf{D}}_n$ is obtained from two path graphs $\mathsf{P}_2^i$ of length $2$ and a path graph $\mathsf{P}_{n-4}$ of length $(n-4)$ by identifying each central vertex of $\mathsf{P}_2^i$ with each leaf of $\mathsf{P}_{n-4}$. 
If $\mathsf{T}$ is an affine Dynkin diagram of diameter $\ge 3$ and $\Gamma$ is normal, then $\cRI(UP_2(\Gamma))$ has at least two maximal edges such that any path joining these two edges must pass through a simplex of dimension $>1$. Indeed, this is a sufficient condition for $\cI(\bar{UP_2(\Gamma)})$ to be not isomorphic to $\cI(X_\Lambda)$ for any simple graph $\Lambda$.

\begin{proposition}\label{prop:NotQItoRAAG}
Let $\Gamma=(\mathsf{T},\ell)$ be a bunch of grapes. 
Suppose that there are four leaves in $\mathsf{T}$ such that the minimal substem $\mathsf{T}_0\subset\mathsf{T}$ containing these four leaves is isometric to $\tilde{\mathsf{D}}_n$ for some $n\ge 5$ and $\Gamma_{\mathsf{T}_0}$ is normal. 
Then $\mathbb{B}_2(\Gamma)$ is not quasi-isometric to any RAAG.
\end{proposition}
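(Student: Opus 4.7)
The plan is to argue by contradiction: assume $\mathbb{B}_2(\Gamma)$ is quasi-isometric to $A_\Lambda$ for some simple graph $\Lambda$. Two disjoint grapes inside $\Gamma_{\mathsf{T}_0}$ give an undistorted $\mathbb{Z}^2\le \mathbb{B}_2(\Gamma)$ via Proposition~\ref{Prop:FreeAbelianSubgroup}, forcing $\Lambda$ to contain an edge; and since no graph $2$-braid group contains $\mathbb{Z}^3$, $\Lambda$ is triangle-free. Combining Proposition~\ref{Prop:FreeFactor}, the one-endedness of $\pi_1(UP_2(\Gamma))$ from Proposition~\ref{Prop:connectedsimplicial}, Grushko's decomposition of $A_\Lambda$ as a free product indexed by the components of $\Lambda$, and the converse part of Theorem~\ref{PW}, reduces the problem to showing $\pi_1(UP_2(\Gamma))$ is not quasi-isometric to $A_{\Lambda_0}$ for any connected triangle-free $\Lambda_0$ with at least two vertices. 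For such $\Lambda_0$ the Salvetti complex $S_{\Lambda_0}$ and $UP_2(\Gamma)$ are compact special square complexes, so Theorem~\ref{theorem:IsobetInt} supplies an isomorphism
\[
\bar\Psi:\cI(\bar{UP_2(\Gamma)})\xrightarrow{\;\cong\;}\cI(X_{\Lambda_0}),
\]
and it suffices to contradict the existence of such $\bar\Psi$.

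The structural obstruction is extracted from the substem $\mathsf{T}_0\cong\tilde{\mathsf{D}}_n$. Let $b_1,b_2$ be the two branching vertices of $\mathsf{T}_0$ and $x_1,x_1',x_2,x_2'$ the four leaves of $\mathsf{T}$ that lie in $\mathsf{T}_0$; set $t_i=[x_i,b_i]$, $t_i'=[x_i',b_i]$ and $e_i=\{t_i,t_i'\}$. The normality of $\Gamma_{\mathsf{T}_0}$ and the valency of $b_i$ in $\mathsf{T}$ make each of $t_i,t_i'$ a twig of $\Gamma$, so by Lemma~\ref{lem:TwigCorrespondence} $e_i$ is an edge of $\cRI(UP_2(\Gamma))$; since $x_i,x_i'$ are leaves of $\mathsf{T}$, the $2$-path $[x_i,b_i,x_i']$ admits no extension to a $3$-path in $\mathsf{T}$, and Remark~\ref{Ex:PathofLengthN} implies $e_i$ is not a face of any $2$-simplex. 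More generally, an edge of $\cRI(UP_2(\Gamma))$ lying outside every $2$-simplex corresponds to a pair of twigs of $\Gamma$ whose union is a $2$-path in $\mathsf{T}$ with both outer endpoints leaves of $\mathsf{T}$; and when two such edges share a common twig $\tau$, the unique non-leaf endpoint of $\tau$ is forced to be the shared middle vertex of both $2$-paths. Hence the connected components of the subgraph of the $1$-skeleton of $\cRI(UP_2(\Gamma))$ spanned by non-$2$-simplex edges are indexed by a well-defined \emph{middle vertex} in $\mathsf{T}$; since $b_1\neq b_2$, the edges $e_1,e_2$ lie in different components, so every edge-path between them traverses some edge that is a face of a $2$-simplex. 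Because the canonical quotient $\rho$ of Theorem~\ref{Thm:TPBCM} is a local isomorphism on simplices, the same obstruction propagates to $\cI(\bar{UP_2(\Gamma)})$: for arbitrary lifts $\bar e_1,\bar e_2$, every edge-path between them in the $1$-skeleton passes through an edge that is a face of some $2$-simplex.

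The \textbf{main obstacle} is to rule out this configuration inside $\cI(X_{\Lambda_0})$. My plan is to use Lemma~\ref{lem:CopiesofR}: $\cI(X_{\Lambda_0})$ is the union of copies $\mathcal{R}_x\cong\cRI(S_{\Lambda_0})$ glued along vertices, and after using this vertex-gluing to reduce to a single copy the task becomes a statement about $\cRI(S_{\Lambda_0})$ for connected triangle-free $\Lambda_0$ with at least two vertices, namely that the subgraph of its $1$-skeleton spanned by non-$2$-simplex edges is connected. I would prove this by showing that any pair of maximal bicliques of $\Lambda_0$ whose intersection does not extend to a third maximal biclique can be interpolated by a chain of such pairs, each related to the next by "sliding" one side of a biclique along an edge of $\Lambda_0$, with triangle-freeness ensuring non-extendability is preserved at every step. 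Combining this with the vertex-gluing of copies from Lemma~\ref{lem:CopiesofR} would produce a $1$-skeleton path between $\bar\Psi(\bar e_1)$ and $\bar\Psi(\bar e_2)$ in $\cI(X_{\Lambda_0})$ avoiding every $2$-simplex edge, contradicting the transferred obstruction. I expect this interpolation claim for $\cRI(S_{\Lambda_0})$ to be the technically most delicate step, requiring a careful case analysis of how maximal bicliques can share sub-bicliques in the triangle-free setting.
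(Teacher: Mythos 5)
Your reduction (one\-/endedness of $\pi_1(UP_2(\Gamma))$, Papasoglu--Whyte, triangle-freeness of $\Lambda_0$, and Theorem~\ref{theorem:IsobetInt} producing $\bar\Psi:\cI(\bar{UP_2(\Gamma)})\to\cI(X_{\Lambda_0})$) is exactly the paper's, and the obstruction you extract on the source side is correct: the two maximal edges $e_1,e_2$ of $\cRI(UP_2(\Gamma))$ coming from the $2$-paths $[x_i,b_i,x_i']$ do lie in different components of the subgraph spanned by edges not contained in any $2$-simplex, and this does lift to $\cI(\bar{UP_2(\Gamma)})$. The gap is in the step you yourself flag as the delicate one: the interpolation claim that for every connected triangle-free $\Lambda_0$ the ``maximal-edge subgraph'' of $\cRI(S_{\Lambda_0})$ (hence of $\cI(X_{\Lambda_0})$) is connected is \emph{false}. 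Take $\Lambda_0$ on vertices $\{a,b,c,d,e,f,g,h,i,j\}$ with edges $ab,ad,af,bc,be,cd,ef$ (two squares $abcd$ and $abef$ sharing the edge $ab$) together with two pendant paths $c-g-h$ and $e-i-j$. This is connected and triangle-free. The pairs $\bigl(\{g\}\itimes\{c,h\},\ \{c\}\itimes\{b,d,g\}\bigr)$ and $\bigl(\{i\}\itimes\{e,j\},\ \{e\}\itimes\{b,f,i\}\bigr)$ are maximal edges of $\cRI(S_{\Lambda_0})$ (the only maximal bicliques containing the edge $cg$, resp.\ $ei$, are the two stars), whereas every other edge of $\cRI(S_{\Lambda_0})$ incident to $\{c\}\itimes\{b,d,g\}$ or to $\{e\}\itimes\{b,f,i\}$ is a face of a $2$-simplex (e.g.\ the edge joining $\{c\}\itimes\{b,d,g\}$ to $\{a,c\}\itimes\{b,d\}$ lies in the $2$-simplex they span with $\{b\}\itimes\{a,c,e\}$). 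So these two maximal edges form two distinct singleton components of the maximal-edge subgraph, and no ``sliding'' chain between them exists; the same disconnectedness then persists in $\cI(X_{\Lambda_0})$ since a path of maximal edges upstairs projects to one downstairs. Hence for such $\Lambda_0$ your transferred obstruction is simply not an obstruction, and the argument produces no contradiction.

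What the paper does instead is global rather than local: it takes the four path substems joining consecutive leaves of $\mathsf{T}_0$, observes that the corresponding maximal simplices $\triangle_0,\triangle_1,\triangle_2,\triangle_3$ (with $\triangle_0,\triangle_2$ maximal \emph{edges} and $\triangle_1,\triangle_3$ of dimension $n-3\ge 2$) form an essential loop in $\cRI(UP_2(\Gamma))$, and uses simple connectivity of $\cI(\bar{UP_2(\Gamma)})$ (Theorem~\ref{theorem:structureofI}) to unwind this loop into a bi-infinite chain of distinct maximal simplices in which every other term is a separating maximal edge. The separating property, flagness (via Lemma~\ref{lem:flag}), and the alternation of dimensions force the composite $\rho\circ\Phi$ to be injective on this infinite chain, contradicting the finiteness of $\cRI(S_{\Lambda_0})$. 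The finiteness of $\cRI(S_{\Lambda_0})$ together with the fact that $\rho$ restricts to an isomorphism on each copy $\mathcal{R}_x$ (Lemma~\ref{lem:CopiesofR}) is the decisive input; your proposal invokes Lemma~\ref{lem:CopiesofR} but never uses finiteness, which is why it cannot exclude examples like the one above. To repair your argument you would have to replace the connectivity claim by something along these lines, at which point you are essentially reproducing the paper's proof.
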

\begin{proof}
Having such a minimal substem $\mathsf{T}_0$ isometric to $\tilde{\mathsf{D}}_n$ is preserved by any operations on bunches of grapes defined in Section~\ref{section:Operations}.
By Theorem~\ref{thm:QIminimal}, thus, we can assume that $\Gamma$ is quasi-minimal.

Assume that there exists a RAAG is quasi-isometric to $\mathbb{B}_2(\Gamma)$.
By Proposition~\ref{Prop:FreeFactor}, $\mathbb{B}_2(\Gamma)$ is isomorphic to $\pi_1(UP_2(\Gamma))*\mathbb{F}_n$ for some $n\ge 2$ such that $\pi_1(UP_2(\Gamma))$ is one-ended by Proposition~\ref{Prop:connectedsimplicial}.
It follows from \cite[Theorem~0.4]{PW02} that the defining graph of the RAAG must have a component $\Lambda$ such that $A_\Lambda$ is quasi-isometric to $\pi_1(UP_2(\Gamma))$. 
By \cite[Theorem 1-3]{Hua(b)}, moreover, $\Lambda$ must be 2-dimensional, i.e. it is triangle-free and has at least two vertices.
By Theorem~\ref{theorem:IsobetInt}, thus, we assume for contradiction that there is an isomorphism $\Phi:\cI(\bar{UP_2(\Gamma)})\to\cI(X_\Lambda)$.

Let $v,w,x$, and $y$ be four leaves in $\mathsf{T}$ corresponding to leaves in $\mathsf{T}_0$, and let $\mathsf{P}^0, \mathsf{P}^1, \mathsf{P}^2$, and $\mathsf{P}^3$ be the path substems joining $v$ to $w$, $w$ to $x$, $x$ to $y$, and $y$ to $v$, respectively.
Let $\triangle_i$ be the maximal simplex in $\cRI(UP_2(\Gamma))$ corresponding to $\mathsf{P}^i$ for each $i$.
We may assume that both $\mathsf{P}^0$ and $\mathsf{P}^2$ are of length $2$ and both $\mathsf{P}^1$ and $\mathsf{P}^3$ are of length $n-2\ge 3$.
Then both $\triangle_0$ and $\triangle_2$ are one-dimensional, and both $\triangle_1$ and $\triangle_3$ are $(n-3)$-dimensional.

Consider $\bigcup_{i=0}^3\triangle_i$ in $\cRI(UP_2(\Gamma))$.
Then it forms a non-trivial loop in $\cRI(UP_2(\Gamma))$ since both $\triangle_0$ and $\triangle_2$ are maximal edges.
On the other hand, by Theorem~\ref{theorem:structureofI}, $\cI(\bar{UP_2(\Gamma)})$ is simply connected.
It follows that there is a bi-infinite sequence of distinct maximal simplices in $\cI(\bar{UP_2(\Gamma)})$
\[
\dots,\bar\triangle_{-1},\bar\triangle_0,\bar\triangle_1,\bar\triangle_2,\bar\triangle_3,\dots
\]
such that $\rho(\bar\triangle_{n})=\triangle_i$ and $\bar\triangle_{i}\cap\bar\triangle_{i}=\empty$ if $|i-j|\ge 2$.
Note that $\bar\triangle_{2n}$ for any integer $n$ satisfies the following property: there is no path joining two endpoints of $\bar\triangle_{2n}$ without passing through $\bar\triangle_{2n}$.

Note that since $\Phi$ is an isomorphism and $\cI(\bar{UP_2(\Gamma)})$ is a flag complex by Theorem~\ref{theorem:structureofI}, so is $\cI(X_\Lambda)$, and thus by Lemma~\ref{lem:flag}, so is $\cRI(S_\Lambda)$.
Moreover, $\Phi(\triangle_i)$ and $\Phi(\triangle_{i+1})$ have different dimensions for any $i$.
Combining these two facts with the fact that the canonical quotient map $\rho:\cI(X_\Lambda)\to\cRI(S_\Lambda)$ is a combinatorial map, it follows that the restriction of $\rho$ to $\Phi(\bigcup_{j=0}^2\bar\triangle_{i+j})$ is injective for any $i$.

Suppose that $k\ge 3$ is the minimal integer such that the restriction of $\rho$ to $\Phi(\bigcup_{j=0}^k\bar\triangle_{i+j})$ is not injective for some $i$.
By the assumption on $k$, $\rho(\Phi(\bar\triangle_{i+k}))$ must be incident to $\rho(\Phi(\bar\triangle_{i}))$ and $\rho(\Phi(\bar\triangle_{i+k-1}))$. However, it means that $\Phi(\bar\triangle_{2k})$ for $i\le 2k\le i+k-1$ does not have the property that there is no path joining two endpoints of $\Phi(\bar\triangle_{2k})$ without passing through $\Phi(\bar\triangle_{2k})$, which is a contradiction.
It follows that the restriction of $\rho$ to $\Phi(\bigcup_{i=0}^k\bar\triangle_{i})$ is injective for any integer $k>0$ and any $i$ but it is also a contradiction since $\cRI(S_\Lambda)$ is finite.
Therefore, there is no RAAG quasi-isometric to $\mathbb{B}_2(\Gamma)$.
\end{proof}

In summary, we have the following theorem.
\begin{theorem}\label{theorem:bunches of grapes related to RAAG}
There are infinitely many graphs with circumference one whose $2$-braid groups are quasi-isometric to RAAGs; this class properly contains the class in \cite[Proposition 5.16]{Oh22}.

There are also infinitely many graphs with circumference one whose $2$-braid groups are not quasi-isometric to RAAGs; this class properly contains the class in \cite[Corollary 5.18]{Oh22}.
\end{theorem}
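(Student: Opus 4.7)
The plan is to derive both halves of the theorem as direct corollaries of Propositions~\ref{prop:QItoRAAG} and \ref{prop:NotQItoRAAG}, so the remaining work is to exhibit explicit infinite families that witness proper containment over the classes of \cite{Oh22}.

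For the first assertion, I would, for each integer $n\ge 3$, take the bunch of grapes $\Gamma_n=(\mathsf{P}_n,\ell_n)$ where $\mathsf{P}_n$ is a path of length $n$ and $\ell_n(v)=1$ at every vertex $v$. Each $\Gamma_n$ is large, normal, and rich. Since no vertex of $\mathsf{P}_n$ admits three or more extended $\hat{v}$-components, pruning over-grown substems never fires in Algorithm~\ref{alg:quasi-minimal}, and the remaining operations preserve the path shape of the stem; hence $(\Gamma_n)_{\mathsf{min}}$ still has a path stem. Proposition~\ref{prop:QItoRAAG} then gives that $\mathbb{B}_2(\Gamma_n)$ is quasi-isometric to a RAAG. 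The graphs $\Gamma_n$ are pairwise non-isometric, and for $n\ge 3$ none has a star stem, so none lies in the class of \cite[Proposition~5.16]{Oh22}.

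For the second assertion, I would, for each $n\ge 5$, take the bunch of grapes $\Lambda_n=(\tilde{\mathsf{D}}_n,\ell_n)$ with $\ell_n$ assigning exactly one grape at every valency-$\le 2$ vertex of $\tilde{\mathsf{D}}_n$ \emph{except} at one chosen valency-$2$ vertex $v^{\ast}$, where it assigns two grapes. The four leaves of $\tilde{\mathsf{D}}_n$ span the minimal substem $\mathsf{T}_0=\tilde{\mathsf{D}}_n$ itself, and $\Gamma_{\mathsf{T}_0}=\Lambda_n$ is normal because every valency-$\le 2$ vertex carries at least one grape. Proposition~\ref{prop:NotQItoRAAG} then ensures that $\mathbb{B}_2(\Lambda_n)$ is not quasi-isometric to any RAAG. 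The family $\{\Lambda_n\}_{n\ge 5}$ is infinite, and because $\ell_n(v^{\ast})=2$, each $\Lambda_n$ fails the ``exactly one grape at every valency-$\le 2$ vertex'' condition defining the class of \cite[Corollary~5.18]{Oh22}.

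The heavy lifting---quasi-isometry invariance of intersection complexes, existence of quasi-minimal representatives, and the two propositions above---has already been carried out, so the present proof is essentially a verification. The only point that requires care is in the second family, where the hypothesis of Proposition~\ref{prop:NotQItoRAAG} demands that $\Gamma_{\mathsf{T}_0}$ (not merely $\Gamma$) be normal; this is the reason for insisting that every valency-$\le 2$ vertex of $\tilde{\mathsf{D}}_n$ still carry at least one grape, while the extra grape at $v^{\ast}$ is what pushes $\Lambda_n$ outside the old class without disturbing any hypothesis of Proposition~\ref{prop:NotQItoRAAG}.
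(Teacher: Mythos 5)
Your proposal is correct and follows essentially the same route as the paper, which obtains the theorem directly from Propositions~\ref{prop:QItoRAAG} and~\ref{prop:NotQItoRAAG}; exhibiting explicit witness families as you do is a fine way to make the proper containments concrete. One trivial correction: $\tilde{\mathsf{D}}_5$ has no valency-$2$ vertices (its six vertices are four leaves and two valency-$3$ vertices), so your second family should start at $n\ge 6$, or alternatively place the second grape at a leaf; either fix leaves the family infinite and the argument intact.
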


\subsection{4-braid groups over trees}
In this subsection, we apply the methods in Section~\ref{section:proofofThm} to the 4-braid groups over trees. 

\begin{definition}[Bunches of grapes grown from trees]\label{definition:bunch of grapes grown from tree}
Given a tree $\Lambda$, we construct a bunch of grapes $\Gamma(\Lambda)=(\Lambda,\loops)$ over the stem $\Lambda$ such that $\loops(v)=\binom{\val_\Lambda(v)-1}2$ for each vertex $v\in\mathcal{V}(\Lambda)$. 
We say that $\Gamma(\Lambda)$ is \emph{grown from} $\Lambda$, or $\Lambda$ is obtained from $\Gamma(\Lambda)$ by \emph{crushing grapes}.
See Figure~\ref{figure:tree and grapes} for example.
\end{definition}

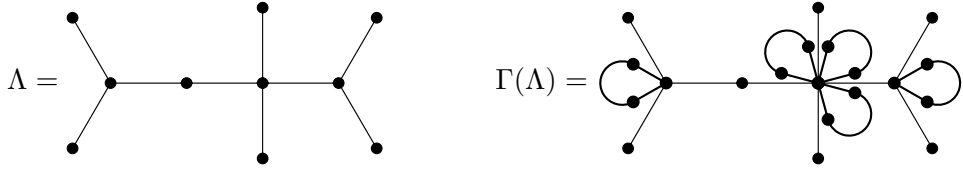
\begin{figure}[ht]
\begin{align*}
\Lambda&=
\begin{tikzpicture}[baseline=-.5ex]
\draw[fill] (120:1) circle (2pt) -- (0,0) circle (2pt) -- (240:1) circle (2pt);
\draw[fill] (0,0) -- (1,0) circle (2pt) -- (2,0) circle (2pt) -- +(90:1) circle (2pt) +(0,0) -- +(-90:1) circle (2pt) +(0,0) -- +(1,0) circle (2pt);
\draw[fill] (3,0) -- +(60:1) circle (2pt) +(0,0) -- +(-60:1) circle (2pt);
\end{tikzpicture}&
\Gamma(\Lambda)&=
\begin{tikzpicture}[baseline=-.5ex]
\draw[fill] (120:1) circle (2pt) -- (0,0) node (A) {} circle (2pt) -- (240:1) circle (2pt);
\draw[fill] (0,0) -- (1,0) circle (2pt) -- (2,0) node (B) {} circle (2pt) -- +(90:1) circle (2pt) +(0,0) -- +(-90:1) circle (2pt) +(0,0) -- +(1,0) circle (2pt);
\draw[fill] (3,0) node (C) {} -- +(60:1) circle (2pt) +(0,0) -- +(-60:1) circle (2pt);
\grape[180]{A};
\grape[45]{B};
\grape[-45]{B};
\grape[135]{B};
\grape[0]{C};
\end{tikzpicture}
\end{align*}
\caption{A tree $\Lambda$ and a bunch of grapes $\Gamma(\Lambda)$ grown from $\Lambda$}
\label{figure:tree and grapes}
\end{figure}

Then we have the following observation:
\begin{enumerate}
\item $\Gamma(\Lambda)$ contains $\Lambda$ and the sets of essential vertices of $\Lambda$ and $\Gamma(\Lambda)$ are identical.
\item Any grape in $\Gamma(\Lambda)$ adjacent to $v$ could be identified with an embedding of the star $3$-graph into $\Lambda$ near $v$, which defines a natural isomorphism
\[
\mathbb{B}_1(\Gamma(\Lambda))\cong \mathbb{B}_2(\Lambda)\cong \mathbb{F}_N,
\]
where $N=\sum_{v\in\mathcal{V}(\Lambda)}\binom{\val_\Lambda(v)-1}2$.
\item For each subtree $\Lambda'\subset\Lambda$, $\Gamma(\Lambda')$ is a sub-bunch of grapes of $\Gamma(\Lambda)$.
\item For each $n\ge 3$ and $\ell=\binom{n-1}2$, both $\mathbb{B}_4(\mathsf{S}_n)$ and $\mathbb{B}_2(\Gamma(\mathsf{S}_n))$ are free groups of rank 
\[
\binom{n+2}{3}(n-2)-\binom{n+2}{4}+1 \quad\text{ and }\quad
\frac{(n+\ell)(n+3\ell-3)}{2}+1,
\]
respectively, which are at least $\ell$.
\item For every edge $e\in\Lambda$, there are edge stabilization maps described in \cite{ADCK2020}
\[
e_*:\mathbb{B}_1(\Gamma(\Lambda))\to\mathbb{B}_2(\Gamma(\Lambda))
\quad\text{ and }\quad
e_*^2:\mathbb{B}_2(\Lambda)\to \mathbb{B}_4(\Lambda),
\]
which are injective if $e$ is an edge adjacent to a leaf.
\end{enumerate}

\begin{proposition}
Let $\Lambda$ be a tree and $\Gamma=\Gamma(\Lambda)$ be a small bunch of grapes grown from $\Lambda$.
For each edge $e\in\Lambda$ adjacent to a leaf, there is a triple $(\mathbb{F},\mathbb{F}', f)$ of free groups $\mathbb{F}$, $\mathbb{F}'$ and an isomorphism $f:\mathbb{B}_2(\Gamma(\Lambda))\to \mathbb{B}_4(\Lambda)$ depending on $e$, which make the following commutative:
\[
\begin{tikzcd}[baseline=-.5ex]
\mathbb{B}_2(\Lambda)\arrow[r,"e_*^2"]\arrow[d,"\cong"'] & \mathbb{B}_4(\Lambda)\arrow[r, hookrightarrow] & \mathbb{B}_4(\Lambda)*\mathbb{F}\arrow[d,"\exists f"]\\
\mathbb{B}_1(\Gamma)\arrow[r,"e_*"] & \mathbb{B}_2(\Gamma)\arrow[r, hookrightarrow] & \mathbb{B}_2(\Gamma)*\mathbb{F}'
\end{tikzcd}
\]

Moreover, if $\Lambda$ is a star graph, then the images of $e_*^2$ and $e_*$ are free factors of their ranges.
\end{proposition}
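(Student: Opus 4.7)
The plan is to reduce to the case $\Lambda=\mathsf{S}_n$ and then bootstrap from the ``moreover'' free-factor statement to construct the triple $(\mathbb{F},\mathbb{F}',f)$. Since $\Gamma(\Lambda)$ is small, at most one vertex $v$ of $\Lambda$ has $\loops(v)=\binom{\val_\Lambda(v)-1}{2}>0$, forcing $\Lambda$ to have at most one vertex of valency $\geq 3$. Thus $\Lambda$ is homeomorphic either to a path (all braid groups trivial; take $\mathbb{F}=\mathbb{F}'=1$ and $f=\operatorname{id}$) or to some $\mathsf{S}_n$ with $n\geq 3$, and by the homeomorphism-invariance of $UD_k(-)$ after sufficient subdivision (Theorem~\ref{Abrams}) I may take $\Lambda=\mathsf{S}_n$.

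Assuming the ``moreover'' statement, I would write $\mathbb{B}_4(\Lambda)=e_*^2(\mathbb{B}_2(\Lambda))*\mathbb{K}$ and $\mathbb{B}_2(\Gamma)=e_*(\mathbb{B}_1(\Gamma))*\mathbb{K}'$ with $\mathbb{K},\mathbb{K}'$ free, and let $\tau:\mathbb{B}_2(\Lambda)\xrightarrow{\cong}\mathbb{B}_1(\Gamma)$ denote the canonical identification of observation~(2), which pairs each tripod braid from an $\mathsf{S}_3$-embedding near the center $v_0$ with the loop around the corresponding grape. Setting $\mathbb{F}:=\mathbb{K}'$ and $\mathbb{F}':=\mathbb{K}$, I would define
\[
f:\mathbb{B}_4(\Lambda)*\mathbb{F}\cong \mathbb{B}_2(\Lambda)*\mathbb{K}*\mathbb{K}'\xrightarrow{\tau\,*\,\operatorname{id}\,*\,\operatorname{id}}\mathbb{B}_1(\Gamma)*\mathbb{K}*\mathbb{K}'\cong \mathbb{B}_2(\Gamma)*\mathbb{F}',
\]
which is an isomorphism of free groups; the commutativity of the diagram then follows because both edge stabilizations park strands at the leaf $w$, so on the ``active'' strands they realize the same element under the identification $\tau$.

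The substantive content lies in the ``moreover'' free-factor claim. For $e_*$, I would apply Lemma~\ref{NoHangingEdge} to $\Gamma$ with leaf $w=v_1$ and adjacent vertex $v_0$: the van Kampen decomposition $UD_2(\Gamma)=UD_2(\Gamma_{-w})\cup(\Gamma_{-w}\setminus v_0)\itimes e\cup \Gamma_{-w}\itimes w$ places $e_*(\mathbb{B}_1(\Gamma))$ inside $\Gamma_{-w}\itimes w\simeq\Gamma_{-w}$. In the star case, $\Gamma_{-w}\setminus v_0$ is a disjoint union of isolated vertices (the remaining leaves of $\mathsf{S}_n$) and isolated $u_1u_2$-arcs (one per grape), hence $(\Gamma_{-w}\setminus v_0)\itimes v_0$ is $\pi_1$-trivial with many components; tracking the homotopy equivalence in the proof of Lemma~\ref{NoHangingEdge}, van Kampen decomposes $\mathbb{B}_2(\Gamma)$ into a free product having $\pi_1(\Gamma_{-w}\itimes w)=e_*(\mathbb{B}_1(\Gamma))$ as an explicit free factor, with the complementary $\mathbb{K}'$ accounting for $\pi_1(S(\lk_\Gamma(v_0)))$ together with the extra circles produced by the multiple components of the intersection. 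The hard part will be establishing the analogous free-factor property for $e_*^2:\mathbb{B}_2(\Lambda)\hookrightarrow\mathbb{B}_4(\Lambda)$, since no 4-braid version of Lemma~\ref{NoHangingEdge} is developed in the paper. My plan here is to stratify $UD_4(\mathsf{S}_n)$ by the number of strands lying on the twig $e\cup\{w\}$ — the locus with two strands parked on this twig forms a subcomplex whose fundamental group is $e_*^2(\mathbb{B}_2(\mathsf{S}_n\setminus w))\cong\mathbb{B}_2(\Lambda)$ — and to prove a 4-braid analogue of Lemma~\ref{NoHangingEdge} by the same van Kampen method, or alternatively to invoke a discrete Morse-theoretic presentation (\`a la Farley--Sabalka) for the tree $\mathsf{S}_n$ in which edge stabilization manifestly sends a subset of critical generators to a subset of critical generators, whence the image is freely generated by part of a free basis.
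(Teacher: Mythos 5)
Taking the hypothesis ``small'' at face value, your reduction is valid: smallness forces at most one vertex of $\Lambda$ to have valency $\ge 3$, so $\Lambda$ is homeomorphic to a path or to $\mathsf{S}_n$, and in that case your argument coincides with the paper's base case. The paper likewise establishes the free-factor property in the star case by discrete Morse theory (the images under $e_*^2$ and $e_*$ of critical cells of $UD_2(\Lambda)$ and $UD_1(\Gamma)$ are realized as critical cells of $UD_4(\Lambda)$ and $UD_2(\Gamma)$, and the ambient groups are free), and then defines $f$ exactly as you do, as the composition $\mathbb{B}_4(\Lambda)*\mathbb{F}\cong\mathbb{B}_2(\Lambda)*\mathbb{F}'*\mathbb{F}\cong\mathbb{B}_1(\Gamma)*\mathbb{F}*\mathbb{F}'\cong\mathbb{B}_2(\Gamma)*\mathbb{F}'$, so that commutativity is tautological on the distinguished free factor. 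Your van Kampen route to the free-factor property of $e_*$ (via Lemma~\ref{NoHangingEdge} and the $\pi_1$-triviality of $\Gamma_{-w}\setminus\{v_0\}$) is a reasonable substitute for the Morse-theoretic one; your primary proposal for $e_*^2$ (a $4$-strand analogue of Lemma~\ref{NoHangingEdge}) is not developed, but your fallback via Farley--Sabalka critical cells is precisely the paper's (equally terse) argument, so you are at the same level of rigor there.

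The substantive divergence is one of scope. The word ``small'' in the statement is almost certainly a slip: the paper's own proof runs an induction on the number $N$ of essential vertices and explicitly treats $N\ge 2$, and Theorem~\ref{Thm:B_4quasi-isometrictoB_2}, derived as a direct consequence of this proposition, is stated for arbitrary trees. Read that way, your proposal omits the inductive step, which is where essentially all of the work lies: one cuts $\Lambda$ at a separating bivalent vertex $v$ into $\Lambda_1\cup_v\Lambda_2$, decomposes $UC_4(\Lambda)$ according to how the four points distribute across the cut, collapses the resulting iterated amalgam to $\mathbb{B}_4(\Lambda_1)*_{\mathbb{B}_2(\Lambda_1)}\bigl(\mathbb{B}_2(\Lambda_1)\times\mathbb{B}_2(\Lambda_2)\bigr)*_{\mathbb{B}_2(\Lambda_2)}\mathbb{B}_4(\Lambda_2)$, matches this with the analogous decomposition $\mathbb{B}_2(\Gamma_1)*_{\mathbb{B}_1(\Gamma_1)}\bigl(\mathbb{B}_1(\Gamma_1)\times\mathbb{B}_1(\Gamma_2)\bigr)*_{\mathbb{B}_1(\Gamma_2)}\mathbb{B}_2(\Gamma_2)$ using the inductive commutative squares for $\Lambda_1,\Lambda_2$, and finally handles an arbitrary leaf-adjacent edge $e$ by a basepoint-change (``dragging'') argument conjugating $e_*$ to $(e_i)_*$. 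None of this is visible in the star case, where all leaf edges are equivalent under a symmetry, and it cannot be recovered from your reduction. So: if the proposition is meant literally, your proof is essentially the paper's; if it is meant to support the general-tree quasi-isometry, the amalgamated induction is the missing core.
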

\begin{proof}
We will use the induction on the number $N$ of essential vertices of $\Lambda$ or $\Gamma$.

Suppose that $N=1$. By the discrete Morse theory and the definition of the stabilization map, the images under $e_*^2$ and $e_*$ of critical cells in $UD_2(\Lambda)$ and $UD_1(\Gamma)$ could be realized as critical cells in $UD_4(\Lambda)$ and $UD_2(\Gamma)$, respectively.
Since both $\mathbb{B}_4(\Lambda)$ and $\mathbb{B}_2(\Gamma)$ are free since $\Lambda$ is a star graph, both $e_*^2(\mathbb{B}_2(\Lambda))$ and $e_*(\mathbb{B}_1(\Gamma))$ are free factors of $\mathbb{B}_4(\Lambda)$ and $\mathbb{B}_2(\Gamma)$, respectively.

Since both $e_*^2$ and $e_*$ are injective as seen in the observation above, we may write
\[
\mathbb{B}_4(\Lambda)\cong \mathbb{B}_2(\Lambda) * \mathbb{F}'\quad\text{ and }\quad
\mathbb{B}_2(\Gamma)\cong \mathbb{B}_1(\Gamma) * \mathbb{F},
\]
for some free groups $\mathbb{F}$ and $\mathbb{F}'$, and we are done by defining $f_\Lambda$ as the composition of isomorphisms
\[
f_\Lambda:\mathbb{B}_4(\Lambda) *\mathbb{F} \to \mathbb{B}_2(\Lambda)*\mathbb{F}'*\mathbb{F} \to
\mathbb{B}_1(\Gamma)*\mathbb{F}*\mathbb{F}' \to \mathbb{B}_2(\Gamma)*\mathbb{F}'.
\]

Now assume that $N\ge 2$. 
By subdividing edges of $\Lambda$ if necessary, we can pick a bivalent vertex $v$ of $\Lambda$, which separate $\Lambda$ into two parts $\Lambda_1$ and $\Lambda_2$ by cutting $e$ as seen in the picture below
\[
\begin{tikzcd}[column sep=4pc]
\Lambda=\begin{tikzpicture}[baseline=-.5ex]
\draw[fill] (0,0) circle (2pt);
\draw (-1.5,-0.5) rectangle (-0.5,0.5);
\draw (-0.5,0) -- node[midway, above] {$e_1$} (0,0) node[below] {$v$} -- (0.5,0) node[midway,above] {$e_2$};
\draw (1.5,0.5) rectangle (0.5, -0.5);
\end{tikzpicture}
\ar[r,"cutting"] &
\Lambda_1=\begin{tikzpicture}[baseline=-.5ex]
\draw (-1.5,-0.5) rectangle (-0.5,0.5);
\draw[fill] (-0.5,0) -- node[midway,above] {$e_1$} (0,0) circle(2pt) node[right] {$v$};
\end{tikzpicture}\quad
\begin{tikzpicture}[baseline=-.5ex]
\draw[fill] (0,0) circle (2pt) node[left] {$v$} -- node[midway,above] {$e_2$} (0.5,0);
\draw (1.5,0.5) rectangle (0.5, -0.5);
\end{tikzpicture}
=\Lambda_2
\end{tikzcd}
\]
so that both $\Lambda_1$ and $\Lambda_2$ has at least one essential vertices. 
Then we have triples $(\mathbb{F}, \mathbb{F}', f_1)$ and $(\mathbb{G}, \mathbb{G}', f_2)$ by the induction hypothesis.

We denote two bunches of grapes grown from $\Lambda_1$ and $\Lambda_2$ by $\Gamma_1$ and $\Gamma_2$.
Since $\Gamma$ is obtained by identifying two leaves of $\Gamma_1$ and $\Gamma_2$, 
we decompose the configuration space $UC_4(\Lambda)$ according to whether the vertex $v$ is occupied or not by a point in $UC_4(\Lambda)$ as follows: 
for $\mathring{\Lambda}_i=\Lambda_i\setminus\{v\}$,
\[
\setlength\arraycolsep{0.1em}
\begin{array}{ccccccc}
UC_4(\Lambda)&=& UC_4(\Lambda_1)&\displaystyle{\bigcup_{UC_3(\mathring{\Lambda}_1)\times\{*\}}} &\left(UC_3(\Lambda_1)\times UC_1(\Lambda_2)\right) \\
 & & &\displaystyle{\bigcup_{UC_2(\mathring{\Lambda}_1)\times\{*\}\times UC_1(\mathring{\Lambda}_2)}}&
\left(UC_2(\mathring{\Lambda}_1)\times UC_2(\mathring{\Lambda}_2)\right) \\
 & & &\displaystyle{\bigcup_{UC_1(\mathring{\Lambda}_1)\times\{*\}\times UC_2(\mathring{\Lambda}_2)}} &\left(UC_1(\Lambda_1)\times UC_3(\Lambda_2)\right)
&\displaystyle{\bigcup_{\{*\}\times UC_3(\mathring{\Lambda}_2)}} &UC_4(\Lambda_2),
\end{array}
\]
which yields an iterated amalgamated free product decomposition of $\mathbb{B}_4(\Lambda)$
\begin{align*}
\mathbb{B}_4(\Lambda)&\cong \mathbb{B}_4(\Lambda_1)*_{\mathbb{B}_3(\Lambda_1)\times\mathbb{B}_0(\Lambda_2)}\left(\mathbb{B}_3(\Lambda_1)\times \mathbb{B}_1(\Lambda_2)\right) *_{\mathbb{B}_2(\Lambda_1)\times\mathbb{B}_1(\Lambda_2)}
\left(\mathbb{B}_2(\Lambda_1)\times \mathbb{B}_2(\Lambda_2)\right)\\
&\mathrel{\hphantom{=}}
*_{\mathbb{B}_1(\Lambda_1)\times\mathbb{B}_2(\Lambda_2)} \left(\mathbb{B}_1(\Lambda_1)\times \mathbb{B}_3(\Lambda_2)\right) *_{\mathbb{B}_0(\Lambda_1)\times\mathbb{B}_3(\Lambda_2)} \mathbb{B}_4(\Lambda_2).
\end{align*}
Here, two images of $\mathbb{B}_i(\Lambda_1)\times \mathbb{B}_{3-i}(\Lambda_2)$ under $(e_1)_*$ and $(e_2)_*$ are identified for each $0\le i\le 3$.
Since $\mathbb{B}_i(\Lambda_1)$ and $\mathbb{B}_i(\Lambda_2)$ are trivial for $i\le 1$ and both $\mathbb{B}_3(\Lambda_1)$ and $\mathbb{B}_3(\Lambda_2)$ are subgroups of $\mathbb{B}_4(\Lambda_1)$ and $\mathbb{B}_4(\Lambda_2)$, respectively, as observed before, we have
\begin{align*}
\mathbb{B}_4(\Lambda)&\cong \mathbb{B}_4(\Lambda_1)*_{\mathbb{B}_2(\Lambda_1)}
\left(\mathbb{B}_2(\Lambda_1)\times \mathbb{B}_2(\Lambda_2)\right)*_{\mathbb{B}_2(\Lambda_2)} \mathbb{B}_4(\Lambda_2),
\end{align*}
where $\mathbb{B}_2(\Lambda_i)$ is identified with the subgroup of $\mathbb{B}_4(\Lambda_i)$ via $(e_i)_*^2$ for each $i=1,2$.

On the other hand, the decomposition on $UC_2(\Gamma)$ gives us the following:
\begin{align*}
\mathbb{B}_2(\Gamma)&\cong
\mathbb{B}_2(\Gamma_1)*_{\mathbb{B}_1(\Gamma_1)\times\mathbb{B}_0(\Gamma_2)}\left(\mathbb{B}_1(\Gamma_1)\times\mathbb{B}_1(\Gamma_2)\right)*_{\mathbb{B}_0(\Gamma_1)\times\mathbb{B}_1(\Gamma_2)}
\mathbb{B}_2(\Gamma_2)\\
&\cong\mathbb{B}_2(\Gamma_1)*_{\mathbb{B}_1(\Gamma_1)}\left(\mathbb{B}_1(\Gamma_1)\times\mathbb{B}_1(\Gamma_2)\right)*_{\mathbb{B}_1(\Gamma_2)}\mathbb{B}_2(\Gamma_2)
\end{align*}
since $\mathbb{B}_0(\Gamma_1)$ and $\mathbb{B}_0(\Gamma_2)$ are trivial and $\mathbb{B}_2(\Gamma_1)\cong\mathbb{B}_1(\Gamma_1)*\mathbb{F}$.

Moreover, the commutative diagrams for $\Lambda_1$ and $\Lambda_2$ imply that
\begin{align*}
\mathbb{B}_4(\Lambda)*\mathbb{F}*\mathbb{G}&\cong (\mathbb{F}*\mathbb{B}_4(\Lambda_1))*_{\mathbb{B}_2(\Lambda_1)}
\left(\mathbb{B}_2(\Lambda_1)\times \mathbb{B}_2(\Lambda_2)\right)*_{\mathbb{B}_2(\Lambda_2)} (\mathbb{B}_4(\Lambda_2)*\mathbb{G})\\
&\cong
(\mathbb{F}'*\mathbb{B}_2(\Gamma_1))*_{\mathbb{B}_1(\Gamma_1)}
\left(\mathbb{B}_1(\Gamma_1)\times \mathbb{B}_1(\Gamma_2)\right)*_{\mathbb{B}_1(\Gamma_2)} (\mathbb{B}_2(\Gamma_2)*\mathbb{G}')\\
&\cong
\mathbb{B}_2(\Gamma)*\mathbb{F}'*\mathbb{G}'.
\end{align*}
Hence we have an isomorphism $f:\mathbb{B}_4(\Lambda)*\mathbb{F}*\mathbb{G} \cong \mathbb{B}_2(\Gamma)*\mathbb{F}'*\mathbb{G}'$.

Finally, we have decompositions of $\mathbb{B}_2(\Lambda)$ and $\mathbb{B}_1(\Gamma)$ as follows:
\begin{align*}
\mathbb{B}_2(\Lambda)&\cong
\mathbb{B}_2(\Lambda_1) *_{\mathbb{B}_1(\Lambda_1)\times\mathbb{B}_0(\Lambda_1)} 
\left(\mathbb{B}_1(\Lambda_1)\times\mathbb{B}_1(\Lambda_2)\right) *_{\mathbb{B}_1(\Lambda_1)\times\mathbb{B}_0(\Lambda_1)} 
\mathbb{B}_2(\Lambda_2)\\
&\cong \mathbb{B}_2(\Lambda_1) * \mathbb{B}_2(\Lambda_2)\\
\mathbb{B}_1(\Gamma)&\cong \mathbb{B}_1(\Gamma_1)*\mathbb{B}_1(\Gamma_2),
\end{align*}
which have isomorphic factors.

Now let $e$ be an edge adjacent to any leaf of $\Lambda$, say $*$.
We may assume that $e$ is an edge of $\Lambda_1$.
Then there is a unique edge path $\gamma$ from $*$ to $v$, and we define $\Lambda_2'=\Lambda_2\cup\gamma$, which is homeomorphic to $\Lambda_2$.
This homeomorphism $h:\Lambda_2\to\Lambda_2'$ induces an equivariant isomorphism so that $(e_*)^2 h_* = h_* (e_2)_*^2 : \mathbb{B}_2(\Lambda_2) \to \mathbb{B}_4(\Lambda_2')$.
Similarly, if we define $\Gamma'=\Gamma(\Lambda_2')$, then there is an induced homeomorphism $h:\Gamma_2\to\Gamma_2'$ such that $e_* h_*= h_* (e_2)_*:\mathbb{B}_1(\Gamma_2)\to\mathbb{B}_2(\Gamma_2')$.

However, the map $e_*$ is indeed a conjugate of $(e_2)_*$ by a path between two basepoints of $UC_2(\Lambda_1)$ or $UC_1(\Gamma_1)$, one of which has an additional point near the leaf $*$ and the other has an additional point near the vertex $v$.
Roughly speaking, $e^2(\mathbf{x})$ can be obtained by \emph{dragging} two points closest to $v$ in $e_2^2(\mathbf{x})$ to the leaf $*$ for each $\mathbf{x}\in UC_2(\Lambda_1)$.

More precisely, we have the commutative diagram in Figure~\ref{figure:changing basepoints}, where $x_0\in UC_1(\Gamma)$ and $\mathbf{x}_0\in UC_2(\Lambda)$ are basepoints and $f_e$ is the composition of $f$ and two isomorphisms coming from dragging isomorphisms.
Then the outer hexagon is the desired commutative diagram.
\end{proof}

\begin{figure}[ht]
\[
\begin{tikzcd}[row sep=1pc]
& \mathbb{B}_4(\Lambda, e^2(\mathbf{x}_0)) \arrow[r, hookrightarrow] \arrow[dd, "dragging", "\cong"'] & \mathbb{B}_4(\Lambda, e^2(\mathbf{x}_0)) * \mathbb{F}*\mathbb{G} \arrow[dd, "\cong"'] \arrow[dddddd, bend left=90, "f_e", "\cong"'] \\
\mathbb{B}_2(\Lambda, \mathbf{x}_0) \arrow[dddd, "\cong"] \arrow[ru, "e_*^2"] \arrow[rd, "(e_2)_*^2"'] \\
& \mathbb{B}_4(\Lambda, e_2^2(\mathbf{x}_0)) \arrow[r, hookrightarrow] & \mathbb{B}_4(\Lambda, e_2^2(\mathbf{x}_0)) * \mathbb{F} * \mathbb{G} \arrow[dd, "\cong"', "f"]\\ \\
& \mathbb{B}_2(\Gamma, e_2(x_0)) \arrow[r, hookrightarrow] \arrow[dd, "dragging", "\cong"'] & \mathbb{B}_2(\Gamma, e_2(x_0)) * \mathbb{F}'*\mathbb{G}'\arrow[dd, "\cong"']\\
\mathbb{B}_1(\Gamma,x_0) \arrow[ru, "(e_2)_*"] \arrow[rd, "e_*"']\\
& \mathbb{B}_2(\Gamma, e(x_0)) \arrow[r, hookrightarrow] & \mathbb{B}_2(\Gamma, e(x_0)) * \mathbb{F}'*\mathbb{G}'
\end{tikzcd}
\]
\caption{Changing basepoints}
\label{figure:changing basepoints}
\end{figure}
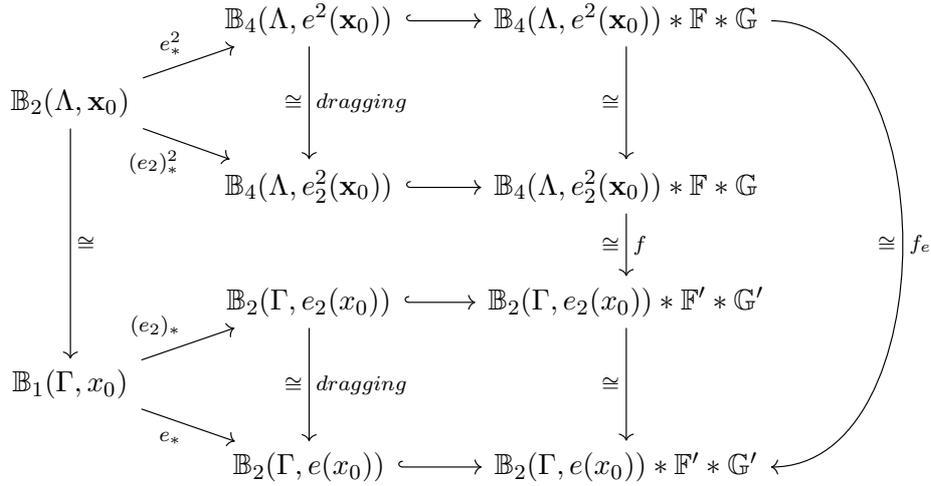

As a direct consequence of the previous proposition, we have the following theorem.

\begin{theorem}\label{Thm:B_4quasi-isometrictoB_2}
Let $\Lambda$ be a tree and $\Gamma=\Gamma(\Lambda)$ be a bunch of grapes grown from $\Lambda$.
Then the braid groups $\mathbb{B}_4(\Lambda)$ and $\mathbb{B}_2(\Gamma)$ are isomorphic up to free groups. Namely, 
\[
\mathbb{B}_4(\Lambda)*\mathbb{F}\cong \mathbb{B}_2(\Gamma)*\mathbb{F}'
\]
for some free groups $\mathbb{F}$ and $\mathbb{F}'$.

In particular, $\mathbb{B}_4(\Lambda)$ and $\mathbb{B}_2(\Gamma)$ are quasi-isometric.
\end{theorem}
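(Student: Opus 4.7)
The theorem is essentially a corollary of the preceding proposition, so the plan is short. First, I would simply apply the proposition (with $e$ chosen to be any edge of $\Lambda$ adjacent to a leaf) to obtain free groups $\mathbb{F}$, $\mathbb{F}'$ together with an isomorphism $f:\mathbb{B}_4(\Lambda)\ast\mathbb{F}\to\mathbb{B}_2(\Gamma)\ast\mathbb{F}'$. This already gives the first (algebraic) assertion verbatim.

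For the quasi-isometry statement, the plan is to invoke Theorem~\ref{PW} of Papasoglu--Whyte. Concretely, from the isomorphism $\mathbb{B}_4(\Lambda)\ast\mathbb{F}\cong\mathbb{B}_2(\Gamma)\ast\mathbb{F}'$ we get a quasi-isometry between the two sides. If either of $\mathbb{F},\mathbb{F}'$ happens to be trivial we enlarge both sides by an extra free factor $\mathbb{F}_1$ (which changes nothing up to quasi-isometry by the first clause of Theorem~\ref{PW}) so that both groups are nontrivial free products. Then, provided $\mathbb{B}_4(\Lambda)$ and $\mathbb{B}_2(\Gamma)$ are one-ended, the ``moreover'' half of Theorem~\ref{PW} applied to the common free-product factorizations lets us strip off the free factors and conclude that $\mathbb{B}_4(\Lambda)$ and $\mathbb{B}_2(\Gamma)$ are quasi-isometric.

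The only step that requires a little care, and which I would treat as the main (mild) obstacle, is justifying the one-endedness hypothesis needed to apply Theorem~\ref{PW}. If $\Lambda$ has at most one essential vertex (so $\Lambda$ is a path or a star), then both $\mathbb{B}_4(\Lambda)$ and $\mathbb{B}_2(\Gamma)$ are free groups by Example~\ref{Ex:FreeGroupCase} and the observations in Section~9.2, and they are quasi-isometric iff they have the same number of ends; here one can read off the ranks directly from the formulas recorded in the observation preceding the proposition and verify the quasi-isometry by hand (both are infinitely-ended free groups of rank $\ge 2$). Otherwise $\Lambda$ has at least two essential vertices, so $\Gamma=\Gamma(\Lambda)$ is large and normal, and by Proposition~\ref{Prop:FreeFactor} we have $\mathbb{B}_2(\Gamma)\cong\pi_1(UP_2(\Gamma))\ast\mathbb{F}_n$ where the factor $\pi_1(UP_2(\Gamma))$ is one-ended by Proposition~\ref{Prop:connectedsimplicial}. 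Thus $\mathbb{B}_2(\Gamma)$ is a free product of a one-ended group with a free group, and the same is then true for $\mathbb{B}_4(\Lambda)$ via the algebraic isomorphism; applying Theorem~\ref{PW} to the one-ended factors on each side then yields the desired quasi-isometry between $\mathbb{B}_4(\Lambda)$ and $\mathbb{B}_2(\Gamma)$.
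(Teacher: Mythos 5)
Your overall route is the paper's: the displayed isomorphism is exactly the output of the preceding proposition, and the quasi-isometry is then extracted via Theorem~\ref{PW}. You are also right that this last step is not automatic and that the number of ends is the issue. However, there is a gap in the way you close it. In the case where $\Lambda$ has at least two essential vertices you obtain $\mathbb{B}_2(\Gamma)\cong G*\mathbb{F}_m$ with $G$ one-ended and $m\ge 2$, and then assert that ``the same is then true for $\mathbb{B}_4(\Lambda)$ via the algebraic isomorphism.'' But the isomorphism $\mathbb{B}_4(\Lambda)*\mathbb{F}\cong G*\mathbb{F}_M$ only gives, via uniqueness of the Grushko decomposition, that $\mathbb{B}_4(\Lambda)\cong G*\mathbb{F}_k$ with $k=M-\operatorname{rk}\mathbb{F}\ge 0$; it does not rule out $k=0$. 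If $k=0$ then $\mathbb{B}_4(\Lambda)$ would be one-ended while $\mathbb{B}_2(\Gamma)$ is infinitely-ended, and they would not be quasi-isometric (compare $\mathbb{Z}^2*\mathbb{F}_1\cong(\mathbb{Z}^2*\mathbb{F}_1)*\{1\}$: an isomorphism ``up to free groups'' does not by itself make the two cores quasi-isometric). So you need an independent reason why $\mathbb{B}_4(\Lambda)$ admits a nontrivial free splitting. This is available from the ``moreover'' clause of the proposition together with its inductive decomposition: cutting $\Lambda$ so that $\Lambda_1$ is a star, the base case gives $\mathbb{B}_4(\Lambda_1)\cong\mathbb{B}_2(\Lambda_1)*\mathbb{F}'_1$ with $\mathbb{F}'_1$ nontrivial and with the amalgamation taken over the free factor $\mathbb{B}_2(\Lambda_1)$, whence $\mathbb{B}_4(\Lambda)$ itself splits off a nontrivial free factor. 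With $k\ge 1$ in hand, the first part of Theorem~\ref{PW} applied to $G*\mathbb{F}_k$ and $G*\mathbb{F}_m$ finishes the argument as you intend.

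Two smaller points. First, $\Gamma(\Lambda)$ is large but never normal, since leaves and bivalent vertices of $\Lambda$ receive no grapes; so Proposition~\ref{Prop:FreeFactor} does not apply to $\Gamma$ directly, and you must first pass to the normal representative via Theorem~\ref{theorem:2-free factor}, which only inserts another free factor and does not affect the argument. Second, in the one-essential-vertex case your parenthetical ``both are infinitely-ended free groups of rank $\ge 2$'' should exclude the degenerate path case (where both groups are trivial and $\Gamma$ is not even a bunch of grapes) and otherwise be checked against the rank formulas; it does hold for every star $\mathsf{S}_n$ with $n\ge 3$.
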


By the aid of the above theorem, we have the following corollaries:
\begin{corollary}\label{Cor:B_4QIiffB_2QI}
Let $\Lambda_1$ and $\Lambda_2$ be trees, and $\Gamma_1$ and $\Gamma_2$ be two bunches of graphs grown from $\Lambda_1$ and $\Lambda_2$, respectively.
Then $\mathbb{B}_4(\Lambda_1)$ and $\mathbb{B}_4(\Lambda_2)$ are quasi-isometric if and only if so are $\mathbb{B}_2(\Gamma_1)$ and $\mathbb{B}_2(\Gamma_2)$.
\end{corollary}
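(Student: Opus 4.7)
The plan is to derive the corollary as an immediate consequence of Theorem~\ref{Thm:B_4quasi-isometrictoB_2} combined with the fact that quasi-isometry is an equivalence relation. By Theorem~\ref{Thm:B_4quasi-isometrictoB_2} applied to $\Lambda_1$ and $\Lambda_2$, we have quasi-isometries $\mathbb{B}_4(\Lambda_1)\simeq_{\mathrm{q.i.}}\mathbb{B}_2(\Gamma_1)$ and $\mathbb{B}_4(\Lambda_2)\simeq_{\mathrm{q.i.}}\mathbb{B}_2(\Gamma_2)$.

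Thus the forward direction is: assuming $\mathbb{B}_4(\Lambda_1)\simeq_{\mathrm{q.i.}}\mathbb{B}_4(\Lambda_2)$, composing the three quasi-isometries gives $\mathbb{B}_2(\Gamma_1)\simeq_{\mathrm{q.i.}}\mathbb{B}_4(\Lambda_1)\simeq_{\mathrm{q.i.}}\mathbb{B}_4(\Lambda_2)\simeq_{\mathrm{q.i.}}\mathbb{B}_2(\Gamma_2)$. The converse direction is identical, simply running the same chain in the opposite order: $\mathbb{B}_4(\Lambda_1)\simeq_{\mathrm{q.i.}}\mathbb{B}_2(\Gamma_1)\simeq_{\mathrm{q.i.}}\mathbb{B}_2(\Gamma_2)\simeq_{\mathrm{q.i.}}\mathbb{B}_4(\Lambda_2)$.

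There is essentially no obstacle, since all the heavy lifting has already been done in proving Theorem~\ref{Thm:B_4quasi-isometrictoB_2} (which itself follows from the isomorphism-up-to-free-factors result together with Theorem~\ref{PW} from Papasoglu--Whyte). The proof of the corollary is therefore just one or two lines invoking transitivity of quasi-isometry.
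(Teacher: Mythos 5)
Your proposal is correct and matches the paper's intent exactly: the paper states this corollary as an immediate consequence of Theorem~\ref{Thm:B_4quasi-isometrictoB_2} (offering no separate proof), and your two-line argument via transitivity and symmetry of quasi-isometry is precisely what is meant.
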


\begin{corollary}\label{corollary:algorithm for tree 4-braid groups}
Let $\Lambda_1$ and $\Lambda_2$ be trees.
Then there exists an algorithm to determine whether $\mathbb{B}_4(\Lambda_1)$ and $\mathbb{B}_4(\Lambda_2)$ are quasi-isometric.
\end{corollary}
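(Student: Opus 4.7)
The statement is an essentially immediate corollary of the earlier results, so the plan is to assemble the pieces in the right order rather than to do any new combinatorial work.

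First, I would describe the explicit procedure. Given two trees $\Lambda_1$ and $\Lambda_2$, in finite time one can construct the bunches of grapes $\Gamma_i = \Gamma(\Lambda_i)$ grown from $\Lambda_i$ as in Definition~\ref{definition:bunch of grapes grown from tree}: for each vertex $v \in \mathcal{V}(\Lambda_i)$, one attaches $\binom{\val_{\Lambda_i}(v)-1}{2}$ copies of a $3$-cycle at $v$. This is a terminating deterministic construction, so there is no algorithmic subtlety here.

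Second, I would invoke Corollary~\ref{Cor:B_4QIiffB_2QI} (which in turn rests on Theorem~\ref{Thm:B_4quasi-isometrictoB_2}) to reduce the decision problem: $\mathbb{B}_4(\Lambda_1)$ and $\mathbb{B}_4(\Lambda_2)$ are quasi-isometric if and only if $\mathbb{B}_2(\Gamma_1)$ and $\mathbb{B}_2(\Gamma_2)$ are quasi-isometric. Thus it suffices to decide quasi-isometry between the two graph $2$-braid groups over $\Gamma_1, \Gamma_2 \in \grapegraph$.

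Third, I would apply Theorem~\ref{thm:existenceofAlgorithm} (Main Theorem~\ref{Mainthm:Algorithm}) directly to the pair $(\Gamma_1,\Gamma_2)$: this provides an algorithm (realized concretely via the quasi-minimal representative procedure of Algorithm~\ref{alg:quasi-minimal}, together with the isometry check between the resulting quasi-minimal bunches of grapes) which decides in finite time whether $\mathbb{B}_2(\Gamma_1)$ and $\mathbb{B}_2(\Gamma_2)$ are quasi-isometric. Composing the two steps yields the desired algorithm for $4$-braid groups over trees.

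Since every ingredient is already stated as an algorithm or as a finite construction, there is no real obstacle — the only thing to verify is that the reduction from $\Lambda$ to $\Gamma(\Lambda)$ is computable, which is clear from the definition. Hence the proof is just the two-line composition: \emph{run the grape-growing procedure, then run the quasi-minimal representative algorithm and compare outputs up to isometry}.
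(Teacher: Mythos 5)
Your proof is correct and follows exactly the paper's argument: construct the grown bunches of grapes $\Gamma(\Lambda_i)$, reduce via Corollary~\ref{Cor:B_4QIiffB_2QI} to the quasi-isometry problem for the $2$-braid groups, and then apply Theorem~\ref{thm:existenceofAlgorithm}. The paper's own proof is precisely this two-line combination, so there is nothing to add.
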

\begin{proof}
The proof is a combination of Theorem~\ref{thm:existenceofAlgorithm} and Corollary~\ref{Cor:B_4QIiffB_2QI}.
\end{proof}

\appendix
\section{Algorithms}\label{appendix:algorithms}

\begin{algorithm}[ht]
\caption{An algorithm producing the normal representative}\label{alg:normal}
\begin{algorithmic}
\REQUIRE A large bunch of grapes $\Gamma=(\mathsf{T},\loops)$
\ENSURE The normal representative $\Gamma_{\mathsf{normal}}=(\mathsf{T}_{\mathsf{normal}},\loops_{\mathsf{normal}})$
\STATE $\mathsf{T}_{\mathsf{normal}}\gets \mathsf{T}$, $\loops_\mathsf{normal}\gets \loops$
\FORALL{$t=[v_0,\dots, v_k]\subset\mathsf{T}_{\mathsf{normal}}$ is an empty twig with $\loops_{\mathsf{normal}}(v_k)=0$}
	\STATE $\mathsf{T}_{\mathsf{normal}}\gets \mathsf{T}_{\mathsf{normal}}\setminus \{v_1,\dots, v_k\}$
\ENDFOR
\FORALL{$t=[v_0,\dots, v_k]\subset \mathsf{T}_{\mathsf{normal}}$ is a twig of length at least $2$}
	\STATE $\mathsf{T}_{\mathsf{normal}}\gets$ the replacement of $t$ by an edge $[v_0,v_k]$
\ENDFOR
\RETURN{$(\mathsf{T}_{\mathsf{normal}},\loops_{\mathsf{normal}})$}
\end{algorithmic}
\end{algorithm}

\begin{algorithm}[ht]
\caption{An algorithm producing the minimal rich representative}\label{alg:minimal rich}
\begin{algorithmic}
\REQUIRE A large bunch of grapes $\Gamma=(\mathsf{T},\loops)$
\ENSURE The minimal rich representative $\Gamma_{\mathsf{rich}}=(\mathsf{T}_{\mathsf{rich}},\loops_{\mathsf{rich}})$
\STATE $(\mathsf{T}_{\mathsf{rich}},\loops_{\mathsf{rich}})\gets$ the normal representative of $\Gamma$
\FORALL{$v\in\mathsf{T}_{\mathsf{rich}}$}
	\IF{$\val_{\mathsf{T}_{\mathsf{rich}}}(v)\ge 2$}
		\STATE $\loops_{\mathsf{rich}}(v)\gets 1$
	\ELSE
		\STATE $\loops_{\mathsf{rich}}(v)\gets \min\{2,\loops_{\mathsf{rich}}(v)\}$
	\ENDIF
\ENDFOR
\RETURN{$(\mathsf{T}_{\mathsf{rich}},\loops_{\mathsf{rich}})$}
\end{algorithmic}
\end{algorithm}

\begin{algorithm}[ht]
\caption{An algorithm producing the quasi-minimal representative}\label{alg:quasi-minimal}
\begin{algorithmic}
\REQUIRE A large bunch of grapes $\Gamma=(\mathsf{T},\loops)$
\ENSURE The quasi-minimal representative $\Gamma_{\mathsf{rich}}=(\mathsf{T}_{\mathsf{min}},\loops_{\mathsf{min}})$
\STATE $\mathsf{T}_{\mathsf{min}}\gets \mathsf{T}_{\mathsf{rich}}$, $\loops_\mathsf{min}\gets \loops_{\mathsf{rich}}$
\STATE $d\gets \operatorname{diam}(\mathsf{T}_{\mathsf{min}})$
\IF{$d$ is even}
	\STATE $r\gets \frac d2$
\ELSE
	\STATE $r\gets \frac{d+1}2$
\ENDIF
\STATE $c\gets$ the central vertex or edge in $\mathsf{T}$
\FOR{$i=r-1$ to $0$}
	\STATE $\mathcal{V}_i\gets\{v\in\mathsf{T}_{\mathsf{min}}\mid d(c,v)=i\}$
	\FORALL{$v\in \mathcal{V}_i$}
	\STATE $\mathcal{C}\gets$ $\hat v$-components not containing $c$
	\IF{$\exists$ at least $3$ identical extended $\hat v$-components $\Gamma^+_{v,1},\dots,\Gamma^+_{v,m}$ in $\mathcal{C}$}
		\FOR{$j=3$ to $m$}
			\STATE $\mathsf{T}_{v,i}\gets$(the stem of $\Gamma^+_{v,j}$)$\setminus \{v\}$
			\STATE $\mathsf{T}_{\mathsf{min}}\gets \mathsf{T}_{\mathsf{min}}\setminus \mathsf{T}_{v,3}\setminus\cdots\setminus \mathsf{T}_{v,m}$
		\ENDFOR
	\ENDIF
	\ENDFOR
\ENDFOR
\RETURN{$(\mathsf{T}_{\mathsf{min}},\loops_{\mathsf{min}})$}
\end{algorithmic}
\end{algorithm}

\begin{algorithm}[ht]
\caption{An algorithm that determines whether $\mathbb{B}_2(\Gamma)$ and $\mathbb{B}_2(\Gamma')$ are quasi-isometric or not}\label{alg:quasi-isom}
\begin{algorithmic}
\REQUIRE Two bunches of grapes $\Gamma=(\mathsf{T},\loops)$ and $\Gamma'=(\mathsf{T}',\loops')$
\ENSURE Determine whether $\mathbb{B}_2(\Gamma)$ and $\mathbb{B}_2(\Gamma')$ are quasi-isometric or not.
\STATE $V\gets \#\{v\in\mathcal{V}(\mathsf{T})\mid \loops(v)\ge 1\}$
\STATE $V'\gets \#\{v'\in\mathcal{V}(\mathsf{T}')\mid \loops'(v')\ge 1\}$
\IF{$V\le 1$ and $V'>1$ or $V>1$ and $V'\le1$}
	\RETURN \FALSE
\ELSIF{$V\le 1$ and $V'\le 1$}
	\STATE $N(n,\ell)\gets \frac{(n+\ell) ( n+3\ell-3 )}{2} + 1$
	\STATE $N_1\gets\sum_{v\in \mathcal{V}(\mathsf{T})} N(\val_{\mathsf{T}}(v),\loops(v))$
	\STATE $N_1'\gets\sum_{v'\in \mathcal{V}(\mathsf{T'})} N(\val_{\mathsf{T}'}(v'),\loops'(v'))$
	\IF{$\min\{N_1,2\}=\min\{N_1',2\}$}
		\RETURN \TRUE
	\ELSE
		\RETURN \FALSE
	\ENDIF
\ENDIF
\STATE $\Gamma_{\mathsf{min}}\gets$ the minimal quasi-representative of $\Gamma$
\STATE $\Gamma'_{\mathsf{min}}\gets$ the minimal quasi-representative of $\Gamma'$
\IF{$\Gamma_{\mathsf{min}}$ and $\Gamma_{\mathsf{min}}'$ are isometric}
	\RETURN \TRUE
\ELSE
	\RETURN \FALSE
\ENDIF
\end{algorithmic}
\end{algorithm}

\bibliographystyle{alpha} 
\bibliography{Ref.bib}
\end{document}